\definecolor{Green}{HTML}{069937}
\newif \ifnotes
\colorlet{shadecolor}{gray!15}
\newtheorem{theorem}{Theorem}
\newtheorem{lemma}{Lemma}
\newtheorem{corollary}{Corollary}
\newtheorem{proposition}{Proposition}
\newtheorem*{example}{Example}
\newtheorem*{remark}{Remark}
\DeclareMathOperator*{\argmin}{argmin}
\DeclareMathOperator*{\esssup}{ess\,sup}
\newcommand{\E}{\mathbb{E}}
\renewcommand{\hat}{\widehat}
\let\tilde\widetilde
\tikzstyle{Arrow} = [thick,decoration={markings,mark=at position 1 with {\arrow[thick]{latex}}},shorten >= 3pt, preaction = {decorate}]
\title{\bf Statistical Properties of Rectified Flow}
\author[1]{Gonzalo Mena}
\author[2]{Arun Kumar Kuchibhotla}
\author[3]{Larry Wasserman}
\affil[1]{\texttt{gmena@andrew.cmu.edu}}
\affil[2]{{\tt arunku@cmu.edu}}
\affil[3]{{\tt larry@cmu.edu}}
\affil[1,2,3]{Department of Statistics \& Data Science, Carnegie Mellon University}
\date{}                   
\begin{document}

\maketitle

\begin{abstract}
Rectified flow
\citep{liu2022flow, liu2022rectified, wu2023fast}
is a method for defining a transport map between
two distributions, and enjoys popularity in machine learning, although theoretical results supporting the validity of these methods are scant.
The rectified flow can be regarded as an approximation to optimal transport, but in contrast to other transport methods that require optimization over a function space, computing the rectified flow only requires
standard statistical tools such as regression or density estimation, that we leverage to develop empirical versions of transport maps.
We study some structural properties of the rectified flow, including existence, uniqueness, and regularity, as well as the related statistical properties, 
such as rates of convergence and central limit theorems, for some selected estimators. To do so, we analyze the bounded and unbounded cases separately as each presents unique challenges. In both cases, we are able to establish convergence at faster rates than those for the usual nonparametric regression and density estimation.

\end{abstract}

\newpage

\tableofcontents

\newpage

\section{Introduction}

In some statistical problems,
we need to find a map
$T$ that transforms one distribution $\mu_0$
into another distribution $\mu_1$.
Examples include
generative modeling \citep{balaji2020, rout2021},
transfer learning \citep{lu2017},
domain adaptation \citep{yan2018semi, courty2017joint, courty2014domain},
causal inference \citep{li2021causal, torous2021optimal} and
image analysis \citep{kolouri2016} among others.
If $X_0$ denotes a draw from $\mu_0$,
then we want a map $T$ such that
$T(X_0)\sim \mu_1$.
Such a map is called a
{\em transport map}.
More generally,
a {\em coupling}
is a joint distribution $J$
for a pair $(X_0,X_1)$
such that
$J$ has marginals $\mu_0$ and $\mu_1$.
A transport map is a
degenerate coupling
of the form $(X_0,T(X_0))$ where $X_0 \sim \mu_0$.

A commonly used transport map
is the {\em Monge map} or {\em optimal transport map}
which minimizes the average cost 
$\E[c(X_0,T(X_0))]$ over all maps $T$ such that $T(X_0)\sim \mu_1$.
Often one uses
$c(x,y) = \|x-y\|^2$ (the Euclidean norm)
so the optimal transport map minimizes
$\E[\|X_0 - T(X_0)\|^2]$ over all maps $T$ such that $T(X_0)\sim \mu_1$. (Note that unlike the general definition of transport maps, the use of the Euclidean norm implicitly assumes $X_0$ and $X_1$ lie in the same vector space.)
If $T_0$ is such a minimizer, then
the minimum value
$W^2(\mu_0,\mu_1) = \E[\|X_0 - T_0(X_0)\|^2]$
defines the well known Wasserstein distance.
Despite its intuitive appeal,
the optimal transport map
can be difficult to deal with, both theoretically and computationally.
Computation, estimation, and inference
for the optimal transport map are all
challenging.
Under smoothness conditions, the Monge map
can, in principle, be estimated at a fast rate \citep{manole2023clt}.
Yet, there is no practical way to compute the map
under these smoothness conditions.
Other maps and couplings that are useful
include the Schrodinger coupling
\citep{tong2023simulation, chen2016} and
diffusion models \citep{liu2022diff}.

Throughout, we assume that $\mu_0$ and $\mu_1$ are supported on the same convex set $\Omega\subseteq\mathbb{R}^d$ with a non-empty interior. (The convexity assumption is crucially used in most of our results, and significantly different proofs would be required to relax convexity. The assumption of the same support can be easily relaxed, but is assumed for simplicity.)

Recently, 
\cite{liu2022flow}, \cite{liu2022rectified}
proposed
a new method for constructing
couplings called 
{\em rectified flow}. This leads to a transport map using
an ordinary differential equation. 
We start with any convenient coupling between $X_0$ and $X_1$;
usually this is just the independence coupling.
Set
\begin{equation}\label{eq:velocity-field}
v_t(z) \equiv v(t, z) := \E\bigl[X_1-X_0 \,\bigm| \,(1-t)X_0 + tX_1=z\bigr],\quad{z\in\Omega}.
\end{equation}
For each $x\in\Omega$, let $t\mapsto \mathfrak{R}(t, x)$ solve the ODE
\begin{equation}\label{eq:rectified-flow-ODE}
\frac{d}{d t}\mathfrak{R}(t, x) = v(t, \mathfrak{R}(t, x)),\quad t\in[0, 1],\quad \mbox{with}\quad\mathfrak{R}(0, x) = x.
\end{equation}
This leads to a transport map called the rectified map, defined by
\begin{equation}\label{eq:rectified-flow}
R(x) \equiv \mathfrak{R}(1, x) = x + \int_0^1 v(t, \mathfrak{R}(t, x))dt.
\end{equation}
(We use
$v_t(z)$ and $v(t,z)$ interchangeably to
to denote the velocity field~\eqref{eq:velocity-field}. Likewise, we denote by
$v_j(t,z)$ or $v_{t,j}(z)$ the  $j^{\rm th}$ coordinate of this vector.)
When we do not need to stress the starting point $x$, we write $z_t = \mathfrak{R}(t, x)$,
and refer to $t\mapsto z_t$ as the path.
Theorem 3.2 of~\cite{liu2022rectified} has shown that
if the ODE~\eqref{eq:rectified-flow-ODE} has a unique solution and $z\mapsto v(t, z)$ is locally bounded, then
$x\mapsto R(x)$ is a valid transport map, that is,
$R(X_0)\sim \mu_1$ if $X_0\sim \mu_0$; see Theorem \ref{thm:uniquene-ae-implies-transport-map} below.
We can think of rectified flow
as a method for converting an initial coupling
(usually the independence coupling) into a valid
transport map.

Repeating the rectified flow operation produces a straight coupling; a path $t\mapsto Z_t$ is said to be straight if $Z_t = (1 - t)Z_0 + tZ_1$ for all $t\in(0, 1)$, or equivalently, $\partial Z_t/\partial t = C$ for all $t\in(0, 1)$. See Theorem 3.7 of~\cite{liu2022flow} for a precise result. If the velocity field is constructed to be a conservative field (i.e., derivative of a function), then the rectified flow map 
after iterations
is the optimal transport map, under some regularity conditions; see Section 4.2 of~\cite{hertrich2025relation}.   

Given that the velocity field~\eqref{eq:velocity-field} is defined as 
a conditional expectation, one can readily leverage the wealth of literature
on regression methods to estimate $z\mapsto v_t(z)$ for all $t\in(0, 1)$ and obtain $\widehat{R}$ by
solving the empirical ODE $\partial z_t/\partial t = \widehat{v}_t(z_t), t\in (0, 1)$ with $z_0 = x$.
These facts hold true as long as $(X_0, X_1)$ is {\em any} valid coupling of $\mu_0$ and $\mu_1$; in particular, $X_0$ and $X_1$ need not be independent. 
If one uses the independent coupling, i.e., $X_0$ and $X_1$ are independent, then the velocity field
can be written explicitly in terms of the Lebesgue densities of $\mu_0$ and $\mu_1$ (assuming the existence of Lebesgue densities); see Lemma~\ref{lem:representation-of-velocity-field}. Such an alternative representation implies that by estimating the densities of $\mu_0$ and $\mu_1$, one can obtain an estimator of $z\mapsto v_t(z)$ for all $t\in(0, 1)$ simultaneously, that in turn yields an estimator of the transport map. Throughout the manuscript, we assume that $X_0$ and $X_1$ are independent. 

Given the simplicity of rectified flow, it has attracted a lot of interest recently. Rectified flow has been 
independently rediscovered
at least twice since the work
of~\cite{liu2022flow}.
\cite{heitz2023} introduced
iterative $\alpha$-deblending
and
\cite{delbracio2023}
introduced inversion by direct iteration.
These are equivalent to rectified flow.
Since the original work,
applications have appeared in 
\cite{hu2023rf, liu2023instaflow, zhang2024language}.

The proof that $R(\cdot)$ is a valid transport map crucially relies on the assumption that the ODE~\eqref{eq:rectified-flow-ODE} has a unique solution. To the best of our knowledge, this assumption has not been formally verified for a general collection of distributions. Even the existence of a solution to this ODE is not obvious. From the general theory of differential equations (Cauchy-Lipschitz theorem), we know that if $z\mapsto v_t(z)$ is smooth enough (uniformly in $t$), then the existence and uniqueness of the solution follow. In this paper, we study the regularity of the velocity field $v_t(z)$ as a function of $(t, z)$ under general smoothness conditions on the densities and their supports. Rather surprisingly, even if the densities of $\mu_0$ and $\mu_1$ are infinitely smooth on their (bounded) support, $z\mapsto v_t(z)$ need not be Lipschitz continuous with a uniformly bounded Lipschitz constant. This makes the application of existing theory of ODEs difficult for the study of~\eqref{eq:rectified-flow-ODE}. For a more specialized analysis, we consider the bounded and unbounded cases separately, as the proof techniques and assumptions differ substantially in these two cases. We study structural properties of the velocity field and introduce several estimators for the velocity and the map. Based on these structural properties, we derive convergence rates and the central limit theorem for our estimators.

We can summarize our general contributions as follows.
\begin{enumerate}
    \item Starting from an independent coupling, we provide four representations for the velocity field that define the rectified flow. This allows us to define different estimators for the velocity field based on the routinely used non-parametric density and regression estimators. Such procedures also enable us to employ dimension reduction techniques, allowing for faster estimation rates of a transport map. 
    
    \item We show, via examples, that the velocity field~\eqref{eq:velocity-field} need not be a smooth function of $t$ and $z$, even if the Lebesgue densities $p_0$ and $p_1$ are infinitely differentiable. This is especially the case if $p_0$ and $p_1$ are compactly supported. In fact,
    in the compact case, we show that the velocity field is not even continuous.
    This is significant 
    since most proofs of the existence and uniqueness of the ODE assume that the velocity field
    is smooth.
    
    
    \item When $\mu_0$ and $\mu_1$ are log-concave, log-H{\"o}lder continuous, and are supported on all of $\mathbb{R}^d$, we prove that the velocity field is smooth, enabling the application of classical existence and uniqueness results from the ODE literature. While traditional non-parametric methods can be used to analyze the convergence rates of the velocity field, studying the rectified flow estimator is more complicated. This complexity arises because the estimator is derived through a non-linear operation, specifically, solving an ordinary differential equation (ODE). To address this, we use perturbation theory of ODEs (specifically, an analogue of the Alekseev-Gr{\"o}bner formula~\citep[Theorem 14.5]{HairerWannerNorsett1993}) to determine the convergence rates and asymptotic normality of our regression-based rectified flow map estimator when $\mu_0$ and $\mu_1$ are strongly log-concave supported on all of $\mathbb{R}^d$. This is, to our knowledge, the first such result in the literature regarding the rectified map estimator. 
    \item When $\mu_0$ and $\mu_1$ are compactly supported, the theory of rectified flow is significantly complicated by the lack of smoothness or even continuity. In fact, no solution satisfying~\eqref{eq:rectified-flow-ODE} for all $t\in[0,1]$ might exist. For this reason, we consider Carath{\'e}odory solutions~\eqref{eq:rectified-flow-integral-equation} and prove their existence and uniqueness under mild conditions. Additionally, the lack of smoothness prohibits the use of Alekseev-Gr{\"o}bner formula, but by a direct method, we provide convergence rates and asymptotic normality of the density-based rectified flow map. This requires the development of a new kernel density estimator for arbitrary convex sets (with no assumptions on the smoothness of the boundaries).

\item The estimators have some
surprising properties. For example, in the compact case, a regression-based estimator
would have poor rates due to the lack of smoothness
of the velocity. Surprisingly, a density-based estimator
has fast rates. 

\item Another surprising finding is
the following. For any fixed $0 < t < 1$, the velocity field $v_t$ can be estimated at $n^{-1/2}$
rates using semiparametric estimators. Also, $v_0$ and $v_1$ can be estimated at
$n^{-1/2}$ rates. But, the $n^{-1/2}$ rate is not obtainable uniformly over
$0 \leq t \leq 1$.
    
\end{enumerate}

\paragraph{Paper Outline.}
We review estimation rates and computation of the optimal transport which motivated us to a statistical study of rectified flow
in Section \ref{section::transport}.
In Section~\ref{sec:rectified-flow}, we provide four representations of the velocity field~\eqref{eq:velocity-field} under independence coupling, and provide some examples where the velocity field and the map itself can be explicitly computed. This gives us insight into the regularity properties of the velocity field, and also the differences between the optimal transport map and the rectified map. We also discuss different estimation schemes for the velocity field based on the four representations, and show how one can incorporate structure into these estimators to avoid the curse of dimensionality. 
To derive the regularity properties and the estimation rates, we consider the cases of unbounded and bounded supports separately. 
In Section~\ref{sec:review-ODE}, we review some results regarding the existence, uniqueness, and stability of solutions from the literature of ordinary differential equations.
In Section~\ref{sec:unbounded}, we consider the case of unbounded $\Omega$ and derive regularity properties, rates of convergence, and a CLT assuming $\mu_0$ and $\mu_1$ are log-concave distributions. In Section~\ref{sec:bounded}, we consider the case of bounded $\Omega$ and derive regularity, rates of convergence, and a CLT assuming $\mu_0$ and $\mu_1$ have smooth densities on their support $\Omega$.

\section{Optimal Transport}
\label{section::transport}

In this section we provide a brief
review of optimal transport.

{\bf Optimal Transport.}
Let $\mu_0$ and $\mu_1$ be
probability distributions.
We say that $T$ is a {\em transport map}
if $X\sim \mu_0$ implies that
$T(X)\sim \mu_1$.
If $\mu_0$ is absolutely continuous then
there are many such maps.
The {\em Monge map} $T_0$, or {\em optimal transport map},
(under the squared $L_2$ norm)
is the transport map $T_0$ that minimizes
$$
\int \|T(x) -x\|^2 d\mu_0(x).
$$
The (quadratic) Wasserstein distance $W(\mu_0,\mu_1)$ is defined by
$$
W^2(\mu_0,\mu_1) = \int \|T_0(x) -x\|^2 d\mu_0(x).
$$

{\bf Dynamical Representation}.
Optimal transport can be expressed
in dynamic form as 
a flow from $\mu_0$ to $\mu_1$, as described in~\cite{benamou2000computational}.
Consider a flow defined by the differential equation
$$
\frac{\partial}{\partial t}\mathfrak{R}_{u}(t, x) = u(t, \mathfrak{R}_u(t, x)),\quad\mbox{for all}\quad t\in[0, 1],
$$
for a velocity field $u(t, z)$, with $\mathfrak{R}_u(0, x)=x$.
Let $\mathcal{U}$ be the collection of all velocity fields such that $\mathfrak{R}_u(t, x), t\in[0,1]$
is uniquely defined and $\mathfrak{R}(1, X_0) \overset{d}{=} X_1$. Then $x\mapsto \mathfrak{R}_{u^*}(1, x)$ is the optimal transport map, where $u^*(\cdot, \cdot)$ is the minimizer of
$$
\E \Biggl[ \int_0^1 \|u(t, \mathfrak{R}_u(t, X_0))\|^2 dt \Biggr],
$$
over all $u\in\mathcal{U}$.
If $\mu_t$ denotes the law of $\mathfrak{R}(t, X_0)$,
then the solution satisfies the
{\em continuity equation}
$$
\partial_t p_t + \text{div}(u_t^* p_t) = 0
$$
where $p_t$ is the density of $\mu_t$
where $\text{div}$ denotes the divergence.
Hence, the optimization can be written as
$$
\inf_{u,p}\int_0^1 \int \|u_t(x)\|^2 p_t(x) dx dt
$$
subject to
$\partial_t p_t + \text{div}(u_t p_t) = 0$. (See Proposition 1.1 of~\cite{benamou2000computational}.)

{\bf Estimation.}
The primary statistical task in
optimal transport is to estimate the transport map
and Wasserstein distance
from samples
$X_1,\ldots, X_n\sim \mu_0$ and
$Y_1,\ldots, Y_n\sim \mu_1$.
The simplest estimate
is the transport
from the empirical distribution
$\mu_{0,n} = n^{-1}\sum_i \delta_{X_{0i}}$
to
$\mu_{1,n} = n^{-1}\sum_i \delta_{X_{1i}}$
where $\delta_a$ denotes a point mass at $a$.
The solution is
$\hat T(X_{0i}) = Y_{\hat \pi(i)}$
where $\hat\pi$ is the permutation that minimizes
$\sum_i \|X_{0i} - Y_{\pi(i)}\|^2$.
This defines that map $\hat T$ at the data points $X_{0i}$.
The map can be extended to all $x$ by taking
$\hat T(x) = \hat T (X_{0i}(x))$
where $X_{0i}(x)$ is the closest data point to $x$.
Computing this map takes time $O(n^3)$
and requires the use of linear programming software.
The estimate has the slow rate $n^{-2/d}$
which is dominated by bias.
Inference is difficult due to the large bias.
It is natural to ask
if we can improve estimation and inference
when $\mu_0$ and $\mu_1$ have smooth densities.
In principle,
the answer is yes.
The minimax rate for estimating $T$
is much faster under smoothness assumptions.
\citep{hutter2019minimax, deb2021rates, manole2021sharp, 
gunsilius2022convergence, manole2021}.
Furthermore,
\cite{manole2023clt} obtained a centered, central limit
theorem for the transport map under smoothness.
Unfortunately,
finding practical methods to 
compute these estimators
is still unsolved.
Furthermore,
the theoretical results rely on fairly
restrictive conditions.
Hence, the goal of
finding simple, practical methods
to estimate transport maps and to
quantify the uncertainty of the estimates
is still largely open.
Due to these challenges,
other maps and couplings have been considered, such
as regularized transport \citep{cuturi2013},
minibatch transport \citep{fatras2021}
and sliced transport \citep{bai2023, manole2022sliced}.

\section{Rectified Flow: Some Examples and Estimation Methods}
\label{sec:rectified-flow}

We start by stating a useful property that the velocity can be represented in several equivalent ways. Recall that $X_0\sim \mu_0$ and $X_1\sim \mu_1$ are independent and 
\[
v_t(z) := \mathbb{E}[X_1 - X_0|X_t = z]\quad\mbox{where}\quad X_t := (1-t)X_0 + tX_1.
\]
\begin{lemma}\label{lem:representation-of-velocity-field}
Assuming $\mu_0$ and $\mu_1$ have Lebesgue densities $p_0$ and $p_1$, the velocity field can be equivalently written as follows:
\begin{align}
\label{eq::vzt}
v_t(z) &= \E\bigl[X_1-X_0\,\bigm|\,(1-t)X_0+t X_1 =z\bigr]\\
\label{eq:veldelta} v_t(z)
&= 
\frac{\int \delta \, p_0(z-t \delta) p_1(z + (1-t)\delta) d\delta }
{\int p_0(z-t \delta) p_1(z + (1-t)\delta) d \delta} = \frac{f_t(z)}{p_t(z)}\\
\label{eq:velp0}
v_t(z)
& =
\frac{\E_0 \left[ t^{-1}(z-X_0)\ p_1\left(t^{-1}{(z - X_0(1 - t))}\right)\right]}
{\E_0 \left[ p_1\left( t^{-1}({z - X_0(1 - t)})\right)\right]}\\
\label{eq:velp1}
v_t(z)
&= \frac{\E_1 \left[ (1-t)^{-1}({X_1-z})\ 
p_0\left( (1-t)^{-1}{(z - tX_1)}\right)\right]}
{\E_1 \left[ p_0\left( (1-t)^{-1}(z - tX_1)\right)\right]},
\end{align}
where $\mathbb{E}_0[\cdot]$ and $\mathbb{E}_1[\cdot]$ represent expectations with respect to $\mu_0$ and $\mu_1$, respectively, and
\begin{equation}\label{eq:notation-numerator-denominator-velocity-field}
f_t(z) = \int \delta p_0(z-t\delta) p_1(z + (1-t)\delta) d\delta,\ \ \ 
p_t(z) = \int p_0(z-t\delta) p_1(z + (1-t)\delta) d\delta.
\end{equation}
\end{lemma}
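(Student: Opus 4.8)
The plan is to start from the definition~\eqref{eq::vzt} and rewrite the conditional expectation using the joint density of $(X_0, X_1)$, which under the independence assumption factorizes as $p_0(x_0)p_1(x_1)$. First I would introduce the change of variables $(x_0, x_1) \mapsto (z, \delta)$ where $z = (1-t)x_0 + tx_1$ is the conditioning variable and $\delta = x_1 - x_0$ is the integrand; this is an invertible linear map with $x_0 = z - t\delta$ and $x_1 = z + (1-t)\delta$, and its Jacobian is a constant depending only on $t$. Applying this substitution to both the numerator $\E[(X_1-X_0)\mathbbm{1}\{X_t \in dz\}]$ and the normalizing density of $X_t$ at $z$, the constant Jacobian cancels in the ratio, yielding~\eqref{eq:veldelta} with $f_t$ and $p_t$ as defined in~\eqref{eq:notation-numerator-denominator-velocity-field}. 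Here I should be a little careful to justify that $p_t(z)$ is indeed (proportional to) the Lebesgue density of $X_t$, which follows from the same change of variables applied to $\E[g(X_t)]$ for test functions $g$; for $t \in (0,1)$ this is routine, and the endpoint cases $t \in \{0,1\}$ can be handled separately or simply excluded from the statement.

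For~\eqref{eq:velp0} and~\eqref{eq:velp1}, the idea is instead to integrate out only one of the two variables while keeping the other as an expectation. For~\eqref{eq:velp0}: condition on $X_0$ and note that given $X_0 = x_0$, the event $X_t = z$ pins down $X_1 = (z - (1-t)x_0)/t$, so the conditional density of $X_t$ at $z$ given $X_0 = x_0$ is $t^{-d}p_1\big(t^{-1}(z - (1-t)x_0)\big)$ and on that event $X_1 - X_0 = t^{-1}(z - x_0)$. Plugging these into the conditional-expectation formula $\E[X_1 - X_0 \mid X_t = z] = \E_0[(X_1-X_0)\,p_{X_t \mid X_0}(z)] / \E_0[p_{X_t\mid X_0}(z)]$ gives~\eqref{eq:velp0} after the factors of $t^{-d}$ cancel. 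Formula~\eqref{eq:velp1} is the mirror image, conditioning on $X_1$ instead, with $t$ replaced by $1-t$ and the roles of $X_0, X_1$ swapped; alternatively one derives it from~\eqref{eq:veldelta} by the substitution $\delta \mapsto x_1$ in the $\delta$-integral rather than $\delta \mapsto x_0$.

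I do not expect any serious obstacle here — the lemma is essentially a bookkeeping exercise in changes of variables and the tower property. The one point that warrants care is the rigorous justification that the conditional expectation can be written as the displayed ratio of integrals: this requires knowing that $X_t$ has a density (so that conditioning on $\{X_t = z\}$ is well-defined via the disintegration / Bayes formula for densities), and that the numerator integrals converge. Both hold under the stated assumption that $p_0, p_1$ are Lebesgue densities, at least for $t \in (0,1)$, by Fubini and the change of variables above; I would state the identities for $t \in (0,1)$ and note that~\eqref{eq:velp0} extends to $t = 1$ and~\eqref{eq:velp1} to $t = 0$ by inspection. The equality $f_t(z)/p_t(z)$ of the two expressions in~\eqref{eq:veldelta} is then just the definition~\eqref{eq:notation-numerator-denominator-velocity-field}, so nothing further is needed there.
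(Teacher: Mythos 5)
The paper omits this proof entirely (``The proof is a straightforward calculation and is omitted''), so there is no official argument to compare against; your derivation is correct and is the natural one. Two small refinements. First, the Jacobian of the linear map $(x_0,x_1)\mapsto(z,\delta)$ with $z=(1-t)x_0+tx_1$, $\delta=x_1-x_0$ is not merely ``a constant depending only on $t$'': its determinant is $\det\begin{pmatrix}(1-t)I & tI\\ -I & I\end{pmatrix} = \det\bigl((1-t)I + tI\bigr) = 1$, so the joint density of $(X_t,\,X_1-X_0)$ is exactly $p_0(z-t\delta)p_1(z+(1-t)\delta)$ and $p_t$ is exactly (not merely proportional to) the Lebesgue density of $X_t$ — which is also asserted in the text following the lemma. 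Second, your derivations of~\eqref{eq:velp0} and~\eqref{eq:velp1} implicitly require $t\in(0,1)$ and $t\in(0,1)$ respectively (otherwise the $t^{-d}$ or $(1-t)^{-d}$ scalings are undefined); you flag this, which is appropriate, and it is consistent with how the paper uses these representations. Otherwise the argument — disintegrating via the invertible change of variables for~\eqref{eq:veldelta}, and conditioning on one marginal at a time for~\eqref{eq:velp0}--\eqref{eq:velp1} — is sound and complete.
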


The proof is a straightforward calculation and is omitted.
The closed form expression~\eqref{eq:velp1} for $v_t(z)$
was mentioned in Eq. (4) of~\cite{liu2022flow},
along with an expression for the derivative of the velocity
field with respect to $z$ under the assumption
of a differentiable log-density.
From Lemma~\ref{lem:representation-of-velocity-field}, it maybe tempting to
conclude that $v_t$ has the same smoothness as $p_0$ and $p_1$ but, as we shall see,
this is not the case.

The following result guarantees that the rectified transport is a valid transport map whenever there is almost sure uniqueness of solutions that satisfy the ODE~\eqref{eq:rectified-flow-ODE} almost surely. Formally, instead of the ODE~\eqref{eq:rectified-flow-ODE}, consider the integral equation
\begin{equation}\label{eq:rectified-flow-integral-uniqueness}
    \mathfrak{R}(t, x) = x + \int_0^t v(s, \mathfrak{R}(s, x))ds\quad\mbox{for all}\quad t\in[0, 1].
\end{equation}
Note that if $\mathfrak{R}(t, x)$ satisfies the ODE~\eqref{eq:rectified-flow-ODE}, then it also satisfies~\eqref{eq:rectified-flow-integral-uniqueness}. However, $\mathfrak{R}(t, x)$ satisfying~\eqref{eq:rectified-flow-integral-uniqueness} may only satisfy~\eqref{eq:rectified-flow-ODE} for almost all $t\in[0, 1]$. In the case of bounded supports, we could only prove the existence of unique solutions to~\eqref{eq:rectified-flow-integral-uniqueness} (not~\eqref{eq:rectified-flow-ODE}), that too for almost all $x\in\Omega$. Hence, we provide an alternative result that shows that such almost sure unique solutions still yield transport maps. 
\begin{theorem}\label{thm:uniquene-ae-implies-transport-map}
Suppose that $A\subseteq\Omega$ with $\mu_0(A)=1$ such that the integral equation~\eqref{eq:rectified-flow-integral-uniqueness} has a unique solution for each $x\in A$. Then, for all $t\in[0,1]$, the law of $\mathfrak{R}(t, X_0)$ is the same as that of $X_t = (1-t)X_0 + t X_1$.
\end{theorem}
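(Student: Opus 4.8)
The plan is to show that the flow $\mathfrak{R}(t,\cdot)$ pushes $\mu_0$ forward to the law $\mu_t$ of $X_t=(1-t)X_0+tX_1$ by verifying that the curve of measures $t\mapsto (\mathfrak{R}(t,\cdot))_\#\mu_0$ and the curve $t\mapsto \mu_t$ both solve the same continuity equation $\partial_t\rho_t + \mathrm{div}(v_t\,\rho_t)=0$ with the same initial datum $\rho_0=\mu_0$, and then invoke uniqueness for the continuity equation. The two ingredients are therefore: (i) $t\mapsto\mu_t$ solves the continuity equation with velocity field $v_t$; (ii) $t\mapsto(\mathfrak{R}(t,\cdot))_\#\mu_0$ solves the same continuity equation; and (iii) a uniqueness statement that forces the two to coincide.

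For (i), I would argue in the weak (distributional) sense, which avoids any smoothness assumption on $p_0,p_1$. Fix $\varphi\in C_c^\infty(\mathbb{R}^d)$. Since $X_t=(1-t)X_0+tX_1$, we have $\frac{d}{dt}\E[\varphi(X_t)] = \E[\nabla\varphi(X_t)\cdot(X_1-X_0)]$ by dominated convergence (the integrand is bounded because $\varphi$ has compact support and $X_1-X_0$ is integrable; differentiation under the expectation is justified since $\varphi\in C^1$ and $X_t$ is linear in $t$). Now condition on $X_t=z$: by the tower property and the very definition~\eqref{eq:velocity-field} of $v_t$, this equals $\E[\nabla\varphi(X_t)\cdot v_t(X_t)] = \int \nabla\varphi(z)\cdot v_t(z)\,d\mu_t(z)$. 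That is exactly the weak form of $\partial_t\mu_t + \mathrm{div}(v_t\mu_t)=0$ with $\mu_0$ as initial condition.

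For (ii), let $\nu_t := (\mathfrak{R}(t,\cdot))_\#\mu_0$, which is well-defined because by hypothesis $\mathfrak{R}(t,x)$ is uniquely determined for each $x\in A$ and $\mu_0(A)=1$. For $\varphi\in C_c^\infty(\mathbb{R}^d)$, write $\int\varphi\,d\nu_t = \int_A \varphi(\mathfrak{R}(t,x))\,d\mu_0(x)$. Using the integral equation~\eqref{eq:rectified-flow-integral-uniqueness}, for each fixed $x\in A$ the map $t\mapsto\mathfrak{R}(t,x)$ is absolutely continuous with $\frac{d}{dt}\mathfrak{R}(t,x)=v(t,\mathfrak{R}(t,x))$ for a.e.\ $t$, so $\frac{d}{dt}\varphi(\mathfrak{R}(t,x)) = \nabla\varphi(\mathfrak{R}(t,x))\cdot v(t,\mathfrak{R}(t,x))$ for a.e.\ $t$, and $t\mapsto\varphi(\mathfrak{R}(t,x))$ is Lipschitz in $t$ (since $\nabla\varphi$ is bounded and, on the compact support of $\varphi$, $v_t$ is bounded — this is where local boundedness of $v_t$ enters). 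Integrating in $t$ and then in $x$, and exchanging the order of integration (Fubini, justified by the uniform bound on the support of $\varphi$), gives $\int\varphi\,d\nu_t - \int\varphi\,d\nu_0 = \int_0^t\!\!\int \nabla\varphi(y)\cdot v_s(y)\,d\nu_s(y)\,ds$, i.e.\ $\nu_t$ solves the same weak continuity equation with $\nu_0=\mu_0$.

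For (iii), I would cite a standard uniqueness result for the continuity equation along the lines of the superposition principle / Ambrosio's theory (or, since here the solution $t\mapsto\mathfrak{R}(t,x)$ is given pathwise and unique $\mu_0$-a.e., the more elementary fact that the push-forward of a flow is the unique weak solution): given a (Borel, locally bounded) velocity field $v_t$ for which the characteristic ODE has $\mu_0$-a.e.\ unique solutions, the push-forward $(\mathfrak{R}(t,\cdot))_\#\mu_0$ is the unique solution of the continuity equation with initial datum $\mu_0$ within the relevant class. Applying this to both $\mu_t$ and $\nu_t$ yields $\mu_t=\nu_t$ for all $t\in[0,1]$, which is the claim. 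I expect the main obstacle to be making step (iii) rigorous at the stated level of generality — $v_t$ is only assumed locally bounded and need not be Lipschitz or even continuous (as the paper stresses), so one cannot quote the classical Cauchy–Lipschitz-based uniqueness; instead the argument must lean on the hypothesis of $\mu_0$-a.e.\ uniqueness of characteristics together with a superposition-type result, and care is needed to confine everything to $\Omega$ and to control mass near $\partial\Omega$. A secondary technical point is the interchange of differentiation/integration and the Fubini steps, but those are routine given the compact support of the test function and integrability of $X_1-X_0$.
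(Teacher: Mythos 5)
Your proposal is correct in spirit and identifies the right key tool — a superposition principle of Ambrosio type — but the route you sketch is slightly more roundabout than the paper's. The paper verifies only your step (i) (that the law $\mu_t$ of $X_t$ solves the continuity equation $\partial_t\mu_t+\mathrm{div}(v_t\mu_t)=0$ in the weak sense, by exactly the computation you write, $\frac{d}{dt}\E[\varphi(X_t)]=\E[\nabla\varphi(X_t)\cdot v_t(X_t)]$) and then invokes Theorem~8.2.1(ii) of Ambrosio--Gigli--Savar\'e directly on $\mu_t$: that theorem represents $\mu_t$ as $\int\varphi(\gamma_x(t))\,d\eta(x,\gamma_x)$ for a probability measure $\eta$ concentrated on pairs $(x,\gamma_x)$ with $\gamma_x$ an absolutely continuous solution of the characteristic ODE started at $x$. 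The hypothesis of $\mu_0$-a.e.\ uniqueness of the integral equation~\eqref{eq:rectified-flow-integral-uniqueness} then forces $\gamma_x=\mathfrak{R}(\cdot,x)$ for $\mu_0$-a.e.\ $x$, so the representation collapses to $\E[\varphi(X_t)]=\int\varphi(\mathfrak{R}(t,x))\,d\mu_0(x)$, and the conclusion follows. Your extra step (ii) — verifying that $\nu_t:=(\mathfrak{R}(t,\cdot))_\#\mu_0$ also solves the continuity equation — is correct but superfluous once the superposition principle is in hand, because the hard work you flag in step (iii) (proving uniqueness of the continuity equation under mere local boundedness and a.e.-unique characteristics) is precisely what the superposition theorem delivers, with no additional verification needed. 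In short: you correctly anticipate that the classical Cauchy--Lipschitz machinery is unavailable and that a measure-theoretic superposition argument is required; the paper simply applies that argument once, to $\mu_t$, rather than setting up a two-sided uniqueness comparison.
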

A proof of Theorem~\ref{thm:uniquene-ae-implies-transport-map} can be found in Section~\ref{appsubsec:proof-unique-almost-surely}.

\subsection{Explicit examples}
In this section, we consider several examples in which the velocity field can be explicitly computed. In general, even when $p_0$ and $p_1$ are known, the velocity field does not exhibit a closed-form expression. This is possible in specific examples such as Gaussians, mixtures of Gaussians, and uniform distributions. These examples will be discussed in the following and provide insights into the differences between the rectified map and the optimal transport map, and also into the regularity that the velocity field can be expected to satisfy, in general. We note here that the closed-form expressions for the velocity field in some of the following examples can also be found elsewhere~\citep{hertrich2025relation}.
\paragraph{$\mu_0$ and $\mu_1$ are Gaussian.}
To gain further insight into the rectified flow,
we now consider the case where
$\mu_0$ and $\mu_1$ are Gaussian.

\begin{lemma}
\label{lemma::gaussianvel}
Let 
$X_0 \sim N(m_0,\Sigma_0)$
and
$X_1 \sim N(m_1,\Sigma_1)$ be independent.
Then, for $t\in[0,1]$, assuming the invertibility of $t^2\Sigma_1 + (1 - t)^2\Sigma_0$, we have
\begin{equation}
\label{eq:gaussianvel}
v_t(z) = m_1 - m_0 + (t\Sigma_1 - (1-t)\Sigma_0)(t^2\Sigma_1 +(1-t)^2\Sigma_0)^{-1}(z - m_t),
\end{equation}
where $m_t = (1-t)m_0 + tm_1$. 
\end{lemma}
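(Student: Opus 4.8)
The plan is to exploit the joint Gaussianity of $(X_0, X_1)$, which makes every affine image of this vector jointly Gaussian; in particular the pair $(X_1 - X_0,\ X_t)$, with $X_t = (1-t)X_0 + tX_1$, is jointly Gaussian. I would then invoke the classical formula for the conditional mean of one block of a Gaussian vector given another: for jointly Gaussian $(U, V)$ with $\mathrm{Var}(V)$ invertible, $\E[U \mid V = z] = \E[U] + \mathrm{Cov}(U, V)\,\mathrm{Var}(V)^{-1}(z - \E[V])$. Taking $U = X_1 - X_0$ and $V = X_t$, the invertibility hypothesis on $t^2\Sigma_1 + (1-t)^2\Sigma_0$ is exactly what makes this applicable.

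The rest is three elementary moment computations, all using independence of $X_0$ and $X_1$. First, $\E[X_1 - X_0] = m_1 - m_0$ and $\E[X_t] = (1-t)m_0 + tm_1 = m_t$. Second, $\mathrm{Var}(X_t) = (1-t)^2\Sigma_0 + t^2\Sigma_1$, since the cross-covariance of $X_0$ and $X_1$ vanishes. Third, expanding bilinearly and again discarding the cross term, $\mathrm{Cov}\bigl(X_1 - X_0,\ (1-t)X_0 + tX_1\bigr) = -(1-t)\Sigma_0 + t\Sigma_1 = t\Sigma_1 - (1-t)\Sigma_0$. Substituting these into the conditioning formula gives precisely~\eqref{eq:gaussianvel}.

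There is essentially no hard step here; the only two points worth a sentence are (i) recording that $(X_1 - X_0,\ X_t)$ is Gaussian as a linear image of $(X_0, X_1)$, so the linear conditional-mean formula genuinely applies, and (ii) noting that the stated invertibility assumption is what permits writing $\mathrm{Var}(X_t)^{-1}$ (the degenerate case would require a pseudoinverse restricted to the range of $X_t$, which the lemma deliberately avoids). As a consistency check one can evaluate the endpoints: at $t = 0$ the expression collapses to $v_0(z) = m_1 - z = \E[X_1 - X_0 \mid X_0 = z]$ and at $t = 1$ to $v_1(z) = z - m_0 = \E[X_1 - X_0 \mid X_1 = z]$, matching direct conditioning.
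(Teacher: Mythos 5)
Your proof is correct and follows essentially the same route as the paper's: form the jointly Gaussian pair $(X_1 - X_0,\, X_t)$ as a linear image of $(X_0, X_1)$, compute the relevant means and (cross-)covariances using independence, and apply the standard Gaussian conditional-mean formula. The paper displays the joint covariance matrix explicitly while you compute the needed blocks directly, but this is a presentational difference, not a different argument.
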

It is interesting to note that it is possible to construct a pair of singular covariance matrices $\Sigma_0$ and $\Sigma_1$ such that the velocity field is well-defined for all $t\in(0, 1)$, but not at $t = 0, 1$. This shows that even in the case of Gaussian distributions, the map $t\mapsto v_t(z)$ need not be continuous. Note that with singular covariance matrices, $\mu_0$ and $\mu_1$ do not exhibit Lebesgue densities.

\begin{figure}[ht!]
    \centering
    \begin{minipage}{0.48\textwidth}
        \centering
        \includegraphics[width=\textwidth]{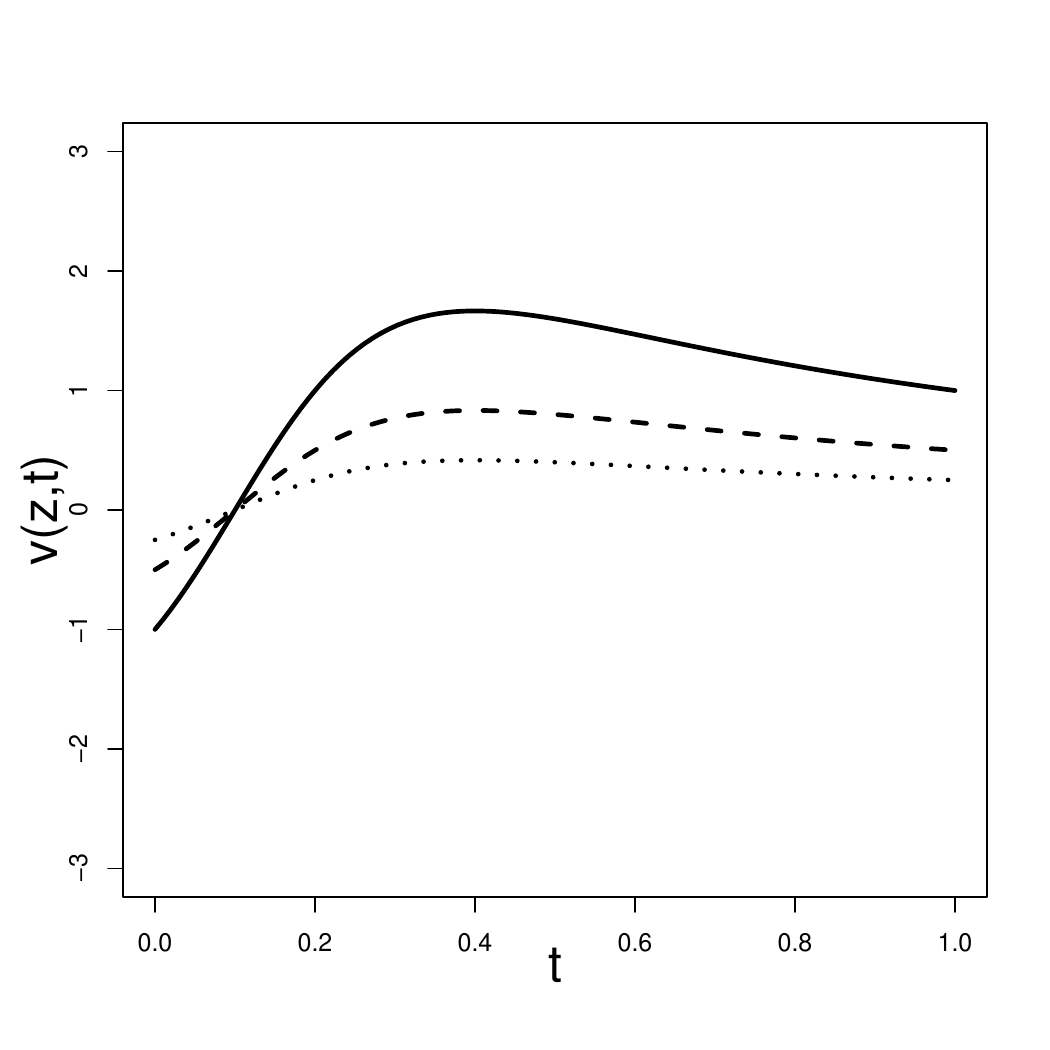}
    \end{minipage}\hfill
    \begin{minipage}{0.48\textwidth}
        \centering
        \includegraphics[width=\textwidth]{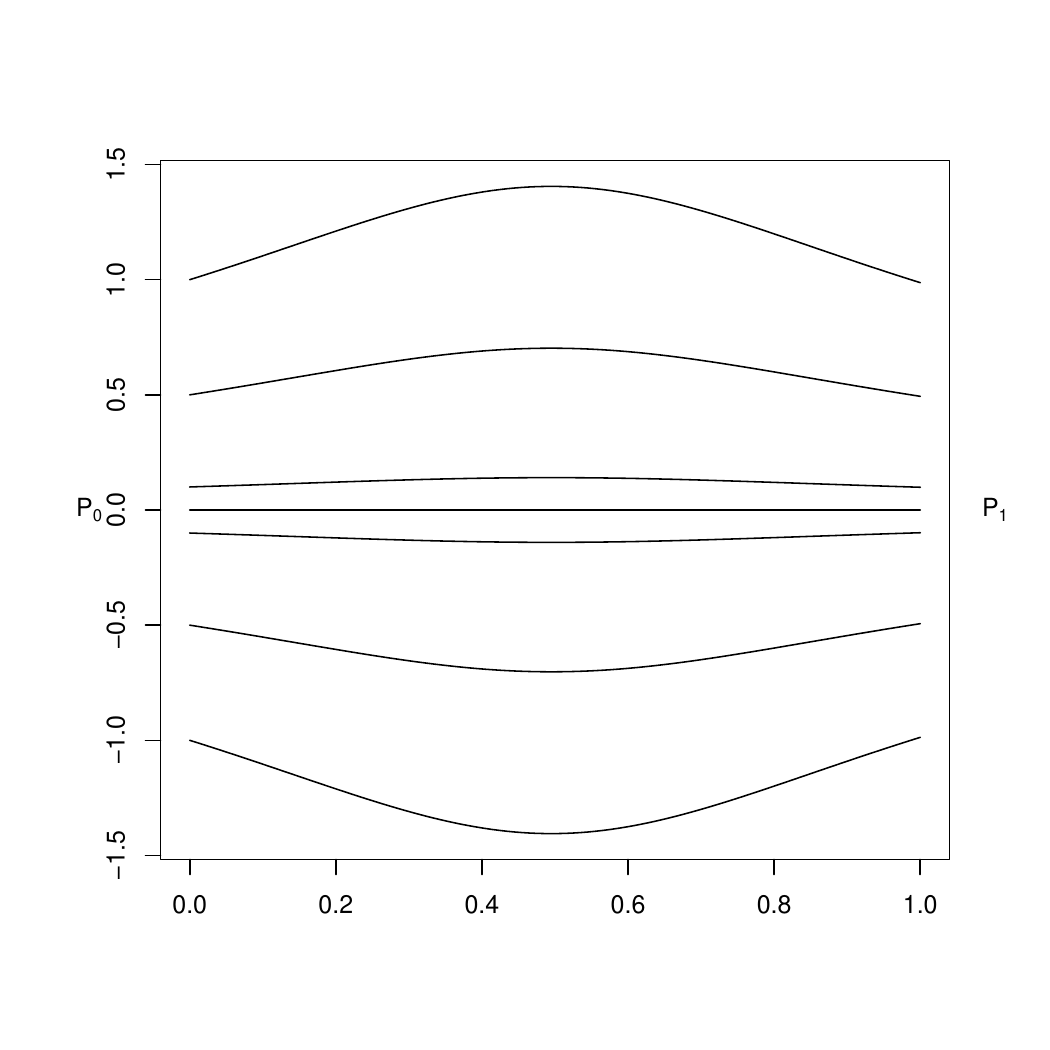}
    \end{minipage}
    \caption{The figure on the left shows the velocity for the rectified flow when $z = 1$ (solid), $z = 1/2$ (dotted), and $z = 1/4$ (dashed). The figure on the right shows the paths $z(t)$
when $\mu_0 = \mu_1 = N(0,1)$.
The resulting map $R(x)$ is the identity map
but the paths are nonlinear.
The optimal transport path is simply constant.}
    \label{fig:side-by-side}
\end{figure}
Given the explicit form for the velocity field, one can obtain a closed-form expression for the rectified flow starting from an independent coupling. This allows us to compute the iterated rectified flow and also find a closed-form expression for the infinite-iteration rectified flow. Interestingly, in this case of Gaussian-to-Gaussian transport maps, the rectified flow does not change after the first iteration. This is the content of the following result.

\begin{proposition}\label{prop:flow-from-Gaussian-to-Gaussian}
Consider the setting of Lemma~\ref{lemma::gaussianvel}. The rectified map between $X_0$ and $X_1$ (starting from an independent coupling) is 
\begin{equation}
\label{eq:rectgaussian}
R(x)=m_1+\Sigma_0^{1/2}\left(\Sigma_0^{-1/2}\Sigma_1 \Sigma_0^{-1/2}\right)^{1/2}\Sigma_0^{-1/2}(x-m_0),
\end{equation}
where $\Sigma^{1/2}$ denotes the positive square root matrix of the positive semidefinite symmetric matrix $\Sigma$, i.e., $\Sigma^{1/2}=PD^{1/2}P^\top$ where $PDP^\top$ is the eigendecomposition of $\Sigma$.
Moreover, the map $R(\cdot)$ does not change upon iteration, i.e., the rectified flow map starting from the coupling $(X_0, R(X_0))$ is $R(\cdot)$.
\end{proposition}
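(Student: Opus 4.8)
The plan is to exploit that the velocity field of Lemma~\ref{lemma::gaussianvel} is affine in $z$, so the flow must be affine in $x$. Write $S_t := t^2\Sigma_1 + (1-t)^2\Sigma_0$ and $M_t := (t\Sigma_1 - (1-t)\Sigma_0)S_t^{-1} = \tfrac12 \dot S_t S_t^{-1}$, so that $v_t(z) = (m_1 - m_0) + M_t(z - m_t)$. Since $\Sigma_0,\Sigma_1\succ 0$ (invertibility of $t^2\Sigma_1+(1-t)^2\Sigma_0$ at $t=0,1$ is assumed, and covariances are PSD), $S_t\succ 0$ and $M_t$ is continuous on $[0,1]$, so $v_t$ is globally Lipschitz in $z$ uniformly over $t\in[0,1]$ and~\eqref{eq:rectified-flow-ODE} has a unique solution. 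Positing $\mathfrak{R}(t,x) = m_t + A_t(x-m_0)$ reduces~\eqref{eq:rectified-flow-ODE} to the linear matrix ODE $\dot A_t = M_t A_t$, $A_0 = I$; its unique solution, substituted back, is readily checked to solve~\eqref{eq:rectified-flow-ODE}, hence equals $\mathfrak{R}$ by uniqueness, and $R(x) = m_1 + A_1(x-m_0)$. It remains to identify $A_1$.

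The main obstacle is that $\Sigma_0$ and $\Sigma_1$ need not commute, so the tempting guess $A_t = S_t^{1/2}\Sigma_0^{-1/2}$ fails: it would require $\tfrac{d}{dt}S_t^{1/2} = \tfrac12\dot S_t S_t^{-1/2}$, which holds only when $S_t$ and $\dot S_t$ commute. The fix is to whiten by $\Sigma_0^{1/2}$. Set $C := \Sigma_0^{-1/2}\Sigma_1\Sigma_0^{-1/2}\succ 0$ and $W_t := t^2 C + (1-t)^2 I$, so $S_t = \Sigma_0^{1/2}W_t\Sigma_0^{1/2}$ and $S_t^{-1}\Sigma_0^{1/2} = \Sigma_0^{-1/2}W_t^{-1}$. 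Writing $\tilde{A}_t := \Sigma_0^{-1/2}A_t\Sigma_0^{1/2}$ (so $\tilde{A}_0 = I$), the ODE becomes $\dot{\tilde{A}}_t = \tfrac12(\dot W_t W_t^{-1})\tilde{A}_t$; now $W_t$ and $\dot W_t = 2tC - 2(1-t)I$ are both polynomials in $C$ and hence commute, so $\dot W_t W_t^{-1} = \tfrac{d}{dt}\log W_t$ and, using $W_0 = I$, $\tilde{A}_t = W_t^{1/2}$. Undoing the whitening gives $A_t = \Sigma_0^{1/2}W_t^{1/2}\Sigma_0^{-1/2}$; at $t=1$, $W_1 = C$, which is exactly~\eqref{eq:rectgaussian}. (Alternatively one can skip the change of variables and simply verify directly that $A_t = \Sigma_0^{1/2}W_t^{1/2}\Sigma_0^{-1/2}$ satisfies $\dot A_t = M_t A_t$ and $A_0 = I$.)

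For the idempotency claim, note that $R$ is affine with linear part $A_1 = \Sigma_0^{1/2}C^{1/2}\Sigma_0^{-1/2}$, which is similar to the positive definite matrix $C^{1/2}$ and hence has positive real eigenvalues; consequently $B_t := (1-t)I + tA_1$ is invertible for every $t\in[0,1]$. For the new coupling $(X_0, R(X_0))$ we have $X_t' := (1-t)X_0 + tR(X_0) = m_t + B_t(X_0 - m_0)$, and invertibility of $B_t$ means conditioning on $X_t' = z$ pins down $X_0 - m_0 = B_t^{-1}(z - m_t)$; since $R(X_0) - X_0 = (m_1 - m_0) + (A_1 - I)(X_0 - m_0)$, the new velocity field is $\tilde v_t(z) = \E[R(X_0) - X_0 \mid X_t' = z] = (m_1 - m_0) + (A_1 - I)B_t^{-1}(z - m_t)$. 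Running the affine ansatz $\mathfrak{R}(t,x) - m_t = G_t(x - m_0)$ through this ODE yields $\dot G_t = (A_1 - I)B_t^{-1}G_t$, $G_0 = I$, and one checks $G_t = B_t$ solves it since $\dot B_t = A_1 - I = (A_1 - I)B_t^{-1}B_t$; by uniqueness $G_t = B_t$, so $G_1 = B_1 = A_1$ and the rectified map of $(X_0, R(X_0))$ is again $x\mapsto m_1 + A_1(x - m_0) = R(x)$. Iterating, every subsequent rectification returns $R$, which in particular is the infinite-iteration limit referenced in the text.
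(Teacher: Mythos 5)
Your proof is correct and follows essentially the same strategy as the paper: whiten by $\Sigma_0^{1/2}$, exploit that after whitening the time-dependent matrices commute, and solve the resulting linear matrix ODE; then for the iteration step, observe that the new coupling is deterministic and invertible, re-derive the velocity field, and solve again. The cosmetic differences — you keep the ODE in matrix form and note that $W_t$ and $\dot W_t$ commute rather than passing to the eigendecomposition of $C$, and you verify $B_t$ invertibility via the positive spectrum of $A_1 = \Sigma_0^{1/2}C^{1/2}\Sigma_0^{-1/2}$ rather than citing the paper's separate Lemma~\ref{lemma::gaussianvel2} — are slight streamlinings, not a different route.
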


\begin{remark}
The resulting map is, in general, different from the well-known optimal transport between Gaussians given by \citep{peyre2019computational}: 
$$
T_0(x)=m_1+\Sigma_0^{-1/2}\left(\Sigma_0^{1/2}\Sigma_1\Sigma_0^{1/2}\right)^{1/2}
\Sigma_0^{-1/2}\left(x-m_0\right).
$$
The rectified map and the optimal transport map are equivalent only if $\Sigma_0$ and $\Sigma_1$ commute (i.e., simultaneously diagonalizable). This fact is also well-known; see~\citet[Theorem 2(ii)]{hertrich2025relation}.
As a simple example,
Figure \ref{fig:side-by-side} (left)
shows
$v(t,z)$
when
$X_0 \sim N(0,1)$
and $X_1 \sim N(0,9)$.
\end{remark}

\begin{remark}
It is interesting to see what happens when
$X_0\overset{d}{=} X_1$. This is called self-transport.
\cite{mordant2024entropic} considered this
in the case of entropic transport.
If $X_0\overset{d}{=} X_1\sim N(m, \Sigma)$, from Proposition~\ref{prop:flow-from-Gaussian-to-Gaussian}, it follows that
$R(x) = x$ is the identity map,
but the path $t\mapsto Z_t$ (the solution of~\eqref{eq:rectified-flow-ODE}) is nonlinear (Figure~\ref{fig:side-by-side}, right).
\end{remark}

\paragraph{$\mu_0$ and $\mu_1$ are mixture of Gaussians.}

Another special case 
where $v_t(z)$ has a simple closed form
is the mixture of Gaussians. The following result is also known; see~\citet[Theorem 4]{hertrich2025relation}.

\begin{lemma}
\label{lemma::mixture}
Suppose that
$X_0 \sim \sum_{i=1}^{I_0}\pi^i_{0}N(m^i_0,\Sigma^i_{0})$
and
$X_1 \sim \sum_{j=1}^{I_1}\pi^j_{1}N(m^j_{1},\Sigma^j_{1})$
where, $X_0$ and $X_1$ are independent of each other.
Then
\[
    v_t(z) = \frac{\sum_{i,j}^{I_0,I_1}\pi^i_{0}\pi^j_{1}v^{i,j}_t(z)\tau^{i,j}_t(z)}{\sum_{i,j}^{I_0,I_1}\pi^i_{0}\pi^j_{1}\tau^{i,j}_t(z)}
\]
where $v^{i,j}_t(z)$ is the velocity field between $N(m^i_0,\Sigma^i_{0})$ and $N(m^j_{1},\Sigma^j_{1})$ that can be computed by Equation \eqref{eq:gaussianvel}, and
$\tau^{i,j}_t(z) = N(z;m^{i,j}_t,\Sigma^{i,j}_t)$ where $m^{i,j}_t=tm_1^j+(1-t)m_0^i$ and $\Sigma_t^{i,j}=t^2\Sigma_1^j+(1-t)^2\Sigma_0^i$.
\end{lemma}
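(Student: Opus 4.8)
The plan is to exploit the ratio representation of the velocity field from Lemma~\ref{lem:representation-of-velocity-field}, namely $v_t(z) = f_t(z)/p_t(z)$ with $f_t$ and $p_t$ as in~\eqref{eq:notation-numerator-denominator-velocity-field}, together with the observation that both $f_t$ and $p_t$ are \emph{bilinear} in the pair of densities $(p_0,p_1)$, and hence decompose additively over the mixture components.

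First I would substitute the mixture densities $p_0 = \sum_{i=1}^{I_0}\pi_0^i N(\,\cdot\,; m_0^i,\Sigma_0^i)$ and $p_1 = \sum_{j=1}^{I_1}\pi_1^j N(\,\cdot\,; m_1^j,\Sigma_1^j)$ into~\eqref{eq:notation-numerator-denominator-velocity-field}. Expanding the product $p_0(z-t\delta)p_1(z+(1-t)\delta)$ and interchanging the finite sums with the integral over $\delta$ (justified by nonnegativity for $p_t$ and by integrability of each summand for $f_t$) gives $p_t(z) = \sum_{i,j}\pi_0^i\pi_1^j\, p_t^{i,j}(z)$ and $f_t(z) = \sum_{i,j}\pi_0^i\pi_1^j\, f_t^{i,j}(z)$, where $p_t^{i,j}$ and $f_t^{i,j}$ are the quantities in~\eqref{eq:notation-numerator-denominator-velocity-field} built from the single Gaussian pair $N(m_0^i,\Sigma_0^i)$, $N(m_1^j,\Sigma_1^j)$.

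Next I would identify the component pieces. Since $(1-t)X_0 + tX_1$ is Gaussian when $X_0\sim N(m_0^i,\Sigma_0^i)$ and $X_1\sim N(m_1^j,\Sigma_1^j)$ are independent, with mean $m_t^{i,j} = (1-t)m_0^i + tm_1^j$ and covariance $\Sigma_t^{i,j} = (1-t)^2\Sigma_0^i + t^2\Sigma_1^j$, its density is $p_t^{i,j}(z) = N(z; m_t^{i,j},\Sigma_t^{i,j}) = \tau_t^{i,j}(z)$, which is strictly positive under the invertibility hypothesis of Lemma~\ref{lemma::gaussianvel} applied to each pair. Applying~\eqref{eq:veldelta} to that single Gaussian pair yields $f_t^{i,j}(z) = v_t^{i,j}(z)\,p_t^{i,j}(z) = v_t^{i,j}(z)\,\tau_t^{i,j}(z)$, and Lemma~\ref{lemma::gaussianvel} supplies the closed form~\eqref{eq:gaussianvel} for $v_t^{i,j}$. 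Assembling, $v_t(z) = f_t(z)/p_t(z)$ equals the claimed ratio, valid for every $z$ because the denominator is a positive combination of strictly positive Gaussian densities.

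An equivalent route, which I would mention as a cross-check, avoids the integral representation altogether: introduce latent component labels $A\in\{1,\dots,I_0\}$ for $X_0$ and $B\in\{1,\dots,I_1\}$ for $X_1$, and use the tower property $v_t(z) = \sum_{i,j}\E[X_1-X_0\mid X_t=z,\,A=i,\,B=j]\,\Pr(A=i,\,B=j\mid X_t=z)$. The conditional expectation is exactly $v_t^{i,j}(z)$, and Bayes' rule gives $\Pr(A=i,\,B=j\mid X_t=z)\propto \pi_0^i\pi_1^j\,\tau_t^{i,j}(z)$, reproducing the same formula. The only points needing a little care are the sum--integral interchange and the sub-density identity $f_t^{i,j} = v_t^{i,j}\,p_t^{i,j}$; both are routine, so I do not anticipate a genuine obstacle here.
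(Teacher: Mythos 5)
Your proposal is correct and follows essentially the same route as the paper: expand the bilinear expressions $f_t$ and $p_t$ over the mixture components, interchange finite sums with the $\delta$-integral, and recognize each component pair as a single-Gaussian velocity and Gaussian weight. Your observation that $p_t^{i,j}$ is just the density of $(1-t)X_0^i + tX_1^j$ is a tidier way to obtain $\tau_t^{i,j}(z)=N(z;m_t^{i,j},\Sigma_t^{i,j})$ than the change-of-variables computation in the paper, but it is the same argument in substance.
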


While the rectified flow itself cannot be obtained in a closed-form from Lemma~\ref{lemma::mixture}, it provides a simple method to estimate the velocity field in practice. The parameters of the mixtures
can be estimated using the EM algorithm, and subsequently, the velocity field can be estimated using a simple plug-in.
This implies that estimating the rectified flow map between two mixtures of Gaussians can be computationally efficient, unlike the optimal transport map.
Again, this is in sharp contrast to
optimal transport, where no closed-form
expression is available for
mixtures of Gaussians.
\begin{wrapfigure}{r}{0.3\textwidth} 
  \centering
  \includegraphics[width=0.3\textwidth]{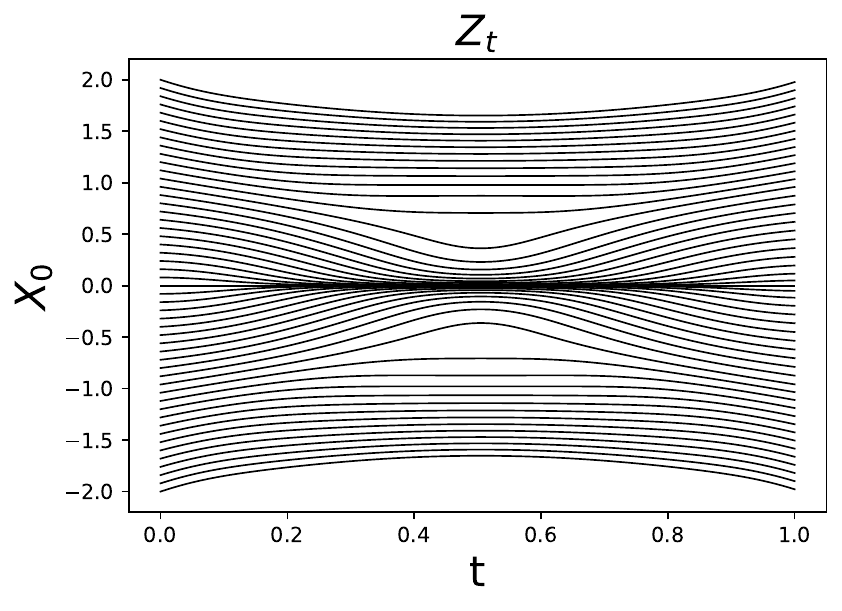} 
  \caption{Plot of the rectified flow map $x\mapsto R(x)$ in transporting $X_0\sim 0.5 N(1, 1) + 0.5 N(-1, 1)$ to itself.}
  \label{fig::mixture}
\end{wrapfigure}
In Fig. \ref{fig::mixture} we show the trajectories $Z_t$ for different starting points $X_0$ when transporting from the mixture $0.5N(1,1)+0.5N(-1,1)$ to itself.

\paragraph{$\mu_0$ and $\mu_1$ are Uniform.}

Another special case that provides some interesting insight
is the case where $\mu_0 = \mu_1 = \mathrm{Unif}(0,1)$.
It is easy to see that $p_0(z - tx)p_1(z + (1 - t)x) > 0$ if and only if $x$ lies in $S_t(z)$ where
$S_t(z) = [a_t(z),b_t(z)],$
with
\begin{align*}
a_t(z) &= \max\Biggl\{\frac{z-1}{t}, - \frac{z}{1-t} \Biggr\},\ \ \ 
b_t(z) = \min\Biggl\{\frac{z}{t}, \frac{1-z}{1-t} \Biggr\}
\end{align*}
Given that the densities are constant on $[0, 1]$, we get that $v_t(z) = (a_t(z)+b_t(z))/2.$
In particular, if $t\in(0, 1/2]$, we have
\begin{equation}\label{eq:uniform-velocity-field}
v_t(z) {=} \frac{1}{2t(1-t)}\times
\begin{cases}
z(1-2t), &\mbox{if }z \le t,\\
(1-2z)t, &\mbox{if }t \le z \le 1 - t,\\
(1-z)(2t-1), &\mbox{if }z \ge 1 - t.
\end{cases}
\end{equation}
A plot of $v_t(z)$ for a few values of $t$ is given in Figure 
\ref{fig::uniform_lipschitz}. While not represented in these plots, it is noteworthy that $(t, z)\mapsto v(t, z)$ is not a continuous function at $\{0, 1\}^2$. In fact, 
\[
\lim_{z\to0}\lim_{t\to0} v_t(z) ~\neq~ \lim_{t\to0}\lim_{z\to0} v_t(z). 
\]
Take, for example, $z = 0$ and then $t\to0$, and $z = t$ and then $t \to 0$.
Moreover, from the figures, it is clear that $z\mapsto v_t(z)$ is not differentiable everywhere
and, in fact, is not even uniformly (in $t$) Lipschitz with the Lipschitz constant tending to infinity as $t(1-t)\to 0$.
This example shows that one cannot expect smoothness of $z\mapsto v_t(z)$ when the support $\Omega$ is bounded.
It is easy to see that for $t\in[0, 1/2]$,
\begin{equation}\label{eq:uniform-Lipschitz-cond}
|v_t(z_1) - v_t(z_2)| \le a(t)\kappa(|z_1 - z_2|),\quad\mbox{where}\quad a(t) = \frac{1}{2t(1-t)},\; \kappa(u) = u.
\end{equation}
This implies that the Lipschitz constant diverges as $t(1-t)\to0$.

\begin{figure}[H]
\begin{center}
\includegraphics[width=0.8\textwidth]{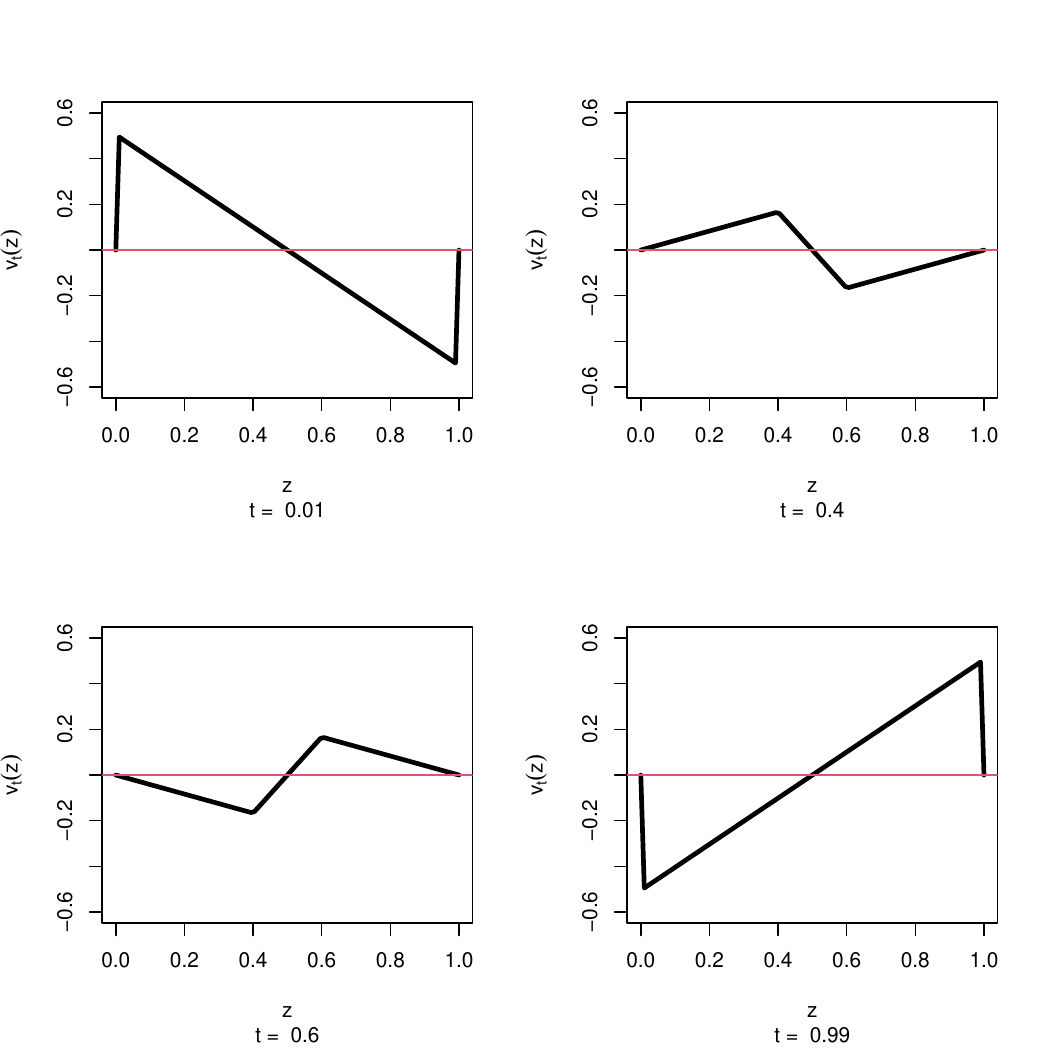}
\end{center}
\caption{\em
A plot of $v_t(z)$ versus $z$ for four values of $t$
in the case where $\mu_0 = \mu_1 = \mathrm{Unif}[0,1]$.
We see that $v_t(z)$ is piecewise smooth.
As $t$ approaches 0 and 1, the Lipschitz constant
approaches infinity near the boundary.}
\label{fig::uniform_lipschitz}
\end{figure}

\subsection{Estimating the Velocity}
\label{section::estimation}

Now we turn to the nonparametric estimation
of $v_t(z)\equiv v(t, z)$.
Throughout this subsection, we assume access to independent observations
$X_{01},\ldots, X_{0n}\sim p_0$
and
$X_{11},\ldots, X_{1n}\sim p_1$.
The implicit assumption that the number of observations from $p_0$ is the same as those from $p_1$ is not significant and is only made for notational ease.
We assume that the support of $p_0$ and $p_1$ is $\Omega\subseteq\mathbb{R}^d$.
We consider
density-based,
regression-based, and semiparametric estimation.
In the case where $\Omega$ is compact,
the density based approach is superior to regression.
Examination of the regularity needed for these results
is deferred until later sections.

\subsubsection{Density-Based Estimator}
\label{section::density}

Based on the second representation in Lemma~\ref{lem:representation-of-velocity-field},
we define a density-based estimator as follows.
Define
\begin{equation}\label{eq:product-of-densities}
r_t(z, \delta) = p_0(z - t\delta)p_1(z + (1 - t)\delta).
\end{equation}
Let
$\hat p_0$ and $\hat p_1$
be estimators
of $p_0$ and $p_1$ and
$$
\hat r_t(z,\delta) = \hat p_0(z-t\delta)\hat p_1(z+(1-t)\delta).
$$
Then the density-based estimator of the velocity field is given by 
\begin{equation}\label{eq:density-based-velocity-estimator}
\hat v_t^{\mathrm{den}}(z) = \frac{\hat f_t(z)}{\hat p_t(z)},
\end{equation}
where
$$
\hat f_t(z) = \int_{\mathbb{R}^d} \delta \hat p_0(z-t\delta)\hat p_1(z+(1-t)\delta)d\delta
$$
and
$$
\hat p_t(z) = \int_{\mathbb{R}^d} \hat p_0(z-t\delta)\hat p_1(z+(1-t)\delta)d\delta.
$$
If the support of the densities, $\Omega$, is known, then it is important to use a density estimator that is not subject to boundary bias; see
\cite{bouezmarni2010, bertin2025new, muller1999}
and references therein. 
Because we require
some specific properties,
we propose a 
new estimator in~\eqref{eq:new-boundary-corrected-estimators} that works for arbitrary domains.

In Section \ref{sec:bounded},
we show that this estimate
has a certain robustness to the lack of smoothness of $z\mapsto v_t(z)$.
Specifically, if $\Omega$ is bounded, then
$v_t(z)$ is not even Lipschitz continuous, 
but the rate of convergence of $\widehat{v}_t^{\mathrm{den}}(z)$ depends on the smoothness of $p_0, p_1$.
This is in contrast to the regression approach (discussed below)
which relies on the smoothness of $z\mapsto v_t(z)$.

The following is a simple result that demonstrates the consistency of $\hat{f}_t(z)$ and $\hat{p}_t(z)$ under some primitive consistency conditions on $\hat{p}_0$ and $\hat{p}_1$. Note that consistency is implied only for $t\in(0, 1)$ but not for $t$ such that $t(1-t)\to0$. This result does not make any assumptions on the support of $p_0, p_1$. In Sections~\ref{sec:unbounded} and~\ref{sec:bounded}, we provide refined rates of convergence for the velocity field estimate, under some assumptions on the support. 

Define
\begin{align*}
    f_t(z) &= \int_{\mathbb{R}^d} \delta p_0(z - t\delta)p_1(z + (1 - t)\delta)d\delta,\\
    p_t(z) &= \int_{\mathbb{R}^d} p_0(z - t\delta)p_1(z + (1 - t)\delta)d\delta,\\
    \|\widehat{p}_t - p_t\|_2 &= \left(\int_{\mathbb{R}^d} |\hat{p}_t(\delta) - p_t(\delta)|^2d\delta\right)^{1/2},\quad t\in[0, 1],\\
    \bar{\zeta}_2(\hat{p}_t, p_t) &= \sup\left\{\left|\int_{\mathbb{R}^d} g(\delta)(\hat{p}_t(\delta) - p_t(\delta))d\delta\right|:\, g:\mathbb{R}^d\to\mathbb{R}\mbox{ is a quadratic function}\right\},\quad t\in[0, 1].
\end{align*}
Note that $p_t(z) = p_0(z)$ if $t = 0$ and $p_t(z)=p_1(z)$ if $t = 1$. In fact, $p_t(\cdot)$ is the Lebesgue density of $X_t = (1 - t)X_0 + tX_1$ for independent $X_0, X_1$. In addition, we note that $\bar{\zeta}_2(\widehat{p}_t, p_t)$ measures how well the first two moments of $p_t$ can be estimated by those of $\widehat{p}_t$.
\begin{proposition}\label{prop:consistency-of-density-estimator}
    Fix any $t\in[0, 1]$ and any $z\in\mathbb{R}^d$, then 
    {\small\[
    \|\hat{f}_t(z) - f_t(z)\| \le \frac{\|\hat{p}_1 - p_1\|_2\|p_0\|_{\infty}(\mathbb{E}\|z - X_0\|^2)^{1/2} + \|\hat{p}_0 - p_0\|_2\|\hat{p}_1\|_{\infty}(\mathbb{E}\|z - X_1\|^2 + \bar{\zeta}_2(\hat{p}_1, p_1))^{1/2}}{t^{d/2}(1 - t)^{d/2}}. 
    \]}
    and
    \[
    |\hat{p}_t(z) - p_t(z)| \le \frac{\|p_0\|_{\infty}\|\hat{p}_1 - p_1\|_2 + \|\hat{p}_1\|_{\infty}\|\hat{p}_0 - p_0\|_2}{t^{d/2}(1 - t)^{d/2}}.
    \]
\end{proposition}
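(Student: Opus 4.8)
Both displays come from one mechanism. The starting point is the elementary product decomposition, inserted pointwise under the integrals that define $\hat{f}_t(z)-f_t(z)$ and $\hat{p}_t(z)-p_t(z)$:
\[
\hat{p}_0(z-t\delta)\hat{p}_1(z+(1-t)\delta)-p_0(z-t\delta)p_1(z+(1-t)\delta)=p_0(z-t\delta)\bigl(\hat{p}_1-p_1\bigr)(z+(1-t)\delta)+\hat{p}_1(z+(1-t)\delta)\bigl(\hat{p}_0-p_0\bigr)(z-t\delta).
\]
This splits each difference into a term weighted by the true density $p_0$ and carrying the $L^2$-error $\|\hat{p}_1-p_1\|_2$, and a term weighted by the estimated density $\hat{p}_1$ and carrying $\|\hat{p}_0-p_0\|_2$. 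For the $\hat{p}_t$ bound there is no extra $\delta$ factor, so I would handle the first term by Cauchy--Schwarz against the nonnegative measure $p_0(z-t\delta)\,d\delta$, whose total mass is $t^{-d}$ (substitute $x=z-t\delta$ and use $\int p_0=1$), then bound the surviving density factor by $\|p_0\|_\infty$ and substitute $y=z+(1-t)\delta$ to extract $\|\hat{p}_1-p_1\|_2$ together with a factor $(1-t)^{-d}$; the second term is identical with $p_0$ replaced by the bona fide density $\hat{p}_1$. Collecting the powers of $t$ and $1-t$ gives the $\hat{p}_t$ bound.

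For $\hat{f}_t(z)-f_t(z)$ the factor $\delta$ survives the Cauchy--Schwarz step, and in place of a total-mass factor one is left with a genuine second-moment integral. In the $p_0$-weighted term, the substitution $x=z-t\delta$ converts $\int\|\delta\|^2 p_0(z-t\delta)\,d\delta$ (up to powers of $t$) into $\mathbb{E}\|z-X_0\|^2=\int\|z-x\|^2 p_0(x)\,dx$, producing the factor $\|p_0\|_\infty(\mathbb{E}\|z-X_0\|^2)^{1/2}\|\hat{p}_1-p_1\|_2$. In the other term the weight is the \emph{estimated} density, so the corresponding substitution $y=z+(1-t)\delta$ leaves $\int\|y-z\|^2\hat{p}_1(y)\,dy$ rather than a moment of $p_1$; the key observation is that $y\mapsto\|y-z\|^2$ is a quadratic function of $y$, so by the very definition of $\bar{\zeta}_2$,
\[
\int\|y-z\|^2\hat{p}_1(y)\,dy\;\le\;\int\|y-z\|^2 p_1(y)\,dy+\bar{\zeta}_2(\hat{p}_1,p_1)\;=\;\mathbb{E}\|z-X_1\|^2+\bar{\zeta}_2(\hat{p}_1,p_1),
\]
which is precisely the factor appearing in the second summand. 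Bounding $\hat{p}_1\le\|\hat{p}_1\|_\infty$ where needed and assembling the Jacobian factors yields the stated inequality. Throughout one uses only that $\hat{p}_0,\hat{p}_1$ are nonnegative and integrate to one and that $\mathbb{E}\|z-X_0\|^2,\mathbb{E}\|z-X_1\|^2<\infty$; no assumption on $\Omega$ or on the smoothness of $p_0,p_1$ enters, and the $t^{d/2}(1-t)^{d/2}$ in the denominator is exactly what makes the bound vacuous as $t(1-t)\to 0$.

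The Cauchy--Schwarz-plus-change-of-variables bookkeeping for the four terms is routine. I expect the only non-mechanical point --- and the main thing to get right in the write-up --- to be the treatment of the term in $\hat{f}_t$ whose integration weight is the estimated density $\hat{p}_1$: its second moment cannot be controlled by $\|p_0\|_\infty$ and $\|p_1\|_\infty$ alone, and one must notice that $\bar{\zeta}_2$ was introduced precisely to dominate the error in estimating quadratic (hence second-moment) functionals, so that $\int\|\cdot-z\|^2\hat{p}_1$ stays within $\bar{\zeta}_2(\hat{p}_1,p_1)$ of $\mathbb{E}\|z-X_1\|^2$.
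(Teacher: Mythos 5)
Your proof is correct and follows the same route as the paper: the same product decomposition $\hat p_0\hat p_1 - p_0 p_1 = p_0(\hat p_1-p_1) + \hat p_1(\hat p_0-p_0)$, Cauchy--Schwarz, changes of variable to pull out the powers of $t$ and $1-t$, and the crucial observation that $\int\|y-z\|^2\hat p_1(y)\,dy\le\mathbb{E}\|z-X_1\|^2+\bar\zeta_2(\hat p_1,p_1)$ because $\|y-z\|^2$ is a quadratic function. The only cosmetic difference is that you apply Cauchy--Schwarz against the measure $p_0\,d\delta$ (or $\hat p_1\,d\delta$) while the paper uses a plain $L^2$--$L^2$ split with $\delta p_0$ (or $\delta\hat p_1$) in one factor; both routes reach the same bound, and you may want to notice that both mechanisms actually produce $\|p_0\|_\infty^{1/2}$ and $\|\hat p_1\|_\infty^{1/2}$ rather than the first powers printed in the proposition (and quoted at the end of your sketch) --- a small slip that the paper's own proof also commits.
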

The upper bounds presented in Proposition~\ref{prop:consistency-of-density-estimator} are asymmetric in $(p_0, \hat{p}_0)$ and $(p_1, \hat{p}_1)$, and one can take the minimum of the two bounds obtained by swapping the roles of $(p_0, \hat{p}_0)$ and $(p_1, \hat{p}_1)$.
Proposition~\ref{prop:consistency-of-density-estimator} implies if 
\begin{equation}\label{eq:density-uniform-boundedness}
\max\{\|p_0\|_{\infty}, \|p_1\|_{\infty}, \|\hat{p}_0\|_{\infty}, \|\hat{p}_1\|_{\infty}\} = O_p(1),
\end{equation}
and $\max\{\mathbb{E}[\|z - X_0\|^2], \mathbb{E}[\|z - X_1\|^2]\} = O(1)$, then 
\[
\|\hat{f}_t(z) - f_t(z)\| = O_p\left(\frac{\|\hat{p}_1 - p_1\|_2 + \|\hat{p}_0 - p_0\|_2}{t^{d/2}(1 - t)^{d/2}}\right),
\]
and
\[
\sup_{z\in\mathbb{R}^d}\,|\hat{p}_t(z) - p_t(z)| = O_p\left(\frac{\|\hat{p}_1 - p_1\|_2 + \|\hat{p}_0 - p_0\|_2}{t^{d/2}(1 - t)^{d/2}}\right).
\]
Therefore, assuming $L_2$-consistency of $\hat{p}_1$ to $p_1$ and $\hat{p}_0$ to $p_0$, $\hat{f}_t(z)$ and $\hat{p}_t(z)$ are consistent for $f_t(z)$ and $p_t(z)$, respectively. Finally, an application of Slutsky's theorem implies consistency of $\hat{v}_t^{\mathrm{den}}(z)$ for all $z$ such that $p_t(z) > 0$ and all $t\in(0, 1)$. The consistency is non-uniform in $t\in(0, 1)$. Formally, we have
\[
\|\hat{v}_t^{\mathrm{den}}(z) - v_t(z)\| \le \frac{\|\hat{f}_t(z) - f_t(z)\|}{p_t(z) - \|\hat{p}_t(z) - p_t(z)\|_{\infty}} + \frac{\|f_t(z)\|}{p_t(z)(p_t(z) - \|\hat{p}_t - p_t\|_{\infty})}|\hat{p}_t(z) - p_t(z)|.
\]
Hence, for all $(t, z)$ such that $p_t(z) \ge 2\|\hat{p}_t - p_t\|_{\infty}$, we get
\begin{equation}\label{eq:rate-of-convergence-density-field}
\frac{\|\hat{v}_t^{\mathrm{den}}(z) - v_t(z)\|}{1 + \|v_t(z)\|} =  O_p\left(\frac{\|\hat{p}_1 - p_1\|_2 + \|\hat{p}_0 - p_0\|_2}{p_t(z)t^{d/2}(1 - t)^{d/2}}\right).
\end{equation}
Note that under assumption~\eqref{eq:density-uniform-boundedness}, $L_2$ consistency of the density estimators is implied by $L_1$ consistency. It is well-known that without any (smoothness) assumptions on the densities, $L_1$-consistent estimators can be constructed~\citep{devroye-density-estimation}.

The rate of convergence in~\eqref{eq:rate-of-convergence-density-field} also implies that $\hat{v}_t(\cdot)$ inherits the rate of convergence from each of the density estimators, no matter the smoothness of the velocity field. In particular, if $p_0$ and $p_1$ can be estimated at a parametric rate, that is, 
\begin{equation}\label{eq:parametric-rate}
\|\hat{p}_1 - p_1\|_2 = O_p(n^{-1/2})\quad\mbox{ and }\|\hat{p}_0 - p_0\|_2 = O_p(n^{-1/2}), 
\end{equation}
then the velocity field can be estimated (pointwise) at a parametric rate. Assumption~\eqref{eq:parametric-rate} holds, for example, if $p_0$ and $p_1$ are known to belong to a parametric model~\citep{dasgupta2012density}. In the non-parametric case, approximate parametric rates are possible with extreme smoothness on the densities $p_0, p_1$.  Note that Proposition~\ref{prop:consistency-of-density-estimator} does not make any assumption on the structure of density estimators $\hat{p}_0$ and $\hat{p}_1$. With no additional structural assumptions, one can use non-parametric density estimators such as the kernel density estimator or the $k$-nearest neighbor estimators. We note here that these estimators can adapt to the intrinsic volume dimension of the data~\citep{dasgupta2014optimal,kim2019uniformconvergenceratekernel,zhao2022analysis}. If some additional structure is assumed, then specialized density estimators can be used. Here are some examples of such an additional structure:
\begin{enumerate}
    \item If the densities $p_0$ and $p_1$ are assumed to be (homothetic) log-concave, $\hat{p}_0$ and $\hat{p}_1$ can be taken to be the non-parametric MLE~\citep{samworth2018recent,kubal2024log,xu2021high}. 
    \item If the densities $p_0$ and $p_1$ have depend only $x$ only through low-dimensional projections, then $\hat{p}_0$ and $\hat{p}_1$ can be taken to be the projection pursuit density estimators~\citep{friedman1984projection,vandermeulen2024dimension}.  
    \item If the densities $p_0$ and $p_1$ are expected to be close to a parametric family, then $\hat{p}_0$ and $\hat{p}_1$ can be taken to be quasi-MLEs with a non-parametic adjustment as in~\citep{hjort1995nonparametric,hjort1996locally}.
\end{enumerate}
\begin{remark}
    It is worth stressing here that Proposition~\ref{prop:consistency-of-density-estimator} does not require the underlying data $(X_{01}$, $\ldots$, $X_{0n})$ and $(X_{11}, \ldots, X_{1n})$ to be independent. Even if these two data vectors are dependent, one can construct estimators of $p_0$ and $p_1$ and use the velocity field estimator $\hat{v}_t^{\mathrm{den}}(\cdot)$. This means that even if the given data is not from an independent coupling, we can get a rectified flow map with an independent coupling by using $\hat{v}_t^{\mathrm{den}}(\cdot)$.
\end{remark}

\subsubsection{Regression-based estimator}

The expression
$v_t(z) = \E[ X_1-X_0|(1-t) X_0 + t X_1 = z]$
implies that we can estimate
$\hat v_t(z)$
by performing nonparametric regression for the mean function $\mathbb{E}(X_1 - X_0|X_t=z)$
where 
$X_t\equiv (1-t)X_0 + t X_1$.
This regression is non-standard
for two reasons.
First, the outcome $X_1 - X_0$
and the features $X_t$
are deterministic functions of the same
underlying variables $(X_0,X_1)$.
Second, the features $X_t$ are correlated for different values of $t$.

In the context of regression-based estimation of the velocity field, several estimators are possible. Firstly, following the logic of the Nadaraya-Watson regression estimator, we can approximate $\mathbb{E}[X_1 - X_0|X_t = z]$ with $\mathbb{E}[(X_1 - X_0) K_h(X_t - z)]/\mathbb{E}[K_h(X_t - z)]$ for a function $K_h(\cdot)$ that approximates a bump at zero. The numerator is the expected value of a two-sample second-order $U$-statistic kernel $(X_1 - X_0)K_h((1 - t)X_0 + tX_1 - z)$. Hence, given access to independent data vectors $(X_{01}, \ldots, X_{0n})$ and $(X_{11}, \ldots, X_{1n})$, we obtain our first regression based estimator
\begin{equation}\label{eq:U-statistics-regression-field}
\hat{v}_t^{\mathrm{reg0}}(z) = \frac{\sum_{i,j} (X_{1i} - X_{0j})K_h((1 - t)X_{0j} + tX_{1i} - z)}{\sum_{i,j} K_h((1 - t)X_{0j} + tX_{1i} - z)}.
\end{equation}
Here, $\sum_{i,j}$ is a summation over $i\in\{1, 2, \ldots, n\}$ and $j\in\{1, 2, \ldots, n\}$. If $z\mapsto v_t(z)$ is expected to have some structure encoded through the assumption that $v_t\in\mathcal{V}$ for a function class $\mathcal{V}$, then one can consider the following regression-based estimator of $v_t$:
\begin{equation}\label{eq:ERM-regression-field}
    \hat{v}_t^{\mathrm{reg1}}(z) = \argmin_{v\in\mathcal{V}}\, \sum_{i,j} (X_{1i} - X_{0j} - v((1 - t)X_{0j} + tX_{1i}))^2.
\end{equation}
For example, $\mathcal{V}$ can encode a parametric model, or a single-index model, or a multiple-index model, or a neural network, or even qualitative constraints such as convexity. 
In general, one can create the artificial (regression) data set $((1 - t)X_{0j} + tX_{1i},\, X_{1i} - X_{0j}), i,j\in\{1, 2, \ldots, n\}$ and apply arbitrary nonparametric regression techniques. If computing the regression estimator based on $n^2$ observations is computationally intensive, then one can compute an estimator using a random pairing of observations, e.g., $(X_{0i}, X_{1i}), 1\le i\le n$.
For example, the kernel regression estimator with one such pairing is
\begin{equation}\label{eq:kerreg}
\hat v_t^{\mathrm{reg2}}(z) = 
\dfrac{\sum_{i=1}^n (X_{1i}-X_{0i})
K_h\left(X_{ti}-z\right)}{\sum_{i=1}^n K_h\left( X_{ti}-z\right)}.
\end{equation}
Although some efficiency is lost, an advantage of $\hat{v}_t^{\mathrm{reg2}}$ over the others is that it is computed using $n$ independent observations. 
For a gain in efficiency, one can consider estimators computed from several such pairings and average them. 

Assuming $v_t(\cdot)$ is either smooth or close enough to $\mathcal{V}$, it is possible to get rates of convergence of the aforementioned estimators of $v_t$. Unfortunately, such results are of little practical value because the smoothness of $z\mapsto v_t(z)$ is a delicate issue, as already exemplified in Figure~\ref{fig::uniform_lipschitz}. In Section~\ref{sec:unbounded}, we study the regularity of $z\mapsto v_t(z)$ when $\mu_0$ and $\mu_1$ are strongly log-concave distributions, and provide rates of convergence for $\widehat{v}_t^{\mathrm{reg2}}.$

The above is not the only possible regression-based estimator.  We could, for example, use the fact that $v_t(z)=z/t - \mathbb{E}(X_0|X_t=z)/t$ and regress $X_0$ on $X_t$. This yields the analogue of $\hat{v}_t^{\mathrm{reg2}}$ as
$${\hat{v}}_t^{\mathrm{reg3}}(z)=\frac{z}{t}-\frac{1}{t}\dfrac{\sum_{i=1}^nX_{0i} K_{h}\left(tX_{1i}+(1-t)X_{0i}-z\right)}{\sum_{i=1}^nK_{h}\left(tX_{1i}+(1-t)X_{0i}-z\right)}.$$

\paragraph{Connection between density- and regression-based estimators.}
Recall that the denominator of $v_t(z)$ is
\begin{equation}\label{eq:representation-of-density-of-X_t}
p_t(z) =
\int p_0(z-t\delta)p_1(z+(1-t)\delta) d \delta =
\frac{1}{t^d}\int p_1\left(\frac{z-x}{t}+x\right)p_0(x) dx.
\end{equation}
We have two estimators:
one from regression and one from density estimation.
Consider the denominator of the kernel regression based estimator $\widehat{v}_t^{\mathrm{reg0}}(z)$:
$$
\hat{p}_t^{\mathrm{reg0}}(z) =  \frac{1}{n^2}\sum_{i,j} \frac{1}{h^d} K \left(\frac{(1-t)X_{0j}+tX_{1i}-z}{h}\right).
$$
Now consider the density based estimator using the second equality in~\eqref{eq:representation-of-density-of-X_t}:
$$
\hat{p}_t^{\mathrm{den}}(z) = \frac{1}{nt^d}\sum_i \hat p_1\left(\frac{z-X_{0i}}{t}+X_{0i}\right).
$$
Taking $\hat p_1$ to be a kernel density estimator (with a symmetric kernel $K(\cdot)$), we get
\begin{align*}
\hat{p}_t^{\mathrm{den}}(z) &= \frac{1}{nt^d}\sum_i \hat p_1\left(\frac{z-X_{0i}}{t}+X_{0i}\right) =  \frac{1}{n^2t^d}\sum_{i,j} \frac{1}{h^d} K \left(\frac{X_{1j} - \frac{z-X_{0i}}{t}-X_{0i}}{h}\right)\\
&=  \frac{1}{n^2}\sum_{i,j} \frac{1}{(th)^d} K \left(\frac{(1-t)X_{0i}+tX_{1j}-z}{th}\right).
\end{align*}
This is equivalent to $\hat{p}_t^{\mathrm{reg0}}(z)$ but with a time-dependent bandwidth $th$.

\subsubsection{Substitution Estimators}
Using the third and fourth representations for the velocity field in Lemma~\ref{lem:representation-of-velocity-field}, one can define estimators that only use the density estimator for one of the measures. Formally, from the third representation in Lemma~\ref{lem:representation-of-velocity-field}, we can define
the estimator $\hat{v}_t^{(3)}(z) = \hat{f}_t^{(3)}(z)/\hat{p}_t^{(3)}(z)$, where
\begin{align*}
\hat f_t^{(3)}(z) &= \frac{1}{nt^d}\sum_{i=1}^n \frac{z-X_{0i}}{t}\hat p_1\left( \frac{z-X_{0i}}{t}+X_{0i} \right)\\
\hat p_t^{(3)}(z) &= \frac{1}{nt^d}\sum_{i=1}^n  \hat p_1\left( \frac{z-X_{0i}}{t}+X_{0i} \right).
\end{align*}
This can be particularly useful if we, for example, know that $p_1$ is very smooth and therefore can be estimated at a fast rate, while $p_0$ cannot be estimated as well. On the flip side, the fourth representation in Lemma~\ref{lem:representation-of-velocity-field} yields the following estimator
\begin{align*}
    \hat f_t^{(4)}(z) &= \frac{1}{n(1-t)^d}\sum_{i=1}^n \frac{X_{1i} - z}{1 - t}\hat{p}_0\left(\frac{z - X_{1i}}{1 - t} + X_{1i}\right),\\
    \hat{p}_t^{(4)}(z) &= \frac{1}{n(1-t)^d}\sum_{i=1}^n \hat{p}_0\left(\frac{z - X_{1i}}{1 - t} + X_{1i}\right).
\end{align*}
The explicit dependence on $t, 1 - t$ in the denominator here implies that these estimators have variances blowing up as $t\to0$ or $1-t\to 0$ for $\widehat{p}_t^{(3)}$ and $\widehat{p}_t^{(4)}$, respectively. 
\subsubsection{Semiparametric Estimators}

For fixed $t$ and $z$,
we can view
the velocity as a ratio
of linear functionals,
namely,
$v_t(z) = \psi_N/\psi_D$ 
where
$$
\psi_D = \int p_0(z-t\delta)p_1(z+ (1-t)\delta)d\delta = p_t(z)
$$
and
$$
\psi_N = \int \delta p_0(z-t\delta)p_1(z+ (1-t)\delta)d\delta = f_t(z).
$$
Since these are
bilinear functionals of $p_0$ and $p_1$
we might expect to construct
a semiparametric efficient estimator for $v_t(z)$.
This turns out to be true but with some caveats.

Recall that the one-step, semiparametric estimator of a 
pathwise differentiable functional
$\psi=T(p)$ is the plugin estimator plus
the efficient influence function:
$$
\hat\psi = T(\hat p) + \frac{1}{n}\sum_i \varphi(X_i,\hat p)
$$
where $\hat p$ and 
$n^{-1}\sum_i \varphi(X_i,p)$ are 
usually computed from separate parts of the data
and $\varphi(x,p)$ is the efficient influence function.
Often,
$\varphi(x,p)$ is simply the Gateaux derivative of $T(p)$
which will be the case in what follows.
If $\|\widehat{p} - p\| = o_p(n^{-1/4})$,
then
$\sqrt{n}(\hat\psi - \psi)\overset{d}{\to} N(0,\sigma^2)$
where
$\sigma^2 = \E[\varphi^2(X,p)]$.
Now we apply this approach to
the velocity.

Let $\hat p_0$ and $\hat p_1$
be estimators of $p_0$ and $p_1$.
The plugin estimator of $v_t(z)$ is
$$
\hat\psi_{pi}=
\frac{\int_{S_t(z)} \delta  \hat p_0(z-t\delta)\hat p_1(z+ (1-t)\delta)d\delta}
{ \int_{S_t(z)} \hat p_0(z-t\delta)\hat p_1(z+ (1-t)\delta)d\delta}.
$$
The one-step estimator is
$$
\hat v_t^{1\mbox{-}\mathrm{step}}(z) = \hat\psi_{pi} +
\frac{1}{n}\sum_i \varphi_0(X_{0i},\hat p_1) + 
\frac{1}{n}\sum_i \varphi_1(X_{1i},\hat p_0)
$$
where
$\varphi_0$ is the efficient influence
function of $v_t(z)$ with respect to $\mu_0$ and
$\varphi_1$ is the efficient influence
function of $v_t(z)$ with respect to $\mu_1$ 
which are given in the next lemma.

\begin{theorem}\label{thm:semiparametric-rate}
For $0 < t < 1$,
the efficient influence function
for $v_t(z)$ is
$\varphi = \varphi_0 + \varphi_1$ where
$\varphi_0$ and $\varphi_1$ 
are the Gateuax derivatives of $v_t(z)$
with respect to $\mu_0$ and $\mu_1$ and
are given by
\begin{align*}
\varphi_0(x,p_1) &= \frac{\varphi_{N0}(x,p_1)}{p_t(z)} - 
v_t(z) \frac{\varphi_{D0}(x,p_1)}{p_t(z)} \\
\varphi_0(y,p_0) &= \frac{\varphi_{N1}(y,p_0)}{p_t(z)} - 
v_t(z) \frac{\varphi_{D1}(y,p_0)}{p_t(z)}
\end{align*}
where
\begin{align*}
\varphi_{N0}(x,p_1) &= 
\frac{1}{t^d}\left(\frac{z-x}{t}\right)p_1\left(\frac{z-x}{t} + x\right) - f_t(z)\\
\varphi_{N1}(y,p_0) &= 
\frac{1}{(1-t)^d}\left(\frac{z-y}{1-t}\right)p_0\left(\frac{z-y}{1-t} + y\right) - f_t(z)\\
\varphi_{D0}(x,p_1) &= \frac{1}{t^d}p_1\left(\frac{z-x}{t} + x\right) - p_t(z)\\
\varphi_{D1}(y,p_0) &= \frac{1}{(1-t)^d}p_0\left(\frac{z-y}{1-t} + y\right) - p_t(z).
\end{align*}
If $\|\hat p_0-p_0\|_2=o_p(n^{-1/4})$ and
$\|\hat p_1-p_1\|_2=o_p(n^{-1/4})$ and
are computed from an independent sample, then
$\sqrt{n}(\hat v_t(z) - v_t(z)) \overset{d}{\to} N(0,\Sigma)$
where
$\Sigma = \mathrm{Var}(\varphi_0) + \mathrm{Var}(\varphi_1)$.
\end{theorem}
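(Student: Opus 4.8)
My plan is to treat $v_t(z)$, for fixed $t\in(0,1)$ and fixed $z$, as a ratio $\psi_N/\psi_D$ of the two \emph{bilinear} functionals $\psi_D=\psi_D(p_0,p_1)=p_t(z)$ and $\psi_N=\psi_N(p_0,p_1)=f_t(z)$ of the pair of densities, and to exploit that the model is fully nonparametric: the data are $n$ i.i.d.\ copies of $X_0\sim\mu_0$ together with $n$ i.i.d.\ copies of $X_1\sim\mu_1$, independent of one another, so the tangent space at $(\mu_0,\mu_1)$ is the entire product $L_2^0(\mu_0)\oplus L_2^0(\mu_1)$. Consequently, once $v_t(z)$ is shown to be pathwise differentiable, its gradient is \emph{unique} and hence automatically the efficient influence function; nothing extra needs to be argued about efficiency beyond exhibiting \emph{some} square-integrable influence function. (Here $v_t(z)$ is $\mathbb{R}^d$-valued, so $\varphi_0,\varphi_1$ are vectors and $\Sigma$ a $d\times d$ covariance matrix; everything below is read componentwise.)

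First I would compute the Gateaux derivatives. Along a regular submodel $p_{0,\e}=p_0(1+\e s_0)$ with $\E_0[s_0(X_0)]=0$ and $p_1$ held fixed, $\tfrac{d}{d\e}\psi_D(p_{0,\e},p_1)\big|_{\e=0}=\int p_0(z-t\delta)s_0(z-t\delta)p_1(z+(1-t)\delta)\,d\delta$; the substitution $x=z-t\delta$ (so $\delta=(z-x)/t$ and $d\delta=t^{-d}dx$) rewrites this as $\E_0\!\big[s_0(X_0)\,t^{-d}p_1((z-X_0)/t+X_0)\big]$, identifying the representer $t^{-d}p_1((z-x)/t+x)$; subtracting its mean $p_t(z)$ to center it yields $\varphi_{D0}(x,p_1)$. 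The identical computation carrying the extra factor $\delta=(z-x)/t$ through the integral gives $\varphi_{N0}(x,p_1)$, and perturbing $p_1$ instead with the substitution $y=z+(1-t)\delta$ gives $\varphi_{D1},\varphi_{N1}$. Since $v_t(z)=\psi_N/\psi_D$, the chain (quotient) rule for influence functions produces the gradient $\varphi=\psi_D^{-1}\varphi_N-v_t(z)\,\psi_D^{-1}\varphi_D$; collecting the $X_0$-pieces and the $X_1$-pieces gives $\varphi_0$ and $\varphi_1$ exactly as stated. Mild regularity is needed here — $p_0,p_1$ bounded and $\E\|z-X_0\|^2,\E\|z-X_1\|^2<\infty$ — so that the functionals are pathwise differentiable and $\varphi_0,\varphi_1\in L_2$.

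For the CLT I would use the standard cross-fitted one-step decomposition. Writing $\mathbb{P}_n$ for the relevant empirical measure and $\hat\varphi_j$ for $\varphi_j$ with $(p_0,p_1,p_t,v_t)$ replaced by their plug-in estimates, one has $\hat v_t(z)-v_t(z) = \big[\mathbb{P}_n\varphi_0+\mathbb{P}_n\varphi_1\big] + \big[(\mathbb{P}_n-\mathbb{P})(\hat\varphi_0-\varphi_0)+(\mathbb{P}_n-\mathbb{P})(\hat\varphi_1-\varphi_1)\big] + R_2$, where $R_2=(\hat\psi_{pi}-v_t(z))+\E_0[\hat\varphi_0]+\E_1[\hat\varphi_1]$ is the second-order drift. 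The first bracket is a sum of two \emph{independent} i.i.d.\ averages, so $\sqrt n$ times it converges to $N(0,\mathrm{Var}(\varphi_0)+\mathrm{Var}(\varphi_1))$ by the Lindeberg CLT, the vanishing of the cross-covariance being precisely the independence of the two samples. The second bracket is $o_p(n^{-1/2})$ because $\hat\varphi_j-\varphi_j$ is built from an independent fold and $\|\hat\varphi_j-\varphi_j\|_2\to_p0$ (from $\|\hat p_0-p_0\|_2,\|\hat p_1-p_1\|_2\to_p0$ plus boundedness of the densities), so conditionally it has mean $0$ and variance $n^{-1}\|\hat\varphi_j-\varphi_j\|_2^2=o_p(n^{-1})$.

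The crux is $R_2=o_p(n^{-1/2})$, and this is where I expect the real work. Because $\psi_N,\psi_D$ are bilinear, the one-step corrections cancel the \emph{linear} part of the plug-in bias exactly: e.g.\ $\E_0[\hat\varphi_{D0}]+\E_1[\hat\varphi_{D1}]=-\psi_D(\hat p_0-p_0,p_1)-\psi_D(p_0,\hat p_1-p_1)-2\psi_D(\hat p_0-p_0,\hat p_1-p_1)$, which combines with $\hat\psi_{D,pi}-\psi_D=\psi_D(\hat p_0-p_0,p_1)+\psi_D(p_0,\hat p_1-p_1)+\psi_D(\hat p_0-p_0,\hat p_1-p_1)$ to leave only the second-order cross-term $-\psi_D(\hat p_0-p_0,\hat p_1-p_1)$, and likewise for $\psi_N$; the additional linearization of the map $(\psi_N,\psi_D)\mapsto\psi_N/\psi_D$ and of the plug-ins $\hat p_t,\hat v_t$ inside $\hat\varphi_j$ contributes only products of errors of order $(\hat\psi_{N,pi}-\psi_N)^2+(\hat\psi_{D,pi}-\psi_D)^2=O_p(\|\hat p_0-p_0\|_2^2+\|\hat p_1-p_1\|_2^2)$. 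It then remains to bound the cross-terms: after the change of variables above and Cauchy–Schwarz, $|\psi_D(\hat p_0-p_0,\hat p_1-p_1)|\le (t(1-t))^{-d/2}\|\hat p_0-p_0\|_2\|\hat p_1-p_1\|_2$, and the numerator version carries an extra polynomial weight $\|z-\cdot\|$ absorbed using $\E\|z-X_0\|^2<\infty$ (or boundedness of $\Omega$). For fixed $t\in(0,1)$ the constant $(t(1-t))^{-d/2}$ is finite, so under $\|\hat p_j-p_j\|_2=o_p(n^{-1/4})$ every term of $R_2$ is $o_p(n^{-1/2})$ — and this is exactly the step that degrades as $t(1-t)\to0$, which is why the $n^{-1/2}$ rate is not uniform in $t$. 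Combining the three pieces and applying Slutsky gives $\sqrt n(\hat v_t(z)-v_t(z))\overset{d}{\to}N(0,\Sigma)$ with $\Sigma=\mathrm{Var}(\varphi_0)+\mathrm{Var}(\varphi_1)$.
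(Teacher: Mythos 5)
Your proposal takes essentially the same route as the paper: treat $v_t(z)=\psi_N/\psi_D$ as a ratio of bilinear functionals, compute componentwise Gateaux derivatives by perturbing $p_0$ and $p_1$ separately along score directions and applying the substitutions $x=z-t\delta$, $y=z+(1-t)\delta$, combine via the quotient rule, and then establish the CLT by a one-step/von Mises expansion with independence between the two samples giving $\Sigma=\mathrm{Var}(\varphi_0)+\mathrm{Var}(\varphi_1)$. The difference is that you are noticeably more careful than the paper in two places. First, you make the (correct and useful) observation that the model is fully nonparametric, so the tangent space is all of $L_2^0(\mu_0)\oplus L_2^0(\mu_1)$, which is why the gradient you exhibit is automatically the \emph{efficient} influence function — the paper asserts efficiency but never invokes this. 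Second, you actually control the second-order remainder $R_2$: the paper's proof literally writes ``Ignoring $R$'' and proceeds, whereas you exploit the bilinearity of $\psi_D,\psi_N$ to show the linear bias cancels exactly, leaving only the cross-term $-\psi_D(\hat p_0-p_0,\hat p_1-p_1)$, which you bound by Cauchy--Schwarz as $(t(1-t))^{-d/2}\|\hat p_0-p_0\|_2\|\hat p_1-p_1\|_2$, and this is precisely where the $o_p(n^{-1/4})$ rates and the fixed $t\in(0,1)$ both enter and where the rate degrades as $t(1-t)\to 0$, which ties in nicely with Lemma~\ref{lem:semiparametric-bad}. The one point you gloss over is the numerator cross-term $\psi_N(\hat p_0-p_0,\hat p_1-p_1)$, which carries the unbounded weight $\|\delta\|$; your parenthetical about absorbing it via $\E\|z-X_0\|^2<\infty$ does not quite go through by Cauchy--Schwarz alone (the weight has to be paired with one of the two $L_2$ error factors, which need not have finite weighted $L_2$ norm without further assumptions), but on the compact $\Omega$ that the paper assumes throughout the weight is simply bounded and the issue disappears; the paper's own proof never confronts this either. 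Overall this is the same argument as the paper's, carried out at a level of rigor the paper delegates to ``similar'' and ``ignoring.''
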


The above shows that $v_t(z)$
can be estimated at a $n^{-1/2}$ rate
which seems to contradict our other results.
However,
this only applies when $t\in(0,1)$ is fixed.
The following result shows that the semiparametric efficient estimator behaves poorly as $t(1-t)\to 0$.

\begin{lemma}\label{lem:semiparametric-bad}
Under the setting of Theorem~\ref{thm:semiparametric-rate},
\[
\lim_{n\to\infty}n\mathrm{Var}(\hat v_t^{1\mbox{-}\mathrm{step}}(z)) \asymp \frac{1}{t^d(1-t)^d},\quad\mbox{as}\quad t(1-t)\to 0.
\]
\end{lemma}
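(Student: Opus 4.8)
The plan is to reduce the claim to an analysis of the limiting covariance matrix and then extract its order of magnitude by an explicit change of variables. By Theorem~\ref{thm:semiparametric-rate}, $\hat v_t^{1\mbox{-}\mathrm{step}}(z)$ is asymptotically linear with efficient influence function $\varphi=\varphi_0+\varphi_1$, where $\varphi_0$ is a function of the sample from $\mu_0$ (and of an independent estimate of $p_1$) and $\varphi_1$ a function of the sample from $\mu_1$; these two contributions are asymptotically independent, so that $\lim_{n\to\infty} n\,\mathrm{Var}(\hat v_t^{1\mbox{-}\mathrm{step}}(z))=\Sigma(t):=\mathrm{Var}(\varphi_0)+\mathrm{Var}(\varphi_1)$, identifying $n\,\mathrm{Var}$ with the asymptotic variance (a standard uniform-integrability step, or simply the definition). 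Since $t(1-t)\to0$ means either $t\to0$ or $t\to1$, and the two regimes are mirror images of each other under $(p_0,t)\leftrightarrow(p_1,1-t)$, it suffices to prove: (i) $\mathrm{Var}(\varphi_0)\asymp t^{-d}$ as $t\to0$, and (ii) $\mathrm{Var}(\varphi_1)=O(1)$ as $t\to0$. Since $(1-t)^d\to1$ in this regime, (i)--(ii) give $\Sigma(t)\asymp t^{-d}\asymp t^{-d}(1-t)^{-d}$, and the swapped argument handles $t\to1$. (Throughout, $\asymp$ is read entrywise on the diagonal of $\Sigma(t)$, equivalently for $\operatorname{tr}\Sigma(t)$, or in the Loewner order up to constants.)

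Claim (ii) is immediate: as $t\to0$ the prefactor $(1-t)^{-d}$ in $\varphi_{N1},\varphi_{D1}$ tends to $1$, while $(1-t)^{-d}p_0\bigl(\tfrac{z-y}{1-t}+y\bigr)$ together with $p_t(z)$ and $f_t(z)$ have finite limits, so $\mathrm{Var}(\varphi_1)$ stays bounded. The heart of the matter is (i). The efficient influence function is centered, since $\mathbb{E}_{X_0}[\varphi_{N0}(X_0,p_1)]=f_t(z)$ and $\mathbb{E}_{X_0}[\varphi_{D0}(X_0,p_1)]=p_t(z)$ by the representation~\eqref{eq:representation-of-density-of-X_t}; hence $\mathbb{E}\varphi_0=0$ and $\mathrm{Var}(\varphi_0)=p_t(z)^{-2}\,\mathbb{E}\bigl[(\varphi_{N0}-v_t(z)\varphi_{D0})(\varphi_{N0}-v_t(z)\varphi_{D0})^{\!\top}\bigr]$. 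Expanding the square and using the centering identities, all cross terms carrying $f_t(z)$ or $p_t(z)$ collapse to $O(1)$ contributions, and the surviving leading term in each of $\mathbb{E}[\varphi_{N0}\varphi_{N0}^{\top}]$, $\mathbb{E}[\varphi_{N0}\varphi_{D0}]$, $\mathbb{E}[\varphi_{D0}^2]$ has the form $t^{-2d}\,\mathbb{E}_{X_0}\bigl[g\bigl(\tfrac{z-X_0}{t}\bigr)\,p_1\bigl(\tfrac{z-X_0}{t}+X_0\bigr)^2\bigr]$ with $g$ a polynomial of degree $0$, $1$, or $2$. The substitution $\delta=(z-X_0)/t$ (Jacobian $t^d$) followed by $\eta=(1-t)\delta$ turns each such term into $t^{-d}$ times $\int g\bigl(\tfrac{\eta}{1-t}\bigr)\,p_1(z+\eta)^2\,p_0\bigl(z-\tfrac{t}{1-t}\eta\bigr)\,d\eta$, and dominated convergence (dominating function $\|p_0\|_{\infty}(1+\|\eta\|^2)p_1(z+\eta)^2$, integrable because $p_1$ is bounded and $\mathbb{E}\|X_1\|^2<\infty$) lets $t\to0$. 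Using $p_t(z)\to p_0(z)$ and $v_t(z)\to v_0(z)=\mathbb{E}X_1-z$, the pieces recombine into the single clean limit
\[
t^{d}\,\mathrm{Var}(\varphi_0)\ \longrightarrow\ M:=\frac{1}{p_0(z)}\int_{\mathbb{R}^d}\bigl(\eta-v_0(z)\bigr)\bigl(\eta-v_0(z)\bigr)^{\!\top}\,p_1(z+\eta)^2\,d\eta\qquad(t\to0).
\]

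Finally, $M$ is positive definite: for $w\neq0$ one has $w^{\top}Mw=p_0(z)^{-1}\!\int\bigl(w^{\top}(\eta-v_0(z))\bigr)^2 p_1(z+\eta)^2\,d\eta>0$, since a Lebesgue density $p_1$ cannot be supported inside the hyperplane $\{w^{\top}(\eta-v_0(z))=0\}$. Hence both the largest and the smallest eigenvalue of $\mathrm{Var}(\varphi_0)$ are of exact order $t^{-d}$, which with (ii) yields $\Sigma(t)\asymp t^{-d}(1-t)^{-d}$ as $t\to0$; the verbatim argument with $p_0\leftrightarrow p_1$ and $t\leftrightarrow 1-t$ covers $t\to1$. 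I expect the only real work to be technical bookkeeping: verifying the dominated-convergence hypotheses (which is where mild side conditions implicit in the setting of Theorem~\ref{thm:semiparametric-rate} enter --- boundedness of $p_0,p_1$, continuity and positivity of $p_0,p_1$ at $z$, and finite second moments of $X_0,X_1$), and checking that the subleading terms generated by the centering constants $f_t(z),p_t(z)$ are genuinely $O(1)$, so that they can neither cancel nor inflate the $t^{-d}$ divergence.
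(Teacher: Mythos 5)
Your proposal is correct and follows essentially the same route as the paper's own proof: decompose the asymptotic variance as $\mathrm{Var}(\varphi_0)+\mathrm{Var}(\varphi_1)$, note by symmetry that it suffices to treat $t\to0$, and extract the $t^{-d}$ order from the $t^{-2d}$ prefactor in $\varphi_{N0},\varphi_{D0}$ via the change of variables $\delta=(z-x)/t$. In fact you carry the computation further than the paper does --- the paper only works out the term $t^d A_1\to p_0(z)^{-1}\int \delta\delta^{\top}p_1^2(z+\delta)\,d\delta$ and then asserts ``similar calculations apply to the other terms,'' whereas you assemble all the pieces into the clean limit $M$ and verify its positive definiteness, which is precisely what rules out cancellations and secures the lower bound in $\asymp$.
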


Given that the variance explodes as $t\to 0,1$,
how should we use this estimator?
One possibility is to proceed as follow.
Note that
$v_0(z)=\E[X_1]-z$ and
$v_1(z)=z - \E[X_0]$
which can be estimated at an $n^{-1/2}$ rate.
Fix a small $t_0>0$.
We can use a hybrid model that treats
$v_t$ nonparametrically using the one-step estimator
over $t\in [t_0,1-t_0]$
and we estimate $v_t$ using a parametric model otherwise.
For example,
$$
\hat v_t(z) = 
\begin{cases}
\hat v_0(z) = n^{-1}\sum_{i=1}^n X_{1i}-z & 0 \leq t \leq t_0\\
\hat v_t^{1\mbox{-}\mathrm{step}}(z) & t_0 < t < 1-t_0\\
\hat v_1(z) = z - n^{-1}\sum_{i=1}^n X_{0i} & 1-t_0 \leq t \leq 1.
\end{cases}
$$
More generally, we could use a polynomial
or some other smooth function for $t\in[0,t_0]$ or $t\in[1-t_0, 1]$.
Then
$\hat v_t(z) - v_t(z) = O_p(n^{-1/2})$.
Another possibility is to use the estimator
$\hat R(x) = x + \int_{\xi}^{1-\xi} \hat v_t^{1\mbox{-}\mathrm{step}}(\hat z_t)dt$
which provides a $\sqrt{n}$-estimate of an approximate rectified flow.

\subsubsection{Smoothed Transport}
\label{section::smoothed}

Some researchers have focused on
transporting a smoothed version of $\mu_0$
to a smoothed version of $\mu_1$
\citep{goldfeld2020, goldfeld2021, chen2021asymptotics}.
The resulting transport map and Wasserstein distance
can be estimated at a $n^{-1/2}$ rate.
Let
$K_\sigma$ denote a Normal with variance $\sigma^2 I$.
Similarly, the smoothed Wasserstein distance
is defined to be
$W(\mu_0\star K_\sigma,\mu_1\star K_\sigma)$
where $\star$ denotes convolution.
We define the smooth rectified map
$R_\sigma$ 
to be the rectified map from
$\mu_0\star K_\sigma$ to
$\mu_1\star K_\sigma$.
Then,
$$
v_{t,\sigma}(z) =
\frac{\int \delta \int K_\sigma(z-t\delta-u)d\mu_0(u) 
\int K_\sigma(z+(1-t)\delta-v)d\mu_1(v) d\delta }
{\int  \int K_\sigma(z-t\delta-u)d\mu_0(u) 
\int K_\sigma(z+(1-t)\delta-v)d\mu_1(v) d\delta } \equiv \frac{N}{D}.
$$
We estimate the velocity field with
the following ratio of two sample $U$-statistics:
\begin{align*}
\hat v_t(z) &=
\frac{\sum_i\sum_j h_{t,z}(X_{0i},X_{1j})}
{\sum_i\sum_j \tilde h_{t,z}(X_{0i},X_{1j})}
\equiv \frac{\hat N}{\hat D}
\end{align*}
where
\begin{align*}
h_{t,z}(u,v) &= \int_{\mathbb{R}^d} \delta K_\sigma(z - t \delta -u)
K_\sigma(z + (1-t) \delta -v)d\delta\\
\tilde h_{t,z}(u, v) &= \int_{\mathbb{R}^d}  K_\sigma(z - t \delta -u)K_{\sigma}(z +(1-t)\delta - v)d\delta.
\end{align*}
Then, using the fact that $K_{\sigma}$ is a Gaussian distribution, we obtain
\begin{align*}
\hat v_t(z) 
&=
\frac{\sum_i\sum_j\frac{(1-t)(z-X_{1i})-t(z-X_{0i})}{2t^2 - 2t + 1}
\exp\left\{ - \frac{z-2zt+t(X_{0i}+X_{1i})-X_{1i}}{2 h^2 (2t^2 - 2t + 1)}\right\}}
{ \sum_i\sum_j
\exp\left\{ - \frac{z-2zt+t(X_{0i}+X_{1i})-X_{1i}}{2 h^2 (2t^2 - 2t + 1)}\right\}}
\end{align*}
By standard asymptotic normality results on $U$-statistics,
we see that 
$$
\sqrt{2n}
\left(
\begin{array}{c}
\hat N - N\\
\hat D - D 
\end{array}
\right)
\overset{d}{\to} N(0,\Sigma)
$$
where
$\Sigma_{11}  = 2 \text{Cov}[h(X,Y),h(X,Y')] + 2 \text{Cov}[h(X,Y),h(X',Y)]$,
$\Sigma_{22} = 2 \text{Cov}[\tilde h(X,Y),\tilde h(X,Y')] + 
2 \text{Cov}[\tilde h(X,Y),\tilde h(X',Y)]$.
The limiting distribution of
$\hat v_t(z)$ follows by the delta method.

This approach is notable
for producing an approximate transport map
requiring no optimization.

\section{Auxiliary Results on Ordinary Differential Equations}\label{sec:review-ODE}
In the previous section, we have provided various estimators of the velocity field and discussed their relative strengths. It is not as easy to discuss the corresponding rectified flow estimators because the rectified flow is a complicated, non-linear function of the velocity field. As mentioned earlier, it is not obvious to claim the existence or uniqueness of solutions to the ODE of the type~\eqref{eq:rectified-flow-ODE}. In this section, we review some results from the literature on ordinary differential equations regarding existence, uniqueness, and stability. Many of these results are known in the ODE literature, but scattered enough that it is worth gathering them here.

Throughout this section, we deal with the following generic ODE: for a function $F:[0, 1]\times\mathcal{S}\to\mathbb{R}^d$,
\begin{equation}\label{eq:generic-ODE}
    \frac{dy(t)}{dt} = F(t, y(t)),\; t\in[0, 1]\;\mbox{with the initial condition}\; y(0) = x. 
\end{equation}
If the function $(t, y)\mapsto F(t, y)$ is continuous and bounded, then the classical Peano existence theorem~\citep[Theorem 7.6]{HairerWannerNorsett1993} implies the existence of a solution in the neighborhood of zero. The solution can then be extended to all of $[0, 1]$ using the Peano continuation theorem~\citep[Theorem 1.2.15]{KanschatScheichl2021}. Although continuity might appear a weak condition in our context, especially when $\mu_0$ and $\mu_1$ have smooth Lebesgue densities, we shall see that the velocity field $v(t, z)$ in~\eqref{eq:velocity-field} cannot be continuous in both $t, z$ when $\mu_0, \mu_1$ are compactly supported. When $\mu_0, \mu_1$ are supported on $\mathbb{R}^d$, then $v(t, z)$ can be proved to be continuous under weak regularity conditions. 

For this reason, we present the Carath{\'e}odory existence theorem, which weakens the continuity assumption. This comes with the caveat that the solution need not be differentiable everywhere. (A simple example is $dy(t)/dt = \mbox{sign}(t)$ with $y(0) = 0$. Then $y(t) = |t|$, which is not differentiable at zero.) Given the potential non-differentiability, it is easier to think of the following integral equation than the differential equation~\eqref{eq:generic-ODE}:
\begin{equation}\label{eq:generic-ODE-integral}
    y(t) = x + \int_0^t F(s, y(s))ds\quad\mbox{for all}\quad t \in [0, 1].
\end{equation}
It is easy to see that any such function $y(\cdot)$ is absolutely continuous (but need not always be differentiable). Furthermore, any solution to the ODE~\eqref{eq:generic-ODE} is also a solution to~\eqref{eq:generic-ODE-integral}, and any solution to~\eqref{eq:generic-ODE-integral} satisfies~\eqref{eq:generic-ODE} but only almost everywhere $t\in[0, 1]$.

In the context of rectified flow between compactly supported $\mu_0, \mu_1$, it is also important to show that the solutions lie in that support. For example, in the context of the generic ODE~\eqref{eq:generic-ODE}, this means that $y(0) \in \mathcal{S}$ implies $y(t)\in\mathcal{S}$ for all $t\in[0, 1]$. Viability theory~\citep{aubin2009viability} provides results of this kind. Unfortunately, the standard results (e.g., Nagumo's theorem or Theorem 2 of~\cite{hartman1972invariant}) require continuity. These continuity assumptions have been weakened in the differential inclusions literature to those of Carath{\'e}odory's existence theorem~\citep{tallos1991viability}. For an accessible presentation, we present the following result, which proves the existence of a solution and, under certain conditions, the existence in $\mathcal{S}$. We use the following notation: $\mathcal{S}^\circ$ represents the interior of $\mathcal{S}$, $\partial \mathcal{S}$ denotes the boundary of $\mathcal{S}$, and $\mathcal{B}(x, r) = \{y\in\mathbb{R}^d:\, \|x - y\| \le r\}$ with $\mathcal{B}_0 = \mathcal{B}(0, 1)$.

The Carath{\'e}odory conditions~\citep[Chapter 1]{filippov2013differential} are as follows:
\begin{enumerate}[label=(C\arabic*)]
    \item For almost all $t\in[0, 1]$, $\mathcal{S}\ni x\mapsto F(t, x)$ is well-defined and continuous.\label{eq:continuous-in-x}
    \item For each $x\in\mathcal{S}$, the function $[0, 1]\ni t\mapsto F(t, x)$ is measurable.\label{eq:meas-in-t}
    \item For all $x\in\mathcal{S}, t\in[0, 1]$, $\|F(t, x)\| \le B < \infty$.\label{eq:bounded-in-x}
    \item For almost all $t\in[0, 1]$, $F:[0, 1]\times\mathbb{R}^d\to\mathbb{R}^d$ is well-defined and for some $B < \infty$ satisfies $\|F(t, x)\| \le B(1 + \|x\|)$ for all $t\in[0, 1], x\in\mathbb{R}^d$.\label{eq:integrably-bounded}
\end{enumerate}
For any $x\in\mathcal{S}$, define the contingent cone of $\mathcal{S}$ at $x$ as
For any $x\in\mathcal{S}$, define the 
 intermediate cone of $\mathcal{S}$ at $x$ as
\begin{align*}
T_{\mathcal{S}}(x) &:= \left\{v\in\mathbb{R}^d:\, \liminf_{h\downarrow 0}\frac{\mbox{dist}(x + hv, \mathcal{S})}{h} = 0\right\}.
\end{align*}
See Definition 4.1.1 of~\cite{aubin2009set}.
\begin{enumerate}[label=(V\arabic*)]
    \item The set $\mathcal{S}$ is closed and convex, and $F(t, x)\in T_{\mathcal{S}}(x)$ for all $t\in[0, 1]$ and $x\in\mathcal{S}$.\label{eq:tangent-cone-viability}
\end{enumerate}
Examples of tangent cones can be found in Chapter 4 of~\cite{aubin2009set} and also Section~\ref{sec:bounded}. A simple result worth recalling is that $T_{\mathcal{S}}(x) = \mathbb{R}^d$ whenever $x\in\mathcal{S}^\circ$ (i.e., $x$ is in the interior of $\mathcal{S}$). 

The following result only assumes $F(\cdot, \cdot)$ is defined on $[0, 1]\times\mathcal{S}$, except for part 2.
\begin{theorem}[Existence of Solutions (in $\mathcal{S}$)]\label{thm:Peano-existence} 
    Consider the integral equation~\eqref{eq:generic-ODE-integral}.
    \begin{enumerate}
        \item Suppose assumptions~\ref{eq:continuous-in-x}--\ref{eq:bounded-in-x} hold with $x\in\mathcal{S}^\circ$. Then there exists $T\in(0, 1]$ and an absolutely continuous function $y^*:[0, T]\to\mathcal{S}$ satisfying~\eqref{eq:generic-ODE-integral}. Here, $T$ can be chosen to be $\min\{1, \mathrm{dist}(x,\partial\mathcal{S})/B\}$.
        \item If, instead of~\ref{eq:bounded-in-x}, assumption~\ref{eq:integrably-bounded} holds (and~\ref{eq:continuous-in-x} and~\ref{eq:meas-in-t} hold with $\mathcal{S} = \mathbb{R}^d$), then there exists an absolutely continuous function $y^*:[0,1]\to\mathbb{R}^d$ satisfying~\eqref{eq:generic-ODE-integral} and moreover, all solutions of~\eqref{eq:generic-ODE-integral} satisfy $\|y^*(t)\| \le (1 + \|x\|)e^{Bt} - 1$ for all $t\in[0, 1]$. 
        \item If assumptions~\ref{eq:continuous-in-x},~\ref{eq:meas-in-t},~\ref{eq:bounded-in-x}, and~\ref{eq:tangent-cone-viability} hold and $x\in\mathcal{S}$, then there exists an absolutely continuous function $y^*:[0,1]\to\mathcal{S}$ satisfying~\eqref{eq:generic-ODE-integral}. 
    \end{enumerate}
\end{theorem}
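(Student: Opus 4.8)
All three parts are instances of classical Carath\'eodory existence theory, with the viability clause handled by a projection argument that is where the convexity of $\mathcal{S}$ enters. Throughout, the workhorse will be a delayed Euler (Tonelli) approximation scheme combined with Arzel\`a--Ascoli.

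\emph{Part 1 (short-time existence in $\mathcal{S}$).} For each integer $m\ge 1$ I would define $y_m(t)=x$ on $[0,1/m]$ and, recursively on the grid intervals $[k/m,(k+1)/m]\cap[0,T]$, set $y_m(t)=x+\int_0^{(t-1/m)_+}F(s,y_m(s))\,ds$. Since~\ref{eq:continuous-in-x}--\ref{eq:meas-in-t} make $F$ a Carath\'eodory function, the superposition $s\mapsto F(s,y_m(s))$ is measurable whenever $y_m$ is continuous (the standard superposition lemma), and~\ref{eq:bounded-in-x} bounds it by $B$; hence the recursion is well defined and $\|y_m(t)-x\|\le B(t-1/m)_+\le Bt$. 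With $T=\min\{1,\mathrm{dist}(x,\partial\mathcal{S})/B\}$ and $x\in\mathcal{S}^\circ$, this keeps $y_m(t)$ within distance $\mathrm{dist}(x,\partial\mathcal{S})$ of $x$, hence in $\mathcal{S}$, so $F$ is only ever evaluated where it is defined. The bound by $B$ also makes $\{y_m\}$ uniformly Lipschitz, so by Arzel\`a--Ascoli a subsequence converges uniformly on $[0,T]$ to an absolutely continuous $y^\ast$ with values in $\mathcal{S}$. Passing to the limit in the integral identity uses~\ref{eq:continuous-in-x} (so $F(s,y_m(s))\to F(s,y^\ast(s))$ for a.e.\ $s$) together with~\ref{eq:bounded-in-x} (dominated convergence), yielding~\eqref{eq:generic-ODE-integral} on $[0,T]$.

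\emph{Part 2 (global existence with growth bound).} I would run the same delayed Euler scheme, now on all of $[0,1]$ using that $F$ is defined on $[0,1]\times\mathbb{R}^d$. The new ingredient is an a priori bound: any solution of~\eqref{eq:generic-ODE-integral} satisfies $1+\|y(t)\|\le 1+\|x\|+\int_0^t B(1+\|y(s)\|)\,ds$, so Gr\"onwall's inequality gives $1+\|y(t)\|\le(1+\|x\|)e^{Bt}$, i.e.\ $\|y(t)\|\le(1+\|x\|)e^{Bt}-1$ --- exactly the stated bound on all solutions. The identical estimate applied to $y_m$ (with $(t-1/m)_+$ in place of $t$) bounds the approximants uniformly on $[0,1]$ by $M:=(1+\|x\|)e^{B}$, so $\|F(s,y_m(s))\|\le B(1+M)$ and the $y_m$ are again uniformly Lipschitz; Arzel\`a--Ascoli and the same limiting argument (using~\ref{eq:continuous-in-x} and~\ref{eq:integrably-bounded}) produce a global absolutely continuous solution.

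\emph{Part 3 (viability).} Here convexity is used through a projection trick. Since $\mathcal{S}$ is closed and convex, the metric projection $\pi_{\mathcal{S}}:\mathbb{R}^d\to\mathcal{S}$ is single-valued and nonexpansive, so $\tilde F(t,y):=F(t,\pi_{\mathcal{S}}(y))$ is defined on all of $[0,1]\times\mathbb{R}^d$, still satisfies~\ref{eq:continuous-in-x} and~\ref{eq:meas-in-t}, and is bounded by $B$ (hence obeys~\ref{eq:integrably-bounded}). By Part 2 there is an absolutely continuous $y:[0,1]\to\mathbb{R}^d$ with $y(0)=x\in\mathcal{S}$ and $y'(t)=F(t,\pi_{\mathcal{S}}(y(t)))$ a.e.; it remains to show $y(t)\in\mathcal{S}$, for then $\pi_{\mathcal{S}}(y(t))=y(t)$ and $y$ solves the original equation. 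Consider $g(t):=\tfrac12\,\mathrm{dist}(y(t),\mathcal{S})^2$. Because $y\mapsto\tfrac12\mathrm{dist}(y,\mathcal{S})^2$ is continuously differentiable with gradient $y-\pi_{\mathcal{S}}(y)$ when $\mathcal{S}$ is convex, $g$ is absolutely continuous with $g'(t)=\langle y(t)-\pi_{\mathcal{S}}(y(t)),\,y'(t)\rangle$ a.e. Now $y'(t)=F(t,\pi_{\mathcal{S}}(y(t)))\in T_{\mathcal{S}}(\pi_{\mathcal{S}}(y(t)))$ by~\ref{eq:tangent-cone-viability}, while $y(t)-\pi_{\mathcal{S}}(y(t))$ lies in the normal cone $N_{\mathcal{S}}(\pi_{\mathcal{S}}(y(t)))$ by the variational characterization of the projection onto a convex set; since for a convex set $N_{\mathcal{S}}(p)$ is the polar of $T_{\mathcal{S}}(p)$ (the intermediate cone of the hypothesis coinciding with $\overline{\mathrm{cone}(\mathcal{S}-p)}$ for convex $\mathcal{S}$), we get $g'(t)\le 0$ a.e. Together with $g\ge 0$ and $g(0)=0$ this forces $g\equiv 0$, i.e.\ $y(t)\in\mathcal{S}$ for all $t\in[0,1]$.

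\emph{Main obstacle.} Parts 1--2 are routine once one invokes the Carath\'eodory superposition lemma and dominated convergence, which is exactly why~\ref{eq:continuous-in-x}--\ref{eq:meas-in-t} and the (integrable) boundedness hypotheses cannot be dispensed with. The genuinely delicate part is Part 3: one must both produce a global solution of the extended equation and certify that it never leaves $\mathcal{S}$. The argument above works because $\mathcal{S}$ is convex --- through nonexpansiveness of $\pi_{\mathcal{S}}$, the $C^1$ smoothness of $\tfrac12\mathrm{dist}(\cdot,\mathcal{S})^2$, and the polarity between the contingent and normal cones. For a merely closed set one would instead need a careful Euler scheme selecting approximating velocities that point back toward $\mathcal{S}$ (the route taken in the differential-inclusions viability literature), which is considerably more involved; the convexity assumption is what lets us avoid it.
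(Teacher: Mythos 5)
Your Parts 1 and 2 match the paper in substance: the paper cites Theorem 1 of Filippov (2013) for Part 1, which is exactly the delayed-Euler/Tonelli construction you spell out, and Part 2 is the same Gr\"onwall bootstrap.

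Part 3 is where you genuinely diverge, and your route is correct but different. The paper builds an explicit Euler-type scheme that is kept inside $\mathcal{S}$ at each step using the tangent-cone condition (via Lemma 3 of Tall\'os): for each $m$ one first shrinks the mesh so that $\Delta(y,h_y)\le 1/(3m)$, covers the compact set $\mathcal{S}\cap\mathcal{B}(x,B)$ by finitely many balls on which this estimate is stable, builds a piecewise-linear trajectory $x_m(\cdot)$ that stays in $\mathcal{S}$ (convexity is used to keep the linear interpolants inside $\mathcal{S}$ and to identify the contingent cone with the intermediate cone by Prop. 4.2.1 of Aubin), and then passes to the limit via Arzel\`a--Ascoli. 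Your proof instead extends $F$ to all of $\mathbb{R}^d$ by $\tilde F(t,y)=F(t,\pi_{\mathcal{S}}(y))$, solves globally via Part 2, and certifies viability by the Lyapunov function $g(t)=\tfrac12\,\mathrm{dist}(y(t),\mathcal{S})^2$, using that $\nabla\bigl(\tfrac12 d_{\mathcal{S}}^2\bigr)(y)=y-\pi_{\mathcal{S}}(y)$ is well-defined and Lipschitz for closed convex $\mathcal{S}$, together with the polarity $N_{\mathcal{S}}(p)=T_{\mathcal{S}}(p)^\circ$. Both arguments lean on convexity in an essential way (for the projection and, in the paper, for the cone identification; in yours, additionally for the $C^{1,1}$ regularity of $\tfrac12 d_{\mathcal{S}}^2$ and the normal/tangent polarity). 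Your projection-plus-Lyapunov version is shorter and more conceptual, and it also unifies nicely with the paper's own proof of uniqueness (Theorem~\ref{thm:uniqueness-viability}, Part 3), which uses the same projection extension; the paper's Euler construction is more robust in that it is closer to the general viability arguments that survive without convexity, at the cost of the covering/compactness bookkeeping. Neither version has a gap given the paper's hypotheses.
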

A proof of Theorem~\ref{thm:Peano-existence} can be found in Section~\ref{appsubsec:proof-Peano-existence}.

The following result proves the uniqueness of the solution. Consider the following smoothness condition on $F(\cdot, \cdot)$.
\begin{enumerate}[label=(W\arabic*)]
    \item There exist measurable functions $a:[0, 1]\to\mathbb{R}_+\cup\{\infty\}$ and $\kappa:\mathbb{R}_+\to\mathbb{R}_+$ such that $\lim_{u\to0}\kappa(u) = 0$ and for any $t\in[0, 1]$\label{eq:Lipschitz-Osgood}
    \begin{equation}\label{eq:velocity-Lipschitz-in-z}
    \|F(t, y) - F(t, y')\| \le a(t)\kappa(\|y - y'\|)\quad\mbox{for all}\quad y, y'\in\mathcal{S},
    \end{equation}
    and for every $\delta > 0$,
    \[
    \lim_{\gamma\to0}\Psi^{-1}\left(\Psi\left(\int_0^{\gamma} a(s)\kappa(\delta s)ds\right) + \int_{\gamma}^t a(s)ds\right) = 0,\quad\mbox{where}\quad\Psi(u) = \int \frac{du}{\kappa(u)}. 
    \]
    (Here $\Psi(\cdot)$ is the indefinite integral.)
\end{enumerate}
Note that assumption~\ref{eq:Lipschitz-Osgood} implies assumption~\ref{eq:continuous-in-x}. This can be seen as a special case of the assumption of Theorem 3.1 of~\cite{liu2024uniqueness}, which itself is a generalization of Theorem 2.1 of~\cite{constantin2023uniqueness}. For other uniqueness conditions for ODEs, see~\cite{Bernfeld1975Uniqueness},~\cite{banas1981relations}, and Chapter 1 of~\cite{agarwal1993uniqueness}. Finally, we note that~\eqref{eq:velocity-Lipschitz-in-z} is stronger than the one required for our results. In particular, inequality~\eqref{eq:velocity-Lipschitz-in-z} can be replaced with
\[
\|F(t, y_1(t)) - F(t, y_2(t))\| \le a(t)\kappa(\|y_1(t) - y_2(t)\|)\quad\mbox{for any two solutions}\; y_j(\cdot)\;\mbox{of~\eqref{eq:generic-ODE-integral},} 
\]
for the proof of Theorem~\ref{thm:uniqueness-viability}. Similarly, inequality~\eqref{eq:velocity-Lipschitz-in-z} can be replaced with
\[
\|F(t, y(t)) - F(t, w(t))\| \le a(t)\kappa(\|y(t) - w(t)\|)\quad\mbox{for any two solutions}\; y(\cdot), w(\cdot)\;\mbox{of~\eqref{eq:generic-ODE-integral},~\eqref{eq:generic-ODE2-integral}}, 
\]
for the proof of Theorem~\ref{thm:stability}. While stating the assumptions in terms of the solutions we want to study might seem circular, these relaxed versions are helpful when we can prove apriori that any solution at time $t$ belongs to a much smaller set than $\mathcal{S}$ itself, which in turn helps in reducing the constant factor $a(t)$; see, for example, Lemma~\ref{lem:distance-to-boundary-latest}.

We provide two specific instances $a(\cdot)$ and $\kappa(\cdot)$ that satisfy assumption~\ref{eq:Lipschitz-Osgood}.
\paragraph{Example 1. (Osgood functions)}\label{exam:Osgood-function} Consider $a(s) = L$ and any (Osgood) function $\kappa(\cdot)$ such that $\kappa(0) = 0$ and $\lim_{\varepsilon\to0}\,\Psi(\varepsilon) = -\infty.$ Some examples of such functions are $u$, $u\log(1/u)$, $u\log(1/u)\log(\log(1/u))$ (for $u<1$). (Note that functions of the type $\kappa(u) = u^{\alpha}$ or $u(\log(1/u))^{\alpha}$ for $\alpha > 1$ do {\em not} satisfy this assumption.)
Then
\[
\int_0^{\gamma} a(s)\kappa(\delta s)ds = \frac{L}{\delta}\int_0^{\delta\gamma} \kappa(s)ds,\quad \int_{\gamma}^t a(s)ds = L(t - \gamma).
\]
As $\gamma \to 0$, the first integral converges to zero and hence,
\[
\Psi\left(\int_0^{\gamma} a(s)\kappa(\delta s)ds\right) \to -\infty\quad\mbox{as}\quad \gamma\to 0.
\]
This implies that
\[
\Psi\left(\int_0^{\gamma} a(s)\kappa(\delta s)ds\right) + \int_{\gamma}^t a(s)ds \to -\infty\quad\mbox{as}\quad \gamma\to 0.
\]
This yields assumption~\ref{eq:Lipschitz-Osgood}. This example is the Osgood uniqueness theorem.
\paragraph{Example 2. (Nagumo functions)} Consider $a(s) = c/s$ and $\kappa(s) = s$. Then
\[
\int_0^{\gamma} a(s)\kappa(\delta s)ds = c\delta\gamma, \quad\int_{\gamma}^t a(s)ds = c\log(t/\gamma),\quad\mbox{and}\quad \Psi(u) = \log(u).
\]
This implies
\[
\Psi^{-1}\left(\Psi\left(\int_0^{\gamma} a(s)\kappa(\delta s)ds\right) + \int_{\gamma}^t a(s)ds\right) = \exp\left(\left[\log\left({c\delta\gamma}\right) + c\log(t/\gamma)\right]\right) = t^c\gamma^{1-c}\delta,
\]
which converges to zero as $\gamma\to 0,$ for any $c < 1$. This almost recovers the Nagumo uniqueness theorem, which allows for $c = 1$ but requires continuity of $t\mapsto F(t, x)$.
\begin{theorem}[Uniqueness of Solutions]\label{thm:uniqueness-viability} Consider the integral equation~\eqref{eq:generic-ODE-integral}.
\begin{enumerate}
    \item Suppose assumptions~\ref{eq:meas-in-t},~\ref{eq:bounded-in-x} and~\ref{eq:Lipschitz-Osgood} hold. Fix $x\in\mathcal{S}^\circ$. Then there exists $T\in(0, 1]$ such that there is a unique solution $y^*:[0, T]\to\mathcal{S}$ that satisfies~\eqref{eq:generic-ODE-integral} for $t\in[0, T]$. Here $T$ can be chosen to be $\min\{1, \mathrm{dist}(x, \partial\mathcal{S})/B\}$.
    \item If, instead,~\ref{eq:meas-in-t},~\ref{eq:integrably-bounded}, and~\ref{eq:Lipschitz-Osgood} hold, then for any $x\in\mathbb{R}^d$, there exists a unique solution $y^*:[0, 1]\to \mathcal{B}(x, Be^B(1 + \|x\|))$ that satisfies~\eqref{eq:generic-ODE-integral} for $t\in[0, 1]$.
    \item If assumptions~\ref{eq:meas-in-t},~\ref{eq:bounded-in-x},~\ref{eq:tangent-cone-viability} and~\ref{eq:Lipschitz-Osgood} hold and $x\in\mathcal{S}$, then there exists a unique solution $y^*:[0, 1]\to\mathcal{S}$ that satisfies~\eqref{eq:generic-ODE-integral} for $t\in[0,1]$.
\end{enumerate}
\end{theorem}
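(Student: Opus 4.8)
The plan is to combine the existence results of Theorem~\ref{thm:Peano-existence} with a Gronwall-type uniqueness argument built around the Osgood-type condition~\ref{eq:Lipschitz-Osgood}. Existence of a solution in each of the three settings is already supplied by the corresponding parts of Theorem~\ref{thm:Peano-existence} (using that~\ref{eq:Lipschitz-Osgood} implies~\ref{eq:continuous-in-x}), so the entire content of this theorem is the uniqueness half. First I would fix two absolutely continuous solutions $y(\cdot)$ and $w(\cdot)$ of the integral equation~\eqref{eq:generic-ODE-integral} on a common interval (either $[0,T]$ in parts 1,3 or $[0,1]$ in part 2), both with $y(0)=w(0)=x$, both taking values in the relevant set ($\mathcal{S}$, or the ball in part 2). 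Set $\rho(t) := \|y(t) - w(t)\|$. Subtracting the two integral equations and using the triangle inequality together with~\eqref{eq:velocity-Lipschitz-in-z} gives
\[
\rho(t) \le \int_0^t a(s)\,\kappa(\rho(s))\,ds\quad\mbox{for all }t,
\]
with $\rho(0)=0$ and $\rho$ continuous. The goal is to show $\rho \equiv 0$.

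The key step is the differential-inequality / comparison argument. If $\rho$ is not identically zero, pick $t_1$ with $\rho(t_1) > 0$, and let $\gamma \in (0, t_1)$. On $[\gamma, t_1]$, I would bound $\rho(t) \le \rho(\gamma) + \int_\gamma^t a(s)\kappa(\rho(s))\,ds$ and feed this into the standard Bihari--LaSalle inequality: with $\Psi(u) = \int du/\kappa(u)$ monotone, one obtains
\[
\Psi(\rho(t_1)) \le \Psi(\rho(\gamma)) + \int_\gamma^{t_1} a(s)\,ds.
\]
To control $\rho(\gamma)$ for small $\gamma$, I would note that on $[0,\gamma]$ the crude bound $\rho(s) \le B\cdot 2\gamma$ (from~\ref{eq:bounded-in-x}, or the analogous linear-growth bound in part 2, which is finite since solutions stay in a fixed ball) holds, or more sharply $\rho(s) \le \int_0^s a(u)\kappa(\rho(u))du \le \delta s$ for a suitable constant $\delta$ once $\gamma$ is small; then $\Psi(\rho(\gamma)) \le \Psi(\int_0^\gamma a(s)\kappa(\delta s)\,ds)$ by monotonicity of $\Psi$ and $\kappa$. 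Applying $\Psi^{-1}$ (also monotone) yields exactly the quantity appearing in assumption~\ref{eq:Lipschitz-Osgood}:
\[
\rho(t_1) \le \Psi^{-1}\!\left(\Psi\!\left(\int_0^{\gamma} a(s)\kappa(\delta s)\,ds\right) + \int_{\gamma}^{t_1} a(s)\,ds\right),
\]
and the right-hand side tends to $0$ as $\gamma \downarrow 0$ by~\ref{eq:Lipschitz-Osgood}. Hence $\rho(t_1) = 0$, a contradiction, so $\rho \equiv 0$ and the two solutions coincide. For part 2 I would additionally remark that any solution satisfies $\|y(t)\| \le (1+\|x\|)e^{Bt} - 1$ (from Theorem~\ref{thm:Peano-existence}, part 2), hence lies in $\mathcal{B}(x, Be^B(1+\|x\|))$ after a crude bound on $\|y(t)-x\|$ via the integral equation; in part 3 the viability assumption~\ref{eq:tangent-cone-viability} is only needed to guarantee the solution stays in $\mathcal{S}$, which is already handled by Theorem~\ref{thm:Peano-existence}.

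The main obstacle I anticipate is the careful handling of the singularity of $\Psi$ (and possibly of $a(\cdot)$, as in the Nagumo example $a(s)=c/s$) near $s=0$: one must justify that $\Psi(\rho(\gamma))$ is well-defined and finite for $\gamma>0$, that the integrals $\int_0^\gamma a(s)\kappa(\delta s)ds$ converge (which is part of the hypothesis structure), and that the monotonicity manipulations with $\Psi^{-1}$ are valid even when $\Psi$ diverges to $-\infty$ at $0$. A clean way around most of the delicacy is to phrase the argument as: for each fixed $\varepsilon > 0$, choose $\gamma$ small enough (using~\ref{eq:Lipschitz-Osgood}) that the displayed right-hand side is below $\varepsilon$, concluding $\rho(t_1) \le \varepsilon$ for all $\varepsilon$, hence $\rho(t_1)=0$; this avoids ever needing a uniform constant. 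I would also invoke the relaxed form of~\eqref{eq:velocity-Lipschitz-in-z} (stated in the text as holding along solutions rather than for all $y,y' \in \mathcal{S}$) so that the argument applies verbatim under the weaker hypothesis.
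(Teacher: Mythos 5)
Your core argument for parts~1 and~2 is essentially the paper's: set $\rho(t)=\|y_1(t)-y_2(t)\|$, use the crude bound $\rho(s)\le 2Bs$ on $[0,\gamma]$ coming from~\ref{eq:bounded-in-x}, then apply Bihari's generalization of Gr\"onwall on $[\gamma,t]$ to get
$\rho(t)\le \Psi^{-1}\bigl(\Psi\bigl(\int_0^\gamma a(s)\kappa(2Bs)\,ds\bigr)+\int_\gamma^t a(s)\,ds\bigr)$
and send $\gamma\downarrow0$ via~\ref{eq:Lipschitz-Osgood} with $\delta=2B$; part~2 uses the a priori growth bound from Theorem~\ref{thm:Peano-existence}(2) in place of~\ref{eq:bounded-in-x}. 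That is exactly the paper's proof of those two parts.

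Where you diverge is part~3. You argue that solutions automatically lie in $\mathcal{S}$ (because $F$ is only defined on $[0,1]\times\mathcal{S}$) and then run the same Bihari estimate on all of $[0,1]$. The paper instead extends $F$ to $\mathbb{R}^d$ by setting $G(t,y)=F(t,\mathrm{Proj}_{\mathcal{S}}(y))$, checks that $G$ still satisfies~\ref{eq:bounded-in-x} and~\ref{eq:Lipschitz-Osgood} (using that projection onto a closed convex set is $1$-Lipschitz), applies part~1 with $\mathcal{S}=\mathbb{R}^d$ so that $T=1$, and then identifies the unique solution of the projected equation with the viable solution produced by Theorem~\ref{thm:Peano-existence}(3) (since $\tilde y(t)\in\mathcal{S}$ implies $\mathrm{Proj}_{\mathcal{S}}(\tilde y(t))=\tilde y(t)$). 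Both routes work. Yours is slightly more direct but relies on the remark that, since $\mathcal{S}$ is closed and solutions are continuous, $y(s)\in\mathcal{S}$ a.e.\ forces $y(s)\in\mathcal{S}$ for every $s$ --- worth stating explicitly. The paper's projection trick sidesteps that domain bookkeeping entirely and reuses part~1 with no constraint on $T$, which is arguably cleaner. One minor inaccuracy in your narration: you say the viability condition~\ref{eq:tangent-cone-viability} is ``only needed to guarantee the solution stays in $\mathcal{S}$,'' but staying in $\mathcal{S}$ is automatic here; viability is what guarantees that a solution exists on the entire interval $[0,1]$ rather than terminating when the path reaches $\partial\mathcal{S}$. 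This does not break your argument, but the stated rationale for the hypothesis is off.
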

A proof of Theorem~\ref{thm:uniqueness-viability} can be found in Section~\ref{appsubsec:proof-uniqueness-viability}.

The following result provides a stability bound, i.e., a bound on the difference of solutions when the $F$ changes. Here again, traditional results such as those in~\cite{brauer1966perturbations} and Proposition 20.1 of~\cite{Soderlind2024LogarithmicNorms} assume continuity with respect to $t$.

For any $a, b > 0$, set $a\vee b = \max\{a, b\}$.
\begin{theorem}[Stability of the Solution]\label{thm:stability}
    For a function $G:[0,1]\times\mathcal{S}\to\mathbb{R}^d$ and $x'\in\mathcal{S}$, consider the integral equation
    \begin{equation}\label{eq:generic-ODE2-integral}
        w(t) = x' + \int_0^t G(s, w(s))ds,\quad\mbox{for}\quad t\in[0, 1].
    \end{equation}
    Suppose $F(\cdot, \cdot)$ and $G(\cdot, \cdot)$ satisfy assumptions~\ref{eq:continuous-in-x},~\ref{eq:meas-in-t}, and~\ref{eq:bounded-in-x}. Suppose, additionally, $F(\cdot, \cdot)$ satisfies~\ref{eq:Lipschitz-Osgood} with a non-decreasing function $\kappa(\cdot)$.
    For any $\delta \geq 0$, set
    \begin{equation}\label{eq:error-estimate-of-F-G}
    \mathcal{E}_{\delta}(t) ~:=~ 2B\delta + \|x' - x\| + \int_{\delta}^{t\vee\delta} \|F(s, w(s)) - G(s, w(s))\|ds.
    \end{equation}
    Let 
    \[
    T = \sup\{\tau\in[0, 1]:\, w(s), y(s)\in\mathcal{S}\mbox{ for all }s\in[0, \tau]\}.
    \]
    If $x, x'\in\mathcal{S}^\circ$, then the (unique) solution of~\eqref{eq:generic-ODE-integral} and $w:[0,T]\to\mathcal{S}$, any solution of~\eqref{eq:generic-ODE2-integral},
    \begin{equation}\label{eq:perturbation-bound}
    \|y(t) - w(t)\| \le \Psi^{-1}\left(\Psi\left(\mathcal{E}_{\delta}(t)\right) + \int_{\delta}^{t\vee\delta} a(s)ds\right),\quad\mbox{for all}\quad t,\delta\in[0, T].
    \end{equation}
    
    Moreover, if $F(\cdot,\cdot)$ and $G(\cdot, \cdot)$ satisfy~\ref{eq:tangent-cone-viability}, then, for any solution $w(\cdot)$ of~\eqref{eq:generic-ODE2-integral} such that $w(s)\in\mathcal{S}$ for all $s\in[0, 1]$, inequality~\eqref{eq:perturbation-bound} holds for all $t\in[0, 1]$.
\end{theorem}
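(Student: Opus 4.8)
The proof is a two-stage generalized Gr\"onwall (Bihari--LaSalle) comparison; the split at $\delta$ is forced by the fact that the weight $a(\cdot)$ in~\eqref{eq:velocity-Lipschitz-in-z} need not be integrable near $0$. On $[0,\delta]$ we bound the difference of the two solutions crudely, using only~\ref{eq:bounded-in-x}; on $[\delta,T]$ we run a comparison in which $a(\cdot)$ is harmless. Existence and uniqueness of $y(\cdot)$ on $[0,T]$ come from Theorems~\ref{thm:Peano-existence} and~\ref{thm:uniqueness-viability} (note~\ref{eq:Lipschitz-Osgood} implies~\ref{eq:continuous-in-x}) using $x\in\mathcal{S}^\circ$; for the inequality itself we need only the integral equations together with~\ref{eq:bounded-in-x} and~\eqref{eq:velocity-Lipschitz-in-z}.

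\textbf{Stage 1: the initial window.} For $t\le\delta$, subtract~\eqref{eq:generic-ODE-integral} from~\eqref{eq:generic-ODE2-integral} and bound each integrand by~\ref{eq:bounded-in-x}:
\[
\|y(t)-w(t)\|\;\le\;\|x-x'\|+\int_0^t\bigl(\|F(s,y(s))\|+\|G(s,w(s))\|\bigr)\,ds\;\le\;\|x-x'\|+2Bt\;\le\;\mathcal{E}_\delta(t),
\]
the last step because $t\vee\delta=\delta$ for $t\le\delta$, so the integral in~\eqref{eq:error-estimate-of-F-G} is empty and $\mathcal{E}_\delta(t)=2B\delta+\|x-x'\|$. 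As $\Psi,\Psi^{-1}$ are increasing and $\int_\delta^{t\vee\delta}a=0$ here, this is precisely~\eqref{eq:perturbation-bound} for $t\le\delta$; it also records $\|y(\delta)-w(\delta)\|\le 2B\delta+\|x-x'\|$, which seeds Stage 2.

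\textbf{Stage 2: integral inequality and comparison.} Write $\phi(t)=\|y(t)-w(t)\|$, continuous since solutions of the integral equations are absolutely continuous. Subtracting~\eqref{eq:generic-ODE-integral} and~\eqref{eq:generic-ODE2-integral} and splitting the integral at $\delta$ gives $\phi(t)\le\phi(\delta)+\int_\delta^t\|F(s,y(s))-G(s,w(s))\|\,ds$ for $\delta\le t\le T$. Since both curves lie in $\mathcal{S}$ on $[0,T]$, inequality~\eqref{eq:velocity-Lipschitz-in-z} applied along $y(\cdot),w(\cdot)$ gives $\|F(s,y(s))-G(s,w(s))\|\le a(s)\kappa(\phi(s))+\|F(s,w(s))-G(s,w(s))\|$; inserting this and the Stage-1 bound $\phi(\delta)\le 2B\delta+\|x-x'\|$ yields
\[
\phi(t)\;\le\;\mathcal{E}_\delta(t)+\int_\delta^t a(s)\kappa(\phi(s))\,ds,\qquad \delta\le t\le T,
\]
where $t\mapsto\mathcal{E}_\delta(t)$ is non-decreasing (nonnegative integrand) and $\ge 2B\delta>0$. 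Fix $\tau\in[\delta,T]$, bound $\mathcal{E}_\delta(t)\le\mathcal{E}_\delta(\tau)$ on $[\delta,\tau]$, and set $v(t)=\mathcal{E}_\delta(\tau)+\int_\delta^t a(s)\kappa(\phi(s))\,ds\ge\phi(t)$. Because $\kappa$ is non-decreasing, $v'(t)=a(t)\kappa(\phi(t))\le a(t)\kappa(v(t))$, so $\tfrac{d}{dt}\Psi(v(t))\le a(t)$; integrating on $[\delta,\tau]$ (where $v(\delta)=\mathcal{E}_\delta(\tau)$) and applying the increasing $\Psi^{-1}$ gives $\phi(\tau)\le v(\tau)\le\Psi^{-1}\bigl(\Psi(\mathcal{E}_\delta(\tau))+\int_\delta^\tau a(s)\,ds\bigr)$. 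Since $\tau\in[\delta,T]$ was arbitrary, this and Stage 1 give~\eqref{eq:perturbation-bound} for all $t,\delta\in[0,T]$.

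\textbf{Viability extension, and the main obstacle.} When $F$ and $G$ additionally satisfy~\ref{eq:tangent-cone-viability}, Theorem~\ref{thm:Peano-existence}(3) applied to $F$ keeps $y(\cdot)$ in $\mathcal{S}$ on all of $[0,1]$, and the given $w(\cdot)$ is assumed to stay in $\mathcal{S}$ on $[0,1]$; hence $T=1$ and Stages 1--2 give~\eqref{eq:perturbation-bound} on the whole interval. The delicate part is the comparison in Stage 2 with a time-varying ``constant'' $\mathcal{E}_\delta(t)$ and a merely non-decreasing (hence possibly discontinuous) $\kappa$: one must verify that $\Psi$ is well-defined and locally Lipschitz on the relevant range, that $t\mapsto\Psi(v(t))$ is absolutely continuous with the claimed a.e.\ derivative, and that one never divides by $\kappa(v(t))=0$ --- which is exactly why the term $2B\delta$ is built into $\mathcal{E}_\delta$, keeping $\mathcal{E}_\delta\ge 2B\delta>0$ --- while the uniform estimate $\phi\le\|x-x'\|+2B$ from~\ref{eq:bounded-in-x} keeps every integral finite (so that whenever $\Psi(\mathcal{E}_\delta(t))+\int_\delta^t a$ escapes the range of $\Psi$ the asserted bound is simply vacuous). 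A secondary point is to invoke~\eqref{eq:velocity-Lipschitz-in-z} only along the two solution curves, both of which lie in $\mathcal{S}$ up to time $T$, so that any bad behaviour of $F$ outside $\mathcal{S}$ never enters.
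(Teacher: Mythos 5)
Your proof is correct and follows essentially the same strategy as the paper's: split the interval at $\delta$, bound crudely on $[0,\delta]$ using~\ref{eq:bounded-in-x} alone, then run a Bihari comparison on $[\delta,t]$. The one technical difference is in how the comparison function is built. You freeze the drift at its terminal value $\mathcal{E}_\delta(\tau)$ and compare against $v(t)=\mathcal{E}_\delta(\tau)+\int_\delta^t a\,\kappa(\phi)$, so that $v'\le a(t)\kappa(v)$ immediately yields $\tfrac{d}{dt}\Psi(v)\le a(t)$ with a constant initial value $v(\delta)=\mathcal{E}_\delta(\tau)$. The paper instead keeps the drift time-varying, defines $V(t)=\mathcal{E}_\delta(t)+\int_\delta^t a\,\kappa(\Delta)$, divides by $\kappa(V)$, and uses monotonicity of $\kappa$ together with $V\ge\mathcal{E}_\delta$ to compare $\mathcal{E}'_\delta/\kappa(V)\le\mathcal{E}'_\delta/\kappa(\mathcal{E}_\delta)$ before integrating. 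Your freeze-the-constant variant is a standard, arguably slightly cleaner route to the same bound (the two agree exactly at $t=\tau$); the paper's version makes the role of $\Psi(\mathcal{E}_\delta(t))-\Psi(\mathcal{E}_\delta(\delta))$ more transparent but requires $\mathcal{E}_\delta$ to be absolutely continuous, which you sidestep. Your closing paragraph on the technical safeguards --- the $2B\delta$ padding to keep $\kappa$ bounded away from zero, the a priori bound $\phi\le\|x-x'\|+2B$ to keep integrals finite, and invoking~\eqref{eq:velocity-Lipschitz-in-z} only along the two in-$\mathcal{S}$ trajectories --- is accurate and worth stating; the paper leaves these implicit.
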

A proof of Theorem~\ref{thm:stability} can be found in Section~\ref{appsubsec:proof-of-stability}. Note that $\mathcal{E}_{\delta}(t)$ in~\eqref{eq:error-estimate-of-F-G} is well-defined only for $t$ such that $w(s)\in\mathcal{S}$ for all $s\in[0, t]$, when the functions $F, G$ are defined only on $[0, 1]\times\mathcal{S}$. If, instead of~\ref{eq:bounded-in-x}, one assumes~\ref{eq:integrably-bounded} for $F(\cdot, \cdot), G(\cdot, \cdot)$, then inequality~\eqref{eq:perturbation-bound} continues to hold with $B$ in the definition~\eqref{eq:error-estimate-of-F-G} of $\mathcal{E}_{\delta}(t)$ replaced with $(1 + \|x\|\vee\|x'\|)Be^B$.

To illustrate Theorem~\ref{thm:stability}, consider $a(s) = c/s$ and $\kappa(s) = s$. We get $\Psi(u) = \log(u)$ and $\int_{\delta}^t a(s)ds = c\log(t/\delta)$. Therefore,~\eqref{eq:perturbation-bound} becomes
\[
\|y(t) - w(t)\| \le \exp\left(\log(\mathcal{E}_{\delta}(t)) + c\log(t/\delta)\right) = t^c\delta^{-c}\mathcal{E}_{\delta}(t).
\]
Because $\mathcal{E}_{\delta}(t) \ge 2B\delta$, the upper bound cannot converge to zero if $c \ge 1$ as $\delta\to0$. If $c < 1$, then assuming $\sup_{x\in\mathcal{S}}\|F(s, x) - G(s, x)\| \to 0$ and $\|x - x'\|\to0$, one can choose $\delta\to0$ so that the upper bound converges to zero. 

Finally, the following result combines techniques from stability to prove existence and unique of solutions for perturbed ODEs. This is useful for proving the existence of the empirical rectified flow.
\begin{theorem}[Existence under small uniform perturbations of the vector field]\label{prop:extension}
Consider the integral equation~\eqref{eq:generic-ODE-integral}. Suppose assumptions~\ref{eq:meas-in-t} and~\ref{eq:Lipschitz-Osgood} hold with $\kappa(u) = u$ and $a(t) = L$. Set $R := e^L(\int_0^1 \|F(s, x)\|ds) + 10.$
Let $G : [0,1]\times\mathbb{R}^d \to \mathbb{R}^d$ be a  continuous and
locally Lipschitz in the second argument, i.e., each $y\in\mathbb{R}^d$ there exists constants $L_y, \varepsilon_y > 0$ such that $\sup_{t\in[0,1]}\|G(t, y_1) - G(t, y_2)\| \le L_y\|y_1 - y_2\|$ for all $y_1, y_2\in\mathcal{B}(y, \varepsilon_y)$. Set
\[
    \Delta := \sup_{(t,y)\in [0,1]\times \mathcal{B}(x, R)} \|F(t,y) - G(t,y)\|.
\]
If ${\Delta}(e^{L}-1)/{L} \le 9,$
then the integral equation~\eqref{eq:generic-ODE2-integral} (with $x' = x$) admits a unique solution $w : [0,1] \to \mathbb{R}^d$. Moreover, the unique solutions $y(\cdot)$ of~\eqref{eq:generic-ODE-integral} and $w(\cdot)$ of~\eqref{eq:generic-ODE2-integral} satisfy
\[
    \|y(t)-w(t)\| \;\le\; \frac{\Delta}{L}\,\bigl(e^{Lt}-1\bigr),
    \qquad t\in[0,1].
\]
In particular, $w(t)\in\mathcal{B}(x, R)$ for $t\leq 1$.
\end{theorem}
A proof can be found in Section~\ref{appsubsec:proof-of-extension}. 

\paragraph{Application to the case of uniform distribution}
Suppose $\mu_0 = \mu_1 = \mbox{Unif}[0,1]$. From~\eqref{eq:uniform-velocity-field}, we get for $t\in[0, 1/2]$,
\[
v(t, z) = \frac{1}{2t(1-t)}\times\begin{cases}z(1-2t), &\mbox{if }z \le t,\\
(1-2z)t, &\mbox{if }t \le z \le 1 - t,\\
(1-z)(2t-1), &\mbox{if }z \ge 1 - t.\end{cases}
\]
The expression is analogous for $t\in[1/2, 1]$.
It is easy to see that $|v(t, z)| \le 1$ for all $t\in[0, 1], z\in[0, 1]$. This can also be seen from the fact that $v(t, z) = \mathbb{E}[X_1 - X_0|X_t = z]$ which can at most be 1 in absolute value because $|X_1 - X_0| \le 1$. Moreover, inequality~\eqref{eq:uniform-Lipschitz-cond} verifies assumption~\ref{eq:Lipschitz-Osgood} with $a(s) = 1/(2s(1-s))$ and $\kappa(u) = u$ so that there exists a unique solution $z^*:[0,1]\to[0, 1]$ satisfying~\eqref{eq:rectified-flow-ODE} almost everywhere.
\section{Existence, Regularity, and Estimation in the Unbounded Case}

\label{sec:unbounded}

Here, we address the existence and regularity of the population rectified flow, as well as statistical rates for the regression-based estimator, when $\Omega=\mathbb{R}^d$. To do so, we need to assume that the underlying densities 
have full support, have sufficient H{\"o}lder regularity, and satisfy a strongly log-concave bound. Specifically, we assume that
\begin{enumerate}[label=(U\arabic*),ref=U\arabic*,series=myU]
\item $p_0(x),p_1(x)>0$, for all $x \in \mathbb{R}^d$.\label{assump:unbnd-densitiy-positive}
\item $\phi_0(x) = -\log p_0(x), \phi_1(x) = -\log p_1(x)$, are twice differentiable and their Hessians $x\mapsto \nabla^2\phi_j(x),$ $j=0,1$ are $(\beta-2)$-H{\"o}lder smooth, i.e., for $j = 0,1$,\label{assump:unbnd-log-Holder}
\begin{align*}
\sup_{x\in\mathbb{R}^d,\,\|k\|_1 = m}\,\left|\frac{\partial^{m}}{\partial x_1^{k_1}\partial x_2^{k_2}\cdots \partial x_d^{k_d}}\phi_j(x)\right| \le C_m < \infty,\quad 2 \le m \le \lceil\beta\rceil - 1,\;\mbox{and}\\
\max_{\|k\|_1 = \lceil\beta\rceil - 1}\sup_{x\neq y}\left|\frac{\partial^{\|k\|_1}}{\partial x_1^{k_1}\partial x_2^{k_2}\cdots \partial x_d^{k_d}}\phi_j(x) - \frac{\partial^{\|k\|_1}}{\partial y_1^{k_1}\partial y_2^{k_2}\cdots \partial y_d^{k_d}}\phi_j(y)\right| \le C_{\beta}\|x - y\|^{\beta - \lceil\beta\rceil + 1}.
\end{align*}
Here $k = (k_1, \ldots, k_d)$ is a vector of non-negative integers and $\|k\|_1 = \sum_{j=1}^d k_j$.
\item $p_0,p_1$ are strongly log-concave with parameter $\alpha$, i.e., for $j=0,1$ and $x\in\mathbb{R}^d$,\label{assump:strong-log-concave}
\[
\phi_j(x) \ge \phi_j(x') + \nabla\phi_j(x')^{\top}(x' - x) + \frac{\alpha}{2}\|x - x'\|^2\quad\mbox{for all}\quad x, x'\in\mathbb{R}^d, j = 0, 1.
\]
\end{enumerate}

In Section \ref{sub:reguexistanceunbounded} we establish several regularity properties of the velocity and related quantities, from which we will conclude in Theorem \ref{teo:regularity} the existence and regularity of the population rectified transport map. This result bears some resemblance with Theorem 5.1 in \cite{gao2024gaussian}, that also establishes existence of the flow under strong log-concavity. However, their setup is substantially different so that none of these results can be inferred from the other. In Section \ref{subsection:estimationrect} we establish asymptotic bias, variance, and a central limit theorem for the regression-based estimator of $R(x)$. The main result of this section is Theorem \ref{teo:CLT}, which establishes a central limit theorem at a faster rate than the one for usual kernel regression estimators. Throughout this section, we use the notation $\Delta = X_1 - X_0$.

\subsection{Regularity and existence}\label{sub:reguexistanceunbounded}
One question that arises with the previous assumptions is whether they will imply the usual notion of $\beta$--H{\"o}lder smoothness in $p_0,p_1$ and whether this will translate into smoothness of the density of $X_t=tX_1+(1-t)X_0$. These two questions have positive answers: $\beta$-H{\"o}lder smoothness of $p_0,p_1$ is addressed in Lemma \ref{lemma:holderpj}. Additionally, $\beta$--H{\"o}lder smoothness of $p_t$ (with bounds that are independent of $t$), and other related quantities are stated in Lemma \ref{lemma:holdersmooth}. These are proved in Sections \ref{sub:holderpap} and in Section \ref{sub:proofholdersmooth}, respectively.
\begin{lemma}\label{lemma:holderpj}

For any vector $k = (k_1, \ldots, k_d)$ of non-negative integers such that $\|k\|_1 \le \lceil\beta\rceil - 1$, there exist constants $c_{0,k},\ldots, c_{\beta,k}$, such that for all $x\in\mathbb{R}^d$ and $j\in\{0,1\}$
\begin{equation}\label{eq:bound-on-derivatives}
\left|\frac{\partial^{\|k\|_1}p_j(x)}{\partial x_1^{k_1}\partial x_2^{k_2}\cdots\partial x_d^{k_d}}\right| \leq p_j(x)\left(\sum_{l=0}^{\max\{2,\lceil\beta\rceil -\|k\|_1 - 1\}} c_{l,k}\|x\|^l + c_{\beta,k}\|x\|^{\beta - \|k\|_1}\right).
\end{equation}
In particular, if $p_j$ are log-concave then $p_j$ is $\beta$-H{\"o}lder continuous.
\end{lemma}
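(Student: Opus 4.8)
The plan is to prove the bound~\eqref{eq:bound-on-derivatives} by induction on $\|k\|_1$, differentiating the identity $p_j = e^{-\phi_j}$ and tracking how the H\"older/polynomial growth of the derivatives of $\phi_j$ propagates. First I would record the base case $k = 0$, where the claimed bound reads $|p_j(x)| \le p_j(x)(c_{0,0} + \dots)$, which holds trivially. For the inductive step, suppose the bound holds for all multi-indices of order $m - 1 \le \lceil\beta\rceil - 2$, and let $\|k\|_1 = m$. Write $k = k' + e_i$ for some coordinate $i$ with $k'_i = k_i - 1$, so $\partial^k p_j = \partial_{x_i}(\partial^{k'} p_j)$. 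The key structural fact is that $\partial^{k'} p_j(x) = p_j(x) Q_{k'}(x)$ where $Q_{k'}$ is a universal polynomial expression in the partial derivatives of $\phi_j$ of orders $1$ through $\|k'\|_1$ (this is the Fa\`a di Bruno / Bell-polynomial expansion of $\partial^{k'} e^{-\phi_j}$). Differentiating once more gives
\[
\partial^k p_j = p_j\bigl(-\partial_{x_i}\phi_j\bigr) Q_{k'} + p_j\,\partial_{x_i} Q_{k'},
\]
so I need growth bounds on $\partial_{x_i}\phi_j$, on $Q_{k'}$, and on $\partial_{x_i} Q_{k'}$.

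Next I would supply the needed growth bounds on the derivatives of $\phi_j$ themselves. By assumption~\ref{assump:unbnd-log-Holder}, all derivatives of $\phi_j$ of order $2 \le \ell \le \lceil\beta\rceil - 1$ are uniformly bounded by constants, and the derivatives of order $\lceil\beta\rceil - 1$ are $(\beta - \lceil\beta\rceil + 1)$-H\"older. Integrating the H\"older bound once shows the order-$(\lceil\beta\rceil - 1)$ derivatives satisfy a bound of the form $C(1 + \|x\|^{\beta - \lceil\beta\rceil + 1})$ only after one notes that they are actually bounded; more importantly, integrating the uniform bound on the second derivative shows $\|\nabla\phi_j(x)\| \le \|\nabla\phi_j(0)\| + C_2\|x\|$, i.e.\ the gradient grows at most linearly, and hence $\phi_j$ grows at most quadratically. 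So among the derivatives of $\phi_j$, only the first derivative carries genuine (linear) growth in $\|x\|$; all higher ones up to order $\lceil\beta\rceil - 1$ are bounded, except that the top-order term contributes the fractional power $\|x\|^{\beta - \|k\|_1}$ when we differentiate a bounded quantity into a H\"older one. I would then assemble the bound on $Q_{k'}$: it is a sum of monomials in the $\partial^{\le \|k'\|_1}\phi_j$; a monomial containing $a$ factors of first-order derivatives and the rest bounded contributes at most $C\|x\|^a$, and the number of first-order factors in any term of $\partial^{\|k'\|_1} e^{-\phi_j}$ is at most $\|k'\|_1$ — but crucially, by a standard counting of Bell polynomials, a term with $\|k'\|_1$ copies of $\nabla\phi_j$ appears, so $Q_{k'}$ grows like $\|x\|^{\|k'\|_1}$. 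Feeding this into the displayed recursion: the term $(-\partial_{x_i}\phi_j) Q_{k'}$ grows like $\|x\| \cdot \|x\|^{\|k'\|_1} = \|x\|^{\|k\|_1}$, and $\partial_{x_i} Q_{k'}$ grows like $\|x\|^{\|k\|_1 - 1}$ from the product rule (differentiating one factor either lowers its growth contribution by replacing a first-order derivative with a bounded one, or keeps a first-order derivative), plus a H\"older remainder of size $\|x\|^{\beta - \|k\|_1}$ coming from hitting a top-order derivative. Collecting, $\partial^k p_j / p_j$ is bounded by $\sum_{l=0}^{\|k\|_1} c_{l,k}\|x\|^l + c_{\beta,k}\|x\|^{\beta - \|k\|_1}$, and since $\|k\|_1 \le \lceil\beta\rceil - 1$ we can absorb the polynomial into the form stated (replacing $\|k\|_1$ by $\max\{2, \lceil\beta\rceil - \|k\|_1 - 1\}$ is a harmless weakening because $\lceil\beta\rceil - \|k\|_1 - 1 \ge$ the needed exponent only needs checking, and in any case the smaller exponents are dominated for $\|x\|$ large and the constants can absorb the behavior near the origin).

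Finally, for the last sentence: if $p_j$ is log-concave, then $\phi_j$ is convex, and combined with the upper bound $\phi_j(x) \le \phi_j(0) + \nabla\phi_j(0)^\top x + \tfrac{C_2}{2}\|x\|^2$ and the fact that a convex integrable function must also have a lower bound of the form $\phi_j(x) \ge a\|x\| + b$ for some $a > 0$ (otherwise $e^{-\phi_j}$ would not be integrable), the factor $p_j(x) = e^{-\phi_j(x)}$ decays at least like $e^{-a\|x\|}$, which dominates every polynomial and fractional-power factor on the right-hand side of~\eqref{eq:bound-on-derivatives}. Hence $\partial^k p_j(x) \to 0$ as $\|x\| \to \infty$ for every $\|k\|_1 \le \lceil\beta\rceil - 1$, and the order-$(\lceil\beta\rceil-1)$ derivatives are globally bounded; their $(\beta - \lceil\beta\rceil + 1)$-H\"older continuity then follows on a compact set from continuity of $p_j$ times the H\"older derivative of $\phi_j$, and at infinity from the exponential decay, giving global $\beta$-H\"older continuity of $p_j$.

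The main obstacle I anticipate is the bookkeeping in the inductive step: carefully identifying which terms in the Fa\`a di Bruno expansion of $\partial^{k} e^{-\phi_j}$ carry the maximal power $\|x\|^{\|k\|_1}$ (only those built purely from first-order derivatives of $\phi_j$) versus which contribute the H\"older remainder term $\|x\|^{\beta - \|k\|_1}$ (only those involving a top-order, order-$(\lceil\beta\rceil - 1)$ derivative, and then only after one more differentiation), and making sure the interplay of the two never produces an exponent outside the stated envelope. The log-concavity corollary is comparatively soft once the exponential-decay lower bound on $\phi_j$ is in hand.
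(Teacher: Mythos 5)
Your route---Fa\`a di Bruno expansion of $\partial^k e^{-\phi_j}$, linear growth of $\nabla\phi_j$ plus uniform boundedness of its higher derivatives under~\ref{assump:unbnd-log-Holder}, and for the H\"older conclusion the exponential decay $p_j(x) \le e^{-a\|x\|+b}$ forced by convexity and integrability---is essentially the paper's own proof; inducting on $\|k\|_1$ rather than expanding directly is cosmetic. The genuine gap is the step where you try to force your derived bound into the printed envelope. You correctly obtain $|\partial^k p_j|/p_j \le C(1+\|x\|^{\|k\|_1})$: the Bell-polynomial term $(\nabla\phi_j)^{\|k\|_1}$ really does contribute degree $\|k\|_1$ (take $\phi_j(x)=x^2/2$ in $d=1$; then $p_j^{(3)}=(3x-x^3)p_j$ and $p_j^{(4)}=(x^4-6x^2+3)p_j$). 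You then call replacing $\|k\|_1$ by $\max\{2,\lceil\beta\rceil - \|k\|_1 - 1\}$ a ``harmless weakening.'' It is not: whenever $\|k\|_1 > \max\{2,\lceil\beta\rceil - \|k\|_1 - 1\}$ (e.g.\ $\beta=3.5$, $\|k\|_1=3$, where the claim would require $\|x\|^3\lesssim \|x\|^2+\|x\|^{1/2}$ for large $\|x\|$), the absorption fails. The right conclusion from your computation is a degree-$\|k\|_1$ polynomial bound---which is what the paper actually invokes downstream (``degree bounded by $l+1$'' in the proof of Lemma~\ref{lemma:holdersmooth}), not the smaller degree printed in~\eqref{eq:bound-on-derivatives}. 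Do not paper over this: record the degree-$\|k\|_1$ bound and observe that it suffices for the second sentence. Relatedly, the fractional factor $\|x\|^{\beta-\|k\|_1}$ does not arise in the pointwise Fa\`a di Bruno estimate at all, since every $\phi_j$-derivative of order $\le \lceil\beta\rceil - 1$ appearing there is pointwise bounded under~\ref{assump:unbnd-log-Holder}; it is only relevant to the H\"older \emph{modulus} of the top-order derivative, which is a separate estimate.

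For the last sentence of the lemma your outline is right, but the final modulus estimate should be made explicit with the dichotomy the paper uses: if $\|x-y\|\le 1$, expand $\partial^k p_j(x) - \partial^k p_j(y)$ term by term, using the $(\beta-\lceil\beta\rceil+1)$-H\"older modulus of $\partial^{\lceil\beta\rceil-1}\phi_j$ together with the Lipschitz continuity of $p_j$ (which follows from the global boundedness of $\nabla p_j$ that you already have); if $\|x-y\|>1$, use global boundedness, namely $|\partial^k p_j(x)-\partial^k p_j(y)|\le 2\|\partial^k p_j\|_\infty \le 2\|\partial^k p_j\|_\infty \|x-y\|^{\beta-\lceil\beta\rceil+1}$. ``On a compact set \dots and at infinity from exponential decay'' is not the right split, since the H\"older estimate concerns pairs $(x,y)$ at fixed small separation, not individual far-out points.
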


\begin{lemma}\label{lemma:holdersmooth}
For any $k>0$ define the function $$f_k(t,z)=\int\delta^kp_0(z-t\delta)p_0(z+(1-t)\delta)d\delta=\mathbb{E}\left(\Delta|X_s=z\right)p_t(z).$$ 

Under assumptions \eqref{assump:unbnd-log-Holder} and \eqref{assump:strong-log-concave}, 
for each $t$, each component of the map $z\to f_k(t,z)$ is $\beta$-H{\"o}lder continuous with a constant independent on $t$. In particular, $z\to p_t(z)$ and $z\to v(t,z)p_t(z)$ are $\beta$-H{\"o}lder functions.
\end{lemma}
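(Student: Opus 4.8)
The plan is to differentiate $f_k(t,\cdot)$ under the integral sign up to order $\lceil\beta\rceil-1$, bound the resulting integrals via the pointwise derivative estimates of Lemma~\ref{lemma:holderpj}, and make every bound uniform in $t\in[0,1]$ through a completing‑the‑square argument. Since each entry of $f_k(t,z)$, as well as the special cases $p_t$ (``$k=0$'') and the $j$th coordinate of $v(t,z)p_t(z)=\int\delta\,p_0(z-t\delta)p_1(z+(1-t)\delta)\,d\delta$, is of the form $\int\delta^{\nu}p_0(z-t\delta)p_1(z+(1-t)\delta)\,d\delta$ for a fixed multi-index $\nu$ (with $\delta^\nu=\delta_1^{\nu_1}\cdots\delta_d^{\nu_d}$; note that the second factor in the stated definition of $f_k$ should be $p_1$, the density of $X_1$), it suffices to treat one such scalar integral. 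Write $x_j^\star$ for the mode of $p_j$; by~\eqref{assump:strong-log-concave}, $p_j(x)\le C_j\exp(-\tfrac{\alpha}{2}\|x-x_j^\star\|^2)$, and by~\eqref{assump:unbnd-log-Holder} together with~\eqref{assump:strong-log-concave} the Hessian $\nabla^2\phi_j$ is bounded, so $\nabla\phi_j$ grows at most linearly. The elementary identity
\[
\|z-t\delta\|^2+\|z+(1-t)\delta\|^2=\bigl(t^2+(1-t)^2\bigr)\,\Bigl\|\delta+\tfrac{1-2t}{t^2+(1-t)^2}\,z\Bigr\|^2+\frac{\|z\|^2}{t^2+(1-t)^2},\qquad t^2+(1-t)^2\in\bigl[\tfrac12,1\bigr],
\]
shows that after the substitution $\eta=\delta+\tfrac{1-2t}{t^2+(1-t)^2}z$ (a shift by a linear function of $z$ of operator norm bounded uniformly in $t$) the weight $p_0(z-t\delta)p_1(z+(1-t)\delta)$ times any polynomial in $\|z-t\delta\|,\|z+(1-t)\delta\|,\|\delta\|$ is dominated by $\mathrm{poly}(\|z\|,\|\eta\|)\,e^{-\frac{\alpha}{4}\|\eta\|^2}e^{-\frac{\alpha}{2}\|z\|^2}$; integrating out $\eta$ gives a bound that is finite, uniform in $t\in[0,1]$ (including the degenerate endpoints), and even uniformly bounded over $z\in\mathbb{R}^d$. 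Call this the \emph{$\delta$-bound}.

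First I would use dominated convergence, with the $\delta$-bound as the majorant and the estimates $|D^\alpha p_j(x)|\le p_j(x)Q_\alpha(\|x\|)$ of Lemma~\ref{lemma:holderpj} ($D^\alpha$ the mixed partial of multi-index $\alpha$, $Q_\alpha$ of polynomial growth), to justify, for every $\gamma$ with $\|\gamma\|_1\le\lceil\beta\rceil-1$, the Leibniz formula
\[
D_z^{\gamma}f_k(t,z)=\sum_{\alpha+\beta'=\gamma}\binom{\gamma}{\alpha}\int\delta^{\nu}\,(D^{\alpha}p_0)(z-t\delta)\,(D^{\beta'}p_1)(z+(1-t)\delta)\,d\delta,
\]
and to conclude that $D_z^\gamma f_k(t,\cdot)$ is continuous and, again by the $\delta$-bound, satisfies $\sup_z\|D_z^\gamma f_k(t,z)\|\le C$ with $C$ independent of $t$. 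This already gives $f_k(t,\cdot)\in C^{\lceil\beta\rceil-1}$ with all derivatives of order up to $\lceil\beta\rceil-1$ bounded uniformly in $t$.

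Next, for $\|\gamma\|_1=\lceil\beta\rceil-1$ and $z,z'$ with $\|z-z'\|\le1$ (the case $\|z-z'\|>1$ following from the previous boundedness), I would bound $D_z^\gamma f_k(t,z)-D_z^\gamma f_k(t,z')$ summand by summand, telescoping each product difference $(D^{\alpha}p_0)(z{-}t\delta)(D^{\beta'}p_1)(z{+}(1{-}t)\delta)-(D^{\alpha}p_0)(z'{-}t\delta)(D^{\beta'}p_1)(z'{+}(1{-}t)\delta)$ into a term in which only the $p_0$-factor changes argument and one in which only the $p_1$-factor changes argument. Since $\phi_j$ is twice differentiable, $\lceil\beta\rceil-1\ge1$, so in each summand at least one of $\alpha,\beta'$ has order $<\lceil\beta\rceil-1$: the increment of that lower-order factor is controlled by the mean value theorem and the order-$(\|\alpha\|_1{+}1)$, resp.\ order-$(\|\beta'\|_1{+}1)$, bound of Lemma~\ref{lemma:holderpj}, contributing a factor $\|z-z'\|\le\|z-z'\|^{\beta-\lceil\beta\rceil+1}$ against a quantity the $\delta$-bound handles. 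For the at-most-one remaining full-order factor, say $D^{\gamma}p_0$, I would use $p_0=e^{-\phi_0}$ and Fa\`a di Bruno to write $D^{\gamma}p_0=p_0\,P_\gamma$ with $P_\gamma$ a polynomial in the partials of $\phi_0$ of order $\le\lceil\beta\rceil-1$; as the partials of order $\ge2$ are bounded, $\nabla\phi_0$ has linear growth, and the order-$(\lceil\beta\rceil{-}1)$ partial is $(\beta-\lceil\beta\rceil+1)$-H\"older (assumption~\eqref{assump:unbnd-log-Holder}), one gets, for $\|y-y'\|\le1$,
\[
\|D^{\gamma}p_0(y)-D^{\gamma}p_0(y')\|\le\Phi_0(y)\,\|y-y'\|^{\beta-\lceil\beta\rceil+1},\qquad\Phi_0(y)\le\mathrm{poly}(1+\|y\|)\,e^{-\frac{\alpha}{4}\|y-x_0^\star\|^2},
\]
and symmetrically for $p_1$. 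Substituting these estimates into the telescoped sum and applying the $\delta$-bound once more (the decaying factors $\Phi_j$ and $p_j$ again combine into a product of two Gaussians in $\delta$ with effective precision $\gtrsim\alpha$, uniformly in $t$) bounds each summand, hence $\|D_z^\gamma f_k(t,z)-D_z^\gamma f_k(t,z')\|$, by $C\|z-z'\|^{\beta-\lceil\beta\rceil+1}$ with $C$ independent of $t,z,z'$. Combined with the previous step, $f_k(t,\cdot)$ is $\beta$-H\"older with a $t$-independent constant; taking $\nu=0$ gives the claim for $p_t$ and $\nu=e_j$ the claim for the $j$th coordinate of $v(t,\cdot)p_t(\cdot)$.

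The step I expect to be the main obstacle is the uniformity in $t$: one must verify that \emph{every} $\delta$-integral that arises is bounded independently of $t\in[0,1]$, and in particular that the argument does not degenerate as $t\to0$ or $t\to1$, where one of the two density factors ceases to depend on $\delta$. The completing‑the‑square identity, with $t^2+(1-t)^2$ bounded below by $\tfrac12$, is precisely what keeps the effective variance of the Gaussian in $\delta$ bounded for all $t$ and makes all these integrals uniform. A secondary technical point is establishing the H\"older continuity of the top-order derivatives $D^\gamma p_j$ with a modulus $\Phi_j$ that retains the Gaussian decay of $p_j$, which requires propagating the strong log-concavity and H\"older hypotheses on $\phi_j$ carefully through Fa\`a di Bruno's formula.
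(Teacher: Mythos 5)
Your proposal is correct, and it takes a genuinely different route from the paper's proof. The paper begins by assuming WLOG $t>1/2$ (swapping the roles of $p_0,p_1$ otherwise) and then performs the asymmetric change of variables $x=z-t\delta$, which introduces a factor $t^{-d}$ that is controllable only because of the WLOG; the rest of the argument then lives in $x$-space, repeatedly invoking the Gaussian bound of Lemma~\ref{lemma:boundp} and a Fa\`a~di~Bruno expansion for the top-order H\"older estimate. You instead stay in $\delta$-space for all $t\in[0,1]$ simultaneously and use the symmetric completing-the-square to show that the precision of the effective Gaussian in $\delta$ is $\gtrsim\alpha(t^2+(1-t)^2)\ge\alpha/2$ uniformly; this removes both the WLOG and the $t^{-d}$ Jacobian from the argument and is, to my eye, cleaner. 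Your Leibniz/telescoping structure, the MVT vs.\ full-order-H\"older dichotomy across summands, and the final Fa\`a~di~Bruno step for $D^\gamma p_j$ all match the paper's strategy in substance, just organized without the change of variables.

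One point you assert but do not fully verify is that the $\delta$-bound produces Gaussian decay in $z$ (not merely a bound that is finite and uniform in $t$). That does hold: writing $z_j=z-x_j^\star$, one has the identity
\[
\|z_0-t\delta\|^2+\|z_1+(1-t)\delta\|^2
=(t^2+(1-t)^2)\Bigl\|\delta+\tfrac{(1-t)z_1-tz_0}{t^2+(1-t)^2}\Bigr\|^2
+\frac{\|(1-t)z_0+tz_1\|^2}{t^2+(1-t)^2},
\]
and since $(1-t)z_0+tz_1=z-\bigl((1-t)x_0^\star+tx_1^\star\bigr)$, the residual term is $\|z-x_t^\star\|^2/(t^2+(1-t)^2)\ge\tfrac12\|z\|^2-C$ uniformly in $t$. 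This is precisely the quantity the paper extracts through its change of variables (and records as Lemma~\ref{lemma:boundp}); you should spell it out, because the na\"ive Cauchy--Schwarz bound only yields nonnegativity of the residual, not coercivity. Two cosmetic remarks: the lemma statement has a typo beyond the one you caught — it should read $\mathbb{E}(\Delta^k\mid X_t=z)\,p_t(z)$, not $\mathbb{E}(\Delta\mid X_s=z)\,p_t(z)$ — and you should check that the paper's intended meaning of $\delta^k$ for vector $\delta$ (coordinatewise power, as against your multi-index $\delta^\nu$) is what you treat; your more general multi-index formulation covers it.
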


These bounds will be helpful for the analysis of estimators in Section \ref{subsection:estimationrect}. We also show in Proposition \ref{prop:derzbound} that the first and second derivatives of the velocity are bounded. In turn, this implies the main result of our section, Theorem \ref{teo:regularity}, establishing that the population rectified transport map is well-defined and preserves marginals.

\begin{proposition}
    
\label{prop:derzbound}
Under assumptions \eqref{assump:unbnd-log-Holder} and \eqref{assump:strong-log-concave}, $\nabla v(t,z)$ is uniformly bounded over $t$ and $z$. Also, for each $i$, $\nabla^2 v_i(t,z)$ is uniformly bounded on $t$ and $z$ as well.
\end{proposition}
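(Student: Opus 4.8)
The plan is to express the derivatives of $v(t,z)=f_t(z)/p_t(z)$ in terms of integrals against the product kernel $r_t(z,\delta)=p_0(z-t\delta)p_1(z+(1-t)\delta)$ and control everything through the strong log-concavity assumption~\eqref{assump:strong-log-concave} together with the H\"older bounds of Lemma~\ref{lemma:holderpj}. The key observation is that $\partial_z^k r_t(z,\delta)$ is a finite sum of terms of the form $(\partial^a p_0)(z-t\delta)\,(\partial^b p_1)(z+(1-t)\delta)$ with $a+b=k$, and by Lemma~\ref{lemma:holderpj} each factor is bounded by $p_j$ times a polynomial in its argument. Hence, after a change of variables $x = z-t\delta$ (or $x=z+(1-t)\delta$) as in representations~\eqref{eq:velp0}--\eqref{eq:velp1}, these become expectations under $\mu_0$ or $\mu_1$ of polynomially-weighted exponentials, which are finite and — crucially — can be bounded uniformly in $t\in(0,1)$ after suitably rescaling. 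The strong log-concavity gives Gaussian-type tail control that survives the $t$-dependent rescaling because the variance of $X_t=(1-t)X_0+tX_1$ stays in a fixed range.

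First I would establish a \emph{lower bound} on $p_t(z)$ that is uniform in $t$ and has a Gaussian-type shape in $z$: since $p_t$ is the density of $(1-t)X_0+tX_1$ and both summands are strongly log-concave with parameter $\alpha$, their convolution $p_t$ is strongly log-concave (convolution preserves strong log-concavity, with parameter at least the harmonic-mean-type combination, which is bounded below uniformly in $t$ on, say, $t\in[1/4,3/4]$; near the endpoints $p_t\to p_j$ directly). Combined with an upper bound on $\|\nabla\log p_t\|$ at a reference point (say the mode), this yields $p_t(z)\gtrsim e^{-A\|z\|^2-B}$ with constants free of $t$. Second, I would bound the numerators $f_t(z)$ and their $z$-derivatives, as well as $\partial_z^k p_t(z)$, by $Ce^{-a\|z\|^2+b}$ times a polynomial in $\|z\|$, using the derivative-transfer identity above and Lemma~\ref{lemma:holderpj}; here the extra factor of $\delta$ (or its powers) in $f_t$ contributes at most one extra power of $\|z\|$ after the change of variables, as already seen in the commented-out Proposition. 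Third, I would assemble $\nabla v = \nabla f_t / p_t - f_t\,\nabla p_t / p_t^2$ and $\nabla^2 v_i$ analogously (quotient rule, three terms for the first derivative, more for the second), and observe that in each term the Gaussian \emph{decay} from the numerator bounds dominates the Gaussian \emph{growth} $e^{A\|z\|^2}$ coming from dividing by $p_t^k$, provided the decay rate $a$ exceeds $kA$. Since $a$ is governed by $\alpha$ (the log-concavity parameter of $p_0,p_1$) and $A$ is governed by the same $\alpha$ up to universal constants, one can check $a > 2A$ holds, so all terms are bounded uniformly over $z\in\mathbb{R}^d$ and $t\in(0,1)$.

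The main obstacle is the uniformity in $t$, particularly as $t(1-t)\to 0$. Two things must be tracked carefully: (i) the change of variables $\delta\mapsto x$ introduces Jacobian factors $t^{-d}$ or $(1-t)^{-d}$ which, by themselves, blow up — these must cancel against the concentration of $p_t$ near its mode (equivalently, the density of $X_t$ does not degenerate because the $X_j$ have densities, unlike the singular-Gaussian pathology flagged after Lemma~\ref{lemma::gaussianvel}); (ii) the H\"older/polynomial prefactors in Lemma~\ref{lemma:holderpj} have constants $c_{l,k}$ that I must verify do not themselves depend on $t$ — they don't, since that lemma is a statement purely about $p_0,p_1$. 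The cleanest route is probably to split into $t\in[0,1/2]$ and $t\in[1/2,1]$, use representation~\eqref{eq:velp1} on the first (so the $t$-dependence sits in a well-behaved place) and~\eqref{eq:velp0} on the second, and within each regime further separate $t$ bounded away from $0$ (where brute-force bounds on $r_t$ suffice) from $t$ near $0$ (where one uses that $p_t\to p_1$ smoothly, with derivatives converging, by dominated convergence justified via the same exponential envelopes). I expect the endpoint analysis — showing the bounds on $\nabla v$ and $\nabla^2 v_i$ extend continuously to $t=0,1$ rather than merely holding on the open interval — to be the most delicate part, and it is exactly what is needed downstream so that the Cauchy–Lipschitz argument in Theorem~\ref{teo:regularity} applies on the closed interval $[0,1]$.
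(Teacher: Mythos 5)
Your plan has a genuine rate-mismatch that I do not think can be fixed by sharpening constants. The quotient-rule decomposition requires controlling terms of the form $\nabla^a f_t\,\nabla^b p_t/p_t^k$, and you propose to do this with two Gaussian envelopes: numerators $\lesssim P(\|z\|)\,e^{-a\|z\|^2}$ and denominator $p_t(z)\gtrsim c\,e^{-A\|z\|^2}$. (Your accounting of the exponent is off — because a term with a $p_t^k$ in the denominator also carries $k$ numerator factors, each contributing $e^{-a\|z\|^2}$, what is actually needed is $a\ge A$, not $a>kA$ — but even the weaker condition is the problem.) The rate $a$ in the upper bounds of Lemma~\ref{lemma:boundp} and Lemma~\ref{lemma:holdersmooth} is governed by the strong-log-concavity parameter $\alpha$ from~\eqref{assump:strong-log-concave}, i.e.\ by the \emph{lower} bound on $\nabla^2\phi_j$. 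A pointwise Gaussian lower bound on $p_t$, however, needs an \emph{upper} bound on $\nabla^2(-\log p_t)$, and its only source in the standing assumptions is the bound $\nabla^2\phi_j\preceq C_2 I$ from~\eqref{assump:unbnd-log-Holder}. These are different parameters, and one always has $\alpha\le C_2$, so $a\le A$, with strict inequality whenever $p_0,p_1$ are not exactly Gaussian. Your assertion that ``$A$ is governed by the same $\alpha$ up to universal constants'' is therefore false, and the factors $e^{(kA-ka)\|z\|^2}$ from your assembly step blow up in $\|z\|$. This is not a technicality: it is why the bounds of Lemma~\ref{lemma:holdersmooth} alone (upper envelopes on $f_t,p_t$ and their derivatives) do not imply boundedness of $\nabla v$.

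The paper's proof avoids dividing by $p_t$ altogether. After the WLOG reduction $t>1/2$ and the change of variables, it writes each entry of $\nabla_z v$ as a constant plus a \emph{covariance} under the tilted (normalized) measure $p_z(x)\propto p_0(x)p_1\bigl(x+\tfrac{z-x}{t}\bigr)$, namely
\[
t\,\nabla_z\,\mathbb{E}_{p_z}[X]
  =\mathrm{Cov}_{p_z}\!\left(X,\;\nabla\log p_1\!\left(X+\tfrac{z-X}{t}\right)\right),
\]
and analogously expresses the entries of $\nabla^2 v_i$ through second- and third-order moments of $X$ and of the score $\nabla\log p_1(X+(z-X)/t)$ under $p_z$. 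Assumptions~\eqref{assump:unbnd-log-Holder} and~\eqref{assump:strong-log-concave} give
$\alpha\bigl(1+\tfrac{(1-t)^2}{t^2}\bigr)I\preceq\nabla^2(-\log p_z)\preceq\gamma\bigl(1+\tfrac{(1-t)^2}{t^2}\bigr)I$
uniformly in $z$ and $t\ge1/2$, so the variances in Cauchy--Schwarz are controlled by the Brascamp--Lieb inequality~\eqref{eq:brascamp} and the Poincar\'e inequality~\eqref{eq:poincare}, and the higher moments needed for $\nabla^2 v_i$ by the $L^p$-moment bound for Poincar\'e measures (Lemma~\ref{lemma:lpmomentpoincare}). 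The crucial point is that the normalization $p_t(z)$ never appears as a quantity to be lower-bounded pointwise — it only enters implicitly as the normalizing constant of the probability measure $p_z$. That is what makes the argument uniform in $z$ and $t$ without requiring the decay rates of numerator and denominator to match.
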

\begin{theorem}\label{teo:regularity}
Under assumptions \eqref{assump:unbnd-log-Holder} and \eqref{assump:strong-log-concave}, the rectified map $R(x)$ is uniquely defined as the solution to \eqref{eq:rectified-flow}, and is twice continuously differentiable. Moreover, it satisfies the marginal preserving property: if $X_0\sim \mu_0$, for each $t\in[0,1]$, $\mathfrak{R}(t, X_0)$ has the same law as that of $X_t=(1-t)X_0+tX_1$.
\end{theorem}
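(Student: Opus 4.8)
The plan is to reduce the statement to classical facts about ordinary differential equations, using as inputs the regularity already collected in Proposition~\ref{prop:derzbound} and Lemma~\ref{lemma:holdersmooth}. I would proceed in three stages: (i) show that~\eqref{eq:rectified-flow-ODE} has a unique solution $t\mapsto\mathfrak{R}(t,x)$ on all of $[0,1]$ for every $x\in\mathbb{R}^d$; (ii) upgrade this to $C^2$ dependence of $\mathfrak{R}(t,\cdot)$ on the initial point via the variational equations, which in particular makes $R=\mathfrak{R}(1,\cdot)$ in~\eqref{eq:rectified-flow} twice continuously differentiable; and (iii) deduce the marginal-preserving property from Theorem~\ref{thm:uniquene-ae-implies-transport-map}.

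For stage (i) I would first check that $v$ is jointly continuous on $[0,1]\times\mathbb{R}^d$. Continuity in $z$ at fixed $t$, uniformly in $t$, is immediate from the $\beta$-H\"older bounds (with $t$-free constants) of Lemma~\ref{lemma:holdersmooth} for $z\mapsto p_t(z)$ and $z\mapsto v(t,z)p_t(z)$ together with $p_t(z)>0$, which holds under~\eqref{assump:unbnd-densitiy-positive}. Continuity in $t$ at fixed $z$ follows, for $t\in(0,1)$, from dominated convergence applied to the integrals defining $f_t$ and $p_t$ in~\eqref{eq:notation-numerator-denominator-velocity-field}, where strong log-concavity~\eqref{assump:strong-log-concave} provides a $t$-uniform integrable envelope, and it is matched at the endpoints using the explicit affine forms $v(0,z)=\mathbb{E}[X_1]-z$ and $v(1,z)=z-\mathbb{E}[X_0]$; splitting $v(t,z)-v(t',z')$ into a spatial and a temporal increment then yields joint continuity. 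Next, Proposition~\ref{prop:derzbound} gives $\sup_{t,z}\|\nabla v(t,z)\|\le L<\infty$, so $v(t,\cdot)$ is globally $L$-Lipschitz uniformly in $t$, and combining this with $\sup_t\|v(t,0)\|<\infty$ (which the same tail control supplies) gives the linear-growth bound $\|v(t,z)\|\le C(1+\|z\|)$ with $C$ independent of $t$. At this point the Cauchy--Lipschitz theorem -- equivalently Theorem~\ref{thm:uniqueness-viability}(2) specialized to $\mathcal{S}=\mathbb{R}^d$, $\kappa(u)=u$ and $a(s)\equiv L$ (the Osgood case, Example~1) -- yields a unique absolutely continuous solution $\mathfrak{R}(\cdot,x)$ for each $x$, defined on the entire interval $[0,1]$; since $v$ is jointly continuous, this solution is in fact genuinely $C^1$ with $\partial_t\mathfrak{R}(t,x)=v(t,\mathfrak{R}(t,x))$ everywhere.

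For stage (ii) I would invoke the classical theory of differentiable dependence of ODE flows on initial data (see, e.g., \citet{HairerWannerNorsett1993}). Since $v$ is continuous in $t$ and, by Proposition~\ref{prop:derzbound}, twice continuously differentiable in $z$ with $\nabla v$ and each $\nabla^2 v_i$ bounded on $[0,1]\times\mathbb{R}^d$, the Jacobian $J(t,x):=\partial_x\mathfrak{R}(t,x)$ solves the linear (first) variational equation $\frac{d}{dt}J(t,x)=\nabla v(t,\mathfrak{R}(t,x))J(t,x)$ with $J(0,x)=I_d$, and the second-order sensitivities $\partial^2_x\mathfrak{R}(t,x)$ solve a further linear equation whose forcing term is a bounded continuous function of $(t,x)$ built from $\nabla^2 v(t,\mathfrak{R}(t,x))$ contracted with $J(t,x)$; both equations have unique solutions depending continuously on $x$, so $x\mapsto\mathfrak{R}(t,x)$ is $C^2$ for every $t\in[0,1]$, and in particular $R=\mathfrak{R}(1,\cdot)$ is twice continuously differentiable. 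For stage (iii), stage (i) already provides a unique solution of the integral equation~\eqref{eq:rectified-flow-integral-uniqueness} for every $x\in\mathbb{R}^d$, so Theorem~\ref{thm:uniquene-ae-implies-transport-map} applies with $A=\mathbb{R}^d$ (and $\mu_0(A)=1$), giving $\mathfrak{R}(t,X_0)\overset{d}{=}X_t$ for all $t\in[0,1]$; taking $t=1$ gives $R(X_0)\sim\mu_1$.

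I expect the main obstacle to be not the ODE arguments, which are standard once their hypotheses are in place, but the two technical inputs feeding them: (a) joint continuity of $v$ and of its first two $z$-derivatives up to the temporal boundary $t\in\{0,1\}$, where the ratio representation~\eqref{eq:veldelta} degenerates and one must pass to the limit carefully and reconcile it with the affine endpoint formulas; and (b) the uniform-in-$t$ linear-growth bound on $\|v(t,z)\|$ (equivalently $\sup_t\|v(t,0)\|<\infty$) that rules out blow-up. Both are plausible consequences of the strong log-concavity assumption~\eqref{assump:strong-log-concave} -- which controls the tails of the conditional law of $X_1-X_0$ given $X_t=z$ uniformly in $t$ -- together with the estimates underlying Lemma~\ref{lemma:holdersmooth} and Proposition~\ref{prop:derzbound}, but they carry the bulk of the bookkeeping.
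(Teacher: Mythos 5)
Your proposal follows essentially the same route the paper takes: invoke Proposition~\ref{prop:derzbound} for uniform boundedness of $\nabla v$ and $\nabla^2 v$, apply Cauchy--Lipschitz theory to get global existence and uniqueness of the flow, use the variational equations (equivalently, the smooth-dependence-on-initial-data theorem) together with the $C^2$ regularity of $v$ from Lemma~\ref{lemma:holdersmooth} to conclude $R\in C^2$, and finally obtain marginal preservation from Theorem~\ref{thm:uniquene-ae-implies-transport-map}. The paper's proof is much terser and leaves implicit the two technical points you flag in your final paragraph (joint continuity up to $t\in\{0,1\}$ and the uniform linear-growth bound), but these are indeed absorbed into the uniform-in-$t$ conclusions of Proposition~\ref{prop:derzbound} and Lemma~\ref{lemma:holdersmooth}, so there is no gap.
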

\begin{proof}
Since $v(t,z)$ has bounded derivatives (Proposition \ref{prop:derzbound}), by the classical Cauchy-Lipschitz theory of ordinary differential equations \cite{hartman2002ordinary}, the ODE defining $R(x)$ \eqref{eq:rectified-flow-ODE} has unique solutions for each $x\in\mathbb{R}^d$. Moreover, the map $x\to R(x)$ inherits the regularity of the velocity, which in our case is guaranteed to be twice continuously differentiable, by Lemma \ref{lemma:holdersmooth}. The marginal preserving property is a direct consequence of Theorem \ref{thm:uniquene-ae-implies-transport-map}.
\end{proof}
\subsection{Estimation}
\label{subsection:estimationrect}

We now turn to estimating the rectified map
using kernel regression.
 In this section, we will assume that $\hat{v}(t,z)=\hat{v}_{h_n}(t,z)$ is the regression-based estimator \eqref{eq:kerreg}. Based on this estimators, we consider the empirical rectified flow $\hat{R}(x)=\hat{\mathfrak{R}}(1,x)$ where $\hat{\mathfrak{R}}(t,x)$ is the solution to ODE \eqref{eq:rectified-flow-ODE} with velocity $\hat{v}(t,z)$ for a fixed $x\in\mathbb{R}^d$. We will make the following additional assumptions
\begin{enumerate}[label=(K\arabic*),ref=K\arabic*]
\item \label{assump:K} The kernel $K$ is of order $\lfloor \beta\rfloor$, has bounded support and has continuous and bounded first and second derivatives. We further assume that $K$ is regular in the sense of \cite{einmahl2005uniform}.
\end{enumerate}
\begin{enumerate}[label=(U\arabic*),ref=U\arabic*,resume*=myU]
\item \label{assump:morse} For a starting point $x$ define the following functions for $1\leq i\leq d$ (the discrete time derivatives of the coordinates of the velocity vector) 
$$F^i_t(y):=\frac{v_i(t+y,z_{t+y})-v_i(t,z_t)}{y},$$
where the trajectories $z_t$ start at $x$. These functions must satisfy
\begin{enumerate}
\item Are twice differentiable, and their second derivatives are equicontinuous in $t$.
\item For each $t$, $F_t^i$ has at most $M$ critical points, where $M$ is uniform in $d$ and $t\in[0,1]$.
\item The set of critical points is uniformly non-degenerate; i.e., the second derivatives of $F^i_t$ are bounded away from zero at those points.
\end{enumerate}
\end{enumerate}


Under these assumptions, we show that $\hat{\mathfrak{R}}(t,x)$  is well-defined on an event with a probability converging to one. Before stating our main distributional result, let us first state pointwise consistency: that $\hat{\mathfrak{R}}(t,x)$ converges to $\mathfrak{R}(t,x)$ at least as fast as the uniform consistency rate of $\hat{v}(t,z)$ towards $v(t,z)$. Under assumptions
\begin{proposition}\label{prop:consistR}
Under assumptions \eqref{assump:K}, \eqref{assump:unbnd-densitiy-positive}, \eqref{assump:unbnd-log-Holder} and \eqref{assump:strong-log-concave}, for every compact set $C\subseteq\mathbb{R}^d$, there exists a compact set $B\subseteq\mathbb{R}^d$ such that
$$\sup_{x\in C}\|\mathfrak{R}(t,x)-\hat{\mathfrak{R}}(t,x)\|=O_p\left(\sup_{0\leq s\leq 1,\,z\in B} \lVert \hat{v}(s,z)-v(s,z)\rVert\right)=O_p\left(\sqrt{\frac{\log(1/h_n)}{nh_n^d}}+h_n^\beta\right).$$

\end{proposition}

We can finally state the main result of this section, a pointwise CLT for $\hat{R}(x)$ around $R(x)$, assuming that \eqref{assump:morse} holds at $x$. 

\begin{theorem}\label{teo:CLT}
Suppose that assumptions \eqref{assump:K}, \eqref{assump:unbnd-densitiy-positive}, \eqref{assump:unbnd-log-Holder}, \eqref{assump:strong-log-concave} and \eqref{assump:morse} hold. Let $\varepsilon>0$ be any arbitrary positive number, and $d>1$. Suppose that $h=h_n$ is such that (the rightmost inequality is an undersmoothing condition)
 \begin{equation}\label{eq:bandwidth} n^{-\frac{1}{d+2+\varepsilon}}\ll h_n\ll n^{-\frac{1}{d-1+2\beta}}.\end{equation}
 where $n^{-a}\ll \alpha_n\ll n^{-b}$ if there are $c>0, b<c'<a$, such that $\alpha_n=cn^{-c'}$. Then,
\begin{equation}\label{eq:clt} \sqrt{nh_n^{d-1}}\left(\hat{R}(x)-R(x)\right)~\overset{d}{\to}~N\left(0,\Sigma(x)\right),\end{equation}
where
 \begin{align}\nonumber 
\Sigma(x) = \int_0^1  \frac{\tilde{\Phi}(t)}{p_t(z_t)}  \mathbb{E}\left[ \left(\Delta-v_{t}(z_{t})\right)\left(\Delta-v_t(z_t)\right)^\top \iint\limits_{\mathbb{R}^d\times\mathbb{R}} K\left(u+\omega(\Delta -v_t(z_t))\right) K\left(u\right)dud\omega   \Big |X_t=z_t\right] \tilde{\Phi}(t)^\top dt,\\ \label{eq:sigma}&&
\end{align}
$\tilde{\Phi}(t)=\Phi(1)\Phi(t)^{-1}$ and $\Phi(t)$ is the fundamental 
matrix associated to the ODE \eqref{eq:rectified-flow-ODE}, i.e., it is the unique invertible solution to the matrix differential equation
    \begin{equation}\label{eq:fundamental-derivative-velocity}
    \Phi(t) = I + \int_0^t \partial_zv(s, \mathfrak{R}(s, x))^\top\Phi(s)ds.
    \end{equation}
    
In the case of a symmetric kernel, this simplifies to
     \begin{eqnarray*}
\Sigma(x) = \int_0^1\frac{\tilde{\Phi}(t)}{p_t(z_t)}
\mathbb{E}\left[\left(\Delta-v_{t}(z_{t})\right)\left(\Delta-v_t(z_t)\right)^\top   \int_{-\infty}^\infty (K\ast K)\left(\omega(v_t(z_t)-\Delta)\right)d\omega  \Big |X_t=z_t\right]  \tilde{\Phi}(t)^\top dt
\end{eqnarray*}
 \end{theorem}

Under the condition $n^{-\frac{1}{d+2+\varepsilon}}\ll h_n$ only (i.e., no undersmoothing), we can derive a central limit theorem with an asymptotic bias of order $h_n^\beta$. In this case, since the asymptotic variance is of order $1/(nh_n^{d-1})$, the optimal bandwidth is $h_n=n^{-\frac{1}{d-1+2\beta}}$, and the corresponding asymptotic MSE is of order $n^{-\frac{2\beta}{2\beta+d-1}}$, faster than the usual nonparametric regression asymptotic MSE rate $n^{-2\beta/(2\beta+d)}$. We note that, based on classical results on nonparametric regression (Lemma \ref{lemma:nonreg} in Section \ref{sub:addunbounded}), we would be able to estimate the velocity at each time at this rate. While our rectified estimator enjoys a faster rate, it is still slower than the one established in \cite{manole2023clt} for the optimal transport estimator. They demonstrate that both bias and variance decay faster than the usual kernel estimators, at rates $h_n^{\beta+1}$ and $1/(nh^{d-2})$, respectively \cite{manole2023clt}. In this case, the optimal MSE rate $n^{\frac{-2(\beta+1)}{2\beta+d}}$ \cite{hutter2019minimax,manole2021}. Therefore, our rates interpolate between the classical kernel estimation rates and the ones for optimal transport. Although it remains an open question, we don't expect better rates in our setup: the analysis of \cite{manole2023clt} heavily exploits the fact that the optimal transport map has $\beta+1$-H{\"o}lder regularity \citep{caffarelli1}, a statement that may not hold for the rectified transport map.

 While the technical condition \eqref{assump:morse} is generally hard to verify, it is easy to demonstrate that it holds for each $x$ in the case of arbitrary rectified transport between independent Gaussians. In this case, the analysis reduces to individual components, and for each of these components, the set of critical points in \eqref{assump:morse} is characterized by the solutions to a polynomial equation; the non-degeneracy condition clearly holds.
If we are not able to verify \eqref{assump:morse}, we can still state a CLT
$$  \sqrt{nh_n^{d-1}}\Sigma_h^{-1/2}(x)\left(\hat{R}(x)-R(x)\right) ~\overset{d}{\to}~N\left(0,I_d\right).$$
for some matrix $\Sigma_h$ that appears in the proof of Theorem \ref{teo:CLTlinear}.

 \begin{corollary}\label{cor:1d}
 In the one-dimensional case, $R(x)$ coincides with the optimal transport. The bias rate can be improved to order $h_n^{\beta+1}$ if we choose a kernel of order $\lfloor \beta+1\rfloor $. Additionally, we have 
$$  \sqrt{n}\left(\hat{R}(x)-R(x)\right)~\overset{d}{\to}~ N\left(0,\Sigma(x)\right), $$
whenever  $n^{-\frac{1}{3+\varepsilon}}\ll h_n\ll n^{-\frac{1}{2\beta+2}}$.
 The asymptotic variance in this case doesn't depend on the kernel and has a more explicit formula:
 \begin{equation}\label{eq:sigmax1d} \Sigma(x) = \int_0^1\frac{1}{p_t(z_t)}\tilde{\Phi}(t)^2 
\mathbb{E}\left[\lvert X_1-X_0-v_{t}(z_{t})\rvert  \Big |X_t=z_t\right]dt.\end{equation}
 \end{corollary}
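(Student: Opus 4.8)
For the first claim, I would begin by showing that the rectified map is monotone. Since $v(t,\cdot)$ is Lipschitz on $\mathbb{R}$ (Proposition~\ref{prop:derzbound}), the Cauchy--Lipschitz theorem gives both forward and backward uniqueness of solutions to~\eqref{eq:rectified-flow-ODE}, so trajectories issued from distinct initial points can neither cross nor coalesce; hence $x<x'$ implies $\mathfrak{R}(t,x)<\mathfrak{R}(t,x')$ for all $t\in[0,1]$, and in particular $R=\mathfrak{R}(1,\cdot)$ is strictly increasing. By Theorem~\ref{teo:regularity}, $R$ transports $\mu_0$ to $\mu_1$; since $\mu_0$ is atomless and a non-decreasing transport map between atomless measures on $\mathbb{R}$ is unique and equal to $F_1^{-1}\circ F_0$, we conclude $R=T_0$, the quadratic optimal transport map.

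For the improved bias rate, the plan is to revisit the bias term in the proof of Theorem~\ref{teo:CLT}. The Alekseev--Gr\"obner expansion represents $\hat{R}(x)-R(x)$, up to terms of smaller order under~\eqref{eq:bandwidth}, as $\int_0^1\Phi(1,t,z_t)(\hat{v}_t(z_t)-v_t(z_t))\,dt$, whose expectation is, to leading order, $\int_0^1\Phi(1,t,z_t)\,b_t\,dt$ with $b_t$ the kernel-regression bias of $\hat{v}_t$ at $z_t$. Pointwise in $t$, $b_t$ is only $O(h^\beta)$, because $f_t,p_t$ are merely $\beta$-H\"older (Lemmas~\ref{lemma:holderpj},~\ref{lemma:holdersmooth}) and no higher kernel order improves on this; the gain must therefore come from integrating in $t$. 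The key point is that, once $d=1$, the integrated leading bias reconstructs, modulo smaller-order terms, the difference between the (monotone, hence optimal) transport map of a kernel-smoothed version of the problem and $R=T_0$ itself; and because $R$ is locally $(\beta+1)$-H\"older --- being the one-dimensional optimal map, by Caffarelli's theorem~\citep{caffarelli1}, or elementarily as $F_1^{-1}\circ F_0$ with $F_0,F_1\in C^{\beta+1}$ --- a kernel of order $\lfloor\beta+1\rfloor$ makes each smoothing error $O(h^{\beta+1})$, so that $\int_0^1\Phi(1,t,z_t)\,b_t\,dt=O(h^{\beta+1})$. I expect this to be the principal difficulty: making the cancellation of the $O(h^\beta)$ leading parts of the per-$t$ velocity bias precise upon integration along the flow, which is exactly the windfall from the extra regularity of the one-dimensional optimal map and which has no analogue when $d>1$, where the rectified map need not be $(\beta+1)$-H\"older.

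Finally, the central limit theorem follows by adapting the proof of Theorem~\ref{teo:CLT} to $d=1$ with a kernel of order $\lfloor\beta+1\rfloor$. The normalization $\sqrt{nh_n^{d-1}}$ collapses to $\sqrt{n}$; the undersmoothing requirement $\sqrt{nh_n^{d-1}}\,h_n^{\beta+1}\to0$ becomes $h_n\ll n^{-1/(2\beta+2)}$, which together with the lower bound inherited from~\eqref{eq:bandwidth} gives the stated range $n^{-1/(3+\varepsilon)}\ll h_n\ll n^{-1/(2\beta+2)}$. It then remains to simplify~\eqref{eq:sigma} when $d=1$: writing $c:=\Delta-v_t(z_t)\in\mathbb{R}$ and substituting $s=\omega c$ in the inner double integral,
\[
\iint_{\mathbb{R}\times\mathbb{R}}K(u+\omega c)K(u)\,du\,d\omega=\frac{1}{|c|}\iint_{\mathbb{R}\times\mathbb{R}}K(u+s)K(u)\,du\,ds=\frac{1}{|c|},
\]
where the last step uses $\int_{\mathbb{R}}K=1$. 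Hence $(\Delta-v_t(z_t))(\Delta-v_t(z_t))^\top/|c|=|\Delta-v_t(z_t)|$, the matrices $\Phi(1,t,z_t)$ reduce to scalars, and~\eqref{eq:sigma} collapses to the kernel-free expression~\eqref{eq:sigmax1d}.
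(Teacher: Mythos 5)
The first claim ($R=T_0$) and the variance simplification are both handled correctly: your monotonicity argument via non-crossing of trajectories is a valid alternative to the paper's direct computation $R'(x)=\Phi(1,0,x)=\exp\bigl(\int_0^1 \partial_z v(t,z_t)\,dt\bigr)>0$, and your change of variables $s=\omega c$ in the inner double integral is exactly what the paper does.

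The improved bias rate, however, has a genuine gap, which you yourself flag as ``the principal difficulty.'' You offer a heuristic — that the integrated bias is asymptotically the discrepancy between the rectified map of a kernel-smoothed problem and $R$, and that the $(\beta+1)$-H\"older regularity of the one-dimensional optimal map (Caffarelli, or $F_1^{-1}\circ F_0$) then gives $O(h^{\beta+1})$. This is not established, and the paper does not prove it that way. The actual mechanism is Lemma~\ref{lemma:intdm}: writing $m(u,h,s)=(v(s,z_s+hu)-v(s,z_s))p_s(z_s+hu)$, one has the identity
\[
u\,\frac{d}{ds}\Bigl(\Phi(1,s,z_s)\,\frac{p_s(z_s+hu)}{p_s(z_s)}\Bigr)=-\frac{\Phi(1,s,z_s)}{p_s(z_s)}\,\frac{d}{dh}m(u,h,s),
\]
so that the $h$-derivative of the $t$-integrated bias integrand is a total $s$-derivative. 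Integrating in $s$ collapses the whole trajectory integral to boundary terms at $s=0$ and $s=1$, giving
\[
\frac{d}{dh}\,b_0(h,u)=uK(u)\Bigl(\Phi(1,0,x)\,\frac{p_0(x+hu)}{p_0(x)}-\frac{p_1(R(x)+hu)}{p_1(R(x))}\Bigr),
\]
which involves only $p_0$ and $p_1$ (each $\beta$-H\"older). This makes $b_0(h)$ explicitly $(\beta+1)$-H\"older in $h$, and a kernel of order $\lfloor\beta+1\rfloor$ then annihilates the Taylor polynomial, yielding $O(h^{\beta+1})$. This telescoping-in-time identity is the heart of the one-dimensional improvement and is exactly what your argument lacks; without it, the claim that the per-$t$ $O(h^\beta)$ errors cancel upon integration is unsubstantiated. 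Note also that the paper invokes the Caffarelli-type regularity of $R$ only in a post-hoc remark to explain why the improvement is unsurprising, not as an ingredient in the proof.
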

 
The asymptotic behavior of our estimator in this case resembles that of optimal transport, where kernel estimators in one dimension achieve the parametric rate \citep{ponnoprat2024uniform}. The gains in bias rate don't come as a surprise because of the extra regularity of $R(x)$ in this case \citep{manole2023clt}. In the case where $X_0,X_1$ are Gaussian, we can even derive an explicit formula for the asymptotic variance.

\begin{example}\label{example:1dgaussian}
One-dimensional Gaussians. If $X_0\sim N\left(m_0,\sigma_0^2\right),X_1\sim N\left(m_1,\sigma_1^2\right)$ then
$$\Sigma(x)= 2\frac{\sigma_1}{\sigma_0}\left(\arctan\left(\frac{\sigma_1}{\sigma_0}\right)+\arctan\left(\frac{\sigma_0}{\sigma_1}\right)\right)\exp\left(\frac{1}{2\sigma^2_0}(x-m_0)^2\right).$$
\end{example}

We conclude this section with an abbreviated version of the proof of Theorem \ref{teo:CLT}.
\subsubsection*{Proof sketch of Proposition \ref{prop:consistR} and Theorem \ref{teo:CLT}}

 We write
 \begin{eqnarray} \hat{R}(x)-R(x)&=&
 \int_0^1 \tilde{\Phi}(s) \left(\hat{v}_s(z_s)-v_s(z_s)\right)ds+S\\
&=&\label{eq:linearization} \int_0^1 \tilde{\Phi}(s) \left(\frac{\hat{f}_s(z_s)-v_s(z_s)\hat{p}_s(z_s)}{p_s(z_s)}\right)ds+\tilde{S},\end{eqnarray}

 where $S,\tilde{S}$ are remainder terms. In the first line we exploit a variation of parameters-type bound. The second line is the usual linearization of ratio estimators such as the ones for Kernel regression. The main benefit of the above expression is that the integral term in \eqref{eq:linearization} is much simpler to analyze, as it reduces to a linear combination of kernel-like quantities evaluated over the deterministic trajectory $\mathfrak{R}(s,x)$. The bulk of the proof consists of first showing that this linearized component enjoys the asymptotic bias/variance bounds and CLT in Theorem \ref{teo:CLT} and subsequent remark. This is done in Proposition \ref{prop:linearized} and Theorem \ref{teo:CLTlinear}. Then, we show that the remainder $\tilde{S}$ decays faster than this rate. We defer all the proofs and details to Section \ref{sec:unboundedproof}.

\section{Existence and Regularity in the Bounded Case}
\label{sec:bounded}
When $\Omega$ is a proper convex subset of $\mathbb{R}^d$, it is non-trivial to show that the velocity field~\eqref{eq:velocity-field} is well-defined on $\Omega$, especially for $z\in\partial\Omega$. This is because the product of densities $p_0(z - t\delta)p_1(z + (1-t)\delta)$ can be strictly positive only if $z - t\delta\in\Omega$ and $z + (1-t)\delta\in\Omega$. When $z\in\partial\Omega$, the set of all $\delta$ satisfying such constraints can be a singleton (containing zero) or a face of the convex set $\Omega.$ If it is a singleton, then the velocity field becomes ill-defined, and one has to consider derivatives to show that a unique continuous extension exists. This can be successfully resolved for any strictly convex set $\Omega$ as shown in Section~\ref{sec:strictly-convex-set-definition}. When $\Omega$ has non-trivial faces (line segments on the boundary), then a generic proof of unique continuous extension to the boundary seems out of reach. The remaining section is organized as follows. In Section~\ref{subsec:exist-uni-rates-bounded}, we prove existence and uniqueness of solutions to~\eqref{eq:rectified-flow-ODE} under relatively mild conditions on the densities. Under the same conditions, we also prove that rectified flow map starting in the interior stays in the interior. In Section~\ref{subsec:rates-of-convergence-bnd}, we provide rates of convergence for estimation of the velocity field~\eqref{eq:velocity-field} and for the rectified flow. In Section~\ref{subsec:linearization-asymp-norm-bounded}, we prove asymptotic normality for the rectified flow map. In Section~\ref{sec:strictly-convex-set-definition}, we show that the velocity field has a unique continuous extension to the boundary of $\Omega$ if $\Omega$ is strictly convex. This, in particular, implies that the rectified flow map may be chosen to be the identity on the boundary of a strictly convex set $\Omega$.
\subsection{Existence, uniqueness, and regularity of rectified flow}\label{subsec:exist-uni-rates-bounded}
When $z\in\Omega^\circ$, such continuity issues do not arise if $p_0$ and $p_1$ are assumed to be continuous and bounded away from zero on $\Omega$. To avoid the trouble of defining a unique continuous extension, we only prove the existence and uniqueness of solutions whenever $x\in\Omega^\circ$. We need the following notation and assumptions. For any set $A\subseteq\mathbb{R}^d$, define
\begin{equation}\label{eq:inflated-imploded-set}
    \begin{split}
        A^{\varepsilon} &= \{x\in\mathbb{R}^d:\, \mbox{dist}(x, A) \le \varepsilon\},\\
        A^{-\varepsilon} &= \{x\in \mathbb{R}^d:\, \mathcal{B}(x, \varepsilon)\subseteq A\}.
    \end{split}
\end{equation}
\begin{enumerate}[label=(B\arabic*)]
    \item $\Omega$ is a compact, convex subset of $\mathbb{R}^d$ with a non-empty interior.\label{eq:compact-support}
    \item The Lebesgue densities $p_0$ and $p_1$ of $\mu_0$ and $\mu_1$ are bounded away from zero on $\Omega$. Moreover, $x\mapsto p_0(x)$ and $x\mapsto p_1(x)$ are continuous and uniformly bounded on $\Omega$.\label{eq:bounded-away-densities}
\end{enumerate}
Under assumption~\ref{eq:compact-support}, set
\begin{equation}\label{eq:inradius}
    r_{\mathrm{in}} ~:=~ \sup\{r\ge 0:\, \mathcal{B}(z, r) \subseteq \Omega\mbox{ for some }z\in\Omega\} ~=~ \sup_{z\in\Omega}\,\mbox{dist}(z,\, \partial\Omega) > 0.
\end{equation}
Under assumption~\ref{eq:bounded-away-densities}, set
\begin{equation}\label{eq:density-lower-bound-notation}
\underline{\mathfrak{p}} := \inf_{x\in\Omega}\min\{p_0(x), p_1(x)\} \le \sup_{x\in\Omega}\max\{p_0(x),\,p_1(x)\} =: \overline{\mathfrak{p}},
\end{equation}
and for any $\eta\in[0,\mbox{diam}(\Omega)]$,
\begin{equation}\label{eq:modulus-density}
\omega(\eta) := \sup_{\substack{z, z'\in\Omega,\\\|z - z'\| \le \eta}}\max\left\{\left|\frac{p_0(z)}{p_0(z')} - 1\right|,\, \left|\frac{p_1(z)}{p_1(z')} - 1\right|\right\}. 
\end{equation}
Note that the compactness of $\Omega$ combined with continuity of $p_0(\cdot)$ and $p_1(\cdot)$ implies that $\omega(\eta) \to 0$ as $\eta\to0$.
\begin{enumerate}[label=(B3)]
\item The function $\eta\mapsto \omega(\eta) + \eta$ is a non-decreasing Osgood function, i.e., $\Psi(u) = \int du/(u + \omega(u))$ satisfies $\Psi(\varepsilon) \to -\infty$ as $\varepsilon \to 0$.\label{eq:Osgood-condition-densities}
\end{enumerate}

See Example 1 (page~\pageref{exam:Osgood-function}) for examples of Osgood functions. Note that~\ref{eq:Osgood-condition-densities} is weaker than the assumption that $\omega(\cdot)$ is an Osgood function. 
For example, for uniform densities $p_0, p_1$, $\omega \equiv 0$, and hence, does not satisfy the Osgood condition.
\begin{theorem}\label{thm:rectified-flow-ODE-unique-sol}
    Suppose assumptions~\ref{eq:compact-support},~\ref{eq:bounded-away-densities}, and~\ref{eq:Osgood-condition-densities} hold. Then for any $x\in\Omega^\circ$, there exists a unique Carath{\'e}odory solution satisfying~\eqref{eq:rectified-flow-ODE}.
\end{theorem}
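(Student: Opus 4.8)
\emph{Proof strategy.} The plan is to run $F(t,z)=v(t,z)$ through the existence/uniqueness machinery of Section~\ref{sec:review-ODE}. The difficulty is that $v_t$ is only reliably defined (with finite value $f_t(z)/p_t(z)$) on $\Omega^\circ$, so the viability versions of Theorem~\ref{thm:Peano-existence} and Theorem~\ref{thm:uniqueness-viability} with $\mathcal S=\Omega$ are not directly usable; instead I would (i) note that $v$ satisfies~\ref{eq:bounded-in-x} with $B=\mathrm{diam}(\Omega)$, since $v_t(z)=\E[X_1-X_0\mid X_t=z]$ and $\|X_1-X_0\|\le\mathrm{diam}(\Omega)$ on $\Omega$; (ii) prove an a priori bound forcing every Carath\'eodory solution from $x\in\Omega^\circ$ to stay uniformly inside $\Omega$ on $[0,1)$; (iii) deduce a relaxed (along-solutions) Osgood estimate; (iv) combine these with local existence/uniqueness. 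Assumptions~\ref{eq:meas-in-t} and~\ref{eq:continuous-in-x} (on $\Omega^\circ$) are routine from Lemma~\ref{lem:representation-of-velocity-field}, dominated convergence (the integrands are bounded and, in $\delta$, supported in $\|\delta\|\le\mathrm{diam}(\Omega)$), $p_t>0$ on $\Omega^\circ$ for $t\in(0,1)$, and the linearity $v_0(z)=\E[X_1]-z$, $v_1(z)=z-\E[X_0]$ at the endpoints.

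\emph{Step 1: distance to the boundary.} I would use the representation $v_t(z)=(1-t)^{-1}(\E[X_1\mid X_t=z]-z)$, valid for $z\in\Omega^\circ$, $t\in[0,1)$. For a unit vector $\nu$ set $g_\nu(t)=h_\Omega(\nu)-\langle z_t,\nu\rangle\ge0$, with $h_\Omega$ the support function of $\Omega$. Since $\E[X_1\mid X_t=z_t]\in\Omega$ we get $\langle v_t(z_t),\nu\rangle\le g_\nu(t)/(1-t)$, hence $g_\nu'(t)\ge-g_\nu(t)/(1-t)$ a.e., and Gr\"onwall gives $g_\nu(t)\ge g_\nu(0)(1-t)$. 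Taking the infimum over $\nu$ and using $\mathrm{dist}(w,\partial\Omega)=\inf_{\|\nu\|=1}(h_\Omega(\nu)-\langle w,\nu\rangle)$ for $w\in\Omega$ gives $\mathrm{dist}(z_t,\partial\Omega)\ge\mathrm{dist}(x,\partial\Omega)(1-t)$ for all $t\in[0,1)$, an instance of Lemma~\ref{lem:distance-to-boundary-latest}. In particular every Carath\'eodory solution stays in $\Omega^\circ$ on $[0,1)$, and meets $\partial\Omega$ at most in the limit $t\to1$.

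\emph{Step 2: a tame Osgood modulus.} Write $v_t=f_t/p_t$ as in~\eqref{eq:notation-numerator-denominator-velocity-field}, the integrals effectively taken over the convex set $S_t(z)=\{\delta:\ z-t\delta\in\Omega,\ z+(1-t)\delta\in\Omega\}$, on which $\|\delta\|\le\mathrm{diam}(\Omega)$. Moving $z$ to $z'$ perturbs each factor of the integrand on $S_t(z)\cap S_t(z')$ multiplicatively by $1+O(\omega(\|z-z'\|))$ ($\omega$ as in~\eqref{eq:modulus-density}) and shifts the domain in Hausdorff distance by $O(\|z-z'\|/\min\{t,1-t\})$, so $\mathrm{vol}(S_t(z)\triangle S_t(z'))=O(\|z-z'\|(t^{-d}+(1-t)^{-d}))$; together with $p_t(z)\ge c_d\,\underline{\mathfrak p}^2(\mathrm{dist}(z,\partial\Omega)/\max\{t,1-t\})^d$ (inscribed ball) and $\|v_t\|\le\mathrm{diam}(\Omega)$, this gives $\|v_t(z)-v_t(z')\|\le a(t;\mathrm{dist}(z,\partial\Omega))\,\kappa(\|z-z'\|)$ with $\kappa(u)=C(u+\omega(u))$ --- Osgood by~\ref{eq:Osgood-condition-densities} --- and $a(t;\rho)$ polynomial in $t^{-1},(1-t)^{-1},\rho^{-1}$, bounded on compacts of $(0,1)$. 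The crucial point is that when $t\,\mathrm{diam}(\Omega)/(1-t)\le\mathrm{dist}(z,\partial\Omega)$ the constraint $z-t\delta\in\Omega$ is automatically implied by $z+(1-t)\delta\in\Omega$, so $S_t(z)$ has a single active constraint and the $t^{-d}$ blow-up near $t=0$ disappears. Plugging in Step 1, $a(t):=a(t;\mathrm{dist}(x,\partial\Omega)(1-t))$ is bounded near $t=0$, bounded on every $[\varepsilon,1-\varepsilon]$, and only polynomially large in $(1-t)^{-1}$ as $t\to1$; hence $a\in L^1([0,1-\varepsilon])$ for all $\varepsilon>0$. This verifies the relaxed, along-solutions version of~\ref{eq:Lipschitz-Osgood}.

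\emph{Step 3: assembling.} Local existence on some $[0,T_0]$ follows from Theorem~\ref{thm:Peano-existence}(1) with $\mathcal S$ a closed ball around $x$ inside $\Omega^\circ$ (or plain Peano, $v$ being continuous and bounded there). By Step 1 the solution never hits $\partial\Omega$ before $t=1$, so it can be continued on all of $[0,1)$, and since $\|\dot z_t\|\le\mathrm{diam}(\Omega)$ it extends to an absolutely continuous $z:[0,1]\to\overline\Omega=\Omega$; a single endpoint is immaterial for a Carath\'eodory solution. For uniqueness, given two such solutions $y,w$ from $x$, Step 1 applies to both and Step 2 gives $\|\dot y_t-\dot w_t\|\le a(t)\kappa(\|y_t-w_t\|)$ a.e.\ with $\kappa$ Osgood and $a\in L^1_{\mathrm{loc}}([0,1))$, so the Osgood comparison argument from the proof of Theorem~\ref{thm:uniqueness-viability} forces $y\equiv w$ on each $[0,1-\varepsilon]$, hence on $[0,1]$ by continuity. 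I expect Step 2 to be the main obstacle --- estimating the integral over the symmetric difference $S_t(z)\triangle S_t(z')$ of the moving convex domains and, in particular, exploiting the single-active-constraint phenomenon so that the modulus of $v_t$ stays bounded near $t=0$; the $t\to1$ behaviour is then harmless since only local integrability of $a$ on $[0,1)$ is needed and the endpoint is handled by continuity.
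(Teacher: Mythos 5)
Your proposal is correct and follows the same overall architecture as the paper's proof (and the auxiliary lemmas it invokes): an a priori lower bound on $\mathrm{dist}(z_t,\partial\Omega)$ keeping trajectories in $\Omega^\circ$ until $t=1$, an Osgood-type modulus of continuity for $z\mapsto v(t,z)$ whose Lipschitz constant is controlled by $\mathrm{dist}(z,\partial\Omega)$ and $\min\{t,1-t\}$, and then local existence plus a Bihari/Osgood comparison for uniqueness, patched to $t=1$ by continuity. The one place where you take a genuinely different route is Step 1: the paper's Lemma~\ref{lem:distance-to-boundary-latest} proves $\mathrm{dist}(\mathfrak{R}(t,x),\partial\Omega)\ge(1-t)\mathrm{dist}(x,\partial\Omega)$ by tracking the nearest-point projection onto $\partial\Omega$ and bounding the inner product $v(t,y(t))^{\top}(y(t+\eta)-\tilde y(t+\eta))$ via Lemma~\ref{lem:projection-to-boundary}, whereas you use the representation $v_t(z)=(1-t)^{-1}(\E[X_1\mid X_t=z]-z)$ together with the support function $h_\Omega$: since $\E[X_1\mid X_t=z_t]\in\Omega$, each $g_\nu(t)=h_\Omega(\nu)-\langle z_t,\nu\rangle$ satisfies $g_\nu'(t)\ge -g_\nu(t)/(1-t)$, and taking the infimum over unit $\nu$ recovers the same bound. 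Your argument is noticeably cleaner and avoids handling the nondifferentiability of the projection, at the cost of needing the obvious caveat (also implicit in the paper) that the Gr\"onwall step is only justified up to the first boundary hitting time, so a contradiction/continuation argument is needed to rule out boundary hitting before $t=1$. Your Step 2 is a faithful sketch of Lemma~\ref{lem:boundedness-and-local-Lipschitz} (including the key observation that the $t^{-d}$ blow-up vanishes once only one of the two constraints defining $S_t(z)$ is active, which is Proposition~\ref{prop:properties-of-S_t}(1) in the paper), and the relaxed ``along-solutions'' form of~\ref{eq:Lipschitz-Osgood} you invoke is explicitly permitted in Section~\ref{sec:review-ODE}; you correctly flag that a fully detailed symmetric-difference volume estimate (the paper's Lemma~\ref{lem:Lipschitz-of-volume} plus Proposition~\ref{prop:properties-of-S_t}(4)) is the main work to fill in.
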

\begin{proof}
As noted in the example of the rectified flow from standard uniform to itself, one cannot expect the velocity field $(t, z) \mapsto v(t, z)$ to be jointly continuous on $[0,1]\times\Omega$. This prompts us to consider Carath{\'e}odory solutions to~\eqref{eq:rectified-flow-ODE}, i.e., we consider solutions satisfying
\begin{equation}\label{eq:rectified-flow-integral-equation}
z(t) = x + \int_0^t v(s, z(s))ds,\quad\mbox{for}\quad t\in[0, 1].
\end{equation}
Because we have not defined $v(t, z)$ for $z\in\partial\Omega$, this integral equation is not well-defined if $z(s)\in\partial\Omega$ for any $s\in[0, 1)$. We proceed with the following steps to prove the existence and uniqueness of solutions to~\eqref{eq:rectified-flow-integral-equation}. Because $x\in\Omega^\circ$, $\mbox{dist}(x,\, \partial\Omega) > 0$.
\begin{enumerate}
    \item Lemma~\ref{lem:boundedness-and-local-Lipschitz} proves that for any $t\in[0, 1]$, the function $z\mapsto v(t, z)$ is uniformly bounded on $\Omega$ and uniformly continuous on every compact subset of $\Omega^\circ$. This verifies the assumptions for Theorem~\ref{thm:Peano-existence}(1), and implies the existence of $\bar{t}\in(0, 1]$ and a Carath{\'e}odory solution $z^*(\cdot)$ of~\eqref{eq:rectified-flow-integral-equation} on $[0, \bar{t})$ that lies in $\Omega^\circ$.   
    \item Note that every solution to~\eqref{eq:rectified-flow-integral-equation} starting at $x\in\Omega^\circ$ has to either stay in $\Omega^\circ$ for $t\in[0,1)$ or reach the boundary at some $\tau < 1$ and terminate (because we have not defined $z\mapsto v(t, z)$ on $\partial\Omega$). 
    Lemma~\ref{lem:distance-to-boundary-latest} proves that every solution $z(\cdot)$ of~\eqref{eq:rectified-flow-integral-equation} must satisfy $\mbox{dist}(z(t), \partial\Omega) \ge (1-t)\mbox{dist}(x, \partial\Omega)$ for all $t\in[0, 1]$, which implies that no solution of~\eqref{eq:rectified-flow-integral-equation} starting in the interior can reach the boundary, except maybe at $t = 1$.
    \item Using the modulus of continuity bound on the velocity field from Lemma~\ref{lem:boundedness-and-local-Lipschitz}, Lemma~\ref{lem:uniqueness-final} proves that for any $x\in\Omega^\circ$, there exists a unique solution $z^*(\cdot)$ satisfying~\eqref{eq:rectified-flow-integral-equation} and $z^*(t)\in \Omega^\circ$ for all $t\in[0, 1)$.
    \item Hence, for any $x\notin\partial\Omega$, there exists a unique solution. This implies that the rectified flow provides a valid transport map (Theorem~\ref{thm:uniquene-ae-implies-transport-map}). 
\end{enumerate}
\end{proof}
The following lemma provides some basic boundedness and uniform continuity properties of the velocity field~\eqref{eq:velocity-field} under assumptions~\ref{eq:compact-support} and~\ref{eq:bounded-away-densities}.
\begin{lemma}\label{lem:boundedness-and-local-Lipschitz}
    Under assumption~\ref{eq:compact-support},
    \begin{equation}\label{eq:bounded-velocity}
    \sup_{t\in[0,1]}\|v(t, z)\| \le \mathrm{diam}(\Omega),\quad\mbox{for all}\quad z\in\Omega^\circ.
    \end{equation}
    Under assumptions~\ref{eq:compact-support} and~\ref{eq:bounded-away-densities}, for all $t\in[0, 1], \varepsilon, \eta > 0$,
    \begin{equation}\label{eq:locally-Lipschitz}
    \begin{split}
    &\sup_{\substack{z, z'\in\Omega^{-\varepsilon},\\\|z - z'\| \le \eta}}\,{\|v(t, z) - v(t, z')\|}
    ~\le~ \mathfrak{L}_1\omega(\eta) + \mathfrak{L}_2(\varepsilon)\eta,
    \end{split}
    \end{equation}
    where
    \begin{equation}\label{eq:Lipschitz-multiplers}
    \begin{split}
        \mathfrak{L}_1 &= 9\mathrm{diam}(\Omega),\quad\mathrm{and}
        \quad\mathfrak{L}_2(\varepsilon) = \frac{3\mathrm{diam}(\Omega)}{\omega^{-1}(1)} + \frac{1}{\varepsilon^2}\frac{\overline{\mathfrak{p}}^2}{\underline{\mathfrak{p}}^2}3d5^{d+1}\mathrm{diam}^2(\Omega).
    \end{split}
    \end{equation}
\end{lemma}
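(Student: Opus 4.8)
The plan is to handle the two bounds in~\eqref{eq:bounded-velocity} and~\eqref{eq:locally-Lipschitz} separately, disposing of the first quickly. By the probabilistic representation~\eqref{eq::vzt}, $v(t,z)=\mathbb{E}[X_1-X_0\mid X_t=z]$ wherever the conditional law is defined, in particular for $z\in\Omega^\circ$, and since $\mu_0,\mu_1$ are supported on $\Omega$ we have $\|X_1-X_0\|\le\mathrm{diam}(\Omega)$ a.s., so the conditional mean cannot exceed that bound. (Equivalently, in~\eqref{eq:veldelta} the integrand vanishes off $S_t(z):=\{\delta:z-t\delta\in\Omega,\ z+(1-t)\delta\in\Omega\}$, and on $S_t(z)$ one has $\|\delta\|=\|(z+(1-t)\delta)-(z-t\delta)\|\le\mathrm{diam}(\Omega)$, so $v(t,z)$ is a weighted average of vectors of length $\le\mathrm{diam}(\Omega)$.)

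For~\eqref{eq:locally-Lipschitz} I would write $v_t=f_t/p_t$ with the notation of~\eqref{eq:notation-numerator-denominator-velocity-field} and use the quotient identity
\[
v_t(z)-v_t(z')=\frac{f_t(z)-f_t(z')}{p_t(z)}+v_t(z')\,\frac{p_t(z')-p_t(z)}{p_t(z)},
\]
so that, with $\|v_t(z')\|\le\mathrm{diam}(\Omega)$ from the first part, the task reduces to bounding $\|f_t(z)-f_t(z')\|$ and $|p_t(z)-p_t(z')|$ against $p_t(z)$. Two preliminary reductions: first, if $\eta>\omega^{-1}(1)$ the first part already gives $\|v(t,z)-v(t,z')\|\le2\mathrm{diam}(\Omega)\le\tfrac{3\mathrm{diam}(\Omega)}{\omega^{-1}(1)}\eta\le\mathfrak{L}_2(\varepsilon)\eta$, so I may assume $\eta\le\omega^{-1}(1)$, and similarly $\eta\le\varepsilon/4$ (the leftover range $\varepsilon/4<\eta\le\omega^{-1}(1)$ being absorbed by the same trivial bound, since $\varepsilon<r_{\mathrm{in}}\le\mathrm{diam}(\Omega)/2$ and the combinatorial constant in $\mathfrak{L}_2$ is large); second, since $v_t(\cdot)$ for $(\mu_0,\mu_1)$ equals $-v_{1-t}(\cdot)$ for $(\mu_1,\mu_0)$ and the right-hand side of~\eqref{eq:locally-Lipschitz} is symmetric in $p_0\leftrightarrow p_1$, I may assume $t\ge1/2$. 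Convexity of $\Omega$ enters here through a small but crucial fact: for $a,c\in\Omega$ with $\|a-c\|\le2\eta\le2\,\omega^{-1}(1)$ the midpoint $b=(a+c)/2\in\Omega$, so $p_j(a)/p_j(c)=(p_j(a)/p_j(b))(p_j(b)/p_j(c))\in[(1-\omega(\eta))^2,(1+\omega(\eta))^2]$, hence $\omega(2\eta)\le2\omega(\eta)+\omega(\eta)^2\le3\omega(\eta)$.

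Next I would align arguments and split the integral. Put $w=z'-z$, $c=w/t$ (so $\|c\|\le2\eta$ as $t\ge1/2$); the substitution $\delta\mapsto\delta+c$ in $f_t(z'),p_t(z')$ brings the $p_0$-arguments into coincidence with those of $f_t(z),p_t(z)$, at the cost of shifting the $p_1$-argument and the constraint set by $c$ and producing an extra $c\,p_t(z')$ term inside $f_t$. The resulting differences are then governed by $\Delta_u:=p_1(u)\mathbf{1}_\Omega(u)-p_1(u+c)\mathbf{1}_\Omega(u+c)$ with $u=z+(1-t)\delta$, and I would split the $\delta$-integral into an interior part $\{u,u+c\in\Omega\}$ and a boundary part (the rest, necessarily inside $\{\mathrm{dist}(u,\partial\Omega)\le\|c\|\le2\eta\}$). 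On the interior part $|\Delta_u|\le p_1(u)\,\omega(\|c\|)\le3\omega(\eta)\,p_1(u)$, so integrating against $p_0(z-t\delta)$ and using $\|\delta\|\le\mathrm{diam}(\Omega)$ on $S_t(z)$, this part contributes $\le3\,\mathrm{diam}(\Omega)\,\omega(\eta)\,p_t(z)$ to $\|f_t(z)-f_t(z')\|$ and $\le3\omega(\eta)\,p_t(z)$ to $|p_t(z)-p_t(z')|$; combined with the $c\,p_t(z')$ term ($\le2\eta\,p_t(z')$) and $p_t(z')\le(1+3\omega(\eta))p_t(z)\le4p_t(z)$, this delivers the $\mathfrak{L}_1\omega(\eta)$ part of~\eqref{eq:locally-Lipschitz}.

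The boundary part is where the real work lies, and I expect it to be the main obstacle: there $\omega$ is useless (one of $p_1(u),p_1(u+c)$ is zero), only $|\Delta_u|\le\overline{\mathfrak{p}}$ is available, and so I must estimate $\mathrm{Leb}(B)$ for $B:=\{\delta:z-t\delta\in\Omega,\ \mathrm{dist}(z+(1-t)\delta,\partial\Omega)\le2\eta\}$. Two observations make this tractable. First, $B$ is empty unless $1-t$ is bounded below: if $z-t\delta\in\Omega$ and $z\in\Omega^{-\varepsilon}$ then $\|\delta\|\le\mathrm{diam}(\Omega)/t\le2\mathrm{diam}(\Omega)$, hence $\mathrm{dist}(z+(1-t)\delta,\partial\Omega)\ge\varepsilon-2(1-t)\mathrm{diam}(\Omega)>2\eta$ as soon as $1-t<(\varepsilon-2\eta)/(2\mathrm{diam}(\Omega))$, so when $B\neq\emptyset$ one has $(1-t)^{-1}\le4\mathrm{diam}(\Omega)/\varepsilon$. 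Second, a convex-geometry estimate: $B$ lies inside a collar of thickness $O(\eta/(1-t))$ around $\partial K$, $K:=(\Omega-z)/(1-t)$, intersected with $\mathcal{B}(0,2\mathrm{diam}(\Omega))$; since the part of the boundary of a convex body inside a ball has $(d-1)$-measure at most that of the enclosing sphere (monotonicity of surface area for convex bodies), a covering argument bounds $\mathrm{Leb}(B)\le C_d\,\mathrm{diam}(\Omega)^{d-1}\,\eta/(1-t)$ with $C_d$ of order $d\,5^{d}$. Thus the boundary part contributes $\le\overline{\mathfrak{p}}^2\cdot O(\mathrm{diam}(\Omega))\cdot C_d\,\mathrm{diam}(\Omega)^{d-1}\,\eta/(1-t)$ to $\|f_t(z)-f_t(z')\|$. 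Finally $p_t(z)\ge\underline{\mathfrak{p}}^2\,\mathrm{Leb}(\mathcal{B}(0,\varepsilon))$ since $\mathcal{B}(0,\varepsilon)\subseteq S_t(z)$ for $z\in\Omega^{-\varepsilon}$; dividing the boundary contribution by $p_t(z)$, substituting $(1-t)^{-1}\le4\mathrm{diam}(\Omega)/\varepsilon$, and collecting the dimensional constants yields the $\mathfrak{L}_2(\varepsilon)\,\eta$ term with $\mathfrak{L}_2(\varepsilon)$ of the stated form, the factor $5^{d+1}$ arising from the covering. The bookkeeping of these volume ratios is the only genuinely delicate point; the conceptual content is the two observations above that tame the boundary layer.
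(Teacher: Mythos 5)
Your proof of~\eqref{eq:bounded-velocity} matches the paper's Jensen argument, and your opening moves for~\eqref{eq:locally-Lipschitz} are sound and genuinely different from the paper's: where the paper compares $S_t(z)$ and $S_t(z')$ directly and splits the integral over $S_t(z)\cap S_t(z')$, $S_t(z)\setminus S_t(z')$, $S_t(z')\setminus S_t(z)$, you instead change variables to align the $p_0$-arguments and split according to whether the shifted $p_1$-argument stays in $\Omega$. The midpoint trick $\omega(2\eta)\le 2\omega(\eta)+\omega(\eta)^2$, the symmetry reduction to $t\ge 1/2$, and the observation that the boundary layer is empty unless $(1-t)^{-1}\lesssim \mathrm{diam}(\Omega)/\varepsilon$ are all valid.

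The gap is in the boundary-layer volume estimate. You bound $\mathrm{Leb}(B)$ \emph{absolutely} by $C_d\,\mathrm{diam}(\Omega)^{d-1}\eta/(1-t)$ via the enclosing-sphere surface area, and then divide by $p_t(z)\ge\underline{\mathfrak{p}}^2\,\mathrm{Leb}(\mathcal{B}(0,\varepsilon))$. This yields
\[
\frac{\mathrm{Leb}(B)}{\mathrm{Leb}(\mathcal{B}(0,\varepsilon))}\;\lesssim\;\frac{\mathrm{diam}(\Omega)^{d-1}\,\eta}{(1-t)\,\varepsilon^{d}}\;\lesssim\;\frac{\mathrm{diam}(\Omega)^{d}\,\eta}{\varepsilon^{d+1}},
\]
i.e. $\varepsilon^{-(d+1)}$ scaling, which is what the lemma's $\mathfrak{L}_2(\varepsilon)\propto\varepsilon^{-2}$ demands only when $d=1$; for $d\ge 2$ it is off by the factor $(\mathrm{diam}(\Omega)/\varepsilon)^{d-1}$. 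The fix is to bound the \emph{ratio} $\mathrm{Leb}(B)/\mathrm{Vol}(S_t(z))$ directly rather than each factor separately, using the fact that for a convex body $A$ containing $\mathcal{B}(0,r)$ one has $\mathcal{H}^{d-1}(\partial A)/\mathrm{Vol}(A)\le d/r$, together with a Steiner--Aleksandrov--Fenchel estimate of the relative shell volume: this is exactly what the paper's Lemma~\ref{lem:Lipschitz-of-volume} packages, giving $\mathrm{Vol}(A^{\gamma}\setminus A)/\mathrm{Vol}(A)\le (d\gamma/r)(1+\gamma/r)^{d-1}$. Applied with $A=S_t(z)$ (inradius $\ge\varepsilon$) and shell width $\gamma\asymp\eta/(1-t)\lesssim\eta\,\mathrm{diam}(\Omega)/\varepsilon$, the relative shell volume is $\lesssim d\,\mathrm{diam}(\Omega)\eta/\varepsilon^2$, recovering the $\varepsilon^{-2}$ and the $d\,5^{d+1}$ constants. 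The absolute-surface-area route discards the isoperimetric control that makes the constant dimension-free in $\varepsilon$.
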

A proof of Lemma~\ref{lem:boundedness-and-local-Lipschitz} can be found in Section~\ref{appsubsec:proof-boundedness-and-local-Lipschitz}.
Note that if $\omega(\eta) = C\eta$ for some constant $C$, then inequality~\eqref{eq:locally-Lipschitz} implies that $z\mapsto v(t, z)$ is Lipschitz on $\Omega^{-\varepsilon}$ for each $t\in[0, 1]$.
Better bounds on the modulus of continuity of $z\mapsto v(t, z)$ (depending on $t$) are available in the proof of Lemma~\ref{lem:boundedness-and-local-Lipschitz}. In particular, from the proof of Lemma~\ref{lem:boundedness-and-local-Lipschitz}, it follows that $\mathfrak{L}_2(\varepsilon)$ can be replaced with a time-dependent function
\begin{equation}\label{eq:time-dependent-Lipschitz-constant}
\mathfrak{L}_2(\varepsilon; t) = \frac{3\mathrm{diam}(\Omega)}{\omega^{-1}(1)} + \frac{d}{\varepsilon}\frac{3\overline{\mathfrak{p}}^2\mbox{diam}(\Omega)}{\underline{\mathfrak{p}}^2}
\begin{cases}
1, &\mbox{if }\min\{t, 1 - t\} \le \varepsilon/\mbox{diam}(\Omega),\\
5^{d+1}/\min\{t, 1 - t\}, &\mbox{if }\min\{t, 1 - t\} \ge \varepsilon/\mbox{diam}(\Omega).
\end{cases}
\end{equation}
See inequalities~\eqref{eq:low-values-of-t},~\eqref{eq:high-value-of-t},~\eqref{eq:unrestrictied-Lipschitz}, and~\eqref{eq:correct-restricted-Lipschitz-constant}. It is easy to see  that $\sup_{t\in[0, 1]} \mathfrak{L}_2(\varepsilon; t) = \mathfrak{L}_2(\varepsilon).$

\begin{remark}[(Sub-)Optimality of Lemma~\ref{lem:boundedness-and-local-Lipschitz}]
    The modulus of continuity bound~\eqref{eq:locally-Lipschitz} is obtained as a result of 
    \[
    \|v(t, z) - v(t, z')\| \le 3\mathrm{diam}(\Omega)\left[3\omega(\eta) + \frac{\eta}{\omega^{-1}(1)} + \frac{\overline{\mathfrak{p}}^2}{\underline{\mathfrak{p}}^2}\,\frac{\mathrm{Vol}(S_t(z)\Delta S_t(z'))}{\mathrm{Vol}(S_t(z))}\right],
    \]
    where $\Delta$ represents the symmetric difference of sets. This bound holds without any assumptions on $z, z'\in\Omega$ (i.e., we do not need $z, z'\in\Omega^{-\varepsilon}$). In controlling the second term on the right hand side, the in-radius of $S_t(z)$ is used, and this is obtained from $z\in \Omega^{-\varepsilon}.$ Lemma~\ref{lem:boundedness-and-local-Lipschitz} controls this term when $z, z'\in\Omega^{-\varepsilon}$ for arbitrary convex sets with non-empty interior (in particular, with no assumption on the smoothness of the boundary). It is possible that for specific convex sets such as polygons and ellipsoids, better bounds could be obtained. We believe, however, that in the worst case Lemma~\ref{lem:boundedness-and-local-Lipschitz} is sharp. 
\end{remark}

The following lemma, deriving bounds on distance to boundary, plays the most crucial role in the derivation of uniqueness and regularity properties of rectified flow.
\begin{lemma}\label{lem:distance-to-boundary-latest}
    Suppose assumptions~\ref{eq:compact-support} and~\ref{eq:bounded-away-densities} hold. For $x\in\Omega^\circ$, let $y(\cdot)$ be any function satisfying
    \begin{equation}\label{eq:sub-rectified-flow-equation}
    y(t) = x + \int_0^t v(s, y(s))ds\quad\mbox{for all}\quad t\in[0, T),
    \end{equation}
    with $T = \sup\{t\in[0, 1]:\, y(s)\in\Omega^\circ\mbox{ for all }s\in[0,t]\}$. Then $T = 1$. (This implies that every solution exists on $[0, 1]$.) 
    Furthermore, any solution $y:[0,1]\to\mathbb{R}^d$ must satisfy
    \begin{equation}\label{eq:distance-from-boundary}
    \mathrm{dist}(y(t),\,\partial\Omega) \ge (1 - t)\mathrm{dist}(x,\partial\Omega)\quad\mbox{for all}\quad t\in[0, 1].
    \end{equation}
\end{lemma}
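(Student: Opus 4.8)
The plan is to exploit that $v(t,z)=\E[X_1-X_0\mid X_t=z]$ is a conditional expectation of a vector ``anchored'' at $X_1\in\Omega$: for $z\in\Omega^\circ$ (where $p_t(z)>0$ by assumption~\ref{eq:bounded-away-densities}, so the conditional expectation is well defined) one has
\[
z-tv(t,z)=\E[X_0\mid X_t=z]\in\Omega,\qquad z+(1-t)v(t,z)=\E[X_1\mid X_t=z]\in\Omega,
\]
the memberships holding because $\Omega$ is closed and convex (assumption~\ref{eq:compact-support}) and $X_0,X_1$ are $\Omega$-valued. Only the second identity will be used. I would also record at the outset that, by Lemma~\ref{lem:boundedness-and-local-Lipschitz}, $\|v(s,y(s))\|\le\mathrm{diam}(\Omega)$ along any solution, so $y(\cdot)$ is Lipschitz on $[0,T)$ and extends continuously to $[0,T]$.

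Next I would convert the second identity into the bound~\eqref{eq:distance-from-boundary} via supporting hyperplanes. Put $\rho:=\mathrm{dist}(x,\partial\Omega)>0$ and fix a unit vector $a$ and $b\in\mathbb{R}$ with $\Omega\subseteq\{w:\langle a,w\rangle\le b\}$; since $\mathcal{B}(x,\rho)\subseteq\Omega$ this forces $\langle a,x\rangle\le b-\rho$. For $t\in[0,T)$ set $g(t):=\langle a,y(t)\rangle$; this is absolutely continuous with $g'(t)=\langle a,v(t,y(t))\rangle$ for a.e.\ $t$ (Carath{\'e}odory solutions are differentiable a.e.). Pairing the second identity, evaluated at $z=y(t)$, with $a$ gives the differential inequality $g(t)+(1-t)g'(t)\le b$ for a.e.\ $t\in[0,T)$. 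Hence on $[0,t_0]$ with any $t_0<T$ (so that $1-t$ is bounded away from $0$) the function $h(t):=(b-g(t))/(1-t)$ is absolutely continuous with $h'(t)=\big[(b-g(t))-(1-t)g'(t)\big]/(1-t)^2\ge 0$ a.e., hence non-decreasing; therefore $h(t_0)\ge h(0)=b-\langle a,x\rangle\ge\rho$, i.e.\ $\langle a,y(t_0)\rangle\le b-(1-t_0)\rho$. Since a closed convex set equals the intersection of its supporting halfspaces, taking the infimum over all such $(a,b)$ yields $\mathcal{B}(y(t_0),(1-t_0)\rho)\subseteq\Omega$, that is, $\mathrm{dist}(y(t_0),\partial\Omega)\ge(1-t_0)\rho$ for every $t_0\in[0,T)$.

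It then remains to bootstrap to $T=1$. If $T<1$, the Lipschitz bound lets us define $y(T):=\lim_{t\uparrow T}y(t)$ and pass to the limit in the integral equation, so $y(\cdot)$ solves~\eqref{eq:rectified-flow-integral-equation} on $[0,T]$; the bound just proved gives $\mathrm{dist}(y(T),\partial\Omega)\ge(1-T)\rho>0$, so $y(T)\in\Omega^\circ$. A solution cannot ``terminate'' at an interior point: applying Theorem~\ref{thm:Peano-existence}(1) at $y(T)$ and concatenating produces a continuation that stays in $\Omega^\circ$ a little longer, contradicting the maximality of $T$. Hence $T=1$, and letting $t_0\uparrow 1$ in the distance bound extends~\eqref{eq:distance-from-boundary} to all of $[0,1]$.

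The analytic content is slight --- the crux is the ``inward-pointing'' identity plus a one-line integrating-factor argument, and convexity of $\Omega$ is used essentially twice (for the conditional-expectation memberships and for the supporting-hyperplane description of $\Omega$). The only parts I expect to require care are bookkeeping ones: justifying the a.e.\ differentiation and the absolute continuity of $h$ for Carath{\'e}odory (merely absolutely continuous) solutions, and making the $T=1$ bootstrap precise --- i.e., interpreting what ``any function satisfying~\eqref{eq:rectified-flow-integral-equation} on $[0,T)$'' means and arguing that a solution reaching an interior point at time $T<1$ can always be continued.
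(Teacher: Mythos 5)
Your proof is correct, and it takes a genuinely different route from the paper's. Both arguments hinge on the geometric fact that $y(t)+(1-t)v(t,y(t))$ points ``inward,'' but you realize this immediately as the identity $y(t)+(1-t)v(t,y(t))=\E[X_1\mid X_t=y(t)]\in\Omega$ (a barycentre of $\Omega$-valued points, using convexity and closedness), and then prove~\eqref{eq:distance-from-boundary} support-function by support-function, using the integrating factor $(1-t)^{-1}$ to make $h(t)=(b-g(t))/(1-t)$ monotone. The paper instead works directly with the scalar $\mathfrak{D}(t)=\mathrm{dist}(y(t),\partial\Omega)$: it establishes that $\mathfrak{D}$ is Lipschitz, derives the Gr{\"o}nwall-type inequality $\mathfrak{D}'(t)\ge -\mathfrak{D}(t)/(1-t)$ by taking a short-time Taylor expansion of $y$, projecting $y(t+\eta)$ onto $\partial\Omega$, and bounding $v(t,y(t))^{\top}(\cdot)$ from below by an infimum over $\delta\in S_t(y(t))$ via Lemma~\ref{lem:projection-to-boundary}, and then integrates. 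What your route buys: the averaging step $\int_{S_t}(y(t)+(1-t)\delta)\,q\,d\delta\in\Omega$ replaces the infimum-plus-projection-lemma machinery in one line, and the integrating-factor argument sidesteps the $O(\eta^2)$ bookkeeping and the separate verification that the composed distance function is absolutely continuous. What the paper's route buys: it tracks a single scalar function rather than a family indexed by supporting hyperplanes, and it already has the projection lemma and the differential-inequality framework in place for reuse elsewhere. The $T=1$ bootstrap is the same in both: show $\mathrm{dist}(y(T),\partial\Omega)>0$ if $T<1$ and continue via Theorem~\ref{thm:Peano-existence}(1), contradicting maximality. One small expositional point: you should say explicitly that the bound $\mathrm{dist}(y(t_0),\partial\Omega)\ge(1-t_0)\rho$, proved for $t_0<T$, passes to $t_0=T$ because both sides are continuous in $t_0$ (you gesture at this via ``extends continuously'' but the inequality's extension deserves one more clause).
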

A proof can be found in Section~\ref{appsubsec:proof-of-dist-to-bdry}. Lemma~\ref{lem:distance-to-boundary-latest} provides a lower bound on the distance to boundary of the path. Unfortunately, that quantitative bound does not imply that $z(1) \in \Omega^\circ$ if $x\in\Omega^\circ$. The following result argues indirectly that $z(1)\in\Omega^\circ$ for almost all $x$, which will be useful in establishing better rates of convergence for the rectified flow.

To state the result, we need some notation. For each $t\in[0,1]$ and $x\in\Omega^\circ$, let $\mathfrak{R}(t, x)$ be the unique solution to~\eqref{eq:rectified-flow-integral-equation} in the sense that
\begin{equation}\label{eq:whole-path-integral}
\mathfrak{R}(t, x) = x + \int_0^t v(s, \mathfrak{R}(s, x))ds\quad\mbox{for all}\quad t\in[0, 1], \; x\in\Omega^\circ.
\end{equation}
Note that Theorem~\ref{thm:rectified-flow-ODE-unique-sol} proves such a function is uniquely defined under assumptions~\ref{eq:compact-support},~\ref{eq:bounded-away-densities}, and~\ref{eq:Osgood-condition-densities}, which are also used in the following result. Also, define
\begin{equation}\label{eq:distance-to-boundary-simultaneous}
  \mathfrak{d}(x) := \inf_{t\in[0, 1]}\, \mathrm{dist}(\mathfrak{R}(t, x),\,\partial\Omega)\quad\mbox{for all}\quad x\in\Omega^\circ.   
\end{equation}
and for $t\in[0, 1]$,
\begin{equation}\label{eq:non-differentiability}
\begin{split}
    B(t) &:= \{z^*\in\Omega:\, z\mapsto v(t, z)\mbox{ is not differentiable at }z^*\}\\
    &= \left\{z^*\in\Omega:\, \limsup_{h\to0}\,\frac{v(t, z^* + h) - v(t, z^*)}{h} \neq \liminf_{h\to0}\,\frac{v(t, z^* + h) - v(t, z^*)}{h}\right\}.
\end{split}
\end{equation}
\begin{lemma}\label{lem:distance-to-boundary-at-1}
    Suppose assumptions~\ref{eq:compact-support},~\ref{eq:bounded-away-densities}, and~\ref{eq:Osgood-condition-densities} hold. Consider the events
    \begin{equation}\label{eq:measure-1-sets}
        \begin{split}
            \mathcal{E}_1 &:= \{z\in\Omega:\, \mathrm{dist}(z, \partial\Omega) > 0\},\\
            \mathcal{E}_2 &:= \{z\in\Omega:\, \mathrm{dist}(\mathfrak{R}(1, z),\,\partial\Omega) > 0\},\\
            \mathcal{E}_3 &:= \{(t, z)\in[0, 1]\times\Omega:\, \mathfrak{R}(t, z)\notin B(t)\},\\
            \mathcal{E}_4 &:= \{z\in\Omega:\, \mathrm{Leb}(\{t\in[0,1]:\, \mathfrak{R}(t, z)\in B(t)\}) = 0\}.
        \end{split}
    \end{equation}
    Then $\mu_0(\mathcal{E}_1\cap\mathcal{E}_2) = 1$. Furthermore, for any $\varepsilon > 0$, we have
    \begin{equation}\label{eq:dist-to-boundary-path}
    \mathfrak{d}(x) ~\ge~ \frac{\mathrm{dist}(\mathfrak{R}(1, x),\,\partial\Omega)\varepsilon}{\mathrm{diam}(\Omega) + \varepsilon},\quad\mbox{for all}\quad x\in\Omega^{-\varepsilon},
    \end{equation}
    and for any $\gamma \in [0, \mathrm{diam}(\Omega)/2]$, setting
    \begin{equation}
        \mathcal{A}_{\gamma} := \{x\in\Omega:\, \mathrm{dist}(\mathfrak{R}(1, x),\,\partial\Omega) \le \gamma\},
    \end{equation}
    we have  
    \begin{equation}\label{eq:dist-to-boundary-almost-everywhere}
    \mu_0(\mathcal{A}_{\gamma}) \le \gamma \frac{2^dd\overline{\mathfrak{p}}}{r_{\mathrm{in}}\underline{\mathfrak{p}}},\quad\mbox{and}\quad \inf_{x\in\mathcal{A}_{\gamma}^c\cap\Omega^{-\varepsilon}}\,\mathfrak{d}(x) ~\ge~ \frac{\gamma\varepsilon}{\mathrm{diam}(\Omega) + \varepsilon}.
    \end{equation}
    Finally, if for some $C \ge 0$ such that $\omega(\eta) \le C\eta$ for all $\eta > 0$, then $(\mathrm{Leb}\times\mu_0)(\mathcal{E}_3) = 1,$ and $\mu_0(\mathcal{E}_4) = 1$. 
\end{lemma}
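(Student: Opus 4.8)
The plan is to derive all four assertions from two facts already in hand: the marginal‑preserving property of the flow and the distance‑to‑boundary estimates of Lemma~\ref{lem:distance-to-boundary-latest}. Under~\ref{eq:compact-support}--\ref{eq:Osgood-condition-densities}, Theorem~\ref{thm:rectified-flow-ODE-unique-sol} gives a well‑defined Carath{\'e}odory solution $\mathfrak{R}(t,\cdot)$ on $\Omega^\circ$ (and $\mu_0(\Omega^\circ)=1$), so Theorem~\ref{thm:uniquene-ae-implies-transport-map} applies and yields $\mathfrak{R}(t,X_0)\overset{d}{=}X_t=(1-t)X_0+tX_1$ for every $t\in[0,1]$; in particular $\mathfrak{R}(1,X_0)\overset{d}{=}X_1\sim\mu_1$, which has Lebesgue density $p_1$. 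For the first claim, $\mu_0\ll\mathrm{Leb}$ and $\mathrm{Leb}(\partial\Omega)=0$ (convexity of $\Omega$) give $\mu_0(\mathcal{E}_1)=\mu_0(\Omega^\circ)=1$, while $\mu_1(\partial\Omega)=0$ gives $\mathbb{P}(\mathfrak{R}(1,X_0)\in\partial\Omega)=\mu_1(\partial\Omega)=0$, i.e.\ $\mu_0(\mathcal{E}_2)=1$; hence $\mu_0(\mathcal{E}_1\cap\mathcal{E}_2)=1$.

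For~\eqref{eq:dist-to-boundary-path} I would combine a forward and a backward (in time) bound on $\mathrm{dist}(\mathfrak{R}(t,x),\partial\Omega)$. For $x\in\Omega^{-\varepsilon}$, Lemma~\ref{lem:distance-to-boundary-latest} gives $\mathrm{dist}(\mathfrak{R}(t,x),\partial\Omega)\ge(1-t)\mathrm{dist}(x,\partial\Omega)\ge(1-t)\varepsilon$, which is useful only away from $t=1$. For $t$ near $1$ I would use instead that $\mathfrak{R}(s,x)\in\Omega^\circ$ for $s<1$ (again Lemma~\ref{lem:distance-to-boundary-latest}), so that $\|v(s,\mathfrak{R}(s,x))\|\le\mathrm{diam}(\Omega)$ by Lemma~\ref{lem:boundedness-and-local-Lipschitz}, whence $\|\mathfrak{R}(t,x)-\mathfrak{R}(1,x)\|\le\int_t^1\|v(s,\mathfrak{R}(s,x))\|\,ds\le(1-t)\mathrm{diam}(\Omega)$; since $y\mapsto\mathrm{dist}(y,\partial\Omega)$ is $1$‑Lipschitz, $\mathrm{dist}(\mathfrak{R}(t,x),\partial\Omega)\ge\mathrm{dist}(\mathfrak{R}(1,x),\partial\Omega)-(1-t)\mathrm{diam}(\Omega)$. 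Writing $D_1=\mathrm{dist}(\mathfrak{R}(1,x),\partial\Omega)$ and $D=\mathrm{diam}(\Omega)$, I would then minimize $\max\{(1-t)\varepsilon,\,D_1-(1-t)D\}$ over $t\in[0,1]$: the first term is decreasing and the second increasing in $t$, and since $0\le D_1\le D\le D+\varepsilon$ they coincide at $1-t=D_1/(D+\varepsilon)\in[0,1]$, where both equal $D_1\varepsilon/(D+\varepsilon)$, giving $\mathfrak{d}(x)\ge D_1\varepsilon/(D+\varepsilon)$. This two‑sided estimate is the one genuinely new step — it upgrades the pointwise bound of Lemma~\ref{lem:distance-to-boundary-latest}, which degenerates at $t=1$, into control of $\inf_t\mathrm{dist}(\mathfrak{R}(t,x),\partial\Omega)$.

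For~\eqref{eq:dist-to-boundary-almost-everywhere}, the marginal property gives $\mu_0(\mathcal{A}_\gamma)=\mathbb{P}(\mathrm{dist}(\mathfrak{R}(1,X_0),\partial\Omega)\le\gamma)=\mu_1(\{x\in\Omega:\mathrm{dist}(x,\partial\Omega)\le\gamma\})\le\overline{\mathfrak{p}}\,\mathrm{Vol}(\Omega\setminus\Omega^{-\gamma})$. When $\gamma\le r_{\mathrm{in}}$, picking $z_0$ with $\mathcal{B}(z_0,r_{\mathrm{in}})\subseteq\Omega$ and scaling $\Omega$ about $z_0$ by the factor $1-\gamma/r_{\mathrm{in}}$ lands inside $\Omega^{-\gamma}$ (by convexity, since then the added ball of radius $\gamma$ is $(1-\gamma/r_{\mathrm{in}})\mathcal{B}(z_0,r_{\mathrm{in}})$ shifted), so $\mathrm{Vol}(\Omega^{-\gamma})\ge(1-\gamma/r_{\mathrm{in}})^d\mathrm{Vol}(\Omega)$ and hence $\mathrm{Vol}(\Omega\setminus\Omega^{-\gamma})\le(d\gamma/r_{\mathrm{in}})\mathrm{Vol}(\Omega)$ using $1-(1-u)^d\le du$ on $[0,1]$; since $1\ge\int_\Omega p_1\ge\underline{\mathfrak{p}}\,\mathrm{Vol}(\Omega)$, this yields $\mu_0(\mathcal{A}_\gamma)\le d\gamma\overline{\mathfrak{p}}/(r_{\mathrm{in}}\underline{\mathfrak{p}})$, within the claimed bound (the extra $2^d$ is slack), while for $\gamma> r_{\mathrm{in}}$ the bound is trivial as $\mathcal{A}_\gamma=\Omega$ and $\gamma/r_{\mathrm{in}}\ge1$. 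The second inequality is then immediate from~\eqref{eq:dist-to-boundary-path}: if $x\in\mathcal{A}_\gamma^c\cap\Omega^{-\varepsilon}$ then $\mathrm{dist}(\mathfrak{R}(1,x),\partial\Omega)>\gamma$, so $\mathfrak{d}(x)>\gamma\varepsilon/(\mathrm{diam}(\Omega)+\varepsilon)$.

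For the last claim I would use Rademacher's theorem. If $\omega(\eta)\le C\eta$, then~\eqref{eq:locally-Lipschitz} shows $z\mapsto v(t,z)$ is Lipschitz on $\Omega^{-\varepsilon}$ for each $t$ and $\varepsilon>0$, so $\mathrm{Leb}(B(t)\cap\Omega^{-\varepsilon})=0$; letting $\varepsilon\downarrow0$ and using $\mathrm{Leb}(\partial\Omega)=0$ gives $\mathrm{Leb}(B(t))=0$ for every $t$. Since $X_t$ has density $p_t$, the marginal property gives $\mu_0(\{z:\mathfrak{R}(t,z)\in B(t)\})=\mathbb{P}(X_t\in B(t))=\int_{B(t)}p_t\,dx=0$ for each $t$. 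I would then apply Fubini--Tonelli to $\mathbbm{1}[\mathfrak{R}(t,z)\in B(t)]$ on $[0,1]\times\Omega$ with the probability measure $\mathrm{Leb}\times\mu_0$: integrating in $z$ first gives $(\mathrm{Leb}\times\mu_0)(\mathcal{E}_3^c)=\int_0^1\mu_0(\{z:\mathfrak{R}(t,z)\in B(t)\})\,dt=0$, so $(\mathrm{Leb}\times\mu_0)(\mathcal{E}_3)=1$; integrating in $t$ first gives $\int_\Omega\mathrm{Leb}(\{t:\mathfrak{R}(t,z)\in B(t)\})\,d\mu_0(z)=0$, so the nonnegative integrand vanishes $\mu_0$‑a.e., i.e.\ $\mu_0(\mathcal{E}_4)=1$. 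The one technical nuisance here is the measurability needed for Fubini: joint measurability of $(t,z)\mapsto\mathfrak{R}(t,z)$ (from continuity in $t$ and continuous dependence on the initial condition, Theorem~\ref{thm:stability}) and Borel measurability of $\{(t,w):w\in B(t)\}$ (which follows from $v$ being a Carath{\'e}odory function — measurable in $t$, continuous in $z$ on $\Omega^\circ$ — so the difference quotients, and hence the non‑differentiability set, depend measurably on $(t,w)$). Apart from this, and the two‑sided estimate of the second paragraph, every step is a routine consequence of absolute continuity, the marginal property, a standard convex‑body shell‑volume bound, and Fubini.
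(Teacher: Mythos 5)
Your proposal is correct, and the overall architecture matches the paper's proof exactly: marginal preservation and absolute continuity give $\mu_0(\mathcal{E}_1\cap\mathcal{E}_2)=1$; the two-sided estimate $\max\{(1-t)\varepsilon,\ \mathrm{dist}(\mathfrak{R}(1,x),\partial\Omega)-(1-t)\mathrm{diam}(\Omega)\}$, optimized over $t$, gives~\eqref{eq:dist-to-boundary-path}; the $\mu_0(\mathcal{A}_\gamma)$ bound reduces via marginal preservation and the density envelope to a bound on $\mathrm{Vol}(\Omega\setminus\Omega^{-\gamma})/\mathrm{Vol}(\Omega)$; and Rademacher plus Fubini–Tonelli dispatch $\mathcal{E}_3,\mathcal{E}_4$.

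The one place you take a genuinely different route is the shell-volume bound. The paper invokes Lemma~\ref{lem:Lipschitz-of-volume}, which runs through the Steiner formula and the Aleksandrov–Fenchel inequality on quermassintegrals, producing $\mathrm{Vol}(\Omega\setminus\Omega^{-\gamma})/\mathrm{Vol}(\Omega)\le (d\gamma/(r_{\mathrm{in}}-\gamma)_+)(1+\gamma/(r_{\mathrm{in}}-\gamma)_+)^{d-1}$, which is then simplified by a case split on $\gamma\lessgtr r_{\mathrm{in}}/2$ — whence the $2^d$. Your argument scales $\Omega$ about the incenter $z_0$ by the factor $1-\gamma/r_{\mathrm{in}}$ and verifies by convexity that the image sits in $\Omega^{-\gamma}$, so $\mathrm{Vol}(\Omega^{-\gamma})\ge(1-\gamma/r_{\mathrm{in}})^d\mathrm{Vol}(\Omega)$, and the elementary inequality $1-(1-u)^d\le du$ gives $\mathrm{Vol}(\Omega\setminus\Omega^{-\gamma})/\mathrm{Vol}(\Omega)\le d\gamma/r_{\mathrm{in}}$. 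This is more elementary (no Steiner formula, no mixed volumes), needs no case split, and actually tightens the constant to $d\gamma\overline{\mathfrak{p}}/(r_{\mathrm{in}}\underline{\mathfrak{p}})$, eliminating the spurious $2^d$. What the paper's route buys instead is the reusable, more general Lemma~\ref{lem:Lipschitz-of-volume}, which bounds $\mathrm{Vol}(A^{\gamma}\setminus A)$ for outer dilations of an arbitrary convex body containing a ball — the inner-erosion version you need here happens to admit the cleaner scaling trick, but the outer version does not.

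One small nicety you supply that the paper glosses over: you explicitly flag the Fubini measurability of $(t,z)\mapsto\mathbf{1}\{\mathfrak{R}(t,z)\in B(t)\}$, which does require a short argument (joint measurability of the flow and Borel measurability of the non-differentiability set via measurable difference quotients of the Carathéodory field). This is a genuine, if routine, gap in the paper's exposition.
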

A proof of Lemma~\ref{lem:distance-to-boundary-at-1} can be found in Section~\ref{appsubsec:proof-of-dist-to-bnry-at-1}. The first part of the result proves that $\mathfrak{d}(x) > 0$ for almost all $x\in\Omega$. Inequality~\eqref{eq:dist-to-boundary-path} proves that if $\mathfrak{R}(0, x)$ and $\mathfrak{R}(1, x)$ are away from the boundary, then the entire path is away from the boundary. This follows from Lemma~\ref{lem:distance-to-boundary-latest} and the boundedness of the velocity field. Inequality~\eqref{eq:dist-to-boundary-almost-everywhere}, on the other hand, proves that for almost all $x\in \Omega$, $\mathfrak{R}(1, x)$ is away from the boundary. It is not obvious if one can obtain quantitative lower bounds on the distance of $\mathfrak{R}(1, x)$ from the boundary for all $x\in\Omega$, without additional assumptions on $\Omega$. The last part of Lemma~\ref{lem:distance-to-boundary-at-1} proves that for almost all $(s, x)\in[0,1]\times\Omega$, $\mathfrak{R}(s, x)$ is a differentiability point of $z\mapsto v(s, z)$. 

Following Lemma~\ref{lem:distance-to-boundary-at-1}, we now present a result on the (smoothness) regularity of the rectified flow. From the stability/perturbation results of ODEs, uniform continuity of the rectified flow is not hard to derive. The modulus of continuity, however, depends very strongly on the distance to boundary of rectified flow path. 
\begin{theorem}[Uniform Continuity of Rectified Flow]\label{thm:continuity-of-paths}
For any $\varepsilon > 0$, $x\mapsto \mathfrak{R}(t, x)$ is uniformly continuous on $\Omega^{-\varepsilon}$. Indeed, for all $\eta \ge 0$,
\begin{align*}
\sup_{\substack{x, x'\in\Omega^{-\varepsilon}, t\in[0, 1],\\
\|x - x'\| \le \eta}}\, \|\mathfrak{R}(t, x) - \mathfrak{R}(t, x')\|
~&\le~ \Psi^{-1}(\Psi(\eta)/2 + \mathfrak{C}_1 + \frac{\mathfrak{C}_2}{\varepsilon}\ln\left(\frac{\mathrm{diam}(\Omega)}{\varepsilon}\right)\\ 
&\qquad+ \mathrm{diam}(\Omega)\max\left\{\frac{4\mathfrak{C}_2}{2\mathfrak{C}_2 - \varepsilon\Psi(\eta)},\, \frac{\eta}{\Psi^{-1}(-2\mathfrak{C}_2/\varepsilon)}\right\},
\end{align*}
for some constants $\mathfrak{C}_1, \mathfrak{C}_2$ depending only on $d, \mathrm{diam}(\Omega),$ and $\omega^{-1}(1)$.

Moreover, for any $\gamma \in [0, \mathrm{diam}(\Omega)/2]$,
\begin{align*}
\sup_{\substack{x, x'\in\mathcal{A}_{\gamma}^c\cap\Omega^{-\varepsilon}, t\in[0, 1],\\
\|x - x'\| \le \eta}}\, \|\mathfrak{R}(t, x) - \mathfrak{R}(t, x')\| ~&\le~ \Psi^{-1}\left(\Psi(\eta) + \mathfrak{C}_1 + \frac{\mathfrak{C}_2}{\min\{\varepsilon, \gamma\}}\ln\left(\frac{\mathrm{diam}(\Omega)}{\varepsilon}\right) + \frac{\mathfrak{C}_3}{\varepsilon\gamma}\right),
\end{align*}
for some constants $\mathfrak{C}_1, \mathfrak{C}_2, \mathfrak{C}_3$ depending only on $d, \mathrm{diam}(\Omega), \omega^{-1}(1)$. (These constants may be different from the ones above.)
\end{theorem}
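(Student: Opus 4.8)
The plan is to control $D(t):=\sup\{\|\mathfrak{R}(t,x)-\mathfrak{R}(t,x')\|:\,\|x-x'\|\le\eta\}$ by a Gr\"onwall--Osgood estimate for the difference of two solutions of~\eqref{eq:rectified-flow-integral-equation} driven by the \emph{same} velocity field $v$; this is the argument of Theorem~\ref{thm:stability} in the special case $F=G=v$. Two inputs are used. First, Lemma~\ref{lem:boundedness-and-local-Lipschitz}: $\sup_s\|v(s,\cdot)\|\le\mathrm{diam}(\Omega)$, and on $\Omega^{-\varepsilon'}$ the modulus bound $\|v(s,z)-v(s,z')\|\le\mathfrak{L}_1\omega(\|z-z'\|)+\mathfrak{L}_2(\varepsilon';s)\|z-z'\|\le(\mathfrak{L}_1\vee\mathfrak{L}_2(\varepsilon';s))\kappa(\|z-z'\|)$ with $\kappa(u):=u+\omega(u)$ and the time-dependent multiplier~\eqref{eq:time-dependent-Lipschitz-constant}. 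Second, the distance-to-boundary lemmas, which keep both solution paths in a set $\Omega^{-\varepsilon'(s)}$; then the relaxed, along-the-solutions form of assumption~\ref{eq:Lipschitz-Osgood} (stated in the remarks following~\ref{eq:Lipschitz-Osgood}) applies with $a(s):=\mathfrak{L}_1\vee\mathfrak{L}_2(\varepsilon'(s);s)$. Since~\ref{eq:Osgood-condition-densities} makes $\kappa$ a non-decreasing Osgood function, $\Psi$ in the statement is $\int du/(u+\omega(u))$, $\Psi(0^+)=-\infty$, and $\Psi^{-1}$ is increasing; the Osgood lemma then gives $D(t)\le\Psi^{-1}(\Psi(\eta)+\int_0^t a(s)\,ds)$ on any time interval over which the paths remain in $\Omega^{-\varepsilon'(s)}$.

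For the second displayed bound (points in $\mathcal{A}_\gamma^c\cap\Omega^{-\varepsilon}$) this is nearly immediate: inequality~\eqref{eq:dist-to-boundary-almost-everywhere} of Lemma~\ref{lem:distance-to-boundary-at-1} keeps the whole path at distance $\ge\varepsilon':=\gamma\varepsilon/(\mathrm{diam}(\Omega)+\varepsilon)$ from $\partial\Omega$, \emph{uniformly in} $t\in[0,1]$. Taking $\varepsilon'$ constant in $s$ and integrating~\eqref{eq:time-dependent-Lipschitz-constant} --- the singular part being capped once $\min\{s,1-s\}\le\varepsilon'/\mathrm{diam}(\Omega)$ --- gives $\int_0^1 a(s)\,ds\le\mathfrak{C}_1+(\mathfrak{C}_2/\varepsilon')\ln(\mathrm{diam}(\Omega)/\varepsilon')$; substituting $\varepsilon'\asymp\gamma\varepsilon/\mathrm{diam}(\Omega)$ and splitting the logarithm (the factor $1/\varepsilon'$ against $\ln(\mathrm{diam}(\Omega)/\varepsilon)/\min\{\varepsilon,\gamma\}$ and the residue against $1/(\varepsilon\gamma)$) yields the displayed bound. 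Equivalently, one invokes Theorem~\ref{thm:stability} with $\mathcal{S}=\Omega^{-\varepsilon'}$, $\delta\downarrow0$, and $T=1$ (the last guaranteed by Lemma~\ref{lem:distance-to-boundary-at-1}).

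The first displayed bound carries the difficulty. Now Lemma~\ref{lem:distance-to-boundary-latest} only furnishes $\mathrm{dist}(\mathfrak{R}(s,x),\partial\Omega)\ge(1-s)\varepsilon$, so $\varepsilon'(s)=(1-s)\varepsilon$ and $\mathfrak{L}_2((1-s)\varepsilon;s)\asymp\varepsilon^{-1}(1-s)^{-2}$ for $s$ near $1$, which is \emph{not integrable up to} $s=1$ --- a genuine effect, since $\mathfrak{R}(1,x)$ may sit on $\partial\Omega$. I split time at $t_\rho:=1-\rho$. On $[0,t_\rho]$ the Gr\"onwall--Osgood estimate gives $\int_0^{t_\rho}a(s)\,ds\le\mathfrak{C}_1+\mathfrak{C}_2\varepsilon^{-1}\ln(\mathrm{diam}(\Omega)/\varepsilon)+\mathfrak{C}_2/(\varepsilon\rho)$, the last term from $\int_{1/2}^{t_\rho}\varepsilon^{-1}(1-s)^{-2}ds\asymp 1/(\varepsilon\rho)$; on $[t_\rho,1]$ I drop the Lipschitz estimate and use only $\|v\|\le\mathrm{diam}(\Omega)$, so $D(t)\le D(t_\rho)+2\rho\,\mathrm{diam}(\Omega)$ for $t>t_\rho$. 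Choosing $\rho=2\mathfrak{C}_2/(2\mathfrak{C}_2-\varepsilon\Psi(\eta))$ (which lies in $(0,1]$ for $\eta$ in the relevant range and tends to $0$ as $\eta\to0$) makes $\Psi(\eta)+\mathfrak{C}_2/(\varepsilon\rho)=\Psi(\eta)/2+\mathfrak{C}_2/\varepsilon$, the spare $\mathfrak{C}_2/\varepsilon$ folding into the logarithm; hence $D(t_\rho)\le\Psi^{-1}(\Psi(\eta)/2+\mathfrak{C}_1+\mathfrak{C}_2\varepsilon^{-1}\ln(\mathrm{diam}(\Omega)/\varepsilon))$ and the remainder equals $\mathrm{diam}(\Omega)\cdot 4\mathfrak{C}_2/(2\mathfrak{C}_2-\varepsilon\Psi(\eta))$, the first entry of the $\max$. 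Finally the additive remainder is absorbed into $\Psi^{-1}$: since $u/\kappa(u)\le1$ one has $\Psi(2u)-\Psi(u)\le\ln2$, so $\Psi^{-1}(A)+c\le\Psi^{-1}(A+\ln2)$ whenever $c\le\Psi^{-1}(A)$; in the complementary regime (moderate $\eta$, $\rho$ bounded below) one uses instead the linear amplification $D(t_\rho)\le\eta/\Psi^{-1}(-2\mathfrak{C}_2/\varepsilon)$, giving the second entry of the $\max$, and one takes the larger.

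The main obstacle is exactly this non-integrable blow-up of the local Lipschitz constant of $v$ near $t=1$ for starting points only known to lie in $\Omega^{-\varepsilon}$; it forces the time split and the delicate calibration of $\rho$, which must vanish as $\eta\to0$ (to preserve uniform continuity) yet slowly enough that $1/(\varepsilon\rho)$ does not swamp the negative budget $\Psi(\eta)$. The rest --- verifying the relaxed Osgood hypothesis along the solutions so that the smaller, path-dependent multiplier $\mathfrak{L}_2(\varepsilon'(s);s)$ may be used, integrating~\eqref{eq:time-dependent-Lipschitz-constant}, and the elementary algebra of $\Psi,\Psi^{-1}$ that collapses the two-piece estimate into the single displayed formula --- is routine bookkeeping.
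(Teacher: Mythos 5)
Your treatment of the first displayed bound is essentially identical to the paper's proof: split time at $t^*=1-\rho$, integrate the time-dependent Lipschitz multiplier $\mathfrak{L}_2((1-s)\varepsilon;s)$ (non-integrable near $s=1$) only up to $t^*$, use boundedness of $v$ on $[t^*,1]$, and calibrate $\rho=2\mathfrak{C}_2/(2\mathfrak{C}_2-\varepsilon\Psi(\eta))$ so that $\Psi(\eta)+\mathfrak{C}_2/(\varepsilon\rho)=\Psi(\eta)/2+\mathfrak{C}_2/\varepsilon$; the two entries of the $\max$ correspond exactly to the two cases in the paper's proof. (Your aside about absorbing the remainder into $\Psi^{-1}$ via $\Psi(2u)-\Psi(u)\le\ln 2$ is unnecessary, since the final bound carries the additive term explicitly, but this is cosmetic.)

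The second displayed bound, however, has a real gap. You take $\varepsilon'$ \emph{constant} and equal to $\gamma\varepsilon/(\mathrm{diam}(\Omega)+\varepsilon)$, the uniform-in-$t$ lower bound from \eqref{eq:dist-to-boundary-path}. Integrating \eqref{eq:time-dependent-Lipschitz-constant} with this constant $\varepsilon'$ gives, exactly as you state, $\int_0^1 a(s)\,ds\lesssim(\mathfrak{C}_2/\varepsilon')\ln(\mathrm{diam}(\Omega)/\varepsilon')\asymp\frac{1}{\gamma\varepsilon}\ln\frac{1}{\gamma\varepsilon}$. But this does not reduce to $\frac{\mathfrak{C}_2}{\min\{\varepsilon,\gamma\}}\ln\frac{\mathrm{diam}(\Omega)}{\varepsilon}+\frac{\mathfrak{C}_3}{\varepsilon\gamma}$ with constants independent of $\varepsilon,\gamma$: expanding $\ln\frac{1}{\gamma\varepsilon}=\ln\frac1\gamma+\ln\frac1\varepsilon$ leaves a cross term $\frac{1}{\gamma\varepsilon}\ln\frac1\varepsilon$ that would require $\frac{1}{\max\{\varepsilon,\gamma\}}\le\mathfrak{C}$ to be dominated by $\frac{1}{\min\{\varepsilon,\gamma\}}\ln\frac1\varepsilon$, or $\ln\frac1\varepsilon\le\mathfrak{C}$ to be dominated by $\frac{1}{\varepsilon\gamma}$ — neither holds uniformly. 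The paper avoids this by using a \emph{time-varying} distance-to-boundary: $\varepsilon'(s)=(1-s)\varepsilon$ for $s\le 1-\gamma/(\varepsilon+\mathrm{diam}(\Omega))$ (from Lemma~\ref{lem:distance-to-boundary-latest}), and $\varepsilon'(s)=\gamma-\mathrm{diam}(\Omega)(1-s)$ for larger $s$ (from the fact that $\mathfrak{R}(1,x)\in\Omega^{-\gamma}$ and $\|v\|\le\mathrm{diam}(\Omega)$), which produces the sharper integral $\int_0^1\overline{\mathfrak{L}}_2(s)\,ds\asymp\frac{1}{\varepsilon\gamma}+\frac{1}{\min\{\varepsilon,\gamma\}}\ln\frac1\varepsilon$ with no $\ln\frac1\gamma$ multiplier on the leading $\frac{1}{\varepsilon\gamma}$ term. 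You would need to replace your constant $\varepsilon'$ with this two-regime $\varepsilon'(s)$ to close the argument.
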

A proof of Theorem~\ref{thm:continuity-of-paths} can be found in Section~\ref{appsubsec:proof-continuity-of-paths}. To better understand the implications of Theorem~\ref{thm:continuity-of-paths}, consider the case where the densities $p_0$ and $p_1$ are Lipschitz continuous so that $\omega(\eta) \le \eta$.\footnote{We assume the Lipschitz constant of 1, for simplicity.} In this case, $\Psi(u) = \ln(u)/2$ which diverges to $-\infty$ as $u\to 0$. In this case, the first part of Theorem~\ref{thm:continuity-of-paths} implies
\begin{equation}\label{eq:modulus-Lipschitz-densities}
\begin{split}
    \sup_{\substack{x, x'\in\Omega^{-\varepsilon}, t\in[0, 1],\\
\|x - x'\| \le \eta}}\, \|\mathfrak{R}(t, x) - \mathfrak{R}(t, x')\| ~&\le~ \frac{8\mathfrak{C}_2\mbox{diam}(\Omega)}{4\mathfrak{C}_2 + \varepsilon\ln(1/\eta)} + \mbox{diam}(\Omega)\exp\left(-\frac{4\mathfrak{C}_2}{\varepsilon}\right)\eta\\ 
&\quad + \exp\left(2\mathfrak{C}_1 + \frac{2\mathfrak{C}_2\ln(\mbox{diam}(\Omega)/\varepsilon)}{\varepsilon}\right)\eta^{1/2}. 
\end{split}
\end{equation}
Although the right hand side converges to zero as $\eta\to0$, it converges at an extremely slow logarithmic rate (from the first term on the right hand side). Additionally, the bound diverges exponentially fast with $\varepsilon\to 0$. It is unclear if this is bound on the modulus of continuity can be improved without further assumptions on $\Omega$. Any improvement in the Lipschitz constant (in Lemma~\ref{lem:boundedness-and-local-Lipschitz}) of the velocity field yields an improvement of the modulus of continuity. 

While the bound in general is pessimistic, the second part of Theorem~\ref{thm:continuity-of-paths} provides a better bound for almost all $x\in\Omega^{-\varepsilon}$. Indeed, it yields
\begin{equation}\label{eq:modulus-Lipschitz-densities-almost-everywhere}
    \begin{split}
        \sup_{\substack{x, x'\in\mathcal{A}_{\gamma}^c\cap\Omega^{-\varepsilon}, t\in[0, 1],\\
\|x - x'\| \le \eta}}\, \|\mathfrak{R}(t, x) - \mathfrak{R}(t, x')\| ~&\le~ \exp\left(2\mathfrak{C}_1 + \frac{2\mathfrak{C}_2}{\min\{\varepsilon, \gamma\}}\ln\left(\frac{\mbox{diam}(\Omega)}{\varepsilon}\right) + \frac{2\mathfrak{C}_3}{\varepsilon\gamma}\right)\eta.
    \end{split}
\end{equation}
In comparison with~\eqref{eq:modulus-Lipschitz-densities}, inequality~\eqref{eq:modulus-Lipschitz-densities-almost-everywhere} shows that for almost all $x\in\Omega^{-\varepsilon}$, the rectified flow map is Lipschitz continuous. But note that the Lipschitz constant is exponentially quickly growing with $1/\varepsilon$ and $1/\gamma$.

\subsection{Rates of convergence of estimates of the velocity and rectified flow}
\label{subsec:rates-of-convergence-bnd}

Because $v_t$ is not smooth, we do not use
a regression estimator.
Instead we use a density-based estimator
based on the form given in
Lemma \ref{lem:representation-of-velocity-field}.
The rate given in
Section \ref{sec:rectified-flow}
is not uniform in $t$
so we will derive better rates in this section.
We define a new density estimator that accomodates
arbitrary boundaries.
To derive the rates of convergence of the rectified flow, we need perturbation bounds as in Theorem~\ref{thm:stability}. Unfortunately, our Lipschitz continuity bound in Lemma~\ref{lem:boundedness-and-local-Lipschitz} is not strong enough to directly apply Theorem~\ref{thm:stability}. Hence, we follow the proof of Theorem~\ref{thm:stability} and derive the rates of convergence for the rectified flow. We define the following collection of velocity fields on $[0,1]\times\Omega$ and prove a general equicontinuity property. 

A velocity field $\nu:[0,1]\times\Omega\to\mathbb{R}^d$ is said to belong to $\mathcal{V}$ if there exists $\varpi(\cdot)$ satisfying~\ref{eq:Osgood-condition-densities} and a constants $\mathfrak{C}$ such that:
\begin{enumerate}[label=(P\arabic*)]
    \item $\|\nu(t, z)\| \le \mbox{diam}(\Omega)$ for all $z\in\Omega^\circ$ and $t\in[0, 1]$.\label{eq:bounded-v-class-prop}
    \item For every $\varepsilon > 0$,\label{eq:Lipschitz-v-class-prop}
    \begin{align*}
    &\sup_{\substack{z,z'\in\Omega^{-\varepsilon},\\\|z - z'\| \le \eta}}\,\|\nu(t, z) - \nu(t, z')\|\\ ~&\le~ \mathfrak{C}\varpi(\eta)
    +~ \frac{\mathfrak{C}\eta}{\varepsilon}\times\begin{cases}1,&\mbox{if }\min\{t, 1 - t\} \le \varepsilon/\mbox{diam}(\Omega),\\
    1/\min\{t, 1 - t\}, &\mbox{otherwise.}\end{cases} 
    \end{align*}
    \item For every $t\in[0, 1]$ and $z\in\Omega$, \label{eq:belonging-v-class-prop}
    \[
    \nu(t, z) ~\in~ S_t(z) ~=~ \frac{z - \Omega}{t}\cap\frac{\Omega - z}{1 - t}.
    \]
\end{enumerate}
(The constant $\mathfrak{C}$ and the function $\varpi(\cdot)$ are the same for all functions in $\mathcal{V}$.) Note that if $\widetilde{\nu}:[0,1]\times\Omega\to\mathbb{R}^d$ satisfies~\ref{eq:Lipschitz-v-class-prop} but not~\ref{eq:bounded-v-class-prop} or~\ref{eq:belonging-v-class-prop}, then one can consider 
\[
\nu(t, x) := \mbox{Proj}_{(\Omega-\Omega)\cap S_t(x)}(\widetilde{\nu}(t, x)), 
\]
which would satisfy all the assumptions. Here $\Omega - \Omega = \{z - z':\, z, z'\in\Omega\}$. In other words, assumptions~\ref{eq:bounded-v-class-prop}--\ref{eq:belonging-v-class-prop} are not restrictive.

From the proof of Theorem~\ref{thm:rectified-flow-ODE-unique-sol}, it follows that for every $\nu\in\mathcal{V}$, the integral equation~\eqref{eq:rectified-flow-integral-equation} (with $\nu$ replacing $v$) has a unique Carath{\'e}odory solution that lies entirely in $\Omega$. The following result proves an equicontinuity result for the solutions in terms of $\nu(\cdot, \cdot)$. For each $\nu\in\mathcal{V}$, let $\mathfrak{R}_{\nu}(\cdot, \cdot)$ be the unique function satisfying
\begin{equation}\label{eq:new-integral-equation-equicontinuity}
\mathfrak{R}_{\nu}(t, x) = x + \int_0^t \nu(s, \mathfrak{R}_{\nu}(s, x))ds,\quad\mbox{for all}\quad x\in\Omega^\circ, \, t\in[0, 1].
\end{equation}
Define the supremum distance on $\mathcal{V}$ as follows. For any two functions $\nu_1, \nu_2\in\mathcal{V}$, set
\[
\|\nu_1 - \nu_2\|_{\infty} ~:=~ \sup_{t\in[0, 1],\,z\in\Omega^\circ}\, \|\nu_1(t, z) - \nu_2(t, z)\|.
\]
Finally, set
\[
\overline{\Psi}(u) := \int \frac{du}{\varpi(u) + u}du.
\]
(This is an indefinite integral.)
\begin{theorem}\label{thm:equi-continuity-in-velocity}
For every $x\in\Omega^{-\varepsilon}$ and $\nu_1, \nu_2\in\mathcal{V}$ with $\|\nu_1 - \nu_2\|_{\infty} \le \Delta$, we have
\begin{equation}\label{eq:equicontinuity-in-velocity-weak}
\begin{split}
\sup_{t\in[0, 1]}\,\|\mathfrak{R}_{\nu_1}(t, x) - \mathfrak{R}_{\nu_2}(t, x)\|
    &\le \overline{\Psi}^{-1}(\overline{\Psi}(\Delta)/2 + \frac{C}{\varepsilon}\ln\left(\frac{\mathrm{diam}(\Omega)}{\varepsilon}\right) \\
    &\quad+ \mathrm{diam}(\Omega)\max\left\{\frac{4C}{2C - \varepsilon\overline{\Psi}(\Delta)},\,\frac{\Delta}{\overline{\Psi}^{-1}(-C/\varepsilon)}\right\},
\end{split}
\end{equation}
where $C$ is a constant depending only on $\mathfrak{C}$ (in~\ref{eq:Lipschitz-v-class-prop}). Moreover, if $x\in\Omega^{-\varepsilon}$ and there exists a $\gamma > 0$ such that
\begin{equation}\label{eq:dist-to-boundary-nu_2}
\mathrm{dist}(\mathfrak{R}_{\nu_2}(1, x),\,\partial\Omega) \ge 2\gamma,
\end{equation}
then, for all small enough $\Delta$ (so that the right hand side of~\eqref{eq:equicontinuity-in-velocity-weak} is less than $\gamma$) and $\nu_1\in\mathcal{V}$ satisfying $\|\nu_1 - \nu_2\|_{\infty} \le \Delta$, we have
\begin{equation}\label{eq:equicontinuity-in-velocity-strong}
\sup_{t\in[0, 1]}\|\mathfrak{R}_{\nu_1}(t, x) - \mathfrak{R}_{\nu_2}(t, x)\| \le \overline{\Psi}^{-1}\left(\overline{\Psi}(\Delta) + \frac{C}{\min\{\varepsilon, \gamma\}}\ln\left(\frac{\mathrm{diam}(\Omega)}{\varepsilon}\right) + \frac{C}{\varepsilon\gamma}\right),
\end{equation}
for a constant $C$ depending only on $\mathfrak{C}$ (in~\ref{eq:Lipschitz-v-class-prop}).
\end{theorem}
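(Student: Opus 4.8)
The plan is to follow the proof of Theorem~\ref{thm:stability}, replacing the single Osgood modulus there by the $\varepsilon$- and time-dependent modulus supplied by~\ref{eq:Lipschitz-v-class-prop}. Fix $x\in\Omega^{-\varepsilon}$ and write $z^{(j)}(t)=\mathfrak{R}_{\nu_j}(t,x)$; these exist, are unique, and stay in $\Omega$ (Theorem~\ref{thm:rectified-flow-ODE-unique-sol} and the discussion preceding it, which only use~\ref{eq:bounded-v-class-prop} and~\ref{eq:belonging-v-class-prop}). The first step is to transfer the distance-to-boundary estimate of Lemma~\ref{lem:distance-to-boundary-latest} to an arbitrary $\nu\in\mathcal{V}$: its proof uses only boundedness (here~\ref{eq:bounded-v-class-prop}), the inclusion $\nu(s,z)\in S_s(z)$ — hence $z+(1-s)\nu(s,z)\in\Omega$, here~\ref{eq:belonging-v-class-prop} — and concavity of $z\mapsto\mathrm{dist}(z,\partial\Omega)$ on the convex set $\Omega$. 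This yields $\mathrm{dist}(z^{(j)}(s),\partial\Omega)\ge(1-s)\,\mathrm{dist}(x,\partial\Omega)\ge(1-s)\varepsilon$, so $z^{(1)}(s),z^{(2)}(s)\in\Omega^{-(1-s)\varepsilon}$ for every $s$.

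Next I would set $u(t):=\|z^{(1)}(t)-z^{(2)}(t)\|$ and, subtracting the two copies of~\eqref{eq:new-integral-equation-equicontinuity}, bound
\[
u(t)\le\int_0^t\bigl\|\nu_1(s,z^{(1)}(s))-\nu_1(s,z^{(2)}(s))\bigr\|\,ds+\int_0^t\|\nu_1-\nu_2\|_\infty\,ds\le\Delta+\int_0^t\bigl(\mathfrak{C}\varpi(u(s))+a_\varepsilon(s)\,u(s)\bigr)ds,
\]
where the first integrand is controlled by applying~\ref{eq:Lipschitz-v-class-prop} at radius $(1-s)\varepsilon$ (legal by the previous paragraph), with weight $a_\varepsilon(s)=\mathfrak{C}\big/\bigl((1-s)\varepsilon\max\{\min\{s,1-s\},(1-s)\varepsilon/\mathrm{diam}(\Omega)\}\bigr)$. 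Two integrability facts drive everything: on $[0,1/2]$ the cap inside the $\max$ turns the would-be $1/s$ singularity into one with $\int_0^{1/2}a_\varepsilon=O\bigl(\varepsilon^{-1}\ln(\mathrm{diam}(\Omega)/\varepsilon)\bigr)$; but on $[1/2,1)$ one has $a_\varepsilon(s)\asymp\mathfrak{C}\varepsilon^{-1}(1-s)^{-2}$, which is \emph{not} integrable near $s=1$. So I would split at $t=1-\tau$: on $[0,1-\tau]$ apply the Bihari/Osgood inequality with comparison function $\varpi(u)+u$ (primitive $\overline{\Psi}$, which diverges to $-\infty$ at $0$ by the $\mathcal{V}$-class requirement~\ref{eq:Osgood-condition-densities}) to get $u(1-\tau)\le\overline{\Psi}^{-1}\bigl(\overline{\Psi}(\Delta)+\int_0^{1-\tau}(\mathfrak{C}+a_\varepsilon(s))\,ds\bigr)$; on $[1-\tau,1]$ use only~\ref{eq:bounded-v-class-prop} to get $u(t)\le u(1-\tau)+2\,\mathrm{diam}(\Omega)\tau$. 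Since $\int_{1/2}^{1-\tau}a_\varepsilon\asymp\mathfrak{C}\varepsilon^{-1}\tau^{-1}$, choosing $\tau\asymp1/(\varepsilon|\overline{\Psi}(\Delta)|)$ makes that contribution $\le\tfrac12|\overline{\Psi}(\Delta)|$, which is precisely the source of the $\overline{\Psi}(\Delta)/2$ in~\eqref{eq:equicontinuity-in-velocity-weak}; the leftover $2\,\mathrm{diam}(\Omega)\tau$ gives the first entry of the $\max$, while the constraint $\tau\le\tfrac12$ (which binds exactly when $\Delta\gtrsim\overline{\Psi}^{-1}(-C/\varepsilon)$) produces the second. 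Folding the additive $2\,\mathrm{diam}(\Omega)\tau$ back under $\overline{\Psi}^{-1}$ by monotonicity yields~\eqref{eq:equicontinuity-in-velocity-weak}.

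For the ``moreover'' part I would bootstrap. If $\mathrm{dist}(\mathfrak{R}_{\nu_2}(1,x),\partial\Omega)\ge2\gamma$, then combining Lemma~\ref{lem:distance-to-boundary-latest} (away from $t=1$) with $\|\nu_2\|\le\mathrm{diam}(\Omega)$ (near $t=1$) — this is exactly inequality~\eqref{eq:dist-to-boundary-path} of Lemma~\ref{lem:distance-to-boundary-at-1}, applied with $\nu_2$ in place of $v$ — gives $\mathrm{dist}(\mathfrak{R}_{\nu_2}(t,x),\partial\Omega)\ge2\gamma_0$ for all $t$, with $\gamma_0:=\gamma\varepsilon/(\mathrm{diam}(\Omega)+\varepsilon)$. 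If moreover $\Delta$ is small enough that the right-hand side of~\eqref{eq:equicontinuity-in-velocity-weak} is $<\gamma_0$, then $z^{(1)}(t)$ never leaves the $\gamma_0$-neighborhood of $z^{(2)}(t)$, so \emph{both} paths lie in $\Omega^{-\gamma_0}$ for \emph{all} $t\in[0,1]$. Re-running the inequality of the previous paragraph with~\ref{eq:Lipschitz-v-class-prop} now applied at the \emph{fixed} radius $\gamma_0$, the weight becomes $\widetilde a(s)=\mathfrak{C}\big/\bigl(\gamma_0\max\{\min\{s,1-s\},\gamma_0/\mathrm{diam}(\Omega)\}\bigr)$, which \emph{is} integrable over all of $[0,1]$ with $\int_0^1\widetilde a=O\bigl(\gamma_0^{-1}\ln(\mathrm{diam}(\Omega)/\gamma_0)\bigr)$; no split is needed, and Bihari directly gives~\eqref{eq:equicontinuity-in-velocity-strong} after bounding $\gamma_0^{-1}\ln(\mathrm{diam}(\Omega)/\gamma_0)\lesssim(\min\{\varepsilon,\gamma\})^{-1}\ln(\mathrm{diam}(\Omega)/\varepsilon)+(\varepsilon\gamma)^{-1}$.

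The main obstacle is the non-integrable singularity of $a_\varepsilon$ at $t=1$: the distance-to-boundary guarantee of Lemma~\ref{lem:distance-to-boundary-latest} degenerates there, so no Osgood/Gronwall argument survives to $t=1$, and one is forced into the split-at-$1-\tau$ together with the optimization over $\tau$. Getting from that split a clean bound — in particular the $\overline{\Psi}(\Delta)/2$ and the explicit two-regime $\max$ rather than an implicit relation — is the delicate step; checking that Lemma~\ref{lem:distance-to-boundary-latest} transfers to $\mathcal{V}$, the Bihari estimate, and the constant bookkeeping are all routine.
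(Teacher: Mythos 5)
Your treatment of the weak bound~\eqref{eq:equicontinuity-in-velocity-weak} is essentially the paper's: you transfer the distance-to-boundary estimate from Lemma~\ref{lem:distance-to-boundary-latest} to any $\nu\in\mathcal{V}$, derive the same Bihari differential inequality with weight $a_\varepsilon(s)$, observe the non-integrable singularity of $a_\varepsilon$ at $s=1$, split at $1-\tau$ with $\tau\asymp 1/(\varepsilon|\overline\Psi(\Delta)|)$, and use only boundedness on $[1-\tau,1]$. This matches the choice $t^*$ in the proof of Theorem~\ref{thm:continuity-of-paths} (cf.~\eqref{eq:some-simple-equation}), and the two entries of the $\max$ emerge exactly as you say.

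The gap is in the ``moreover'' part. You propose to show that both paths stay in $\Omega^{-\gamma_0}$ with $\gamma_0:=\gamma\varepsilon/(\mathrm{diam}(\Omega)+\varepsilon)$, and then rerun Bihari with~\ref{eq:Lipschitz-v-class-prop} evaluated at the \emph{fixed} radius $\gamma_0$. That makes $\int_0^1\widetilde a(s)\,ds \asymp \gamma_0^{-1}\ln(\mathrm{diam}(\Omega)/\gamma_0)$, and your final step asserts $\gamma_0^{-1}\ln(\mathrm{diam}(\Omega)/\gamma_0)\lesssim(\min\{\varepsilon,\gamma\})^{-1}\ln(\mathrm{diam}(\Omega)/\varepsilon)+(\varepsilon\gamma)^{-1}$. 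This is false in the regime the theorem cares about. Expanding, $\gamma_0^{-1}\ln(\mathrm{diam}(\Omega)/\gamma_0)\asymp\frac{\mathrm{diam}(\Omega)}{\gamma\varepsilon}\bigl(\ln\frac{\mathrm{diam}(\Omega)}{\gamma}+\ln\frac{\mathrm{diam}(\Omega)}{\varepsilon}\bigr)$. The piece $\frac{1}{\gamma\varepsilon}\ln\frac{\mathrm{diam}(\Omega)}{\gamma}$ cannot be absorbed by $\frac{C}{\varepsilon\gamma}$ (the ratio diverges as $\gamma\downarrow0$), and the piece $\frac{1}{\gamma\varepsilon}\ln\frac{\mathrm{diam}(\Omega)}{\varepsilon}$ cannot be absorbed by $\frac{C}{\min\{\varepsilon,\gamma\}}\ln\frac{\mathrm{diam}(\Omega)}{\varepsilon}+\frac{C}{\varepsilon\gamma}$ either (the ratio to $\frac{1}{\varepsilon\gamma}$ diverges as $\varepsilon\downarrow0$). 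So the fixed-radius bootstrap produces a strictly weaker bound than stated.

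What you are missing is the \emph{time-dependent} radius the paper uses in the proof of Theorem~\ref{thm:continuity-of-paths} (see~\eqref{eq:better-lipschitz-constant-continuity-of-paths}): the distance-to-boundary estimate gives the much better lower bound $\mathrm{dist}(\mathfrak{R}_{\nu_j}(s,x),\partial\Omega)\ge\max\{(1-s)\varepsilon,\;\gamma-\mathrm{diam}(\Omega)(1-s)\}$ for \emph{all} $s\in[0,1]$ — this radius decays to $\gamma_0$ only in an $O(\gamma/\mathrm{diam}(\Omega))$ window around $s^*=1-\gamma/(\mathrm{diam}(\Omega)+\varepsilon)$ and is much larger elsewhere. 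Plugging that time-varying radius into~\ref{eq:Lipschitz-v-class-prop} and integrating over $[0,1]$ gives precisely $\frac{\mathfrak{C}}{\varepsilon}\ln\frac{\mathrm{diam}(\Omega)}{\varepsilon}+\frac{\mathfrak{C}\,\mathrm{diam}(\Omega)}{\varepsilon\gamma}+\frac{\mathfrak{C}}{\gamma}\ln\frac{\mathrm{diam}(\Omega)}{\varepsilon}+O(1)$, which regroups to $\frac{C}{\min\{\varepsilon,\gamma\}}\ln\frac{\mathrm{diam}(\Omega)}{\varepsilon}+\frac{C}{\varepsilon\gamma}$ as claimed. Replace your fixed-$\gamma_0$ step with this time-varying radius and the rest of your argument goes through.
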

The proof of Theorem~\ref{thm:equi-continuity-in-velocity} is almost the same as that of Theorem~\ref{thm:continuity-of-paths}, and a condensed version of the proof can be found in Section~\ref{appsubsec:proof-equicontinuity-in-velocity}. The assumptions of Theorem~\ref{thm:equi-continuity-in-velocity} can be weakened slightly. It suffices to assume that both $\nu_j, j = 1, 2$ satisfy Properties~\ref{eq:bounded-v-class-prop} and~\ref{eq:belonging-v-class-prop} so as to ensure that $\mathfrak{R}_{\nu_j}(t, x)\in\Omega^{-(1-t)\varepsilon}, j = 1, 2$. Additionally, it suffices that one of $\nu_1, \nu_2$ satisfy property~\ref{eq:Lipschitz-v-class-prop} for the validity of~\eqref{eq:equicontinuity-in-velocity-weak} and~\eqref{eq:equicontinuity-in-velocity-strong}. Finally, under~\eqref{eq:dist-to-boundary-nu_2}, we do not need $\|\nu_1 - \nu_2\|_{\infty} \le \Delta$, we just need $\sup_{t\in[0,1],\,z\in\Omega^{-\kappa}}\|\nu_1(t, z) - \nu_2(t, z)\| \le \Delta$ for some $\kappa > 0$ depending on $\varepsilon, \gamma$. (This $\kappa$ is precisely the right-hand side of~\eqref{eq:dist-to-boundary-path}.)

Theorem~\ref{thm:equi-continuity-in-velocity} proves that (uniform) closeness of velocity fields implies the closeness of the unique solutions. As remarked after Theorem~\ref{thm:continuity-of-paths}, the modulus of continuity with respect to the velocity field is not optimistic for arbitrary $x\in\Omega^{-\varepsilon}$. When the distance of the solution to the boundary is bounded away from zero, then one obtains a better modulus of continuity. In fact, if $\varpi(\eta) = \eta$, then inequality~\eqref{eq:equicontinuity-in-velocity-strong} implies
\[
\sup_{t\in[0, 1]}\|\mathfrak{R}_{\nu_1}(t, x) - \mathfrak{R}_{\nu_2}(t, x)\| \le \Delta\exp\left(\frac{C}{\min\{\varepsilon, \gamma\}}\ln\left(\frac{\mathrm{diam}(\Omega)}{\varepsilon}\right) + \frac{C}{\varepsilon\gamma}\right).
\]
A simple application of Theorem~\ref{thm:equi-continuity-in-velocity} is to understand the effect of discretization algorithms in estimating the solutions of~\eqref{eq:rectified-flow-integral-equation}. For example, Euler discretization corresponds to taking (for some $k \ge 1$)
\[
\nu_1(t, x) = v(\lfloor kt\rfloor/k,\, x)\quad\mbox{for}\quad x\in\Omega,\, t\in[0, 1].
\]
One can apply Theorem~\ref{thm:equi-continuity-in-velocity} with $\nu_2 \equiv v$ and obtain an error bound in terms of $k$.
Note that, in this case, $\Delta = \sup_{x\in\Omega^\circ}\sup_{t\in[0, 1]}\|v(t, x) - v(\lfloor kt\rfloor/k,\, x)\|$, which can be controlled if $v(\cdot, \cdot)$ is Lipschitz continuous in the first argument, uniformly in the second argument. The following result provides such Lipschitz continuity.
\begin{proposition}\label{prop:Lipschitz-cont-velocity-in-time}
    Suppose assumptions~\ref{eq:compact-support},~\ref{eq:bounded-away-densities} hold. Also, suppose $\omega(\eta) \le L\eta$ for all $\eta > 0$ for some $L \ge 0$. Then for all $t\in[0, 1]$ and $h > 0$ such that $t + h\in[0, 1]$, we have
    \[
    \sup_{z\in\Omega^{-\varepsilon}}\,\|v(t, z) - v(t + h, z)\| \le 2\mathrm{diam}(\Omega)\left(8L\mathrm{diam}(\Omega) + \frac{\overline{\mathfrak{p}}^2}{\underline{\mathfrak{p}}^2}\frac{4^{d+1}d\mathrm{diam}^2(\Omega)}{\varepsilon^2\bar{t}}\right).
    \]
\end{proposition}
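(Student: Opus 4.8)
The plan is to reduce the assertion to a bound on how much the density product $g_s(\delta):=p_0(z-s\delta)\,p_1(z+(1-s)\delta)$ (with $p_0,p_1$ extended by $0$ off $\Omega$) varies as $s$ moves from $t$ to $t+h$, and then to control that variation region by region. Since $v(s,z)=f_s(z)/p_s(z)$ with $p_s(z)=\int g_s\,d\delta$ and $f_s(z)=\int\delta\,g_s\,d\delta$, the identity
\[
v(t,z)-v(t+h,z)=\frac{f_t(z)-f_{t+h}(z)}{p_t(z)}-v(t+h,z)\,\frac{p_t(z)-p_{t+h}(z)}{p_t(z)}
\]
combined with $\|v(t+h,z)\|\le\mathrm{diam}(\Omega)$ (Lemma~\ref{lem:boundedness-and-local-Lipschitz}) and $\|\delta\|\le\mathrm{diam}(\Omega)$ on the support of $g_t$ and of $g_{t+h}$ (each such $\delta$ being a difference of two points of $\Omega$) gives $\|v(t,z)-v(t+h,z)\|\le\tfrac{2\,\mathrm{diam}(\Omega)}{p_t(z)}\int|g_t(\delta)-g_{t+h}(\delta)|\,d\delta$; this accounts for the $2\,\mathrm{diam}(\Omega)$ prefactor, and $p_t(z)\ge\underline{\mathfrak{p}}^2\,\mathrm{Vol}(S_t(z))$ together with $\mathcal{B}(0,\varepsilon)\subseteq S_t(z)$ for $z\in\Omega^{-\varepsilon}$ keeps the denominator away from $0$.

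I would then split $\int|g_t-g_{t+h}|$ over the overlap $S_t(z)\cap S_{t+h}(z)$ and the symmetric difference $S_t(z)\,\triangle\,S_{t+h}(z)$ (elsewhere the integrand is $0$). On the overlap all four points $z-t\delta,\,z+(1-t)\delta,\,z-(t+h)\delta,\,z+(1-t-h)\delta$ lie in $\Omega$, so the hypothesis $\omega(\eta)\le L\eta$, used in the form $|p_j(y)-p_j(y')|\le\omega(\|y-y'\|)p_j(y')$, applies to each factor in turn; the key point is that this bounds $|g_t(\delta)-g_{t+h}(\delta)|$ by a universal multiple of $Lh\,\mathrm{diam}(\Omega)\,g_t(\delta)$ --- by $g_t(\delta)$ itself, not by $\overline{\mathfrak{p}}^2$ --- so that after integration and division by $p_t(z)$ the density factors cancel and the overlap produces the $L$-dependent term of order $L\,\mathrm{diam}^2(\Omega)\,h$. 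On the symmetric difference exactly one of $g_t,g_{t+h}$ vanishes and the other is $\le\overline{\mathfrak{p}}^2$, so that part of the integral is $\le\overline{\mathfrak{p}}^2\,\mathrm{Vol}(S_t(z)\,\triangle\,S_{t+h}(z))$, which after division by $p_t(z)\ge\underline{\mathfrak{p}}^2\,\mathrm{Vol}(S_t(z))$ becomes $2\,\mathrm{diam}(\Omega)\,\tfrac{\overline{\mathfrak{p}}^2}{\underline{\mathfrak{p}}^2}\,\tfrac{\mathrm{Vol}(S_t(z)\triangle S_{t+h}(z))}{\mathrm{Vol}(S_t(z))}$.

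The main obstacle is this last geometric estimate, for an \emph{arbitrary} convex $\Omega$ with no regularity of $\partial\Omega$. I would exploit that perturbing $t$ merely rescales the two constraint sets about the origin: with $A:=(z-\Omega)/t$ and $B:=(\Omega-z)/(1-t)$ one has $S_t(z)=A\cap B$ and $S_{t+h}(z)=\lambda A\cap\mu B$, where $\lambda=t/(t+h)$ and $\mu=(1-t)/(1-t-h)$ are both $1+O(h/\bar t)$ with $\bar t=\min\{t,t+h,1-t,1-t-h\}$. Since $A,B$ are convex and contain $0$, nesting inequalities such as $(A\cap B)\setminus\lambda A\subseteq(A\cap B)\setminus\lambda(A\cap B)$ (and the companions obtained by exchanging $A,B$) reduce $\mathrm{Vol}(S_t(z)\triangle S_{t+h}(z))$ to $(|1-\lambda^d|+|\mu^d-1|)$ times the volume of a set contained in $\mathcal{B}(0,\mathrm{diam}(\Omega))$, i.e.\ to a $d\,h/\bar t$ multiple of a bounded volume; one must separately handle the regime in which a constraint becomes inactive near the origin (when $\min\{t,1-t\}$ is of order $\varepsilon/\mathrm{diam}(\Omega)$ or smaller), which is what keeps the $\varepsilon$-dependence at $\varepsilon^{-2}$. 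Dividing by $\mathrm{Vol}(S_t(z))$ and estimating the in-radius of $S_t(z)$ from below by $\varepsilon$ --- exactly as in the proof of Lemma~\ref{lem:boundedness-and-local-Lipschitz}, which already controls the companion quantity $\mathrm{Vol}(S_t(z)\triangle S_t(z'))/\mathrm{Vol}(S_t(z))$ for general convex sets --- then produces the factor $4^{d+1}d\,\mathrm{diam}^2(\Omega)/(\varepsilon^2\bar t)$ with its $\overline{\mathfrak{p}}^2/\underline{\mathfrak{p}}^2$ prefactor. Adding the overlap and symmetric-difference contributions gives the claim; the delicate part throughout is keeping all constants uniform over $t$ and $z\in\Omega^{-\varepsilon}$ using only convexity of $\Omega$, whereas the overlap term is routine once the modulus $\omega$ is applied directly to $g_t$.
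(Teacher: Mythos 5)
Your decomposition of $v(t,z)-v(t+h,z)$ into the ratio identity, the split of $\int|g_t-g_{t+h}|$ into overlap and symmetric difference, the use of $\omega(\eta)\le L\eta$ multiplicatively on the overlap so that the density factors cancel, and the lower bound $p_t(z)\ge\underline{\mathfrak{p}}^2\,\mathrm{Vol}(S_t(z))$ all mirror the paper's proof exactly. The place where you genuinely diverge is the symmetric-difference estimate. The paper treats it via Proposition~\ref{prop:properties-of-S_t}(4) (inflation/implosion $(S_t(z))^{\pm 2\mathrm{diam}^2(\Omega)h/(\varepsilon\bar t)}\subseteq S_{t+h}(z)\subseteq\cdots$) and then Lemma~\ref{lem:Lipschitz-of-volume}, which rests on the Steiner formula and the Aleksandrov--Fenchel inequality to show $\mathrm{Vol}(A^\gamma\setminus A)/\mathrm{Vol}(A)\le(d\gamma/r)(1+\gamma/r)^{d-1}$. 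You instead exploit the exact identity $S_{t+h}(z)=\lambda A\cap\mu B$ with $A=(z-\Omega)/t$, $B=(\Omega-z)/(1-t)$, $\lambda=t/(t+h)<1$, $\mu=(1-t)/(1-t-h)>1$, and the nesting $(A\cap B)\setminus\lambda A\subseteq(A\cap B)\setminus\lambda(A\cap B)$, whose volume is $(1-\lambda^d)\mathrm{Vol}(S_t(z))$ because the star-shapedness about $0$ makes the scaling exact. This is a clean idea and is not what the paper does; for the $S_t\setminus S_{t+h}$ part it in fact yields the sharper ratio $1-\lambda^d\le dh/(t+h)$ with no $\varepsilon$-dependence at all, and for $S_{t+h}\setminus S_t$ it yields $\mu^d-1\le d\mu^{d-1}(\mu-1)$, which is comparable when $h\lesssim\bar t$ but needs care when $\mu$ is not close to $1$.

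Where your account goes slightly wrong is in the last paragraph: after presenting the scaling argument you claim the factor $4^{d+1}d\,\mathrm{diam}^2(\Omega)/(\varepsilon^2\bar t)$ comes out of ``estimating the in-radius of $S_t(z)$ from below by $\varepsilon$ --- exactly as in the proof of Lemma~\ref{lem:boundedness-and-local-Lipschitz}.'' That is the paper's Steiner-formula route, not yours; the scaling route gives a different (for one half of the difference, smaller) constant. The two arguments should not be spliced together as if they produce the same bound, and the $\varepsilon^{-2}$ in the stated proposition actually arises in the paper's Case~2 inflation argument, not in the regime where a constraint is inactive. You also omit the final step of the paper's proof: the multiplicative modulus estimate on the intersection only holds when $h\le1/(L\,\mathrm{diam}(\Omega))$, and the inflation/scaling estimates likewise only hold for $h\le\varepsilon^3/(4\,\mathrm{diam}^3(\Omega))$. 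The paper upgrades the small-$h$ Lipschitz bound to all admissible $h$ by observing that it implies $t\mapsto v(t,z)$ is a.e.\ differentiable on $(0,1)$ with derivative bounded by the same constant, then integrating. That extension step is needed to conclude the proposition as stated (for all $h>0$ with $t+h\le1$).
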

A proof can be found in Section~\ref{appsubsec:proof-Lipschitz-velocity-in-time}.

Theorem~\ref{thm:equi-continuity-in-velocity} also allows us to consider rates of convergence of rectified flow estimators based on the rates of convergence of estimators of the velocity field. In particular, consider the density-based estimator of the velocity field (as discussed in Section~\ref{section::density}). Proposition~\ref{prop:consistency-of-density-estimator} provides a simple result on the rate of convergence that is unfortunately inapplicable for Theorem~\ref{thm:equi-continuity-in-velocity} because the rate is not uniform in $t$. The following result provides a better rate of convergence using assumptions~\ref{eq:compact-support}--\ref{eq:bounded-away-densities}.

Suppose $\widehat{p}_0$ and $\widehat{p}_1$ are the estimators of densities $p_0$ and $p_1$, respectively. (We do not require $\widehat{p}_0$ and $\widehat{p}_1$ to be densities or to be supported on $\Omega$. In case their support is not $\Omega$, we redefine $\widehat{p}_j(z)$ as $\widehat{p}_j(z)\mathbf{1}\{z\in\Omega\}$.) Set
\begin{equation}\label{eq:density-ratio-estimation-error}
    r_n := \sup_{x\in\Omega}\,\max\left\{\left|\ln\frac{\widehat{p}_0(x)}{p_0(x)}\right|,\,\left|\ln\frac{\widehat{p}_1(x)}{p_1(x)}\right|\right\}.
\end{equation}
It is easy to see that, by definition, 
\begin{equation}\label{eq:estimted-denisty-ratio-implication}
e^{-r_n}p_j(x) \le \widehat{p}_j(x)\le e^{r_n}p_j(x)\quad\mbox{for}\quad j = 1, 2,\, x\in\Omega.
\end{equation}
Additionally, if $r_n\to 0$, then (under assumption~\ref{eq:bounded-away-densities})
\[
\max_{j=1,2}\,\|\widehat{p}_j - p_j\|_{\infty} ~\asymp~ r_n.
\]
Define, as in~\eqref{eq:density-based-velocity-estimator}, 
\[
\widehat{v}^{\mathrm{den}}(t, z) := \frac{\int_{S_t(z)} \delta \widehat{p}_0(z - t\delta)\widehat{p}_1(z + (1-t)\delta)d\delta}{\int_{S_t(z)} \widehat{p}_0(z - t\delta)\widehat{p}_1(z + (1-t)\delta)d\delta}.
\]
The domain of integration here is $S_t(z)$ because $\widehat{p}_0$ and $\widehat{p}_1$ are assumed to be supported on $\Omega$ (which is possible only if $\Omega$ is known). Under this setting, the following result holds.
\begin{theorem}[Rate of convergence of velocity field]\label{thm:rate-of-conv-velocity}
    Suppose $\widehat{p}_0$ and $\widehat{p}_1$ are the estimators of densities $p_0$ and $p_1$, respectively. Then, for all $n\ge1$,
    \[
    \|\widehat{v}^{\mathrm{den}} - v\|_{\infty} = \sup_{t\in[0, 1],\,z\in\Omega^\circ}\|\widehat{v}^{\mathrm{den}}(t, z) - v(t, z)\| ~\le~ 2\mathrm{diam}(\Omega)(e^{4r_n} - 1). 
    \]
    If $r_n = o_p(1)$ as $n\to\infty$, then
    \[
    \sup_{t\in[0, 1],\,z\in\Omega^\circ}\|\widehat{v}^{\mathrm{den}}(t, z) - v(t, z)\| = O_p(r_n),\quad\mbox{as}\quad n\to\infty.
    \]
\end{theorem}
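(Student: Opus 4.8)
The plan is to reduce the statement to an elementary perturbation bound for weighted averages. Fix $t\in[0,1]$ and $z\in\Omega^\circ$, let $S_t(z)$ denote the set of $\delta\in\mathbb{R}^d$ for which both $z-t\delta$ and $z+(1-t)\delta$ lie in $\Omega$, and write, for $\delta\in S_t(z)$,
\[
w(\delta):=p_0(z-t\delta)\,p_1(z+(1-t)\delta),\qquad \widehat w(\delta):=\widehat p_0(z-t\delta)\,\widehat p_1(z+(1-t)\delta),
\]
so that $v(t,z)$ and $\widehat v^{\mathrm{den}}(t,z)$ are, respectively, the $w$-weighted and the $\widehat w$-weighted averages of $\delta$ over $S_t(z)$. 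Two facts drive the argument. First, if $\delta\in S_t(z)$ then $\delta=(z+(1-t)\delta)-(z-t\delta)$ is a difference of two points of $\Omega$, so $\|\delta\|\le\mathrm{diam}(\Omega)$; hence $\|v(t,z)\|\le\mathrm{diam}(\Omega)$ (this is also~\eqref{eq:bounded-velocity}) and likewise $\|\widehat v^{\mathrm{den}}(t,z)\|\le\mathrm{diam}(\Omega)$, being a weighted average of such $\delta$. Second, since $z-t\delta$ and $z+(1-t)\delta$ lie in $\Omega$ for $\delta\in S_t(z)$, the pointwise bound~\eqref{eq:estimted-denisty-ratio-implication} gives $e^{-2r_n}w(\delta)\le\widehat w(\delta)\le e^{2r_n}w(\delta)$ for all $\delta\in S_t(z)$. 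Because $p_0,p_1$ are bounded below on $\Omega$ by assumption~\ref{eq:bounded-away-densities} and $S_t(z)$ has positive Lebesgue measure for $z\in\Omega^\circ$, both $\int_{S_t(z)}w$ and (by the lower ratio bound) $\int_{S_t(z)}\widehat w$ are strictly positive, so $\widehat v^{\mathrm{den}}(t,z)$ is well defined.

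Next I would carry out the perturbation estimate. Since $v(t,z)$ is the $w$-weighted mean of $\delta$, one has $\int_{S_t(z)}(\delta-v(t,z))\,w(\delta)\,d\delta=0$, and therefore
\[
\widehat v^{\mathrm{den}}(t,z)-v(t,z)=\frac{\int_{S_t(z)}\bigl(\delta-v(t,z)\bigr)\,\widehat w(\delta)\,d\delta}{\int_{S_t(z)}\widehat w(\delta)\,d\delta}=\frac{\int_{S_t(z)}\bigl(\delta-v(t,z)\bigr)\bigl(\widehat w(\delta)-w(\delta)\bigr)\,d\delta}{\int_{S_t(z)}\widehat w(\delta)\,d\delta}.
\]
In the numerator $\|\delta-v(t,z)\|\le 2\,\mathrm{diam}(\Omega)$ and $|\widehat w(\delta)-w(\delta)|\le(e^{2r_n}-1)w(\delta)$; in the denominator $\int_{S_t(z)}\widehat w\ge e^{-2r_n}\int_{S_t(z)}w$. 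Combining these,
\[
\bigl\|\widehat v^{\mathrm{den}}(t,z)-v(t,z)\bigr\|\le\frac{2\,\mathrm{diam}(\Omega)\,(e^{2r_n}-1)\int_{S_t(z)}w}{e^{-2r_n}\int_{S_t(z)}w}=2\,\mathrm{diam}(\Omega)\bigl(e^{4r_n}-e^{2r_n}\bigr)\le 2\,\mathrm{diam}(\Omega)\bigl(e^{4r_n}-1\bigr).
\]
Since this bound is free of $(t,z)$, taking the supremum over $t\in[0,1]$ and $z\in\Omega^\circ$ gives the first assertion.

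For the stochastic statement, suppose $r_n=o_p(1)$. On the event $\{r_n\le 1\}$, whose probability tends to $1$, the inequality $e^x-1\le x e^x$ gives $e^{4r_n}-1\le 4e^4 r_n$, so $\|\widehat v^{\mathrm{den}}-v\|_\infty\le 8e^4\,\mathrm{diam}(\Omega)\,r_n$ with probability tending to $1$; this is exactly $\|\widehat v^{\mathrm{den}}-v\|_\infty=O_p(r_n)$. I do not expect any genuine obstacle: the only delicate points are checking that $\widehat v^{\mathrm{den}}(t,z)$ is well defined (handled above via the lower ratio bound together with~\ref{eq:bounded-away-densities}) and noticing that the comparison $e^{-2r_n}w\le\widehat w\le e^{2r_n}w$ is needed only on $S_t(z)$, which is precisely the set on which the arguments of the densities remain inside $\Omega$, so that~\eqref{eq:estimted-denisty-ratio-implication} applies.
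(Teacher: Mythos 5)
Your argument is correct. Both your proof and the paper's rest on the same elementary ratio bound $e^{-2r_n}\int_{S_t(z)}h\,w \le \int_{S_t(z)}h\,\widehat w \le e^{2r_n}\int_{S_t(z)}h\,w$ (for nonnegative $h$), but you deploy it through a different decomposition. The paper fixes a unit direction $a\in S^{d-1}$, splits $a^\top\delta$ into its positive and negative parts, applies the two-sided ratio bound with opposite signs to each, and then uses that both weighted averages of $(a^\top\delta)_\pm$ are at most $\sup_{\delta\in S_t(z)}\|\delta\|\le 2\,\mathrm{diam}(\Omega)$. You instead exploit the centering identity $\int_{S_t(z)}(\delta-v(t,z))\,w\,d\delta=0$ to write $\widehat v^{\mathrm{den}}-v$ as a single ratio of $\int(\delta-v)(\widehat w-w)$ over $\int\widehat w$, then bound numerator and denominator separately via $|\widehat w-w|\le(e^{2r_n}-1)w$ and $\int\widehat w\ge e^{-2r_n}\int w$. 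Both routes land on exactly the constant $2\,\mathrm{diam}(\Omega)(e^{4r_n}-1)$; your version avoids the directional reduction and the positive/negative split, which is arguably cleaner, at no cost in sharpness. One small point to note explicitly, since the paper allows $\widehat p_j$ to be non-densities: the finiteness of $r_n$ already forces $\widehat p_j>0$ on $\Omega$, hence $\widehat w>0$ on $S_t(z)$, which is what legitimizes treating $\widehat v^{\mathrm{den}}(t,z)$ as a genuine weighted average (and hence gives $\|\widehat v^{\mathrm{den}}(t,z)\|\le\mathrm{diam}(\Omega)$ — though in fact your main chain never uses this last bound).
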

See Section~\ref{appsubsec:proof-rate-of-conv-velocity} for a proof.
\begin{theorem}[Rate of convergence of rectified flow]\label{thm:rate-of-conv-rectified-flow}
    Consider the setting of Theorem~\ref{thm:rate-of-conv-velocity}. Suppose that
    \[
    \widehat{\omega}(\eta) := \sup_{x, x'\in\Omega,\, \|x - x'\| \le \eta}\, \max_{j = 1, 2}\left|\frac{\widehat{p}_j(x)}{\widehat{p}_j(x')} - 1\right|\quad\mbox{for all}\quad \eta > 0,
    \]
    with $\widehat{\omega}(\cdot)$ satisfying assumption~\ref{eq:Osgood-condition-densities}. Then $\widehat{\mathfrak{R}}(\cdot, \cdot)$ satisfying
    \begin{equation}\label{eq:uniqueness-solution}
    \widehat{\mathfrak{R}}(t, x) = x + \int_0^t \widehat{v}^{\mathrm{den}}(s,\,\widehat{\mathfrak{R}}(s, x))ds,
    \end{equation}
    is well-defined (i.e., uniquely) for all $x\in\Omega^\circ$. If, for some $\varepsilon, \gamma > 0$,
    \begin{equation}\label{eq:x-assumption}
    x\in\Omega^{-\varepsilon}\quad\mbox{and}\quad \mathrm{dist}(\mathfrak{R}(1, x),\,\partial\Omega) \ge 2 \gamma, 
    \end{equation}
    then, for $n$ large enough,
    \[
    \|\widehat{\mathfrak{R}}(1, x) - \mathfrak{R}(1, x)\| \le  {\Psi}^{-1}\left({\Psi}(2\mathrm{diam}(\Omega)(e^{4r_n} - 1)) + \frac{C}{\min\{\varepsilon, \gamma\}}\ln\left(\frac{\mathrm{diam}(\Omega)}{\varepsilon}\right) + \frac{C}{\varepsilon\gamma}\right).
    \]
    (Here $\Psi(u) = \int du/(\omega(u) + u)$ from assumption~\ref{eq:Osgood-condition-densities}.)
\end{theorem}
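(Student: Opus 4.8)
\emph{Proof proposal.} The plan is to assemble the statement from three earlier results: Theorem~\ref{thm:rate-of-conv-velocity} for the uniform velocity error, Theorem~\ref{thm:rectified-flow-ODE-unique-sol} applied to the \emph{estimated} densities for well-definedness of $\widehat{\mathfrak{R}}$, and the equicontinuity-in-velocity estimate of Theorem~\ref{thm:equi-continuity-in-velocity} for the quantitative bound. For the first assertion, treat $(\widehat{p}_0,\widehat{p}_1)$ as a pair of densities on $\Omega$: by~\eqref{eq:estimted-denisty-ratio-implication} they are bounded below by $e^{-r_n}\underline{\mathfrak{p}}$ and above by $e^{r_n}\overline{\mathfrak{p}}$ on $\Omega$, and since $\widehat{\omega}$ satisfies~\ref{eq:Osgood-condition-densities} (in particular $\widehat{\omega}(\eta)\to 0$ as $\eta\to 0$) they are continuous on $\Omega$; hence $(\widehat{p}_0,\widehat{p}_1)$ satisfies the analogues of~\ref{eq:compact-support},~\ref{eq:bounded-away-densities}, and~\ref{eq:Osgood-condition-densities}. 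Applying Theorem~\ref{thm:rectified-flow-ODE-unique-sol} verbatim with $(\widehat{p}_0,\widehat{p}_1)$ in place of $(p_0,p_1)$ produces, for every $x\in\Omega^\circ$, a unique Carath\'eodory solution of~\eqref{eq:uniqueness-solution} that stays in $\Omega^\circ$ on $[0,1)$; this is $\widehat{\mathfrak{R}}(\cdot,x)$.

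Next I would check that $\widehat{v}^{\mathrm{den}}$ and $v$ both fit the comparison framework of Theorem~\ref{thm:equi-continuity-in-velocity}. For each $t$ and $z\in\Omega^\circ$, $\widehat{v}^{\mathrm{den}}(t,z)$ is a convex combination, with nonnegative weights $\widehat{p}_0(z-t\delta)\widehat{p}_1(z+(1-t)\delta)$, of points $\delta\in S_t(z)$; since $S_t(z)$ is convex and each $\delta\in S_t(z)$ equals $(z+(1-t)\delta)-(z-t\delta)\in\Omega-\Omega$, we get $\widehat{v}^{\mathrm{den}}(t,z)\in S_t(z)$ and $\|\widehat{v}^{\mathrm{den}}(t,z)\|\le\mathrm{diam}(\Omega)$, i.e.\ properties~\ref{eq:bounded-v-class-prop} and~\ref{eq:belonging-v-class-prop} hold for $\widehat{v}^{\mathrm{den}}$ (the identical argument gives them for $v$). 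By Lemma~\ref{lem:boundedness-and-local-Lipschitz}, $v$ additionally satisfies~\ref{eq:Lipschitz-v-class-prop} with modulus $\varpi=\omega$ and a constant $\mathfrak{C}$ depending only on $d$, $\mathrm{diam}(\Omega)$, and $\overline{\mathfrak{p}}/\underline{\mathfrak{p}}$. By the remark following Theorem~\ref{thm:equi-continuity-in-velocity}, since $\widehat{v}^{\mathrm{den}}$ and $v$ both satisfy~\ref{eq:bounded-v-class-prop}--\ref{eq:belonging-v-class-prop} and $v$ alone satisfies~\ref{eq:Lipschitz-v-class-prop} with modulus $\omega$, conclusions~\eqref{eq:equicontinuity-in-velocity-weak}--\eqref{eq:equicontinuity-in-velocity-strong} are available with $\overline{\Psi}=\Psi$ (the $\Psi$ of~\ref{eq:Osgood-condition-densities}) and a constant $C$ depending only on $\mathfrak{C}$; in particular $C$ does not depend on $r_n$.

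Then I would invoke the estimate. Put $\nu_1=\widehat{v}^{\mathrm{den}}$, $\nu_2=v$ (so $\mathfrak{R}_{\nu_1}=\widehat{\mathfrak{R}}$, $\mathfrak{R}_{\nu_2}=\mathfrak{R}$) and $\Delta=2\mathrm{diam}(\Omega)(e^{4r_n}-1)$, which by Theorem~\ref{thm:rate-of-conv-velocity} dominates $\|\widehat{v}^{\mathrm{den}}-v\|_\infty$. Hypothesis~\eqref{eq:x-assumption} is exactly $x\in\Omega^{-\varepsilon}$ together with~\eqref{eq:dist-to-boundary-nu_2} for $\nu_2=v$; hence, once $r_n$ is small enough that the right-hand side of~\eqref{eq:equicontinuity-in-velocity-weak} is below $\gamma$ (the only place smallness of $r_n$ is used, and the asserted bound is vacuous otherwise), estimate~\eqref{eq:equicontinuity-in-velocity-strong} gives
\[
\sup_{t\in[0,1]}\|\widehat{\mathfrak{R}}(t,x)-\mathfrak{R}(t,x)\| \le \Psi^{-1}\!\left(\Psi\bigl(2\mathrm{diam}(\Omega)(e^{4r_n}-1)\bigr) + \frac{C}{\min\{\varepsilon,\gamma\}}\ln\!\left(\frac{\mathrm{diam}(\Omega)}{\varepsilon}\right) + \frac{C}{\varepsilon\gamma}\right),
\]
and restricting to $t=1$ is the claim.

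Essentially all the analytic work has been front-loaded into Lemma~\ref{lem:distance-to-boundary-latest}, Lemma~\ref{lem:boundedness-and-local-Lipschitz}, and Theorem~\ref{thm:equi-continuity-in-velocity}, so what remains is bookkeeping; the one delicate point is the second step --- making sure $\widehat{v}^{\mathrm{den}}$ genuinely inherits the structural features of the population velocity (boundedness by $\mathrm{diam}(\Omega)$, values in $S_t(z)$, and a modulus-of-continuity bound of the prescribed shape) so that it can be compared within the class $\mathcal{V}$, and that the constant $C$ in the final bound can be kept independent of the estimation error $r_n$ by routing property~\ref{eq:Lipschitz-v-class-prop} through $v$ rather than $\widehat{v}^{\mathrm{den}}$. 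One should also bear in mind that the conclusion is only informative when $r_n\to 0$, which is consistent with the ``for all small enough $\Delta$'' caveat in Theorem~\ref{thm:equi-continuity-in-velocity}.
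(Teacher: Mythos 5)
Your proposal is correct and follows essentially the same route as the paper's own (very short) proof: apply Theorem~\ref{thm:rectified-flow-ODE-unique-sol} to the estimated densities for uniqueness of $\widehat{\mathfrak{R}}$, then invoke Theorem~\ref{thm:equi-continuity-in-velocity}\eqref{eq:equicontinuity-in-velocity-strong} with $\Delta = 2\mathrm{diam}(\Omega)(e^{4r_n}-1)$ supplied by Theorem~\ref{thm:rate-of-conv-velocity} and condition~\eqref{eq:dist-to-boundary-nu_2} supplied by~\eqref{eq:x-assumption}. The extra verifications you supply (that $\widehat{v}^{\mathrm{den}}$ inherits~\ref{eq:bounded-v-class-prop}--\ref{eq:belonging-v-class-prop} and that the Lipschitz property~\ref{eq:Lipschitz-v-class-prop} should be routed through $v$, keeping $C$ independent of $r_n$) are the details the paper leaves implicit, and you handle them correctly.
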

\begin{proof}[Proof of Theorem~\ref{thm:rate-of-conv-rectified-flow}]
    From the setting of Theorem~\ref{thm:rate-of-conv-velocity} and the assumption on $\widehat{\omega}(\cdot)$, we get that $\widehat{p}_0, \widehat{p}_1$ satisfy assumptions~\ref{eq:compact-support},~\ref{eq:bounded-away-densities}, and~\ref{eq:Lipschitz-Osgood}, and hence, Theorem~\ref{thm:rectified-flow-ODE-unique-sol} implies the uniqueness of the solution to~\eqref{eq:uniqueness-solution}. The second part of the result follows from~\eqref{eq:x-assumption} and Theorem~\ref{thm:equi-continuity-in-velocity} (in particular,~\eqref{eq:equicontinuity-in-velocity-strong}).
\end{proof}
Theorem~\ref{thm:rate-of-conv-rectified-flow} implies that the rectified flow shares the same rate of convergence as the velocity field if $\omega(u) \le Lu$ for all $u\ge 0$ (i.e., the densities are Lipschitz continuous).
\begin{corollary}[Rate of convergence of rectified flow]\label{cor:rate-of-conv-rectified-lipschitz}
    Consider the setting of Theorem~\ref{thm:rate-of-conv-velocity}. If $\omega(u) \le Lu$ for all $u \ge 0$, then 
    \[
    \|\widehat{\mathfrak{R}}(1, x) - \mathfrak{R}(1, x)\| \le 2\mathrm{diam}(\Omega)(e^{4r_n} - 1)\exp\left(\frac{C(L+1)}{\min\{\varepsilon,\,\gamma\}}\ln\left(\frac{\mathrm{diam}(\Omega)}{\varepsilon}\right) + \frac{C(L+1)}{\varepsilon\gamma}\right).
    \]
\end{corollary}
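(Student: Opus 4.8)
The plan is to deduce the Corollary directly from Theorem~\ref{thm:rate-of-conv-rectified-flow} by simplifying the implicit bound $\Psi^{-1}(\cdots)$ under the Lipschitz hypothesis $\omega(u)\le Lu$; essentially nothing new is needed beyond a one-line monotonicity estimate for $\Psi$.

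First I would record from Theorem~\ref{thm:rate-of-conv-velocity} (together with $r_n = o_p(1)$) that, writing $\Delta_n := 2\mathrm{diam}(\Omega)(e^{4r_n}-1)$, we have $\|\widehat v^{\mathrm{den}} - v\|_\infty \le \Delta_n$, and check that $\widehat p_0,\widehat p_1$ carry enough regularity for Theorem~\ref{thm:rate-of-conv-rectified-flow} to apply --- in particular that $\widehat\omega(\cdot)$ satisfies assumption~\ref{eq:Osgood-condition-densities}, using that Lipschitz continuity of $p_j$ combined with the two-sided ratio bound~\eqref{eq:estimted-denisty-ratio-implication} forces $\widehat\omega(\eta)$ to be dominated by a linear function of $\eta$. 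Theorem~\ref{thm:rate-of-conv-rectified-flow} then yields
\[
\|\widehat{\mathfrak R}(1,x) - \mathfrak R(1,x)\| \le \Psi^{-1}\!\left(\Psi(\Delta_n) + K\right),\qquad K := \frac{C}{\min\{\varepsilon,\gamma\}}\ln\!\left(\frac{\mathrm{diam}(\Omega)}{\varepsilon}\right) + \frac{C}{\varepsilon\gamma},
\]
where $\Psi(u) = \int du/(\omega(u)+u)$ and $C$ is the constant of that theorem.

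The core step is a bound on the growth of $\Psi$. Since $0\le \omega(u)\le Lu$ we have $u \le \omega(u)+u \le (L+1)u$, hence for $0<a<b$,
\[
\Psi(b)-\Psi(a) = \int_a^b \frac{du}{\omega(u)+u} \ge \frac{1}{L+1}\int_a^b\frac{du}{u} = \frac{1}{L+1}\ln\!\left(\frac{b}{a}\right).
\]
Applying this with $a=\Delta_n$ and $b=\Delta_n e^{(L+1)K}$ gives $\Psi(\Delta_n e^{(L+1)K}) \ge \Psi(\Delta_n)+K$, and since $\Psi$ is strictly increasing (so $\Psi^{-1}$ is monotone) this is equivalent to $\Psi^{-1}(\Psi(\Delta_n)+K) \le \Delta_n e^{(L+1)K}$. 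Substituting $\Delta_n$ and $K$ gives exactly
\[
\|\widehat{\mathfrak R}(1,x) - \mathfrak R(1,x)\| \le 2\mathrm{diam}(\Omega)(e^{4r_n}-1)\exp\!\left(\frac{C(L+1)}{\min\{\varepsilon,\gamma\}}\ln\!\left(\frac{\mathrm{diam}(\Omega)}{\varepsilon}\right) + \frac{C(L+1)}{\varepsilon\gamma}\right),
\]
which is the claimed bound.

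The only delicate point --- which I would flag as the \emph{main obstacle}, though it is minor --- is verifying the hypotheses of Theorem~\ref{thm:rate-of-conv-rectified-flow} in this setting, specifically that $\widehat{\mathfrak R}$ is well-defined, which requires $\widehat\omega(\cdot)$ to satisfy~\ref{eq:Osgood-condition-densities}. If one prefers to assume nothing about $\widehat p_j$ beyond Theorem~\ref{thm:rate-of-conv-velocity}, one can instead project $\widehat v^{\mathrm{den}}$ onto a Lipschitz velocity class as in the discussion preceding Theorem~\ref{thm:equi-continuity-in-velocity}: the remark there shows that only the limiting field $v$ need satisfy the regularity property~\ref{eq:Lipschitz-v-class-prop}, so the final bound is unaffected. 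Everything else is the elementary $\Psi$-manipulation above.
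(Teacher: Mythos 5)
Your main argument is correct and is essentially the same calculation the paper leaves implicit for this corollary. From $\omega(u)\le Lu$ you get $u\le \omega(u)+u\le (L+1)u$, hence $\Psi(b)-\Psi(a)=\int_a^b du/(\omega(u)+u)\ge \tfrac{1}{L+1}\ln(b/a)$, and since $\Psi$ is strictly increasing this yields $\Psi^{-1}(\Psi(\Delta_n)+K)\le\Delta_n e^{(L+1)K}$ with $\Delta_n=2\mathrm{diam}(\Omega)(e^{4r_n}-1)$ and $K$ as in Theorem~\ref{thm:rate-of-conv-rectified-flow}. This is exactly the manipulation the paper carries out in the remark after Theorem~\ref{thm:equi-continuity-in-velocity} for $\varpi(\eta)=\eta$, so the Corollary follows.

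One side claim in your setup is not correct: the two-sided ratio bound~\eqref{eq:estimted-denisty-ratio-implication} together with Lipschitzness of $p_j$ does \emph{not} force $\widehat\omega(\eta)$ to be bounded by a linear function of $\eta$. Writing $\widehat p_j(x)/\widehat p_j(x') = \bigl(\widehat p_j(x)/p_j(x)\bigr)\bigl(p_j(x)/p_j(x')\bigr)\bigl(p_j(x')/\widehat p_j(x')\bigr)$ only yields $\widehat\omega(\eta)\le e^{2r_n}(1+L\eta)-1$, which does not vanish as $\eta\to 0$: the pointwise ratio bound gives no control of the small-scale oscillations of $\widehat p_j$ (think of a histogram or a nearest-neighbor estimator, which can satisfy~\eqref{eq:estimted-denisty-ratio-implication} while being discontinuous). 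However, this does not affect the corollary, because Theorem~\ref{thm:rate-of-conv-rectified-flow} \emph{assumes} that $\widehat\omega$ satisfies~\ref{eq:Osgood-condition-densities} rather than deriving it, and the corollary inherits that hypothesis; your fallback via projection onto a Lipschitz velocity class is therefore unnecessary. The rest of the proof is fine.
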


\subsection{Linearization and Asymptotic Normality}\label{subsec:linearization-asymp-norm-bounded}
In this section, we prove an expansion for $\widehat{\mathfrak{R}}(1, x) - \mathfrak{R}(1, x)$ in terms of $\widehat{v}^{\mathrm{den}} - v$. The following lemma on the Lipschitz continuity of $\widehat{v}^{\mathrm{den}} - v$ is crucial for such an expansion. 
\begin{lemma}\label{lem:equicontinuity-vhat-v}
    Suppose $\max\{\widehat{\omega}(\eta),\,\omega(\eta)\} \le L\eta$ for all $\eta > 0$. 
    Define
    \[
    s_n = \esssup_{x\in\Omega^\circ}\,\max\left\{\left|\frac{d\log \widehat{p}_0(x)}{dx} - \frac{d\log p_0(x)}{dx}\right|,\,\left|\frac{d\log\widehat{p}_1(x)}{dx} - \frac{d\log p_1(x)}{dx}\right|\right\}.
    \]
    For any $\varepsilon > 0$ and $\eta \le \min\{1/L,\,\varepsilon^2/\mathrm{diam}(\Omega)\}$, we have
    \begin{align*}
        &\sup_{\substack{z, z'\in\Omega^{-\varepsilon},\\\|z - z'\| \le \eta}}\|\widehat{v}^{\mathrm{den}}(t, z) - v(t,z) - \widehat{v}^{\mathrm{den}}(t, z') + v(t, z')\|
        \\
        &\le C'\eta(e^{4r_n} - 1)(\varepsilon^{-2} + \eta\varepsilon^{-4})
    + C''(1 + \eta\varepsilon^{-2})^2\left[\exp(s_n\eta) - 1 + (e^{r_n} - 1)L\eta\right],
    \end{align*}
    for some constants $C'$ and $C''$ depending only on $d, L, \mathrm{diam}(\Omega),$ and $(\overline{\mathfrak{p}},\,\underline{\mathfrak{p}})$. In particular, for $\eta$ small enough and $n$ large enough (so that $r_n$ is small enough), we have
    \[
    \sup_{\substack{z, z'\in\Omega^{-\varepsilon},\\\|z - z'\| \le \eta}}\|\widehat{v}^{\mathrm{den}}(t, z) - v(t,z) - \widehat{v}^{\mathrm{den}}(t, z') + v(t, z')\| \le C'_{\varepsilon}\eta (r_n + s_n),
    \]
    for a constant $C'_{\varepsilon}$ depending on $C', C'',$ and $\varepsilon$.
\end{lemma}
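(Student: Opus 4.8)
The plan is to write $\widehat{v}^{\mathrm{den}}(t,\cdot)-v(t,\cdot)$ as a single ``centred, weighted'' conditional average under the true law, and then to bound its second difference over $z,z'$ by separating the estimation error from the geometric $z$-dependence, the latter being exactly the dependence already controlled in Lemma~\ref{lem:boundedness-and-local-Lipschitz}.

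First I would fix $t$ and set $\ell_j:=\log(\widehat{p}_j/p_j)$ on $\Omega$, so that $\|\ell_j\|_{\infty,\Omega}\le r_n$ by~\eqref{eq:estimted-denisty-ratio-implication}. Since $\widehat\omega(\eta),\omega(\eta)\le L\eta$, both $\log\widehat{p}_j$ and $\log p_j$ are Lipschitz on $\Omega$, hence $\ell_j$ is Lipschitz with a.e.\ gradient bounded by $s_n$, and integrating along segments shows $\ell_j$ is $s_n$-Lipschitz on $\Omega$. For $z\in\Omega^\circ$, let $\pi_z$ be the conditional law of $\delta:=X_1-X_0$ given $X_t=z$, i.e.\ the density on $S_t(z)$ proportional to $p_0(z-t\delta)p_1(z+(1-t)\delta)$; then $v(t,z)=\E_{\pi_z}[\delta]$ and $\|\delta-v(t,z)\|\le 2\,\mathrm{diam}(\Omega)$ $\pi_z$-a.s.\ since $\delta\in\Omega-\Omega$ on $S_t(z)$. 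Writing $\tilde\lambda_z(\delta):=\ell_0(z-t\delta)+\ell_1(z+(1-t)\delta)$ (so $|\tilde\lambda_z|\le 2r_n$ on $S_t(z)$ and $z\mapsto\tilde\lambda_z$ is $s_n$-Lipschitz) and $w_z(\delta):=e^{\tilde\lambda_z(\delta)}/\E_{\pi_z}[e^{\tilde\lambda_z}]-1$, the identity $\widehat{v}^{\mathrm{den}}(t,z)=\E_{\pi_z}[\delta\,e^{\tilde\lambda_z}]/\E_{\pi_z}[e^{\tilde\lambda_z}]$ together with $\E_{\pi_z}[w_z]=0$ gives the exact representation
\[
\widehat{v}^{\mathrm{den}}(t,z)-v(t,z)=\E_{\pi_z}\!\big[(\delta-v(t,z))\,w_z(\delta)\big],\qquad \|w_z\|_\infty\le e^{4r_n}-1,
\]
which already recovers Theorem~\ref{thm:rate-of-conv-velocity}.

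Then I would subtract this representation at $z$ and at $z'$ and split the discrepancy into three pieces. (i) Replacing $\delta-v(t,z)$ by $\delta-v(t,z')$ costs $\|v(t,z)-v(t,z')\|\,\|w\|_\infty$; by Lemma~\ref{lem:boundedness-and-local-Lipschitz} in its time-dependent form~\eqref{eq:time-dependent-Lipschitz-constant}, $\|v(t,z)-v(t,z')\|\lesssim L\eta+\eta/\varepsilon^2$ uniformly in $t$ on $\Omega^{-\varepsilon}$, giving the leading term $\lesssim(e^{4r_n}-1)\eta/\varepsilon^2$. (ii) Replacing $w_z$ by $w_{z'}$: the numerator $e^{\tilde\lambda_z}$ depends on $z$ only through the $s_n$-Lipschitz $\ell_j$, contributing a factor of order $e^{s_n\eta}-1$, while the normalizer $\E_{\pi_z}[e^{\tilde\lambda_z}]\in[e^{-2r_n},e^{2r_n}]$ changes through $\tilde\lambda_z$ (another $e^{s_n\eta}-1$) and through $\pi_z$ itself (a measure change, handled in (iii)). (iii) Replacing $\E_{\pi_z}[\cdot]$ by $\E_{\pi_{z'}}[\cdot]$ on a function of sup-norm $\lesssim e^{4r_n}-1$: this is exactly the measure change appearing in the proof of Lemma~\ref{lem:boundedness-and-local-Lipschitz} --- on $S_t(z)\cap S_t(z')$ the density of $\pi_z$ relative to $\pi_{z'}$ lies in $[e^{-cL\eta},e^{cL\eta}]$ by $\omega\le L\cdot$, and $\mathrm{Vol}(S_t(z)\triangle S_t(z'))/\mathrm{Vol}(S_t(z))\lesssim_d\eta/\varepsilon^2$ uniformly in $t$ (as established there, using that for $z\in\Omega^{-\varepsilon}$ the set $S_t(z)$ contains a ball of radius $\varepsilon/\max\{t,1-t\}$). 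Since $\pi_z$ is built from the true densities, this piece carries $\overline{\mathfrak{p}}^2/\underline{\mathfrak{p}}^2$ but no $e^{r_n}$, so the exponential-in-$r_n$ factors enter only through $\|w\|_\infty$ and the normalizer. Combining the pieces, with $(1+\eta\varepsilon^{-2})^2$ absorbing the cross terms (the $\varepsilon^{-4}$ term being the product of two $\varepsilon^{-2}$ geometric factors), yields the stated non-asymptotic bound; for $\eta$ small and $n$ large, $e^x-1=x+O(x^2)$ turns $e^{4r_n}-1$, $e^{s_n\eta}-1$, $(e^{r_n}-1)L\eta$ into $O(r_n)$, $O(s_n\eta)$, $O(r_n\eta)$, and since $\eta\le\varepsilon^2/\mathrm{diam}(\Omega)$ the prefactor is bounded, giving $\lesssim_\varepsilon\eta(r_n+s_n)$.

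The main obstacle is step (iii): controlling $\mathrm{Vol}(S_t(z)\triangle S_t(z'))/\mathrm{Vol}(S_t(z))$ uniformly in $t\in[0,1]$ for an arbitrary convex $\Omega$ with no boundary regularity --- the delicate point already present in Lemma~\ref{lem:boundedness-and-local-Lipschitz} --- and doing so while carrying the error factors $r_n,s_n$ through it and keeping the $1/t$, $1/(1-t)$ blow-ups dominated by $1/\varepsilon$ (the worst case $\min\{t,1-t\}\asymp\varepsilon$ being responsible for the $\varepsilon^{-2}$). A secondary technical point is that $\widehat{p}_j$ need not be differentiable, so every $z$-derivative must be routed through $\ell_j$, which is genuinely $s_n$-Lipschitz by virtue of $\widehat\omega,\omega\le L\cdot$, and integrated along segments rather than differentiated pointwise.
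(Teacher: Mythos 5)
Your representation is correct and gives a genuinely different route than the paper's proof. Indeed, writing $\pi_z$ for the conditional law of $\delta$ given $X_t=z$ under the true densities, $\tilde\lambda_z(\delta)=\ell_0(z-t\delta)+\ell_1(z+(1-t)\delta)$ with $\ell_j=\log(\widehat p_j/p_j)$, and $w_z=e^{\tilde\lambda_z}/\E_{\pi_z}[e^{\tilde\lambda_z}]-1$, one checks directly that $\pi_z(\delta)e^{\tilde\lambda_z(\delta)}=\widehat p_0(z-t\delta)\widehat p_1(z+(1-t)\delta)/p_t(z)$, so $\E_{\pi_z}[e^{\tilde\lambda_z}]=\widehat p_t(z)/p_t(z)$ and your identity $\widehat v^{\mathrm{den}}(t,z)-v(t,z)=\E_{\pi_z}[(\delta-v(t,z))w_z]$ holds, with $\|w_z\|_\infty\le e^{4r_n}-1$. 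Your observation that $\ell_j$ is $s_n$-Lipschitz is also sound: $\omega,\widehat\omega\le L\,\cdot$ make each $\log p_j,\log\widehat p_j$ Lipschitz, hence $\ell_j$ is absolutely continuous along segments in the convex $\Omega$, and integrating its a.e.\ gradient (bounded by $s_n$ by definition) gives the Lipschitz bound. The paper instead performs a purely algebraic decomposition of $\widehat f_t/\widehat p_t - f_t/p_t$ into cross terms such as $(\widehat p_t(z)/p_t(z)-1)$, $(\widehat p_t(z')/\widehat p_t(z)-1)$, and then splits each surviving integral over $S_t(z)\cap S_t(z')$ and the two symmetric-difference pieces, using the four-term expansion~\eqref{eq:intersection-Modulus-of-continuity} to isolate $r_n$, $s_n$, and the geometric $\eta/\varepsilon^2$ factors. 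Your approach buys conceptual clarity — it makes transparent why the bound must vanish as $r_n,s_n\to0$, since the whole perturbation is a single centred weight $w_z$ — at the cost of having to be careful where, in the ``change one thing at a time'' decomposition, the geometric factor enters.

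Two small bookkeeping slips worth flagging. First, your step (i) as written is exactly zero: since $\E_{\pi_z}[w_z]=0$, replacing $\delta-v(t,z)$ by $\delta-v(t,z')$ inside $\E_{\pi_z}[\cdot\,w_z]$ costs nothing; the leading $(e^{4r_n}-1)\eta/\varepsilon^2$ term you attribute there actually comes from your step (iii) — the measure change $\E_{\pi_z}\to\E_{\pi_{z'}}$ applied to a function whose sup-norm is $\lesssim (e^{4r_n}-1)\mathrm{diam}(\Omega)$ — together with the $\pi_z$-mass of $S_t(z)\setminus S_t(z')$. (Because $w_{z'}$ is undefined on $S_t(z)\setminus S_t(z')$, you must split each expectation over the intersection and the symmetric differences before changing $w$; on $S_t(z)\setminus S_t(z')$ one bounds $\int|\delta-v(t,z)|\,|\widehat\pi_z-\pi_z|\le 2\mathrm{diam}(\Omega)(e^{4r_n}-1)\pi_z(S_t(z)\setminus S_t(z'))$ and the geometric factor appears via $\pi_z(S_t(z)\setminus S_t(z'))\le(\overline{\mathfrak p}^2/\underline{\mathfrak p}^2)\mathrm{Vol}(S_t(z)\triangle S_t(z'))/\mathrm{Vol}(S_t(z))$.) Second, the relative density $d\pi_z/d\pi_{z'}$ on $S_t(z)\cap S_t(z')$ is not simply in $[e^{-cL\eta},e^{cL\eta}]$: it equals the pointwise ratio of the $p_0p_1$ factors (which is controlled by $\omega(\eta)$) \emph{times} $p_t(z')/p_t(z)$, and the latter itself carries the volume-ratio $\eta/\varepsilon^2$ through the change in $S_t(\cdot)$. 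Once the measure change is split this way and the normalizer difference is centred (write $\E_{\pi_z}[e^{\tilde\lambda_{z'}}]-\E_{\pi_{z'}}[e^{\tilde\lambda_{z'}}]$ as a measure change applied to $e^{\tilde\lambda_{z'}}-1$, of sup-norm $\le e^{2r_n}-1$), all pieces are of the advertised orders and the claimed bound follows. None of this is a gap in the method — it is bookkeeping — but the attribution of terms to the three steps as you stated them does not quite match what actually occurs.
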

\begin{theorem}[Linearization]\label{thm:linearization-bounded-case}
    Fix $x\in\mathcal{E}_1\cap\mathcal{E}_2\cap\mathcal{E}_4$. (Recall the definitions from Lemma~\ref{lem:distance-to-boundary-at-1}.) Then $z\mapsto v(s, z)$ is differentiable at $z = \mathfrak{R}(s, x)$ for almost all $s\in[0,1]$. Let the derivative be denoted by $\partial_zv(s,\mathfrak{R}(s, x))$. Define $\widetilde{E}(t, x)$ as the unique solution of the integral equation
    \begin{equation} \label{eq:influence-function-rectified-flow0} \widetilde{E}(t, x)= \int_0^t \{\widehat{v}^{\mathrm{den}}(s, \mathfrak{R}(s, x)) - v(s, \mathfrak{R}(s, x))\}ds + \int_0^t \partial_zv(s, \mathfrak{R}(s, x))\widetilde{E}(s,x)ds.\end{equation}
    The existence of a unique solution to~\eqref{eq:influence-function-rectified-flow0} follows from Proposition~\ref{prop:Volterra}. Indeed, this unique solution can be written as
    \begin{equation}\label{eq:influence-function-rectified-flow}
    \widetilde{E}(t, x) = \Phi(t)\int_0^t (\Phi(s))^{-1}\{\widehat{v}^{\mathrm{den}}(s, \mathfrak{R}(s, x)) - v(s, \mathfrak{R}(s, x))\}ds,
    \end{equation}
    where $\Phi(\cdot)$ is the unique invertible solution to the matrix differential equation
    \begin{equation}\label{eq:fundamental-derivative-velocity}
    \Phi(t) = I + \int_0^t \partial_zv(s, \mathfrak{R}(s, x))^\top\Phi(s)ds.
    \end{equation}
    (Here $I$ is the identity matrix in $\mathbb{R}^{d\times d}$.)
    Then, assuming $r_n + s_n = o_p(1)$, we get
    \begin{equation}\label{eq:linearization-first}
    \sup_{t\in[0,1]}\|\widehat{\mathfrak{R}}(t, x) - \mathfrak{R}(t, x) - \widetilde{E}(t, x)\| = o_p\left(\sup_{t\in[0,1]}\|\widetilde{E}(t, x)\|\right).
    \end{equation}
\end{theorem}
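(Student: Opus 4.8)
The plan is to linearize the nonlinear integral equation~\eqref{eq:uniqueness-solution} for $\widehat{\mathfrak R}(\cdot,x)$ about~\eqref{eq:whole-path-integral} for $\mathfrak R(\cdot,x)$, compare the result with the linear Volterra equation~\eqref{eq:influence-function-rectified-flow0} via a variation-of-parameters/Gr\"onwall estimate, and then finish with a one-step bootstrap that upgrades a crude a priori rate on $D(t):=\widehat{\mathfrak R}(t,x)-\mathfrak R(t,x)$ to the sharp bound~\eqref{eq:linearization-first}. I work under the standing assumptions of this subsection: $p_j,\widehat p_j$ satisfy~\ref{eq:compact-support}--\ref{eq:bounded-away-densities}, $\max\{\omega,\widehat\omega\}(\eta)\le L\eta$, and $r_n+s_n=o_p(1)$; write $z_s=\mathfrak R(s,x)$ and $\widehat z_s=\widehat{\mathfrak R}(s,x)$. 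The differentiability claim is immediate: since $x\in\mathcal E_4$, Lemma~\ref{lem:distance-to-boundary-at-1} gives $\mathrm{Leb}(\{s:z_s\in B(s)\})=0$, i.e.\ $z\mapsto v(s,z)$ is differentiable at $z_s$ for a.e.\ $s$, and we write $\partial_zv(s,z_s)$ for the derivative.

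For the preliminaries I would record two facts. First, since $x\in\mathcal E_1\cap\mathcal E_2$, inequality~\eqref{eq:dist-to-boundary-path} yields $\mathfrak d:=\inf_{s\in[0,1]}\mathrm{dist}(z_s,\partial\Omega)>0$; the reference path lies in $\Omega^{-\mathfrak d}$, and on the slightly larger set $\Omega^{-\mathfrak d/2}$ Lemma~\ref{lem:boundedness-and-local-Lipschitz} (with $\omega(\eta)\le L\eta$) makes $v(s,\cdot)$ Lipschitz with a constant $L_\ast=L_\ast(x)$ uniform in $s$, so $s\mapsto\partial_zv(s,z_s)$ is measurable and bounded by $L_\ast$. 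Hence Proposition~\ref{prop:Volterra} applies and produces the unique solution~\eqref{eq:influence-function-rectified-flow} with invertible fundamental matrix $\Phi(\cdot)$ of~\eqref{eq:fundamental-derivative-velocity} (invertibility via Liouville's formula and boundedness of the coefficients), and I set $\kappa_\Phi:=\sup_{[0,1]}\|\Phi\|\,\sup_{[0,1]}\|\Phi^{-1}\|<\infty$. Second, by Theorem~\ref{thm:rate-of-conv-velocity} $\|\widehat v^{\mathrm{den}}-v\|_\infty=O_p(r_n)$, and $v,\widehat v^{\mathrm{den}}$ lie in a common class $\mathcal V$ (Lemma~\ref{lem:boundedness-and-local-Lipschitz} applied to $p_0,p_1$ and to $\widehat p_0,\widehat p_1$); the strong form~\eqref{eq:equicontinuity-in-velocity-strong} of Theorem~\ref{thm:equi-continuity-in-velocity} (using $\mathrm{dist}(z_1,\partial\Omega)>0$ since $x\in\mathcal E_2$) then gives the a priori bound $\sup_t\|D(t)\|=O_p(r_n)=o_p(1)$, and in particular $\widehat z_s\in\Omega^{-\mathfrak d/2}$ for all $s$ with probability tending to one.

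The core identity comes from subtracting~\eqref{eq:whole-path-integral} from~\eqref{eq:uniqueness-solution} and decomposing the integrand as
\[
\widehat v^{\mathrm{den}}(s,\widehat z_s)-v(s,z_s)=\partial_zv(s,z_s)D(s)+\{\widehat v^{\mathrm{den}}(s,z_s)-v(s,z_s)\}+\mathrm{Err}(s),
\]
where $\mathrm{Err}(s)$ is the sum of the Taylor remainder $v(s,\widehat z_s)-v(s,z_s)-\partial_zv(s,z_s)D(s)$ and the increment $(\widehat v^{\mathrm{den}}-v)(s,\widehat z_s)-(\widehat v^{\mathrm{den}}-v)(s,z_s)$. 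Integrating and subtracting~\eqref{eq:influence-function-rectified-flow0} leaves $D(t)-\widetilde E(t,x)=\int_0^t\partial_zv(s,z_s)\{D(s)-\widetilde E(s,x)\}\,ds+\int_0^t\mathrm{Err}(s)\,ds$, so variation of parameters with the fundamental matrix gives $\sup_t\|D(t)-\widetilde E(t,x)\|\le\kappa_\Phi\int_0^1\|\mathrm{Err}(s)\|\,ds$.

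The step I expect to be the main obstacle is showing $\int_0^1\|\mathrm{Err}(s)\|\,ds=o_p(\|D\|_\infty)$, the delicate point being that differentiability of $v(s,\cdot)$ at $z_s$ holds only pointwise in $s$, with a remainder modulus that is not obviously uniform. For the increment term I would apply Lemma~\ref{lem:equicontinuity-vhat-v} with $\varepsilon=\mathfrak d/2$ (legitimate on the event of the previous paragraph), giving $\|(\widehat v^{\mathrm{den}}-v)(s,\widehat z_s)-(\widehat v^{\mathrm{den}}-v)(s,z_s)\|\le C_{\mathfrak d}\|D(s)\|(r_n+s_n)$, hence a contribution $\le C_{\mathfrak d}\|D\|_\infty(r_n+s_n)$. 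For the remainder I would set $\omega_s(\delta):=\sup_{0<\|h\|\le\delta}\|h\|^{-1}\|v(s,z_s+h)-v(s,z_s)-\partial_zv(s,z_s)h\|$; differentiability gives $\omega_s(\delta)\downarrow0$ as $\delta\downarrow0$ for a.e.\ $s$, while $\omega_s(\delta)\le2L_\ast$ for $\delta\le\mathfrak d/2$, so the bounded convergence theorem makes $g(\delta):=\int_0^1\omega_s(\delta)\,ds$ a deterministic function with $g(0^+)=0$, and the remainder integral is at most $\|D\|_\infty\,g(\|D\|_\infty)=\|D\|_\infty\,o_p(1)$ since $\|D\|_\infty=o_p(1)$. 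Combining, $\sup_t\|D(t)-\widetilde E(t,x)\|\le\kappa_\Phi\|D\|_\infty(g(\|D\|_\infty)+C_{\mathfrak d}(r_n+s_n))=\|D\|_\infty\,o_p(1)$, and the triangle inequality $\|D\|_\infty\le\|\widetilde E(\cdot,x)\|_\infty+\|D\|_\infty\,o_p(1)$ forces $\|D\|_\infty\le2\|\widetilde E(\cdot,x)\|_\infty$ with probability tending to one; feeding this back gives $\sup_t\|D(t)-\widetilde E(t,x)\|\le2\|\widetilde E(\cdot,x)\|_\infty\,o_p(1)=o_p(\sup_t\|\widetilde E(t,x)\|)$, which is~\eqref{eq:linearization-first}.
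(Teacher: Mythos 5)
Your proof is correct and follows essentially the same strategy as the paper's: decompose $\widehat{v}^{\mathrm{den}}(s,\widehat z_s)-v(s,z_s)$ into the linearization $\partial_z v(s,z_s)D(s)$, the velocity error $\widehat v^{\mathrm{den}}(s,z_s)-v(s,z_s)$, and a remainder consisting of the $v$-Taylor residual plus the cross-difference controlled by Lemma~\ref{lem:equicontinuity-vhat-v}, then close a Gr\"onwall/variation-of-parameters loop. The only cosmetic differences are that you handle the a.e.\ Taylor remainder more explicitly through the modulus $\omega_s(\delta)$ and the bounded convergence theorem (where the paper invokes DCT more tersely), and you state the final $\|D\|_\infty\le 2\|\widetilde E\|_\infty$ bootstrap explicitly where the paper asserts the equivalence of the two $o_p$ normalizations without elaboration.
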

A proof of Theorem~\ref{thm:linearization-bounded-case} can be found in Section~\ref{appsubsec:proof-of-linearization}. An important implication of Theorem~\ref{thm:linearization-bounded-case} is that the rate of convergence and the limiting distribution of $\widehat{\mathfrak{R}}(t, x) - \mathfrak{R}(t, x)$ matches that of $\widetilde{E}(t, x)$, if $\widetilde{E}(t, x)$ and $\sup_{t\in[0,1]}\|\widetilde{E}(t, x)\|$ share the same rate of convergence. Theorem~\ref{thm:linearization-bounded-case} does not require any specific structure on the density estimators as long as $r_n + s_n = o_p(1)$ as $n\to\infty$. 

The condition of same rate of convergence can be verified by proving tightness of the process $t\mapsto a_n\widetilde{E}(t, x)$ where $a_n$ is such that $a_n\widetilde{E}(1, x) = O_p(1)$. Note that $\widetilde{E}(0, x) = 0$ and hence, verifying certain moment condition on the increment $a_n(\widetilde{E}(t, x) - \widetilde{E}(s, x))$ would imply $a_n\sup_{t\in[0,1]}\|\widetilde{E}(t, x)\| = O_p(1)$; see, for example, Example 2.2.7 of~\cite{VanderVaartWellner2023} or Section 1.3 of~\cite{Talagrand2022}. However, $\widehat{v}^{\mathrm{den}}(s, z) - v(s, z)$ is a ratio of random quantities which makes it difficult to directly apply this technique. This can be resolved by a further linearlization of $\widehat{v}^{\mathrm{den}}(s, z) - v(s, z)$ as shown in the proof of Theorem~\ref{thm:asymp-normality-bounded-case}. 

{\bf The Density Estimator.}
The limiting distribution of properly standardized $\widetilde{E}(t, x)$ depends heavily on the density estimators. We provide the following general limiting distribution result for a specific higher-order kernel-based density estimator. 

We describe our estimator of densities $p_0, p_1$. (Although we expect this estimator to be known, we could not find an explicit reference.) For any $z\in\Omega$ and bandwidth $h > 0$, set $V_{z,h} = (\Omega - z)\cap \mathcal{B}(0, h)$. Note that if $z\in\Omega^\circ$ and $h$ is small enough, then $V_{z, h} = \mathcal{B}(0, h)$. Our assumption~\ref{eq:compact-support} on $\Omega$ implies that $V_{z, h}$ is a convex set with a non-empty interior for any $z\in\Omega$ and $h > 0$. Let $K_{z,h}:V_{z,h}\to\mathbb{R}$ be an order $m \ge 1$ kernel function satisfying
\begin{equation}\label{eq:m-order-kernel}
    \int_{V_{z,h}} K_{z, h}(u)du = \mbox{Vol}(V_{z,h}),\quad\mbox{and}\quad \sup_{\substack{(\alpha_1, \ldots, \alpha_d)\in\{0,1,\ldots\}^d,\\\sum_{j=1}^d \alpha_j \le m}}\,\left|\int_{V_{z,h}} K_{z,h}(u)\left(\prod_{j=1}^d u_j^{\alpha_j}\right)du\right| = 0.
\end{equation}
The existence of such a function follows from defining an $L_2$ space on $V_{z,h}$ and applying Gram-Schmidt orthogonalization. In fact, set $\widetilde{K}(u) = \mathbf{1}\{u\in V_{z,h}\}$. For any $u\in\mathbb{R}^d$ and $\alpha\in\{0, 1, \ldots\}^d$ with $\|\alpha\|_1 = \sum_{j=1}^d \alpha_j \le m$, define $\varphi_{\alpha}(u) = \prod_{j=1}^d u_j^{\alpha_j}.$
Set $\varphi_0(u) = 1$ for all $u\in\mathbb{R}^d$. For $u\in V_{z,h}$, define
$\Upsilon_{m}(u) ~=~ (\varphi_{\alpha}(u))_{1 \le \|\alpha\|_1 \le m}$,
where the elements are organized with respect to the total ordering on $\alpha\in\{0, 1, \ldots\}^d$ defined for $\alpha\neq \alpha'$ by $\alpha\prec\alpha'$ if and only if $\|\alpha\|_1 < \|\alpha'\|_1$, or $\|\alpha\|_1 = \|\alpha'\|_1$ and $\alpha_{i^*} < \alpha'_{i^*}$ where $i^* = \min\{1\le i\le d:\, \alpha_i \neq \alpha_i'\}$. 
Define
$\mathcal{B}_{m, h}(z) 
:= \int_{\mathbb{R}^d} \Upsilon_m(u)\Upsilon_m^{\top}(u)\widetilde{K}(u)du.$
Lemma 6.1 of~\cite{bertin2025new} implies that $\mathcal{B}_{m,h}(z)$ is a positive definite matrix.
Set
\[
\widetilde{K}_{z,h}(u) = \widetilde{K}(u)\left(1 - \int_{\mathbb{R}^d} \widetilde{K}(r)\Upsilon_m^{\top}(r)\mathcal{B}_{m,h}(z)^{-1}\Upsilon_m(u)dr\right).
\]
This is the residual obtained from regressing $\tilde{K}(u)$ on $\Upsilon_m(u)$ in $L_2(V_{z,h})$ and hence, satisfies the second condition of~\eqref{eq:m-order-kernel}. To satisfy the first condition of~\eqref{eq:m-order-kernel}, it suffices to normalize $\widetilde{K}_{z,h}(u)$. Note that the boundedness of $V_{z,h}$ implies that $K_{z,h}(\cdot)$, defined this way, is also bounded.

Define the kernel density estimator
\begin{equation}\label{eq:new-boundary-corrected-estimators}
    \widehat{p}_j(z) = \frac{1}{n}\sum_{i=1}^n \frac{K_{z,h}(X_{ji} - z)}{\mbox{Vol}(V_{z,h})}.
\end{equation}
Recall that $X_{j1}, \ldots, X_{jn}$ represents an IID sample from $\mu_j, j = 0, 1$. For faster rates for the density estimator, we need to strengthen the smoothness assumption on the densities $p_0$ and $p_1$. Following~\cite{bertin2025new}, we consider the following assumption:
\begin{enumerate}[label=(B4)]
    \item The densities $p_j, j = 0, 1$ are $\beta$-smooth, i.e., for all $z\in\Omega$, there exists a polynomial $q_{z,j}:\mathbb{R}^d\to\mathbb{R}$ of degree $\llfloor \beta\rrfloor$\footnote[2]{$\llfloor \beta\rrfloor$ is the greatest integer smaller than $\beta$. In particular, $\llfloor \beta\rrfloor = \beta - 1$ if $\beta$ is an integer.}  such that \label{eq:Holder-smooth-densities}
    \[
    |p_j(z + u) - q_{z,j}(u)| \le L\|u\|^{\beta},\quad\mbox{for all }u\in V_{z,h}.
    \]
\end{enumerate}
It is interesting to note that~\ref{eq:Holder-smooth-densities} is not a traditional smoothness assumption in the sense that~\ref{eq:Holder-smooth-densities} does not require any strong notion of differentiability at the boundary of $\Omega$. (See Remark 2 of~\cite{bertin2025new} for more details.)
\begin{proposition}\label{prop:density-estimator-bounded}
    Suppose assumption~\ref{eq:Holder-smooth-densities} holds and that $K_{z,h}:V_{z,h}\to\mathbb{R}$ is a $m$-th order kernel for some $m \ge \llfloor \beta\rrfloor$. Additionally, assume that $|K_{z,h}(u)| \le \mathfrak{K}$ for all $u\in V_{z,h}, z\in\Omega$. Then the density estimator~\eqref{eq:new-boundary-corrected-estimators} satisfies
    \[
    \max_{j = 0, 1}\sup_{z\in\Omega}\left|\mathbb{E}[\widehat{p}_j(z)] - p_j(z)\right| \le L\mathfrak{K}h^{\beta},
    \]
    and
    \[
    \mathrm{Var}(\widehat{p}_j(z)) ~\le~ \frac{\mathfrak{K}^2\|p_j\|_{\infty}}{n\mathrm{Vol}(V_{z,h})},\quad\mbox{for all}\quad z\in\Omega,\; j = 0, 1.
    \]
    Additionally, under assumption~\ref{eq:compact-support}, $\mathrm{Vol}(V_{z,h})/\mathrm{Vol}(\mathcal{B}(0, h))$ stays bounded away from zero for all $z\in\Omega$, and hence, $\mathrm{Vol}(V_{z,h}) \ge Ch^{d}$ for all $z\in\Omega$ (for small enough $h$).
\end{proposition}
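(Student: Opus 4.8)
The plan is to dispatch the three assertions separately: the bias and variance bounds follow at once from the orthogonality conditions~\eqref{eq:m-order-kernel} built into $K_{z,h}$ together with the smoothness assumption~\ref{eq:Holder-smooth-densities}, and the volume lower bound is a short convex-geometry argument using an inscribed ball of $\Omega$. For the bias, I would first record that, since $K_{z,h}$ is supported on $V_{z,h}$ and each $X_{ji}$ has density $p_j$ supported on $\Omega$, the substitution $u=x-z$ gives $\mathbb{E}[\widehat p_j(z)] = \mathrm{Vol}(V_{z,h})^{-1}\int_{V_{z,h}} K_{z,h}(u)\,p_j(z+u)\,du$. The key observation is that $0\in V_{z,h}$ for every $z\in\Omega$ (as $z-z=0\in\Omega-z$ and $0\in\mathcal{B}(0,h)$), so evaluating the approximation in~\ref{eq:Holder-smooth-densities} at $u=0$ forces $q_{z,j}(0)=p_j(z)$. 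Since $q_{z,j}$ has degree $\llfloor\beta\rrfloor\le m$, expanding it into monomials and invoking~\eqref{eq:m-order-kernel} (the degree-$0$ moment equals $\mathrm{Vol}(V_{z,h})$, all moments of degrees $1,\dots,m$ vanish) yields $\mathrm{Vol}(V_{z,h})^{-1}\int_{V_{z,h}} K_{z,h}(u)\,q_{z,j}(u)\,du = q_{z,j}(0) = p_j(z)$. Subtracting, $\mathbb{E}[\widehat p_j(z)]-p_j(z) = \mathrm{Vol}(V_{z,h})^{-1}\int_{V_{z,h}} K_{z,h}(u)\,(p_j(z+u)-q_{z,j}(u))\,du$, and bounding the integrand by $\mathfrak{K}\cdot L\|u\|^{\beta}\le \mathfrak{K}Lh^{\beta}$ on $V_{z,h}\subseteq\mathcal{B}(0,h)$ gives $|\mathbb{E}[\widehat p_j(z)]-p_j(z)|\le L\mathfrak{K}h^{\beta}$, uniformly in $z\in\Omega$ and $j\in\{0,1\}$.

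For the variance, since $\widehat p_j(z)$ is an average of $n$ i.i.d.\ copies of $K_{z,h}(X_{j1}-z)/\mathrm{Vol}(V_{z,h})$, I would bound $\mathrm{Var}(\widehat p_j(z))\le n^{-1}\mathrm{Vol}(V_{z,h})^{-2}\,\mathbb{E}[K_{z,h}(X_{j1}-z)^2] = n^{-1}\mathrm{Vol}(V_{z,h})^{-2}\int_{V_{z,h}} K_{z,h}(u)^2 p_j(z+u)\,du$, and then use $|K_{z,h}|\le\mathfrak{K}$ and $p_j\le\|p_j\|_{\infty}$ to get $\mathrm{Var}(\widehat p_j(z))\le \mathfrak{K}^2\|p_j\|_{\infty}/(n\,\mathrm{Vol}(V_{z,h}))$, which is exactly the asserted bound.

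For the volume bound, let $\mathcal{B}(c,r_{\mathrm{in}})\subseteq\Omega$ be an inscribed ball as in~\eqref{eq:inradius}, and note $r_{\mathrm{in}}\le\mathrm{diam}(\Omega)$. Fix $h\le r_{\mathrm{in}}/2$. If $\|z-c\|\le r_{\mathrm{in}}/2$, then $\mathcal{B}(z,r_{\mathrm{in}}/2)\subseteq\mathcal{B}(c,r_{\mathrm{in}})\subseteq\Omega$, so $V_{z,h}=\mathcal{B}(0,h)$. Otherwise, $\mathcal{B}\big((1-\lambda)z+\lambda c,\ \lambda r_{\mathrm{in}}\big)=(1-\lambda)\{z\}+\lambda\mathcal{B}(c,r_{\mathrm{in}})\subseteq\Omega$ for all $\lambda\in[0,1]$ by convexity; taking $\lambda = h\big/\big(2(\|z-c\|+r_{\mathrm{in}})\big)\le 1$ shows $\Omega-z$ contains a ball centered within distance $\lambda\|z-c\|\le h/2$ of the origin and of radius $\lambda r_{\mathrm{in}}\ge h r_{\mathrm{in}}\big/\big(4\,\mathrm{diam}(\Omega)\big)$, hence contained in $\mathcal{B}(0,h)$. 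In both cases $\mathrm{Vol}(V_{z,h})\ge \big(r_{\mathrm{in}}/(4\,\mathrm{diam}(\Omega))\big)^{d}\mathrm{Vol}(\mathcal{B}(0,h))$, uniformly over $z\in\Omega$, which is the claimed conclusion with $C=\mathrm{Vol}(\mathcal{B}(0,1))\big(r_{\mathrm{in}}/(4\,\mathrm{diam}(\Omega))\big)^{d}$.

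I do not expect a genuine obstacle: the estimator in~\eqref{eq:new-boundary-corrected-estimators} is precisely the boundary-adapted ($L_2$-orthogonalized) kernel estimator designed so that the bias calculation collapses, and the variance bound is textbook. The only mildly delicate point is making the lower bound on $\mathrm{Vol}(V_{z,h})$ uniform over \emph{all} $z\in\Omega$, including boundary points where $V_{z,h}$ is a proper, possibly very thin, subset of $\mathcal{B}(0,h)$; the inscribed-ball argument handles this for arbitrary convex $\Omega$ with no assumption on the regularity of $\partial\Omega$, at the price of the dimension-dependent constant $(r_{\mathrm{in}}/\mathrm{diam}(\Omega))^{d}$.
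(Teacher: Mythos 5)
Your bias and variance arguments reproduce the paper's own reasoning essentially verbatim: the bias reduction to the polynomial approximant $q_{z,j}$, the observation (which you justify explicitly via $0\in V_{z,h}$) that $q_{z,j}(0)=p_j(z)$, the cancellation of the higher moments by~\eqref{eq:m-order-kernel}, and the routine i.i.d.\ variance computation bounded by $|K_{z,h}|\le\mathfrak{K}$ and $\|p_j\|_\infty$ are all identical in substance to what the paper does. Both parts are correct.

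The one place you genuinely diverge is the volume lower bound $\mathrm{Vol}(V_{z,h})\ge Ch^d$ uniformly in $z\in\Omega$. The paper does not actually prove this in situ; it simply asserts that it "follows from the fact that any set of full affine rank has positive Lebesgue density at each point" and points to an external reference and a MathStackExchange thread, which gives a pointwise statement and leaves the uniformity over $z$ (including boundary points) implicit. You instead give a short, self-contained, quantitative convex-geometry argument: convex-combine $z$ with the inscribed ball $\mathcal{B}(c,r_{\mathrm{in}})$ at weight $\lambda=h/\bigl(2(\|z-c\|+r_{\mathrm{in}})\bigr)$ to produce a ball of radius $\lambda r_{\mathrm{in}}\ge h\,r_{\mathrm{in}}/(4\,\mathrm{diam}(\Omega))$ that sits inside both $\Omega-z$ and $\mathcal{B}(0,h)$; the uniformity in $z$ is then immediate from $\|z-c\|\le\mathrm{diam}(\Omega)$, and the explicit constant $C=\mathrm{Vol}(\mathcal{B}(0,1))\bigl(r_{\mathrm{in}}/(4\,\mathrm{diam}(\Omega))\bigr)^d$ comes for free. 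The paper's route is shorter on the page but relies on facts proved elsewhere; yours is entirely elementary, produces a quantitative constant, and makes transparent that only convexity, a nonempty interior, and boundedness are being used (no regularity of $\partial\Omega$). I checked the constants: $\lambda\le 1/4$ under $h\le r_{\mathrm{in}}/2$; the constructed ball's center is within $\lambda\|z-c\|\le h/2$ of the origin; and center-distance plus radius is $\lambda(\|z-c\|+r_{\mathrm{in}})=h/2\le h$, so the containment in $\mathcal{B}(0,h)$ holds. The argument is correct.
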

A proof of Proposition~\ref{prop:density-estimator-bounded} can be found in Section~\ref{appsubsec:proof-of-density-estimator-bias}. The final part of Proposition~\ref{prop:density-estimator-bounded} appeared as an assumption in several works; see, for example, Assumption X of~\cite{fan2016multivariate}, Definition 2 of~\cite{cuevas1997plug}, and Definition 2.2 of~\cite{cholaquidis2023standardness}. Proposition~\ref{prop:density-estimator-bounded} implies that the variance converges to zero at the rate of $1/(nh^d)$. Additionally, assuming $\mathcal{K} = \{u\mapsto K_{z,h}(u):\, h\ge0,\, z\in\Omega\}$ satisfies $N(\varepsilon, \mathcal{K}) \le C\varepsilon^{-\alpha}$ for some $C>0$ and $\alpha > 0$\footnote[1]{See~\cite{einmahl2005uniform} for details on the covering number.}, the proof of Theorem 1 (and corollary 1) 
of~\cite{einmahl2005uniform} implies that with probability 1,
\begin{equation}\label{eq:supremum-density-rate}
\sup_{h\in[a_n, b_n]}\sup_{z\in\Omega}|\widehat{p}_j(z) - p_j(z)| = O\left(\sqrt{\frac{\log(1/a_n)+\log\log n}{na_n^d}} + b_n^{\beta}\right),\quad\mbox{as}\quad n\to\infty,
\end{equation}
whenever $b_n\to0,$ and $na_n^d/\log n \to \infty$ as $n\to\infty$. (This is an almost sure convergence statement.)

\begin{theorem}[Asymptotic Normality]\label{thm:asymp-normality-bounded-case}
    Fix any $x\in\Omega^\circ$ such that $\mathfrak{R}(s, x)\in\Omega^\circ$ for all $s\in[0,1]$. Consider the density estimators $\widehat{p}_j(\cdot)$ as in~\eqref{eq:new-boundary-corrected-estimators}. Then for any $h \to 0$, we have
    \begin{equation}\label{eq:rate-of-convergence-bounded-case}
    \sup_{t\in[0,1]}\|\widehat{\mathfrak{R}}(t, x) - \mathfrak{R}(t, x)\| = O_p\left(h^{\beta} + \frac{(\log(1/h))^{\mathbf{1}\{d = 1\}/2}}{\sqrt{nh^{d(d-1)_+/(d+1)}}}\right).
    \end{equation}
    Take any $h = o(n^{-1/(2\beta + d(d-1)_+/(d+1))})$ as $n\to\infty$.
    Further suppose 
    \begin{equation}\label{eq:lower-bound-variance-assumption}
    \|\widehat{E}(1, x) - \mathbb{E}[\widetilde{E}(1, x)]\| ~\gg~ \frac{(\log(1/h))^{\mathbf{1}\{d = 1\}/2}}{\sqrt{nh^{d(d-1)_+/(d+1)}}},\footnote{The notation $W_n \gg a_n$ for a sequence of random variable $W_n$ means that $b_nW_n/a_n \overset{p}{\to} \infty$ for any $b_n\to\infty$ as $n\to\infty.$}
    \end{equation}
    then 
    \[
    \frac{(\log(1/h))^{\mathbf{1}\{d = 1\}/2}}{\sqrt{nh^{d(d-1)_+/(d+1)}}}(\widehat{\mathfrak{R}}(1, x) - \mathfrak{R}(1, x)) ~\overset{d}{\to}~ N(0, \Sigma(x)),
    \]
    for some covariance matrix $\Sigma(x)\in\mathbb{R}^{d\times d}$.
\end{theorem}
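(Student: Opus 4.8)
The plan is to reduce $\widehat{\mathfrak{R}}(1,x)-\mathfrak{R}(1,x)$ to an i.i.d.\ sum by two successive linearizations and then settle everything with one delicate variance computation.

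\emph{Step 1 (linearize the flow).} First I would invoke Theorem~\ref{thm:linearization-bounded-case}. Its hypotheses require $x$ to lie in the full-measure set where the path stays interior and avoids the non-differentiability loci of $z\mapsto v(s,z)$ for a.e.\ $s$ (cf.\ Lemma~\ref{lem:distance-to-boundary-at-1}), which is guaranteed by the standing assumption on $x$ together with Lipschitz densities, and they require $r_n+s_n=o_p(1)$, which follows from Proposition~\ref{prop:density-estimator-bounded} and the uniform deviation bound~\eqref{eq:supremum-density-rate} (the statement for $s_n$ coming from the same argument applied to the derivative of the finite-order kernel, itself a bounded kernel, when the log-densities are $C^1$). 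This yields $\widehat{\mathfrak{R}}(t,x)-\mathfrak{R}(t,x)=\widetilde{E}(t,x)+o_p(\sup_t\|\widetilde{E}(t,x)\|)$ with $\widetilde{E}(t,x)=\Phi(t)\int_0^t(\Phi(s))^{-1}\{\widehat{v}^{\mathrm{den}}(s,\mathfrak{R}(s,x))-v(s,\mathfrak{R}(s,x))\}\,ds$ as in~\eqref{eq:influence-function-rectified-flow}, $\Phi$ solving~\eqref{eq:fundamental-derivative-velocity}. So it suffices to pin down the exact order and limit law of $\widetilde{E}(1,x)$, and separately to show $\sup_t\|\widetilde{E}(t,x)\|\asymp\|\widetilde{E}(1,x)\|$ so that the endpoint value governs the supremum.

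\emph{Step 2 (linearize the velocity estimator and project).} Next I would expand $\widehat{v}^{\mathrm{den}}(s,z)-v(s,z)$ via its influence function: since $\widehat{v}^{\mathrm{den}}_s=\widehat{f}_s/\widehat{p}_s$ with $f_s,p_s$ bilinear in $(p_0,p_1)$, one gets $\widehat{v}^{\mathrm{den}}(s,z)-v(s,z)=p_s(z)^{-1}\{(\widehat{f}_s(z)-f_s(z))-v(s,z)(\widehat{p}_s(z)-p_s(z))\}+O_p(r_n^2)$ uniformly in $(s,z)$, with $\widehat{f}_s-f_s,\widehat{p}_s-p_s$ linear in $(\widehat{p}_0-p_0,\widehat{p}_1-p_1)$ up to a further $O_p(r_n^2)$ bilinear remainder (the relevant Gateaux derivatives are those in Theorem~\ref{thm:semiparametric-rate}); note $p_s(\mathfrak{R}(s,x))$ is bounded away from $0$ uniformly in $s$ since the path keeps positive distance from $\partial\Omega$. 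Insert the boundary-corrected estimator~\eqref{eq:new-boundary-corrected-estimators}. The resulting deterministic part of $\widetilde{E}(1,x)$ is $O(h^\beta)$: although the raw bias of $\widehat{p}_j$ is $O(h^\beta)$ while the substitution weights $s^{-d},(1-s)^{-d}$ blow up at the endpoints, the ratio structure produces the cancellation that keeps $\|v_s\|\le\mathrm{diam}(\Omega)$ (e.g.\ $\widehat{v}^{\mathrm{den}}_1(z)=z-\int w\,\widehat{p}_0(w)\,dw$), so the velocity bias, and its $\int_0^1ds$ average, stays $O(h^\beta)$. The stochastic part is a two-sample $U$-statistic–type object; I would replace it by its H\'ajek projection $n^{-1}\sum_i\chi_i^{(0)}+n^{-1}\sum_j\chi_j^{(1)}$, the degenerate remainder and the two second-order remainders being $o_p$ of the generic rate provided $h$ decays polynomially within the admissible window.

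\emph{Step 3 (the variance computation — the hard part).} It remains to show $n\,\mathrm{Var}(\widetilde{E}(1,x))=\mathrm{Var}(\chi_1^{(0)})+\mathrm{Var}(\chi_1^{(1)})+o(1)$ has exact order $h^{-d(d-1)_+/(d+1)}$ (times $\log(1/h)$ when $d=1$), and that the increments of $t\mapsto\widetilde{E}(t,x)$ satisfy a moment bound yielding tightness, hence $\sup_t\|\widetilde{E}(t,x)\|\asymp\|\widetilde{E}(1,x)\|$. After the change of variables that recasts $\widehat{p}_0$ (resp.\ $\widehat{p}_1$) as a kernel centered near $\mathfrak{R}(s,x)$, the kernel localizes to a ball of radius $\sim sh$ and the support constraint $\delta\in S_s(\mathfrak{R}(s,x))$ forces a given observation $X_{0j}$ to contribute to $\chi_j^{(0)}$ only once the path has come within $\sim sh$ of $X_{0j}$, i.e.\ only for $s\gtrsim\|x-X_{0j}\|$, and on that window the integrand scales like $s^{-d}$; thus $\|\chi_j^{(0)}\|\lesssim\int_{\|x-X_{0j}\|\vee h}^{1}s^{-d}\,ds$, which is $\asymp(\|x-X_{0j}\|\vee h)^{1-d}$ for $d\ge2$ and logarithmic for small $d$. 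Averaging this against the law of $X_{0j}$ near $x$ and tracking the $sh$-scale smearing of the turn-on time produces the exponent; the balance of the path-localization scale against the bandwidth is precisely what replaces the pointwise velocity variance $h^{-d}$ by $h^{-d+2d/(d+1)}=h^{-d(d-1)/(d+1)}$, interpolating (as $d\to\infty$) toward the optimal-transport rate $h^{-(d-2)}$. Here the boundary-geometry bound $\mathrm{Vol}(V_{z,h})\gtrsim h^d$ of Proposition~\ref{prop:density-estimator-bounded} and the equicontinuity estimate of Lemma~\ref{lem:equicontinuity-vhat-v} are the workhorses, and this bookkeeping is the main technical obstacle. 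Finally, granting Step~3, the first display follows from the moment/tightness bound together with the $O(h^\beta)$ bias; choosing $h=o(n^{-1/(2\beta+d(d-1)_+/(d+1))})$ makes the bias $o(1)$ relative to the standard deviation; and a triangular-array Lindeberg–Feller central limit theorem applied to $n^{-1}\sum_i\chi_i^{(0)}+n^{-1}\sum_j\chi_j^{(1)}$—its Lindeberg ratio verified from the deterministic bound $\|\chi_1^{(\cdot)}\|\lesssim h^{1-d}\vee\log(1/h)$ and the variance lower bound~\eqref{eq:lower-bound-variance-assumption}, which also forces the limit covariance to be non-degenerate—gives $N(0,\Sigma(x))$ with $\Sigma(x)$ the covariance of the normalized projection.
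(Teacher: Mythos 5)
Your Steps~1 and~2 — invoking Theorem~\ref{thm:linearization-bounded-case} and then linearizing $\widehat{v}^{\mathrm{den}} - v$ to an influence-function form plus an $O_p(r_n^2)$ bilinear remainder — match the paper's proof exactly, including the observation that the weight $(\delta - v(s,\mathfrak{R}(s,x)))/p_s(\mathfrak{R}(s,x))$ stays uniformly bounded because the path keeps positive distance from $\partial\Omega$, so the kernel bias $O(h^\beta)$ passes through intact. Step~3, though, is where all of the real work lives, and your sketch has a gap there.

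A small correction first: the $\delta$-integral is localized by the kernel to a ball of radius $h/s$, not $sh$; the $s^{-d}$ scaling of each observation's instantaneous contribution is $h^{-d}\cdot(h/s)^d$. The more serious problem is the claim that averaging $\|\chi_j^{(0)}\|^2\lesssim(\|x-X_{0j}\|\vee h)^{2-2d}$ against $p_0$ "produces the exponent" $d(d-1)_+/(d+1)$. It does not: that computation yields $\mathbb{E}\|\chi_j^{(0)}\|^2\lesssim h^{2-d}+\int_h^1 r^{1-d}\,dr$, which is $\asymp\log(1/h)$ at $d=2$ and $\asymp h^{2-d}$ at $d\geq3$ — strictly smaller in order than $h^{-d(d-1)/(d+1)}$. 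The paper's exponent does not come from the pointwise variance of $\widetilde{E}(1,x)$ at all. It comes from a chaining argument for the supremum process $t\mapsto\widetilde{E}(t,x)$: the interval $[0,1]$ is split at a free $t^*$, increment second moments are bounded by $C(t_2-t_1)^2/t_1^d$ on $[t^*,1]$ (via the $(h/s)^d$ volume bound) and by $C(t_2-t_1)^2/(nh^d)$ on $[0,t^*]$ (where $h/s>1$, so only $\mathrm{Vol}(S_s)=O(1)$ is available), each piece is controlled by dyadic chaining, and the two are balanced: $(t^*)^{-(d-1)/2}\asymp t^*h^{-d/2}$ gives $t^*=h^{d/(d+1)}$ and the common rate $n^{-1/2}h^{-d(d-1)/(2(d+1))}$ (with the $\sqrt{\log(1/h)}$ replacement at $d=1$). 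Your phrase "a moment bound yielding tightness" gestures at this but never delivers it; the split at $t^*$ and the chaining are the entire mechanism behind the announced rate and cannot be replaced by the H\'ajek variance average, which genuinely computes something else.

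This also clarifies the role of hypothesis~\eqref{eq:lower-bound-variance-assumption}: the chaining gives only an \emph{upper} bound on $\sup_t\|\widetilde{E}(t,x)\|$ and the theorem normalizes by that rate, so if $\mathrm{Var}(\widetilde{E}(1,x))$ were of strictly smaller order (which your own averaging suggests is possible), the limit would be degenerate. The hypothesis is precisely what rules that out. Your Lindeberg–Feller closing step is a reasonable way to finish — the paper in fact stops after the rate bound — but it presupposes the variance calibration you have not established.
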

A proof of Theorem~\ref{thm:asymp-normality-bounded-case} can be found in Section~\ref{appsec:proof-asym-normality-bounded-case}. The first part~\eqref{eq:rate-of-convergence-bounded-case} proves that for $h \asymp n^{-1/(2\beta + d(d-1)_+/(d+1))}$, we get
\[
\sup_{t\in[0,1]}\|\widehat{\mathfrak{R}}(t, x) - \mathfrak{R}(t, x)\| = O_p\left(n^{-\beta/(2\beta + d(d-1)_+/(d+1))}(\log n)^{\mathbf{1}\{d = 1\}/2}\right).
\]
This is better than the usual non-parametric rate of convergence of $n^{-\beta/(2\beta + d)}$ but not as good as the rate of convergence for the optimal transport map. It is not readily obvious if this is the correct rate for rectified flow.
For asymptotic normality, we use a smaller bandwidth for undersmoothing.
The (cumbersome) assumption~\eqref{eq:lower-bound-variance-assumption} is made to ensure that $\sup_{t\in[0,1]}\|\widetilde{E}(t, x)\| = O_p(\|\widetilde{E}(1, x)\|)$. The proof of Theorem~\ref{thm:asymp-normality-bounded-case} proves that $\sup_{t\in[0,1]}\|\tilde{E}(t, x) - \mathbb{E}[\widetilde{E}(t, x)]\|$ is at most the right hand side of~\eqref{eq:lower-bound-variance-assumption}. 

\subsection{Definition of Velocity field (Strictly Convex Domain)}\label{sec:strictly-convex-set-definition}
A set $A\subseteq \mathbb{R}^d$ is said to be strictly convex at a boundary point $a\in\partial A$ if for all $b\in A$ ($b\neq a$), $(a + b)/2 \in A^{\circ}$. 
A quantitative measure of strict convexity at a point $a\in A$ can be defined as
\begin{equation}\label{eq:modulus-at-a-point}
    \mathfrak{m}_a(\varepsilon; A) := \inf\left\{\mbox{dist}\left(\frac{a + b}{2},\, \partial A\right):\, b\in A,\, \|a - b\| \ge \varepsilon\right\},\quad\mbox{for}\quad \varepsilon \in [0, \mbox{diam}(A)].
\end{equation}
This is closely related to the modulus of uniform convexity of a set~\citep{de2024moduli,balashov2009uniform,de2023extension}. However, unlike the existing definitions, we define the modulus of uniform convexity at a point. (Definition 3.1 of~\cite{de2024moduli} can be recovered by taking the infimum over all $a\in A$.) We make this distinction to allow for sets $A$ that are strictly convex at some points but not at others (cf. Observation 3.3(d) of~\cite{de2024moduli}). For any set $A$, define
\[
\mathrm{SC}(A) := \left\{a\in \partial A:\, \mathfrak{m}_a(\varepsilon; A) > 0\quad\mbox{for all}\quad\varepsilon\in(0, \mbox{diam}(A)]\right\}.
\]
It is easy to see that if $a\in\mathrm{SC}(A)$, then $A$ is strictly convex at $a$. One can also prove the converse for compact sets: $A$ is strictly convex at $a$ implies $a\in\mathrm{SC}(A)$, if $A$ is compact. A proof can be obtained by contradiction.\footnote{Suppose, if possible, for some $\varepsilon > 0$, $\mathfrak{m}_a(\varepsilon; A) = 0$. This implies that for any $k\ge1$, there exists $b_k\in A$ such that $\|a - b_k\| \ge \varepsilon$ and $\mbox{dist}((a + b_k)/2,\, \partial A) \le 1/k$.
    The compactness of $A$ implies that every sequence has a subsequence that converges. Let $b^*$ be one such limit. Then the above conditions imply that $\mbox{dist}((a + b^*)/2, \partial\Omega) = 0$ and $\|a - b^*\| \ge \varepsilon$, which is a contradiction to the hypothesis that $a$ is a point of strict convexity.}  
The set $\mathrm{SC}(A)$ is allowed to be empty; in fact, for polyhedral sets $A$, $\mathrm{SC}(A) = \emptyset$~\citep[Observation 3.3(d)]{de2024moduli}.
A set $A\subseteq\mathbb{R}^d$ is said to be strictly convex if it is strictly convex at all its boundary points (or equivalently, $\mathrm{SC}(A) = \partial A$). 

\begin{proposition}[Well-defined Velocity]\label{prop:well-defined-strictly-convex-set}
    Suppose assumptions~\ref{eq:compact-support} and~\ref{eq:bounded-away-densities} hold. Then the velocity field $v(\cdot, \cdot)$ defined as
    \[
    v(t, z) = \begin{cases}\mathbb{E}[X_1] - z, &\mbox{if }t = 0, z\in\Omega,\\
    \frac{\int_{S_t(z)} \delta p_0(z - t\delta)p_1(z + (1-t)\delta)d\delta}{\int_{S_t(z)} p_0(z - t\delta)p_1(z + (1-t)\delta)d\delta}, &\mbox{if }t\in(0, 1), z\in\Omega^\circ,\\
    z - \mathbb{E}[X_0], &\mbox{if }t = 1, z\in\Omega,
    \end{cases}
    \]
    is continuous in $z$ for each $t\in[0, 1]$ on its domain. Additionally, $v(t, z) = 0$ for $t\in(0, 1)$ and $z\in\mathrm{SC}(\Omega)$ is the unique continuous extension of $z\mapsto v(t, z)$ to $\mathrm{SC}(\Omega)$.
\end{proposition}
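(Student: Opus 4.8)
The plan is to split into the regimes $t=0$, $t=1$, and $t\in(0,1)$, with essentially all the content in the last one. For $t\in\{0,1\}$ the stated map $z\mapsto v(t,z)$ is affine on $\Omega$ (with $\mathbb{E}[X_0],\mathbb{E}[X_1]$ finite by compactness of $\Omega$), hence continuous. For $t\in(0,1)$, continuity of $z\mapsto v(t,z)$ on $\Omega^\circ$ is already a consequence of Lemma~\ref{lem:boundedness-and-local-Lipschitz}: any compact $K\subseteq\Omega^\circ$ is contained in $\Omega^{-\varepsilon}$ for $\varepsilon=\tfrac12\inf_{x\in K}\mathrm{dist}(x,\partial\Omega)>0$, and the modulus estimate~\eqref{eq:locally-Lipschitz} gives uniform continuity of $v(t,\cdot)$ on $\Omega^{-\varepsilon}$. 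So the substantive claim to establish is: for each fixed $t\in(0,1)$ and each $z^*\in\mathrm{SC}(\Omega)$, $v(t,z)\to 0$ as $z\to z^*$ with $z\in\Omega^\circ$. Granting this, setting $v(t,z^*):=0$ on $\mathrm{SC}(\Omega)$ produces a function that is continuous on $\Omega^\circ\cup\mathrm{SC}(\Omega)$ (continuity at interior points, and along sequences inside $\mathrm{SC}(\Omega)$, being immediate), and since $\Omega^\circ$ is dense in $\Omega^\circ\cup\mathrm{SC}(\Omega)$ (because $\mathrm{SC}(\Omega)\subseteq\partial\Omega\subseteq\overline{\Omega^\circ}$), this is the unique continuous extension and it necessarily takes the value $0$ on $\mathrm{SC}(\Omega)$.

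The key reduction is to control $v(t,z)$ by the "radius" of the thickening set $S_t(z)=\tfrac{z-\Omega}{t}\cap\tfrac{\Omega-z}{1-t}$. I would use that $v(t,z)=\mathbb{E}[X_1-X_0\mid X_t=z]$ is the barycenter of the conditional law of $\delta:=X_1-X_0$ given $X_t=z$, which is supported on the closed convex set $S_t(z)$; hence $v(t,z)\in S_t(z)$ and $\|v(t,z)\|\le\rho_t(z):=\sup_{\delta\in S_t(z)}\|\delta\|$ for $z\in\Omega^\circ$. (Alternatively this drops out of representation~\eqref{eq:veldelta} together with the density bounds in~\ref{eq:bounded-away-densities}, up to the harmless factor $\overline{\mathfrak{p}}^2/\underline{\mathfrak{p}}^2$.) Thus it suffices to prove the purely geometric statement $\rho_t(z)\to 0$ as $z\to z^*$ with $z\in\Omega^\circ$.

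For this geometric statement I would argue by contradiction using compactness. If $\rho_t(z)\not\to 0$ then, since each $S_t(z)$ is compact so the supremum is attained, there are $z_k\to z^*$ in $\Omega$ and $\delta_k\in S_t(z_k)$ with $\|\delta_k\|\ge\varepsilon>0$. Put $a_k:=z_k-t\delta_k$ and $b_k:=z_k+(1-t)\delta_k$; then $a_k,b_k\in\Omega$, $b_k-a_k=\delta_k$, and $z_k=(1-t)a_k+tb_k$. Passing to a subsequence with $a_k\to a^*$ and $b_k\to b^*$ in the compact set $\Omega$ gives $z^*=(1-t)a^*+tb^*$ and $\|a^*-b^*\|\ge\varepsilon$, so $a^*\ne b^*$; since $t\in(0,1)$ this also forces $z^*\notin\{a^*,b^*\}$. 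As $z^*\in\mathrm{SC}(\Omega)$, $\Omega$ is strictly convex at $z^*$, so $m_1:=\tfrac12(z^*+a^*)\in\Omega^\circ$ and $m_2:=\tfrac12(z^*+b^*)\in\Omega^\circ$. But $(1-t)m_1+tm_2=\tfrac12 z^*+\tfrac12\big((1-t)a^*+tb^*\big)=z^*$, which realizes $z^*$ as a convex combination with weights $1-t,t\in(0,1)$ of two points of the open convex set $\Omega^\circ$, whence $z^*\in\Omega^\circ$ — contradicting $z^*\in\partial\Omega$. This gives $\rho_t(z)\to 0$, and hence the whole proposition.

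The main obstacle is exactly this last step: recognizing that the thickening set $S_t(z)$ must collapse to $\{0\}$ at a strictly convex boundary point, and packaging it cleanly — the crucial move being that a point which is a nontrivial convex combination of two distinct points of $\Omega$ and whose midpoints with those two points are both interior must itself be interior. Everything else (the affine endpoint cases, invoking Lemma~\ref{lem:boundedness-and-local-Lipschitz} for interior continuity, the barycenter bound, and the density-and-uniqueness bookkeeping) is routine.
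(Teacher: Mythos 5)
Your proposal is correct, but it takes a genuinely different route from the paper's at the main step. For interior continuity you invoke Lemma~\ref{lem:boundedness-and-local-Lipschitz} directly, whereas the paper re-derives uniform continuity on compacts from scratch by splitting the integral over $S_t(z)\cap S_t(z')$, $S_t(z)\setminus S_t(z')$, and $S_t(z')\setminus S_t(z)$ and using Proposition~\ref{prop:properties-of-S_t}(4) — your shortcut is entirely legitimate and avoids repeating that bookkeeping. The real divergence is at the boundary. You first observe that $v(t,z)$ is the barycenter of a probability measure on the compact convex set $S_t(z)$, hence $v(t,z)\in S_t(z)$ and $\|v(t,z)\|\le \rho_t(z):=\sup_{\delta\in S_t(z)}\|\delta\|$, and then prove $\rho_t(z)\to 0$ as $z\to z^*\in\mathrm{SC}(\Omega)$ by a compactness/contradiction argument: a non-collapsing sequence $\delta_k\in S_t(z_k)$ yields $a^*,b^*\in\Omega$ with $a^*\neq b^*$ and $z^*=(1-t)a^*+tb^*$, whence strict convexity at $z^*$ places the midpoints $m_1=(z^*+a^*)/2$ and $m_2=(z^*+b^*)/2$ in $\Omega^\circ$, and $z^*=(1-t)m_1+tm_2\in\Omega^\circ$, a contradiction. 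The paper instead runs the argument quantitatively: it shows $S_t(z)=\{0\}$ at $z\in\mathrm{SC}(\Omega)$ (via a supporting-hyperplane argument in Proposition~\ref{prop:properties-of-S_t}(6)) and then bounds $\sup_{\delta'\in S_t(z')}\|\delta'\|$ explicitly in terms of the modulus of strict convexity $\mathfrak{m}_{z}(\cdot;\Omega)$ using concavity of the distance-to-boundary function (Lemma~\ref{lem:concavity-of-distance-to-boundary}), yielding the displayed rate
\[
\|v(t, z) - v(t, z')\| \le \frac{\|z' - z\| + \mathfrak{m}_z^{-1}(\|z - z'\|/2;\,\Omega)}{\min\{t, 1 - t\}}.
\]
Your compactness argument is shorter and cleaner but purely qualitative; the paper's version costs more but furnishes the explicit modulus-of-continuity bound that is quoted immediately after the proposition and used in later remarks. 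Both establish the proposition; nothing is missing from yours.
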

See Section~\ref{appsubsec:well-defined-strictly-convex} for a proof. Additionally, from the proof, it follows that for any $z\in\mathrm{SC}(\Omega)$ and $z'\in\Omega$, we have
\[
\|v(t, z) - v(t, z')\| \le \frac{\|z' - z\| + \mathfrak{m}_z^{-1}(\|z - z'\|/2;\,\Omega)}{\min\{t, 1 - t\}}.
\]
In particular, this implies that the velocity field is uniformly continuous in the second argument if $\Omega$ is strictly convex. Unfortunately, this does not suffice to establish a unique solution for~\eqref{eq:rectified-flow-integral-equation} when the starting point is at the boundary, because the Lipschitz constant is not necessarily integrable. Even in the univariate case with the domain of $[0, 1]$, one can construct densities $p_0$ and $p_1$ for which the Lipschitz constant at time $t$ behaves like $C/t$ for $t < 1/2$ and $C > 1$. On the other hand, clearly, $z(t) = \mathfrak{R}(t, x) = x$ for all $t\in[0, 1]$ is a solution to~\eqref{eq:rectified-flow-integral-equation} when $x\in\mathrm{SC}(\Omega)$.  

\section{Numerical experiments}

We supplement our theoretical results with simulations highlighting the main features of rectified transport estimators.  In the first experiment, in Figure \ref{fig:gaussiantrajectories}, we show samples from the kernel-regression estimator of trajectories $z_t(x)$, and hence $R(x)=z_1(x)$, for different starting points $x$. In this case, by the same argument as in the proof of Proposition \ref{prop:flow-from-Gaussian-to-Gaussian}, the ground truth trajectories are given by $z_t(x)=\sqrt{t^2+(1-t)^2}x$, and in particular, $R(x)=x$. As we decrease the bandwidth parameter, the bias decreases, but the variance increases. However, the variance remains bounded even for small values of $h$, consistent with our result that the variance is of constant order in the one-dimensional case, Corollary \ref{cor:1d}. Interestingly, although for larger values of $h$ there are significant values at intermediate values of $t$, this tends to be smaller for $R(x)=z_1(x)$.
\begin{figure}[ht!]
   \centering\includegraphics[width=1.0\textwidth]{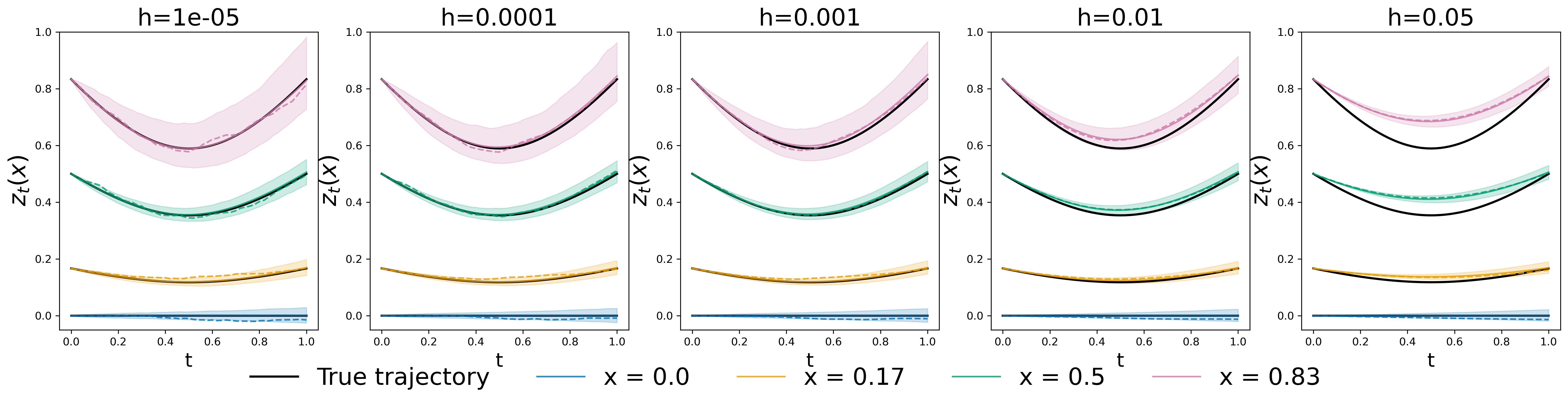}
    
    \caption{Estimating trajectories $z_t(x)$ and $R(x)=z_1(x)$ in the one-dimensional case, estimating the velocity with kernel regression. Each plot shows results for a different choice of bandwidth parameter. We show true and estimated trajectories $z_t(x)=z(t,0,x)$ as a function of different starting points $x$, where $X_0,X_1$ are independent standard Gaussians. Estimators are based on $n=200$ samples. Ground true trajectories are shown in black, and colored solid lines show mean trajectories over 1000 experiment repetitions, for each starting point (different colors). Shades represent 95\% empirical intervals from these repetitions. Additionally, dashed colored lines show one selected sample from the kernel regression-based estimator.} 

    \label{fig:gaussiantrajectories}
\end{figure}

In the second experiment, in Figure \ref{fig:identity}, we show different approaches for estimating the transport map between high-dimensional ($d=50$) standard Gaussians. In this case, the Rectified transport coincides with the optimal transport, the identity function. To visualize our estimates, we plot each coordinate estimated function as a function of a unique variable while making all the other zeros; i.e., in plot $i,j$ we show the function $\hat{R}_i(0,\ldots,x_j,\ldots,0)$. The first is a plug-in estimator for $R(x)$ in the Gaussian case, as described in \eqref{eq:rectgaussian}, where we replace the true means and covariance with their sample versions (purple). In the second estimator, we estimate the velocity by regressing $X_1-X_0$ on $X_t$ using a linear model on all coordinates (orange). The third estimator (blue) is like the second, but we used a cross-validated Lasso instead of linear regression.  Finally, we used kernel regression (green) with bandwidth $h=1.0$ and a Gaussian kernel.

All estimators exhibit reasonable performance, although the linear estimator is typically the worst. Even if the Nadaraya-Watson estimator has some errors, this is reasonable as we did not attempt parameter tuning. The best results are attained for the plug-in and Lasso estimators, illustrating the important point that we should encode in the estimators whatever structure of the underlying function is available to us.  For example, the Lasso regression expresses that each velocity component is a sparse function of its coordinates, and our approach explicitly allows us to encode this in the modeling of the velocity.

\begin{figure}[ht!]
   \centering\includegraphics[width=0.8\textwidth]{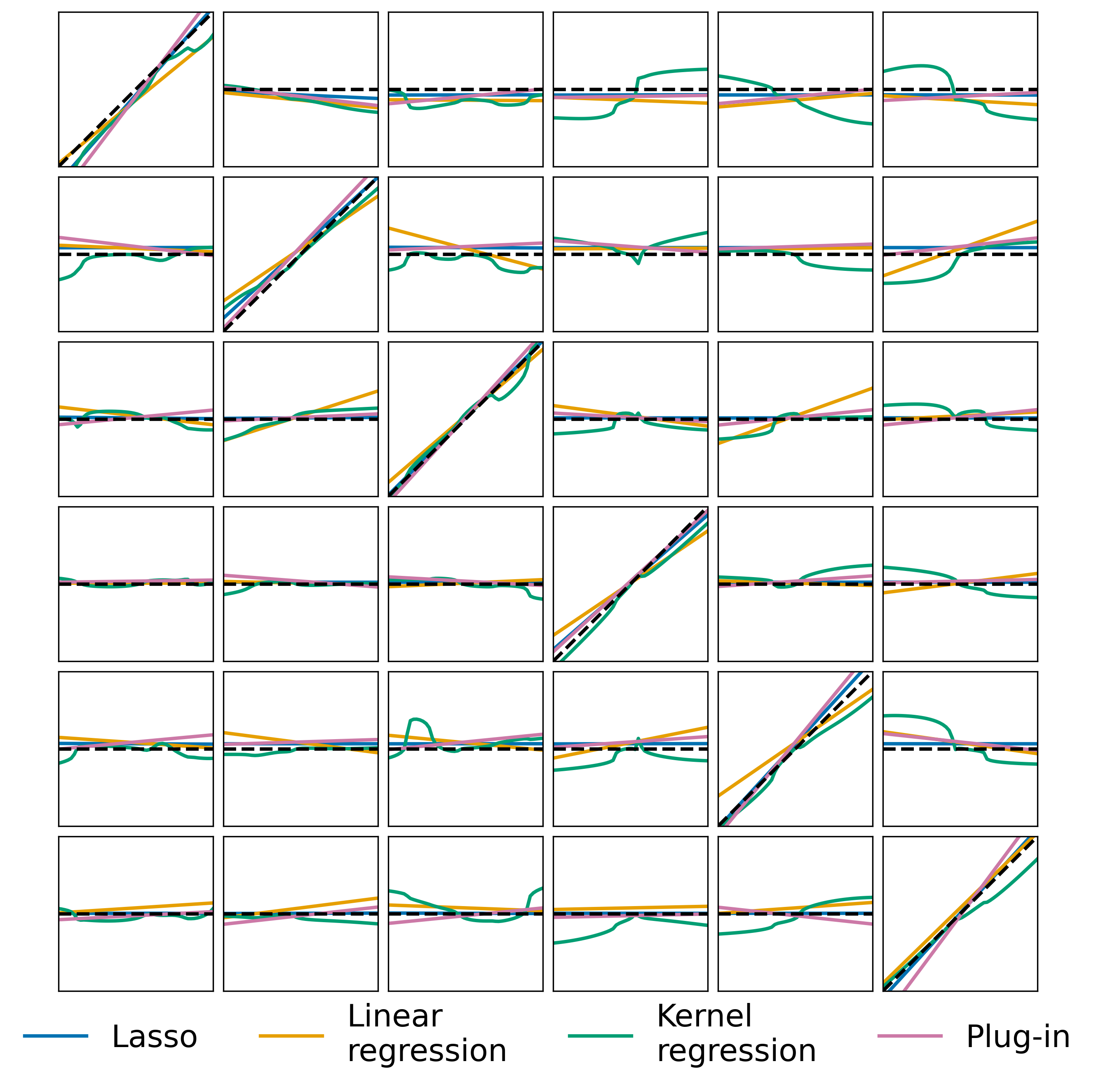}
    
    \caption{Performance of four rectified estimators in the Gaussian case. We drew $n=100$ samples from $X_1,X_0\sim N\left(0,I_d\right)$ with $d=50$.
    In plot $(i,j)$ we show $\hat{R}_i(0,\ldots,x_j,\ldots,0)$ as a function of $x_j\in[-3,3]$. We show only the first 36=6x6 described above. functions. We consider the plug-in estimator (purple), linear regression (orange), cross-validated Lasso (blue), and kernel regression (green). Dashed black lines represent truth $R_i(0,\ldots,x_j,\ldots,0)=x_j \delta_{i=j}$. In all cases, we used a naive ODE discretization by dividing the $[0,1]$ interval into $T=50$ steps.}
    \label{fig:identity}
\end{figure}

\section{Conclusions}\label{sec:conclusions-future-directions}

In this paper, we provide statistical theory and inference for rectified flow for bounded and unbounded random vectors. Although the current work focused on rectified flow that starts with a linear interpolation of random vectors, most of the techniques should carry over to general interpolations discussed in~\cite{albergo2023stochastic}.

Several topics deserve
further investigation.
For example: iterating rectified flow to
approximate optimal transport,
the use of regression and density estimation tools
for estimating rectified flow
and the properties of smoothed rectified flow.
We hope to report on these issues
in the future.

\section*{Acknowledgments.} The authors gratefully thank Dejan Slep{\v c}ev for several insightful discussions, especially for his help with Lemma~\ref{lem:distance-to-boundary-latest} and the proof of Theorem~\ref{thm:rectified-flow-ODE-unique-sol}. AKK also thanks Tomasz Tkocz and Abhimanyu Choudhary for helpful discussions on convex analysis. GM thanks Tudor Manole for discussions on linearization analysis.
We thank Chandramauli Chakraborty who helped with an earlier version of this work. GM was supported by NSF-DMS 2412895.

\bibliography{paper}

@article{cule2010theoretical,
  title={Theoretical properties of the log-concave maximum likelihood estimator of a multidimensional density},
  author={Cule, Madeleine and Samworth, Richard},
  year={2010}
}

@article{gao2024gaussian,
  title={Gaussian interpolation flows},
  author={Gao, Yuan and Huang, Jian and Jiao, Yuling},
  journal={Journal of Machine Learning Research},
  volume={25},
  number={253},
  pages={1--52},
  year={2024}
}

@article{cuevas1997plug,
  title={A plug-in approach to support estimation},
  author={Cuevas, Antonio and Fraiman, Ricardo},
  journal={The Annals of Statistics},
  pages={2300--2312},
  year={1997},
  publisher={JSTOR}
}

@article{cholaquidis2023standardness,
  title={On standardness and the non-estimability of certain functionals of a set},
  author={Cholaquidis, Alejandro and Beatriz, Leonardo Moreno and others},
  journal={arXiv preprint arXiv:2306.16295},
  year={2023}
}

@book{Coddington55,
    AUTHOR = {Coddington, Earl A. and Levinson, Norman},
     TITLE = {Theory of ordinary differential equations},
 PUBLISHER = {McGraw-Hill Book Co., Inc., New York-Toronto-London},
      YEAR = {1955},
     PAGES = {xii+429},
   MRCLASS = {36.0X},
  MRNUMBER = {69338},
MRREVIEWER = {M.\ Zl\'amal},
}

@article{benamou2000computational,
  title={A computational fluid mechanics solution to the Monge-Kantorovich mass transfer problem},
  author={Benamou, Jean-David and Brenier, Yann},
  journal={Numerische Mathematik},
  volume={84},
  number={3},
  pages={375--393},
  year={2000},
  publisher={Springer-Verlag Berlin/Heidelberg}
}

@book{golub2013matrix,
  title={Matrix computations},
  author={Golub, Gene H and Van Loan, Charles F},
  year={2013},
  publisher={JHU press}
}

@article{hoffmann1992distance,
  title={The distance to the intersection of two convex sets expressed by the distances to each of them},
  author={Hoffmann, Armin},
  journal={Mathematische Nachrichten},
  volume={157},
  number={1},
  pages={81--98},
  year={1992},
  publisher={Wiley Online Library}
}

@article{hansen2008uniform,
  title={Uniform convergence rates for kernel estimation with dependent data},
  author={Hansen, Bruce E},
  journal={Econometric Theory},
  volume={24},
  number={3},
  pages={726--748},
  year={2008},
  publisher={Cambridge University Press}
}

@article{vandermeulen2024dimension,
  title={Dimension-independent rates for structured neural density estimation},
  author={Vandermeulen, Robert A and Tai, Wai Ming and Aragam, Bryon},
  journal={arXiv preprint arXiv:2411.15095},
  year={2024}
}

@article{hertrich2025relation,
  title={On the Relation between Rectified Flows and Optimal Transport},
  author={Hertrich, Johannes and Chambolle, Antonin and Delon, Julie},
  journal={arXiv preprint arXiv:2505.19712},
  year={2025}
}

@article{hjort1995nonparametric,
  title={Nonparametric Density Estimation with a Parametric Start},
  author={Hjort, Nils Lid and Glad, Ingrid K},
  journal={The Annals of Statistics},
  volume={23},
  number={3},
  pages={882--904},
  year={1995},
  publisher={Institute of Mathematical Statistics}
}

@article{hjort1996locally,
  title={Locally parametric nonparametric density estimation},
  author={Hjort, NL and Jones, MC},
  journal={The Annals of Statistics},
  volume={24},
  number={4},
  pages={1619--1647},
  year={1996},
  publisher={Institute of Mathematical Statistics}
}

@article{friedman1984projection,
  title={Projection pursuit density estimation},
  author={Friedman, Jerome H and Stuetzle, Werner and Schroeder, Anne},
  journal={Journal of the American Statistical Association},
  volume={79},
  number={387},
  pages={599--608},
  year={1984},
  publisher={Taylor \& Francis}
}

@article{dasgupta2012density,
  title={Density estimation in high and ultra high dimensions, regularization, and the L1 asymptotics},
  author={DasGupta, Anirban and Lahiri, SN},
  journal={Contemporary Developments in Bayesian Analysis and Statistical Decision Theory: A Festschrift for William E. Strawderman},
  volume={8},
  pages={1--23},
  year={2012},
  publisher={Institute of Mathematical Statistics}
}

@article{xu2021high,
  title={HIGH-DIMENSIONAL NONPARAMETRIC DENSITY ESTIMATION VIA SYMMETRY AND SHAPE CONSTRAINTS},
  author={Xu, Min and Samworth, Richard J},
  journal={The Annals of Statistics},
  volume={49},
  number={2},
  pages={650--672},
  year={2021},
  publisher={JSTOR}
}

@article{kubal2024log,
  title={Log-concave Density Estimation with Independent Components},
  author={Kubal, Sharvaj and Campbell, Christian and Robeva, Elina},
  journal={arXiv preprint arXiv:2401.01500},
  year={2024}
}

@article{samworth2018recent,
  title={Recent Progress in Log-Concave Density Estimation},
  author={Samworth, Richard J},
  journal={Statistical Science},
  volume={33},
  number={4},
  pages={493--509},
  year={2018},
  publisher={JSTOR}
}

@article{dasgupta2014optimal,
  title={Optimal rates for k-nn density and mode estimation},
  author={Dasgupta, Sanjoy and Kpotufe, Samory},
  journal={Advances in Neural Information Processing Systems},
  volume={27},
  year={2014}
}

@article{zhao2022analysis,
  title={Analysis of knn density estimation},
  author={Zhao, Puning and Lai, Lifeng},
  journal={IEEE Transactions on Information Theory},
  volume={68},
  number={12},
  pages={7971--7995},
  year={2022},
  publisher={IEEE}
}

@misc{kim2019uniformconvergenceratekernel,
      title={Uniform Convergence Rate of the Kernel Density Estimator Adaptive to Intrinsic Volume Dimension}, 
      author={Jisu Kim and Jaehyeok Shin and Alessandro Rinaldo and Larry Wasserman},
      year={2019},
      eprint={1810.05935},
      archivePrefix={arXiv},
      primaryClass={math.ST},
      url={https://arxiv.org/abs/1810.05935}, 
}

@book {devroye-density-estimation,
    AUTHOR = {Devroye, Luc},
     TITLE = {A course in density estimation},
    SERIES = {Progress in Probability and Statistics},
    VOLUME = {14},
 PUBLISHER = {Birkh\"auser Boston, Inc., Boston, MA},
      YEAR = {1987},
     PAGES = {xx+183},
      ISBN = {0-8176-3365-0},
   MRCLASS = {62G05},
  MRNUMBER = {891874},
MRREVIEWER = {Paul\ Deheuvels},
}

@incollection{gotze2019higher,
  title={Higher Order Concentration in Presence of Poincar{\'e}-Type Inequalities},
  author={G{\"o}tze, Friedrich and Sambale, Holger},
  booktitle={High Dimensional Probability VIII: The Oaxaca Volume},
  pages={55--69},
  year={2019},
  publisher={Springer}
}

@article{brauer1966perturbations,
  title={Perturbations of nonlinear systems of differential equations},
  author={Brauer, Fred},
  journal={Journal of Mathematical Analysis and Applications},
  volume={14},
  number={2},
  pages={198--206},
  year={1966},
  publisher={Academic Press}
}

@article{brascamp1976extensions,
  title={On extensions of the Brunn-Minkowski and Pr{\'e}kopa-Leindler theorems, including inequalities for log concave functions, and with an application to the diffusion equation},
  author={Brascamp, Herm Jan and Lieb, Elliott H},
  journal={Journal of functional analysis},
  volume={22},
  number={4},
  pages={366--389},
  year={1976},
  publisher={Elsevier}
}

@book{aubin2009viability,
  author    = {Aubin, Jean-Pierre},
  title     = {Viability Theory},
  series    = {Modern Birkhäuser Classics},
  publisher = {Birkhäuser Boston, MA},
  year      = {2009},
  doi       = {10.1007/978-0-8176-4910-4},
  isbn      = {978-0-8176-4910-4},
  edition   = {1},
  pages     = {XXV, 545},
  url       = {https://doi.org/10.1007/978-0-8176-4910-4}
}

@book{gut2006probability,
  title={Probability: a graduate course},
  author={Gut, Allan},
  volume={200},
  number={5},
  year={2006},
  publisher={Springer}
}

@article{briec1997minimum,
  title={Minimum distance to the complement of a convex set: duality result},
  author={Briec, W},
  journal={Journal of Optimization Theory and Applications},
  volume={93},
  number={2},
  pages={301--319},
  year={1997},
  publisher={Springer}
}

@book{filippov2013differential,
  title={Differential equations with discontinuous righthand sides: control systems},
  author={Filippov, Aleksei Fedorovich},
  volume={18},
  year={2013},
  publisher={Springer Science \& Business Media}
}

@article{ponnoprat2024uniform,
  title={Uniform confidence band for optimal transport map on one-dimensional data},
  author={Ponnoprat, Donlapark and Okano, Ryo and Imaizumi, Masaaki},
  journal={Electronic Journal of Statistics},
  volume={18},
  number={1},
  pages={515--552},
  year={2024},
  publisher={The Institute of Mathematical Statistics and the Bernoulli Society}
}

@book{Tsybakov,
author = {Tsybakov, Alexandre B.},
title = {Introduction to Nonparametric Estimation},
year = {2008},
isbn = {0387790519},
publisher = {Springer Publishing Company, Incorporated},
edition = {1st},

}

@book{li2023nonparametric,
  title={Nonparametric econometrics: theory and practice},
  author={Li, Qi and Racine, Jeffrey Scott},
  year={2023},
  publisher={Princeton University Press}
}

@article{peyre2019computational,
  title={Computational optimal transport: With applications to data science},
  author={Peyr{\'e}, Gabriel and Cuturi, Marco },
  journal={Foundations and Trends{\textregistered} in Machine Learning},
  volume={11},
  number={5-6},
  pages={355--607},
  year={2019},
  publisher={Now Publishers, Inc.}
}

@article{albergo2023stochastic,
  title={Stochastic interpolants: A unifying framework for flows and diffusions},
  author={Albergo, Michael S and Boffi, Nicholas M and Vanden-Eijnden, Eric},
  journal={arXiv preprint arXiv:2303.08797},
  year={2023}
}

@article{tallos1991viability,
  title={Viability problems for nonautonomous differential inclusions},
  author={Tallos, Peter},
  journal={SIAM journal on control and optimization},
  volume={29},
  number={2},
  pages={253--263},
  year={1991},
  publisher={SIAM}
}

@article {Schymura2014,
    AUTHOR = {Schymura, Daria},
     TITLE = {An upper bound on the volume of the symmetric difference of a
              body and a congruent copy},
   JOURNAL = {Adv. Geom.},
  FJOURNAL = {Advances in Geometry},
    VOLUME = {14},
      YEAR = {2014},
    NUMBER = {2},
     PAGES = {287--298},
      ISSN = {1615-715X,1615-7168},
   MRCLASS = {28A75 (52A38 52A40)},
  MRNUMBER = {3263428},
MRREVIEWER = {Jean-Charles\ Pinoli},
       DOI = {10.1515/advgeom-2013-0029},
       URL = {https://doi.org/10.1515/advgeom-2013-0029},
}

@article{de2024moduli,
  title={Moduli of uniform convexity for convex sets},
  author={De Bernardi, Carlo Alberto and Vesel{\`y}, Libor},
  journal={Archiv der Mathematik},
  volume={123},
  number={4},
  pages={413--422},
  year={2024},
  publisher={Springer}
}

@article{balashov2009uniform,
  title={Uniform convexity and the splitting problem for selections},
  author={Balashov, Maxim V and Repov{\v{s}}, Du{\v{s}}an},
  journal={Journal of mathematical analysis and applications},
  volume={360},
  number={1},
  pages={307--316},
  year={2009},
  publisher={Elsevier}
}

@article{de2023extension,
  title={On extension of uniformly continuous quasiconvex functions},
  author={De Bernardi, Carlo and Vesel{\`y}, Libor},
  journal={Proceedings of the American Mathematical Society},
  volume={151},
  number={04},
  pages={1705--1716},
  year={2023}
}

@article{liu2022flow,
  title={Flow straight and fast: Learning to generate and transfer data with rectified flow},
  author={Liu, Xingchao and Gong, Chengyue and Liu, Qiang},
  journal={arXiv preprint arXiv:2209.03003},
  year={2022}
}

@book{hartman2002ordinary,
  title={Ordinary differential equations},
  author={Hartman, Philip},
  year={2002},
  publisher={SIAM}
}

@inproceedings{gine2002rates,
  title={Rates of strong uniform consistency for multivariate kernel density estimators},
  author={Gin{\'e}, Evarist and Guillou, Armelle},
  booktitle={Annales de l'Institut Henri Poincare (B) Probability and Statistics},
  volume={38},
  number={6},
  pages={907--921},
  year={2002},
  organization={Elsevier}
}

@article{einmahl2000empirical,
  title={An empirical process approach to the uniform consistency of kernel-type function estimators},
  author={Einmahl, Uwe and Mason, David M},
  journal={Journal of Theoretical Probability},
  volume={13},
  number={1},
  pages={1--37},
  year={2000},
  publisher={Springer}
}

@book{nocedal2006numerical,
  title={Numerical optimization},
  author={Nocedal, Jorge and Wright, Stephen J},
  year={2006},
  publisher={Springer}
}

@incollection{fan2016multivariate,
  title={Multivariate local polynomial estimators: Uniform boundary properties and asymptotic linear representation},
  author={Fan, Yangin and Guerre, Emmanuel},
  booktitle={Essays in Honor of Aman Ullah},
  volume={36},
  pages={489--537},
  year={2016},
  publisher={Emerald Group Publishing Limited}
}

@article{liu2022rectified,
  title={Rectified flow: A marginal preserving approach to optimal transport},
  author={Liu, Qiang},
  journal={arXiv preprint arXiv:2209.14577},
  year={2022}
}

@inproceedings{wu2023fast,
  title={Fast Point Cloud Generation with Straight Flows},
  author={Wu, Lemeng and Wang, Dilin and Gong, Chengyue and Liu, Xingchao and Xiong, Yunyang and Ranjan, Rakesh and Krishnamoorthi, Raghuraman and Chandra, Vikas and Liu, Qiang},
  booktitle={Proceedings of the IEEE/CVF Conference on Computer Vision and Pattern Recognition},
  pages={9445--9454},
  year={2023}
}

@article{heitz2023,
  title={Iterative $\alpha$-(de) Blending: a Minimalist Deterministic Diffusion Model},
  author={Heitz, Eric and Belcour, Laurent and Chambon, Thomas},
  journal={arXiv preprint arXiv:2305.03486},
  year={2023}
}

@article{delbracio2023,
  title={Inversion by direct iteration: An alternative to denoising diffusion for image restoration},
  author={Delbracio, Mauricio and Milanfar, Peyman},
  journal={arXiv preprint arXiv:2303.11435},
  year={2023}
}

@article{balaji2020,
  title={Robust optimal transport with applications in generative modeling and domain adaptation},
  author={Balaji, Yogesh and Chellappa, Rama and Feizi, Soheil},
  journal={Advances in Neural Information Processing Systems},
  volume={33},
  pages={12934--12944},
  year={2020}
}

@book{ullah1999nonparametric,
  title={Nonparametric econometrics},
  author={Ullah, Aman and Pagan, Adrian},
  year={1999},
  publisher={Cambridge university press Cambridge}
}

@article{rout2021,
  title={Generative modeling with optimal transport maps},
  author={Rout, Litu and Korotin, Alexander and Burnaev, Evgeny},
  journal={arXiv preprint arXiv:2110.02999},
  year={2021}
}

@inproceedings{courty2014domain,
  title={Domain adaptation with regularized optimal transport},
  author={Courty, Nicolas and Flamary, R{\'e}mi and Tuia, Devis},
  booktitle={Machine Learning and Knowledge Discovery in Databases: European Conference, ECML PKDD 2014, Nancy, France, September 15-19, 2014. Proceedings, Part I 14},
  pages={274--289},
  year={2014},
  organization={Springer}
}

@article{courty2017joint,
  title={Joint distribution optimal transportation for domain adaptation},
  author={Courty, Nicolas and Flamary, R{\'e}mi and Habrard, Amaury and Rakotomamonjy, Alain},
  journal={Advances in neural information processing systems},
  volume={30},
  year={2017}
}

@inproceedings{yan2018semi,
  title={Semi-Supervised Optimal Transport for Heterogeneous Domain Adaptation.},
  author={Yan, Yuguang and Li, Wen and Wu, Hanrui and Min, Huaqing and Tan, Mingkui and Wu, Qingyao},
  booktitle={IJCAI},
  volume={7},
  pages={2969--2975},
  year={2018}
}

@article{li2021causal,
  title={Causal optimal transport for treatment effect estimation},
  author={Li, Qian and Wang, Zhichao and Liu, Shaowu and Li, Gang and Xu, Guandong},
  journal={IEEE transactions on neural networks and learning systems},
  year={2021},
  publisher={IEEE}
}

@article{torous2021optimal,
  title={An optimal transport approach to causal inference},
  author={Torous, William and Gunsilius, Florian and Rigollet, Philippe},
  journal={arXiv preprint arXiv:2108.05858},
  year={2021}
}

@article{kolouri2016,
  title={A continuous linear optimal transport approach for pattern analysis in image datasets},
  author={Kolouri, Soheil and Tosun, Akif B and Ozolek, John A and Rohde, Gustavo K},
  journal={Pattern recognition},
  volume={51},
  pages={453--462},
  year={2016},
  publisher={Elsevier}
}

@book{VanderVaartWellner2023,
  author    = {van der Vaart, A. W. and Wellner, Jon A.},
  title     = {Weak Convergence and Empirical Processes: With Applications to Statistics},
  series    = {Springer Series in Statistics},
  edition   = {2},
  year      = {2023},
  publisher = {Springer Cham},
  doi       = {10.1007/978-3-031-29040-4},
  isbn      = {978-3-031-29038-1}
}

@article{bertin2025new,
  title={A new adaptive local polynomial density estimation procedure on complicated domains},
  author={Bertin, Karine and Klutchnikoff, Nicolas and Ouimet, Fr{\'e}d{\'e}ric},
  journal={Bernoulli},
  volume={31},
  number={3},
  pages={2201--2225},
  year={2025},
  publisher={Bernoulli Society for Mathematical Statistics and Probability}
}

@book{Talagrand2022,
  author    = {Talagrand, Michel},
  title     = {Upper and Lower Bounds for Stochastic Processes: Decomposition Theorems},
  series    = {Ergebnisse der Mathematik und ihrer Grenzgebiete. 3. Folge / A Series of Modern Surveys in Mathematics},
  edition   = {2},
  year      = {2022},
  publisher = {Springer Cham},
  doi       = {10.1007/978-3-030-82595-9},
  isbn      = {978-3-030-82594-2},
  pages     = {XVIII, 726}
}

@article{tong2023simulation,
  title={Simulation-free Schr{\"o}dinger bridges via score and flow matching},
  author={Tong, Alexander and Malkin, Nikolay and Fatras, Kilian and Atanackovic, Lazar and Zhang, Yanlei and Huguet, Guillaume and Wolf, Guy and Bengio, Yoshua},
  journal={arXiv preprint arXiv:2307.03672},
  year={2023}
}

@article{chen2016,
  title={On the relation between optimal transport and Schr{\"o}dinger bridges: A stochastic control viewpoint},
  author={Chen, Yongxin and Georgiou, Tryphon T and Pavon, Michele},
  journal={Journal of Optimization Theory and Applications},
  volume={169},
  pages={671--691},
  year={2016},
  publisher={Springer}
}

@article{liu2022diff,
  title={Let us build bridges: Understanding and extending diffusion generative models},
  author={Liu, Xingchao and Wu, Lemeng and Ye, Mao and Liu, Qiang},
  journal={arXiv preprint arXiv:2208.14699},
  year={2022}
}

@article{manole2021,
  title={Plugin estimation of smooth optimal transport maps},
  author={Manole, Tudor and Balakrishnan, Sivaraman and Niles-Weed, Jonathan and Wasserman, Larry},
  journal={arXiv preprint arXiv:2107.12364},
  year={2021}
}

@article{cuturi2013,
  title={Sinkhorn distances: Lightspeed computation of optimal transport},
  author={Cuturi, Marco},
  journal={Advances in neural information processing systems},
  volume={26},
  year={2013}
}

@book{brunner2017volterra,
  title={Volterra integral equations: an introduction to theory and applications},
  author={Brunner, Hermann},
  volume={30},
  year={2017},
  publisher={Cambridge University Press}
}

@book{schneider2013convex,
  title={Convex bodies: the Brunn--Minkowski theory},
  author={Schneider, Rolf},
  volume={151},
  year={2013},
  publisher={Cambridge university press}
}

@article{giannopoulos2018inequalities,
  title={Inequalities for the surface area of projections of convex bodies},
  author={Giannopoulos, Apostolos and Koldobsky, Alexander and Valettas, Petros},
  journal={Canadian Journal of Mathematics},
  volume={70},
  number={4},
  pages={804--823},
  year={2018},
  publisher={Cambridge University Press}
}

@article{fatras2021,
  title={Minibatch optimal transport distances; analysis and applications},
  author={Fatras, Kilian and Zine, Younes and Majewski, Szymon and Flamary, R{\'e}mi and Gribonval, R{\'e}mi and Courty, Nicolas},
  journal={arXiv preprint arXiv:2101.01792},
  year={2021}
}

@inproceedings{bai2023,
  title={Sliced optimal partial transport},
  author={Bai, Yikun and Schmitzer, Bernhard and Thorpe, Matthew and Kolouri, Soheil},
  booktitle={Proceedings of the IEEE/CVF Conference on Computer Vision and Pattern Recognition},
  pages={13681--13690},
  year={2023}
}

@article{lu2017,
  title={Optimal transport for deep joint transfer learning},
  author={Lu, Ying and Chen, Liming and Saidi, Alexandre},
  journal={arXiv preprint arXiv:1709.02995},
  year={2017}
}

@article{deb2021rates,
  title={Rates of estimation of optimal transport maps using plug-in estimators via barycentric projections},
  author={Deb, Nabarun and Ghosal, Promit and Sen, Bodhisattva},
  journal={Advances in Neural Information Processing Systems},
  volume={34},
  pages={29736--29753},
  year={2021}
}

@article{manole2021sharp,
  title={Sharp convergence rates for empirical optimal transport with smooth costs},
  author={Manole, Tudor and Niles-Weed, Jonathan},
  journal={arXiv preprint arXiv:2106.13181},
  year={2021}
}

@article{gunsilius2022convergence,
  title={On the convergence rate of potentials of Brenier maps},
  author={Gunsilius, Florian F},
  journal={Econometric Theory},
  volume={38},
  number={2},
  pages={381--417},
  year={2022},
  publisher={Cambridge University Press}
}

@article{hutter2019minimax,
  title={Minimax estimation of smooth optimal transport maps},
  author={H{\"u}tter, Jan-Christian and Rigollet, Philippe},
  journal={arXiv preprint arXiv:1905.05828},
  year={2019}
}

@article{manole2022sliced,
  title={Minimax confidence intervals for the sliced wasserstein distance},
  author={Manole, Tudor and Balakrishnan, Sivaraman and Wasserman, Larry},
  journal={Electronic Journal of Statistics},
  volume={16},
  number={1},
  pages={2252--2345},
  year={2022},
  publisher={The Institute of Mathematical Statistics and the Bernoulli Society}
}

@inproceedings{goldfeld2020,
  title={Gaussian-smoothed optimal transport: Metric structure and statistical efficiency},
  author={Goldfeld, Ziv and Greenewald, Kristjan},
  booktitle={International Conference on Artificial Intelligence and Statistics},
  pages={3327--3337},
  year={2020},
  organization={PMLR}
}

@article{hartman1972invariant,
  title={On invariant sets and on a theorem of Wa{\.z}ewski},
  author={Hartman, Philip},
  journal={Proceedings of the American Mathematical Society},
  volume={32},
  number={2},
  pages={511--520},
  year={1972}
}

@inproceedings{goldfeld2021,
  title={Smooth $ p $-Wasserstein distance: structure, empirical approximation, and statistical applications},
  author={Nietert, Sloan and Goldfeld, Ziv and Kato, Kengo},
  booktitle={International Conference on Machine Learning},
  pages={8172--8183},
  year={2021},
  organization={PMLR}
}

@article{constantin2023uniqueness,
  title={A uniqueness criterion for ordinary differential equations},
  author={Constantin, Adrian},
  journal={Journal of Differential Equations},
  volume={342},
  pages={179--192},
  year={2023},
  publisher={Elsevier}
}

@book{aubin2009set,
  title={Set-Valued Analysis},
  author={Aubin, Jean-Pierre and Frankowska, H{\'e}l{\`e}ne},
  year={2009},
  publisher={Birkh{\"a}user Boston},
  series={Modern Birkh{\"a}user Classics},
  doi={10.1007/978-0-8176-4848-0},
  isbn={978-0-8176-4847-3}
}

@misc{KanschatScheichl2021,
  author = {Kanschat, Guido and Scheichl, Robert},
  title = {Numerical Analysis of Ordinary Differential Equations},
  year = {2021},
  month = {July},
  day = {20},
  note = {Lecture notes},
  url = {https://katana.iwr.uni-heidelberg.de/pdfs/NumODE21_notes.pdf}
}

@article{banas1981relations,
  title={Relations among various criteria of uniqueness for ordinary differential equations},
  author={Bana{\'s}, J{\'o}zef and Hajnosz, Andrzej and W{\'e}drychowicz, Stanis{\l}aw},
  journal={Commentationes Mathematicae Universitatis Carolinae},
  volume={22},
  number={1},
  pages={59--70},
  year={1981},
  publisher={Charles University in Prague, Faculty of Mathematics and Physics}
}

@book{Dragomir2003Gronwall,
  title={Some Gronwall Type Inequalities and Applications},
  author={Sever Silvestru Dragomir},
  publisher={Nova Science Publishers},
  year={2003},
  isbn={1590338278, 99781590338278},
  pages={193},
  url={https://rgmia.org/papers/monographs/standard.pdf},
  note={Original by the University of Virginia, Digitized August 25, 2008}
}

@book{Soderlind2024LogarithmicNorms,
  title={Logarithmic Norms},
  author={Söderlind, Gustaf},
  series={Springer Series in Computational Mathematics},
  publisher={Springer Cham},
  year={2024},
  doi={https://doi.org/10.1007/978-3-031-74379-5},
  isbn={978-3-031-74378-8},
  pages={XIV, 493},
  url={https://doi.org/10.1007/978-3-031-74379-5},
}

@article{liu2024uniqueness,
  title={The uniqueness for a class of ordinary and stochastic differential equations},
  author={Liu, Zhenxin and Liu, Ziting},
  journal={Journal of Differential Equations},
  volume={400},
  pages={90--109},
  year={2024},
  publisher={Elsevier}
}

@article{Bernfeld1975Uniqueness,
  author = {Bernfeld, S.R. and Driver, R.D. and Lakshmikantham, V.},
  title = {Uniqueness for ordinary differential equations},
  journal = {Math. Systems Theory},
  volume = {9},
  pages = {359--367},
  year = {1975},
  publisher = {Springer-Verlag},
  doi = {10.1007/BF01715361}
}

@book{agarwal1993uniqueness,
  title={Uniqueness and nonuniqueness criteria for ordinary differential equations},
  author={Agarwal, Ratan Prakash and Agarwal, Ravi P and Lakshmikantham, V},
  volume={6},
  year={1993},
  publisher={World Scientific}
}

@book{HairerWannerNorsett1993,
  author = {Hairer, Ernst and Wanner, Gerhard and N{\o}rsett, Syvert P.},
  title = {Solving Ordinary Differential Equations I},
  subtitle = {Nonstiff Problems},
  series = {Springer Series in Computational Mathematics},
  publisher = {Springer Berlin, Heidelberg},
  year = {1993},
  doi = {10.1007/978-3-540-78862-1},
  isbn = {978-3-540-56670-0},
  edition = {2},
  pages = {XV, 528}
}

@book{AmbrosioGigliSavare2008,
  author = {Ambrosio, Luigi and Gigli, Nicola and Savaré, Giuseppe},
  title = {Gradient Flows},
  subtitle = {In Metric Spaces and in the Space of Probability Measures},
  series = {Lectures in Mathematics. ETH Zürich},
  publisher = {Birkhäuser Basel},
  year = {2008},
  edition = {2},
  isbn = {978-3-7643-8722-8},
  doi = {10.1007/978-3-7643-8722-8},
  url = {https://doi.org/10.1007/978-3-7643-8722-8}
}

@article{kim2016global,
  title={GLOBAL RATES OF CONVERGENCE IN LOG-CONCAVE DENSITY ESTIMATION},
  author={Kim, Arlene Kh and Samworth, Richard J},
  journal={The Annals of Statistics},
  volume={44},
  number={6},
  pages={2756--2779},
  year={2016}
}

@inproceedings{hu2023rf,
  title={Rf-policy: Rectified flows are computation-adaptive decision makers},
  author={Hu, Xixi and Liu, Bo and Liu, Xingchao },
  booktitle={NeurIPS 2023 Foundation Models for Decision Making Workshop},
  year={2023}
}

@inproceedings{liu2023instaflow,
  title={Instaflow: One step is enough for high-quality diffusion-based text-to-image generation},
  author={Liu, Xingchao and Zhang, Xiwen and Ma, Jianzhu and Peng, Jian },
  booktitle={The Twelfth International Conference on Learning Representations},
  year={2023}
}

@article{zhang2024language,
  title={Language Rectified Flow: Advancing Diffusion Language Generation with Probabilistic Flows},
  author={Zhang, Shujian and Wu, Lemeng and Gong, Chengyue and Liu, Xingchao},
  journal={arXiv preprint arXiv:2403.16995},
  year={2024}
}

@article{chen2021asymptotics,
  title={Asymptotics of smoothed Wasserstein distances},
  author={Chen, Hong-Bin and Niles-Weed, Jonathan},
  journal={Potential Analysis},
  pages={1--25},
  year={2021},
  publisher={Springer}
}

@article{mordant2024entropic,
  title={The entropic optimal (self-) transport problem: Limit distributions for decreasing regularization with application to score function estimation},
  author={Mordant, Gilles},
  journal={arXiv preprint arXiv:2412.12007},
  year={2024}
}

@article{manole2023clt,
  title={Central limit theorems for smooth optimal transport maps},
  author={Manole, Tudor and Balakrishnan, Sivaraman and Niles-Weed, Jonathan and Wasserman, Larry},
  journal={arXiv preprint arXiv:2312.12407},
  year={2023}
}

@article{caffarelli1,
  title={Boundary regularity of maps with convex potentials},
  author={Caffarelli, Luis A},
  journal={Communications on pure and applied mathematics},
  volume={45},
  number={9},
  pages={1141--1151},
  year={1992},
  publisher={Wiley Online Library}
}

@article{bouezmarni2010,
  title={Nonparametric density estimation for multivariate bounded data},
  author={Bouezmarni, Taoufik and Rombouts, Jeroen VK},
  journal={Journal of statistical planning and inference},
  volume={140},
  number={1},
  pages={139--152},
  year={2010},
  publisher={Elsevier}
}

@article{einmahl2005uniform,
  title={Uniform in bandwidth consistency of kernel-type function estimators},
  author={Einmahl, Uwe and Mason, David M},
  journal={The Annals of Statistics},
  volume={33},
  number={3},
  pages={1380},
  year={2005},
  publisher={Institute of Mathematical Statistics}
}

@article{muller1999,
  title={Multivariate boundary kernels and a continuous least squares principle},
  author={M{\"u}ller, Hans-Georg and Stadtm{\"u}ller, Ulrich},
  journal={Journal of the Royal Statistical Society: Series B (Statistical Methodology)},
  volume={61},
  number={2},
  pages={439--458},
  year={1999},
  publisher={Wiley Online Library}
}
\bibliographystyle{apalike}

\newpage
\setcounter{section}{0}
\setcounter{equation}{0}
\setcounter{figure}{0}
\renewcommand{\thesection}{S.\arabic{section}}
\renewcommand{\theequation}{E.\arabic{equation}}
\renewcommand{\thefigure}{A.\arabic{figure}}
\renewcommand{\theHsection}{S.\arabic{section}~}
\renewcommand{\theHequation}{E.\arabic{equation}}
\renewcommand{\theHfigure}{A.\arabic{figure}}
  \begin{center}
  \Large {\bf Supplement to ``Statistical Properties of Rectified Flow''}
  \end{center}
       
\begin{abstract}
This supplement contains the proofs of all the main results in the paper and some additional simulation results.
\end{abstract}

\section{Auxiliary Results from Convex Analysis}
\begin{lemma}\label{lem:concavity-of-distance-to-boundary}
    For a convex set $A\subseteq\mathbb{R}^d$, the function $a\mapsto\mbox{dist}(a,\,\partial A)$ is a concave function.
\end{lemma}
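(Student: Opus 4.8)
The plan is to show directly that $d_A(a) := \mathrm{dist}(a,\partial A)$ satisfies $d_A(\lambda a_0 + (1-\lambda)a_1) \ge \lambda d_A(a_0) + (1-\lambda)d_A(a_1)$ for all $a_0, a_1$ and $\lambda\in[0,1]$. I would first reduce to the case $a_0, a_1 \in A$: if either point is outside the closure $\bar A$, then $d_A$ of that point is most naturally read as $0$ (or one restricts attention to $A$, where the statement is intended), so the inequality is trivial or vacuous; if both are in $\bar A$ one works with $\bar A$, which is also convex, and uses that for points of a convex set $\mathrm{dist}(a,\partial A) = \mathrm{dist}(a, A^c) = \sup\{r \ge 0 : \mathcal B(a,r)\subseteq A\}$ (the inradius-at-$a$ description used already in the paper, e.g. in~\eqref{eq:inradius} and~\eqref{eq:inflated-imploded-set}).

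The key step is the ball-inclusion characterization: for $a\in \bar A$,
\[
d_A(a) = \sup\{ r \ge 0 : \mathcal B(a, r) \subseteq \bar A\}.
\]
Granting this, let $r_i = d_A(a_i)$ for $i = 0,1$, so $\mathcal B(a_i, r_i)\subseteq \bar A$ (using closed balls and the sup being attained in the limit, or working with $r_i - \varepsilon$ and letting $\varepsilon\downarrow 0$). Set $a_\lambda = \lambda a_0 + (1-\lambda) a_1$ and $r_\lambda = \lambda r_0 + (1-\lambda) r_1$. I claim $\mathcal B(a_\lambda, r_\lambda)\subseteq \bar A$. Indeed, any point of $\mathcal B(a_\lambda, r_\lambda)$ can be written as $a_\lambda + r_\lambda u = \lambda(a_0 + r_0 u) + (1-\lambda)(a_1 + r_1 u)$ for some $\|u\|\le 1$; since $a_0 + r_0 u \in \mathcal B(a_0,r_0)\subseteq \bar A$ and $a_1 + r_1 u\in\mathcal B(a_1,r_1)\subseteq\bar A$, convexity of $\bar A$ gives $a_\lambda + r_\lambda u\in\bar A$. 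Hence $d_A(a_\lambda)\ge r_\lambda = \lambda d_A(a_0) + (1-\lambda)d_A(a_1)$, which is concavity.

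The remaining task is to justify the ball-inclusion characterization itself, i.e.\ that for $a$ in a convex set, the distance to the topological boundary equals the distance to the complement, equivalently the largest radius of a ball around $a$ inside the set. This is where a little care is needed: $d_A(a) = \inf_{b\in\partial A}\|a-b\|$, and one must check $\inf_{b\in\partial A}\|a-b\| = \inf_{b\notin A^\circ}\|a-b\|$ for $a\in A$. The inequality $\ge$ is immediate since $\partial A\subseteq A^c\cup\partial A$ and points of $A^\circ$ are at positive distance anyway; for $\le$, given $b\notin A^\circ$ with $b\ne a\in A^\circ$ (if $a\in\partial A$ the statement is trivial with $d_A(a)=0$), the segment $[a,b]$ must cross $\partial A$ at some point $b'$ with $\|a-b'\|\le\|a-b\|$, by connectedness together with $a\in A^\circ$, $b\notin A^\circ$. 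I expect this elementary topological point — crossing the boundary along a segment, and handling the closed-vs-open and in-$A$-vs-in-$\bar A$ bookkeeping — to be the only mildly fiddly part; the concavity inequality itself follows cleanly from the convex-combination-of-balls argument above.
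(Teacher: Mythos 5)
Your argument is correct and takes essentially the same route as the paper: both reduce to showing that the closed ball of radius $\lambda r_0 + (1-\lambda)r_1$ around $a_\lambda$ sits inside (the closure of) $A$, and both do so by writing an arbitrary point of that ball as $\lambda(a_0 + r_0 u) + (1-\lambda)(a_1 + r_1 u)$ with $\|u\|\le 1$ and invoking convexity; your bookkeeping with $\|u\|\le 1$ is in fact slightly more careful than the paper's, which writes $u\in S^{d-1}$ and hence literally only covers the sphere rather than the full ball. The extra paragraph you spend on the ball-inclusion characterization of $\mathrm{dist}(\cdot,\partial A)$ is a point the paper asserts without proof, so it is fine to include, but it is not a different method.
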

\begin{proof}
    It suffices to prove that for any two points $a_1, a_2\in A$, and $\lambda \in (0, 1)$,
    \begin{equation}\label{eq:concavity-proof}
    \mbox{dist}(a,\,\partial A) \ge \lambda\mbox{dist}(a_1,\,\partial A) + (1-\lambda)\mbox{dist}(a_2,\, \partial A)\quad\mbox{for}\quad a = \lambda a_1 + (1-\lambda) a_2.
    \end{equation}
    If $\varepsilon_j = \mbox{dist}(a_j, \partial A)$, then convexity of $A$ implies $\mathcal{B}(a_j, \varepsilon_j)\subseteq A$. Consider the set
    \[
    \mathcal{C} = \lambda B(a_1, \varepsilon_1) + (1-\lambda)B(a_2, \varepsilon_2) = \left\{\lambda b_1 + (1-\lambda) b_2:\, \|b_j - a_j\| \le \varepsilon_j\mbox{ for }j = 1, 2\right\}. 
    \]
    Clearly, $\mathcal{C}\subseteq A$ by convexity. Additionally, $a\in\mathcal{C}$. To prove~\eqref{eq:concavity-proof}, it now suffices to show that
    \[
    \mathcal{B}(a,\, \lambda\varepsilon_1 + (1-\lambda)\varepsilon_2) \subseteq \mathcal{C}.
    \]
    Take any point $b\in\mathcal{B}(a,\,\lambda\varepsilon_1 + (1-\lambda)\varepsilon_2)$. Then there exists $u\in S^{d-1}$ such that
    \[
    b = a + (\lambda\varepsilon_1 + (1-\lambda)\varepsilon_2)u = \lambda(a_1 + \varepsilon_1u) + (1-\lambda)(a_2 + \varepsilon_2u).
    \]
    By definition, $a_j + \varepsilon_ju\in \mathcal{B}(a_j, \varepsilon_j)$ and hence, the point on the right belongs to $\mathcal{C}$. Therefore, the result follows.
\end{proof}
\begin{lemma}\label{lem:projection-to-boundary}
    For a convex set $A\subseteq\mathbb{R}^d$ and $x\in A$, if $y\in\partial A$ satisfies
    \[
    \|x - y\| = \inf\{\|x - z\|:\, z\in \partial A\},
    \]
    then 
    \[
    (x - y)^{\top}(z - y) \ge 0\quad\mbox{for all}\quad z\in A.
    \]
\end{lemma}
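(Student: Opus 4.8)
The plan is to reduce the inequality to a one‑sided (halfspace) statement about $A$. Write $r := \|x - y\|$; by hypothesis $r = \mathrm{dist}(x,\partial A)$. If $r = 0$ then $x\in\partial A$ (the boundary is closed), so $y = x$ and both sides of the claimed inequality vanish; hence assume $r > 0$ and set $n := (x-y)/r$, a unit vector. Since $(x-y)^\top(z-y) = r\, n^\top(z-y)$, it suffices to prove that $n^\top(z-y) \ge 0$ for every $z\in A$, i.e. that $A$ is contained in the closed halfspace $\{w : n^\top(w-y)\ge 0\}$.

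First I would record the routine fact that the open ball $B(x,r)$ is contained in $A$: if some $w$ with $\|w-x\| < r$ were not in $A$, then (using $x\in A$ and convexity of $A$) the segment from $x$ to $w$ would meet $\partial A$ at a point at distance $\le \|w-x\| < r$ from $x$, contradicting $r = \mathrm{dist}(x,\partial A)$. Consequently $B(x,r)\subseteq A$ and $y$ lies on its boundary, with the ball sitting entirely on the side $n^\top(\cdot - y)\ge 0$; intuitively the tangent hyperplane to $B(x,r)$ at $y$ is the hyperplane we want to separate $A$ from.

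The main step is an argument by contradiction: suppose some $z\in A$ has $n^\top(z-y) < 0$. By convexity of $A$, for every $s\in(0,1)$ the set $(1-s)B(x,r) + sz = B\big((1-s)x + sz,\ (1-s)r\big)$ is contained in $A$. I would then show that $y$ itself lies in this shrunken ball for all small $s > 0$: using $x - y = rn$,
\[
\big\|(1-s)x + sz - y\big\|^2 - \big((1-s)r\big)^2 \;=\; s\Big[\,2(1-s)r\, n^\top(z-y) + s\|z-y\|^2\,\Big],
\]
and the bracket converges to $2r\, n^\top(z-y) < 0$ as $s\downarrow 0$, hence is negative for small $s$. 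Thus $y$ belongs to an open ball contained in $A$, so $y\in A^\circ$, contradicting $y\in\partial A$. Therefore no such $z$ exists and $n^\top(z-y)\ge 0$ for all $z\in A$, which is exactly what is required.

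I do not anticipate a genuine obstacle here: the only point needing care is the quantitative claim that the ball $(1-s)B(x,r) + sz$ actually absorbs the point $y$ (rather than merely accumulating at it) for small $s$, and the displayed identity settles this cleanly. I would also remark at the outset that no closedness of $A$ is needed — the argument uses only $x\in A$, convexity of $A$, and $A^\circ\cap\partial A = \emptyset$ — and note that, alternatively, the halfspace claim follows from the supporting hyperplane theorem applied to $\overline{A}$ at $y$, since the inscribed ball $\overline{B}(x,r)\subseteq\overline{A}$ forces the supporting normal to be a positive multiple of $-n$.
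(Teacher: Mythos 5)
Your proof is correct, and it takes a genuinely different route from the paper. The paper first observes that $\|x-y\|=\operatorname{dist}(x,\partial A)=\operatorname{dist}(x,\overline{A^c})$ (any segment from $x\in A$ to a point outside $A$ crosses $\partial A$), so $y$ is the metric projection of $x$ onto the \emph{complement} of the convex set, and then cites Proposition~2.2 of Briec (1997), which provides the obtuse-angle inequality for projections onto complements of convex sets. You instead give a self-contained elementary argument: record that the open ball $B(x,r)$ is inscribed in $A$, and then show by a quantitative contradiction that if some $z\in A$ had $n^\top(z-y)<0$, the convex combination $(1-s)B(x,r)+sz\subseteq A$ would be an open ball swallowing $y$ for small $s$, forcing $y\in A^\circ$. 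Your displayed identity
\[
\bigl\|(1-s)x+sz-y\bigr\|^2-\bigl((1-s)r\bigr)^2=s\bigl[2(1-s)r\,n^\top(z-y)+s\|z-y\|^2\bigr]
\]
is correct and does exactly the work needed. The trade-off is that the paper's proof is shorter on the page but leans on an external result, while yours is longer but fully elementary and makes the geometric mechanism (an inscribed ball pinned at $y$) explicit; your closing remark about deriving the same halfspace from the supporting-hyperplane theorem applied to $\overline{A}$ at $y$ is a third valid route and is essentially what Briec's cited proposition encapsulates.
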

\begin{proof}
    Note for any point $z'\notin A$, convexity of $A$ implies the existence of $z\in \partial A$ such that $z = \lambda x + (1-\lambda)z'$. Therefore, 
    \[
    \|x - z'\| = \|x - z\| + \|z - z'\| \ge \|x - z\| \ge \|x - y\|.
    \]
    This implies that 
    \[
    \|x - y\| = \inf\{\|x - z\|:\, z\in \mbox{closure}(A^c)\}.
    \]
    Proposition 2.2 of~\cite{briec1997minimum} implies the result.
\end{proof}

\begin{lemma}\label{lem:Lipschitz-of-volume}
    Suppose $A\subseteq\mathbb{R}^d$ is a compact convex set such that $\mathcal{B}(0, r) \subseteq A$ for some $r > 0$. Then, for any $\gamma > 0$, 
    \[
    \frac{\mathrm{Vol}(A^{\gamma}\setminus A)}{\mathrm{Vol}(A)} \le \frac{d\gamma}{r}\left(1 + \frac{\gamma}{r}\right)^{d-1}.
    \]
\end{lemma}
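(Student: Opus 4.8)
The plan is to reduce the whole statement to a single convex-geometric inclusion, namely
\[
A^{\gamma} \subseteq \Bigl(1 + \tfrac{\gamma}{r}\Bigr)A .
\]
To prove this I would take an arbitrary $x\in A^{\gamma}$ and, assuming $\gamma>0$ (the case $\gamma=0$ being trivial since then $A^\gamma=A$), pick $a\in A$ with $\|x-a\|\le\gamma$ and write $x=a+v$ with $\|v\|\le\gamma$. Put $\lambda=1+\gamma/r\ge1$; it suffices to show $x/\lambda\in A$. But
\[
\frac{x}{\lambda} \;=\; \frac{r}{r+\gamma}\,a \;+\; \frac{\gamma}{r+\gamma}\cdot\frac{r}{\gamma}\,v ,
\]
which exhibits $x/\lambda$ as a convex combination of $a\in A$ and $(r/\gamma)v$. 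Since $\|(r/\gamma)v\|=(r/\gamma)\|v\|\le r$, we have $(r/\gamma)v\in\mathcal{B}(0,r)\subseteq A$, and convexity of $A$ yields $x/\lambda\in A$, i.e.\ $x\in\lambda A$, as claimed. (An alternative route is to use $A^{\gamma}=A+\mathcal{B}(0,\gamma)$ for compact $A$, together with $\mathcal{B}(0,\gamma)=(\gamma/r)\mathcal{B}(0,r)\subseteq(\gamma/r)A$ and the identity $sA+tA=(s+t)A$ for convex $A$ and $s,t\ge0$; either argument is short.)

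Given the inclusion, I would finish by scaling: $\mathrm{Vol}(A^{\gamma})\le\lambda^d\,\mathrm{Vol}(A)$, and since $A\subseteq A^{\gamma}$ and $0<\mathrm{Vol}(A)<\infty$ (positivity follows from $\mathcal{B}(0,r)\subseteq A$, which also makes the quotient meaningful),
\[
\frac{\mathrm{Vol}(A^{\gamma}\setminus A)}{\mathrm{Vol}(A)} \;=\; \frac{\mathrm{Vol}(A^{\gamma})-\mathrm{Vol}(A)}{\mathrm{Vol}(A)} \;\le\; \lambda^d-1 .
\]
It then remains to check the elementary inequality $\lambda^d-1\le d(\lambda-1)\lambda^{d-1}$ for $\lambda\ge1$ and $d\ge1$, which follows by writing $\lambda^d-1=\int_1^{\lambda} d\,s^{d-1}\,ds$ and bounding the integrand by $d\lambda^{d-1}$ on $[1,\lambda]$. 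Substituting $\lambda-1=\gamma/r$ gives exactly $\mathrm{Vol}(A^{\gamma}\setminus A)/\mathrm{Vol}(A)\le (d\gamma/r)(1+\gamma/r)^{d-1}$.

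The only step that carries any content is the containment $A^{\gamma}\subseteq(1+\gamma/r)A$, and even that is a one-line convex-combination argument; the rest is bookkeeping with the volume scaling and a calculus inequality. I therefore do not anticipate a genuine obstacle here — the lemma is essentially the standard ``Steiner-type'' estimate for the outer parallel body of a convex set containing a ball, and the proof is self-contained.
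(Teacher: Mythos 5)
Your proof is correct, and it takes a genuinely different and more elementary route than the paper's. The paper invokes the Steiner formula to express $\mathrm{Vol}(A^{\gamma}\setminus A)$ as a polynomial in $\gamma$ with quermassintegral coefficients, then applies the Aleksandrov--Fenchel inequalities to bound the ratios $W_m(A)/W_1(A)$, and finally uses the surface-area-to-volume bound $\mathcal{H}^{d-1}(\partial A)/\mathrm{Vol}(A)\le d/r$ (Giannopoulos, Lemma~2.1). You bypass all of this with the single inclusion $A^{\gamma}\subseteq(1+\gamma/r)A$, which follows from writing $x/\lambda$ as a convex combination of $a\in A$ and $(r/\gamma)v\in\mathcal{B}(0,r)\subseteq A$, together with volume scaling and the calculus inequality $\lambda^d-1\le d(\lambda-1)\lambda^{d-1}$. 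Your argument is shorter and self-contained, needing only convexity and the definition of $A^{\gamma}$ for compact $A$ (so that $A^{\gamma}=A+\mathcal{B}(0,\gamma)$); in fact your intermediate bound $(1+\gamma/r)^d-1$ is slightly sharper than the final stated one. What the paper's approach buys is an intermediate estimate in terms of the actual isoperimetric ratio $\mathcal{H}^{d-1}(\partial A)/\mathrm{Vol}(A)$, which can beat $d/r$ for specific bodies, but since the lemma as stated only asserts the $d\gamma/r$ bound, nothing is lost by your shortcut.
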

\begin{proof}
    Steiner formula (Eq. (4.1) of~\cite{schneider2013convex}) implies that
    \[
    \mbox{Vol}(A^{\gamma}\setminus A) = \sum_{m=1}^d \binom{d}{m}W_m(A)\gamma^{m},
    \]
    where $W_m(A)$ represent the quermassintegrals of $A$. (Here $W_0(A) = \mbox{Vol}(A)$ and $dW_{1}(A) = \mathcal{H}^{d-1}(\partial A)$; see, for example, Eq. (7.7) of~\cite{schneider2013convex}.) This implies that
    \[
    \frac{\mbox{Vol}(A^{\gamma}\setminus A)}{d\gamma W_1(A)} = \sum_{m=1}^{d} \frac{1}{d}\binom{d}{m}\frac{W_m(A)}{W_1(A)}\gamma^{m-1}.
    \]
    From Aleksandrov-Fenchel inequality (Eq. (7.66)of~\cite{schneider2013convex} with $j = m, i= m - 1, k = m+1$), we obtain
    \[
    \frac{W_{m+1}(A)}{W_m(A)} \le \frac{W_m(A)}{W_{m-1}(A)} \le \cdots \le \frac{W_1(A)}{W_0(A)},
    \]
    which, in turn, implies
    \[
    \frac{W_{m}(A)}{W_1(A)} \le \left(\frac{W_1(A)}{W_0(A)}\right)^{m-1}\quad\mbox{for all}\quad m\ge 1. 
    \]
    Therefore, we conclude that
    \[
    \frac{\mbox{Vol}(A^{\gamma}\setminus A)}{d\gamma W_1(A)} \le \sum_{m=0}^{d-1} \binom{d-1}{m}\left(\frac{\gamma W_1(A)}{W_0(A)}\right)^{m-1} \le \left(1 + \frac{\gamma W_1(A)}{W_0(A)}\right)^{d-1}.
    \]
    Equivalently,  
    \[
    \frac{\mbox{Vol}(A^{\gamma}\setminus A)}{\mbox{Vol}(A)} \le \gamma\frac{\mathcal{H}^{d-1}(\partial A)}{\mbox{Vol}(A)}\left(1 + \frac{\gamma}{d}\frac{\mathcal{H}^{d-1}(\partial A)}{\mbox{Vol}(A)}\right)^{d-1}.
    \]
    Lemma 2.1 of~\cite{giannopoulos2018inequalities} (or Remark 13 on page 392 of~\cite{schneider2013convex}) implies
    \[
    \frac{\mathcal{H}^{d-1}(\partial A)}{\mbox{Vol}(A)} \le \frac{d}{r}.
    \]
    Therefore, 
    \[
    \frac{\mbox{Vol}(A^{\gamma}\setminus A)}{\mbox{Vol}(A)} \le \frac{d\gamma}{r}\left(1 + \frac{\gamma}{r}\right)^{d-1},
    \]
    which proves the result.
\end{proof}

\begin{lemma}\label{lem:eventually-strongly-convex}
For a set $A\subseteq\mathbb{R}^d$, suppose $a\in \mathrm{SC}(A)$ and $\{a_k\}_{k\ge1}\subset\partial A$ is a sequence converging to $a$. Then $a_k\in \mathrm{SC}(A)$ for large enough $k$.    
\end{lemma}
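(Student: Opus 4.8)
\textbf{Proof proposal for Lemma~\ref{lem:eventually-strongly-convex}.}

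The plan is to argue by contradiction, exploiting compactness-type arguments already used to justify the equivalence ``$A$ strictly convex at $a$'' $\iff$ ``$a\in\mathrm{SC}(A)$'' (the footnote after the definition of $\mathrm{SC}(A)$). Suppose, for contradiction, that there is a subsequence (still denoted $a_k$) with $a_k\notin\mathrm{SC}(A)$ for every $k$. By definition of $\mathrm{SC}(A)$, for each $k$ there exists $\varepsilon_k\in(0,\mathrm{diam}(A)]$ with $\mathfrak{m}_{a_k}(\varepsilon_k;A)=0$, and hence, by the infimum in~\eqref{eq:modulus-at-a-point}, a point $b_k\in A$ with $\|a_k-b_k\|\ge\varepsilon_k$ and $\mathrm{dist}((a_k+b_k)/2,\,\partial A)\le 1/k$. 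The first obstacle to watch: $\varepsilon_k$ could shrink to $0$, in which case $\|a_k - b_k\|$ carries no information. I would handle this by noting that $\mathfrak{m}_{a_k}(\cdot;A)$ is non-decreasing in its first argument, so $\mathfrak{m}_{a_k}(\varepsilon_k;A)=0$ forces $\mathfrak{m}_{a_k}(\varepsilon;A)=0$ for all $\varepsilon\le\varepsilon_k$; but it does \emph{not} directly give a fixed $\varepsilon$. The cleaner route is to fix \emph{one} $\varepsilon_0\in(0,\mathrm{diam}(A)]$ coming from the hypothesis $a\in\mathrm{SC}(A)$, namely set $m_0:=\mathfrak{m}_a(\varepsilon_0;A)>0$, and show that $\mathfrak{m}_{a_k}(\varepsilon_0;A)$ stays bounded below for large $k$ --- this contradicts $a_k\notin\mathrm{SC}(A)$ \emph{provided} we also control all smaller scales, so in fact I would prove: for every $\varepsilon\in(0,\mathrm{diam}(A)]$, $\liminf_k\mathfrak{m}_{a_k}(\varepsilon;A)>0$.

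To prove that claim, fix $\varepsilon>0$ and let $m:=\mathfrak{m}_a(\varepsilon/2;A)>0$ (using $a\in\mathrm{SC}(A)$). Take any $b\in A$ with $\|a_k-b\|\ge\varepsilon$. Since $a_k\to a$, for $k$ large enough $\|a-a_k\|\le\varepsilon/2$, hence $\|a-b\|\ge\|a_k-b\|-\|a-a_k\|\ge\varepsilon/2$, so $\mathrm{dist}((a+b)/2,\,\partial A)\ge m$, i.e. $\mathcal{B}((a+b)/2,\,m)\subseteq A$. Now compare the two midpoints: $\|(a_k+b)/2-(a+b)/2\|=\|a_k-a\|/2$, so $\mathcal{B}((a_k+b)/2,\,m-\|a_k-a\|/2)\subseteq\mathcal{B}((a+b)/2,\,m)\subseteq A$, giving $\mathrm{dist}((a_k+b)/2,\,\partial A)\ge m-\|a_k-a\|/2$. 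Taking the infimum over all such $b$ yields $\mathfrak{m}_{a_k}(\varepsilon;A)\ge m-\|a_k-a\|/2\ge m/2>0$ for all $k$ large enough (depending on $\varepsilon$). This is precisely the kind of ``shift the inscribed ball by the displacement of the midpoint'' estimate that makes the argument essentially elementary.

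Finally, to conclude: the estimate $\mathfrak{m}_{a_k}(\varepsilon;A)\ge m_\varepsilon/2>0$ holds for each fixed $\varepsilon$ once $k\ge k(\varepsilon)$, but membership in $\mathrm{SC}(A)$ requires positivity of $\mathfrak{m}_{a_k}(\varepsilon;A)$ \emph{simultaneously for all} $\varepsilon\in(0,\mathrm{diam}(A)]$, and there is no uniform $k(\varepsilon)$. The fix is to observe that for a \emph{fixed} index $k$, strict convexity of $A$ at $a_k$ is a property that need only be checked at one scale in the following sense: actually, the right statement to extract is that $a_k\in\mathrm{SC}(A)$ iff $A$ is strictly convex at $a_k$ (by the compactness argument in the paper's footnote), so it suffices to show $A$ is strictly convex at $a_k$ for large $k$, i.e. for every $b\in A$, $b\neq a_k$, the midpoint $(a_k+b)/2\in A^\circ$. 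Given such $b$, set $\varepsilon:=\|a_k-b\|>0$; applying the previous paragraph with this $\varepsilon$ gives $\mathrm{dist}((a_k+b)/2,\partial A)\ge m_\varepsilon/2>0$ for $k\ge k(\varepsilon)$ --- but $k(\varepsilon)$ depends on $b$, which is again the wrong order of quantifiers. The genuine resolution, and the main technical point I expect to need, is the following uniform lower bound: there exists $k_0$ such that for all $k\ge k_0$ and all $b\in A$, $\mathrm{dist}((a_k+b)/2,\partial A)\ge\tfrac12\,\mathfrak{m}_a(\|a_k-b\|/2 ;A)$ whenever $\|a_k-b\|\ge 2\|a-a_k\|$ --- i.e. the displacement bound is uniform in $b$ once $\|a_k-a\|$ is small relative to the scale. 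For $b$ with $\|a_k-b\|<2\|a-a_k\|$ (i.e.\ $b$ very close to $a_k$), one argues separately: such $b$ lie in a small ball around $a$, and one uses that $(a+b)/2$ is then within $\tfrac12\|a-a_k\|+\tfrac12\|a_k - b\|$ of $a$ while needing $a$ itself need not be interior --- here instead I would use strict convexity of $A$ at $a$ directly on the segment $[a,b]$ together with the fact (Lemma~\ref{lem:concavity-of-distance-to-boundary}) that $\mathrm{dist}(\cdot,\partial A)$ is concave, so it is bounded below on $[a_k,b]$ by the linear interpolation of its endpoint values; since both $a_k,b\in\partial A$ have distance $0$, concavity alone is not enough, and one must instead perturb: replace $a$ by the nearby interior witness point guaranteed by $a\in\mathrm{SC}(A)$ applied at a fixed auxiliary scale. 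I expect the bookkeeping of these two regimes --- $b$ far from $a_k$ versus $b$ near $a_k$ --- and getting the quantifier order right (uniform $k_0$ working for all $b$) to be the main obstacle; everything else is the one-line ball-translation inequality.
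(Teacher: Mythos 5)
You have correctly identified the flaw, which is in fact also present in the paper's own proof: the ball-translation inequality $\mathrm{dist}((a_k+b)/2,\partial A) \ge \mathrm{dist}((a+b)/2,\partial A) - \tfrac12\|a_k-a\|$ (the paper's key estimate) only yields ``for each fixed $\varepsilon$, $\mathfrak{m}_{a_k}(\varepsilon;A) > 0$ once $k\ge K(\varepsilon)$,'' whereas $a_k\in\mathrm{SC}(A)$ requires $\mathfrak{m}_{a_k}(\varepsilon;A)>0$ at every scale $\varepsilon$ simultaneously, for a single $k$. No regime split will close this: at the critical scale $\varepsilon$ comparable to $\|a_k-a\|$, the translated-ball estimate needs $\mathfrak{m}_a(\varepsilon/2;A) > \|a_k-a\|/2$, i.e.\ a lower bound on the modulus $\mathfrak{m}_a$ that is at least linear in $\varepsilon$, and strictly convex sets in general do not satisfy this (at a point of second-order boundary contact $\mathfrak{m}_a(\varepsilon;A)$ decays like $\varepsilon^2$).

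Your hesitation is warranted: the Lemma is false as stated. Let $A = \{(x,y)\in\mathbb{R}^2: y\ge f(x),\ \|(x,y)\|\le 1\}$ with $f$ the even, convex, piecewise-linear function having $f(0)=0$ and vertices at $(\pm 2^{-n}, 4^{-n})$ for $n\ge 1$; the slopes on $[2^{-n-1},2^{-n}]$ equal $(3/2)\,2^{-n}$, which increase with $x$, so $f$ is convex and $A$ is compact convex. The point $a=(0,0)$ lies in $\mathrm{SC}(A)$: for $b=(x,y)\in A$, $b\neq a$, one has $f(x/2)\le f(x)/2\le y/2$ with the first inequality strict because $f$ has a breakpoint in $(0,|x|)$, hence $b/2\in A^\circ$, and the compactness argument in the paper's footnote promotes strict convexity at $a$ to $a\in\mathrm{SC}(A)$. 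However $\partial A$ contains, for every $n$, the line segment joining $(2^{-n},4^{-n})$ to $(2^{-n-1},4^{-n-1})$, and any $a_n$ chosen in the relative interior of this segment satisfies $a_n\in\partial A\setminus\mathrm{SC}(A)$ (a nearby $b$ on the same segment gives $(a_n+b)/2\in\partial A$), yet $a_n\to a$. So the quantifier gap you flagged is not an artifact of the strategy: the conclusion fails, the paper's proof (which fixes $\varepsilon$ before letting $k\to\infty$) is erroneous, and the Lemma, as well as its use at the end of the proof of Proposition~\ref{prop:well-defined-strictly-convex-set}, requires an additional hypothesis such as global strict convexity of $A$, a linear lower bound $\mathfrak{m}_a(\varepsilon;A)\ge c\varepsilon$ near $a$, or a boundary-regularity condition.
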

\begin{proof}
    We have $\mathfrak{m}_a(\varepsilon; A) > 0$ for any $\varepsilon > 0$. Fix $\varepsilon > 0$. It suffices to show that $\mathfrak{m}_{a_k}(\varepsilon; A) > 0$ for large enough $k$.
    Note that for any $k\ge1$ and $b\in A$,
    \[
    \mbox{dist}\left(\frac{a_k + b}{2},\, \partial A\right) \ge \mbox{dist}\left(\frac{a+b}{2},\,\partial A\right) - (1/2)\|a_k - a\|.
    \]
    This implies that
    \begin{align*}
    \mathfrak{m}_{a_k}(\varepsilon; A) &\ge \inf\left\{\mbox{dist}\left(\frac{a + b}{2},\,\partial A\right):\, b\in A,\,\|a_k - b\| \ge \varepsilon\right\} - (1/2)\|a_k - a\|,\\
    &\ge \inf\left\{\mbox{dist}\left(\frac{a + b}{2},\,\partial A\right):\, b\in A,\,\|a - b\| \ge \varepsilon - \|a_k - a\|\right\} - (1/2)\|a_k - a\|\\
    &= \mathfrak{m}_a(\varepsilon - \|a_k - a\|; A) - (1/2)\|a_k - a\|.
    \end{align*}
    For any $\varepsilon > 0$, there exists $K \ge 1$ such that for all $k \ge K$, $\|a_k - a\| \le \min\{\varepsilon/2,\, \mathfrak{m}_a(\varepsilon/2; A)\}$. This implies that for $k\ge K$,
    \[
    \mathfrak{m}_{a_k}(\varepsilon; A) \ge \mathfrak{m}_a(\varepsilon/2)/2 > 0.
    \]
    This proves the result.
\end{proof}

\section{Additional Results on Ordinary Differential Equations}
\begin{proposition}[Solutions of Specific Volterra Equations of Second Kind]\label{prop:Volterra}
    Suppose $a:[0,1]\to\mathbb{R}^d$ is an integrable function and $b:[0,1]\to\mathbb{R}^{d\times d}$ be a bounded (in operator norm) function, i.e., $\|b(s)\|_{\mathrm{op}} \le M < \infty$. Consider the integral equation
    \begin{equation}\label{eq:Volterra-equation}
    w(t) = \int_0^t a(s)ds + \int_0^t b(s)w(s)ds\in\mathbb{R}^d,\quad\mbox{for all}\quad t\in[0, 1],\quad\mbox{with}\quad w(0) = 0.
    \end{equation}
    Then~\eqref{eq:Volterra-equation} has a unique solution and moreover, 
    \[
    w(t) = \Phi(t)\int_0^t (\Phi(s))^{-1}a(s)ds,
    \]
    where $\Phi:[0,1]\to\mathbb{R}^{d\times d}$ is the unique invertible solution to the 
    \begin{equation}\label{eq:fundamental-matrix-Volterra}
    \Phi(t) = I + \int_0^t b(s)\Phi(s)ds,\quad\mbox{for all}\quad t\in[0, 1],\quad\mbox{with}\quad \Phi(0) = I\in\mathbb{R}^{d\times d}.
    \end{equation}
    Moreover, \begin{equation}\label{eq:derphit} (d/dt)(\Phi(t))^{-1} = -(\Phi(t))^{-1}b(t)\end{equation} for almost all $t\in[0, 1]$ and 
    \[
    \|\Phi(1)(\Phi(t))^{-1}\|_{\mathrm{op}}= \|(\Phi(t))^{-1}\Phi(1)\|_{\mathrm{op}} \le \|\Phi(1)\|_{\mathrm{op}}\exp(Mt)\quad\mbox{ for all }t\in[0, 1].
    \]
\end{proposition}
\begin{proof}
    The uniqueness result follows from Theorem 1.2.3 of~\cite{brunner2017volterra}. (The assumption of continuous kernel (i.e., $K\in C(D)$, in the notation of~\cite{brunner2017volterra} is not needed when $\|b(s)\|_{\mathrm{op}} \le M$ uniformly over $s\in[0,1]$. In this case, we interpret the solution as the Carath{\'e}odory solution. Moreover, the proof extends to the vector-valued case.) Also, see Theorem 3.1 (Chapter 3) of~\cite{Coddington55}. 

    Note that the integral equation~\eqref{eq:fundamental-matrix-Volterra} is of the same type as~\eqref{eq:Volterra-equation} and hence from Theorem 1.2.3 of~\cite{brunner2017volterra}, we get the existence and uniqueness of the solution for~\eqref{eq:fundamental-matrix-Volterra}. Because $\Phi(\cdot)$ solves~\eqref{eq:fundamental-matrix-Volterra}, it is absolutely continuous and almost everywhere differentiable. This implies that for almost all $t\in[0, 1]$, 
    \[
    \frac{d\mbox{det}(\Phi(t))}{dt} = (\mbox{tr}(b(t)))\mbox{det}(\Phi(t)).
    \]
    (See, for example, Theorem 7.3 of~\citet[Chapter 1]{Coddington55}.) Hence, 
    \[
    \mbox{det}(\Phi(t)) = \mbox{det}(\Phi(0))\exp\left(\int_0^t \mbox{tr}(b(s))ds\right) = \exp\left(\int_0^t \mbox{tr}(b(s))ds\right),
    \]
    which implies that $\Phi(t)$ is invertible for all $t\in[0, 1]$. (This equation for the determinant is called the Abel–Jacobi–Liouville identity.)

    To prove the final part, note that from Eq. (1.6) of~\citet[Chapter 3]{Coddington55}, we get $(d/dt)(\Phi(t))^{-1} = -(\Phi(t))^{-1}b(t)$ for almost all $t\in[0, 1]$ with $(\Phi(0))^{-1} = I$. Equivalently, $(\Phi(t))^{-1} = I + \int_0^t b(s)(\Phi(s))^{-1}ds$ for all $t\in[0, 1]$. Therefore, 
    \begin{align*}
    \|(\Phi(t))^{-1}\Phi(1)\|_{\mathrm{op}} &\le \|\Phi(1)\|_{\mathrm{op}} + \int_0^t \|(\Phi(s))^{-1}\Phi(1)\|_{\mathrm{op}}\|b(s)\|_{\mathrm{op}}ds\\ 
    &\le \|\Phi(1)\|_{\mathrm{op}} + C\int_0^t \|(\Phi(s))^{-1}\Phi(1)\|_{\mathrm{op}}ds.
    \end{align*}
    Solving this inequality, we get $\|(\Phi(t))^{-1}\Phi(1)\|_{\mathrm{op}} \le \|\Phi(1)\|_{\mathrm{op}}\exp(Mt)$ for all $t\in[0, 1]$. Following the same strategy with the differential equation defining $\Phi(\cdot)$, we get $\|\Phi(1)\| \le \exp(M)$. Hence, $\max_{s\in[0, 1]}\|\Phi(1)(\Phi(s))^{-1}\|_{\mathrm{op}} \le \exp(2M)$.
\end{proof}

\section{Proofs of Results in Section~\ref{sec:rectified-flow}}
\subsection{Proof of Theorem~\ref{thm:uniquene-ae-implies-transport-map}}\label{appsubsec:proof-unique-almost-surely}
Note that $t\mapsto X_t$ is a differentiable map and hence, for any continuously differentiable function $\varphi:\Omega\to\mathbb{R}$ (with bounded derivative), we have by the dominated convergence theorem that
\[
\frac{d}{dt}\mathbb{E}[\varphi(X_t)] = \mathbb{E}\left[\varphi'(X_t)(X_1 - X_0)\right] = \mathbb{E}[\varphi'(X_t)\mathbb{E}[X_1 - X_0|X_t]] = \mathbb{E}[\varphi'(X_t)v(t, X_t)].
\]
Here, the second equality follows from the law of iterated expectations, and the third equality follows from the definition of the velocity field $v$.
Hence, $\mu_t$, the law of $X_t$, satisfies the continuity equation (Eq. (8.1.1) of~\cite{AmbrosioGigliSavare2008}; see the equivalent formulation (8.1.4) there). Hence, by Theorem 8.2.1(ii) of~\cite{AmbrosioGigliSavare2008}, we conclude that 
\[
\mathbb{E}[\varphi(X_t)] = \int_{\Omega\times\Gamma} \varphi(\gamma_x(t))d\eta(x, \gamma_x),
\]
where $\Gamma$ is the collection of functions from $[0, 1]$ to $\Omega$, and $\eta(\cdot, \cdot)$ is a probability measure concentrated on the set of pairs $(x, \gamma_x)$ such that $\gamma_x:[0,1]\to\Omega$ is absolutely continuous and solves $\gamma_x'(t) = v(t, \gamma_x(t))$ for almost all $t\in(0, 1)$ with $\gamma_x(0) = x$. From the hypothesis of a unique solution to~\eqref{eq:rectified-flow-integral-uniqueness} for almost all $x$, we get that
\[
\int_{\Omega\times\Gamma} \varphi(\gamma_x(t))d\eta(x, \gamma_x) = \int_{\Omega} \varphi(\mathfrak{R}(t, x))d\mu_0(x) = \mathbb{E}[\varphi(\mathfrak{R}(t, X_0))].
\]
Therefore, $\mathbb{E}[\varphi(X_t)] = \mathbb{E}[\varphi(\mathfrak{R}(t, X_0))]$ for all $t\in[0, 1]$. Hence, $\mathfrak{R}(t, X_0)$ has the same distribution as $X_t$ for all $t\in[0, 1].$ This completes the proof.

\subsection{Proof of Lemma~\ref{lemma::gaussianvel}}
We have
\[
\begin{pmatrix}
        X_1 - X_0 \\
        tX_1 + (1-t)X_0
    \end{pmatrix} = \begin{pmatrix}
        I_d & -I_d\\
        tI_d & (1-t)I_d
    \end{pmatrix} \begin{pmatrix}
        X_1 \\
        X_0
\end{pmatrix}
\]
Here $I_d$ is the identity matrix with $d\times d$ order, and 
\[
    \begin{pmatrix}
        X_1 \\
        X_0
    \end{pmatrix} \sim N\bigg(\begin{pmatrix}
        m_1\\
        m_0
    \end{pmatrix},\begin{pmatrix}
        \Sigma_1 & 0 \\
        0 & \Sigma_0
    \end{pmatrix}\bigg)
\]
Then, 
\[  \begin{pmatrix}
        X_1 - X_0 \\
        tX_1 + (1-t)X_0
    \end{pmatrix} \sim N\bigg(\begin{pmatrix}
        m_1-m_0 \\
        tm_1+(1-t)m_0
    \end{pmatrix},\begin{pmatrix}
            \Sigma_1 + \Sigma_0 & t\Sigma_1 - (1-t)\Sigma_0\\
            t\Sigma_1 - (1-t)\Sigma_0 & t^2\Sigma_1 +(1-t)^2\Sigma_0
        \end{pmatrix}\bigg)
\]
and, therefore,
\begin{align*}
    \mathbb{E}(X_1 - X_0\mid X_t) &= m_1 - m_0 + (t\Sigma_1 - (1-t)\Sigma_0)(t^2\Sigma_1 +(1-t)^2\Sigma_0)^{-1}(X_t - m_t),
\end{align*}
and hence
\begin{equation}
\label{eq:vtgaussian}
v_t(z) = m_1 - m_0 + (t\Sigma_1 - (1-t)\Sigma_0)(t^2\Sigma_1 +(1-t)^2\Sigma_0)^{-1}(z - m_t).
\end{equation}

\subsection{Proof of Proposition~\ref{prop:flow-from-Gaussian-to-Gaussian}}\label{appsec:proof-of-proposition-flow-Gaussian-to-Gaussian}
We first need a lemma.
\begin{lemma}\label{lemma::gaussianvel2}
Let 
$X_0 \sim N(m_0,\Sigma_0)$,
$X_1 \sim N(m_1,\Sigma_1)$,
and suppose that $X_1=A X_0+b$ so that $A\Sigma_0A^\top=\Sigma_1$ and $b=Am_0-m_1$. Also, suppose that $\Sigma_0$ and $\Sigma_1$ are invertible and that $A$ is positive semidefinite. Then, 
\begin{equation}
\label{eq:gaussianvelT}
v_t(z) = m_1 - m_0 + (A-I_d)(tA+ (1-t)I_d)^{-1}(z - m_t).
\end{equation}
\end{lemma}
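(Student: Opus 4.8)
The plan is to exploit the fact that, although the velocity field~\eqref{eq::vzt} is defined through a conditional expectation, the coupling in this lemma is \emph{degenerate}: $X_1 = AX_0 + b$ is a deterministic affine function of $X_0$. First I would rewrite the interpolant as a deterministic affine image of $X_0$,
\[
X_t = (1-t)X_0 + tX_1 = M_t X_0 + tb, \qquad M_t := (1-t)I_d + tA,
\]
and then check that $M_t$ is invertible for every $t\in[0,1]$. Indeed, from $\Sigma_1 = A\Sigma_0 A^\top$ together with invertibility of $\Sigma_0$ and $\Sigma_1$ one gets $\det(A)^2\det(\Sigma_0) = \det(\Sigma_1)\neq 0$, so $A$ is invertible; being also positive semidefinite, $A$ is positive definite, and hence the eigenvalues $(1-t) + t\lambda_i$ of $M_t$ (with $\lambda_i > 0$ the eigenvalues of $A$) are strictly positive for all $t\in[0,1]$.

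With $M_t$ invertible, the map $X_0\mapsto X_t$ is a bijection, so the conditional distribution of $X_0$ given $X_t = z$ is the point mass at $M_t^{-1}(z - tb)$, and the conditional expectation reduces to a substitution:
\[
v_t(z) = \mathbb{E}[X_1 - X_0\mid X_t = z] = (A - I_d)\,\mathbb{E}[X_0\mid X_t = z] + b = (A - I_d)M_t^{-1}(z - tb) + b .
\]
The rest is algebra aimed at recognizing $m_t = (1-t)m_0 + tm_1$ inside this expression. Taking expectations in $X_1 = AX_0 + b$ gives $m_1 = Am_0 + b$, whence
\[
M_t m_0 + tb = (1-t)m_0 + t(Am_0 + b) = (1-t)m_0 + tm_1 = m_t,
\]
so that $M_t^{-1}(z - tb) = M_t^{-1}(z - m_t) + m_0$. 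Substituting this and using $(A - I_d)m_0 + b = (Am_0 + b) - m_0 = m_1 - m_0$ yields
\[
v_t(z) = m_1 - m_0 + (A - I_d)M_t^{-1}(z - m_t) = m_1 - m_0 + (A - I_d)(tA + (1-t)I_d)^{-1}(z - m_t),
\]
which is exactly~\eqref{eq:gaussianvelT}.

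There is no genuinely difficult step: the only points that deserve an explicit line are the invertibility of $M_t$ at the endpoint $t = 1$ and the remark that, because the coupling is degenerate, the conditional expectation is unambiguous and no appeal to Theorem~\ref{thm:uniquene-ae-implies-transport-map} or to Gaussianity (beyond fixing $m_0, m_1, \Sigma_0, \Sigma_1$) is required. I would close by recording the intended use in the proof of Proposition~\ref{prop:flow-from-Gaussian-to-Gaussian}: applying the lemma with $A = \Sigma_0^{-1/2}(\Sigma_0^{1/2}\Sigma_1\Sigma_0^{1/2})^{1/2}\Sigma_0^{-1/2}$, which is positive definite and satisfies $A\Sigma_0 A^\top = \Sigma_1$, shows that the coupling $(X_0, R(X_0))$ has a velocity field of the same affine-in-$z$ form as~\eqref{eq:gaussianvel}, from which one concludes that the rectified map does not change under further iteration.
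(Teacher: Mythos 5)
Your proof is correct and takes a genuinely different — and in my view cleaner — route than the paper's. The paper computes the full joint Gaussian covariance of $(X_1-X_0,\, tX_1+(1-t)X_0)$ using $\operatorname{Cov}(X_1,X_0)=A\Sigma_0$, then applies the Gaussian conditional-mean formula and performs a nontrivial matrix factoring, namely writing the cross-covariance as $(A-I_d)\Sigma_0(tA^\top+(1-t)I_d)$ and the denominator block as $(tA+(1-t)I_d)\Sigma_0(tA^\top+(1-t)I_d)$, so that $\Sigma_0$ and its flanking factors cancel. You instead observe that the coupling is degenerate — $X_t = M_tX_0 + tb$ with $M_t=(1-t)I_d+tA$ invertible — so the conditional law of $X_0$ given $X_t=z$ is a point mass and the conditional expectation is literal substitution; everything reduces to the two elementary identities $M_tm_0+tb=m_t$ and $(A-I_d)m_0+b=m_1-m_0$. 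What this buys: you avoid the covariance computation and the factoring cancellation entirely, and as you note, Gaussianity plays no role beyond specifying the moments — the same formula holds for any distribution with a deterministic affine coupling. What the paper's route buys: it keeps the argument formally parallel to the proof of Lemma~\ref{lemma::gaussianvel} (the independent case), where the conditional-mean formula is genuinely needed. One small point worth flagging: you implicitly corrected a sign typo in the paper's hypothesis. The lemma states $b=Am_0-m_1$, but $X_1=AX_0+b$ forces $m_1=Am_0+b$, i.e.\ $b=m_1-Am_0$; your algebra uses the correct relation. Also, your invertibility argument for $M_t$ via the eigenvalues $(1-t)+t\lambda_i$ with $\lambda_i>0$ presumes $A$ is diagonalizable with real positive spectrum, which is automatic if ``positive semidefinite'' means symmetric PSD; in the paper's application to Proposition~\ref{prop:flow-from-Gaussian-to-Gaussian} the matrix $A$ is only similar to a symmetric PD matrix, but the eigenvalue argument goes through unchanged.
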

\begin{proof}
The proof is similar to the one of Lemma \ref{lemma::gaussianvel}. In this case, we have
\[
\begin{pmatrix}
        X_1 - X_0 \\
        tX_1 + (1-t)X_t
    \end{pmatrix} = \begin{pmatrix}
        I_d & -I_d\\
        tI_d & (1-t)I_d
    \end{pmatrix} \begin{pmatrix}
        X_1 \\
        X_0
\end{pmatrix}
\]
and 
\[
    \begin{pmatrix}
        X_1 \\
        X_0
    \end{pmatrix} \sim N\bigg(\begin{pmatrix}
        m_1\\
        m_0
    \end{pmatrix},\begin{pmatrix}
        \Sigma_1 & A\Sigma_0 \\
        \Sigma_0A^\top & \Sigma_0
    \end{pmatrix}\bigg).
\]
Then,
\[  \begin{pmatrix}
        X_1 - X_0 \\
        tX_1 + (1-t)X_0
    \end{pmatrix} \sim N\bigg(\begin{pmatrix}
        m_1-m_0 \\
        tm_1+(1-t)m_0
    \end{pmatrix},\tilde{\Sigma}\bigg)
\]
with
\begin{eqnarray*}\tilde{\Sigma}&=&\begin{pmatrix}
            A\Sigma_0A^\top + \Sigma_0 & tA\Sigma_0A^\top - (1-t)\Sigma_0+(1-t)A\Sigma_0-t\Sigma_0A^\top\\
           tA\Sigma_0A^\top - (1-t)\Sigma_0+(1-t)\Sigma_0A^\top-tA\Sigma_0 & t^2A\Sigma_0A^\top +(1-t)^2\Sigma_0+t(1-t)\left(A\Sigma_0+\Sigma_0A^\top\right)
        \end{pmatrix}
        \\ &=& \begin{pmatrix}
            A\Sigma_0A^\top + \Sigma_0 & (A-I_d)\Sigma_0(tA^\top +(1-t)I_d) \\
          (tA+(1-t)I_d)\Sigma_0 (A^\top -I_d) & (tA+(1-t)I_d)\Sigma_0(tA^\top +(1-t)I_d)
        \end{pmatrix} \end{eqnarray*}
Therefore,   
\begin{eqnarray*}
\label{eq:vtgaussian}
v_t(z) &=& m_1 - m_0 + (A-I_d)\Sigma_0(tA^\top +(1-t)I_d)\left((tA +(1-t)I_d)\Sigma_0(tA^\top +(1-t)I_d)\right)^{-1}(z - m_t)\\
&=& m_1 - m_0 + (A-I_d)(tA +(1-t)I_d)^{-1}(z - m_t).
\end{eqnarray*}
In the last line, we have used the fact that $tA+(1-t)I_d$ is an invertible matrix. In turn, this follows from the fact that $A$ is invertible and hence positive definite. To see that $A$ is invertible we argue by contradiction. If it was not, let $m<d$ be the rank of $A$. Then, $d=rank(\Sigma_1)=rank(A\Sigma_0A^T)\leq \min\{d,m\}=m$ which is impossible.
\end{proof}

\begin{proof}[Proof of Proposition~\ref{prop:flow-from-Gaussian-to-Gaussian}]
Let's compute the map $R_1(x)$ resulting from the first iteration. We have, by Lemma \ref{lemma::gaussianvel} that
\begin{equation}
v_t(z) = m_1 - m_0 + (t\Sigma_1 - (1-t)\Sigma_0)(t^2\Sigma_1 +(1-t)^2\Sigma_0)^{-1}(z - m_t).
\end{equation}
Denote $C\equiv \Sigma_0^{-1/2}\Sigma_1\Sigma_0^{-1/2}$. Then, we can write
\begin{equation}
v_t(z) = m_1 - m_0 + \Sigma_0^{1/2}(tC - (1-t)I_d)(t^2C +(1-t)^2I_d)^{-1}\Sigma_0^{-1/2}(z - m_t).
\end{equation}
Write the eigendecomposition of $C$ as $C=P\Lambda P^\top$. Then,
\begin{equation}\label{eq:velg}
v_t(z) = m_1 - m_0 + \Sigma_0^{1/2}P(t\Lambda - (1-t)I_d)(t^2\Lambda +(1-t)^2I_d)^{-1}P^\top\Sigma_0^{-1/2}(z - m_t).
\end{equation}
Denote by $z_t$ the solution to \eqref{eq:rectified-flow-ODE} with velocity given by \eqref{eq:velg} and initial condition $z_0=x$. Define the variable $y_t=P^\top\Sigma_0^{-1/2}z_t$. This variable satisfies the ODE
\begin{eqnarray*}dy_t &=&P^\top \Sigma_0^{-1/2}dz_t \\ &=& \left(P^\top \Sigma_0^{-1/2}(m_1 - m_0) + (t\Lambda - (1-t)I_d)(t^2\Lambda +(1-t)^2I_d)^{-1}P^\top\Sigma_0^{-1/2}(z - m_t)\right)dt\\
&=& \left(P^\top \Sigma_0^{-1/2}(m_1 - m_0) + (t\Lambda - (1-t)I_d)(t^2\Lambda +(1-t)^2I_d)^{-1}(y - P^\top \Sigma_0^{-1/2}m_t)\right)dt.\end{eqnarray*}
With $y_0=P^\top \Sigma_0^{-1/2}z_0=P^\top \Sigma_0^{-1/2}x.$ This ODE is of the form $dy_t=(a_t+p_ty_t)dt$ where $a_t$ is a vector and $p_t$ is a diagonal matrix. We can solve this equation component-wise. Using, for each coordinate, the integrating factor $g_t=\exp\left(\int_0^t -D_s ds\right)$ we have the relation
$\frac{d }{dt}(y_t g_t)=a_tg_t$. By integrating both sides between 0 and t we obtain
$$y_t=g_t^{-1}\left(y_0+\int^t_0 a_sg_sds\right).$$
In particular, we observe that each coordinate of $Y_t$ is an affine function of the initial condition $Y_0$ (and hence, $X$). Therefore, for some vector $b$,
$$y_1=b+g_1^{-1}y_0,$$ where 
\begin{eqnarray*} g_1&=&\exp\left(-\int_0^1 (t\Lambda - (1-t)I_d)(t^2\Lambda +(1-t)^2I_d)^{-1}ds\right) \\ &=& 
\exp\left(-\frac{1}{2}\log (t^2\Lambda +(1-t)^2I_d) \Big \rvert_0^1 \right) \\ &=&
\exp\left(-\frac{1}{2}\Lambda  \right)  = \sqrt{\Lambda}^{-1}. 
\end{eqnarray*}

We have found that for some vector $b$,
$y_1=\sqrt{\Lambda}y_0+b$. Expressing the above in terms of the original variables $z$,
$$P^\top \Sigma^{-1/2}_0 z_1=\sqrt{\Lambda}P^\top \Sigma^{-1/2}_0 z_0 +b.$$
Equivalently, since $R_1(x)=z_1$,
\begin{eqnarray*}R_1(x)&=&\Sigma_0^{1/2}P\sqrt{\Lambda}P^\top \Sigma^{-1/2}_0 x +b'\\&=& 
\Sigma_0^{1/2}C^{1/2} \Sigma^{-1/2}_0 x +b'\\ &=& 
\Sigma_0^{1/2}\left(\Sigma_0^{-1/2}\Sigma_1\Sigma^{-1/2}_0\right)^{1/2} \Sigma^{-1/2}_0 x +b',
\end{eqnarray*}
where $b'$ is another constant vector, whose value we can compute explicitly. Indeed, since $R_1(X_0)\sim N\left(m_1,\Sigma_1\right)$  where $X_0\sim N\left(m_0,\Sigma_0\right)$ we can simply express
$$R_1(x)=m_1 +\Sigma_0^{1/2}\left(\Sigma_0^{-1/2}\Sigma_1\Sigma^{-1/2}_0\right)^{1/2} \Sigma^{-1/2}_0 \left(x-m_0\right).$$

We have found the solution to the first iteration. It remains to show that the second iteration leaves this initial iteration unchanged. The key fact is that after one iteration the initial coupling between $X_0$ and $X_1$ is deterministic (instead of the independence coupling), so we can use Lemma \ref{lemma::gaussianvel2}. We can use a similar argument to solve the resulting ODE with velocity field 
$$v_t(z)=m_1 - m_0 + (A-I_d)(tA +(1-t)I_d)^{-1}(z - m_t),$$
where $A=\Sigma_0^{1/2}C\Sigma_0^{-1/2}$ and $C=\left(\Sigma_0^{-1/2}\Sigma_1\Sigma^{-1/2}_0\right)^{1/2}$ is a symmetric matrix. Therefore,
$$v_t(z)=m_1 - m_0 + \Sigma_0^{1/2}(C-I_d)(tC +(1-t)I_d)^{-1}\Sigma_0^{-1/2}(z - m_t).$$
We can use the same diagonalization argument as before to decouple the system and solve each coordinate separately. Expressing $C=P\Lambda P^t$, this time 
\begin{eqnarray*} g_1&=&
\exp\left(-\int_0^1 (\Lambda  -I_d)(t\Lambda +(1-t)I_d)^{-1}ds\right) \\ 
&=& 
\exp\left(-\log (t\Lambda +(1-t)I_d) \Big \rvert_0^1 \right) =
\exp\left(-\Lambda  \right)  = \Lambda^{-1}. 
\end{eqnarray*}
Consequently, calling $R_2(X)$ the second iteration of the rectified map
\begin{eqnarray*} 
R_2(X)&=&m_1+\Sigma_0^{1/2}P\Lambda P^\top\Sigma_0^{-1/2}\left(X-m_0\right)\\ 
&=& m_1+\Sigma_0^{1/2}C\Sigma_0^{-1/2}\left(X-m_0\right)= R_1(X).
\end{eqnarray*}
\end{proof}

\subsection{Proof of Lemma \ref{lemma::mixture}}\label{appsec:proof-of-lemma-mixture}
When $X_0$ and $X_1$ are mixtures of Gaussians, then we have,
\begin{align*}
    v_t(z)&= \frac{\int y \bigg(\sum_{i=1}^{I_0}\pi_0^i p^i_{0}(z-ty)\bigg)\bigg(\sum_{j=1}^{I_1}\pi^j_{1}p_1^{j}(z+(1-t)y)\bigg)dy}{\int \bigg(\sum_{i=1}^{I_0}\pi^i_{0}p^i_{0}(z-ty)\bigg)\bigg(\sum_{j=1}^{I_1}\pi^j_{1}p_1^{j}(z+(1-t)y)\bigg)dy}\\
    & = \frac{\sum_{i,j}^{I_0,I_1}\pi^i_{0}\pi_1^{j}\int y p^i_{0}(z-ty) p^j_{1}(z+(1-t)y) dy}{\sum_{i,j}^{I_0,I_1}\pi^i_{0}\pi^j_{1}\int p^i_{0}(z-ty)p^{j}_1(z+(1-t)y)dy} \\
    & = \frac{\sum_{i,j}^{I_0,I_1}\pi^i_{0}\pi^j_{1}v^{i,j}_t(z)\tau^{i,j}_t(z)}{\sum_{i,j}^{I_0,I_1}\pi_{0,i}\pi_{1,j}\tau^{i,j}_t(z)},
\end{align*}
where
\begin{align*}
    v^{i,j}_t(z) & = \frac{\int y p^i_{0}(z-ty) p^j_{1}(z+(1-t)y) dy}{\int p^i_{0}(z-ty)p^j_{1}(z+(1-t)y)dy}\\
    \tau^{i,j}_t(z) & = \int p^i_{0}(z-ty)p^j_{1}(z+(1-t)y)dy
\end{align*}
and 
 $$p^i_0 \sim N(\mu^i_0,\Sigma^i_{0}),
    \quad \quad p^j_1 \sim N(\mu^j_1,\Sigma^j_{1}).$$
Each of the $v^{i,j}_t(z)$ is the velocity field between pairs of mixture components $X_0^i\sim N(\mu^i_0,\Sigma^i_{0})$ and $X_1^j\sim N(\mu^j_1,\Sigma^j_{1})$ so that by Equation \eqref{eq:gaussianvel} 
$$v^{i,j}_t(z)=\mu^j_1 - \mu^i_0 + (t\Sigma^j_1 - (1-t)\Sigma^i_0)(t^2\Sigma^j_1 +(1-t)^2\Sigma^i_0)^{-1}(z - (t\mu_1^j-(1-t)\mu^i_0).$$

It remains to compute $\tau^{i,j}(z,t)$. By simplicity remove $i,j$ and focus on the integral $$\tau^{i,j}(z,t)=\int p_{0}(z-ty)p_{1}(z+(1-t)y)dy.$$
Consider the change of variables $x=z+(1-t)y$ (the case $t=1$ is trivial). Then, we write
$$\tau_{i,j}(z,t)=\frac{1}{(1-t)^d}\int p_{0}\left(\frac{1}{1-t}-\frac{t}{1-t}x\right)p_{1}(x)dx.$$

Define the quantities $a=\frac{z}{1-t}$ and $b=\frac{t}{1-t}$ so that
$$\tau^{i,j}_t(z)=\frac{1}{(1-t)^d}\int p_{0}(a-bx)p_{1}(x)dx.$$
We can interpret the above integral as the marginal likelihood of $a$ in the model
$X=bX_1+X_0$ where $X_0\perp X_1$ and $X_{0i}\sim N(\mu_i,\Sigma_i)$. The random variable $X$ then distributes $$X\sim N\left(b\mu_1+\mu_0,b^2\Sigma_1+\Sigma_0\right)=N\left(\frac{1}{1-t}\left(t\mu_1+(1-t)\mu_0\right),\frac{1}{(1-t)^2}\left(t^2\Sigma_1+(1-t)^2\Sigma_0\right)\right).$$
If $p_X$ denotes the density of $X$ and $\mu_t=t\mu_0+(1-t)\mu_1, \Sigma_t=t^2\Sigma_1+(1-t)^2\Sigma_0$, then
\begin{align*} \tau^{i,j}_t(z)&=\frac{1}{(1-t)^d}p_X(a)\\ &=\frac{1}{(1-t)^d}p_X\left(\frac{z}{1-t}\right)\\
&=\frac{1}{(1-t)^d}\frac{1}{\det\left(2\pi\frac{\Sigma_t}{(1-t)^2}\right)^{1/2}}\exp\left(-\frac{1}{2}\frac{1}{1-t}\left(z-\mu_t\right)^\top \left(\frac{\Sigma_t}{ (1-t)^2}\right)^{-1} \frac{1}{1-t}\left(z-\mu_t\right)^\top\right) \\
&=\frac{1}{\det\left(2\pi\Sigma_t\right)^{1/2}}\exp\left(-\frac{1}{2}\left(z-\mu_t\right)^\top \Sigma_t^{-1} \left(z-\mu_t\right)^\top\right)\\
&=N\left(z;\mu_t,\Sigma_t\right).
\end{align*}

\subsection{Proof of Proposition~\ref{prop:consistency-of-density-estimator}}\label{appsec:proof-of-consistency-of-density-estimator}
    Observe that
    \begin{align*}
        \hat{f}_t(z) - f_t(z) &= \int_{\mathbb{R}^d} \delta \{\hat{p}_0(z - t\delta) - p_0(z - t\delta)\}\hat{p}_1(z + (1 - t)\delta)d\delta\\
        &\quad + \int_{\mathbb{R}^d} \delta p_0(z - t\delta)\{\hat{p}_1(z + (1 - t)\delta) - p_1(z + (1 - t)\delta)\}d\delta\\
        &= \mathbf{I} + \mathbf{II}.
    \end{align*}
    We start with $\mathbf{II}$. By H{\"o}lder's inequality, we have
    \begin{align*}
    \|\mathbf{II}\| &\le \left(\int_{\mathbb{R}^d} \|\delta\|^2p_0^2(z - t\delta)d\delta\right)^{1/2}\left(\int_{\mathbb{R}^d} |\hat{p}_1(z + (1 - t)\delta) - p_1(z + (1 - t)\delta)|^2d\delta\right)\\
    &= t^{-d/2}(1 - t)^{-d/2}\left(\int_{\mathbb{R}^d} \|z - u\|^2p_0^2(u)du\right)^{1/2}\left(\int_{\mathbb{R}^d} |\hat{p}_1(u) - p_1(u)|^2du\right)^{1/2}.
    \end{align*}
    On the other hand, 
    \begin{align*}
        \|\mathbf{I}\| &\le \left(\int_{\mathbb{R}^d} \|\delta\|^2\hat{p}_1^2(z + (1-t)\delta)d\delta\right)^{1/2}\left(\int_{\mathbb{R}^d} |\hat{p}_0(z - t\delta) - p_0(z - t\delta)|^2d\delta\right)\\
        &= (1-t)^{-d/2}t^{-d/2}\left(\int_{\mathbb{R}^d} \|z - u\|^2\hat{p}_1^2(u)du\right)^{1/2}\left(\int_{\mathbb{R}^d} |\hat{p}_0(u) - p_0(u)|^2du\right)^{1/2}.
    \end{align*}
    To bound the first integral, we note that
    \[
    \int_{\mathbb{R}^d} \|z - u\|^2\hat{p}_1^2(u)du \le \|\hat{p}_1\|_{\infty}\int_{\mathbb{R}^d} \|z - u\|^2\hat{p}_1(u)du \le \|\hat{p}_1\|_{\infty}\left(\int_{\mathbb{R}^d} \|z - u\|^2p_1(u)du + \bar{\zeta}_2(\hat{p}_1, p_1)\right).
    \]
    Hence, 
    {\small\[
    \|\hat{f}_t(z) - f_t(z)\| \le \frac{\|\hat{p}_1 - p_1\|_2\|p_0\|_{\infty}(\mathbb{E}\|z - X_0\|^2)^{1/2} + \|\hat{p}_0 - p_0\|_2\|\hat{p}_1\|_{\infty}(\mathbb{E}\|z - X_1\|^2 + \bar{\zeta}_2(\hat{p}_1, p_1))^{1/2}}{t^{d/2}(1 - t)^{d/2}}. 
    \]}
    Following the same strategy for $\hat{p}_t(\cdot)$ yields
    \[
    |\hat{p}_t(z) - p_t(z)| \le \frac{\|p_0\|_{\infty}\|\hat{p}_1 - p_1\|_2 + \|\hat{p}_1\|_{\infty}\|\hat{p}_0 - p_0\|_2}{t^{d/2}(1 - t)^{d/2}}.
    \]
    \subsection{Proof of Theorem~\ref{thm:semiparametric-rate}}\label{appsubsec:proof-semiparametric-rate}
By the chain rule,
the efficient influence function is
$$
\varphi = \frac{\varphi_N}{p_t} - v_t(z) \frac{\varphi_D}{p_t} 
$$
where
$\varphi_N$ is the efficient influence function of the numerator and
$\varphi_D$ is the efficient influence function of the denominator.
There are two distributions so
$$
\varphi = \frac{\varphi_{N0} + \varphi_{N1}  }{p_t(z)} - v_t(z) \frac{\varphi_{D0}+\varphi_{D1}}{p_t(z)} =
\frac{\varphi_{N0}}{p_t(z)} - v_t(z) \frac{\varphi_{D0}}{p_t(z)} +
\frac{\varphi_{N1}}{p_t(z)} - v_t(z) \frac{\varphi_{D1}}{p_t(z)} \equiv
\varphi_0 + \varphi_1.
$$
The equations for these functions
follows by computing the Gateaux derivative.
Now consider the limiting distribution.
First consider $\psi_D$.
The von Mises expansion is
\begin{align*}
\psi_D &= \hat\psi_{pi} + \int \varphi_0(x,\hat p_1)d\mu_0 + \int \varphi_1(x,\hat p_0)d\mu_1 + R_n\\
\end{align*}
where $R_n$ is a second order remainder.
Ignoring $R$ we have,
and writing $\psi_D$ as $\psi$ for simplicity,
\begin{align*}
\hat\psi - \psi &=
\psi_{pi}+ \frac{1}{n}\sum_i \varphi(X_i,\hat p_1) + \frac{1}{n}\sum_i \varphi(Y_i,\hat p_0) - \psi(p_0,p_1)\\
&=
\psi_{pi} +
\frac{1}{n}\sum_i \varphi(X_i,\hat p_1) + \frac{1}{n}\sum_i \varphi(Y_i,\hat p_0) - 
\psi_{pi} - \int \varphi_0(x,\hat p_1)d\mu_0 - \int \varphi_1(x,\hat p_0)d\mu_1 \\
&=
\frac{1}{n}\sum_i \varphi(X_i,\hat p_1) + \frac{1}{n}\sum_i \varphi(Y_i,\hat p_0) - 
\int \varphi_0(x,\hat p_1)d\mu_0 - \int \varphi_1(x,\hat p_0)d\mu_1 \\
&=
\frac{1}{n}\sum_i \varphi(X_i,p_1) + \frac{1}{n}\sum_i \varphi(Y_i,p_0) +
\frac{1}{n}\sum_i (\varphi(X_i,\hat p_1)- \varphi(X_i,p_1) ) +
\frac{1}{n}\sum_i (\varphi(Y_i,\hat p_0) - \varphi(Y_i,p_0)) \\
&\ \ \ \ \ \ \ \ -
\int (\varphi_0(x,\hat p_1)-\varphi_0(x,p_1))d\mu_0 -
\int (\varphi_1(x,\hat p_0)-\varphi_1(x,p_0)) d\mu_1\\
&=
\frac{1}{n}\sum_i \varphi(X_i,p_1) + \frac{1}{n}\sum_i \varphi(Y_i,p_0) +
\frac{1}{\sqrt{n}}\mathbb{G}_n (\varphi_0(X,\hat p_1)- \varphi_0(X,p_1) ) + 
\frac{1}{\sqrt{n}}\mathbb{G}_n (\varphi_1(Y,\hat p_0)- \varphi_1(Y,p_0) )
\end{align*}
where $\mathbb{G}_n(f(X_i)) = (1/\sqrt{n})( n^{-1}\sum_i f(X_i) - \int f(x) dP(x))$
is the empirical process.
As the two empirical process terms are $o_p(1)$,
$$
\sqrt{n}(\hat\psi - \psi) =
\sqrt{n}\left(\frac{1}{n}\sum_i \varphi(X_i,p_1) + \frac{1}{n}\sum_i \varphi(Y_i,p_0)\right) + o_p(1).
$$
A similar argument applies to
$\psi_N$ and then
the limit of the ratio followed by the delta method.

\subsection{Proof of Lemma~\ref{lem:semiparametric-bad}}\label{appsubsec:proof-semiparametric-bad}
The asymptotic variance is
$\E[\varphi_0^2(X_0)] + \E[\varphi_1^2(X_1)]$.
Now
\begin{align*}
\E[\varphi_0^2(X_0)] &= \int \varphi_0^2(x,p_1) d\mu_0(x)\\
& =
\frac{1}{p_t^2(z)}\int \varphi_{N0}^2(x,p_1)p_0(x) dx +
\frac{v_t^2(z)}{p_t^2(z)} \int \varphi_{D0}^2(x,p_1)p_0(x) dx - 
2\frac{v_t(z)}{p_t^2(z)} \int \varphi_{D0}(x,p_1) \varphi_{N0}(x,p_1) p_0(x)dx.
\end{align*}
The first term is
\begin{align*}
\frac{1}{p_t^2(z)}\int \frac{1}{t^{2d}} &
\int \left(\frac{z-x}{t}\right)^2 p_1^2\left( \frac{z-x}{t} + x \right) p_0(x) dx + f_t^2(z) -
\frac{2 f_t(z)}{t^d}
\int \left(\frac{z-x}{t}\right) p_1\left( \frac{z-x}{t} + x \right) p_0(x) dx\\
&\equiv A_1 + A_2 + A_3.
\end{align*}
Next,
\begin{align*}
A_1 &=
\frac{\int \left(\frac{z-x}{t}\right)\left(\frac{z-x}{t}\right)^T p_1^2\left( \frac{z-x}{t} + x \right) p_0(x) dx }
{(\int p_0(z-tx) p_1(z+ (1-t)x) dx)^2}\\
&=
\frac{1}{t^d} 
\frac{\int x x^T p_0(z-tx) p_1^2(z + (1-t)x) dx}{(\int  p_0(z-tx) p_1(z + (1-t)x) dx)^2}.
\end{align*}
Hence
$$
\lim_{t\to 0} t^d A_1 = \frac{1}{p_0(z)}
\int \delta^2 p_1^2(z+x) dx
$$
so that
$A_1 \asymp 1/t^d$.
Similar calculations apply to the other terms.

    \section{Proofs of Results in Section~\ref{sec:review-ODE}}
    \subsection{Proof of Theorem~\ref{thm:Peano-existence}}\label{appsubsec:proof-Peano-existence}
    \begin{enumerate}
    \item The first part of the theorem follows from Theorem 1 of~\cite{filippov2013differential}; Take $d = \mbox{dist}(x, \partial\Omega)$ and $m(s) = B$ for all $s\in[0, 1]$. Although Theorem 1 of~\cite{filippov2013differential} is written for $d = 1$, it still applies for $d > 1$. 

    \item The second part of the theorem follows from the first one using the fact that under~\ref{eq:integrably-bounded}, no solution of~\eqref{eq:generic-ODE-integral} can escape to $\infty$. To see this, note that for any solution $y(\cdot)$ of~\eqref{eq:generic-ODE-integral} defined on $[0, T]$, we have
    \[
    \|y(t)\| \le \|x\| + \int_0^t \|F(s, y(s))\|ds \le \|x\| + B\int_0^t (1 + \|y(s)\|)ds.
    \]
    Set $R(t) = \|x\| + B\int_0^t (1 + \|y(s)\|)ds$, so that $\|y(t)\| \le R(t)$. Observe that
    \[
    R'(t) = B(1 + \|y(t)\|) \le B(1 + R(t))\quad\Rightarrow\quad \frac{R'(t)}{1 + R(t)} \le B.
    \]
    Integrating both sides implies that
    \[
    \ln\left(\frac{1 + R(t)}{1 + R(0)}\right) \le Bt\quad\Rightarrow\quad \|y(t)\| \le R(t) \le (1 + \|x\|)e^{Bt} - 1,\mbox{ for all }t\in[0, T].
    \]
    Now applying part 1 of the theorem with $\mathcal{S} = \mathbb{R}^d$ and the upper bound on $\|F(t, x)\|$ as $(1 + \|x\|)Be^{B}$, we obtain a solution defined on $[0, 1]$.
    
    \item The proof of the third part follows the same logic as the first. However, let us first address a trivial case. If $\mbox{dist}(x,\, \partial\mathcal{S}) \ge B$, then one can simply apply part 1 of the theorem to prove the existence of a solution on $[0, 1]$ that lies entirely in $\mathcal{S}$. Under assumption~\ref{eq:bounded-in-x}, any solution to~\eqref{eq:generic-ODE-integral} satisfies $\|y(t) - x\| \le Bt$ for all $t\in[0, 1]$. Hence, in the definition of~\eqref{eq:generic-ODE-integral}, one can without loss of generality define $F$ on $[0,1]\times \mathcal{B}(x, B)\cap\mathcal{S}$. Additionally, for all $x\in \mathcal{B}(x, B)\cap\mathcal{S}$, $T_{\mathcal{S}}(x) = T_{\mathcal{S}\cap\mathcal{B}(x, B)}(x)$. Therefore, for the remaining part of the proof, we take $\mathcal{S} = \mathcal{S}\cap \mathcal{B}(x, B)$, which is a compact subset of $\mathbb{R}^d$.
    
    The proof proceeds as follows. We construct a uniformly equicontinuous sequence of functions $\{x_m:[0, 1]\to\mathcal{S}\}_{m\ge1}$, show that they approximately solve the ODE~\eqref{eq:generic-ODE}, and obtain a subsequential limit which solves the ODE. For any $x\in\mathcal{S}$, define the intermediate cone of $\mathcal{S}$ at $x$ as
    \begin{align*}
    T_{\mathcal{S}}^{\flat}(x) &:= \left\{v\in\mathbb{R}^d:\, \lim_{h\downarrow0}\frac{\mbox{dist}(x + hv, \mathcal{S})}{h} = 0\right\}.
    \end{align*} 
    See Definition 4.1.5 of~\cite{aubin2009set}.
    Proposition 4.2.1 of~\cite{aubin2009set} implies that $T_{\mathcal{S}}(x) = T^{\flat}_{\mathcal{S}}(x)$ under convexity of $\mathcal{S}$ assumed in~\ref{eq:tangent-cone-viability}. From Lemma 3 of~\cite{tallos1991viability}, it follows that for $\Delta:\mathcal{S}\times\mathbb{R}_+\to\mathbb{R}$ defined as
    \[
    \Delta(y, h) := \sup_{v\in T_{\mathcal{S}}(y)\cap \mathcal{B}_0}\, \frac{1}{h}\mbox{dist}(y + hv,\,\mathcal{S}), 
    \]
    satisfies $\lim_{h\to0} \Delta(y, h) = 0$ for all $y\in\mathcal{S}$. (The convergence need not be uniform in $y$.) Hence, for each $y\in\mathcal{S}$, there exists $h_y\in(0, 1/m)$ (for any $m\ge1$) such that
    \begin{equation}\label{eq:Delta-small}
        \Delta(y, h_y) \le \frac{1}{3m}\quad\mbox{for all}\quad y\in\mathcal{S}. 
    \end{equation}
    Define the open set
    \[
    U(y) = \left\{x\in\mathbb{R}^d:\, \mbox{dist}(x + h_yF(t, y), \mathcal{S}) < \mbox{dist}(y + h_yF(t, y)) + \frac{h_y}{3m}\right\},\; y\in\mathcal{S}.
    \]
    Clearly, $y\in U(y)$ and hence, there exists a $\delta_y\in (0, 1/m)$ such that $\mathcal{B}(y, \delta_y)\subseteq U(y)$. Moreover, 
    \[
    \mathcal{S} = \bigcup_{y\in\mathcal{S}} \{y\} \subseteq \bigcup_{y\in\mathcal{S}} \mathcal{B}(y, \delta_y).
    \]
    Compactness of $\mathcal{S}$ (from~\ref{eq:tangent-cone-viability}) implies that we can find a finite set $\mathcal{Y}$ such that 
    \[
    \mathcal{S} \subseteq \bigcup_{y\in\mathcal{Y}} \mathcal{B}(y, \delta_y).
    \]
    (This is because every cover has a finite sub-cover for compact sets in $\mathbb{R}^d$.) Set $h_0 = \min\{h_y:\, y\in\mathcal{Y}\} \in (0, 1/m)$. Note that the cardinality of $\mathcal{Y}$ depends on $m$, in general. 

    Define the sequence of approximate solutions, recursively, as follows. Set $x_m(0) = x$ for all $m \ge 1$. Since $x\in \mathcal{S}$, there exists $y_1\in\mathcal{Y}\subseteq\mathcal{S}$ such that $x\in \mathcal{B}(y_{1}, \delta_{y_1})$. Set
    \[
    z_{1}(s) := \mbox{Proj}_{\mathcal{S}}(x + h_{y_1} F(s, y_1))\quad\mbox{for all}\quad s\in[0, 1],
    \]
    and define $t_1 = h_{y_1}$, $v_0(s) = (z_1(s) - x)/h_y$,
    \[
    x_m(t) = x + \int_0^{t} v_0(s)ds\quad\mbox{for}\quad t\in[0, t_1].
    \]
    By convexity of $\mathcal{S}$,
    \[
    x_m(t) = x\left(1 - \frac{t}{t_1}\right) + \frac{t}{t_1}\frac{1}{t}\int_0^t v_0(s)ds \in\mathcal{S}\quad\mbox{for all}\quad t\in[0, t_1].
    \]
    Having defined $t_0 = 0, t_1, \ldots, t_k$ along with $v_0, v_1, \ldots, v_{k-1}$, we get $x_m(t_k) \in \mathcal{S}$. Find $y_{k+1}\in\mathcal{Y}$ such that $x_m(t_k) \in \mathcal{B}(y_{k+1}, \delta_{y_{k+1}})$. Set
    \[
    z_{k+1}(s) := \mbox{Proj}_{\mathcal{S}}(x_m(t_k) + h_{y_{k+1}} F(s, y_{k+1}))\quad\mbox{for all}\quad s\in[0, 1].
    \]
    Define $t_{k+1} = t_{k} + h_{y_{k+1}}$, 
    \[
    v_{k}(s) = \frac{z_{k+1}(s) - x_m(t_k)}{h_{y_{k+1}}}\quad\mbox{and}\quad x_{m}(t) = x_m(t_{k}) + \int_{t_k}^{t} v_k(s)ds\quad\mbox{for}\quad t\in[t_k, t_{k+1}].
    \]
    Continue this process until $t_{k}$ reaches $1$. There are only finitely many steps (for a fixed $m\ge1$) because $h_0 > 0$. Let $k_m \ge 1$ be such that $t_{k_m} > 1$. Then redefine $t_{k_m} = 1$.

    Setting
    \[
    e_k(t) = \begin{cases}1, &\mbox{if }t\in[t_k, t_{k+1}),\\
    0, &\mbox{otherwise,}\end{cases}
    \]
    we observe
    \begin{equation}\label{eq:representation-x_m}
    x_m(t) = x + \int_0^t \sum_{k=0}^{k_m} e_k(s)v_k(s)ds.
    \end{equation}
    Therefore, $x_m:[0, 1]\to\mathcal{S}$ is absolutely continuous. Moreover, for $t\in(t_k, t_{k+1})$,
    \begin{align*}
        \|x_m'(t) - F(t, y_{k+1})\| &= \left\|v_{k}(t) - F(t, y_{k+1})\right\|\\
        &= \left\|\frac{z_{k+1}(t) - x_m(t_k)}{h_{y_{k+1}}} - F(t, y_{k+1})\right\|\\
        &= \frac{1}{h_{y_{k+1}}}\|z_{k+1}(t) - x_m(t_k) - h_{y_{k+1}}F(t, y_{k+1})\|\\
        &= \frac{1}{h_{y_{k+1}}}\mbox{dist}\left(x_{m}(t_k) + h_{y_{k+1}}F(t, y_{k+1}), \mathcal{S}\right).
    \end{align*}
    Recall now that $x_m(t_k) \in \mathcal{B}(y_{k+1}, \delta_{y_{k+1}}) \subseteq U(y_{k+1})$, and therefore, from the definition of $U(y)$, we conclude
    \[
    \mbox{dist}(x_{m}(t_k) + h_{y_{k+1}}F(t, y_{k+1}),\, \mathcal{S}) < \mbox{dist}(y_{k+1} + h_{y_{k+1}}F(t, y_{k+1}),\,\mathcal{S}) + \frac{h_{y_{k+1}}}{3m}.
    \]
    Hence, for $t\in(t_k, t_{k+1})$, we conclude
    \[
    \|x_m'(t) - F(t, y_{k+1})\| \le \Delta(y_{k+1}, h_{y_{k+1}}) + \frac{1}{3m}.
    \]
    Recall, from~\eqref{eq:Delta-small}, $\Delta(y, h_y) \le 1/(3m)$ for all $y$. Therefore, 
    \begin{equation}\label{eq:x-m'-is-approximately-F}
    \|x_m'(t) - F(t, y_{k+1})\| = \|v_k(t) - F(t, y_{k+1})\| \le \frac{2}{3m} \quad\mbox{ for all }t\in(t_k, t_{k+1}). 
    \end{equation}
    In particular, $\|x_m'(t)\| \le B + 1$ for $m \ge 1$, which implies uniform equicontinuity and also boundedness of the sequence of functions $\{x_m(\cdot)\}_{m\ge1}$. Hence, by the Arzela-Ascoli theorem, we can choose a uniformly convergent subsequence (denoted again by $\{x_m(\cdot)\}_{m\ge1}$) with its limit denoted by $x^*:[0, 1]\to\mathcal{S}$. Now we need to show that $x^*$ satisfies the ODE~\eqref{eq:generic-ODE} almost everywhere $t\in[0, 1]$. We will show that
    \begin{equation}\label{eq:x_m-is-an-approximate-solution}
    \left\|x_m(t) - x - \int_0^t F(s, x_m(s))ds\right\| \to 0\quad\mbox{as}\quad m\to\infty.
    \end{equation}
    From dominated convergence theorem and uniform convergence of $x_m$ to $x^*$, this yields $\|x^*(t) - x - \int_0^t F(s, x^*(s))ds\| = 0$. 
    For this, fix $t\in[t_{k}, t_{k+1}]$, and note that
    \begin{align*}
        \|x_m(t) - y_{k+1}\| &\le \|x_m(t) - x_m(t_k)\| + \|x_m(t_{k}) - y_{k+1}\|\\
        &\le h_{y_{k+1}}\sup_{s\in[t_k, t_{k+1}]}\|x_m'(s)\| + \delta_{y_{k+1}}\\
        &\le h_{y_{k+1}}\left(B + 1\right) + \frac{1}{m} \le \frac{B + 2}{m}.
    \end{align*}
    This can equivalently be written as
    \begin{equation}\label{eq:x_m-is-approximately-y}
        \sup_{s\in[0, 1]}\left\|x_m(s) - \sum_{k=0}^{k_m} y_{k+1}e_k(s)\right\| \le \frac{B + 2}{m}.
    \end{equation}
    This implies
    \begin{align*}
    &\left\|\int_0^t \sum_{k=0}^{k_m} e_k(s)v_k(s)ds - \int_0^t F(s, x_m(s))ds\right\|\\ &\quad\le \left\|\int_0^t \sum_{k=0}^{k_m} e_k(s)F(s, y_{k+1})ds - \int_0^t F(s, x_m(s))ds\right\|\\ 
    &\qquad+ \left\|\int_0^t \sum_{k=0}^{k_m} e_k(s)\{v_k(s) - y_{k+1}\}\right\|\\
    &\quad\le \left\|\int_0^t \{F\left(s,\sum_{k=0}^{k_m} e_k(s)y_{k+1}\right) - F(s, x_m(s))\}ds\right\| + \frac{2}{3m}
    \end{align*}
    The last inequality follows from~\eqref{eq:x-m'-is-approximately-F}. Because of~\eqref{eq:x_m-is-approximately-y} and the fact that continuity of $F$ on the compact set $\mathcal{S}$ implies uniform continuity, we get that the left-hand side converges to zero. Therefore,~\eqref{eq:x_m-is-an-approximate-solution} holds, which in turn implies $x^*$ is a solution to the ODE and belongs to $\mathcal{S}$.
    \end{enumerate}

    \subsection{Proof of Theorem~\ref{thm:uniqueness-viability}}\label{appsubsec:proof-uniqueness-viability}
    Suppose $y_1$ and $y_2$ are any two solutions to the ODE~\eqref{eq:generic-ODE}. This implies that
    \[
    y_j(t) = x + \int_0^t F(s, y_j(s))ds. 
    \]
    From assumption~\ref{eq:bounded-in-x}, we get $\|y_j(t) - x\| \le Bt$ and hence $y_j(t)\in \mathcal{S}$ for all $t \le \mbox{dist}(x, \partial\mathcal{S})/B$. In other words, all the solutions to the ODE lie in $\mathcal{S}$ up to time $T$. 

    To prove the uniqueness, observe that
    \[
    y_1(t) - y_2(t) = \int_0^t \{F(s, y_1(s)) - F(s, y_2(s))\}ds.
    \]
    From assumption~\ref{eq:bounded-in-x}, we have the simple upper bound
    \[
    \|y_1(t) - y_2(t)\| \le 2Bt\quad\mbox{for all}\quad t\in[0, T].
    \]
    Fix any $\gamma \in (0, t)$. From assumption~\ref{eq:Lipschitz-Osgood}, we get
    \[
    \|y_1(t) - y_2(t)\| \le \int_0^{\gamma} a(s)\kappa(2Bs)ds + \int_{\gamma}^t a(s)\kappa(\|y_1(s) - y_2(s)\|)ds.
    \]
    Hence, $\Delta(s) = \|y_1(s) - y_2(s)\|$ satisfies the inequality
    \[
    \Delta(t) \le \int_0^{\gamma} a(s)\kappa(2B s)ds + \int_{\gamma}^t a(s)\kappa(\Delta(s))ds.
    \]
    Applying Bihari's generalization~\citep[Theorem 4]{Dragomir2003Gronwall} of Gr{\"o}nwall's inequality~\citep[Lemma 1.3.8]{KanschatScheichl2021}, we conclude
    \[
    \Delta(t) \le \Psi^{-1}\left(\Psi\left(\int_0^{\gamma} a(s)\kappa(2B s)\right) + \int_{\gamma}^t a(s)ds\right).
    \]
    Because $\gamma > 0$ is arbitrary, from assumption~\ref{eq:Lipschitz-Osgood}, we get $\Delta(t) = 0$ for all $t\in[0, T]$. The proof of part 2 under~\ref{eq:integrably-bounded} follows the same steps as in Theorem~\ref{thm:Peano-existence}.

    To prove the uniqueness under~\ref{eq:tangent-cone-viability}, define $G:[0, 1]\times\mathbb{R}^d\to\mathbb{R}^d$ as 
    \[
    G(t, y) = G(t, \mbox{Proj}_{\mathcal{S}}(y)).
    \]
    Because $\mathcal{S}$ is a compact convex subset of $\mathbb{R}^d$, the projection of $y$ onto $\mathcal{S}$ is uniquely defined and hence, $G(\cdot, \cdot)$ is a well-defined function. It is also clear that $\|G(t, y)\| \le B$ from~\ref{eq:bounded-in-x}. Additionally, for any $y, y'\in\mathbb{R}^d$,
    \begin{align*}
    \|G(t, y) - G(t, y')\| &= \|F(t, \mbox{Proj}_{\mathcal{S}}(y)) - F(t, \mbox{Proj}_{\mathcal{S}}(y'))\|\\
    &\le a(t)\kappa(\|\mbox{Proj}_{\mathcal{S}}(y) - \mbox{Proj}_{\mathcal{S}}(y')\|)\\
    &\le a(t)\kappa(\|y - y'\|),
    \end{align*}
    because projection is a contraction. Hence, $G(\cdot, \cdot)$ verifies conditions~\ref{eq:bounded-in-x} and~\ref{eq:Lipschitz-Osgood} with the domain for the second argument being $\mathbb{R}^d$ (i.e, with $\mathcal{S}$ in assumptions~\ref{eq:bounded-in-x} and~\ref{eq:Lipschitz-Osgood} being $\mathbb{R}^d$). Hence, applying the first part (of Theorem~\ref{thm:uniqueness-viability}), we find that there exists a unique solution to the ODE $y^*:[0, 1]\to\mathbb{R}^d$ such that
    \begin{equation}\label{eq:dummy-ODE}
    y^*(0) = x\quad\mbox{and}\quad \frac{dy^*(t)}{dt} = F(t, \mbox{Proj}_{\mathcal{S}}(y^*(t)))\mbox{ almost everywhere }t.
    \end{equation}
    But Theorem~\ref{thm:Peano-existence} already proves the existence of a solution $\tilde{y}:[0, 1]\to\mathcal{S}$ such that
    \[
    \tilde{y}(0) = x\quad\mbox{and}\quad \frac{d\tilde{y}(t)}{dt} = F(t, \tilde{y}(t))\mbox{ almost everywhere }t.
    \]
    Because $\tilde{y}(t) \in \mathcal{S}$ and hence, $\tilde{y}(t) = \mbox{Proj}_{\mathcal{S}}(\tilde{y}(t))$. This implies that $\tilde{y}(\cdot)$ satisfies~\eqref{eq:dummy-ODE}, which makes it the unique solution to~\eqref{eq:generic-ODE}.

    \subsection{Proof of Theorem~\ref{thm:stability}}\label{appsubsec:proof-of-stability}
    Theorem~\ref{thm:Peano-existence} implies the existence of $T\in(0, 1]$ such that $w:[0,T]\to\mathcal{S}$ solves~\eqref{eq:generic-ODE2-integral} for $t\in[0,T]$.
    Because $y(\cdot)$ and $w(\cdot)$ are respectively solutions to~\eqref{eq:generic-ODE-integral} and~\eqref{eq:generic-ODE2-integral}, we get
    \begin{align*}
    y(t) - w(t) &= x - x' + \int_0^t \{F(s, y(s)) - G(s, w(s))\}ds\\
    &= x - x' + \int_0^t \{F(s, y(s)) - F(s, w(s))\}ds + \int_0^t \{F(s, w(s)) - G(s, w(s))\}ds.
    \end{align*}
    From assumption~\ref{eq:bounded-in-x} (assumed for both $F$ and $G$), we conclude that
    \[
    \|y(t) - w(t)\| \le \|x - x'\| + 2Bt,\quad\mbox{for all}\quad t\in[0, T].
    \]
    This proves~\eqref{eq:perturbation-bound} for $t\in[0,\delta]$. For $t \in [\delta, T]$,
    \begin{align*}
    \|y(t) - w(t)\| &\le \|x - x'\| + \left\|\int_0^{\delta} \{F(s, y(s)) - G(s, w(s))\}ds\right\|\\
    &\qquad+ \left\|\int_{\delta}^t \{F(s, w(s)) - G(s, w(s))\}ds\right\| + \left\|\int_{\delta}^t \{F(s, y(s)) - F(s, w(s))\}ds\right\|\\
    &\le \mathcal{E}_{\delta}(t) + \int_{\delta}^t \|F(s, y(s)) - F(s, w(s))\|ds\\
    &\le \mathcal{E}_{\delta}(t) + \int_{\delta}^t a(s)\kappa(\|y(s) - w(s)\|)ds.
    \end{align*}
    This is a differential inequality, with a drift of $\mathcal{E}_{\delta}(t)$. To control $\Delta(s) = \|y(s) - w(s)\|$, define
    \[
    V(t) = \mathcal{E}_{\delta}(t) + \int_{\delta}^t a(s)\kappa(\Delta(s))ds.
    \]
    We have $\Delta(s) \le V(s)$, which implies
    \[
    \frac{d}{ds}V(s) = \frac{d}{ds}\mathcal{E}_{\delta}(s) + a(s)\kappa(\Delta(s)) \le \frac{d}{ds}\mathcal{E}_{\delta}(s) + a(s)\kappa(V(s)).
    \]
    Dividing both sides by $\kappa(V(s))$, we get
    \[
    \frac{V'(s)}{\kappa(V(s))} \le \frac{\mathcal{E}_{\delta}'(s)}{\kappa(V(s))} + a(s).
    \]
    Because $\kappa(\cdot)$ is non-decreasing and by definition, $V(s) \ge \mathcal{E}_{\delta}(s)$, we get
    \[
    \frac{V'(s)}{\kappa(V(s))} \le \frac{\mathcal{E}_{\delta}'(s)}{\kappa(\mathcal{E}_{\delta}(s))} + a(s).
    \]
    Integrating both sides over $s\in[\delta, t]$, we get
    \[
    \Psi(V(t)) - \Psi(V(\delta)) \le \Psi(\mathcal{E}_{\delta}(t)) - \Psi(\mathcal{E}_{\delta}(\delta)) + \int_{\delta}^t a(s)ds.
    \]
    Equivalently, (because $V(\delta) = \mathcal{E}_{\delta}(\delta)$), we obtain
    \[
    V(t) \le \Psi^{-1}\left(\Psi(\mathcal{E}_{\delta}(t)) + \int_{\delta}^t a(s)ds\right).
    \]
    Hence, the first part of the result follows. The second part follows from the same proof by noting that under~\ref{eq:tangent-cone-viability} any solution $w(\cdot)$ of~\eqref{eq:generic-ODE2-integral} can be extended on $[0, 1]$ to lie in $\mathcal{S}$.

\subsection{Proof of Theorem~\ref{prop:extension}}\label{appsubsec:proof-of-extension}
Firstly, note that from the Lipschitz condition on $F(\cdot, \cdot)$, we have
\[
\|y(t) - x\| \le \int_0^t \|F(s, y(s)) - F(s, x)\|ds + \int_0^t \|F(s, x)\|ds \le L\int_0^t \|y(s) - x\|ds + \int_0^1 \|F(s, x)\|ds
\]
Hence, $R(t) = \int_0^1 \|F(s, x)\|ds + L\int_0^t \|y(s) - x\|ds$ satisfies
\[
R'(t) = L\|y(t) - x\| \le LR(t)\quad\mbox{for all}\quad t\in[0, 1].
\]
This implies, in particular, that every solution of~\eqref{eq:generic-ODE-integral} (under the given hypothesis) satisfies
\begin{equation}\label{eq:good-solution-bounded}
\sup_{t\in[0, 1]}\|y(t) - x\| \le e^L\int_0^1 \|F(s, x)\|ds.
\end{equation}
Because Lipschitz functions are bounded on bounded sets, Theorem~\ref{thm:Peano-existence}(1) with $\mathcal{S} = \mathbb{R}^d$ and $B = \sup_{t\in[0, 1], y\in \mathcal{B}(x, R)}\|F(t, y)\|$ implies the existence and uniqueness of the solution $y(\cdot)$ to~\eqref{eq:generic-ODE-integral}. Further, from~\eqref{eq:good-solution-bounded} , we get that 
\[
\inf_{t\in[0, 1]}\,\mathrm{dist}(y(t),\, \partial\mathcal{B}(x, R)) \ge 10.
\]

Since $G(\cdot, \cdot)$ is continuous and locally Lipschitz in the second argument, Theorem~\ref{thm:Peano-existence}(1) implies the existence of the maximal solution $w:[0,\tau]\to\mathbb{R}^d$ for some
$\tau\in(0,1]$. Note that local Lipschitzness of $y\mapsto G(\cdot, y)$ implies that $\|w(\tau)\| = \infty$ for the maximal solution to exist only on $[0,\tau]$. (Otherwise (i.e., if $\|w(\tau)\| < \infty$), the solution can be extended at least up to time $\tau + \varepsilon_{w(\tau)} > \tau$.) We shall show that
$\|w(\tau)\| \le R$ which implies $\tau = 1$ and that $w(t)\in \mathcal{B}(x, R)$ for all $0\leq t\leq 1$. Define the \emph{exit time} of $w(\cdot)$ from $ \mathcal{B}(x, R)$ as
\[
  T := \inf\{t<\tau: y(t)\notin  \mathcal{B}(x, R) \,\},
\]
with the convention $T=\tau$ if $y(t)\in  \mathcal{B}(x,R)$ for all $t<\tau$.
Since $w(0)=x\in \mathrm{int}( \mathcal{B}(x,R))$ and $w(\cdot)$ is continuous, we have $T>0$,
and for every $t\leq T$, $w(t) \in  \mathcal{B}(x,R)$. Fix any $t\in[0,T)$. Then, from~\eqref{eq:good-solution-bounded}, we obtain
\begin{eqnarray*}
  \|y(t)-w(t)\|
  &=& \Bigg \|\int_0^t \bigl(F(s,y(s)) - G(s,w(s))\bigr)\,ds \Bigg \|\\
    &\leq & \int_0^t \left\|F(s,y(s)) - G(s,y(s))\right\|ds
     + \int_0^t \left\|F(s,y(s)) - F(s,z(s))\right\|ds\\
  &\le& \int_0^t \Delta ds
     + \int_0^t L \|y(s)-z(s)\|\,ds \\
  &\leq& \Delta t + L \int_0^t \|y(s)-z(s)\|\,ds.
\end{eqnarray*}
Following the similar argument that led to~\eqref{eq:good-solution-bounded} (or Gr{\"o}nwall's inequality), we conclude
\[
\|y(t) - w(t)\| \le \frac{\Delta}{L}(e^{Lt} - 1)\quad\mbox{for all}\quad t\in[0, T).
\]
By passing the limit as $t\uparrow T$ and using the continuity of $y(\cdot), w(\cdot)$, we get that this inequality holds for all $t\in[0, T]$. In particular,
\begin{equation}\label{eq:dist-less-than-10}
\|y(T) - w(T)\| \le \frac{\Delta}{L}(e^{Lt} - 1) \le \frac{\Delta}{L}(e^L - 1) \le 9,
\end{equation}
where the last inequality follows from the assumption that $\Delta(e^{L} - 1)/L \le 9$. 

We shall first prove that $T = \tau$. Suppose by contradiction that $T < \tau$. Then by continuity of $w(\cdot)$,
we must have $w(T)\in\partial\mathcal{B}(x, R)$ and $w(t)\in \mathcal{B}(x, R)$ for $t<T$. Inequalities~\eqref{eq:good-solution-bounded} and~\eqref{eq:dist-less-than-10} now yield
\begin{align*}
\mbox{dist}(w(T),\, \partial\mathcal{B}(x, R)) &\ge \mbox{dist}(y(T),\,\partial\mathcal{B}(x, R)) - \|y(T) - z(T)\|\\ 
&\ge 10 - \Delta(e^L - 1)/L \ge 1.
\end{align*}
This contradicts $w(T)\in\partial\mathcal{B}(x, R)$ (i.e., $T < \tau$) and hence, $T = \tau$. In particular, this implies that $w(\tau) \notin \partial\mathcal{B}(x, R)$ but $w(\tau) \in \mathcal{B}(x, R)$. Therefore, $\tau = 1$ and $w(t)\in\mathcal{B}(x, R)$ for all $t\in[0, 1].$
This completes the proof.

\section{Proofs of results in Section \ref{sec:unbounded}}\label{sec:unboundedproof}

\subsection{Proof of Lemma ~\ref{lemma:holderpj}}\label{sub:holderpap}

\begin{proof}[Proof of Lemma ~\ref{lemma:holderpj}]

Let's first prove bounds on the derivatives of $p_j$ under~\eqref{assump:unbnd-log-Holder}. 
A Taylor series expansion around $\tilde{x}_j$, using that the second derivatives of $\phi$ are bounded, yields that for some constant $\mathfrak{C}$,
\[
|\phi_j(x) - \phi_j(\tilde{x}_j)| = |\phi_j(x) - \phi_j(\tilde{x}_j) - \nabla \phi_j(\tilde{x}_j)^{\top}(x - \tilde{x}_j)| \le \mathfrak{C}\|x - \tilde{x}_j\|^{2}.
\]
This implies $|\phi_j(x)| \le c_0 + c_{2}\|x\|^{2}$ for some constants $c_l<\infty$. Moreover, for some constant $\mathfrak{C}$,
\[
\left|\frac{\partial\phi_j(x)}{\partial x_k} - \frac{\partial\phi_j(\tilde{x}_j)}{\partial x_k}\right| \le \mathfrak{C}\|x - \tilde{x}_j\|.
\]
This implies $|\partial\phi_j(x)/\partial x_k| \le c_0 + c_{\beta}\|x\|$ for some constants $c_0, c_{1} < \infty$. 
We conclude that $\phi_j(\cdot)$ and its derivative grow like quadratic and linear functions, respectively. For higher-order derivatives, we can generalize the above bounds, and under ~\eqref{assump:unbnd-log-Holder} we can state that for any $k = (k_1, \ldots, k_d)$ with $2 \le \|k\|_1 \le \lceil\beta\rceil - 1$, there exist constants $c_{l,k}$ for $0 \le l\le \lceil\beta\rceil - \|k\|_1 - 1$ such that
\[
\left|\frac{\partial^{\|k\|_1}}{\partial x_1^{k_1}\cdots \partial x_d^{k_d}}\phi_j(x)\right|  \le \sum_{l=0}^{\lceil\beta\rceil - \|k\|_1 - 1} c_{l,k}\|x\|^{l} + c_{\beta,k}\|x\|^{\beta - \|k\|_1}. 
\]
This follows by a Taylor series expansion of $\partial^{\|k\|_1}\phi_j(x)/\partial x_1^{k_1}\cdots \partial x_d^{k_d}$ around $x = 0$. This inequality implies that any $l$-th order partial derivative is bounded by a polynomial of degree at most $\max\{2, \beta - l\}$ for any $0 \le l \le \lceil\beta\rceil - 1$. By the multivariate Fa{\'a} di Bruno's formula, we get that any $l$-th order partial derivative of $p_j(x)$ has the following form:
\begin{equation}\label{eq:faa}
\partial_lp_j(x) = p_j(x)\sum_{\substack{w_1, \ldots, w_l\ge0,\\\sum_{u=1}^l uw_u = l}} c_{\omega_1,\ldots,\omega_l}\prod_{u=1}^l (\partial_{u}\phi_j(x))^{w_l},
\end{equation}
where $c_{\omega_1,\ldots,\omega_l}$ are some combinatorial constants and $\partial_u\phi_j(x)$ is a short-hand notation for some $u$-th order derivative of $\phi_j(x)$; $(\partial_u\phi_j(x))^{w_l}$ here can refer to product of $w_l$ many potentially different $u$-th order partial derivatives of $\phi_j(x)$. Because of the polynomial bounds on the derivatives of $\phi_j(\cdot)$, we get~\eqref{eq:bound-on-derivatives}. This also implies the boundedness of the derivatives of $p_j$ up to the order of $\lceil\beta\rceil - 1$. It then only remains to prove that for $2\leq l=\lceil \beta\rceil -1$
$$\lvert \partial_l p_j(x)-\partial_l p_j(y)\rvert \leq \mathfrak{C}|x-y|^{\beta-l}.$$
To do so, we show that each of the terms in \eqref{eq:faa} satisfies this bound. Reducing, by simplicity, to the one-dimensional case, note each of these terms is of the form $p_j(x) \partial_u\phi (x)^\omega_l$ where $1\leq u\leq l$ and $w_l$ is an integer with $w_l=1$ if $u=l=\lceil \beta \rceil-1$. We first analyze this case
\begin{eqnarray*} \lvert p_j(x)\partial_l \phi(x)-p_j(y)\partial_l\phi(y)\rvert &=& \lvert p_j(x)\left(\partial_l \phi(x)-\partial_l \phi(y)\right) + \partial_l \phi(y)\left(p_j(x) -p_j(y)\right)\rvert  \\
&\leq & \mathfrak{C}\lvert x-y\rvert^{\beta-l}+\lvert \partial_l \phi(y)\rvert \lvert p_j(x) -p_j(y) \rvert \\ &\leq &
 \mathfrak{C}\left(\lvert x-y\rvert^{\beta-l}+\lvert x-y\rvert\right).\end{eqnarray*}
In the second line, we used the  H{\"o}lder condition for $\phi$, and in the third line, we used that $\partial_l \phi$ is bounded since $l\geq 2$ and that $p_j(x)$ is satisfies a Lipschitz condition since its gradient is bounded, as a consequence of \eqref{eq:bound-on-derivatives} and log-concavity.  To conclude, we divide the analysis into $\lvert x-y\rvert \leq 1$ and $\lvert x-y\rvert \geq 1$. If $\lvert x-y\rvert \leq 1$ then $\lvert x-y\rvert \leq \lvert x-y\rvert^{\beta-l}$ so
$\lvert p_j(x)\partial_l \phi(x)-p_j(y)\partial_l\phi(y)\rvert\leq \mathfrak{C}\lvert x-y\rvert^{\beta-l}$, by the above bounds. If $\lvert x-y\rvert \geq 1$ we use the fact that $x\to p_j(x)\partial_l \phi(x)$ is a bounded function (by some constant $\mathfrak{C'}$), again, as a consequence of polynomial and log-concavity bounds. This implies that
$ \lvert p_j(x)\partial_l \phi(x)-p_j(y)\partial_l\phi(y)\rvert\leq 2\mathfrak{C}'\leq 2\mathfrak{C}|x-y|^{\beta-l}$.

The lower-order derivatives are easier to analyze. Indeed, the derivatives of $p_j(x) \partial_u\phi (x)^\omega$ are bounded since they are bounded by products of polynomials and $p_j(x)$, by the same arguments. Therefore, the functions are Lipschitz, and since the functions themselves are also bounded, this implies the final bound by the same reasoning as before.
\end{proof}
\subsection{Proof of Lemma ~\ref{lemma:holdersmooth}}\label{sub:proofholdersmooth}

\begin{proof}[Proof of Lemma \ref{lemma:holdersmooth}]
For simplicity, we consider the $d=1$ case, but the argument carries over to several dimensions. Note first that we can write $f_k(t,z)$ as
\begin{eqnarray*} f_k(t,z)&=&  \int \frac{(z-x)^k}{t^{k+d}} p_0(x)p_1\left(x+\frac{z-x}{t}\right)dx \\
&=& \sum_{i=0}^k \kappa_i z^i\int x^{k-i}p_0(x)p_1\left(x+\frac{z-x}{t}\right)dx=\sum_{i=0}^k \kappa_i z^ig_{k-i}(z,t).\end{eqnarray*}
The constants $\kappa_i$ depend on $t$, but we can assume w.l.g. that $t>0.5$ so that they all remain bounded; otherwise, we can write the above as an integral with respect to $p_1(x)$. Above, we have defined $g_k(z,t):=\int x^k p_0(x)p_1(x+(z-x)/t)dx$.
Therefore, it suffices to show that each of the $z_ig_{k-i}(z,t)$ are $\beta$-H{\"o}lder continuous. Indeed, let's first bound their derivatives of order $l\leq \lfloor \beta \rfloor$. We have
$$\lvert g_k^{(l)}(t,z)\rvert =\Big \lvert \left(\frac{t-1}{t}\right)^{l}\int x^k p_0(x)p_1^{(l)}\left(x+\frac{z-x}{t}\right)dx\Big \lvert\lesssim \int x^k P\left(x+\frac{z-x}{t}\right) p_0(x)p_1\left(x+\frac{z-x}{t}\right)dx.$$
where we have used that, by Lemma \ref{lemma:holderpj}, $p_1^{(l)}$ is bounded by a polynomial of degree bounded by $l+1$ (denoted by $P$) times $p_1$. If we are able to show that the right-hand side above is finite, this would imply both the validity of differentiation under the integral sign (by dominated convergence) and a bound to the derivative. By Lemma \ref{lemma:boundp} we have that for $m>0$, \begin{equation} \label{eq:hbound} \int x^mp_0(x)p_1\left(x+\frac{z-x}{t}\right)dx\leq c_1e^{-c_0||z||^2}\left(1+\lVert z\rVert ^m\right).\end{equation}
 and since $t>0.5$, we can bound, for some constants $\tilde{c}_l$,
\begin{equation}\label{eq:gpoly} g_k^{(l)}(t,z)\leq \exp\left(-c_1\lVert z\rVert^2\right)\sum_{l=0}^{l+k+1} \tilde{c}_l \lVert z\rVert^l,\end{equation}
so that $g_k^{(l)}(t,z)$ is bounded over $z$ and $t$. Now, note that derivatives of $f_k(t,z)$ are sums of derivatives of products of the monomials $z^i$ and $g_k(t,z)$. Each derivative of $z^i$ will at most increase the degree of the polynomial above, but this will remain bounded because of the exponential term. Therefore, derivatives of $f_k(t,z)$ up to order $l$ are bounded. 

We now need to bound the differences $\lVert  f^{(l)}_k(t,z_1)- f^{(l)}_k(t,z_2)\rVert$ where $l=\lfloor \beta \rfloor$. Again, by the above argument, this derivative expresses as a product of derivatives of monomials of $z^i$ and of $g_{k-i}(z,t)$. In turn, by the product rule, these derivatives will be sums of monomials of lower degree and of $g^{(l')}_{k-i}(z,t)$, for $l'\leq \lfloor \beta\rfloor $. In particular, note that if $l'<\lfloor \beta\rfloor$ then $l'+1\leq \lfloor \beta\rfloor $ so that by \eqref{eq:gpoly}
$$(z^{i}g^{(l')}_{k-i}(z,t))'\leq \exp(-c_1\lVert z\rVert^2)Q(z),$$
for some polynomial $Q$ of bounded degree, so the right-hand side is uniformly bounded in $t$ and $z$. Therefore, since the derivative of $z^{i}g^{(l')}_{k-i}(z,t)$ is bounded we get the Lipschitz bound
$$\lVert z_1^{i}g^{(l')}_{k-i}(z_1,t)-z_2^{i}g^{(l')}_{k-i}(z_2,t)\rVert \lesssim \lVert z_1-z_2\rVert.$$
Also, since $z^{i}g^{(l')}_{k-i}(z,t)$ is bounded, arguing as in the proof of Lemma \ref{lemma:holderpj} (dividing in the cases $\lVert z_1-z_2\rVert<1$ and $\lVert z_1-z_2\rVert>1$) we conclude that the right-hand side term in the bound above can be replaced by $\lVert z_1-z_2\rVert^{\beta-\lfloor \beta\rfloor}$.
Then, it remains to analyze the case $l'=\lfloor \beta\rfloor$. In this case, if we are able to show that
\begin{equation}\label{eq:holderg}\lVert  z_1^ig^{(l')}_k(t,z_1)- z_2^ig^{(l')}_k(t,z_2)\rVert\lesssim \lVert z_1-z_2\rVert^{\beta-\lfloor \beta\rfloor},\end{equation}
then, by the above argument, collecting terms, we will deduce that
$$\lVert  f^{(l)}_k(t,z_1)- f^{(l)}_k(t,z_2)\rVert \lesssim  \lVert z_1-z_2\rVert^{\beta-\lfloor \beta\rfloor}+ \lVert z_1-z_2\rVert\lesssim\lVert z_1-z_2\rVert^{\beta-\lfloor \beta\rfloor},$$
and the proof would be concluded. To show \eqref{eq:holderg}, we note first that by the Faá di Bruno's formula (as in the proof of Lemma \ref{lemma:holderpj})
\begin{eqnarray*} g_k^{(l')}(t,z)&=& \left(\frac{t-1}{t}\right)^{l}\int x^k p_0(x)p_1^{(l')}\left(x+\frac{z-x}{t}\right)dx \\&=&
\left(\frac{t-1}{t}\right)^{l'}\sum_{\substack{w_1, \ldots, w_{l'}\ge0,\\\sum_{u=1}^{l'} uw_u = l'}} c_{w_1,\ldots,w_{l'}}\int x^k p_0(x)p_1\left(x+\frac{z-x}{t}\right) \prod_{u=1}^{l'}\left(\partial_{u}\phi_1\left(x+\frac{z-x}{t}\right)\right)^{w_{l'}}dx.
\end{eqnarray*} 
Again, we will produce bounds based on the study of each terms. As in the proof of Lemma \ref{lemma:holderpj}, we argue first that for $u<l'=\lfloor \beta \rfloor$, all the terms containing $\partial_{u}\phi_j\left(x+\frac{z-x}{t}\right)^{w_{l'}}$ remain bounded.
Indeed, the function $z\rightarrow z^i \int p_0(x) p_1(x+(z-x)/t)\partial_{u}\phi_1(x+(z-x)/t)^{w_{l'}}dx$ has bounded derivative since, by the product rule, expresses as an integral of terms that can be uniformly bounded, by our previous arguments. Therefore, again, it suffices to analyze the term $u=l'=\lfloor \beta \rfloor$. To analyze this case, consider the function
$$m(z,t):=\int z^i x^k p_0(x)p_1\left(x+\frac{z-x}{t}\right) \phi_1^{(l')}\left(x+\frac{z-x}{t}\right)dx.$$
We can bound
\begin{eqnarray}\nonumber \lVert m(z_1,t)-m(z_2,t)\rVert  \nonumber &\leq &
\Big \lVert  \int z_1^i x^k p_0(x)p_1\left(x+\frac{z_1-x}{t}\right) \left( \phi_1^{(l')}\left(x+\frac{z_1-x}{t}\right)- \phi^{(l')}_1\left(x+\frac{z_2-x}{t}\right)\right) dx\Big \rVert \\ \nonumber &&+\Big \lVert  \int  x^k p_0(x)\phi_1^{(l')}\left(x+\frac{z_2-x}{t}\right)\left(z_1^ip_1\left(x+\frac{z_1-x}{t}\right) -z_2^ip_1\left(x+\frac{z_2-x}{t}\right)\right)  dx\Big \rVert \\
\nonumber &\lesssim & 
\lVert z_1\rVert^i\int  \lVert x\rVert^k  p_0(x)p_1\left(x+\frac{z_1-x}{t}\right)\frac{1}{t} \lVert z_1-z_2\rVert^{\beta-\lfloor \beta\rfloor}  dx\\&&+   \int  \lVert x \rVert^k   p_0(x) H(x)\lVert z_1-z_2\rVert dx.
\end{eqnarray}
In the second inequality, we used the H{\"o}lder bound on $\phi_1$ and the fact that $\phi_1^{(l')}$ is bounded. Also, we have defined $H(x):=\sup_{z\in\mathbb{R}^d} \lVert \partial_z G(x,z)\rVert$ where $G(x,z)=z^ip_1(x+(z-x)/t)$. We can bound $H(x)$ as follows. If $i\geq 1$ (otherwise the analysis is simpler), using that $\phi_1(x)$ is minimized at $x_1$ we have \begin{eqnarray*}
H(x)&=&\sup_{z\in\mathbb{R}^d} \lVert \partial_z G(x,z)\rVert \\
&\lesssim & \sup_{z\in\mathbb{R}^d} \lVert iz^{i-1} \rVert  p_1\left(x+\frac{z-x}{t}\right)+  \frac{\lVert z^i \rVert}{t} p_1\left(x+\frac{z-x}{t}\right)\Big \lVert \phi_1^{(1)}\left(x+\frac{z-x}{t}\right)\Big \rVert\\
&\lesssim& \sup_{z\in\mathbb{R}^d} \lVert z \rVert^{i-1} \left(\lVert z \rVert +1\right) \exp(-\frac{\alpha}{2}\Big\lVert x+\frac{z-x}{t}-x_1\Big \rVert^2)\\
&\lesssim& \sup_{\lVert z\rVert \leq 1} \lVert z \rVert^{i-1} \left(\lVert z \rVert +1\right) \exp\left(-\frac{\alpha}{4t^2}\Big\lVert z-tx_1\Big \rVert^2\right)\exp\left(\frac{\alpha(t-1)^2}{2t^2}\lVert x\rVert^2\right)\\
&\lesssim &\exp\left(\frac{\alpha(t-1)^2}{2t^2}\lVert x\rVert^2\right).
\end{eqnarray*}
Above, we used that $\phi^{(1)}$ is bounded, that $\lVert x-y\rVert^2\geq \lVert x\rVert^2/2-\lVert y\rVert^2$, and the fact that the products of terms containing $z$ is bounded.
Also, since 
$$p_0(x)H(x)\lesssim\exp\left(-\frac{\alpha \lVert x\rVert^2}{2}\left(1-\frac{t^2}{(1-t)^2}\right)\right)=\exp\left(-\frac{\alpha}{2}\lVert x\rVert^2\frac{2t-1}{(1-t)^2}\right),$$
we can continue bounding the differences as
\begin{eqnarray*}\lVert m(z_1,t)-m(z_2,t)\rVert  \nonumber &\leq &\lVert z_1\rVert^i \lVert z_1-z_2\rVert^{\beta-\lfloor \beta\rfloor} 
\int \lVert x\rVert^k  p_0(x)p_1\left(x+\frac{z_1-x}{t}\right)dx \\&& + \lVert z_1-z_2\rVert\int \lVert x\rVert^k  \exp\left(-\frac{\alpha}{2}\lVert x\rVert^2\frac{2t-1}{(1-t)^2}\right)dx\\
&\lesssim& \lVert z_1\rVert^i Q(z_1)\exp\left(-c\lVert z_1\rVert ^2\right)\lVert z_1-z_2\rVert^{\beta-\lfloor \beta\rfloor} +\lVert z_1-z_2\rVert \\&\lesssim& \lVert z_1-z_2\rVert^{\beta-\lfloor \beta\rfloor}.
\end{eqnarray*}
In the second-to-last inequality, we used the polynomial bound described earlier and the fact that $t > 0.5$. The last inequality follows by dividing into the cases $\lVert z_1-z_2\rVert\leq 1$ and  $\lVert z_1-z_2\rVert\geq 1$ (to apply this argument, we need to ensure that $m(z_1,t)$ is bounded, which is also true by the same rationale as before).


\end{proof}

\subsection{Proof of Proposition ~\ref{prop:derzbound}}

\begin{proof}[Proof of Proposition \ref{prop:derzbound}]
Without loss of generality, $t>0.5$. Otherwise, can switch the roles of $\phi_0$ and $\phi_1$. Let's call $\gamma>0$ any upper bound on the Hessians of $\phi_0,\phi_1$.
We have that
\begin{eqnarray*}\nabla_z v_t(z)&=&\frac{1}{t^d}\nabla_z \frac{\int (z-x)p_0(x)p_1\left(x+\frac{z-x}{t}\right)dx}{\int p_0(x)p_1\left(x+\frac{z-x}{t^d}\right)dx}\\
&=&\frac{1}{t^d}-\frac{1}{t^d}\nabla_z\frac{\int xp_0(x)p_1\left(x+\frac{z-x}{t}\right)dx}{\int p_0(x)p_1\left(x+\frac{z-x}{t}\right)dx}.
\end{eqnarray*}
The first term $1/t$ is uniformly bounded in $z$. Define the family of measures
$$p_z(x)=\frac{p_0(x)p_1\left(x+\frac{z-x}{t}\right)}{\int p_0(x)p_1\left(x+\frac{z-x}{t}\right)dx} $$
Note that
\begin{eqnarray*}t\nabla_z\frac{\int xp_0(x)p_1\left(x+\frac{z-x}{t}\right)dx}{\int p_0(x)p_1\left(x+\frac{z-x}{t}\right)dx} &=& t\nabla_z \int xp_z(x)dx  \\
&=& \frac{\int xp_0(x)\nabla p_1\left(x+\frac{z-x}{t}\right)^\top dx}{\int p_0(x)p_1\left(x+\frac{z-x}{t}\right)dx}- \int xp_z(x)dx\frac{\int p_0(x)\nabla p_1\left(x+\frac{z-x}{t}\right)^\top dx}{\int p_0(x)p_1\left(x+\frac{z-x}{t}\right)dx}
\\
&=& \int xp_z(x)\frac{\nabla p_1\left(x+\frac{z-x}{t}\right)^\top}{p_1\left(x+\frac{z-x}{t}\right)}dx -\int xp_z(x)dx\int\frac{\nabla p_1\left(x+\frac{z-x}{t}\right)^\top}{p_1\left(x+\frac{z-x}{t}\right) }p_z(x)dx
\\&=&\int xp_z(x)\left(\frac{\nabla p _1\left(x+\frac{z-x}{t}\right)^\top}{p_1\left(x+\frac{z-x}{t}\right)}-\mathbb{E}_{p_z}\left(\frac{\nabla p_1\left(X+\frac{z-X}{t}\right)^\top}{p_1\left(X+\frac{z-X}{t}\right)}\right)\right)dx\\
&=& \int xp_z(x)\left(\nabla \log \left(x+\frac{z-x}{t}\right)^\top-\mathbb{E}_{p_z}\left(\nabla \log \left(X+\frac{z-X}{t}\right)\right)^\top\right)dx\\
&=& \mathbb{E}_{p_z}\left(X\left( \nabla \log \left(X+\frac{z-X}{t}\right)^\top-\mathbb{E}_{p_z}\left(\nabla \log \left(X+\frac{z-X}{t}\right)\right)\right)^\top\right)\\ &=&
\text{Cov}_{p_z}\left(X,\nabla \log \left(X+\frac{z-X}{t}\right)\right). 
\end{eqnarray*}

Therefore, it suffices to bound each of the terms in the above covariance matrix uniformly over $z$ and $t\geq 0.5$. Call $C_{i,j}$ each entry, then by Cauchy-Schwarz
$$\lvert C_{i,j}\rvert \leq \mathrm{Var}_{p_z}\left( X_{0i}\right)^{1/2} \mathrm{Var}_{p_z}\left(\frac{\partial}{\partial x_j} \log p_1\left(X+\frac{z-X}{t}\right)\right)^{1/2}. $$

 Note that assumption \eqref{assump:strong-log-concave} implies that $p_z$ is strongly log-concave, uniformly on $z$ and $t$. Indeed, if we write
$$p_z(x)\propto \exp\left(-\phi_z(x)\right),\quad \phi_z(x)=\phi_0(x)+\phi_1\left(\frac{z-(1-t)x}{t}\right),$$
the function $\phi_z(\cdot)$ satisfies
\begin{equation}\label{eq:logpzhess} \alpha\left(1+\frac{(1-t)^2}{t^2}\right)I_d\preceq \nabla^2 \phi_z(x)=\nabla^2 \phi_0(x)+\frac{(1-t)^2}{t^2}\nabla^2 \phi_1\left(x+\frac{z-x}{t}\right)\preceq \gamma\left(1+\frac{(1-t)^2}{t^2}\right)I_d.\end{equation}
Additionally, since $p_1$ is strongly log-concave, 

\begin{equation} \label{eq:logpzhessinv}\left(\nabla^2 \phi_z(x)\right)^{-1}\preceq \frac{t^2}{(1-t)^2}\left(\nabla^2 \phi_1\left(x+\frac{z-x}{t}\right)\right)^{-1}. \end{equation}
We will use the above bounds later on when we 
invoke the Brascamp-Lieb inequality \cite[Theorem 4.2]{brascamp1976extensions}, that for a (strictly) log-concave measure $p(x)\propto \exp \left(-\phi(x)\right)$ and a differentiable function $h$ with finite $\mathrm{Var}_p(h(X))$ we have
\begin{equation}\label{eq:brascamp}
\mathrm{Var}_p\left(h(X)\right)\leq \mathbb{E}_p\left(\nabla^\top h(X) \left(\nabla^2 \phi(X)\right)^{-1} \nabla h(X)\right).
\end{equation}
In particular, if $p$ is strongly log-concave, so that $I_d\preceq \sigma^2 \nabla^2\phi(x) $ for some $\sigma^2>0$ then
it satisfies a Poincaré inequality with constant $\sigma^2$;
\begin{equation}\label{eq:poincare}
\mathrm{Var}_p\left(h(X)\right)\leq \sigma^2 \mathbb{E}_p\left(\lVert \nabla h(X)\rVert^2\right).
\end{equation}
To bound the first variance term, we consider the function $h_i(x)=X_{0i}$ with gradient $\nabla h(x)=e_i$ (the $i$-th canonical vector). By \eqref{eq:logpzhess} and \eqref{eq:brascamp}.
\begin{eqnarray*}
\mathrm{Var}_{p_z}\left(X_{0i}\right)\leq \mathbb{E}_{p_z}\left( e_i^\top \left(\nabla^2 \phi_z(X)\right)^{-1}e_i \right)&\leq& \frac{t^2}{\alpha(t^2+(1-t)^2)}\mathbb{E}_{p_z}\left(e_i^\top e_i\right)\\
&\leq & \frac{dt^2}{\alpha(t^2+(1-t)^2)}.
\end{eqnarray*}
To bound the other variance term, consider the function $$h_j(x)=\frac{\partial }{\partial x_j}\log p_1\left(x+\frac{z-x}{t}\right) =-\frac{\partial }{\partial x_j}\phi_1\left(x+\frac{z-x}{t}\right).$$


By \eqref{eq:logpzhessinv} and \eqref{eq:brascamp}
\begin{eqnarray*}
\mathrm{Var}_{p_z}\left(\frac{\partial}{\partial x_j} \log p_1\left(X+\frac{z-X}{t} \right)\right)&\leq& \mathbb{E}_{p_z}\left(\nabla h_j(X)^\top \left(\nabla^2 \phi_z(X)\right)^{-1}\nabla h_j(X) \right)\\
&\leq& \frac{t^2}{(1-t)^2} \mathbb{E}_{p_z}\left(\nabla h_j(X)^\top \left(\nabla^2 \phi_1\left(X+\frac{z-X}{t}\right) \right)^{-1}\nabla h_j(X)\right). 
\end{eqnarray*}
Now, note that the vector $\nabla h_j(x)$ is (up to the term $t/(1-t)$) the $j$-th row of the matrix $\nabla^2 \phi_1(x+(z-x)/t)$. Then, the term inside the expectation is of the form $v_jV^{-1}v_j$ where $V$ is a positive definite matrix, and $v_j$ is an arbitrary row of $v_j$. Since
$$v_jV^{-1}v_j\leq \sum_{i=1}^d v_iV^{-1}v_i=\mathrm{Tr}(V),$$
and using the uniform upper bound on $\nabla^2 \phi_1$ we conclude that
\begin{eqnarray*}
\mathrm{Var}_{p_z}\left(\frac{\partial }{\partial x_j}\log p_1\left(X+\frac{z-X}{t}\right) \right)&\leq& \frac{t^2}{(1-t)^2}\frac{(1-t)^2}{t^2}\ \mathbb{E}_{p_z}\left(\mathrm{Tr}\left(\nabla^2 \phi_1\left(X+\frac{z-X}{t}\right) \right) \right)\\
&\leq&  d\gamma.
\end{eqnarray*}
Let's now turn to the second derivatives. We fix one coordinate $v_i(t,z)$ and will bound each coordinate of the Hessian $\nabla^2 v_i(t,z)$. We will use the identity
$$(f/g)''=\frac{f''}{g}-\frac{f}{g}\frac{g''}{g}-2\frac{g'}{g}\left(\frac{f'}{g}-\frac{f}{g}\frac{g'}{g}\right).$$ Taking $g=p_t(z)$, $f=\int X_{0i} p_0(x)p_1(x+(z-x)/t)dx$, and reasoning as with the first derivative we obtain 
\begin{eqnarray*}t^{d+2}\nabla^2_z v_i(t,z)&=&\mathbb{E}_{p_z}\left(X_{0i} \frac{\nabla^2 p_1\left(w(X,z)\right)}{p_1\left(w(X,z)\right)}\right) -\mathbb{E}_{p_z}\left(X_{0i}\right)\mathbb{E}_{p_z} \left( \frac{\nabla^2 p_1\left(w(X,z)\right)}{p_1\left(w(X,z)\right)}\right) \\&& 
-2\mathbb{E}_{p_z}\left(\nabla \log p_1\left(w(X,z)\right) \right)\mathbb{E}_{p_z}\left(X_{0i}\left(\nabla \log p_1\left(w(X,z)\right) -\mathbb{E}_{p_z}\left(\nabla \log\left(w(X,z\right)\right) \right)^\top\right),\end{eqnarray*}
where we have defined $w(x,z):=x+(z-x)/t$.
We will further rearrange the above expressions. Since
$$\left(\log f(x)\right)''=\frac{f''(x)}{f(x)}-\frac{f'(x)^2}{f(x)^2}=\frac{f''(x)}{f(x)}-(\log f(x))'^2,$$
We can express the above as
\begin{eqnarray*}t^3\nabla^2_z v_i(t,z)&=&\mathbb{E}_{p_z}\left(X_{0i} \nabla^2 \log p_1\left(w(X,z)\right)\right)  -\mathbb{E}_{p_z}\left(X_{0i}\right)\mathbb{E}_{p_z} \left( \nabla^2 \log p_1\left(w(X,z)\right)\right) \\&&+\mathbb{E}_{p_z}\left(X_{0i} \nabla\log p_1\left(w(X,z)\right)\nabla\log p_1\left(w(X,z)\right)^\top\right) \\
&&-\mathbb{E}_{p_z}\left(X_{0i}\right)\mathbb{E}_{p_z} \left( \nabla\log p_1\left(w(X,z)\right)\nabla\log p_1\left(w(X,z)\right)^\top\right)\\&& 
-2\mathbb{E}_{p_z}\left(\nabla \log p_1\left(w(X,z)\right) \right)\mathbb{E}_{p_z}\left(X_{0i}\left(\nabla \log p_1\left(w(X,z)\right) -\mathbb{E}_{p_z}\left(\nabla \log p_1\left(w(X,z)\right)\right) \right)^\top\right).\end{eqnarray*}
We can identify the first line above as $\text{Cov}_{p_z}\left(X_{0i},\nabla^2 \log p_1(w(X,z)\right)$. Likewise, since
\begin{eqnarray*}
\mathbb{E}\left(\left(X-\mathbb{E}(X)\right)(Y-\mathbb{E}(Y))^2\right) &=& \mathbb{E}(XY^2)-\mathbb{E}(X)\mathbb{E}(Y^2)-2\mathbb{E}(Y) \mathbb{E}\left(X(Y-\mathbb{E}(Y))\right)\\
&=&\text{Cov}(X,Y^2)-2\mathbb{E}(Y)\text{Cov}(X,Y).
\end{eqnarray*}
Then, if we call $\tilde{w}(x,z)=\nabla \log p_1(w(x,z))$
\begin{eqnarray*}t^{2+d}\nabla^2_z v_i(t,z)&=&\text{Cov}_{p_z}\left(X_{0i},\nabla^2 \log p_1(w(X,z))\right)\\&&
+\mathbb{E}_{p_z}\left(\left(X_{0i}-\mathbb{E}_{p_z}(X_{0i})\right)\left(\tilde{w}(X,z)-\mathbb{E}_{p_z}\left(\tilde{w}(X,z)\right)\right)\left(\tilde{w}(X,z)-\mathbb{E}_{p_z}\left(\tilde{w}(X,z)\right)\right)^\top\right)\\
&
\leq& \mathrm{Var}_{p_z}(X_{0i})^{1/2}\mathrm{Var}_{p_z}\left(\nabla^2 \log p_1\left(w(X,z)\right)\right)^{1/2}\\&&+
\mathrm{Var}_{p_z}\left(X_{0i}\right)^{1/2}\mathbb{E}_{p_z}\left(\left(\tilde{w}(X,z)-\mathbb{E}_{p_z}\left(\tilde{w}(X,z)\right)\right)^2\left(\tilde{w}(X,z)-\mathbb{E}_{p_z}\left(\tilde{w}(X,z)\right)^\top\right)^2\right)^{1/2}.
\end{eqnarray*}
In the above displays, inequalities are interpreted component-wise. It remains to bound each of the variance and expectation terms. The variance of $X_{0i}$ was already bounded. Also, an immediate bound for the variance of the Hessian term follows from the fact that $\nabla^2 \log p_1(w(X,z))=-\nabla^2 \phi_1(w(X,z))$ has bounded eigenvalues, by \eqref{eq:logpzhess}. In turn, this entails entry-wise bounds for the matrix, thanks to elementary properties of matrix norms \citep{golub2013matrix}.



It only remains to bound the quadratic gradient term above. We use an established moment bound for measures satisfying a Poincaré condition (see \cite{gotze2019higher} and Lemma \ref{lemma:lpmomentpoincare}). By applying \eqref{eq:lpmomentpoincare} to the $j$-th coordinate function of the centered version of $\tilde{w}(x,z)$, 
$h_{j}(x)= \tilde{w}_j(x,z)-\mathbb{E}_{p_z}(\tilde{w}_j(x,z))$, and using that $p_z(\cdot)$ satisfies \eqref{eq:poincare} with parameter $\sigma^2=t^2/\alpha((t^2+(1-t)^2))$, we have
$$\mathbb{E}_{p_z}\left(\left(\tilde{w}_j(x,z)-\mathbb{E}(\tilde{w}_j(x,z)\right)^4\right)\leq 4\frac{t^4}{\alpha^2(t^2+(1-t)^2)^2} \mathbb{E}_{p_z}\left(\Big \lVert \nabla \frac{\partial}{\partial x_j}\phi_1\left(X+\frac{z-X}{t} \right)\Big \rVert^4\right).$$
The term inside the expectation is bounded uniformly over $z$ since it is the fourth power of the norm of a vector made up from entries of the matrix $\nabla^2 \phi_1(x+(z-x)/t)$. Therefore, the fourth centered moment above is bounded for each index $j$. We achieve the final conclusion by applying the Cauchy–Schwarz inequality to the matrix of cross moments in the above displays.
\end{proof}

\subsection{Additional results for Section \ref{sub:reguexistanceunbounded}}

\begin{lemma}\label{lemma:momentbound} 
Suppose that $p_0,p_1$ are log-concave. Then,

and for each $\alpha>0$:
$$\sup_{s\in[0,1],z\in \mathbb{R}^d} \mathbb{E}\left(\lVert \Delta\rVert^\alpha |X_s=z\right) p_s(z)< \infty.$$

\end{lemma}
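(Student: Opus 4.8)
The plan is to reduce the claim to the sub-exponential tail bound for log-concave densities together with a one-line convexity inequality. Writing $\Delta = X_1 - X_0$ and $X_s = (1-s)X_0 + sX_1$, recall that the joint Lebesgue density of $(X_s,\Delta)$ at $(z,\delta)$ is $p_0(z - s\delta)\,p_1(z + (1-s)\delta)$ (a linear change of variables with unit Jacobian), so that the quantity to be bounded is
\[
\mathbb{E}\bigl(\|\Delta\|^\alpha \mid X_s = z\bigr)\,p_s(z) \;=\; \int_{\mathbb{R}^d}\|\delta\|^\alpha\,p_0(z - s\delta)\,p_1\bigl(z + (1-s)\delta\bigr)\,d\delta .
\]
Setting $X_0 = z - s\delta$ and $X_1 = z + (1-s)\delta$, the density factor in the integrand is $p_0(X_0)\,p_1(X_1)$, and I will show that it is at most $A\,e^{-a(\|z\|+\|\delta\|)/2}$ for constants $A,a>0$ depending only on $p_0,p_1$, uniformly in $s\in[0,1]$ and $z\in\mathbb{R}^d$; integrating the remaining factor $\|\delta\|^\alpha e^{-a\|\delta\|/2}$ in polar coordinates then finishes the argument.

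First I would record the tail bound: since $p_0$ and $p_1$ are log-concave probability densities on $\mathbb{R}^d$, there exist $A_j,a_j>0$ with $p_j(x)\le A_je^{-a_j\|x\|}$ for all $x$, $j=0,1$. This is classical; for completeness it can be seen by noting that $\phi_j := -\log p_j$ is convex with all sublevel sets of finite Lebesgue measure (since $\mathrm{Vol}(\{\phi_j\le t\})\le e^t\int e^{-\phi_j}=e^t$), hence bounded, hence $\phi_j$ is coercive, and a coercive convex function grows at least linearly, i.e. $\phi_j(x)\ge a_j\|x\| - b_j$. (Under the ambient assumption~\eqref{assump:strong-log-concave} one even gets the Gaussian bound $p_j(x)\le \|p_j\|_\infty\,e^{-(\alpha/2)\|x-x_j^*\|^2}$, with $x_j^*$ the minimizer of $\phi_j$, which only sharpens the subsequent estimate.) Put $A = A_0A_1$ and $a = \min\{a_0,a_1\}$.

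Next comes the geometric inequality. With $X_0 = z - s\delta$ and $X_1 = z + (1-s)\delta$ we have $z = (1-s)X_0 + sX_1$ and $\delta = X_1 - X_0$, so, using $s,1-s\in[0,1]$,
\[
\|X_0\| + \|X_1\| \;\ge\; \|X_1 - X_0\| = \|\delta\|
\qquad\text{and}\qquad
\|X_0\| + \|X_1\| \;\ge\; \|(1-s)X_0 + sX_1\| = \|z\|,
\]
hence $\|X_0\| + \|X_1\|\ge \max\{\|z\|,\|\delta\|\}\ge \tfrac12(\|z\|+\|\delta\|)$. Combining this with the tail bound,
\[
p_0(z-s\delta)\,p_1\bigl(z+(1-s)\delta\bigr) \;\le\; A_0A_1\,e^{-a_0\|X_0\| - a_1\|X_1\|} \;\le\; A\,e^{-a(\|X_0\|+\|X_1\|)} \;\le\; A\,e^{-\frac a2(\|z\|+\|\delta\|)},
\]
so that
\[
\mathbb{E}\bigl(\|\Delta\|^\alpha\mid X_s=z\bigr)p_s(z) \;\le\; A\,e^{-\frac a2\|z\|}\int_{\mathbb{R}^d}\|\delta\|^\alpha e^{-\frac a2\|\delta\|}\,d\delta \;\le\; A\int_{\mathbb{R}^d}\|\delta\|^\alpha e^{-\frac a2\|\delta\|}\,d\delta ,
\]
and the last integral is finite (it converges in polar coordinates for every $\alpha>0$) and independent of $s$ and $z$, which is exactly the asserted supremum bound.

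There is no serious obstacle here: the only ingredient that is not entirely routine is the sub-exponential tail bound for log-concave densities with constants uniform over directions, which is standard (and weaker than what~\eqref{assump:strong-log-concave} already supplies). The one point requiring a little care is that the bound must hold up to and including the endpoints $s=0,1$, where one of the two affine substitutions $\delta\mapsto z-s\delta$, $\delta\mapsto z+(1-s)\delta$ degenerates; this is why the estimate is phrased in terms of the product $p_0(X_0)p_1(X_1)$ and the symmetric inequality $\|X_0\|+\|X_1\|\ge\frac12(\|z\|+\|\delta\|)$, rather than by changing variables to $u = z - s\delta$ or $v = z + (1-s)\delta$.
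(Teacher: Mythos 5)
Your proof is correct and uses only the stated hypothesis of log-concavity, but it follows a genuinely different (and arguably cleaner) route than the paper. The paper changes variables to $x = z - t\delta$, assumes WLOG $t > 1/2$ to control the Jacobian factor $t^{-d}$, splits $\|z - x\|^\alpha \lesssim \|z\|^\alpha + \|x\|^\alpha$, and then bounds the two resulting pieces separately using the exponential tail of $p_s$ (via Lemma~\ref{lemma:boundp}, noting $X_s$ is itself log-concave) and the exponential tail of $p_0$ with $p_1$ bounded. Your argument sidesteps the change of variables, the WLOG step, and any appeal to the tail of $p_s$: the elementary inequality $\|X_0\| + \|X_1\| \ge \max\{\|z\|,\|\delta\|\} \ge \tfrac12(\|z\|+\|\delta\|)$ turns the product $p_0(z-s\delta)p_1(z+(1-s)\delta)$ into a single exponential decaying in $\|z\|+\|\delta\|$ uniformly over $s\in[0,1]$, after which the conclusion is an immediate polar-coordinate integral. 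This is more symmetric in $(p_0,p_1)$, more self-contained (it needs only the sub-exponential tail of a log-concave density, which the paper already invokes via Cule and Samworth), and handles the endpoints $s\in\{0,1\}$ without any case distinction. One small caveat: your inline justification of the tail bound (``finite-measure sublevel sets $\Rightarrow$ bounded $\Rightarrow$ coercive $\Rightarrow$ linear growth'') is correct but elides the convexity fact that a full-dimensional convex set of finite Lebesgue measure cannot contain a half-line and is therefore bounded; since you also note this is classical and can be cited directly, this is not a gap in the proof.
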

\begin{proof}

By Lemma 1 in \cite{cule2010theoretical} there are constants $a>0$ and $b\in\mathbb{R}$ such that 
$p_1(x)\leq e^{-a\|x\|+b}$. Also, without loss of generality, $t>0.5$. From this,
\begin{eqnarray*}\mathbb{E}\left(\lVert \Delta\rVert^\alpha |X_s=z\right)  p_s(z)&=& \int_{\mathbb{R}^d} \lVert \delta\rVert ^\alpha p_0(z-t\delta)p_1(z+(1-t)\delta)\\ &=&
\frac{1}{t^d} \int_{\mathbb{R}^d} \lVert z-x\rVert^\alpha  p_0(x)p_1\left(x+\frac{z-x}{t}\right)dx\\
&\lesssim &  \lVert z\rVert ^\alpha \int_{\mathbb{R}^d}p_0(x)p_1\left(x+\frac{z-x}{t}\right)dx+\int_{\mathbb{R}^d}\lVert x\rVert^\alpha p_0(x)dx\\
&\lesssim &  \lVert z\rVert^\alpha p_s(z)+\int_{\mathbb{R}^d}\lVert x\rVert^\alpha e^{-a\lVert x \rVert +b} dx\\
&\lesssim &  \lVert z\rVert^\alpha p_s(z)+\int_{0}^\infty r^{\alpha+d-1} e^{-r} dr.
\end{eqnarray*}

In the last line, we used polar coordinates. We can identify the last integral with a moment of the exponential distribution, which is finite if $\alpha+d-1>-1$. The first term above is finite if $\alpha>0$, by Lemma \ref{lemma:boundp}.
\end{proof}

\begin{lemma}\label{lemma:infprob} 
Suppose that $p_0$ and $p_1$ satisfy~\eqref{assump:unbnd-densitiy-positive}. Then, for any bounded set $B$, we have that
$$\inf_{z\in B,t\in[0,1]} p_t(z)>0.$$
\end{lemma}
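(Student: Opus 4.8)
The statement is that for densities $p_0, p_1$ strictly positive everywhere on $\mathbb{R}^d$ and any bounded set $B$, we have $\inf_{z\in B, t\in[0,1]} p_t(z) > 0$, where $p_t(z) = \int_{\mathbb{R}^d} p_0(z - t\delta)p_1(z + (1-t)\delta)\,d\delta$. The plan is to first reduce to a compact set: since $B$ is bounded, $\overline{B}$ is compact, and it suffices to show the infimum over $\overline{B}\times[0,1]$ is positive. I would then simply lower-bound $p_t(z)$ by integrating over a fixed small ball $\mathcal{B}(0,\rho)$ in the $\delta$ variable rather than all of $\mathbb{R}^d$, writing
\[
p_t(z) \ge \int_{\mathcal{B}(0,\rho)} p_0(z - t\delta)p_1(z + (1-t)\delta)\,d\delta.
\]
For $z\in\overline{B}$, $t\in[0,1]$, and $\|\delta\|\le\rho$, both arguments $z - t\delta$ and $z + (1-t)\delta$ lie in the compact set $K := \overline{B}^{\,\rho} = \{x : \operatorname{dist}(x,\overline{B})\le\rho\}$ (using the notation~\eqref{eq:inflated-imploded-set}), since $\|t\delta\|\le\rho$ and $\|(1-t)\delta\|\le\rho$.

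Next I would invoke positivity and continuity. Under assumption~\eqref{assump:unbnd-log-Holder}, $\phi_j = -\log p_j$ is twice differentiable, hence continuous, so $p_j$ is continuous; combined with~\eqref{assump:unbnd-densitiy-positive} that $p_j > 0$ everywhere, the extreme value theorem gives
\[
\underline{c} := \inf_{x\in K}\min\{p_0(x), p_1(x)\} > 0.
\]
Therefore, for all $z\in\overline{B}$ and $t\in[0,1]$,
\[
p_t(z) \ge \int_{\mathcal{B}(0,\rho)} \underline{c}^{\,2}\,d\delta = \underline{c}^{\,2}\,\operatorname{Vol}(\mathcal{B}(0,\rho)) > 0,
\]
and this bound is uniform in $z\in B$ and $t\in[0,1]$, which is exactly the claim. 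One small caveat: the lemma as stated in the excerpt only cites~\eqref{assump:unbnd-densitiy-positive}; if one does not want to assume continuity of $p_j$, one can instead argue that a strictly positive measurable density is bounded below by a positive constant on a set of positive measure inside any given ball, but the cleanest route is to use the continuity that is available under the standing assumptions of Section~\ref{sec:unbounded}.

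There is essentially no hard step here — the proof is a one-line compactness-plus-positivity argument. The only thing to be careful about is the bookkeeping that guarantees $z - t\delta$ and $z + (1-t)\delta$ stay in a \emph{fixed} compact set as $t$ ranges over $[0,1]$, which is handled by restricting $\delta$ to a ball of fixed radius $\rho$; no matter where $t$ sits in $[0,1]$, the displacements $t\delta$ and $(1-t)\delta$ both have norm at most $\rho$. With that observation, the uniform lower bound $\underline{c}^{\,2}\operatorname{Vol}(\mathcal{B}(0,\rho))$ drops out immediately.
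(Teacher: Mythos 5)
Your proof is correct and follows essentially the same route as the paper: both restrict the $\delta$-integral to a fixed ball centered at the origin, observe that $z-t\delta$ and $z+(1-t)\delta$ then stay in a fixed bounded set for all $t\in[0,1]$ and $z\in B$, and invoke continuity (the paper notes log-concavity gives continuity, you note it follows from~\eqref{assump:unbnd-log-Holder}) together with strict positivity and compactness to obtain a uniform lower bound. The only difference is cosmetic: the paper uses the unit ball where you use a ball of radius $\rho$.
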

\begin{proof}[Proof of Lemma~\ref{lemma:infprob}]
Let $B_1$ be the unit ball in $\mathbb{R}^d$ centered at the origin. Recall that
\[
p_t(z) = \int_{\mathbb{R}^d} p_0(z - t\delta)p_1(z + (1 - t)\delta)d\delta \ge \int_{B_1} p_0(z - t\delta)p_1(z + (1 - t)\delta)d\delta.
\]
Hence, for $z\in B$, $p_t(z)$ is bounded below because $\inf_{\delta\in B_1} p_0(z - t\delta) > 0$ and $\inf_{\delta\in B_1}p_1(z + (1 - t)\delta) > 0$. (Recall that assumption~\eqref{assump:unbnd-densitiy-positive} implies that $p_0$ and $p_1$ are bounded away from zero on all bounded subsets of $\mathbb{R}^d$. Moreover, logconcave densities are continuous in the interior of any compact subset of $\mathbb{R}^d$.)
\end{proof}
A first observation, showed as a separate Lemma, is that the above assumptions imply that $p_0,p_1$ also have $\beta$-H{\"o}lder smoothness, so our results can be put in the perspective of the classical Kernel-based estimators~\citep{Tsybakov}.

\begin{lemma}\label{lemma:boundp} Suppose that $X_0\sim \mu_0$ and $X_1\sim \mu_1$ are independent, and that assumption ~\eqref{assump:unbnd-log-Holder}
holds. Set $m_j = \mathbb{E}[X_j]$ and $\Sigma_j = \mathrm{Var}(X_j)$ ($j = 0,1$). If $\Sigma_0,\Sigma_1$ are positive definite with eigenvalues bounded away from $0$ and $\infty$, and if $p_0,p_1$ are log-concave, then there exist constants $c_0 = c_0(\{m_j\}, \{\Sigma_j\}), c_1 = c_1(\{m_j\}, \{\Sigma_j\}) > 0$ depending only on $m_0, m_1, \Sigma_0, \Sigma_1$ such that
\begin{equation}\label{eq:exponential-decay}
p_t(z) \le c_0\exp(-c_1\|z\|)\quad\mbox{for all}\quad z\in\mathbb{R}^d, t\in [0, 1].
\end{equation}
Additionally, if assumption \eqref{assump:strong-log-concave} holds (i.e. $p_0$,$p_1$ are strongly-log concave), for following functions $h_m(z,t)$ defined for $m>0$ 
$$h_m(z,t):=\frac{1}{t^{d+m}} \int_{\mathbb{R}^d}x^m p_0(x)p_1\left(x+\frac{z-x}{t}\right)dx,$$ 
we have that for some $c_0,c_1>0$
\begin{equation}\label{eq:quadratic-decay}
h_m(z,t) \le  c_0(1+\lVert z\rVert^m) \exp(-c_1\|z\|^2)\quad\mbox{for all}\quad x\in\mathbb{R}^d, t\in [0, 1].
\end{equation}
In particular,
\begin{equation}\label{eq:quadratic-decay}
p_t(z) \le c_0\exp(-c_1\|z\|^2)\quad\mbox{for all}\quad z\in\mathbb{R}^d, t\in [0, 1].
\end{equation}

\end{lemma}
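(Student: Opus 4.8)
I would prove the exponential bound \eqref{eq:exponential-decay} using only log-concavity and the moment hypotheses, and then the quadratic bounds using the strong log-concavity \eqref{assump:strong-log-concave}; the final bound on $p_t$ is the case $m=0$ of the bound on $h_m$, since $h_0(z,t)=p_t(z)$ by the second equality in \eqref{eq:representation-of-density-of-X_t}. For \eqref{eq:exponential-decay}, first note that $p_t$ is log-concave: for $t\in(0,1)$ the vector $X_t=(1-t)X_0+tX_1$ is a sum of the independent log-concave vectors $(1-t)X_0$ and $tX_1$ (invertible scalings preserve log-concavity), so its density is a convolution of log-concave densities and hence log-concave by Pr\'ekopa's theorem; $t\in\{0,1\}$ is trivial. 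Its mean is $m_t=(1-t)m_0+tm_1$ and its covariance is $\Sigma_t=(1-t)^2\Sigma_0+t^2\Sigma_1$, and since $(1-t)^2+t^2\in[\tfrac12,1]$ on $[0,1]$ we have $\|m_t\|\le\max_j\|m_j\|$, $\lambda_{\min}(\Sigma_t)\ge\tfrac12\min_j\lambda_{\min}(\Sigma_j)>0$ and $\lambda_{\max}(\Sigma_t)\le\max_j\lambda_{\max}(\Sigma_j)<\infty$, uniformly in $t$. Passing to isotropic position, $\widetilde p_t(y):=\sqrt{\det\Sigma_t}\,p_t(\Sigma_t^{1/2}y+m_t)$ is an isotropic log-concave density, so the quantitative form of \cite[Lemma 1]{cule2010theoretical} (equivalently, the uniform tail bound for isotropic log-concave densities) gives $\widetilde p_t(y)\le C(d)e^{-c(d)\|y\|}$. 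Undoing the affine change of variables, $p_t(z)=\det(\Sigma_t)^{-1/2}\widetilde p_t(\Sigma_t^{-1/2}(z-m_t))$, and using $\det(\Sigma_t)^{-1/2}\le\lambda_{\min}(\Sigma_t)^{-d/2}$ together with $\|\Sigma_t^{-1/2}(z-m_t)\|\ge\lambda_{\max}(\Sigma_t)^{-1/2}(\|z\|-\|m_t\|)$, the uniform bounds above yield $p_t(z)\le c_0e^{-c_1\|z\|}$ with $c_0,c_1$ depending only on $d$ and $(m_j,\Sigma_j)_{j=0,1}$.

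\textbf{Quadratic bounds.} Fix $t\in[0,1]$; by the analogue of $h_m$ with the roles of $p_0,p_1$ and of $t,1-t$ interchanged (cf.\ the proof of Lemma \ref{lemma:holdersmooth}) I may assume $t\ge\tfrac12$, and $t\in\{0,1\}$ is trivial from strong log-concavity of $p_j$ directly. Let $\widetilde x_j$ be the unique minimizer of $\phi_j=-\log p_j$, which exists with $\nabla\phi_j(\widetilde x_j)=0$ by \eqref{assump:unbnd-log-Holder}--\eqref{assump:strong-log-concave}; strong convexity gives $p_j(x)\le p_j(\widetilde x_j)\exp(-\tfrac{\alpha}{2}\|x-\widetilde x_j\|^2)$. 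Substituting into $h_m$ and writing $y=x+\tfrac{z-x}{t}$, so that $\|y-\widetilde x_1\|^2=\tfrac{(1-t)^2}{t^2}\|x-b\|^2$ with $b=\tfrac{z-t\widetilde x_1}{1-t}$, the exponent becomes $\tfrac{\alpha}{2}\big(\|x-\widetilde x_0\|^2+\tfrac{(1-t)^2}{t^2}\|x-b\|^2\big)$. Completing the square, its minimum over $x$ equals $\tfrac{\alpha}{2}\,\|z-m_t^*\|^2/(t^2+(1-t)^2)$ with $m_t^*=(1-t)\widetilde x_0+t\widetilde x_1$, hence $\ge\tfrac{\alpha}{4}\|z\|^2-\tfrac{\alpha}{2}\max_j\|\widetilde x_j\|^2$ since $t^2+(1-t)^2\le1$, and its minimizer $x^*=(t^2\widetilde x_0+(1-t)(z-t\widetilde x_1))/(t^2+(1-t)^2)$ satisfies $\|x^*\|\le C(1+\|z\|)$ uniformly in $t$. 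The remaining Gaussian integral $\int\|x\|^m\exp(-\tfrac{\alpha}{2}A\|x-x^*\|^2)dx$ with $A=(t^2+(1-t)^2)/t^2\ge\tfrac1{2t^2}$ is, after the substitution $u=x-x^*$ and $\|x\|^m\le 2^{m-1}(\|u\|^m+\|x^*\|^m)$, at most $C_{d,m,\alpha}\,t^d(1+\|x^*\|^m)$; dividing by $t^{d+m}$, using $t^{-m}\le 2^m$, and collecting terms gives $|h_m(z,t)|\le c_0(1+\|z\|^m)\exp(-c_1\|z\|^2)$ uniformly in $t$. Taking $m=0$ gives $p_t(z)\le c_0e^{-c_1\|z\|^2}$.

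\textbf{Main obstacle.} The log-concavity of $p_t$ and the Gaussian algebra are routine; the delicate points are (i) keeping all constants uniform in $t\in[0,1]$ --- exactly where $(1-t)^2+t^2\in[\tfrac12,1]$ and the reduction to $t\ge\tfrac12$ are used, and where one must verify that the $t^{-(d+m)}$ prefactor of $h_m$ is precisely absorbed by the $t^d$ from the Gaussian normalization (with $t\ge\tfrac12$ handling the leftover $t^{-m}$) --- and (ii) invoking, or re-deriving via a bound on $\|f\|_{\infty}$ plus the tail decay of isotropic log-concave densities, the \emph{quantitative} version of \cite[Lemma 1]{cule2010theoretical}, which is what makes $c_0,c_1$ in \eqref{eq:exponential-decay} depend on $(m_j,\Sigma_j)$ alone rather than on $p_0,p_1$ themselves.
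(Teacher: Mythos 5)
Your proof is correct and follows essentially the same route as the paper. For the exponential bound you use log-concavity of $X_t=(1-t)X_0+tX_1$, its uniformly controlled mean and covariance, and a quantitative tail bound for log-concave densities in terms of moments; the paper gets this from Corollary~6(a) of~\cite{kim2016global} rather than Lemma~1 of~\cite{cule2010theoretical}, but this is exactly the ``quantitative version'' you flag as needed in your obstacle discussion, so you have correctly identified the right ingredient. For the quadratic bound, your reduction to $t\ge\tfrac12$, use of $\alpha$-strong log-concavity at the minimizers $\widetilde x_j$, completion of the square, and bounding of the resulting Gaussian moment integral (absorbing the $t^{-(d+m)}$ prefactor via the variance scaling and $t\ge\tfrac12$) matches the paper's computation with $A=(t-1)/t$ and $\widetilde x = (z+(t-1)x_0-tx_1)/t$, just written in slightly different coordinates.
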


\begin{proof}[Proof of Lemma~\ref{lemma:boundp}]
Let's first show the statement under log-concavity. As $X_0$ and $X_1$ are log-concave and independent, this implies that $X_t = (1 - t)X_0 + tX_1$ is also log-concave. Moreover, $\mbox{Var}(X_t) = (1 - t)^2\Sigma_0 + t^2\Sigma_1$, whose minimum eigenvalues are lower bounded by those of $\Sigma_0, \Sigma_1$ and the maximum eigenvalues are upper bounded by those of $\Sigma_0, \Sigma_1$.
Thus, inequality~\eqref{eq:exponential-decay} follows from Assumption~\eqref{assump:strong-log-concave} and Corollary 6(a) of~\cite{kim2016global} and implies the boundedness of the densities. We note that while Corollary 6(a) of~\cite{kim2016global} is proved with an assumption that the largest eigenvalue of the covariance matrix is bounded by $1 + \eta$ for some $\eta \in (0, 1)$, the proof can be extended to the case of an arbitrarily finite largest eigenvalue. 

Now, under \eqref{assump:strong-log-concave} call $x_i$ the unique minimizer of $\phi_i$, and assume w.l.g that $t>0.5$; otherwise we can exchange the roles of $p_0,p_1$. By strong log-concavity we have
\begin{eqnarray*}
h_m(z,t)&=& \frac{1}{t^{d+m}}\int_{\mathbb{R}^d}x^m p_0(x)p_1\left(x+\frac{z-x}{t}\right)dx\\
&\leq& x^m \exp \left(-\left(\phi_0(x_0)+\phi_1(x_1)\right)\right) \int_{\mathbb{R}^d}\exp\left(-\frac{\alpha}{2}\left(\lVert x-x_0\rVert ^2+\lVert x+\frac{z-x}{t}-x_1\rVert^2\right)\right)dx\\
&\lesssim &
 \int_{\mathbb{R}^d}(x+x_0)^m\exp\left(-\frac{\alpha}{2}\left(\lVert x\rVert ^2+\lVert x+x_0+\frac{z-x-x_0}{t}-x_1\rVert^2\right)\right)dx.
\end{eqnarray*}
In the last line, we used the change of variables $y=x-x_0$.
We can identify the above sum of squares as $\|x\|^2+||Ax+\tilde{x}||^2$ where $A=(t-1)/t$ and $\tilde{x}=(z+(t-1)x_0-tx_1)/t$. Further, since
$$\|x\|^2+||Ax+\tilde{x}||^2 = (1+A^2)\left(\lVert x+\frac{A}{1+A^2}\tilde{x}\rVert^2\right)+\frac{||\tilde{x}||^2}{1+A^2}, $$
we conclude that up to the $||\tilde{x}^2||/(1+A^2)$ term and the normalizing constant, the above exponential identifies with a Gaussian with variance $\sigma^2_t$ and mean $\mu_t$ given by
\begin{eqnarray*}
\sigma^2_t&=I_d\dfrac{1}{\alpha(1+A^2)}=&I_d\frac{t^2}{\alpha(t^2+(1-t)^2)}\\
\mu_t&= -\dfrac{A}{1+A^2}\tilde{x}=&(1-t)\frac{z-(1-t)x_0-tx_1}{t^2+(1-t)^2}
\end{eqnarray*}
By replacing, and using that $\sigma_t$ is bounded we get
\begin{eqnarray*} h_m(z,t)&\lesssim& (2\pi \sigma_t^2)^{d/2}\exp\left(-\frac{\lVert \tilde{X}\rVert^2}{2\alpha (1+A^2)}\right) \mathbb{E}_{X\sim N\left(\mu_t,\sigma^2_t\right)} (X+x_0)^m \\
&\lesssim& \exp\left(-\alpha\frac{\lVert z+(t-1)x_0-tx_1\rVert^2}{2(t^2+(1-t)^2)}\right)\mathbb{E}_{X\sim N\left(0,I_d\right)} \left(\sigma_t X+\mu_t+x_0\right)^m.\end{eqnarray*}
Let's first show that the first term is bounded (up to constants independent on $t$) by $\exp(-c||z||^2)$. Indeed, the denominator in the exponential is bounded below and the numerator writes as $\alpha \lVert z-x_t\rVert^2$, where $x_t=tx_1+(1-t)x_0$. Then, $\lVert z-x_t\rVert^2\geq 0.5\lVert z\rVert^2-\lVert x_t\rVert^2\geq 0.5\lVert z\rVert^2 -\max_{t\in[0,1]}\lVert x_t\rVert^2,$ and we conclude.
Then, it only remains to bound each of the expectation terms. We claim that each of these is bounded by a constant times $z^k$. Indeed, since $\mu_t+x_0=a_tz+b_t$ for some $a_t,b_t$ of bounded norm (since $t>0.5$), and since the $(\sigma_t X+\mu_t +x_0)^m$ is a polynomial expression in the above bounded quantities of degree $m$, we conclude that 
$\mathbb{E}_{X\sim N\left(0,I_d\right)}\left(\sigma_t X\mu_t+x_0\right)^m\lesssim \lVert z\rVert^m+1$.
\end{proof}

\subsection{Proof of  Proposition \ref{prop:consistR}, Theorem \ref{teo:CLT} and related results}
 Here we prove Proposition \ref{prop:consistR}, Theorem \ref{teo:CLT} and other intermediate results regarding the linearization and their bounds (Proposition \ref{prop:linearized}, Theorem \ref{teo:CLTlinear},  Lemma \ref{lemma:concentrationprob})
 
 \begin{proof}[Proof of Proposition \ref{prop:consistR}]
Let $L=O_p(1)$ such that $\sup_{s\in[0,t]}\lVert \mathfrak{R}(s,x)\rVert \leq L$  and ~$\sup_{s\in[0,1]}\lVert \hat{\mathfrak{R}}(s,x)\rVert \leq L$. In this setup, the assumptions of Theorem \ref{thm:stability} hold. Indeed, we can set $\mathcal{S}$ as the ball or radius $B=\sup_{\lVert z\rVert \leq L, s\in[0,1]} \lVert v(s,z)\rVert$ and choosing $\delta=0$, $a(s):=C$ where $C$ is a uniform (in $t$) bound on the Lipschitz constant of $v$, $\kappa(s)=s$ and $\Psi(u)=\log u$. By \eqref{eq:perturbation-bound} we obtain, for each $0\leq t\leq 1$
\begin{eqnarray*} \lVert \hat{\mathfrak{R}}(t,x)-\mathfrak{R}(t,x)\rVert &\leq& \exp\left(\log\left(\int_0^t \lVert \hat{v}(s,\mathfrak{R}(s,x))-v\left(\mathfrak{R}(s,x)\right)\rVert ds\right)+C\right)\\
&\leq& \exp(Ct)\int_0^t \lVert \hat{v}(s,\mathfrak{R}(s,x))-v\left(\mathfrak{R}(s,x)\right)\rVert ds.
\end{eqnarray*}
We can now take the supremum on both sides above with respect to $x$. As $x$ moves in a compact $C$, so will the entire paths $z_{s,x}=\mathfrak{R}(s,x)$ do over a new compact $B$. This shows the first equality in the statement of Proposition \ref{prop:consistR}. The second equality follows from Lemma \ref{lemma:kernelderbound}(c), which is a consistency statement uniform in $t$ and $z\in B$.
 \end{proof}
 
 \begin{proof}[Proof of Theorem \ref{teo:CLT}]
 
Denote $\hat{\mathfrak{R}}(s,x)$ and $\mathfrak{R}(s,x)$ the solutions to an ODE with $t\geq 0$, initial condition $\hat{\mathfrak{R}}(s,x)=\mathfrak{R}(0,x)=x$ and right hand sides $\hat{v}(s,z)$ and $v(s,z)$ and respectively. Note that while in principle only $\mathfrak{R}(s,x)$ is well defined, our conditions guarantee the existence and uniqueness of the empirical ODE as well: Indeed, by Theorem \ref{prop:extension}, to guarantee almost sure uniqueness and existence of the population ODE it suffices to show that we can approximate the velocity with arbitrary resolution on a fixed compact (where the trajectories of the population ODE lie). Because of our weak consistency results (Lemma \ref{lemma:kernelderbound}), the empirical velocity can be made arbitrary close to the population quantity on an event of probability converging to one.
In this event $\hat{\mathfrak{R}}(s,x)$ is well defined for all $s\in[0,1]$.
Now, since we have
\begin{align*}
    \widehat{\mathfrak{R}}(t, x) &= x + \int_0^t \widehat{v}(s,\,\widehat{\mathfrak{R}}(s, x))ds,\\
    \mathfrak{R}(t, x) &= x + \int_0^t v(s,\,\mathfrak{R}(s, x))ds.
\end{align*}
If we define $\widehat{E}(t, x) := \widehat{\mathfrak{R}}(t, x) - \mathfrak{R}(t, x)$, this quantity satisfies
\begin{align*}
\widehat{E}(t, x) &= \int_0^t \{\widehat{v}(s, \widehat{\mathfrak{R}}(s, x)) - v(s, \mathfrak{R}(s, x))\}ds \\
&= \int_0^t\underbrace{ \{\widehat{v}(s, \widehat{\mathfrak{R}}(s, x)) - v(s, \widehat{\mathfrak{R}}(s, x)) - \widehat{v}(s, \mathfrak{R}(s, x)) + v(s, \mathfrak{R}(s, x))\}dx}_{R_1(s)}\\
&\quad+ \int_0^t \underbrace{\{v(s, \widehat{\mathfrak{R}}(s, x)) - v(s, \mathfrak{R}(s, x))\}ds}_{R_2(s)}\\
&\quad+  \int_0^t \{\widehat{v}(s, \mathfrak{R}(s, x)) - v(s, \mathfrak{R}(s, x))\}ds
\end{align*}
Let's first analyze $R_1(t)$. By a second-order Taylor expansion on $v$ and $\hat{v}$ on their second arguments,

\begin{eqnarray*} R_1(s)&=&\frac{\partial }{\partial z}\widehat{v}\left(s,\mathfrak{R}(s,x)\right)^\top\widehat{E}(s, x)+\widehat{E}(s, x)^\top\frac{\partial^2 }{\partial z^2}\widehat{v}\left(s,\widehat{\psi}(s)\right) \widehat{E}(s, x)\\&& -\left(\frac{\partial }{\partial z}v\left(s,\mathfrak{R}(s,x)\right)^\top\widehat{E}(t, x)+\widehat{E}(s, x)^\top\frac{\partial^2 }{\partial z^2}v\left(s,\psi(s)\right) \widehat{E}(s, x)\right),\end{eqnarray*}

where $\widehat{\psi}(s)$ and $\psi(s)$ are some vectors in the line joining $\mathfrak{R}(s,x)$ and $\widehat{\mathfrak{R}}(s,x)$. Using Lemma \ref{lemma:concentrationprob}(b), we can bound the second derivative of the empirical velocity. Therefore,

$$  R_1(s)= O_p\left(\Big\lVert \frac{\partial }{\partial z}\widehat{v}\left(s,\mathfrak{R}(s,x)\right)-\frac{\partial }{\partial z}v\left(s,\mathfrak{R}(s,x)\right)\Big\rVert  \lVert \widehat{E}(s, x) \rVert\right) +O_p\left(\lVert \widehat{E}(s, x)^2 \rVert\right)$$

Additionally, by the same argument,

\[
R_2(s)= \frac{\partial}{\partial z}v(s, \mathfrak{R}(s, x))^\top\widehat{E}(s, x)^\top + O_p(\|\widehat{E}(s, x)\|^2).
\]
Therefore, collecting terms,
$$\widehat{E}(t, x) -  \left(\int_0^t \{\widehat{v}(s, \mathfrak{R}(s, x)) - v(s, \mathfrak{R}(s, x))\}ds +  \frac{\partial}{\partial z}v(s, \mathfrak{R}(s, x))^\top\widehat{E}(s, x)\right)ds = \int_0^t R_3(s)ds,$$
where \begin{equation} \label{eq:R3} R_3(s)= O_p\left(\Big\lVert \frac{\partial }{\partial z}\widehat{v}\left(s,\mathfrak{R}(s,x)\right)-\frac{\partial }{\partial z}v\left(s,\mathfrak{R}(s,x)\right)\Big\rVert  \lVert \widehat{E}(s, x) \rVert\right)+O_p\left(\lVert \widehat{E}(s, x)^2 \rVert\right).\end{equation}
In turn, by the definition of $\tilde{E}(s,x)$ the above implies that

$$\widehat{E}(t, x) - \widetilde{E}(t,x)-   \int_0^t \frac{\partial}{\partial z}v(s, \mathfrak{R}(s, x))^\top\left(\widehat{E}(s, x)-\widetilde{E}(s, x)\right) ds = \int_0^t R_3(s)ds.$$

Call $F(t,x):=\lVert \widehat{E}(t, x) -\widetilde{E}(t, x) \rVert$. Then, by boundedness of the derivative, for some constant, 
$$F(t,x)\leq \int_0^t \lVert R_3(s)\rVert ds + \int_0^t C  F(s,x) ds,$$
so that, by Gronwall's inequality, 
$$F(t,x)\leq \exp(Ct) \int_0^t \lVert R_3(s)\rVert ds.$$
Now, let's bound the right-hand side above.  By \eqref{eq:R3}, proposition \ref{prop:consistR} and Lemma \ref{lemma:kernelderbound}, we have that

$$\sup_{s\in[0,1]}\lVert R_3(s)\rVert =O_p\left(\left[\sqrt{\frac{\log n}{nh_n^{d+2}}}+h^{\beta-1}\right]\left[\sqrt{\frac{\log n}{nh_n^{d}}}+h^\beta\right]\right) +O_p\left(\left[\sqrt{\frac{\log n}{nh_n^d}}+h_n^\beta\right]^2\right)=o_p\left(\sqrt{\frac{1}{nh^{d-1}_n}}\right),$$
whenever $h_n\gg n^{-\frac{1}{d+2+\varepsilon}}$, for any $\varepsilon>0.$ 

This implies that
$F(t,x)=o_p\left((nh_n^{d-1})^{-1/2}\right)$.
Then, for any sequence $a_n$
\begin{eqnarray}
\label{eq:linearexplicit} \nonumber a_n\left(\hat{R}(x)-R(x)\right)&=& a_n \tilde{E}(1,x)+ a_n\left(\hat{E}(1,x)- \tilde{E}(1,x)\right)=a_n \tilde{E}(1,x) +o_p\left(a_n(nh_n^{d-1})^{-1/2}\right).
\end{eqnarray}
In particular, for $a_n=\sqrt{nh_n^{d-1}}$ we obtain 
\begin{equation}
\label{eq:linearR}
 \sqrt{nh_n^{d-1}}\left(\hat{R}(x)-R(x)\right)= \sqrt{nh_n^{d-1}} \tilde{E}(1,x)+ o_p\left(1\right),
\end{equation}
So it suffices to establish a CLT for $\sqrt{nh_n^{d-1}} \tilde{E}(1,x)$. To derive such result we will linearize $\tilde{E}(1,x)$ by the usual analysis of ratio estimators:  we will use the fact that for any arbitrary $\hat{f},f,\hat{p}>0,p>0$ we have
 \begin{equation}\label{eq:ratio}
 \frac{\hat{f}}{\hat{p}}-\frac{f}{p}=\frac{1}{p}\left(\hat{f}-\frac{f}{p}\hat{p}\right)\left(1-\frac{\hat{p}-p}{\hat{p}}\right).
 \end{equation}
 from \eqref{eq:ratio} it follows that, if $L=\sup_{s\in[0,1]}\lVert z_s\rVert$
 \begin{eqnarray*}
 \hat{v}_s(z_s)-v_s(z_s)&=&\frac{\hat{f}_s(z_s)-v_s(z_s)\hat{p}_s(z_s)}{p_s(z_s)} \\&&-\left(\frac{\hat{f}_s(z_s)-v_s(z_s)\hat{p}_s(z_s)}{p_s(z_s)}\right)\frac{\hat{p}_s(z_s)-p_s(z_s)}{\hat{p}_s(z_s)}\\ &=&
\frac{\hat{f}_s(z_s)-v_s(z_s)\hat{p}_(z_s)}{p_s(z_s)} \\&&+O_p\left(\sup_{s\in[0,1],\lVert z\rVert \leq L}\lVert \hat{f}_s(z_s)-v_s(z_s)\hat{p}_s(z_s)\rVert\lVert \hat{p}_s(z_s)-p_s(z_s)\rVert \right)\\&=&
\frac{\hat{f}_s(z_s)-v_s(z_s)\hat{p}_s(z_s)}{p_s(z_s)} +O_p\left(\left[\sqrt{\frac{\log n}{nh_n^d}}+h_n^\beta\right]^2\right)\\
&=&\frac{\hat{f}_s(z_s)-v_s(z_s)\hat{p}_s(z_s)}{p_s(z_s)} +o_p\left((nh_n^{d-1})^{-1/2}\right)
.\end{eqnarray*}
Above, we have used Lemma \ref{lemma:kernelderbound}(b) and the fact that $\sup_{t\in[0,1]}\frac{1}{\hat{p}_t(z_t)}=O_p(1)$ (see the proof of \ref{lemma:kernelderbound}). Therefore, by virtue of \eqref{eq:linearR}
 \begin{equation*}
\nonumber \sqrt{nh_n^{d-1}}\left(\hat{R}(x)-R(x)\right)= \sqrt{nh_n^{d-1}} \left(\int_0^1  \tilde{\Phi}(s) \frac{\hat{f}_s(z_s)-v_s(z_s)\hat{p}_s(z_s)}{p_s(z_s)}ds\right)+ o_p\left(1\right).
\end{equation*}
 
 The conclusion is now a direct consequence of Theorem \ref{teo:CLTlinear}.

 \end{proof}

\begin{proposition}\label{prop:linearized}
 Consider the linearized quantity
\begin{equation}\label{eq:ln} \hat{L}(x):=\int_0^1  \tilde{\Phi}(s) \left(\frac{\hat{f}_s(z_s)-v_s(z_s)\hat{p}_s(z_s)}{p_s(z_s)}\right)ds.\end{equation}
Suppose that assumptions \eqref{assump:K}, \eqref{assump:unbnd-densitiy-positive}, \eqref{assump:unbnd-log-Holder} and \eqref{assump:strong-log-concave} hold. Then, for any convex and compact set $B$

\begin{itemize}

\item[(a)] Bias: \begin{equation}\label{eq:biasbound} \sup_{x\in B}||\mathbb{E}(\hat{L}(x))|| \leq C h_n^{\beta},\end{equation}
\item[(b)]  Uniform weak consistency: \begin{equation}\label{eq:fluctuations} \sup_{x\in B} ||\hat{L}(x)||=O_p\left(\sqrt{\frac{\log n}{nh_n^d}}+h_n^\beta\right). \end{equation}
\item[(c)] Variance: \begin{equation} \label{eq:varbound} \sup_{x\in B} \lVert nh_n^{d-1}\text{Cov}\left(\hat{L}(x)\right) -\Sigma_h(x)\rVert =o(1) ,\end{equation}
for some matrix $\Sigma_h$ detailed in the proof. 
If \eqref{assump:morse} also holds, we have that $\Sigma_{h_n}(x)\to \Sigma(x)$ where $\Sigma(x)$ is defined in \eqref{eq:sigma}, and
\begin{equation} \label{eq:varbound}  \lVert \text{Cov}\left(\hat{L}(x)\right)\rVert \lesssim \frac{1}{nh_n^{d-1}}.\end{equation}
\end{itemize}
 \end{proposition}
  Note that (c) improves over the ``naive'' variance rate given in (b).
 We defer the more involved proof of Proposition \ref{prop:linearized} to the next Section \ref{sub:proplinearizedproof}. There, we also include additional lemmas that this proof relies on. With all of the above, we can state a CLT for $\hat{L}(x)$. 
 \begin{theorem}\label{teo:CLTlinear}
Suppose that assumptions Suppose that assumptions \eqref{assump:K}, \eqref{assump:unbnd-densitiy-positive}, \eqref{assump:unbnd-log-Holder}, and \eqref{assump:strong-log-concave} hold.  In the setup of Theorem \ref{teo:CLT} then, for $\Sigma_h$ defined in \eqref{eq:sigmah} we have
$$  \sqrt{nh_n^{d-1}}\Sigma_h^{-1/2}(x)\hat{L}(x)~\overset{d}{\to}~ N\left(0,I_d\right).$$
If \eqref{assump:morse} also holds, we have that 
$$  \sqrt{nh_n^{d-1}}\hat{L}(x)~\overset{d}{\to}~ N\left(0,\Sigma(x)\right),$$

 \end{theorem}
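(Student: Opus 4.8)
The plan is to recognize that $\hat{L}(x)$ is, for each fixed $n$, an average of i.i.d.\ random vectors, so that a triangular-array central limit theorem applies once its mean and covariance are understood. Writing the kernel regression estimator~\eqref{eq:kerreg} out explicitly, with $K_h(u) = h^{-d}K(u/h)$, $\Delta_i = X_{1i} - X_{0i}$ and $X_{si} = (1-s)X_{0i} + sX_{1i}$, one has $\hat{f}(s,z) - v(s,z)\hat{p}(s,z) = n^{-1}\sum_{i=1}^n(\Delta_i - v(s,z))K_h(X_{si} - z)$, and hence
\[
\hat{L}(x) = \frac{1}{n}\sum_{i=1}^n \xi_{n,i},\qquad
\xi_{n,i} := \int_0^1 \frac{\Phi(1,s,z(s))}{p_s(z(s))}\bigl(\Delta_i - v(s,z(s))\bigr)K_h\bigl(X_{si} - z(s)\bigr)\,ds,
\]
where the $\xi_{n,i}$ are i.i.d.\ because the trajectory $z(\cdot)$, the fundamental matrix $\Phi$, and the velocity field $v$ are all deterministic. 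I would then set $\zeta_{n,i} := \sqrt{h_n^{d-1}/n}\,(\xi_{n,i} - \mathbb{E}\xi_{n,i})$, so that $\sqrt{nh_n^{d-1}}\,\hat{L}(x) = \sum_{i=1}^n\zeta_{n,i} + \sqrt{nh_n^{d-1}}\,\mathbb{E}[\hat{L}(x)]$, reducing the theorem to a CLT for $\sum_i\zeta_{n,i}$ together with a negligibility statement for the bias term.

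The bias is disposed of by Proposition~\ref{prop:linearized}(b): $\|\mathbb{E}[\hat{L}(x)]\| \le Ch_n^\beta$, so $\sqrt{nh_n^{d-1}}\,\|\mathbb{E}[\hat{L}(x)]\| \le C\sqrt{nh_n^{d-1+2\beta}} \to 0$ by the undersmoothing half of~\eqref{eq:bandwidth}; inserting the factor $\Sigma_h^{-1/2}(x)$ does not change this, since $\Phi(1,s,z(s))$ is invertible (Lemma~\ref{lemma:fundamentalprop}) and, from the proof of Proposition~\ref{prop:linearized}, $\Sigma_h(x)$ is bounded away from singularity, so $\|\Sigma_h^{-1/2}(x)\|$ is bounded. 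The covariance is handled by Proposition~\ref{prop:linearized}(c): $\operatorname{Cov}\bigl(\sum_i\zeta_{n,i}\bigr) = nh_n^{d-1}\operatorname{Cov}(\hat{L}(x)) = \Sigma_h(x) + o(1)$, and, additionally under~\eqref{assump:morse}, $\Sigma_h(x)\to\Sigma(x)$ with $\Sigma(x)$ given by~\eqref{eq:sigma}.

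The one genuinely new step is to verify a Lindeberg (Lyapunov) condition for the array $\{\zeta_{n,i}\}$ and conclude by the Cram\'er--Wold device. Since $K$ has bounded support (assumption~\eqref{assump:K}), $K_h(X_{si}-z(s))$ vanishes unless $\|X_{si}-z(s)\|\le Ch_n$; moreover $\sup_s\|\Phi(1,s,z(s))/p_s(z(s))\| < \infty$ and $\sup_s\|v(s,z(s))\| < \infty$, because $v$ is Lipschitz in $z$ (Proposition~\ref{prop:derzbound}), so the deterministic trajectory $z(\cdot)$ stays in a bounded set, on which $p_s$ is bounded below (Lemma~\ref{lemma:infprob}). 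Hence $\|\xi_{n,i}\| \lesssim h_n^{-d}(1+\|\Delta_i\|)\int_0^1\mathbf{1}\{\|X_{si}-z(s)\|\le Ch_n\}\,ds$; raising this to the power $2+\delta$, using $\bigl(\int_0^1\mathbf{1}\{\cdot\}ds\bigr)^{2+\delta}\le\int_0^1\mathbf{1}\{\cdot\}ds$ (Jensen), taking expectations, and applying Lemma~\ref{lemma:momentbound} with $\alpha = 2+\delta$ gives $\mathbb{E}\|\xi_{n,i}\|^{2+\delta}\lesssim h_n^{-d(1+\delta)}$. It follows that $\sum_i\mathbb{E}\|\zeta_{n,i}\|^{2+\delta}\lesssim\bigl(h_n\,(nh_n^{d+1})^{\delta/2}\bigr)^{-1}\to 0$, since the lower bound $h_n\gg n^{-1/(d+2+\varepsilon)}$ forces $nh_n^{d+1}\to\infty$ at a polynomial rate and $\delta$ may be taken as large as desired ($\Delta$ is sub-Gaussian, so all conditional moments in Lemma~\ref{lemma:momentbound} are finite). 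With the Lyapunov bound, the Lindeberg--Feller theorem applied to $a^\top\zeta_{n,i}$ (and to $a^\top\Sigma_h^{-1/2}(x)\zeta_{n,i}$, the prefactor being bounded) shows $\sum_i\zeta_{n,i}$ converges in law to a centered Gaussian with the covariance identified above; combined with the bias estimate this yields $\sqrt{nh_n^{d-1}}\Sigma_h^{-1/2}(x)\hat{L}(x)\overset{d}{\to}N(0,I_d)$ and, under~\eqref{assump:morse}, $\sqrt{nh_n^{d-1}}\hat{L}(x)\overset{d}{\to}N(0,\Sigma(x))$.

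I expect the real difficulty to be entirely upstream, in Proposition~\ref{prop:linearized}: the fact that $\operatorname{Cov}(\hat{L}(x))$ decays like $(nh_n^{d-1})^{-1}$ rather than the naive $(nh_n^d)^{-1}$, together with the precise form~\eqref{eq:sigma} of $\Sigma(x)$, hinges on controlling the Lebesgue measure of the set of times $s$ at which the random segment $s\mapsto X_{si}$ runs nearly tangent to the deterministic curve $s\mapsto z(s)$ — which is exactly what the non-degeneracy condition~\eqref{assump:morse} buys — and in the present argument that enters only as a citation of Proposition~\ref{prop:linearized}.
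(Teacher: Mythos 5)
Your proposal is correct and follows essentially the same route as the paper: center $\hat{L}(x)$, kill the bias using Proposition~\ref{prop:linearized}(b) together with the undersmoothing half of~\eqref{eq:bandwidth}, identify the limiting covariance via Proposition~\ref{prop:linearized}(c), and verify a Lyapunov condition for the i.i.d.\ triangular array $\zeta_{n,i}$ (which, up to the normalization $v_n\asymp\sqrt{nh_n^{1-d}}$, is exactly the array $U_{n,i}$ in the paper's invocation of Lemma~\ref{lemma:lyapunov}), concluding by Cram\'er--Wold. The only difference is cosmetic: the paper bounds $\mathbb{E}\|U_{n,i}\|^{2+\kappa}$ by splitting into the $\Delta_i K_h$ and $v\cdot K_h$ pieces, computing $\mathbb{E}[K_h^a]$, and conditioning on $X_i(s)$, whereas you bound the full integrand at once using the compact support of $K$, apply the elementary bound $\bigl(\int_0^1\mathbf{1}\{\cdot\}\,ds\bigr)^{2+\delta}\le\int_0^1\mathbf{1}\{\cdot\}\,ds$, and then condition — both yield the same estimate $\mathbb{E}\|\xi_{n,i}\|^{2+\delta}\lesssim h_n^{-d(1+\delta)}$ and hence the same conclusion.
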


 \begin{proof}[Proof of Theorem \ref{teo:CLTlinear}]
 $$ \sqrt{nh_n^{d-1}} \hat{L}(x)=\sqrt{nh_n^{d-1}} \left(\hat{L}(x)-\mathbb{E}(\hat{L}(x))\right)+\sqrt{nh_n^{d-1}}\mathbb{E}(\hat{L}(x)).$$
 
 By the bias property in Proposition \ref{prop:linearized}a, \eqref{eq:biasbound}, and the undersmoothing condition $h_n\ll n^{-1/(d-1+2\beta)}$, the second term in the right-hand side above is $o(1)$. We will now apply the Lyapunov CLT \eqref{lemma:lyapunov} to the triangular array
 $$U_{n,i}:=L_{n,i}(x)-\mathbb{E}(L_{n,i}(x)),\quad L_{n,i}(x)=\int_0^1  \frac{ \tilde{\Phi}(s)}{p_s(z_s)}(\Delta_i-v(s,z_s))K_{h_n}(X_i(s)-z(s))ds.$$
 We can then express
 \begin{eqnarray*}U_{n,i}&=&U^1_{n,i}+U^2_{n,i},\\
 U^1_{n,i}&:=&\int_0^1\frac{ \tilde{\Phi}(s)}{p_s(z_s)}\left(\Delta_iK_{h_n}(X_i(s)-z_s)-\mathbb{E}\left(\Delta_i K_{h_n}(X_i(s)-z_s)\right)\right) ds, \\
 U^2_{n,i}&:=&\int_0^1\frac{ \tilde{\Phi}(s)}{p_s(z_s)}v(s,z_s)\left(K_{h_n}(X_i(s)-z_s)-\mathbb{E}\left( K_{h_n}(X_i(s)-z_s)\right)\right) ds.
 \end{eqnarray*}
  We will bound $\mathbb{E}\left(||U_{n,i}||^{2+\kappa}\right)$ for any $\kappa>0$. To do this, first recall that $||x+y||^{2+\kappa}\lesssim \|x\|^{2+\kappa}+||y||^{2+\kappa}$, so we can analyze $U_{n,i}^1$ and $U_{n,i}^2$ separately. Also, recall that for a vector-valued function $f:\mathbb{R}^{d_1}\rightarrow \mathbb{R}^{d_2}$ on a probability space with measure $\mu$ we have
  \begin{eqnarray*}
  \Big \lVert \int f(x)d\mu(x) \Big \rVert^{2+\kappa} &\leq&   \left(\int \lVert f(x)\rVert d\mu(x)\right) ^{2+\kappa}\\
   &\leq& \int \lVert f(x)\rVert^{2+\kappa} d\mu(x).
  \end{eqnarray*}
   The second inequality follows from H{\"o}lder's inequality. Note also that for any vector $x$
$$\Big \lVert \frac{ \tilde{\Phi}(s)}{p_s(z_s)}x\Big \rVert\leq \Big \lVert \frac{ \tilde{\Phi}(s)}{p_s(z_s)}\Big \rVert \lVert x\rVert \lesssim \|x\|,$$
where the last inequality follows from uniform upper and lower bounds on $\tilde{\Phi}(s)$ and $p_s(z_s)$ over $s\in [0,1]$, respectively. Also, since $v(s,z_s)$ is also upper bounded on this interval, we conclude that it suffices to bound uniformly in $s$ the quantities
$$U_{n,i}^1(s):=\mathbb{E}\left[\left(K_{h_n}(X_i(s)-z_s)-\mathbb{E}\left( K_{h_n}(X_i(s)-z_s)\right)\right)^{2+\kappa}\right]$$
and
$$U_{n,i}^2(s):=\mathbb{E}\left[\lVert\Delta_i K_{h_n}(X_i(s)-z_s)-\mathbb{E}\left( \Delta_i K_{h_n}(X_i(s)-z_s)\right)\rVert^{2+\kappa}\right]).
$$
For an arbitrary $a>0$, we have
\begin{eqnarray*} \mathbb{E}\left(K_h(X_i(s)-z_s)^{a}\right)&=&h^{-ad}\int_{\mathbb{R}^d}K^a\left(\frac{z-z_s}{h}\right) p_s(z)dz\\&=& h^{-ad+d}\int_{\mathbb{R}^d} K(u)^ap_s(z_s+hu)du\\&=& h^{-ad+d}p_s(z_s)\int K(u)^a du+h^{-ad+d+1}\int_{\mathbb{R}^d}K(u)^a u\cdot \nabla p_s(z_s+\tau hu)du\\&=& 
O(h^{-ad+d}),\end{eqnarray*}
since $p_s(z)$ and the gradient of $p_s(z)$ are uniformly bounded in $s$ and $z$ (Lemma \ref{lemma:holdersmooth}). Then, for an arbitrary $a>0$,
Taking $a=1$, we obtain that $\mathbb{E}(K_{h}(X_i(s)-z_s))\lesssim 1$. Taking $a=\kappa+2$, using that $K$ has bounded support we obtain $\mathbb{E}(K_{h}(X_i(s)-z_s)^{2+\kappa})\lesssim h^{-d(1+\kappa)}$  Then, since $\mathbb{E}|X+Y|^p\lesssim \mathbb{E}|X|^p+\mathbb{E}|Y|^p$ we conclude that $U_{n,i}(s)\lesssim h^{-d(1+\kappa)}$, uniformly in $s$.
To bound $U_{n,i}^2(s)$ we follow a conditioning argument. Call $f(z)=\mathbb{E}\left(\lVert\Delta_i\rVert^{2+\kappa}|X_i(s)=z\right)$. Then,
\begin{eqnarray*} \mathbb{E}\left[\lVert\Delta_iK_h\left(X_i(s)-z_s\right)\rVert^{2+\kappa}\right]&=&\mathbb{E}\left[\mathbb{E}\left(\lVert\Delta_iK_h\left(X_i(s)-z_s\right)\rVert^{2+\kappa}|X_{i}(s)\right)\right]
\\&=& \mathbb{E}\left[\mathbb{E}\left(\lVert\Delta_i\rVert^{2+\kappa}|X_i(s)\right)|K_h\left(X_i(s)-z_s\right)|^{2+\kappa}\right]\\ &=&
\int_{\mathbb{R}^d} f(z)K^{2+\kappa}_h(z-z_s)p_s(z)dz \\&=&h^{-d(1+\kappa) }\int_{\mathbb{R}^d} f(z_s+hu)p_s(z_s+hu)K^{2+\kappa}(u)du\\&\lesssim & 
h^{-d(1+\kappa)}\int_{\mathbb{R}^d} K^{2+\kappa}(u)du\\
&\lesssim & 
h^{-d(1+\kappa)}.
\end{eqnarray*}
In the second-to-last line, we used the moment bound in Lemma \ref{lemma:momentbound}; $f(x)p_s(x)$ is bounded uniformly in $x\in \mathbb{R}^d$ and $s\in[0,1]$. In the last line, we used that $K$ is bounded. By a similar argument as before, we conclude that $U^2_{n,i}(s)\lesssim h^{-d(1+\kappa)}$ and so $U_{n,i}(s)$ is also bounded by $h^{-d(1+\kappa)}$.

We now turn to lower-bound the smallest eigenvalue of $U_{n,i}$. Using that (Proposition \ref{prop:linearized}c)
$$\lVert nh^{d-1} \text{Cov}(\hat{L}(x))-\Sigma_h(x)\rVert_{\infty}=o(1),$$
we can deduce that $\lambda_{min}(\text{Cov}(\hat{L}(x)))\gtrsim n^{-1}h^{1-d}\left(1+o(1)\right)$: indeed, suppose first that $\Sigma_h(x)$ is the identity matrix. In that case, since $||A-B||\lesssim ||A-B||_{\infty}$, it suffices to show that if $\lVert A-I_d\lVert_\infty <a$ then $\lambda_{min}(A)> 1-a$, which easily follows from Gershgorin circle theorem. In the general case, since $\Sigma(x)$ is positive definite, we can pre-multiply to obtain a lower bound on the eigenvalues of $\Sigma(x)^{-1/2}\hat{L}(x)\Sigma(x)^{-1/2}$. In turn, since $\Sigma(x)$ is positive definite, this leads to a bound on the eigenvalues of $\hat{L}(x)=\Sigma(x)^{1/2}\Sigma(x)^{-1/2}\hat{L}(x)\Sigma(x)^{-1/2}\Sigma(x)^{1/2}$. Then, using that
$v_n^2=n^2\lambda_{min}\left(\text{Cov}(\hat{L}(x))\right)$, we conclude that
\begin{eqnarray*}
\frac{1}{v_n^{2+\kappa}}\sum_{i=1}^n \lVert U_{n,i}\rVert^{2+\kappa} \lesssim \frac{nh^{-d(1+\kappa)}}{n^{1+\kappa/2}h^{(1-d)(1+\kappa/2)}}\lesssim \left(nh^{d+1+2/\kappa}\right)^{-\kappa/2}. 
\end{eqnarray*}
Therefore, for any $\varepsilon>0$ we can take $\kappa=2/(1+\varepsilon)\leq 2$ such that $\left(nh^{d+1+2/\kappa}\right)^{-\kappa/2}=\left(nh^{d+2+\varepsilon}\right)^{-\kappa/2}=o(1)$ under the condition $h=h_n\gg n^{-\frac{1}{d+2+\varepsilon}}$
From this, we conclude that
$$\text{Cov}\left(\hat{L}(x)\right)^{-1/2}\left(\hat{L}(x)-\mathbb{E}(\hat{L}(x))\right)~\overset{d}{\to}~ N\left(0,I_d\right),$$
and since $nh^{d-1}\text{Cov}\left(\hat{L}(x)\right)~\overset{d}{\to}~ \Sigma(x)$, by Slutsky's theorem, that
$$\sqrt{nh^{d-1}}\left(\hat{L}(x)-\mathbb{E}(\hat{L}(x))\right)~\overset{d}{\to}~ N\left(0,\Sigma(x)^{1/2}\Sigma(x)^{1/2}\right).$$
\end{proof}
 
\begin{lemma}\label{lemma:concentrationprob}
Suppose that $p_0,p_1$ satisfy \eqref{assump:unbnd-densitiy-positive},\eqref{assump:unbnd-log-Holder}, and \eqref{assump:strong-log-concave}. Then, 
for any radius $L=O(1)$
\begin{itemize}
\item[(a)] $$\sup_{s\in[0,1],||z||\leq L}\frac{\partial}{\partial z}\hat{v}(t,z)=O_p(1).$$
\item[(b)] 
$$\sup_{s\in[0,1],||z||\leq L }\frac{\partial^2}{\partial z^2}\hat{v}(t,z)=O_p(1).$$

\end{itemize}

\end{lemma}

\begin{proof} To show (a) and (b), we use that for $l=1,2$ and any radius $L>0$
$$\sup_{t\in[0,1],||z||\leq L}\Big\lVert\frac{\partial^l}{\partial z^l}v(t,z)\Big \rVert \leq \sup_{t\in[0,1],||z||\leq L} \Big\lVert\frac{\partial^l}{\partial z^l}v(t,z)\Big \rVert +\sup_{t\in[0,1],||z||\leq L}\Big\lVert\frac{\partial^l}{\partial z^l}\hat{v}(t,z)-\frac{\partial^l}{\partial z^l}v(t,z)\Big \rVert.$$ By Lemma \ref{lemma:kernelderbound}, the difference term above is bounded in probability, provided that the radius is bounded. \end{proof}

\subsection{Proof of Proposition \ref{prop:linearized} and additional lemmas}\label{sub:proplinearizedproof}

 \begin{proof}[Proof of  Proposition \ref{prop:linearized}]
 
For the bias (a) and consistency analysis (b) we note that
\begin{eqnarray*}\lVert \hat{L}(x)\rVert &=& \Big \lVert \int_0^1\frac{\tilde{\Phi}(s)}{p_s(z_s)} \left(\hat{f}_s(z_s)-v_s(z_s)\hat{p}_s(z_s) \right) ds\Big \rVert \\
&\lesssim& \int_0^1\frac{\tilde{\Phi}(s)}{p_s(z_s)} \lVert \rVert  \lVert \hat{f}_s(z_s)-v_s(z_s)\hat{p}_s(z_s) \rVert ds\\
&\lesssim& \int_0^1\frac{\lVert\tilde{\Phi}(s)\rVert}{p_s(z_s)}  ds \sup_{s\in[0,1]}  \lVert \hat{f}_s(z_s)-v_s(z_s)\hat{p}_s(z_s) \rVert ds
\\
&=&O_p\left(\sqrt{\frac{\log n}{nh_n^{d}}}+h_n^\beta\right) .\end{eqnarray*}
The above term is made from a bias and fluctuations term as shown in the proof of
we used Lemma \ref{lemma:kernelderbound}, the upper bounds on  $\tilde{\Phi}(s)$ (Proposition \ref{prop:Volterra}) the lower bounds on $p_s(z_s)$ (
$\inf_{s\in[0,1],z\in\tilde{B}} p_s(z)>0$ by Lemma \ref{lemma:infprob}). 
Thus, we have established \eqref{eq:biasbound} and \eqref{eq:fluctuations}.

It remains to show (c). Recall that $\hat{f}(s,z)$ and $\hat{p}(s,z)$ are sample averages of independent copies of the variables $\Delta_iK_h(I_i(s)-z)$ and $K_h(I_i(s)-z)$, respectively. Therefore, using that $\text{Cov}(\bar{X},\bar{Y})=\text{Cov}(X,Y)/n$ and upon calling
$$\gamma(t):=\frac{ \tilde{\Phi}(t)}{p_t(z_t)},$$
we have
\begin{eqnarray*}
\text{Cov}\left(\hat{L}(x)\right) &=& \int_0^1 \int_0^1 \text{Cov}\left(\gamma(s)\left(\hat{f}(s,z_s)-v(s,z_s)\hat{p}(s,z_s)\right),\gamma(t)\left(\hat{f}(t,z_t)-v(t,z_t)\hat{p}(t,z_t)\right)\right)dsdt\\ &=&
\frac{1}{n}\int_0^1 \int_0^1 \text{Cov}\left(\gamma(s)\left(\Delta-v(s,z_s)\right)K_h\left(X_s-z_s\right) ,\gamma(t)\left(\Delta-v(t,z_t)\right) K_h\left(X_t-z_t\right) \right)dsdt \\
&&\underbrace{\frac{1}{n} \int_0^1 \int_0^1 \gamma(s) \mathbb{E}\left(K_h\left(X_s-z_s\right)K_h\left(X_t-z_t\right)\left(\Delta-v(s,z_s)\right)\left(\Delta-v(t,z_t)\right)^\top \right)\gamma(t)^\top dsdt}_{\Sigma^0_h(x)}\\
 &&-
\underbrace{\frac{1}{n}\int_0^1 \int_0^1 \mathbb{E}\left(\gamma(s) K_h\left(X_s-z_s\right)\left(\Delta-v(s,z_s)\right)\right)\mathbb{E}\left(\gamma(t) K_h\left(X_t-z_t\right)\left(\Delta-v(t,z_t)\right)\right)^\top dsdt}_{\Omega_h(x)}.
\end{eqnarray*}

Let's analyze $\Sigma^0_h(x),\Omega_2(x)$ separately. Regarding $\Omega_h(x)$, since $\lVert uv^t\rVert \leq \lVert u\rVert \lVert v \rVert $:
\begin{eqnarray*}n\lVert \Omega_h(x)\rVert&\lesssim&\int_0^1  \int_0^1 \lVert \mathbb{E}\left(K_h\left(X(s)-z_s\right)\gamma(s) \left(\Delta-v(s,z_s)\right)\right)\rVert \lVert \mathbb{E}\left( K_h\left(X_t-z_t\right)\gamma(t)\left(\Delta-v(t,z_t)\right)\right)\rVert dsdt\\
&\lesssim& h^2.\end{eqnarray*}
In the last line, we used the fact that
\begin{eqnarray*}
\mathbb{E}\left(K_h\left(X_s-z_s\right)\lVert \Delta-v(s,z_s)\rVert \right)&=&\int_{\mathbb{R}^d} K(u)\lVert \delta-v(s,z_s) \rVert p_0(z_s+hu-s\delta)p_1(z_s+hu+(1-s)\delta)dud\delta\\
&=& \int_{\mathbb{R}^d} K(u)\lVert \delta-v(s,z_s) \rVert p_0(z_s-s\delta)p_1(z_s+(1-s)\delta)dud\delta\\&&+ h\int_{\mathbb{R}^d} K(u)\lVert \delta-v(s,z_s))\rVert \nabla  p_0(z_s-s\delta)p_1(z_s+(1-s)\delta)d\delta du\\&&+h\int_{\mathbb{R}^d} K(u)\lVert\delta-v(s,z_s))\rVert p_0(z_s-s\delta)\nabla p_1(z_s+(1-s)\delta) d\delta du \\
&\lesssim& h.
\end{eqnarray*}
In the last line, we used that the first term in the sum of the second to last line is zero by the definition of $v(s,z_s)$, and that the sum of the integrals is bounded (uniformly over $s$) by Lemma \ref{lemma:holdersmooth}.
Then, if we define \begin{equation}\label{eq:sigmah} \Sigma_h(x):=nh^{d-1}\Sigma^0_h(x)\end{equation} 
we have 
$$\lim_{n\rightarrow \infty}\lVert nh^{d-1}\text{Cov}(\hat{L}(x))-\Sigma_h(x)\lVert = \sup_{x\in B} h^{d-1}\lVert \Omega_h(x)\rVert =O(h^{d+1})=o(1).$$
The convergence analysis of $\Sigma_h(x)$ is more delicate, and its bulk is contained in the separate Lemma \ref{lemma:variancebound0} that we only invoke here. By the identity $X_s=X_t+(s-t)\Delta$, we can express the integral in $\Sigma^0_h(x)$ in terms of $X_t$ and $\Delta$ only. Specifically, 
\begin{eqnarray*}
n\Sigma^0_h(x)&=& \int_0^1\int_0^1\gamma(s) \mathbb{E}\left(K_h\left(X_t+(s-t)\Delta-z_s\right)K_h\left(X_t-z_t\right)\left(\Delta-v(s,z_s\right)\left(\Delta-v(t,z_t\right)^\top \right)\gamma(t)^\top dsdt\\
&=& \int_0^1 \gamma(s) \int_{\mathbb{R}^d} \int_{\mathbb{R}^d} K_h\left(hu+(s-t)\delta+z_t-z_s\right)K\left(u\right)\left(\delta-v_s(z_s)\right)\left(\delta-v_t(z_t)\right)^\top \gamma(t)^\top \\
&& \times p_0(z_t+hu-t\delta)p_1(z_t+hu+(1-t)\delta)du d\delta dsdt.
    \end{eqnarray*}
The above expression follows from i) the fact that the joint density of $\Delta,X_t$ at $(\delta,z)$ is $p_0(z-t\delta)p_1(z+(1-t)\delta)$ and ii) the change of variables $u=(X_t-z_t)/h$.  Considering the additional change of variable $\omega=(s-t)/h$, we obtain
\begin{eqnarray*}
n\Sigma_h^0(x)
&=& h\int_0^1 \int_{-t/h}^{(1-t)/h} \gamma(\omega h+t) \int_{\mathbb{R}^d} \int_{\mathbb{R}^d} K_h\left(hu+h\omega\delta+z_t-z_{\omega h+t}\right)K\left(u\right)\left(\delta-v_{\omega h+t}(z_{\omega h+t})\right) \\
&& \times \left(\delta-v_t(z_t)\right)^\top \gamma(t)^\top p_0(z_t+hu-t\delta)p_1(z_t+hu+(1-t)\delta)du d\delta d\omega dt\\
&=& h^{1-d} \int_0^1 \int_{\mathbb{R}^d} \int_{\mathbb{R}^d} K(u) g(t,u,\delta,h)\left(\delta-v_t(z_t)\right)^\top  \\&&\times p_0(z_t+hu-t\delta)p_1(z_t+hu+(1-t)\delta)\gamma(t)^\top du d\delta dt,
    \end{eqnarray*}
    where \begin{eqnarray*} g(t,u,\delta,h)&:=& \int_{-t/h}^{(1-t)/h} \gamma(wh+t) K\left(u+\omega\delta-\frac{z_{\omega h+t}-z_{t}}{h}\right)\left(\delta-v_{\omega h+t}(z_{\omega h+t})\right)d\omega.
    \\&=&
    \int_{-t/h}^{(1-t)/h} \gamma(t) K\left(u+\omega\delta-\frac{z_{\omega h+t}-z_{t}}{h}\right)\left(\delta-v_{t}(z_{t})\right)d\omega\\&&+ \int_{-t/h}^{(1-t)/h} wh K\left(u+\omega\delta-\frac{z_{\omega h+t}-z_{t}}{h}\right) \frac{d}{dy}
    \left[\gamma(y) \left(\delta-v_{y}(z_{y})\right)\right] \Big \lvert_{y\in[t,t+\omega h]}d\omega.
    \end{eqnarray*} Using the fact that the derivatives of $\gamma(y)$ and $v_y(z_y)$ are bounded in the compact $[0,1]$ (they are continuous), and that $|\omega h|\leq 1$, we derive the following component-wise bound for the entries of $g(t,u,\delta,h)$, that holds whenever the denominator is well-defined:
    \begin{eqnarray*}
    \lvert g_i(t,u,\delta,h)\rvert&\lesssim& (1+\lVert \delta\rVert)\int_{-t/h}^{(1-t)/h} K\left(u+\omega\delta-\frac{z_{\omega h+t}-z_{t}}{h}\right)d\omega  \\
    &\lesssim &  \frac{(1+\lVert \delta\rVert)(1+\lVert u\rVert) }{\lvert \delta_j- v_j(t,z_t)\rvert}\sum_{m=1}^M \frac{1}{|\delta_j-F^j_t(x_j(t,m))|^{1/2}},
    \end{eqnarray*}
    where, in the last line, we used Lemma \ref{lemma:variancebound0}, and the functions $F^j_t$ and the points $x_j(t,m)$ are the ones defined in this Lemma. Note that the above bound is valid for each $1\leq i,j\leq d$. We have found a bound on $g(t,u,\delta,h)$ that is independent of $h$. We will use it to invoke dominated convergence on each coordinate of the matrix $\Sigma_h^{i,j}(x)$ of $\Sigma_h(x)$. Indeed, we have (since $\Sigma^{i,j}_h(x):= \Sigma_h^{0,i,j}(x)$
\begin{eqnarray*} \Sigma^{i,j}_h(x) &\lesssim& \int_0^1\int_{\mathbb{R}^d}\int_{\mathbb{R}^d} K(u) \lvert g_i(t,u,\delta,h)\rvert \lvert \delta_j-v_j(t,z_t)\rvert p_0(z_t+hu-t\delta)p_1(z_t+hu+(1-t)\delta) du d\delta dt\\
&\lesssim &\int_0^1\int_{\mathbb{R}^d}\int_{\mathbb{R}^d} K(u) (1+\lVert \delta\rVert)(1+\lVert u\rVert)  p_0(z_t+hu-t\delta)p_1(z_t+hu+(1-t)\delta) du d\delta dt\\ 
&\lesssim &\int_0^1\int_{\mathbb{R}^d}\int_{\mathbb{R}^d} K(u)  \sum_{m=1}^M \frac{(1+\lVert u\rVert)(1+\lVert \delta\rVert)}{|\delta_j-F^j_t(x_j(t,m))|^{1/2}} p_0(z_t+hu-t\delta)p_1(z_t+hu+(1-t)\delta) du d\delta dt\\ 
&\lesssim &\int_0^1 \int_{\mathbb{R}^d} K(u)(1+\lVert u\rVert)dt\\
&<& \infty
\end{eqnarray*}
The second-to-last step above is not fully obvious, and we show it as a separate Lemma \ref{Lemma:boundroot}. To invoke this Lemma, we used that $z_t,u,h,F_t^j(x_j(t,m))$ are uniformly bounded over $t$.

As a conclusion of the above discussion, $\Sigma^{i,j}_h(x)$ converges to the integral of the integrand with respect to $t,\delta,u,\omega$ when $h\rightarrow 0$. It only remains to evaluate this limit. We start with $g(t,u,\delta,h)$. Note that the integrals with respect to $\omega$ become integrals in $[-\infty,\infty]$, except from the measure-zero set $\{0,1\}$. Also, by \eqref{eq:rectified-flow-ODE}, the definition of derivative and using the continuity of $K$ we have
$$\lim_{h\rightarrow 0} K\left(u+\omega -\frac{z_{\omega h+t}-z_t}{h}\right)=K\left(u+\omega\left(\delta-v_t(z_t)\right)\right).$$
The rest of the expressions containing $h$ are all of the form $\omega h+t$, which converge pointwise to $t$. Therefore, $\Sigma(x):=\lim_{h\to 0}\Sigma_h(x)$ satisfies
\begin{eqnarray*}
\Sigma(x)&=&\int_0^1 \int_{\mathbb{R}^d}\int_{\mathbb{R}^d} K(u) \lim_{h\rightarrow 0} g(t,u,\delta,h)(\delta-v_t(z_t))^\top p_0(z_t+hu-t\delta)p_1(z_t+hu+(1-t)\delta) \gamma(t)^\top du d\delta dt\\
&=&
\int_0^1 \int_{\mathbb{R}^d}\int_{\mathbb{R}^d} K(u) \int_{-\infty}^\infty \gamma(t) K(u+\omega(\delta-v_t(z_t)))(\delta-v_t(z_t))(\delta-v_t(z_t))^\top\\ &&\times  p_0(z_t-t\delta)p_1(z_t+(1-t)\delta) \gamma(t)^\top du d\delta dt
\end{eqnarray*}
This last expression coincides with \eqref{eq:sigma}.
\end{proof}

\begin{lemma}\label{Lemma:boundroot}
In the setup of Proposition \ref{prop:linearized}, if $u,h,z_t,F_t(x_j(t,m))$ are bounded uniformly in $t\in [0,1]$, then
\begin{equation} \label{eq:boundquad} \int_{\mathbb{R}^d}  \sum_{m=1}^M \frac{1+\lVert \delta\rVert}{|\delta_j-F^j_t(x_j(t,m))|^{1/2}} p_0(z_t+hu-t\delta)p_1(z_t+hu+(1-t)\delta) d\delta \lesssim 1,\end{equation}
\end{lemma}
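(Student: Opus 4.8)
\textbf{Proof strategy for Lemma~\ref{Lemma:boundroot}.}

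The plan is to bound the left-hand side of~\eqref{eq:boundquad} by splitting the integral over $\delta\in\mathbb{R}^d$ into a region where $\delta$ is ``small'' (so that the singular factor $|\delta_j - F_t^j(x_j(t,m))|^{-1/2}$ must be controlled, but the density tail is harmless) and a region where $\delta$ is ``large'' (so that the singularity is bounded away, and the exponential decay of the density product dominates). The key structural fact I would use is that, by Lemma~\ref{lemma:boundp} (the strong-log-concavity case, inequality~\eqref{eq:quadratic-decay}) applied after the change of variables $x = z_t + hu - t\delta$, the product $p_0(z_t+hu-t\delta)p_1(z_t+hu+(1-t)\delta)$ decays at least like $c_0'(1+\|\delta\|)^0\exp(-c_1'\|\delta\|^2)$ uniformly in $t\in[0,1]$ (here I use that $z_t$, $hu$ are uniformly bounded, and that $\min\{t,1-t\}$ issues are absorbed because we have \emph{both} densities, not just one — more precisely I would invoke the representation $p_0(z-t\delta)p_1(z+(1-t)\delta)$ and note that the convolution structure gives Gaussian-type decay in $\delta$ with $t$-uniform constants, exactly as in the proof of Lemma~\ref{lemma:momentbound} and Lemma~\ref{lemma:boundp}).

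Having fixed that Gaussian envelope $G(\delta) := c_0(1+\|\delta\|)\exp(-c_1\|\delta\|^2)$ which dominates $(1+\|\delta\|)\,p_0(z_t+hu-t\delta)p_1(z_t+hu+(1-t)\delta)$ uniformly in $t,u,h$ (on the bounded ranges assumed), the problem reduces to showing
\[
\int_{\mathbb{R}^d} \frac{G(\delta)}{|\delta_j - a_{t,m}|^{1/2}}\,d\delta \lesssim 1
\]
uniformly over the bounded quantities $a_{t,m} := F_t^j(x_j(t,m))$, and then summing over the finitely many $m = 1,\dots,M$ (with $M$ uniform in $t$, by assumption~\eqref{assump:morse}). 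For this single integral I would use Fubini to peel off the $j$-th coordinate: $\int_{\mathbb{R}^{d-1}} (\text{Gaussian in }\delta_{-j})\,d\delta_{-j} \lesssim (1+|\delta_j|)$ times another Gaussian in $\delta_j$, so it remains to check the one-dimensional bound $\int_{\mathbb{R}} (1+|y|)e^{-c y^2} |y - a|^{-1/2}\,dy \lesssim 1$ uniformly over $|a|\le A$. This last one-dimensional estimate is elementary: near $y = a$ the integrand is integrable since $|y-a|^{-1/2}$ is locally integrable in one dimension (the exponent $1/2 < 1$), and away from $y=a$ the Gaussian kills everything; explicitly one splits $\{|y-a|\le 1\}$ and $\{|y-a|>1\}$, bounds the former by $\int_{-1}^{1}|s|^{-1/2}ds\cdot\sup(1+|y|)e^{-cy^2} = 4\cdot C_A$ and the latter by $\int_{\mathbb{R}}(1+|y|)e^{-cy^2}dy < \infty$.

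The main obstacle — and the only place requiring genuine care — is establishing the $t$-uniform Gaussian envelope $G(\delta)$ for the density product, because naively bounding $p_0(z_t - t\delta)$ alone gives $\exp(-c t^2\|\delta\|^2)$, which degenerates as $t\to 0$; symmetrically $p_1$ degenerates as $t\to 1$. The resolution is that one never needs a single-density bound: combining the two factors, the quadratic form in the exponent is $\tfrac{\alpha}{2}\big(\|z_t+hu - t\delta - x_0\|^2 + \|z_t+hu+(1-t)\delta - x_1\|^2\big)$ in the strongly-log-concave case (using the minimizers $x_0,x_1$ of $\phi_0,\phi_1$), and the coefficient of $\|\delta\|^2$ in this sum is $t^2 + (1-t)^2 \ge 1/2$, bounded below uniformly in $t\in[0,1]$ — this is precisely the computation already carried out in the proof of Lemma~\ref{lemma:boundp} (the ``$\|x\|^2 + \|Ax+\tilde x\|^2$'' manipulation). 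So I would simply cite that computation to extract the envelope, then run the elementary integrability argument above. Everything else (the sum over the $M$ critical points, the change of variables, Fubini) is routine.
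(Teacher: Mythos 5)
Your proposal is correct and follows essentially the same route as the paper's proof: both hinge on the strong-log-concavity completion-of-squares computation (the same one carried out in Lemma~\ref{lemma:boundp}) to extract a $t$-uniform Gaussian envelope $\exp(-c\|\delta-\delta_0\|^2)$ with $c\asymp \alpha(t^2+(1-t)^2)\gtrsim \alpha$ and $\delta_0$ bounded, and both then reduce to a one-dimensional estimate that splits near and away from the singularity, using local integrability of $|y-a|^{-1/2}$ near $y=a$ and Gaussian decay far from it. The only cosmetic difference is organizational: the paper bounds $\|\delta\|\lesssim\sum_i|\delta_i|$ and treats the coordinate cases $j=i$ versus $j\neq i$ separately, whereas you peel off coordinate $j$ directly by Fubini; the two bookkeeping schemes are interchangeable.
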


\begin{proof}
Using that both $p_0$,$p_1$ are $\alpha$-strongly log-concave, by completing squares we deduce that for each $z=z_t+hu$ 
\begin{eqnarray*} p_0(z-\delta t)p_1(z+(1-t)\delta)\lesssim p_0(x_0)p_1(x_1) \exp\left(-a\lVert \delta - \delta_0\rVert^2+K\right),\end{eqnarray*}
where $a=\alpha(t^2+(1-t)^2)/2$, $x_0,x_1$ are the minimizers of $p_0,p_1$, and
\begin{eqnarray*} 
\delta_0&=&\frac{(1-t)x_1-tx_0-(1-2t)z}{t^2+(1-t)^2}\\
K &=& \frac{\alpha}{2}\left(\lVert z-x_0\lVert^2+\lVert z-z_1\rVert^2-\frac{\lVert(1-2t)z+tx_0-(1-t)x_1\rVert^2}{t^2+(1-t)^2}\right).
\end{eqnarray*}
Note that as $z$ is bounded, then both $\delta_0$ and $K$ are bounded uniformly in $t$. Likewise, $a$ is bounded above and below by a constant greater than zero. This implies that in the bounds below, we can treat them as constants.
 Likewise, $\delta_0$ also depends on these quantities, but it is free from $\delta$. The above indicates that as long as we can produce the following bound
$$\int_{\mathbb{R}^d}\frac{\lVert \delta \rVert}{|\delta_i-y_i|^{1/2}} \exp\left(-a\lVert \delta- \delta_0\rVert^2\right)d\delta<L_{y,\delta_0},$$ for $L_{y,\delta_0}$ that is bounded if $y$ and $\delta_0$ are bounded, we can conclude \eqref{eq:boundquad}. We finally show that this is the case. We will rely on the two following bounds: first, that 
\begin{eqnarray*} \int_{\mathbb{R}}\frac{\exp\left(-a\lVert x-\mu\rVert^2\right)}{|x-x_0|^{1/2}}dx &\leq &\int_{|x-x_0|\leq 1} \frac{\exp\left(-a\lVert x-\mu\rVert^2\right)}{|x-x_0|^{1/2}}dx+\int_{|x-x_0|\geq 1} \frac{\exp\left(-a\lVert x-\mu\rVert^2\right)}{|x-x_0|^{1/2}}dx  \\ &\leq &
\int_{|x-x_0|\leq 1} \frac{1}{|x-x_0|^{1/2}}dx+\int_{|x-x_0|\geq 1} \frac{\exp\left(-a\lVert x-\mu\rVert^2\right)}{|x-x_0|^{1/2}}dx\\ &\leq & 4+\sqrt{\frac{\pi}{a}},\end{eqnarray*}
which is a constant independent of $\mu, x_0$. Second, we use that
$$\int |x| \exp(-a||x-\mu||^2)dx\lesssim \mu +1.$$ To use the above bounds, note first that $\lVert \delta \rVert\lesssim \sum_{i=1}^d |\delta_i|$. Therefore, we can reduce the analysis to examining each integral with a $|\delta_j|$ term in the numerator. For $j\neq i$, we find
\begin{eqnarray*}
\int_{\mathbb{R}^d}\frac{|\delta_j| \exp\left(-a\lVert \delta- \delta_0\rVert^2\right)}{|\delta_i-y_i|^{1/2}}d\delta&=&  \sqrt{\frac{\pi}{a}}^{d-2}\int |\delta_j|\exp\left(-a\lvert \delta_j- \delta^j_0\rvert^2\right)d\delta_j\int  \frac{\exp\left(-a\lvert \delta_i- \delta^i_0\rvert^2\right)}{\lvert \delta_i-y_i\rvert^{1/2} }d\delta_i\\
&\lesssim & |\delta^j_0| +1,\end{eqnarray*} 
 The case of $j=i$ is similar, we express $$\frac{|\delta_i|}{|\delta_i-y_i|}\leq \frac{|\delta_i-y_i|+|y_i|}{|\delta_i-y_i|}=1+ \frac{|y_i|}{|\delta_i-y_i|},$$
 implying that 
 \begin{eqnarray*}
\int_{\mathbb{R}^d}\frac{|\delta_i| \exp\left(-a\lVert \delta- \delta_0\rVert^2\right)}{|\delta_i-y_i|^{1/2}}d\delta
\lesssim  |y_i| +1,\end{eqnarray*} 
\end{proof}
\begin{lemma}\label{lemma:variancebound0}
Suppose that $K$ is a kernel with bounded support, and let $f:[0,1]\to \mathbb{R}^d$ be a twice continuously differentiable function with coordinates $f_i$. For each $1\leq i\leq d$ define the functions 
$$F^i_t(x):=\frac{f_i(t+x)-f_i(t)}{x},\quad x\in[-t,1-t].$$
Note that these are all uniformly bounded over $t,x$ since $f_i'$ is bounded. Suppose that they satisfy the following conditions
\begin{itemize}
    \item Are twice differentiable, and their second derivatives are equicontinuous in $t$.
    \item For each $t$, $F^i_t$ has at most $M$ critical points, where $M$ is uniform in $d$ and $t\in[0,1]$.
    \item  The critical points are uniformly non-denegerate, i.e. if $$C(t,i)=\Big\{x\in[-t,1-t], (F^i)'_t(x)=0\Big\}.$$
is the set of such critical points, then, there is $L>0$ such that
$$\sup_{x\in \bigcup_{t\in[0,1],1\leq i\leq d}C(t,i)}\Big \lvert (F_t^i)^{''}(x)\Big \rvert \geq L.$$
The above is equivalent to 
$$\sup_{x\in \bigcup_{t\in[0,1],1\leq i\leq d}C(t,i)} \Big \lvert \frac{1}{x} f_i''(t+x)\Big \rvert>L.$$

\end{itemize}
 If we denote by $x_i(t,m)$ any enumeration of the critical points of $F^i_t$, for each $1\leq i\leq d,\delta,u\in\mathbb{R}^d$ such that the right-hand side below is well-defined, we have that
\begin{eqnarray} \label{eq:kernelbound} \int_{-t/h}^{(1-t)/h} K\left(u+\omega\delta-\frac{f(t+h\omega)-f(t)}{h}\right)d\omega \lesssim \frac{1+||u||}{|\delta_i-f_i'(t))|}\sum_{m=1}^M \frac{1}{\sqrt{\lvert \delta_i - F^i_t(x_i(t,m))\rvert}},
\end{eqnarray}
In the above bound, there are only hidden dependencies the support of the kernel and bounds on the first and second derivatives of $F_i(x)$.
\end{lemma}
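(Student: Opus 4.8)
The plan is to exploit that $K$ has bounded support, say $\mathrm{supp}(K)\subseteq\mathcal{B}(0,\rho)$, so that $|K|\le\|K\|_\infty\mathbf{1}\{\,\cdot\,\in\mathcal{B}(0,\rho)\}$ and it suffices to use only the constraint coming from the $i$-th coordinate of the argument. After the substitution $y=h\omega$ (so $d\omega=h^{-1}dy$ and $\omega\in[-t/h,(1-t)/h]$ corresponds to $y\in[-t,1-t]$), the argument of $K$ becomes $u+h^{-1}\Psi(y)$ with $\Psi(y):=y\delta-(f(t+y)-f(t))$, whose $i$-th coordinate is $\Psi_i(y)=y\big(\delta_i-F^i_t(y)\big)$ and satisfies $\Psi_i'(y)=\delta_i-f_i'(t+y)$, and $\Psi_i'(y_0)=-y_0(F^i_t)'(y_0)$ whenever $F^i_t(y_0)=\delta_i$. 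Consequently the integral is bounded by $h^{-1}\|K\|_\infty\,\mathrm{Leb}\{y\in[-t,1-t]:|\Psi_i(y)|\le\varepsilon\}$ with $\varepsilon:=h(\rho+\|u\|)$, and everything reduces to estimating this sublevel-set measure. I will also use the identity $(F^i_t)''(x)=f_i''(t+x)/x$ at critical points $x=x_i(t,m)$, so that the non-degeneracy hypothesis is $|(F^i_t)''(x_i(t,m))|\ge L$.

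First I would isolate the origin. A Taylor-remainder computation gives $(F^i_t)'(y)=y^{-2}\!\int_0^y s f_i''(t+s)\,ds$, hence $|(F^i_t)'(y)|\le\tfrac12\|f_i''\|_\infty$ and $|F^i_t(y)-f_i'(t)|\le\tfrac12\|f_i''\|_\infty|y|$ for all $y$. Thus on $J_0:=\{|y|\le r_0\}$ with $r_0:=|\delta_i-f_i'(t)|/\|f_i''\|_\infty$ we have $|\delta_i-F^i_t(y)|\ge\tfrac12|\delta_i-f_i'(t)|$, so $|\Psi_i(y)|\ge\tfrac12|y|\,|\delta_i-f_i'(t)|$, and the part of the sublevel set inside $J_0$ is contained in $\{|y|\le2\varepsilon/|\delta_i-f_i'(t)|\}$, contributing $\lesssim(1+\|u\|)/|\delta_i-f_i'(t)|$ after dividing by $h$. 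Off $J_0$, $|y|\ge r_0$ gives $|\Psi_i(y)|\ge r_0|\delta_i-F^i_t(y)|$, so the rest of the sublevel set lies in $\{y:|F^i_t(y)-\delta_i|\le\varepsilon/r_0\}$; since $F^i_t$ has at most $M$ critical points, $[-t,1-t]$ splits into at most $M+1$ arcs on which $F^i_t$ is strictly monotone, and on each arc this set is an interval around the (at most one) point $y_0$ with $F^i_t(y_0)=\delta_i$. On arcs where $y_0$ is bounded away from all $x_i(t,m)$, $|(F^i_t)'(y_0)|$ --- and hence $|\Psi_i'(y_0)|$ --- is bounded below, so changing variables through $\Psi_i$ shows the contribution is $O(1)$ after dividing by $h$.

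The remaining contributions come from arcs whose crossing $y_0$ is close to a critical point $x_i(t,m)$, and here I would use the second-order expansion $F^i_t(y)=F^i_t(x_i(t,m))+\tfrac12(F^i_t)''(x_i(t,m))(y-x_i(t,m))^2+o((y-x_i(t,m))^2)$ --- uniform in $t$ by the equicontinuity of $(F^i_t)''$ --- together with $|(F^i_t)''(x_i(t,m))|\ge L$. Either $\delta_i$ lies on the side of the critical value $F^i_t(x_i(t,m))$ on which $F^i_t-\delta_i$ does not vanish near $x_i(t,m)$ (no contribution there), or $F^i_t-\delta_i$ has a transversal zero $y_0$ at distance $\asymp\sqrt{|\delta_i-F^i_t(x_i(t,m))|/L}$ from $x_i(t,m)$, at which $|(F^i_t)'(y_0)|\asymp\sqrt{L|\delta_i-F^i_t(x_i(t,m))|}$. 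Since $y_0\notin J_0$ forces $|y_0|\ge r_0=|\delta_i-f_i'(t)|/\|f_i''\|_\infty$, we get $|\Psi_i'(y_0)|=|y_0|\,|(F^i_t)'(y_0)|\gtrsim r_0\sqrt{|\delta_i-F^i_t(x_i(t,m))|}$, so this component contributes
\[
\lesssim\ \frac{\varepsilon}{h\,r_0\sqrt{|\delta_i-F^i_t(x_i(t,m))|}}\ \asymp\ \frac{1+\|u\|}{|\delta_i-f_i'(t)|}\cdot\frac{1}{\sqrt{|\delta_i-F^i_t(x_i(t,m))|}}
\]
after dividing by $h$. Summing over the $\le M$ critical points, the $\le M+1$ transversal $O(1)$ terms, and the near-origin term --- and noting that $M$, $L$, $\|f_i'\|_\infty$, $\|f_i''\|_\infty$ and the $t$-equicontinuity modulus of $(F^i_t)''$ are all uniform in $t$ --- gives the asserted bound~\eqref{eq:kernelbound}, with implicit constant depending only on $\rho$, $\|K\|_\infty$, $M$, $L$ and the uniform $C^2$ bounds on $f_i$.

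The delicate point is the sublevel-set geometry near a critical point: placing the mass at $x_i(t,m)$ itself, where $(F^i_t)'$ degenerates, gives only an $h^{-1/2}$-type bound, and the resolution --- which is exactly what makes the stated $h$-free right-hand side correct --- is that when $|\delta_i-F^i_t(x_i(t,m))|$ is not itself of order $h$, the level set $\{F^i_t=\delta_i\}$ meets a neighbourhood of $x_i(t,m)$ not tangentially but through two transversal crossings at distance $\asymp\sqrt{|\delta_i-F^i_t(x_i(t,m))|}$, where the slope is already of order $\sqrt{|\delta_i-F^i_t(x_i(t,m))|}$. Getting this square-root bookkeeping right, controlling the factor $|y_0|^{-1}$ by $r_0$, and making every constant uniform over $t\in[0,1]$ is where the real work lies; the additive $O(1)$ from the transversal, non-degenerate arcs is harmless in the only application of the lemma, Lemma~\ref{Lemma:boundroot}, where the right-hand side is integrated against the rapidly decaying weight $p_0p_1$, which also suppresses the regime of very large $\delta$.
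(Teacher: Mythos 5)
Your proof takes a genuinely different route from the paper's. You substitute $y=h\omega$, reduce the problem to bounding the measure of the sublevel set $\{|\Psi_i(y)|\le\varepsilon\}$ of $\Psi_i(y):=y\,(\delta_i-F^i_t(y))$ (note $\Psi_i(h\omega)/h$ is exactly the paper's $\omega^2 hG_t(\omega h)-\omega(\delta-f'(t))$), and split cleanly into a near-origin region $J_0=\{|y|\le r_0\}$ --- handled by $|F^i_t(y)-f_i'(t)|\le\tfrac12\|f_i''\|_\infty|y|$, which forces $|\delta_i-F^i_t|\ge\tfrac12|\delta_i-f_i'(t)|$ there --- and a far region $|y|\ge r_0$, where $|\Psi_i(y)|\ge r_0|\delta_i-F^i_t(y)|$ reduces everything to the sublevel set of $F^i_t-\delta_i$, analyzed arc by arc with a Morse-type expansion at critical points. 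The paper (Lemmas~\ref{lemma:variancebound}--\ref{lemma:variancebound3}) instead works without the substitution, introduces $G_t(x)=(F_t(x)-f'(t))/x$ and $A=hG_t(\omega h)/(\delta-f'(t))$, completes the square in the resulting quadratic inequality in $\omega$, and carries out an explicit case analysis on $|A|$ and on where $\delta$ falls relative to $F_t(a),F_t(b)$. Both approaches rest on the same two inputs --- the nondegeneracy of critical points yielding the $\sqrt{|\delta_i-F^i_t(x_i(t,m))|}$ factor, and a uniform-in-$t$ separation/lower bound away from critical points (the paper's Lemma~\ref{lemma:variancebound3}, which you invoke implicitly through the equicontinuity of $(F^i_t)''$) --- but your decomposition is more geometric and substantially less bookkeeping.

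One slip: your claim that transversal arcs (crossing point $y_0$ away from all critical points) contribute ``$O(1)$'' after dividing by $h$ is not right. Running your own change of variables, $|\Psi_i'(y_0)|=|y_0||(F^i_t)'(y_0)|\gtrsim r_0L'$, so that contribution is $\lesssim(\rho+\|u\|)/(r_0L')\asymp(1+\|u\|)/|\delta_i-f_i'(t)|$ --- the same order as the near-origin term, not a constant. Both your argument and the paper's (see the final display in the proof of Lemma~\ref{lemma:variancebound}) in fact produce an additive term of this order that is not literally dominated by the right-hand side of~\eqref{eq:kernelbound} when every $|\delta_i-F^i_t(x_i(t,m))|$ is large; the paper drops it silently in passing from proof to statement. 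As you observe, it is harmless when integrated against the Gaussian weight in Lemma~\ref{Lemma:boundroot}, but it should be recorded as an additive $(1+\|u\|)/|\delta_i-f_i'(t)|$ term rather than called $O(1)$. Beyond that, your sketch is sound; the remaining work is to make the neighborhood size around critical points uniform in $t$ (exactly what the paper's Lemma~\ref{lemma:variancebound3} supplies).
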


\begin{proof}
Since $K$ is a bounded kernel, it suffices to bound the Lebesgue measure $\Lambda(S)$ of the set $S\subseteq [-t/h,(1-t)/h]$ where the evaluation of the kernel may be positive, i.e., 
\begin{eqnarray*} \int_{-t/h}^{(1-t)/h} K\left(u+\omega\delta-\frac{f(t+h\omega)-f(t)}{h}\right)d\omega \lesssim \lambda(S).
\end{eqnarray*}
Specifically, let's $R$ be the size of a ball containing the support of the kernel. The condition $\omega \in S$ implies, in addition to $-t\leq h\omega\leq 1-t$, that
\begin{equation}
    \label{eq:kerzer0}
K\left(u+\omega\delta-\frac{f(t+h\omega)-f(t)}{h}\right)>0,
\end{equation}
which in turn implies that 
$$\Big\lVert \omega\delta-\frac{f(t+h\omega)-f(t)}{h}\Big\rVert\leq \Big\lVert u+\omega\delta-\frac{f(t+h\omega)-f(t)}{h}\Big\rVert+\lVert u\rVert \leq R+\lVert u\rVert.$$
Then, for each $i$, $S$ is contained in the set
\begin{equation} \label{eq:si}S_i=\Big\{\omega\in[-t,1-t],\Big\lvert \omega\delta_i-\frac{f_i(t+h\omega)-f_i(t)}{h}\Big\rvert \leq R+\lVert u\rVert\Big\},\end{equation}
and so $\lambda(S)\leq \lambda(S_i)$.
The conclusion follows from bounding the Lebesgue measure of each of these sets, which we address in the following lemma

\end{proof}

\begin{lemma}\label{lemma:variancebound1}
Under the setup of \ref{lemma:variancebound0}. We have that
$$\lambda(S_i)\lesssim \frac{1+||u||}{|\delta_i-f_i'(t,z_t)|}\sum_{j=1}^M \frac{1}{\sqrt{\lvert \delta_i - F^i_t(x_i(t,j))\rvert}}$$
\end{lemma}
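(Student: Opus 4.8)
The plan is to bound $\lambda(S_i)$ by a one–dimensional sublevel–set estimate for the map $\omega\mapsto g(\omega):=\omega\delta_i-h^{-1}\big(f_i(t+h\omega)-f_i(t)\big)=\omega\big(\delta_i-F^i_t(h\omega)\big)$, so that $S_i=\{\omega:\ |g(\omega)|\le c\}$ with $c:=R+\|u\|$ the threshold in the definition of $S_i$ (here $R$ is the radius of a ball containing the support of $K$, so $c\lesssim 1+\|u\|$). After the substitution $x=h\omega$ this becomes $\lambda(S_i)=h^{-1}\lambda(T_i)$ with $T_i:=\{x\in[-t,1-t]:\ |H(x)|\le hc\}$ and $H(x):=x\big(\delta_i-F^i_t(x)\big)=x\delta_i-\big(f_i(t+x)-f_i(t)\big)$; note $H(0)=0$, $H'(x)=\delta_i-f_i'(t+x)$, and $H$ vanishes exactly at $x=0$ and at the roots of $F^i_t(\cdot)=\delta_i$, of which there are at most $M+1$ since $F^i_t$ has at most $M$ critical points and hence at most $M+1$ monotone branches. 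The two elementary tools are: if $|H'|\ge\alpha$ on an interval then $\{|H|\le\eta\}$ meets it in an interval of length $\le 2\eta/\alpha$, and if $|H''|\ge\beta$ on an interval then $\lambda(\{|H|\le\eta\})$ there is $\le 4\sqrt{\eta/\beta}$.

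First I would isolate the root $x=0$: using a uniform bound on $f_i''$ (which follows from the equicontinuity of the second derivatives in \eqref{assump:morse} together with compactness of $[0,1]$), pick $\rho_0\asymp|\delta_i-f_i'(t)|$ so that $|f_i'(t+x)-f_i'(t)|\le\tfrac12|\delta_i-f_i'(t)|$ for $|x|\le\rho_0$; then $|H'|\ge\tfrac12|\delta_i-f_i'(t)|$ on $(-\rho_0,\rho_0)$, so $\lambda\big(T_i\cap(-\rho_0,\rho_0)\big)\le 4hc/|\delta_i-f_i'(t)|$, contributing $\lesssim c/|\delta_i-f_i'(t)|$ to $\lambda(S_i)$.

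Next, on $[-t,1-t]\setminus(-\rho_0,\rho_0)$ one has $|x|\gtrsim\min\{\rho_0,1\}$, so $|H(x)|\le hc$ forces $|F^i_t(x)-\delta_i|\lesssim hc/|x|$, i.e.\ $x$ lies in a sublevel set of $\psi:=\delta_i-F^i_t$. I would split this region along the ($\le M+1$) monotone branches of $F^i_t$ and, on each branch, isolate a neighborhood of each critical point $x_i(t,m)$. Off all critical points, $|(F^i_t)'|$ is bounded below by a uniform constant — obtained by combining the non-degeneracy bound $|(F^i_t)''|\ge L$ at critical points (which, with equicontinuity, gives $|(F^i_t)''|\ge L/2$ on a uniform neighborhood and hence strict monotonicity of $(F^i_t)'$ there) with the fixed number of branches — so $\psi$ is strictly monotone with slope bounded away from zero and its sublevel set there has measure $O(hc)$, contributing $O(c)$. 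Near a critical point $x^*=x_i(t,m)$, since $(F^i_t)'(x^*)=0$ and $|(F^i_t)''|\ge L/2$ on a uniform neighborhood of $x^*$, a second–order Taylor expansion shows $\{|\psi|\le hc/|x|\}$ near $x^*$ has measure $\lesssim hc/\sqrt{L\,|\delta_i-F^i_t(x^*)|}$, provided $|\delta_i-F^i_t(x^*)|\gg hc$; this is exactly where the proviso ``whenever the right–hand side is well–defined'' (and $h$ small) is used, for if $\delta_i=F^i_t(x^*)$ then $H$ has a double zero whose second derivative is only $O(h)$, for which no useful $k=2$ estimate exists. Dividing by $h$ and summing over $m$ yields $(1+\|u\|)\sum_m|\delta_i-F^i_t(x_i(t,m))|^{-1/2}$, which together with the $x=0$ contribution gives the claimed bound.

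The step I expect to be hardest is making all these lower bounds uniform in $(t,i)$: turning the qualitative hypotheses of \eqref{assump:morse} into (i) $\sup_{t,i}\|f_i''\|_\infty<\infty$, (ii) a uniform radius on which $|(F^i_t)''|\ge L/2$ around each critical point, and (iii) a uniform lower bound for $|(F^i_t)'|$ away from those neighborhoods. A secondary subtlety is a critical point $x_i(t,m)$ sitting close to $0$ (so that $|x_i(t,m)|^{-1}$ is not $O(1)$); such points must be absorbed into the $x=0$ analysis, and one checks their contribution remains controlled because $F^i_t(x_i(t,m))\to f_i'(t)$ as $x_i(t,m)\to0$.
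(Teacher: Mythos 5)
Your route is genuinely different from the paper's, and in spirit it works, but the write‑up has a concrete bookkeeping gap that loses exactly the prefactor $|\delta_i-f_i'(t)|^{-1}$ you are trying to produce.

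The paper completes the square in $\omega$ (writing $G_t(x)=(F_t(x)-f'(t))/x$, $A=hG_t(\omega h)/(\delta-f'(t))$) so that the condition $\omega\in S$ becomes an explicit inequality about $\left(\omega-\tfrac1{2A}\right)^2$; it then splits $\omega$-space into a small ball $D$ plus two root neighborhoods $S_1,S_2$, shows $\Lambda(D),\Lambda(S_2)\lesssim\tilde R/|\delta-f'(t)|$ directly, and for $S_1$ derives the crucial clean bound $|F_t(\omega h)-\delta|\lesssim h\tilde R/|\delta-f'(t)|$ before doing a case analysis in $\delta$ with Lemma~\ref{lemma:variancebound2}. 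Your approach skips the algebraic machinery: you write $H(x)=x\bigl(\delta_i-F_t^i(x)\bigr)$, peel off the root at $x=0$ by a derivative bound on $(-\rho_0,\rho_0)$, and for $|x|\ge\rho_0$ control the sublevel set of $\psi=\delta_i-F_t^i$ by monotonicity off critical points and a second‑order estimate near them. That is a legitimately more elementary organization of the same estimate.

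The gap: for $|x|\ge\rho_0$ the $\psi$-threshold is $\eta(x)=hc/|x|$, but you then state the off‑critical‑point sublevel set has measure $O(hc)$ and the near‑critical‑point one has measure $\lesssim hc/\sqrt{L|\delta_i-F_t^i(x^*)|}$. Both of these silently replace $\eta(x)$ by $hc$, which is only correct when $|x|\asymp1$. For a critical point with $\rho_0\le|x^*|\ll1$ the true threshold is $\eta\approx hc/|x^*|\gg hc$, and the corresponding upper bound is $\lesssim \bigl(hc/|x^*|\bigr)/\sqrt{LD}$, not $hc/\sqrt{LD}$; your stated bound is therefore \emph{not} a valid upper bound in that regime. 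Likewise the off‑critical branches contribute $O(hc/\rho_0)$, not $O(hc)$. As a result, your final tally $(1+\|u\|)\sum_m|\delta_i-F_t^i(x_i(t,m))|^{-1/2}$ is short a factor of $|\delta_i-f_i'(t)|^{-1}$; it is strictly smaller than the claimed right‑hand side when $|\delta_i-f_i'(t)|<1$, so the lemma does not follow from what you wrote. The ``absorb near‑zero critical points into the $x=0$ analysis'' remark addresses $|x^*|\lesssim\rho_0$, but not $\rho_0\le|x^*|\ll1$, and adding the $x=0$ contribution to your sum still produces a sum, not the claimed product.

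The repair is small: carry $1/|x|$ through, and use $|x|\ge\rho_0\asymp|\delta_i-f_i'(t)|/\|f_i''\|_\infty$ to bound $\eta(x)\le hc/\rho_0\asymp hc/|\delta_i-f_i'(t)|$ uniformly on the outer region. Then your near‑critical bound becomes $\lesssim\bigl(hc/|\delta_i-f_i'(t)|\bigr)/\sqrt{LD}$ and your off‑critical bound $\lesssim hc/|\delta_i-f_i'(t)|$, and after dividing by $h$ and summing you recover exactly the paper's estimate $\Lambda\lesssim\frac{c}{|\delta_i-f_i'(t)|}\bigl(1+\sum_m|\delta_i-F_t^i(x_m)|^{-1/2}\bigr)$. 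This is in fact what the paper's $S_1$-analysis produces (note they also have the extra additive $\tilde R/|\delta-f'(t)|$ term, which is absorbed in applications because $F_t$ is bounded). So: same bound, genuinely different derivation, but as written the threshold bookkeeping must be corrected.
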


\begin{proof}
In this proof we drop the $i$ indexing and denote $\delta,f,f',f'',F_t$ any of the $\delta_i,f'_i,f''_i,F^i_t$. Likewise, we denote by $S$ any of the $S_i$ in \eqref{eq:si}. We define the more generic sets, for $-t\leq a\leq b\leq 1-t$ and $\tilde{R}>0$,
$$S(\tilde{R},a,b)=\Big\{\omega\in\left[\frac{a}{h},\frac{b}{h}\right]: \Big\lvert \omega\delta-\frac{f(t+h\omega)-f(t)}{h}\Big\rvert \leq \tilde{R} \Big\}.$$
By the definition of $F_t(x)$ the condition $\omega\in S(\tilde{R},a,b)$ implies
$$ \Big\lvert \omega\left(\delta-F_t(\omega \delta)\right)\Big \rvert =\Big\lvert \omega\delta-\frac{f(t+h\omega)-f(t)}{h}\Big\rvert \leq \tilde{R}.$$
We want to control $S=S(R+\lVert u\rVert,-t,1-t)$. This is not obvious since the above inequality is nonlinear in $\omega$. We will partition the interval of $[-t,1-t]$ in a way so that on each of these sub-intervals the above measure is well controlled. Specifically, if $a_0=-t,\ldots a_m,\ldots, a_{\tilde{M}}=1-t$ is a partition of the interval $[-t,1-t]$, then
\begin{equation}\label{eq:partition}S(R+\lVert u\rVert ,t,1-t)=\bigcup_{m=0}^{\tilde{M}-1} S(R+\lVert u\rVert ,a_m,a_{m+1}),\end{equation}
so that as long as the number of pieces is finite, we can bound $S$ if we have individual bounds on the subintervals. We choose the partition consisting on the division of the $[-t,1-t]$ interval induced by the critical points of $F_t$. This partition contains at most $M+1$ subintervals. We bound the measure of each of these subintervals using Lemma \ref{lemma:variancebound}, from which we easily conclude.

\end{proof}


\begin{lemma}\label{lemma:variancebound} Under the assumptions of Lemma \ref{lemma:variancebound0}, consider the restriction of $f(x)$ and $F_t(x)=(f(x+t)-f(t))/x$ to an interval $[a,b]\subseteq[-t,1-t]$ such that $F_t(x)$ is strictly monotonic on this interval, and that if there is a critical point of $F_t(x)$ it must be either $a$ or $b$, and such that if $a$ and $b$ are both critical points then $F''_t(a)F''_t(b)<0$.

Then, whenever $\delta\neq f'(t),\delta\neq F_t(a), \delta\neq F_t(b)$,
$$\lambda\left(S(\tilde{R},a,b)\right)\lesssim  \frac{\tilde{R}}{|\delta-f'(t)|}\left(\frac{1}{\sqrt{|\delta-F_t(a)|}}+\frac{1}{\sqrt{|\delta-F_t(b)|}}\right).$$

\end{lemma}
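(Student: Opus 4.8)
\textbf{Proof proposal for Lemma~\ref{lemma:variancebound}.}

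The plan is to estimate the one–dimensional Lebesgue measure of the set $S(\tilde R, a, b) = \{\omega\in[a/h, b/h] : |\omega(\delta - F_t(\omega h))| \le \tilde R\}$ by change of variables. First I would introduce the substitution $x = \omega h$, so that $S(\tilde R, a, b)$ corresponds bijectively (up to the factor $h$) to the set $\{x\in[a,b] : |(x/h)(\delta - F_t(x))| \le \tilde R\}$, i.e.\ to $\{x\in[a,b] : |x|\,|\delta - F_t(x)| \le h\tilde R\}$, and $\lambda(S(\tilde R,a,b)) = h^{-1}\lambda(\{x\in[a,b]: |x||\delta-F_t(x)|\le h\tilde R\})$. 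The key analytic point is that on $[a,b]$ the function $x\mapsto F_t(x)$ is strictly monotone, so it has a well-defined inverse, and the quantity $g(x) := x(\delta - F_t(x))$ is the object whose sublevel set we must measure. I would compute $g'(x) = \delta - F_t(x) - xF_t'(x)$, and recognize (using the defining relation $F_t(x) = (f(t+x)-f(t))/x$, hence $xF_t'(x) = f'(t+x) - F_t(x)$, so $F_t(x) + xF_t'(x) = f'(t+x)$) that $g'(x) = \delta - f'(t+x)$. Thus $g'$ vanishes precisely where $f'(t+x) = \delta$; since $f''$ is bounded this controls how fast $g$ can be flat.

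Next I would split the estimate according to whether we are near an endpoint (where $F_t$ itself is near $\delta$) or in the interior. Near the point where $g$ is small, either $|x|$ is small — which cannot persist because $0\in[-t,1-t]$ is a single point and $g'(0) = \delta - f'(t)\neq 0$ so $|g(x)|\gtrsim |\delta - f'(t)||x|$ near $0$ — or $|\delta - F_t(x)|$ is small, which by strict monotonicity of $F_t$ and the non-degeneracy hypothesis ($F_t'' $ bounded away from $0$ at the critical endpoints, so $F_t$ behaves at worst like a square root near its turning points) forces $x$ to be within $O(\sqrt{|\delta - F_t(a)|})$ of $a$ or $O(\sqrt{|\delta - F_t(b)|})$ of $b$ after suitable rescaling. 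Concretely: on the portion of $[a,b]$ bounded away from the endpoints, $|\delta - F_t(x)|$ is bounded below by a constant times $\min\{|\delta - F_t(a)|, |\delta-F_t(b)|\}$ (monotonicity), so there $|g(x)| \gtrsim |\delta - f'(t)|\,|x|$ and the sublevel set has length $\lesssim h\tilde R/|\delta-f'(t)|$; on a neighborhood of $a$ of size $\rho_a$, using $|\delta - F_t(x)| \gtrsim |\delta - F_t(a)| - C\rho_a^{?}$ together with the lower bound $|x|\gtrsim$ const, one gets the sublevel set contributes $\lesssim h\tilde R/(|\delta-f'(t)|\sqrt{|\delta - F_t(a)|})$, and symmetrically for $b$. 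Dividing by $h$ then yields
\[
\lambda(S(\tilde R, a, b)) \lesssim \frac{\tilde R}{|\delta - f'(t)|}\left(\frac{1}{\sqrt{|\delta - F_t(a)|}} + \frac{1}{\sqrt{|\delta - F_t(b)|}}\right),
\]
which is the claimed bound.

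The main obstacle I anticipate is making the endpoint analysis uniform: I must show that the ``square-root'' behavior of $F_t$ near a degenerate-looking endpoint is quantitatively controlled purely by the uniform lower bound $L$ on $|F_t''|$ at critical points and the uniform upper bounds on $f', f''$, with all implied constants independent of $t$, $h$, $a$, $b$. This requires a careful second-order Taylor expansion of $F_t$ at the endpoint (when the endpoint is a critical point of $F_t$, $F_t'(a)=0$ and $|F_t''(a)|\ge L$, so $|F_t(x) - F_t(a)|\ge \frac{L}{4}|x-a|^2$ on a neighborhood whose size depends only on the modulus of continuity of $F_t''$, which is equicontinuous in $t$ by hypothesis), and when the endpoint is not a critical point, $F_t'$ is bounded below in absolute value on the interval by strict monotonicity, giving a linear (hence better) lower bound. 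I would also need to handle the bookkeeping so that the two endpoint terms and the interior term combine without extra factors, and check that the hypotheses $\delta\ne f'(t)$, $\delta\ne F_t(a)$, $\delta\ne F_t(b)$ are exactly what is needed to keep the right-hand side finite. The rest is routine calculus.
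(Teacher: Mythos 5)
Your central derivative identity is correct and is actually a cleaner observation than what the paper uses: writing $g(x)=x\bigl(\delta-F_t(x)\bigr)=x\delta-f(t+x)+f(t)$ gives $g'(x)=\delta-f'(t+x)$, $g''(x)=-f''(t+x)$, and $\lambda\bigl(S(\tilde R,a,b)\bigr)=h^{-1}\lambda\bigl(\{x\in[a,b]:|g(x)|\le h\tilde R\}\bigr)$. The paper instead writes the condition as a perturbed quadratic $|A\omega^2-\omega|\le\tilde R/|\delta-f'(t)|$ with an $\omega$-dependent coefficient $A=hG_t(\omega h)/(\delta-f'(t))$ and does a careful root-extraction argument. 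Your route is genuinely different in spirit and, done carefully, would simplify the algebra.

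However, the argument as written has a real gap: the claim that ``$|\delta-F_t(x)|$ small forces $x$ to be within $O(\sqrt{|\delta-F_t(a)|})$ of $a$ or $O(\sqrt{|\delta-F_t(b)|})$ of $b$'' (and the companion claim that on the interior $|\delta-F_t(x)|\gtrsim\min\{|\delta-F_t(a)|,|\delta-F_t(b)|\}$) is false whenever $\delta$ lies strictly between $F_t(a)$ and $F_t(b)$. In that regime $F_t$ (strictly monotone) has a unique interior preimage $x_\delta$ with $F_t(x_\delta)=\delta$, and $|\delta-F_t(x)|$ is arbitrarily small near $x_\delta$, which in general is nowhere near $a$ or $b$. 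This is exactly Cases 2 and 3 of the paper's proof. The zeros of $g$ in $[a,b]$ are at $x=0$ (if $0\in[a,b]$) and at $x_\delta$ (if it exists), and the sublevel set $\{|g|\le h\tilde R\}$ will have a component near $x_\delta$ which your argument does not bound. The fix within your framework is to note that $g'(x_\delta)=\delta-f'(t+x_\delta)=F_t(x_\delta)-f'(t+x_\delta)=-x_\delta F_t'(x_\delta)$, together with $x_\delta G_t(x_\delta)=\delta-f'(t)$ (so $|x_\delta|\gtrsim|\delta-f'(t)|$ since $G_t$ is bounded), to show the component near $x_\delta$ has length $\lesssim h\tilde R/\bigl(|\delta-f'(t)|\,|F_t'(x_\delta)|\bigr)$; then split on whether $x_\delta$ is $\kappa$-close to a critical endpoint (using the equicontinuity/nondegeneracy hypotheses, as in the paper's Lemma~\ref{lemma:variancebound3}) to get $|F_t'(x_\delta)|\gtrsim\sqrt{|\delta-F_t(b)|}$ near $b$ or $|F_t'(x_\delta)|\gtrsim 1$ away from it. You also need to verify that the sublevel set is in fact concentrated near the zeros of $g$ and does not contain a ``plateau'' coming from a near-flat local extremum of $g$ — this requires using the structure of $g'$ (its zeros are the solutions of $f'(t+x)=\delta$, whose multiplicity and nondegeneracy are controlled by assumption~\ref{assump:morse}), an issue the paper sidesteps by a direct case analysis and you would need to address explicitly.
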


\begin{proof}

Let us define one additional function
$$G_t(x)=\frac{F_t(x)-f'(t)}{x}.$$
Note that $G_t(x)$ is uniformly bounded in $t$ and $x\in[-t,1-t]$ since $f''(t)$ is bounded in $[0,1]$ and $$|G_t(x)|=\Big\lvert\frac{f(t+x)-f(t)-xf'(t)}{x}\Big\rvert=|f''(\upsilon)|.$$ for some $\upsilon\in[t,t+x]\subseteq[0,1]$.
Since
$G_t(x)=xF_t(x)+f'(t)$, the condition $\omega\in S(\tilde{R},a,b)$ implies
$$ \Big\lvert \omega^2 hG_t(h\omega)-\omega\left(\delta-  f'(t)\right) \Big\rvert = \Big\lvert \omega\left(\delta-F_t(\omega \delta)\right)\Big \rvert  \leq \tilde{R}.$$
The above displays imply that for each index (w.l.g., $\omega\neq 0$)
$$\Big\lvert \omega^2 h G_t(\omega h)-\omega\left(\delta-  f'(t)\right) \Big \rvert \leq \tilde{R}.$$
Write $A=A(\omega,\delta,t,h):=h G_t(\omega h)/(\delta-  f'(t))$. Note that the case $A=0$ directly implies that $\lvert \omega\rvert\lesssim \tilde{R}|\delta-f'(t)|^{-1}$. So, assuming that $A\neq 0$, we can express the above as
\begin{equation}\label{eq:abc}\Big\lvert \left(\omega-\frac{1}{2A}\right)^2  -\frac{1}{4A^2}\Big \rvert\leq \frac{\tilde{R}}{\lvert A\rvert \lvert \delta-  f'(t)\rvert}.\end{equation}

We will carefully study this region. Consider first the set  $$D(\tilde{R},a,b) =\{\omega \in S(\tilde{R},a,b): \lvert \delta-f'(t)\rvert <4\lvert A\rvert \tilde{R}\}.$$ In this set, the condition \eqref{eq:abc}
is equivalently stated as
\begin{equation}\label{eq:Cbound} \frac{1}{2A}-\sqrt{\frac{1}{4A^2}+\frac{\tilde{R}}{\lvert A\rvert\lvert \delta-  f'(t)\rvert}}\leq \omega\leq  \frac{1}{2A}+\sqrt{\frac{1}{4A^2}+\frac{\tilde{R}}{\lvert A\rvert\lvert \delta-  f'(t)\rvert}}.\end{equation}
Suppose first that $A>0$. We will use repeatedly the fact that for any $x>0$ and $y$ such that $x+y\geq 0$, \begin{equation}\label{eq:sqrt}\lvert\sqrt{x+y}-\sqrt{x}\rvert \leq \frac{\lvert y\rvert }{\sqrt{x}}.\end{equation}
By \eqref{eq:sqrt}, the leftmost inequality in \eqref{eq:Cbound} implies that
$$-\frac{\tilde{R}}{\lvert \delta-f(t)\rvert}\lesssim \omega.$$
On the other hand, by \eqref{eq:sqrt} and owing to the fact that $\lvert \delta-f'(t)\rvert <4\lvert A\rvert \tilde{R}$, the rightmost inequality in \eqref{eq:Cbound} implies
$$\omega\leq  \frac{1}{A}+\sqrt{\frac{1}{4A^2}+\frac{\tilde{R}}{\lvert A\rvert\lvert \delta-  f'(t)\rvert}}-\frac{1}{2A}\lesssim \frac{1}{A}+ \frac{\tilde{R}}{\lvert \delta-f(t)\rvert}\lesssim \frac{4\tilde{R}}{\lvert \delta-f(t)\rvert}+ \frac{\tilde{R}}{\lvert \delta-f(t)\rvert}\lesssim  \frac{\tilde{R} }{\lvert \delta-f(t)\rvert}.$$
The case $A<0$ is completely analogous. Therefore,  \begin{equation}\label{eq:dlambda}\Lambda(D(\tilde{R},a,b))\lesssim \Lambda\left(|\omega|\lesssim \frac{\tilde{R}}{|\delta-f'(t)|}\right)\lesssim \tilde{R}{|\delta-f'(t)|},\end{equation}
which is independent of $h$.
The above implies that we can now focus only on the complement $D^c(\tilde{R},a,b)$ of this set. In this complement, the set of $\omega$ satisfying \eqref{eq:abc} is the set belonging to the union of the two sets defined below
$$S_1(\tilde{R},a,b)=\Bigg\{\omega\in D^c(\tilde{R},a,b): \sqrt{\frac{1}{4A^2}-\frac{\tilde{R}}{\lvert A\rvert\lvert \delta-  f'(t)\rvert}}+\frac{1}{2A}\leq \omega\leq  \sqrt{\frac{1}{4A^2}+\frac{\tilde{R}}{\lvert A\rvert\lvert \delta-  f'(t)\rvert}}+\frac{1}{2A}\Bigg\} ,$$
and
$$S_2(\tilde{R},a,b)=\Bigg\{\omega\in D^c(\tilde{R},a,b):  \frac{1}{2A}-\sqrt{\frac{1}{4A^2}+\frac{\tilde{R}}{\lvert A\rvert\lvert \delta-  f'(t)\rvert}}\leq \omega\leq  \frac{1}{2A}-\sqrt{\frac{1}{4A^2}-\frac{\tilde{R}}{\lvert A\rvert\lvert \delta-  f'(t)\rvert}}\Bigg\},$$
Suppose $A>0$ so $A=|A|$. In this case, the analysis of $S_2(\tilde{R},a,b)$ is simpler. 
From \eqref{eq:sqrt} we deduce that if $\omega\in S_2$ then
$$\lvert \omega\rvert  \leq 2\lvert A\rvert \frac{\tilde{R}}{\lvert A\rvert \lvert \delta-  f'(t)\rvert}=2\frac{\tilde{R}}{\lvert \delta-  f'(t)\rvert }\lesssim \frac{\tilde{R}}{\lvert \delta-  f'(t)\rvert}.$$
Therefore, in this set $\omega$ has an amplitude at most proportional to  $\tilde{R}\lvert\delta-  f'(t)\rvert^{-1}$ and so
\begin{equation}\label{eq:s2lambda}\Lambda(S_2(\tilde{R},a,b))\lesssim \frac{\tilde{R}}{\lvert \delta-f'(t)\rvert}.\end{equation}
The analysis of $S_1(\tilde{R},a,b)$ is much more delicate. We will show that $\omega$ must be contained on an interval of small length, although the center can be large. By subtracting $1/A=(\delta-f'(t))/hG_t(\omega h)$ to the inequalities defining $S_1(\tilde{R},a,b)$ and using \eqref{eq:sqrt} we have
$$\frac{-\tilde{R}}{|\delta-f'(t)|}\lesssim \sqrt{\frac{1}{4|A|^2}-\frac{\tilde{R}}{\lvert A\rvert\lvert \delta-  f'(t)\rvert}}-\frac{1}{2|A|}\leq \omega-\frac{\delta-f'(t)}{hG_t(\omega h)} \leq  \sqrt{\frac{1}{4A^2}+\frac{\tilde{R}}{\lvert A\rvert\lvert \delta-  f'(t)\rvert}}-\frac{1}{2|A|}\leq \frac{\tilde{R}}{|\delta-f'(t)|}.$$
The above implies that
$$\frac{-\tilde{R}}{|\delta-f'(t)|}\lesssim \frac{ h\omega G_t(\omega h)-(\delta-f'(t))}{hG_t(\omega h)} \lesssim  \frac{\tilde{R}}{|\delta-f'(t)|},$$
Since $xG(x)+f'(t)=F_t(x),$ and since $G_t(x)$ is bounded, the above, in turn, implies that whenever $\omega \in S_1(\tilde{R},a,b)$ then
\begin{equation}\label{eq:ftw} \lvert F_t(\omega h)-\delta\rvert \lesssim  h\frac{|G_t(\omega h)|\tilde{R}}{|\delta-f'(t)|}\lesssim \frac{h\tilde{R}}{|\delta-f'(t)|}:=\tilde{R}_h.\end{equation}
In what follows, we will show that the above inequality implies that $\omega$ belongs to a set whose Lebesgue measure by a quantity independent on $h$ (but depending on $\delta,\tilde{R},a,b, f$ and $t$). We will analyze several scenarios, depending on whether $\delta$ is in the range of $F_t$, whether $F_t$ is increasing or decreasing, and whether $a$ or $b$ are critical points. 
We will first assume that $F_t$ is increasing, that $b$ is a critical point with $F''_t(b)<0$, and that $a$ is not a critical point. Other scenarios will reduce to this one.

We will analyze different ranges for $\delta$. On each of them, will also separate $S_1(\tilde{R},a,b)$ into two sub-intervals, $S_1(\tilde{R},a,b)=\underline{S}_1(\tilde{R},a,b)\cap \bar{S}_1(\tilde{R},a,b)$ with $$\underline{S}_1(\tilde{R},a,b)=\{\omega \in S_1(\tilde{R},a,b); a\leq h\omega \leq h\omega_\kappa \} \quad \text{and} \quad \bar{S}_1(\tilde{R},a,b)=\{\omega \in S_1(\tilde{R},a,b); h\omega_{\kappa} <h\omega \leq b\},$$  where $\kappa$ is the length of the interval anticipated in Lemma \ref{lemma:variancebound3} and $\omega_{\kappa}:=(b-\kappa)/h$, so that $\lvert h\omega -b\rvert >\kappa$ on $\underline{S}_1(\tilde{R},a,b)$ and $ \lvert h\omega -b\rvert <\kappa $ on $\bar{S}_1(\tilde{R},a,b)$. By Lemma \ref{lemma:variancebound3}, on $\underline{S}_1(\tilde{R},a,b)$, $h\omega$ is far from the critical point $b$ and so $F'_t(h\omega)>L'>0$ for some constant $L'>0$ that depends only on the family $F_t$. 
Therefore, if $\omega_1,\omega_2 \in \underline{S}_1(\tilde{R},a,b)$, we have that whenever $\lvert F_t(h\omega_1)-F_t(h\omega_2)\rvert\lesssim Q$
\begin{equation}\label{eq:ftw1}
\lvert  h\omega_1-h\omega_2 \lvert \leq \frac{1}{L'} \lvert F_t(h\omega_1)-F_t(h\omega_2)\rvert \lesssim Q.
\end{equation}
Contrarily, on  $\bar{S}_1(\tilde{R},a,b)$, $h\omega$ is near the critical point $b$ and so by Lemma \ref{lemma:variancebound3} $F''_t(h\omega)<-L/2$, where $L$ is the uniform lower bound on the absolute second derivatives at critical points. Then, if $\omega_1,\omega_2\in \bar{S}_1(\tilde{R},a,b)$, by Lemma \ref{lemma:variancebound2} we have the relations
\begin{equation*}
\frac{1}{2}\sqrt{\frac{L}{2}} \lvert h\omega_1-h\omega_2 \rvert \lvert  F_t(h\omega_1) -F_t(b)\rvert^{1/2}  \leq \frac{L}{2} |h\omega_1 -b\rvert\lvert h\omega_1-h\omega_2\rvert \leq \lvert F_t(h\omega_1)-F_t(h\omega_2)\rvert,
\end{equation*}
that imply that whenever $\lvert F_t(h\omega_1)-F_t(h\omega_2)\rvert\lesssim Q$
\begin{equation}\label{eq:ftw2A}
\lvert h\omega_1-h\omega_2 \lvert   \lesssim \frac{Q}{\lvert F_t(h\omega_1)-F_t(b)\rvert^{1/2}}.
\end{equation}
and
\begin{equation}\label{eq:ftw2B}
\lvert h\omega_1-h\omega_2 \rvert    \lesssim \frac{Q}{\lvert h\omega_1-b\rvert}.
\end{equation}

Equipped with this, we will show that regardless of the value of $\delta$, we will have
\begin{equation}\label{eq:s1lambda}\Lambda(S_1(\tilde{R},a,b))=\Lambda(\underline{S}_1(\tilde{R},a,b))+\Lambda(\bar{S}_1(\tilde{R},a,b))\lesssim \frac{\tilde{R}}{|\delta-f'(t)|}+\frac{\tilde{R}}{|\delta-f'(t)|}\frac{1}{\lvert \delta -F_t(b)\rvert^{1/2} }.\end{equation}

 \textbf{Case 1: $\delta<F_t(a)$}

if $\omega\in \underline{S}_1(\tilde{R},a,b)$, since $h\omega >a$, so $\delta<F_t(a)<F_t(h\omega)$, and by \eqref{eq:ftw}$$ \lvert F_t(h\omega )-F_t(a)\lvert <\lvert F_t(\omega h)-\delta\rvert  \lesssim \tilde{R}_h.$$
Therefore, since $a\in \underline{S}_1(\tilde{R},a,b)$, by \eqref{eq:ftw1} the above implies that
$\lvert h\omega -a\rvert\lesssim \tilde{R}_h.$ 

If $\omega\in \bar{S}_1(\tilde{R},a,b)$, $h\omega \geq h\omega_\kappa> a$ and $\delta< F_t(a)<F_t(h\omega_{\kappa}) <F_t(h\omega)$, so by \eqref{eq:ftw},
$$ \lvert F_t(h\omega )-F_t(h\omega_\kappa)\lvert <\lvert F_t(h\omega)-\delta\rvert  \lesssim \tilde{R}_h.$$
and since both $\omega,\omega_{\kappa}\in \bar{S}_1(\tilde{R},a,b)$, and since $h\omega_{\kappa}-b = \kappa>0$, by \eqref{eq:ftw2B} the above implies that 
$$\lvert h\omega -h\omega_\kappa\rvert\lesssim \frac{\tilde{R}_h}{\lvert h\omega_\kappa-b\rvert }\lesssim \tilde{R}_h.$$
Let's compute the Lebesgue measure of the sets implied by the above conditions. If  $\omega\in \underline{S}_1(\tilde{R},a,b)$, $\vert h\omega-a\rvert \lesssim \tilde{R}_h$, so $\omega$ lies on an interval centered at a moving $a/h$ but with constant length. Therefore, $$\Lambda(\omega\in \bar{S}_1(\tilde{R},a,b))\leq\Lambda \Big( \omega\in \Big[\frac{a}{h}-\frac{\tilde{R}}{\lvert \delta-f'(t)\rvert},\frac{a}{h}+\frac{\tilde{R}}{\lvert \delta-f'(t)\rvert}\Big]\Big)\lesssim \frac{\tilde{R}}{\lvert \delta-f'(t)\rvert},$$
If $\omega\in \bar{S}_1(\tilde{R},a,b)$ then by the same argument (the interval is centered now at $\omega_\kappa$) we have the same bound for the size of the set. Then,
$$\Lambda(S_1(\tilde{R},a,b))=\Lambda(\underline{S}_1(\tilde{R},a,b))+\Lambda(\bar{S}_1(\tilde{R},a,b))\lesssim \frac{\tilde{R}}{|\delta-f'(t)|}.$$
and so \eqref{eq:s1lambda} holds.

\textbf{Case 2: $F_t(a)\leq \delta<F_t(h\omega_\kappa)$}

Suppose that $\omega \in \underline{S}_1(\tilde{R},a,b)$. Let $\omega_\delta$ be the unique such that $F_t(h\omega_\delta)=\delta$. We have $a\leq h\omega_\delta\leq h\omega_\kappa$, i.e., $\omega_\delta \in \underline{S}_1(\tilde{R},a,b)$.
 Then, by \eqref{eq:ftw},\eqref{eq:ftw1} we have
$$\lvert h\omega -h\omega_\delta\rvert\leq \frac{1}{L'}\lvert F_t(h\omega) -F_t(h\omega_\delta)\rvert = \frac{1}{L'}\lvert F_t(h\omega) -\delta \rvert  \leq  \tilde{R}_h.$$ 
If $\omega \in \bar{S}_1(\tilde{R},a,b)$, then, since $\delta<F_t(h\omega_\kappa)$, and by similar arguments as in Case 1, we pivot on $h\omega_\kappa$ to obtain
$$ \lvert F_t(h\omega )-F_t(h\omega_\kappa)\lvert <\lvert F_t(h\omega)-\delta\rvert  \lesssim \tilde{R}_h,$$
so that by \eqref{eq:ftw1},
$\lvert h\omega -h\omega_\kappa\rvert \lesssim \tilde{R}_h.$

In this case, the total measure is bounded by
$$\Lambda(S_1(\tilde{R},a,b))=\Lambda(\underline{S}_1(\tilde{R},a,b))+\Lambda(\bar{S}_1(\tilde{R},a,b))\lesssim \frac{\tilde{R}}{|\delta-f'(t)|},$$
so \eqref{eq:s1lambda} holds.

\textbf{Case 3: $F_t(h\omega_\kappa)\leq \delta < F_t(b)$}

If $\omega \in \underline{S}_1(\tilde{R},a,b)$, we pivot on $\omega_\kappa$ to use \eqref{eq:ftw} and \eqref{eq:ftw1}. Specifically, since $F_t(h\omega)<F_t(h\omega_\kappa)<\delta$, in this case
$$ \lvert F_t(h\omega )-F_t(h\omega_\kappa)\lvert <\lvert F_t(h\omega)-\delta\rvert  \lesssim \tilde{R}_h,$$
which implies that $\lvert h\omega  -h\omega_\kappa\rvert \lesssim \tilde{R}_h$.

Now, if $\omega \in \bar{S}_1(\tilde{R},a,b)$, define $\omega_\delta$ as before, since $h\omega_\kappa<h\omega_\delta<b$, we have that $\omega,\omega_{\delta}\in\bar{S}_1(\tilde{R},a,b)$. Then, 
$$ \lvert F_t(h\omega )-F_t(h\omega_\delta)\rvert = \lvert F_t(h\omega)-\delta\rvert  \lesssim \tilde{R}_h,$$
Therefore, by \eqref{eq:ftw} and \eqref{eq:ftw2A}
\begin{equation*}
\lvert h\omega-h\omega_{\delta} \lvert   \lesssim \frac{\tilde{R}_h}{\lvert F_t(h\omega_{\delta})-F_t(b)\rvert^{1/2}}=\frac{\tilde{R}_h}{\lvert \delta-F_t(b)\rvert^{1/2}}=\frac{h\tilde{R}}{|\delta-f'(t)||\delta-F_t(b)|^{1/2}}.
\end{equation*}

Combining the bounds, we control the Lebesgue measure as before, and \eqref{eq:s1lambda} holds.

\textbf{Case 4: $\delta>F_t(b)$}
In this case, if $\omega \in \underline{S}_1(\tilde{R},a,b)$ it is implied that $\lvert h\omega  -h\omega_\kappa\rvert \lesssim \tilde{R}_h$, by the same argument as in Case 3. If $\omega \in \bar{S}_1(\tilde{R},a,b)$ then, by \eqref{eq:ftw} and since $b$ is a local maximum
$$\frac{L}{2}\lvert h\omega -b\rvert^2 \lesssim F_t(b)-F_t(h\omega)\lesssim F_t(b)-(\delta-\tilde{R}_h).$$
In particular, for the set defined by the above inequality to be non-empty, $\delta-F_t(b) \leq \tilde{R}_h$.
Up to constants, we conclude that $\omega$ lies on an interval of length $$l(h)=\frac{1}{h}\sqrt{\tilde{R}_h-(\delta-F_t(b))}=\sqrt{\frac{\tilde{R}}{h\lvert \delta-f'(t)\rvert}-\frac{\delta-F_t(b)}{h^2}}$$  if $\delta-F_t(b) \leq \tilde{R}_h$, $l(h)=0$ otherwise. This length function admits a bound independent of $h$. Indeed, the function $g(x)=A/x-B/x^2$ for $x>0$ is increasing if $x\leq 2B/A$ and decreasing if $2B/A$. Therefore, $x^\ast=2B/A$ is the unique maximum, and $g(x^\ast)=A^2/4B$. Taking $A=\tilde{R}/|\delta-f'(t)|$ and $B=\delta-F_t(b)$ we obtain that
$$l(h)\lesssim\sqrt{\left(\frac{\tilde{R}}{|\delta-f'(t)}\right)^2\frac{1}{4(\delta-F_t(b))}}\lesssim \frac{\tilde{R}}{\lvert \delta-f'(t)\rvert } \frac{1}{\lvert\delta-F_t(b)\rvert^{1/2}}.$$
We have concluded that in all cases \eqref{eq:s1lambda} holds. In summary, by \eqref{eq:s1lambda},\eqref{eq:s2lambda} and \eqref{eq:dlambda} we have provided bounds for the Lebesgue measure for of the sets $S_1(\tilde{R},a,b)$ and $S_2(\tilde{R},a,b)$, and $D(\tilde{R},a,b)$ so that we can bound 
\begin{eqnarray*}\Lambda\left(S(\tilde{R},a,b)\right)&\leq& \Lambda\left(S_1(\tilde{R},a,b)\right)+\Lambda\left(S_2\left(\Lambda(\tilde{R},a,b\right)\right)+\Lambda\left(D(\tilde{R},a,b)\right)\\
&&\lesssim \frac{\tilde{R}}{|\delta-f'(t)|}+\frac{\tilde{R}}{|\delta-f'(t)|}\frac{1}{\lvert \delta -F_t(b)\rvert^{1/2} }.\end{eqnarray*}

Now, it remains to analyze possible scenarios. Note first that we have assumed that $A>0$. But if
 if, contrarily, $A<0$, then $A=-|A|$, and we can sketch the same argument switching the roles of $S_1(\tilde{R},a,b)$ and $S_2(\tilde{R},a,b)$ (for the analysis of $D(\tilde{R},a,b)$ we already considered these two scenarios). Also, so far we assumed that $F$ is increasing and $b$ is the only critical point of $F$. If $F$ was decreasing, the argument is essentially the same, as we can do the same case analysis in inverted ordering. Additionally, if $a$ is also a critical point (if $F$ is increasing, it must be a local minimum), then, again, we can replicate this case analysis: we now divide the $[a,b]$ interval into $[a,\omega_\kappa^a],[\omega_\kappa^a,\omega_\kappa^b]$ and $[\omega_\kappa^b,\delta]$ where $\omega_\kappa^a=a+\delta, \omega_\kappa^b=b-\delta$ and bound the measures of each of these sets for different values of $\delta$.  These can all be controlled by the same arguments as we did before, yielding the same final bound, but this time we must include a new term including the contribution of $a$. Therefore,
$$\Lambda\left(S(\tilde{R},a,b)\right)\lesssim \frac{\tilde{R}}{|\delta-f'(t)|}+\frac{\tilde{R}}{|\delta-f'(t)|}\frac{1}{\lvert \delta -F_t(b)\rvert^{1/2} }+\frac{\tilde{R}}{|\delta-f'(t)|}\frac{1}{\lvert \delta -F_t(a)\rvert^{1/2} }.$$

\end{proof}
\begin{lemma}\label{lemma:variancebound2}
Suppose that on the interval $0<b-x<\kappa$, $F(x)$ is strictly increasing, $F'(b)=0$, and that $F''(x)<-M$ for some constant $M>0$ (therefore, $b$ is a local minimum). Then, for any $x_1,x_2$ such that $|x_1-b|<\delta, |x_2-b|\leq \delta$,
$$ \frac{\sqrt{M}}{2}\lvert x_2-x_1\rvert \lvert F(b)-F(x_1)\rvert ^{1/2} \leq \frac{M}{2} \lvert x_2-x_1\rvert \lvert b-x_1\rvert  \leq \lvert F(x_2)-F(x_1)\rvert.$$
\end{lemma}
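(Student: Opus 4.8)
The statement is Lemma~\ref{lemma:variancebound2}: under the hypotheses that $F$ is strictly increasing on $0 < b - x < \kappa$, with $F'(b) = 0$ and $F''(x) < -M$ there, for $x_1, x_2$ within $\delta$ of $b$ one has
\[
\frac{\sqrt{M}}{2}|x_2 - x_1|\,|F(b) - F(x_1)|^{1/2} \le \frac{M}{2}|x_2 - x_1|\,|b - x_1| \le |F(x_2) - F(x_1)|.
\]

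\textbf{Approach.} The plan is to prove the two inequalities separately. For the right-hand inequality, I would write $F(x_2) - F(x_1) = F'(\xi)(x_2 - x_1)$ for some $\xi$ between $x_1$ and $x_2$ by the mean value theorem, and then bound $|F'(\xi)|$ from below. Since $F'(b) = 0$, we have $F'(\xi) = F'(\xi) - F'(b) = -F''(\zeta)(b - \xi)$ for some $\zeta$ between $\xi$ and $b$; using $F'' < -M$ gives $|F'(\xi)| = |F''(\zeta)|\,|b - \xi| \ge M|b - \xi|$. Because $\xi$ lies between $x_1$ and $x_2$, both of which are on the same side of $b$ (namely $b - x_i \in (0,\kappa)$, so $x_i < b$), and because $F$ is monotone the relevant $\xi$ satisfies $|b - \xi| \ge |b - x_1|$ when $x_1$ is the point farther from $b$; more carefully, I should note that it suffices to prove the inequality and $|F(x_2) - F(x_1)| \ge \frac{M}{2}|x_2 - x_1|\min\{|b-x_1|,|b-x_2|\}$ is too weak, so I need $|b - \xi| \ge |b - x_1|$. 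This holds provided we interpret the statement with $x_1$ being whichever of the two points we are comparing against — and indeed in the application (Lemma~\ref{lemma:variancebound}, eq.~\eqref{eq:ftw2A}--\eqref{eq:ftw2B}) $x_1 = h\omega_1$ plays a distinguished role. A cleaner route: the factor $\tfrac12$ suggests integrating. Write
\[
F(x_1) - F(x_2) = \int_{x_2}^{x_1} F'(s)\,ds = \int_{x_2}^{x_1} (F'(s) - F'(b))\,ds = -\int_{x_2}^{x_1}\int_s^b F''(r)\,dr\,ds,
\]
and since $F''(r) < -M$ throughout, $|F(x_1) - F(x_2)| \ge M\int_{x_2}^{x_1}(b - s)\,ds = \tfrac{M}{2}\big((b-x_2)^2 - (b-x_1)^2\big) = \tfrac{M}{2}(x_1 - x_2)(2b - x_1 - x_2)$ — hmm, this gives $|x_1 - x_2|$ times the \emph{average} distance to $b$, not $|b-x_1|$. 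So to recover exactly $\tfrac{M}{2}|x_2-x_1||b-x_1|$ I do need $2b - x_1 - x_2 \ge |b - x_1|$, i.e. $(b - x_1) + (b - x_2) \ge |b - x_1|$, which is automatic since both summands are positive. That is where the $\tfrac12$ comes from: $\tfrac{M}{2}|x_1-x_2|\big((b-x_1)+(b-x_2)\big) \ge \tfrac{M}{2}|x_1-x_2|(b-x_1)$.

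\textbf{Left-hand inequality.} For $\frac{\sqrt M}{2}|x_2 - x_1||F(b) - F(x_1)|^{1/2} \le \frac{M}{2}|x_2-x_1||b - x_1|$, after cancelling $\frac{\sqrt M}{2}|x_2 - x_1|$ it reduces to $|F(b) - F(x_1)|^{1/2} \le \sqrt M\,|b - x_1|$, i.e. $|F(b) - F(x_1)| \le M|b - x_1|^2$. This follows from the same Taylor/integral argument as above but now as an \emph{upper} bound: $F(b) - F(x_1) = -\int_{x_1}^b\int_s^b F''(r)\,dr\,ds$ and... wait, $F'' < -M$ only gives a lower bound on $-F''$, hence an upper bound on $-\int\int F''$ is not available that way. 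I should instead note we need an upper bound on $|F(b) - F(x_1)|$, which requires an upper bound on $|F''|$; this is not among the hypotheses as literally stated. The resolution is that in context (Lemma~\ref{lemma:variancebound0}) the second derivatives are assumed bounded (equicontinuous in $t$, hence uniformly bounded on $[0,1]$), so $|F''| \le M'$ for some $M'$, and then $|F(b) - F(x_1)| \le \tfrac{M'}{2}|b-x_1|^2$. Strictly, then, the constant in the left inequality should be $\sqrt{M'}$ rather than $\sqrt{M}$, or one should read $M$ as a two-sided bound $M \le |F''| $ is used on the right and the left uses an implicit upper bound; I would state the lemma's proof using that $F''$ is bounded above in absolute value by the same $M$ appearing in the downstream use (which only needs the right inequality to have the clean form and the left to be \emph{some} bound of this shape).

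\textbf{Main obstacle.} The real subtlety is not the calculus — both inequalities are one-line consequences of Taylor's theorem with integral remainder — but getting the \emph{direction} of the distance-to-$b$ factor right (average vs.\ endpoint) and reconciling the one-sided hypothesis $F'' < -M$ with the need for a two-sided bound in the first inequality. I expect the write-up to be short: one display for the integral identity $F(x_1) - F(x_2) = -\int_{x_2}^{x_1}\int_s^b F''(r)\,dr\,ds$, one application of $-F'' > M$ to get the right inequality via $(b-x_1)+(b-x_2) \ge (b-x_1)$, and one application of the (ambient) upper bound on $|F''|$ to get the left inequality via $|F(b)-F(x_1)| \le \tfrac{M}{2}(b-x_1)^2$. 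I would flag explicitly that the points lie on the same side of $b$ (so all the distances $b - x_i$ are positive and no absolute-value bookkeeping is needed), which is guaranteed by the hypothesis $0 < b - x < \kappa$.
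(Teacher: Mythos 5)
Your approach matches the paper's in substance: a second-order Taylor expansion around $b$ (or, equivalently, the double-integral identity $F(x_1)-F(x_2) = -\int_{x_2}^{x_1}\int_s^b F''(r)\,dr\,ds$) proves the right-hand inequality, and the paper's actual proof is the Lagrange-remainder version of exactly what you wrote, including the observation that $(b-x_1)+(b-x_2)\ge b-x_1$ is what produces the endpoint rather than the average distance.

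More importantly, you have correctly identified a genuine error in the paper's proof of the left inequality. The left inequality $\frac{\sqrt{M}}{2}|x_2-x_1||F(b)-F(x_1)|^{1/2}\le \frac{M}{2}|x_2-x_1||b-x_1|$ is equivalent (after cancelling the common factor) to $|F(b)-F(x_1)|\le M|b-x_1|^2$, which is an \emph{upper} bound on the function increment and therefore needs an \emph{upper} bound on $|F''|$. The hypothesis $F''<-M$ only gives the lower bound $|F''|>M$. The paper's own proof ironically quotes the Taylor formula in exactly the wrong direction: it writes $|F(x_1)-F(b)|=|F''(\psi)||x_1-b|^2>M|x_1-b|^2$ (also missing the $\tfrac12$), which establishes the \emph{reverse} of the needed inequality. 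As you say, the right way to repair this is to import the upper bound on $|F''|$ that is implicitly available from the equicontinuity/boundedness hypotheses on the second derivatives in Lemma~\ref{lemma:variancebound0}; with a two-sided bound $M\le|F''|\le M'$, the left inequality holds with $\sqrt{2/M'}\cdot(M/2)$ in place of $\sqrt{M}/2$, and since only the qualitative shape of the bound is used downstream (in deriving~\eqref{eq:ftw2A}), the constant change is harmless. You also implicitly flag the minor slip in the hypothesis: $F'(b)=0$ and $F''<0$ near $b$ makes $b$ a local \emph{maximum}, not a minimum as the lemma statement asserts. Your analysis of where the $\tfrac12$ comes from and of the fact that both points lie on the same side of $b$ (so no sign bookkeeping is needed) is also accurate.
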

The same conclusion applies if the function is striclty decreasing, $F'(b)=0$ and $F''(x)>M$ for some $M>0$.
\begin{proof}
Let's assume first that $x_1<x_2$. By a second-order Taylor expansion on $F$ around $x_2$ and a first order expansion for $F$ around $b$, we have that for some $\psi_1\in [x_1,x_2]$ and $\psi_2\in[x_2,b]$
\begin{eqnarray*}
F(x_1)-F(x_2)&=& (x_1-x_2)\left(F'(x_2)+\frac{1}{2}F''(\psi_1)\left(x_1-x_2\right)\right) \\ &=&
(x_1-x_2)\left(F''(\psi_2)(x_2-b)+\frac{1}{2}F''(\psi_1)\left(x_1-x_2\right)\right)\end{eqnarray*} 
Then, inverting signs in the above, and using that $-F(x)>M$ on that interval and that $x_1<x_2<b$, the above implies
\begin{eqnarray*}
F(x_2)-F(x_1)&=& (x_2-x_1)  \left(F''(\psi_2)(x_2-b)+\frac{1}{2}F''(\psi_1)\left(x_1-x_2\right)\right) \\
&\geq &\lvert x_1-x_2\rvert\left(\frac{1}{2} F''(\psi_2)(x_2-b)+\frac{1}{2}F''(\psi_1)\left(x_1-x_2\right)\right) \\
&\geq &\lvert x_1-x_2\rvert\left(-\frac{1}{2} F''(\psi_2)(b-x_2)-\frac{1}{2}F''(\psi_1)\left(x_2-x_1\right)\right) \\
&\geq& \lvert x_1-x_2\rvert  \frac{M}{2} \left(b-x_2 +x_2-x_1\right)\\
&\geq& \frac{M}{2} \lvert x_1-x_2\rvert  \lvert b-x_1\rvert.
\end{eqnarray*}
We obtain the final bound using that, by second-order approximation for $F$ around $b$
$$|F(x_1)-F(b)|=|F''(\psi)||x_1-b|^2>M|x_1-b|^2$$
If now $x_1>x_2$, we can exchange the roles to obtain
$$F(x_1)-F(x_2)\geq  \frac{M}{2}\lvert x_1-x_2\rvert \lvert b-x_2\rvert  \geq  \frac{M}{2}\lvert x_1-x_2\rvert\lvert b-x_1\rvert\geq   \frac{\sqrt{M}}{2}\lvert x_1-x_2\rvert|F(x_1)-F(b)|^{1/2}, $$

where we used that $x_1>x_2$ so that $b-x_2>b-x_1>0$.
The conclusion follows since $x_1<x_2$ is equivalent to $F(x_1)<F(x_2)$. The proof for decreasing $F$ follows by applying the same argument to $-F$.
\end{proof}

\begin{lemma}\label{lemma:variancebound3}
Suppose that $F_t(x)$ is a coordinate of a function satisfying the conditions of Lemma \ref{lemma:variancebound0}. Then, 
There is $\kappa>0$ such that for each $t$ and critical point $x(t)$, 
$$0<\frac{L}{2}\leq \max_{|x-x(t)|<\delta}\Big \lvert F''_t(x)\Big \rvert.$$
Also, there is a constant $L'>0$ such that
$$\inf_{|x-x(t)|>\delta}\Big \lvert F'_t(x) \Big \rvert>L'.$$


\end{lemma}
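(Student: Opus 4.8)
## Proof Proposal for Lemma~\ref{lemma:variancebound3}

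The plan is to exploit compactness together with the non-degeneracy and equicontinuity hypotheses inherited from~\eqref{assump:morse} (equivalently, the bulleted conditions in Lemma~\ref{lemma:variancebound0}). Recall that for each $t\in[0,1]$ and each coordinate index $i$, the set of critical points $C(t,i)=\{x\in[-t,1-t]:(F_t^i)'(x)=0\}$ has cardinality at most $M$, and the uniform non-degeneracy condition guarantees that $|(F_t^i)''(x)|\ge L$ for every $x$ lying in $\bigcup_{t,i}C(t,i)$. Fix one coordinate and write $F_t$ for $F_t^i$ and $x(t)$ for a generic element of $C(t,i)$. First I would use the equicontinuity of the family $\{(F_t)''\}_{t\in[0,1]}$: there is a single modulus of continuity $\varpi(\cdot)$, independent of $t$, such that $|(F_t)''(x)-(F_t)''(y)|\le\varpi(|x-y|)$ for all $x,y$. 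Choose $\kappa>0$ small enough that $\varpi(\kappa)\le L/2$. Then for every $t$, every critical point $x(t)$, and every $x$ with $|x-x(t)|<\kappa$ we have
\[
|(F_t)''(x)| \ge |(F_t)''(x(t))| - \varpi(|x-x(t)|) \ge L - \tfrac{L}{2} = \tfrac{L}{2},
\]
which gives the first assertion (in particular $\max_{|x-x(t)|<\kappa}|F_t''(x)|\ge L/2$, with the stronger pointwise lower bound $|F_t''(x)|\ge L/2$ on the whole $\kappa$-neighborhood, which is what the subsequent lemmas actually use). Note the sign of $(F_t)''$ cannot change on this neighborhood since it stays bounded away from zero, so each critical point is genuinely a local max or local min with second derivative of magnitude at least $L/2$ there.

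For the second assertion — a uniform lower bound $|F_t'(x)|>L'$ whenever $x$ is at distance more than $\kappa$ from every critical point — I would argue by contradiction using compactness. Suppose no such $L'>0$ exists. Then there is a sequence $(t_n,x_n)$ with $\mathrm{dist}(x_n, C(t_n,i))>\kappa$ and $|F_{t_n}'(x_n)|\to 0$. By compactness of $[0,1]$ and of the (uniformly bounded) domain, pass to a subsequence with $t_n\to t_\ast$ and $x_n\to x_\ast$. Equicontinuity of the second derivatives plus uniform boundedness of $F_t$ and its first two derivatives lets us extract (again along a subsequence, via Arzelà--Ascoli applied to $F_{t_n}$, $F_{t_n}'$, $F_{t_n}''$) a limiting $C^2$ function which, by the convergence of the coefficients of $F_t$ in $t$ noted in the proof of Lemma~\ref{lemma:holdersmooth}, must coincide with $F_{t_\ast}$. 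Then $F_{t_\ast}'(x_\ast)=0$, so $x_\ast\in C(t_\ast,i)$. But $\mathrm{dist}(x_n,C(t_n,i))>\kappa$ together with the stability of the critical set under the limit (which itself follows from the non-degeneracy bound $|F_t''|\ge L$ at critical points, via an implicit-function / inverse-function argument showing critical points vary continuously in $t$) forces $\mathrm{dist}(x_\ast, C(t_\ast,i))\ge\kappa>0$, a contradiction. Hence the desired $L'>0$ exists.

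The main obstacle I anticipate is the last step of the contradiction argument: making rigorous the claim that the critical sets $C(t,i)$ vary ``continuously'' in $t$ in the sense needed, i.e.\ that one cannot have critical points appearing in the limit far from all the $C(t_n,i)$. This is where the non-degeneracy hypothesis does the real work — because $|F_t''|\ge L$ at every critical point, each critical point is isolated with a quantitative separation, and by the implicit function theorem it persists and moves continuously as $t$ varies; combined with the uniform bound $M$ on their number, no critical point can be ``created from nothing'' in the limit. I would formalize this by noting that near any critical point $x(t_\ast)$ of $F_{t_\ast}$, the function $F_t'$ changes sign across a small fixed interval for all $t$ near $t_\ast$ (by uniform convergence of $F_t'$ and the strict monotonicity of $F_{t_\ast}'$ there, itself a consequence of $|F_{t_\ast}''|\ge L$), so $C(t_n,i)$ has a point within that interval for large $n$ — contradicting $\mathrm{dist}(x_n, C(t_n,i))>\kappa$ once $x_n$ is close to $x_\ast$. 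Everything else is routine once this separation-of-critical-points fact is in hand. I would also remark that the final displayed equivalence in the lemma statement, $|x^{-1}f_i''(t+x)|>L$ on critical points, is immediate from the identity $(F_t^i)''(x)=\tfrac{2}{x^2}\big(\tfrac{x}{2}f_i''(t+x) - F_t^i(x)+\cdots\big)$-type expansion — more cleanly, differentiating $F_t^i(x)=(f_i(t+x)-f_i(t))/x$ twice and using $(F_t^i)'(x)=0$ to eliminate the first-derivative term — so it need not be treated separately.
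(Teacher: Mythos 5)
Your proof takes essentially the same route as the paper's: part one by equicontinuity of the second derivatives, part two by a compactness/contradiction argument. What you add is genuine value: the paper's proof of the second assertion ends by asserting that the limit point $x_*$ "is away from the critical points of $F_{t^*}$," but nowhere establishes this — the $x_n$ are only known to be $\kappa$-separated from $C(t_n,i)$, not from $C(t^*,i)$, and in principle a critical point of $F_{t^*}$ could sit at $x_*$ without being a limit of critical points of $F_{t_n}$. You correctly identify this as the crux and close it with the sign-change argument: near any critical point of $F_{t^*}$, the derivative $F_{t^*}'$ crosses zero with slope of magnitude at least $L/2$; locally uniform convergence $F_{t_n}' \to F_{t^*}'$ (which follows from the equicontinuity of the second derivatives) then forces $F_{t_n}'$ to change sign on a small fixed interval around that critical point for all large $n$, producing a critical point of $F_{t_n}$ there and contradicting the $\kappa$-separation. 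This is the right fix and is more careful than what is written in the paper. Two minor notational points: the lemma's statement introduces $\kappa$ but then writes $\delta$ in the displayed inequalities — you sensibly use $\kappa$ throughout — and you observe, correctly, that the argument actually yields the pointwise bound $|F_t''(x)|\ge L/2$ on the whole $\kappa$-neighborhood, which is slightly stronger than the stated $\max$ and is what the subsequent lemma (Lemma~\ref{lemma:variancebound2}) actually uses.
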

\begin{proof}

We first analyze the behavior around a critical point. Suppose that $F_t(x(t))>0$ (the other case is analogous). 
By the uniform equicontinuity of the second derivative, we have that for some $\delta>0$, whenever $|x-y|\leq \delta$, we have
$$\Big \lvert F''_t(x)-F''_t(y)\Big \rvert \leq \frac{L}{2}.$$
Then, choosing $y=x(t)$ we obtain that if $|x-x(t)|\leq \delta$ then 
$$\frac{L}{2} \leq -\frac{L}{2} +F''_t(x(t)) \leq F''_t(x).$$
Now, regarding the first derivative, for each $t$, define $L_t$ as the infimum of $|F'_t(x)|$ in the region $|x-x(t)|\geq \delta.$ We need to show that $\inf_{t\in[0,1]} L_t> 0$. Then, for some sequence $t_n\rightarrow t^\ast$ we have $L_{t_n}\to 0$. By continuity of $F_t$ and compactness, the infimum is realized on a certain $x_{t_n}$ satisfying $|x_{t_n}-x(t_n)|>\delta$ ($x_{t_n}$ is away from any critical point of $F_{t_n}$). The sequence $x_{t_n}$ must have an accumulation point $x^\ast=x_{t^\ast}$. By equicontinuity, this implies $L_{t_n}=F_{t_n}(x_{t_n})\to F(x_{t^\ast})$, so that $F(x_{t^\ast})=0$. Then, $x_{t^\ast}$ must be a critical point of $F_{t^\ast}$, contradicting that $x_{t^\ast}$ is away from the critical points of $F_{t^\ast}$.
\end{proof}

\subsection{Proof of Corollary \ref{cor:1d}}
 \begin{proof}[Proof of Corollary \ref{cor:1d}]
 We first show that in this case, $R(x)$ coincides with optimal transport. To see this, first we use that, by \eqref{eq:derphit} in Proposition \ref{prop:Volterra}
 \begin{eqnarray*}
\frac{d}{ds}\tilde{\Phi}(s) &=& -\tilde{\Phi}(s)\frac{\partial }{\partial z}v(s,z_s).
\end{eqnarray*}

Therefore, we can write, for $0\leq s\leq 1$
$$\tilde{\Phi}(s)=\exp\left(\int_s^1 \frac{\partial v}{\partial z}(t,z_t)dt\right).$$
In particular, for $s=0$ we obtain
$R'(x)=\tilde{\Phi}(1)>0$. Therefore, $R$ is an increasing transport map, so it must be the optimal transport.

 Now, let's prove the remaining claims. Note that if we can establish the improved bias rate $h_n^{\beta+1}$ then the CLT follows easily, as all the arguments for the variance analysis don't depend on the dimension. We only need to establish this rate and show that the asymptotic variance doesn't depend on the Kernel.
 
To establish the rate, recall first that, as in the proof of Proposition \ref{prop:linearized}, we need to bound the bias
\begin{eqnarray*} b(h):=\mathbb{E}(\hat{L}(x))=\int_0^1\int_{\mathbb{R}} \frac{\tilde{\Phi}(s)}{p_s(z)} K\left(u\right)\left(v(s,z+hu)-v(s,z)\right)p_s(z+hu) duds.
\end{eqnarray*}
We will use the following result, as stated in Lemma \ref{lemma:intdm} (note that we are allowed to compute first derivatives since $\beta>2$).
\begin{equation}
u\frac{d}{ds}\left(\tilde{\Phi}(s)\frac{p_s(z_s+hu)}{p_s(z_s)}\right)=-\frac{\tilde{\Phi}(s)}{p_s(z_s)}\frac{d}{d h} m(u,h,s),
\end{equation}
 where $$m(u,h,s)=\left(v(s,z_s+hu)-v(s,z_s)\right)p_s(z_s+hu).$$
 This implies that the function 
 $$b_0(h,u) = \int_0^1 \frac{\tilde{\Phi}(s)}{p_s(z)}K(u) m(u,h,s)ds$$
 is differentiable with respect to $h$ at all $u$, with derivative 
\begin{eqnarray*}
\frac{d}{dh} b_0(h,u)&=&-uK(u)\int_0^1 \frac{d}{ds}\left(\tilde{\Phi}(s)\frac{p_s(z_s+hu)}{p_s(z_s)}\right)ds\\&=&uK(u)\left(\tilde{\Phi}(0)\frac{p_0(x+hu)}{p_0(x)}-\tilde{\Phi}(1)\frac{p_1(R(x)+hu)}{p_1(R(x))}\right)\\
&=& uK(u)\left(\tilde{\Phi}(1)\frac{p_0(x+hu)}{p_0(x)}-\frac{p_1(R(x)+hu)}{p_1(R(x))}\right)\\
\end{eqnarray*}
At this point, the argument is the same as in usual kernel density estimation; by using the H{\"o}lder regularity of $p_0,p_1$ now we can demonstrate that $b_0(h)$ has $\beta+1$ H{\"o}lder regularity with respect to $h$ and use it on the Taylor expansions as in the proof of Proposition \ref{prop:linearized}. Exchanging derivatives and integration with respect to $u$ is justified by continuity of $\Phi,p_0,p_1$ and boundedness of the kernel.

 It only remains to show \eqref{eq:sigmax1d}. To do so, we consider the change of variables $\omega_1=u+\omega(\Delta-v_t(z_t))$ in the integral with respect to $\omega$ in \eqref{eq:sigma}, whenever $(\Delta-v_t(z_t))\neq 0$. Then, we obtain
 \begin{eqnarray*}\nonumber 
\Sigma(x) &=& \int_0^1  \frac{\tilde{\Phi}(t) }{p_t(z_t)}  \mathbb{E}\left[ \frac{\left(\Delta-v_{t}(z_{t})\right)^2}{\lvert \Delta-v_{t}(z_{t})\rvert }\int_{\mathbb{R}} \int_{-\infty}^\infty K\left(\omega'_1\right) K\left(u\right)dud\omega_1   \Big |X_t=z_t\right]  \tilde{\Phi}(t) dt.\\
&=& \int_0^1  \frac{\tilde{\Phi}^2(t) }{p_t(z_t)}  \mathbb{E}\left[ \lvert\Delta-v_{t}(z_{t})\rvert   \Big |X_t=z_t\right]   dt,
 \end{eqnarray*}
 a quantity that doesn't depend on $K$.
 \end{proof}

\subsection{Proof of Example \ref{example:1dgaussian}}
\begin{proof}[Proof of Example \ref{example:1dgaussian}]
We can easily extend the argument in the proof of Proposition \ref{prop:flow-from-Gaussian-to-Gaussian} to show that if $X_i\sim N\left(m_i,\Sigma_i\right)$ are independent, then $z_t=z_t(x)=z(t,0,x)$ satisfies
$$z_t(x)= m_t+\Sigma_0^{1/2}\left(\Sigma_0^{-1/2}(t^2\Sigma_1+(1-t)^2\Sigma_0)\Sigma_0^{-1/2}\right)^{1/2}\Sigma_0^{-1/2}\left(x-m_0\right).$$
Also, $$v(t,z_t)=m_1-m_0 +(t\Sigma_1-(1-t)\Sigma_0)\left(t^2\Sigma_1+(1-t)^2\Sigma_0\right)^{-1}(z_t-m_t),$$
and since $X_t\sim N\left(m_t,t^2\Sigma_1+(1-t)^2\Sigma_0\right)$, in the one dimensional case we get
\begin{eqnarray*}
\tilde{\Phi}(t)=\exp\left(\int_t^1\frac{\partial }{\partial z}v(s,z_s) ds\right)&=&\exp\left(\int_t^1(s\Sigma_1-(1-s)\Sigma_0)\left(s^2\Sigma_1+(1-s)^2\Sigma_0\right)^{-1}ds\right)\\
&=& \frac{\exp\left(\frac{1}{2}\log\left(\Sigma_1\right)\right)}{\exp\left(\frac{1}{2}\log\left(t^2\Sigma_1+(1-t)^2\Sigma_0\right)\right)}\\&=&\Sigma_1^{1/2}\left(t^2\Sigma_1+(1-t)^2\Sigma_0\right)^{-1/2}.
\end{eqnarray*}
Additionally, it is easy to check that the distribution of $\Delta=X_1-X_0$ conditional on $X_t=z_t$ is Gaussian with mean $m$ and variance $\Sigma$ given by
$$m:=m_0-m_1+\frac{s\Sigma_1-(1-s)\Sigma_0}{s^2\Sigma_1+(1-s)^2\Sigma_0}, \quad \text{ and } \quad\Sigma:=\frac{1}{t^2\Sigma_1+(1-t)^2\Sigma_0}.$$

Therefore, $$\mathbb{E}(\lvert \Delta-v(t,z_t)\lvert \Big |X_t=z_t)=\sqrt{\frac{2\Sigma}{\pi}}=\frac{1}{(t^2\Sigma_1+(1-t)^2\Sigma_0)^{1/2}}\sqrt{\frac{2}{\pi}},$$
and 
\begin{eqnarray*}
\Sigma(x)&=&\int_0^1 \frac{\tilde{\Phi}^2(t) }{p_t(z_t)}  \mathbb{E}\left[ \lvert\Delta-v_{t}(z_{t})\rvert   \Big |X_t=z_t\right]ds\\
&=&\int_0^1 \frac{\Sigma_1 \sqrt{2\pi}(t^2\Sigma_1+(1-t)^2\Sigma_0)^{1/2}}{t^2\Sigma_1+(1-t)^2\Sigma_0} \exp\left( \frac{(z_t-m_t)^2}{2(t^2\Sigma_1+(1-t)^2\Sigma_0)}\right)\frac{1}{(t^2\Sigma_1+(1-t)^2\Sigma_0)^{1/2}}\sqrt{\frac{2}{\pi}}dt\\
&=& 2\Sigma_1\int_0^1 \frac{1}{t^2\Sigma_1+(1-t)^2\Sigma_0}\exp\left(\frac{1}{2\Sigma_0}(x-m_0)^2\right)dt\\
&=& 2\frac{\Sigma_1^{1/2}}{\Sigma_0^{1/2}}\left(\arctan\left(\frac{\Sigma_1^{1/2}}{\Sigma_0^{1/2}}\right)+\arctan\left(\frac{\Sigma_0^{1/2}}{\Sigma_1^{1/2}}\right)\right)\exp\left(\frac{1}{2\Sigma_0}(x-m_0)^2\right)
\end{eqnarray*}
\end{proof}

\subsection{Additional results for Section \ref{sec:unbounded}}\label{sub:addunbounded}
\begin{lemma}[Lemma 2.2 in \cite{gotze2019higher}] \label{lemma:lpmomentpoincare} Let $\mu$ be a probability measure on $\mathbb{R}^d$ satisfying the Poincaré inequality \eqref{eq:poincare} with constant $\sigma^2>0$. If $h$ is a locally Lipschitz function with $\mathbb{E}_\mu(h)=0$, then, for any $p\geq 2$:
\begin{equation} \label{eq:lpmomentpoincare} \mathbb{E}_\mu\left(\lvert h(X)\rvert ^p\right)\leq \left(\frac{\sigma p}{\sqrt{2}}\right)^p \mathbb{E}_\mu \left(\lVert \nabla h(X)\rVert^p \right).\end{equation}

\end{lemma}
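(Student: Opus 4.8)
The plan is to derive \eqref{eq:lpmomentpoincare} by a bootstrap (self-improvement) argument on the exponent, feeding the Poincaré inequality \eqref{eq:poincare} not to $h$ itself but to the powers $|h|^{p/2}$. Write $\mathcal{N}_q := (\mathbb{E}_\mu[\|\nabla h\|^q])^{1/q}$ and $\|h\|_{L^q} := (\mathbb{E}_\mu[|h|^q])^{1/q}$; both are nondecreasing in $q$ since $\mu$ is a probability measure. Two preliminary reductions are worth noting: if $\mathcal{N}_p = \infty$ there is nothing to prove, so assume $\mathcal{N}_p < \infty$; and one may assume $h$ is bounded, since otherwise one runs the argument for the centered truncations $h_M = ((h\wedge M)\vee(-M)) - \mathbb{E}_\mu[(h\wedge M)\vee(-M)]$ (still locally Lipschitz, centered, with $\|\nabla h_M\|\le\|\nabla h\|$ a.e.) and passes to the limit $M\to\infty$ by monotone/dominated convergence. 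With $h$ bounded and $\mathcal N_p<\infty$, every moment $\mathbb{E}_\mu[|h|^q]$, $2\le q\le p$, is finite. The base case $p=2$ is exactly \eqref{eq:poincare}: $\|h\|_{L^2}^2 = \mathrm{Var}_\mu(h)\le\sigma^2\mathcal N_2^2 \le (\tfrac{\sigma\cdot 2}{\sqrt2})^2\mathcal N_2^2$.

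Next I would prove \eqref{eq:lpmomentpoincare} by strong induction on $p$ (for non-integer $p$ the dyadic descent $p\mapsto p/2\mapsto p/4\mapsto\cdots$ reaches the band $(2,4]$ in finitely many steps). Fix $p>2$. The function $|h|^{p/2}$ is locally Lipschitz with a.e. gradient of norm $\tfrac p2|h|^{p/2-1}\|\nabla h\|$, and applying \eqref{eq:poincare} to it gives
\[
\|h\|_{L^p}^p \;\le\; \big(\mathbb{E}_\mu[|h|^{p/2}]\big)^2 \;+\; \frac{\sigma^2 p^2}{4}\,\mathbb{E}_\mu\!\big[|h|^{p-2}\|\nabla h\|^2\big].
\]
Hölder's inequality with exponents $\tfrac p{p-2},\tfrac p2$ bounds the last expectation by $\|h\|_{L^p}^{p-2}\mathcal N_p^2$. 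For the first term, note $\big(\mathbb{E}_\mu[|h|^{p/2}]\big)^2 = \|h\|_{L^{p/2}}^p$; if $p\le 4$ then $\|h\|_{L^{p/2}}\le\|h\|_{L^2}\le\sigma\mathcal N_2\le\sigma\mathcal N_p$ by Poincaré and monotonicity, while if $p>4$ the induction hypothesis at $p/2>2$ gives $\|h\|_{L^{p/2}}\le\tfrac{\sigma(p/2)}{\sqrt2}\mathcal N_{p/2}\le\tfrac{\sigma p}{2\sqrt2}\mathcal N_p$. Since $(\tfrac p{\sqrt2})^p\ge 2$ for all $p\ge 2$, in either case $\big(\mathbb{E}_\mu[|h|^{p/2}]\big)^2\le\tfrac12\big(\tfrac{\sigma p}{\sqrt2}\mathcal N_p\big)^p$.

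Setting $A := \tfrac{\sigma p}{\sqrt2}\mathcal N_p$ and $r := \|h\|_{L^p}/A$, the displayed inequality becomes, after dividing by $A^p$ and using $A^2 = \tfrac{\sigma^2 p^2}{2}\mathcal N_p^2$,
\[
r^p \;\le\; \tfrac12 \;+\; \tfrac12\, r^{p-2}, \qquad\text{i.e.}\qquad r^{p-2}\big(r^2-\tfrac12\big)\le\tfrac12.
\]
If $r>1$ then $r^{p-2}>1$ (as $p>2$) and $r^2-\tfrac12>\tfrac12$, so the left side strictly exceeds $\tfrac12$ — a contradiction. Hence $r\le 1$, that is $\|h\|_{L^p}\le\tfrac{\sigma p}{\sqrt2}\mathcal N_p$, which is precisely \eqref{eq:lpmomentpoincare}.

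The algebra closes cleanly, so the main obstacle is analytic bookkeeping rather than the inequality itself: justifying that \eqref{eq:poincare}, stated in the paper for differentiable integrands, remains valid for the merely locally Lipschitz function $|h|^{p/2}$ with its Rademacher gradient (handled by the truncation of $h$ together with mollification, using $\mathcal N_p<\infty$ to keep $\nabla|h|^{p/2}$ in $L^2$ on the relevant set), and the routine passage $M\to\infty$ back to general $h$. Everything else is a single application of Hölder and the one-line scalar inequality above.
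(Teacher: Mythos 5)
Your proof is correct. The paper itself does not reprove this lemma — it simply cites \cite{gotze2019higher} — so there is no internal proof to compare against; and the route you take (apply Poincar\'e to the power $|h|^{p/2}$, split off the squared mean, Hölder the cross term to $\|h\|_{L^p}^{p-2}\mathcal N_p^2$, and close with the scalar inequality $r^{p-2}(r^2-\tfrac12)\le\tfrac12 \Rightarrow r\le1$) is the standard bootstrap argument and, in essence, the one used in the cited reference. The only point worth stating more carefully is the very last passage: Fatou's lemma on $h_M\to h$ gives $\mathbb{E}_\mu[|h|^p]\le\liminf_M\mathbb{E}_\mu[|h_M|^p]$ directly, which is enough; you do not actually need dominated or monotone convergence there, and invoking them loosely could confuse a reader about what is assumed finite a priori (only $\mathcal N_p<\infty$ and $\mathbb{E}_\mu|h|<\infty$, the latter implicit in the centering hypothesis).
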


\begin{lemma}[Lyapunov CLT]
\label{lemma:lyapunov} Let $U_{n,i}$ by a triangular array in $\mathbb{R}^d$ with mean zero and finite second moment. Define $V_n=\sum_{i=1}^n\text{Cov}(U_{n,i})$ and let $v_n^2=\lambda_{min}(V_m)$ (the least eigenvalue of $V_n$) If for some $\delta>0$ we have that $$\frac{1}{v_n^{2+\delta}}\sum_{i=1}^n\mathbb{E}\left(|U_{n,i}^{2+\delta}|\right)=o(1),$$
then, $$V_n^{-1/2}\sum_{i=1}^nU_{n,i}\rightarrow N\left(0,I_d\right).$$
\end{lemma}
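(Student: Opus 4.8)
The plan is to reduce the $d$-dimensional statement to the classical one-dimensional Lyapunov CLT for triangular arrays via the Cram\'er--Wold device. Since $V_n^{-1/2}$ is invoked in the statement, $V_n$ is positive definite (equivalently $v_n^2>0$), so $V_n^{1/2}$ and its inverse exist and are symmetric positive definite. By Cram\'er--Wold it suffices to show that for every fixed $\theta\in\mathbb{R}^d$ one has $\theta^\top V_n^{-1/2}\sum_{i=1}^n U_{n,i}\overset{d}{\to}N(0,\|\theta\|^2)$; the case $\theta=0$ is trivial, so fix $\theta\neq 0$ and set $W_{n,i}:=\theta^\top V_n^{-1/2}U_{n,i}$. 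For each $n$ the $W_{n,i}$, $1\le i\le n$, are independent, mean zero, with finite $(2+\delta)$-th moment, and
\[
\sum_{i=1}^n\mathrm{Var}(W_{n,i})=\theta^\top V_n^{-1/2}\Big(\sum_{i=1}^n\mathrm{Cov}(U_{n,i})\Big)V_n^{-1/2}\theta=\theta^\top V_n^{-1/2}V_nV_n^{-1/2}\theta=\|\theta\|^2 .
\]

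Next I would verify Lyapunov's condition for the scalar array $\{W_{n,i}\}$. By the Cauchy--Schwarz inequality and symmetry of $V_n^{-1/2}$, $|W_{n,i}|=|(V_n^{-1/2}\theta)^\top U_{n,i}|\le\|V_n^{-1/2}\theta\|\,\|U_{n,i}\|$, and
\[
\|V_n^{-1/2}\theta\|^2=\theta^\top V_n^{-1}\theta\le\lambda_{\max}(V_n^{-1})\|\theta\|^2=\frac{\|\theta\|^2}{\lambda_{\min}(V_n)}=\frac{\|\theta\|^2}{v_n^2}.
\]
Hence
\[
\sum_{i=1}^n\mathbb{E}|W_{n,i}|^{2+\delta}\le\Big(\frac{\|\theta\|^2}{v_n^2}\Big)^{(2+\delta)/2}\sum_{i=1}^n\mathbb{E}\|U_{n,i}\|^{2+\delta}=\|\theta\|^{2+\delta}\cdot\frac{1}{v_n^{2+\delta}}\sum_{i=1}^n\mathbb{E}\|U_{n,i}\|^{2+\delta}\longrightarrow 0
\]
by hypothesis; here I read $\mathbb{E}|U_{n,i}^{2+\delta}|$ in the statement as $\mathbb{E}\|U_{n,i}\|^{2+\delta}$.

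With the per-row variances summing to the positive constant $\|\theta\|^2$ and Lyapunov's condition in hand, the classical one-dimensional Lyapunov central limit theorem for triangular arrays (or the Lindeberg--Feller theorem together with the standard implication that Lyapunov's condition implies Lindeberg's) yields $\sum_{i=1}^n W_{n,i}=\theta^\top V_n^{-1/2}\sum_{i=1}^n U_{n,i}\overset{d}{\to}N(0,\|\theta\|^2)$. Since this holds for every $\theta\in\mathbb{R}^d$, the Cram\'er--Wold theorem gives $V_n^{-1/2}\sum_{i=1}^n U_{n,i}\overset{d}{\to}N(0,I_d)$, which is the claim.

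I do not anticipate a genuine obstacle: the lemma is a routine repackaging of the scalar Lyapunov CLT through Cram\'er--Wold. The only points needing mild care are (i) making the operator-norm bound $\|V_n^{-1/2}\theta\|^2\le\|\theta\|^2/v_n^2$ explicit, so that the hypothesis stated in terms of $v_n$ feeds exactly into the scalar Lyapunov condition, and (ii) observing that the per-row variances sum to $\|\theta\|^2$ rather than to $1$, so no additional rescaling is required before applying the scalar CLT (and the limit is correspondingly $N(0,\|\theta\|^2)$, as needed for the Cram\'er--Wold characterization of $N(0,I_d)$).
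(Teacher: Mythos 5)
Your proof is correct. The paper states this lemma without proof (it is presented as a standard auxiliary result, used in the proof of Theorem~\ref{teo:CLTlinear}), so there is no argument to compare against; your route via Cram\'er--Wold together with the scalar Lyapunov CLT is the standard textbook derivation. The two small reading choices you make — interpreting $\mathbb{E}(|U_{n,i}^{2+\delta}|)$ as $\mathbb{E}\|U_{n,i}\|^{2+\delta}$ and taking the $U_{n,i}$ independent within each row — are the intended ones (and are how the paper applies the lemma). The key bounds, namely $\sum_i\mathrm{Var}(W_{n,i})=\|\theta\|^2$ and $\|V_n^{-1/2}\theta\|^2\le\|\theta\|^2/v_n^2$ feeding into the hypothesis written in terms of $v_n=\lambda_{\min}(V_n)^{1/2}$, are exactly right and make the scalar Lyapunov condition immediate.
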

\begin{lemma}[Gronwall's inequality]\label{lemma:gronwall} If $u$ and $\beta$ are differentiable and continuous real-valued functions defined on an interval $I$ such that $u'(t)\leq \beta(t)u(t)$ in the interior of $I$, then 
$$u(t)\leq u(t_0)\exp\left(\int_{t_0}^t\beta(s)ds\right).$$
\end{lemma}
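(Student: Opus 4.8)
The plan is to run the classical integrating-factor argument. First I would set $g(t) := \exp\!\left(-\int_{t_0}^{t}\beta(s)\,ds\right)$; since $\beta$ is continuous this function is well-defined, strictly positive, and differentiable on $I$ with $g'(t) = -\beta(t)g(t)$. Then I would examine the product $\varphi(t) := u(t)g(t)$ and differentiate it on the interior of $I$, obtaining
\[
\varphi'(t) = u'(t)g(t) + u(t)g'(t) = \bigl(u'(t) - \beta(t)u(t)\bigr)g(t).
\]

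Next, I would invoke the hypothesis $u'(t) \le \beta(t)u(t)$ together with the positivity of $g(t)$ to conclude $\varphi'(t) \le 0$ throughout the interior of $I$, so $\varphi$ is non-increasing there; by continuity of $u$ (hence of $\varphi$) this monotonicity extends to the endpoints of $I$. Fixing any $t \in I$ with $t \ge t_0$, monotonicity gives $\varphi(t) \le \varphi(t_0) = u(t_0)g(t_0) = u(t_0)$. Dividing through by $g(t) > 0$ and using $1/g(t) = \exp\!\left(\int_{t_0}^{t}\beta(s)\,ds\right)$ then yields $u(t) \le u(t_0)\exp\!\left(\int_{t_0}^{t}\beta(s)\,ds\right)$, which is the asserted bound.

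There is no substantive obstacle here; the only points requiring care are bookkeeping ones. Since the differential inequality is assumed only on the interior of $I$, I must use continuity to transfer the monotonicity of $\varphi$ to boundary points, and I should note that the displayed conclusion is the relevant one for $t \ge t_0$ (for $t < t_0$ the same argument reverses the inequality, giving a lower bound instead). All manipulations are elementary and are justified by the stated differentiability of $u$ and continuity of $\beta$, so I would present the proof in the three short steps above.
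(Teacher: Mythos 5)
Your proof is correct and is the standard integrating-factor argument for Gronwall's differential inequality. The paper states this lemma as a classical result without giving a proof, so there is nothing to compare against; your argument is the canonical one and is complete.
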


\begin{lemma}
\label{lemma:nonreg}
Let $Y,X$ be random variables in $\mathbb{R}^d$. Consider the problem of estimating the conditional mean $\mu(x)=\mathbb{E}(Y|X=x)$ from i.i.d pairs $(X_{0i},X_{1i})$. Suppose that $X$ has a bounded density $p$ which is of H{\"o}lder class $\beta$ and that $K$ is a kernel of order $l=\left \lfloor{\beta}\right \rfloor$. Assume that $h=o(1)$, $nh^d\rightarrow \infty$. Assume also that for some $\delta>0$ and every $x$
$$\mathbb{E}(|Y-\mu(X)|^{2+\delta}|X=x)<M,\quad \int |K(\psi)|^{2+\delta}d\psi <\infty. $$
Then, the classical Nadaraya-Watson kernel density estimator $\hat{\mu}_h(x)$ of $\mu(x)$
$$\hat{\mu}_h(x)\equiv\frac{\sum_{i=1}^n K_h(X_{0i}-x)X_{1i}}{\sum_{i=1}^n K_h(X_{0i}-x)},$$ satisfies a central limit theorem
$$\sqrt{nh^d}(\hat{\mu}_h(x)- \mu_h(x)) ~\overset{d}{\to} N\left(0,\frac{\big\Vert K \big \Vert_{L_2}^2}{p(x)}\Sigma(x)\right),$$
where $\mu_h=\mu(x)+O(h^\beta)$ and $\Sigma(x)\equiv \mathrm{Var}(Y|X=x)=\mathbb{E}(Y^2|X=x)-\mathbb{E}(Y|X=x)^2$.
The proof is standard  (e.g. see \cite{li2023nonparametric, ullah1999nonparametric}) and we skip it.
\end{lemma}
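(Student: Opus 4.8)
\textbf{Proof proposal for Lemma~\ref{lemma:nonreg}.}

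The statement is the classical CLT for the Nadaraya--Watson estimator, so the plan is to assemble a self-contained argument from standard pieces. Write $\widehat{\mu}_h(x) = \widehat{N}_h(x)/\widehat{D}_h(x)$ where $\widehat{N}_h(x) = n^{-1}\sum_{i=1}^n K_h(X_{0i}-x)X_{1i}$ and $\widehat{D}_h(x) = n^{-1}\sum_{i=1}^n K_h(X_{0i}-x)$. First I would record the bias/moment bookkeeping for the denominator: $\mathbb{E}[\widehat{D}_h(x)] = \int K(\psi) p(x+h\psi)\,d\psi = p(x) + O(h^\beta)$ by a Taylor expansion of order $l = \lfloor\beta\rfloor$ using the H\"older-$\beta$ regularity of $p$ and the fact that $K$ is a kernel of order $l$; and $\mathrm{Var}(\widehat{D}_h(x)) = O((nh^d)^{-1})$ since $\mathbb{E}[K_h(X_{0i}-x)^2] = h^{-d}\int K^2(\psi)p(x+h\psi)\,d\psi = O(h^{-d})$. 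Hence $\widehat{D}_h(x) \xrightarrow{p} p(x) > 0$. Similarly the numerator has mean $\mathbb{E}[\widehat{N}_h(x)] = \int K(\psi)\mu(x+h\psi)p(x+h\psi)\,d\psi = \mu(x)p(x) + O(h^\beta)$, using that $y\mapsto \mu(y)p(y) = \mathbb{E}[Y\mathbf 1\{X\in dy\}]/dy$ inherits H\"older-$\beta$ smoothness (this is where I would be slightly careful: the hypotheses as stated give smoothness of $p$ and a uniform $(2+\delta)$-moment bound on $Y-\mu(X)$; in the standard treatment one also assumes $\mu p$ is H\"older-$\beta$, which I would either add as an implicit hypothesis or note is needed — I expect this to be the only genuinely delicate point).

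The core of the proof is a CLT for the centered, scaled vector $\sqrt{nh^d}\big(\widehat{N}_h(x) - \mathbb{E}[\widehat{N}_h(x)],\ \widehat{D}_h(x) - \mathbb{E}[\widehat{D}_h(x)]\big)$. I would apply the Lyapunov CLT (Lemma~\ref{lemma:lyapunov}) to the triangular array $U_{n,i} = (h^{d/2}/\sqrt n)\big(K_h(X_{0i}-x)X_{1i} - \mathbb{E}[\cdot],\ K_h(X_{0i}-x) - \mathbb{E}[\cdot]\big)$. The covariance computation is routine: $nh^d\,\mathrm{Var}(\widehat{D}_h(x)) \to p(x)\|K\|_{L_2}^2$; $nh^d\,\mathrm{Cov}(\widehat{N}_h(x), \widehat{D}_h(x)) \to \mu(x)p(x)\|K\|_{L_2}^2$; and $nh^d\,\mathrm{Cov}(\widehat{N}_h(x))\to p(x)\|K\|_{L_2}^2\,\mathbb{E}[YY^\top\mid X=x]$, each obtained by the change of variables $\psi = (X_{0i}-x)/h$, conditioning on $X_{0i}$ to handle $Y$ (using $\mathbb{E}[YY^\top\mid X=x]$ continuous, or at least the relevant conditional second moment finite), and dominated convergence with $K$ bounded of compact support; the product-of-means terms contribute $O(h^d) = o(1)$. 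The Lyapunov condition with exponent $\delta$ follows from $\mathbb{E}[\|U_{n,i}\|^{2+\delta}] = O((h^d/n)^{(2+\delta)/2} h^{-d(1+\delta)})$ via $\mathbb{E}[K_h(X_{0i}-x)^{2+\delta}|Y|^{2+\delta}] = O(h^{-d(1+\delta)})$ (conditioning on $X_{0i}$, the uniform $(2+\delta)$-moment bound on $Y-\mu(X)$ and boundedness of $\mu$ near $x$ give a bounded conditional $(2+\delta)$-moment of $Y$, and $\int|K|^{2+\delta}<\infty$), and the smallest eigenvalue of $\sum_i \mathrm{Cov}(U_{n,i})$ being bounded below by a positive constant (since the limiting covariance matrix is positive definite, as $\mathbb{E}[YY^\top\mid X=x] - \mu(x)\mu(x)^\top = \mathrm{Var}(Y\mid X=x)\succeq 0$ plus the $(1,\mu(x))$-directions); this ratio is $O\big((nh^{d}\cdot h^{2d/\delta})^{-\delta/2}\big) = o(1)$ under $nh^d\to\infty$ after absorbing the $h^{2d/\delta}$ factor into the hypothesis, exactly as in the proof of Theorem~\ref{teo:CLTlinear}.

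Finally I would assemble the conclusion by the delta method applied to $(u,v)\mapsto u/v$ at $(\mu(x)p(x), p(x))$, whose gradient is $(1/p(x),\ -\mu(x)/p(x))$. Combining with the bias terms $\mathbb{E}[\widehat{N}_h(x)] = \mu(x)p(x) + O(h^\beta)$ and $\mathbb{E}[\widehat{D}_h(x)] = p(x) + O(h^\beta)$ yields
\[
\sqrt{nh^d}\big(\widehat{\mu}_h(x) - \mu_h(x)\big) \rightsquigarrow N\!\left(0,\ \frac{\|K\|_{L_2}^2}{p(x)}\,\big(\mathbb{E}[YY^\top\mid X=x] - 2\mu(x)\mu(x)^\top + \mu(x)\mu(x)^\top\big)\right) = N\!\left(0,\ \frac{\|K\|_{L_2}^2}{p(x)}\,\Sigma(x)\right),
\]
where $\mu_h(x) = \mathbb{E}[\widehat{N}_h(x)]/\mathbb{E}[\widehat{D}_h(x)] = \mu(x) + O(h^\beta)$ and $\Sigma(x) = \mathrm{Var}(Y\mid X=x)$; the cross terms collapse because the gradient contracts the limiting covariance to precisely the conditional-variance form. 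The main obstacle, as flagged, is bridging the stated hypotheses (smoothness of $p$ only) to the smoothness of $\mu p$ needed for the $O(h^\beta)$ bias of the numerator — I would handle this by noting it is the standard implicit regularity assumption in this setting (cf.\ the references cited in the lemma), or by stating it explicitly; everything else is a routine kernel computation plus Lyapunov's CLT and the delta method.
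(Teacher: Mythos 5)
The paper supplies no proof of this lemma (it explicitly defers to standard references), and your argument is exactly the standard one that those references give: numerator/denominator decomposition, bias via an order-$\lfloor\beta\rfloor$ kernel expansion, a joint Lyapunov CLT for the centered pair, and the delta method for the ratio. So your approach is the intended one and is essentially correct. Two small remarks. First, your final Lyapunov ratio is written as $O\bigl((nh^{d}\cdot h^{2d/\delta})^{-\delta/2}\bigr)$, which does \emph{not} vanish under $nh^d\to\infty$ alone; but your own moment bound $\mathbb{E}\|U_{n,i}\|^{2+\delta}=O\bigl((h^d/n)^{(2+\delta)/2}h^{-d(1+\delta)}\bigr)$, summed over $i$ and divided by $v_n^{2+\delta}\asymp 1$, gives exactly $(nh^d)^{-\delta/2}=o(1)$ under the stated hypothesis, so the spurious $h^{2d/\delta}$ factor is a bookkeeping slip rather than a real obstruction. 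Second, you are right that the $O(h^\beta)$ bias of the numerator requires H\"older-$\beta$ regularity of $z\mapsto\mu(z)p(z)$, which the lemma does not state explicitly; in the paper's application this is supplied by Lemma~\ref{lemma:holdersmooth}, which establishes $\beta$-H\"older continuity of $f_t=v_t\,p_t$ and of $p_t$, so the gap you flagged lies in the lemma's hypotheses rather than in your argument.
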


\begin{lemma}\label{lemma:unifcons}
Let $(X_{i},Y_{i})\in \mathbb{R}^{d_1}\times \mathbb{R}^{d_2}$ be an i.i.d sequence of random variables. Let $K$ be a bounded continuous kernel. For fixed continuous functions $c$ and $d$ define
\begin{equation}\label{eq:limempiricalw} W_n(z)=\frac{1}{n}\sum_{i=1}^n \left(c(z)Y_{i}+d(z)\right) K_h(X_{i}-z)-\mathbb{E}\left(\left(c(z)Y_{i}+d(z)\right) K_h(X_{i}-z)\right).\end{equation}
Suppose that the density $p$ of $X$ is continuous and strictly positive over a compact $B\subseteq\mathbb{R}^{d_1}$, and that the joint density of $X,Y$ is continuous in $B\times \mathbb{R}^{d_2}$. Then, with probability one,
\begin{equation}\label{eq:limempirical}\lim_{n\rightarrow \infty} \sqrt{\frac{nh_n^d}{2\log h_n^{-d}}}\sup_{z\in B}||W_n(z)||=\sigma^2(B),\end{equation}
where \begin{equation}\label{eq:limempiricalsigma} \sigma^2(B)=\sup_{z\in B}\mathbb{E}\left(\Vert(c(z)Y+d(z)\rVert^2|X=z\right)p(z)||K||_{L_2}^2.\end{equation}
\end{lemma}

\begin{proof}
This is a simple multivariate extension of Theorem 1 in \cite{einmahl2000empirical}. Extension of closely related results to the multivariate case has already been pursued, e.g., in \cite[Proposition 3.1]{gine2002rates}, so we skip the details.
 \end{proof}
 \begin{lemma}\label{lemma:einimproved}
 Let $X_t=tX_1+(1-t)X_0$ for some random variables $X_0,X_1$ with bounded second moments and $t\in[0,1]$.  Suppose that the the hypothesis of Lemma \ref{lemma:unifcons}, hold for $(X_t,Y)$ for each $t\in[0,1]$, defining $W_n(t,z)$ and $\sigma_t(B)$ the time-dependent counterparts of \eqref{eq:limempiricalw} and \eqref{eq:limempiricalsigma}. Suppose further that $K$ has a bounded derivative. If $\tilde{\sigma}^2(B):=\sup_{t\in[0,1]}\sigma^2_t(B)<\infty$, and $nh_n^d/\log(1/h_n) \to \infty$ as $n\to\infty$ then, 
 \begin{equation}\label{eq:limempirical}\sqrt{\frac{nh_n^d}{\log h_n^{-1}}}\sup_{z\in B,t\in [0,1]}||W_n(t,z)|| = O_p(1),\end{equation}
 \end{lemma}
 \begin{proof}

 This result is a uniform in $t$ version of Lemma \ref{lemma:unifcons}, but weakened in the sense that the limit is replaced by a bounded in probability statement. These weaker versions are usually established as intermediate results, e.g., in \cite{einmahl2000empirical,gine2002rates,einmahl2005uniform,hansen2008uniform}, and are sufficient for our applications. The proof of this extension follows from the standard arguments in these papers; here, he will only mention the main idea and argue that uniformity over $t$ doesn't degrade the rates.

 Define, for a class $\mathcal{G}$ the following quantity
 $$\Big\lVert  \mathbb{P}_n-P \Big \rVert_{\mathcal{G}}:=\sup_{g\in\mathcal{G}}|\mathbb{P}_ng-Pg|=\sup_{g\in\mathcal{G}}\Big \lvert \frac{1}{n}\sum_{i=1}^n g(X_i)-\mathbb{E}(g(X))\Big \rvert$$
  We will use the following chaining bound \cite[Remark 3.4.2]{gine2021mathematical}
 \begin{equation} \label{eq:chaing} \mathbb{E} \left(\sqrt{n} \Big\lVert  \mathbb{P}_n-P \Big \rVert_{\mathcal{G}} \right) \leq 8\sqrt{2} \mathbb{E}\left[\int_0^{\sqrt{\|P_n g^2\|_{\mathcal{G}}}}\sqrt{\log 2N(\tau, \mathcal{F},L_2(P_n))}d\tau \right]\end{equation}
Consider the case $c(z)=0,d(z)=1$. We study the function class
 $$\mathcal{G}_h:=\Big\{g:\mathbb{R}^d\times \mathbb{R}^d\to \mathbb{R}, g(x,y)=\frac{1}{h^d} K\left(\frac{tx+(1-t)y -z}{h}\right),z\in B,  t\in[0,1]\Big\}.$$
 Note that an envelope for $\mathcal{G}_h$ is $\lVert K\rVert_\infty/h^d$. We can link this family to the more elementary classes
 $$\mathcal{F}=\bigcup_{h>0}h^d \mathcal {F}_h \quad \mathcal{F}_h=\Big\{f:\mathbb{R}^d\to\mathbb{R}, f(x)= \frac{1}{h^d} K\left(\frac{x -z}{h}\right),z\in B\Big\}$$
We will bound the covering number of $\mathcal{G}_h$ using the well-known fact, by assumption \ref{assump:K}, that K is a regular Kernel in the sense of \cite{einmahl2005uniform}, it follows that $\mathcal{F}$ is a bounded VC class \citep{einmahl2005uniform}, so it satisfies following bound, for each measure $\tilde{P}$ \cite[Equation 2.1]{gine2002rates}, \cite{van1996bracketing} $$N\left(\varepsilon\|F\|_{L_2(\tilde{P})} ,\mathcal{F},L_2(\tilde{P})\right)\leq \left(\frac{A}{\varepsilon}\right)^{\nu},$$  for some $A>3e,\nu\geq 1$. Note that, by scaling, and since $N\left(\varepsilon,\mathcal{F}^1,L_2(\tilde{P})\right)\leq N\left(\varepsilon ,\mathcal{F}^2,L_2(\tilde{P})\right)$ if $\mathcal{F}^1\subseteq \mathcal{F}^2$, this immediately implies a bound on the covering number of the $\mathcal{F}_h$'s:
\begin{equation} \label{eq:covfh} N\left(\varepsilon ,\mathcal{F}_h,L_2(\tilde{P})\right)=N\left(h^d\varepsilon, h^d\mathcal{F}_h,L_2(\tilde{P})\right)  \leq N\left(h^d\varepsilon, \mathcal{F},L_2(\tilde{P})\right)\leq \left(\frac{A\|F\|_{L_2(\tilde{P})}}{h^d\varepsilon}\right)^{\nu}\leq \left(\frac{A\lVert K\rVert_\infty}{h^d\varepsilon}\right)^{\nu},\end{equation}
Where the envelope $F$ can be taken as $\lVert K\rVert_\infty$.
Now we will bound the covering number of $\mathcal{G}_h$ in terms of the one for $\mathcal{F}_h$.
More precisely, we now show that for some $C_2>0$, for any $h>0$, and any $P$ in the product space, if we define $\Lambda^2(P)=\int \Vert x-y\Vert^2dP(x,y)$, and if $N\left(\varepsilon,\mathcal{F}\right):=\sup_{Q}N\left(\varepsilon,\mathcal{F},L_2(Q)\right)$ then
\begin{equation}\label{eq:nfg} N\left(\varepsilon+\frac{\lVert K'\rVert_\infty\eta}{h^{1+d}}\Lambda(P), \mathcal{G}_h,L_2(P)\right)\leq \frac{1}{\eta} N\left(\varepsilon,\mathcal{F}_h\right)\leq \frac{1}{\eta} N\left(\varepsilon,\mathcal{F}\right)\leq \frac{1}{\eta} \left(\frac{A\lVert K\rVert_\infty}{h^d\varepsilon}\right)^{\nu}.\end{equation}
To show this, let $\varepsilon>0$, $\eta>0$, and a probability measure $P$ in the product space. For each $k=1,\ldots, \lfloor 1/\eta\rfloor$ consider an $\varepsilon$-covering $f^k_i(x)$ of the space $L_2(P^k)$ of size $N\left(\varepsilon,\mathcal{F}_h,L_2(P^k)\right)$, where $P^k$ is the law of $k\eta X_1 +(1-k\eta) X_0$, and $(X_0,X_1)\sim P$. By \eqref{eq:covfh}, $N\left(\varepsilon,\mathcal{F}_h,L_2(P^k)\right)\leq \left(\frac{h^dA\lVert K\rVert_\infty}{\varepsilon}\right)^{\nu}.$ We now show how to cover the class $\mathcal{G}_h$ with the family $g^k_i(x,y):=f^k_i(k\eta x+(1-k\eta)y)$ whose size is bounded by $\sup_{\tilde{P}} N\left(\varepsilon,\mathcal{F}_h,L_2(\tilde{P})\right)/\eta$. Let $g\in\mathcal{G}_h$, then $g(x,y)=f(tx+(1-t)y)$ for some $t\in[0,1]$ and $f\in\mathcal{F}_h$. Let $f^k_i$ be such that $|\eta k-t|\leq \eta$ and such that $d_{L_2(Q^k)}(f,f^k_i)\leq \varepsilon$. Then, if we define $g^k(x,y)=f(\eta k x+(1-\eta k)y)$ we have $d_{L_2(P)}(g,g^k_i)\leq d_{L_2(P)}(g,g^k)+d_{L_2(P)}(g^k,g^k_i)$. We can bound each individual term: first, note that
\begin{eqnarray*} d^2_{L_2(P)}(g^k,g_i^k)&=&\int (g^k(x,y)-g^k_i(x,y))^2dP(x,y)\\
&=& \int \left(f(k\eta x+(1-k\eta)y)-f_i^k(k\eta x+(1-k\eta)y)\right)^2dP(x,y)\\
&=&\int \left(f(x)-f_i^k(x)\right)^2dP^k(x)\\
&=&d^2_{L_2(P^k)}(f,f^i_k)\\
&\leq &\varepsilon^2
\end{eqnarray*}
Above, we used the definition of $P^k$ in terms of $P$, and the fact that $d_{L_2(P^k)}(f,f^k_i)\leq \varepsilon$. Additionally, 
\begin{eqnarray*}
d^2_{L_2(P)}(g,g^k)&\leq&
\int \left(f(tx+(1-t)y)-f(k\eta x+(1-k\eta)y)\right)^2 dP(x,y) \\
&\leq & \sup_{z}\lVert K'(z)\rVert^2 |t-\eta k|^2 \int  \lVert x- y\rVert^2 dP(x,y)\\
&\leq & \lVert K'\rVert_\infty^2\Lambda^2(P)\frac{\eta^2}{h^{2+2d}}. 
\end{eqnarray*}
In the last inequality, we used the fact that $K'$ has a bounded derivative, so \eqref{eq:nfg} is proven. Since \eqref{eq:nfg} is valid for all $\varepsilon,\eta, h$ we can choose $\eta=h^{1+d}\varepsilon/(\lVert K'\rVert_\infty\Lambda(P))$.
to obtain that for any measure $P$ in the product space
\begin{equation}\label{eq:nfg2} N\left(2\varepsilon, \mathcal{G}_h,L_2(P)\right)\leq  \frac{\lVert K'\rVert_\infty\Lambda(P)}{h^{1+d}\varepsilon} \left(\frac{A\lVert K\rVert_\infty}{h^d\varepsilon}\right)^{\nu}\leq \frac{\Lambda(P)}{h^{1+d+\nu d}}\left(\frac{A'}{\varepsilon}\right)^{1+\nu},\end{equation}
for some big enough $A'=f(A,\lVert K\rVert_\infty,\lVert K'\rVert_\infty,\nu)$. 
We will now apply \eqref{eq:chaing} to the family $\mathcal{G}_h$. Call $\delta_n^2 = \lVert P_n g^2\rVert_{\mathcal{G}_h}$ and $\delta^2 = \lVert P g^2\rVert_{\mathcal{G}_h}$. First, using \eqref{eq:nfg2} and that $\sqrt{a+b}\leq \sqrt{a}+\sqrt{b}$ we have
\begin{eqnarray}
\nonumber \int_0^{\delta_n} \sqrt{\log 2N(\tau,\mathcal{G}_h,L_2\left(P_n\right))}d\tau
&\lesssim& \int_0^{\delta_n} \sqrt{\log \Lambda(P_n)}dt+\int_0^{\delta_n}\sqrt{\log h^{-(1+d+\nu d)}}dt +\int_0^{\delta_n}\sqrt{\log (2A't)^{-2(\nu+1)} }dt\\
\label{eq:chainint} &\lesssim & \delta_n\sqrt{\log \Lambda(P_n)}+ \delta_n \sqrt{\log h^{-(1+d+\nu d)})}+A''.
\end{eqnarray}
In the last inequality, we used the fact that the last integral can be bounded uniformly, regardless of the limit of integration (this follows from an elementary change of variables). To apply \eqref{eq:chaing}, we need to take expectations. To do so, we will and the fact that (see the proof of  Theorem 3.5.4 in \cite{gine2021mathematical})
$$n\mathbb{E}\left(\delta^2_n\right)\leq n\delta^2+2^53^{3/2} \sqrt{\mathbb{E}(U^2)}\left(2^3\sqrt{n}\mathbb{E} \left(\sqrt{n} \Big\lVert  \mathbb{P}_n-P \Big \rVert_{\mathcal{G}_h} \right)+\sqrt{\mathbb{E}(U^2)}\right),$$
where $U=\max_{1\leq i \leq n} G_h(X_i)$ and $G_h$ is an envelope for $\mathcal{G}_h$. In our case, we can bound $U\leq \lVert K\rVert_\infty h^{-d}$. Therefore, 
\begin{eqnarray}
\nonumber \mathbb{E}\left(\delta^2_n\right)&\lesssim& \delta^2+ h^{-d} \left(\frac{1}{\sqrt{n}} \mathbb{E} \left(\sqrt{n} \Big\lVert  \mathbb{P}_n-P \Big \rVert_{\mathcal{G}_h} \right)+ \frac{h^{-d}}{n}\right)\\
\label{eq:deltan}&\lesssim& h^{-d}\left(1+  \frac{1}{\sqrt{n}} \mathbb{E} \left(\sqrt{n} \Big\lVert  \mathbb{P}_n-P \Big \rVert_{\mathcal{G}_h}\right) + \frac{h^{-d}}{n}\right).
\end{eqnarray}

In the last line, we used that that for each $g\in \mathcal{G}_h$ we have the bound 
$$\mathbb{E}\left(g(X)^2\right)= \int \frac{1}{h^{2d}} K^2\left(\frac{X-z}{h}\right)p(x)dx=\frac{1}{h^d}\int  K^2\left(u\right)p(z+hu)du\leq h^{-d}\lVert p\rVert_\infty\lVert K\rVert_2^2,$$
so that $\delta^2\lesssim h^{-d}$.

Denoting $Y=\mathbb{E}\left(\sqrt{n} \Big\lVert  \mathbb{P}_n-P \Big \rVert_{\mathcal{G}_h} \right)$, by \eqref{eq:chaing}, \eqref{eq:chainint}, \eqref{eq:deltan} we have

\begin{eqnarray*}
 Y &\leq& \sqrt{\mathbb{E}\left(\delta^2_n\right)} \frac{1}{\sqrt{2}}\sqrt{\mathbb{E}\left(\log \Lambda^2(P_n)\right)}+\sqrt{\mathbb{E}\left(\delta^2_n\right)}\sqrt{\log h^{-(1+d+\nu d)})}+A''\\
 &\lesssim&
\frac{1}{\sqrt{2}}\sqrt{\mathbb{E}\left(\delta^2_n\right)} \sqrt{\log \mathbb{E}\left( \Lambda^2(P_n)\right)}+\sqrt{\mathbb{E}\left(\delta^2_n\right)}\sqrt{\log h^{-(1+d+\nu d)})}+A''\\
&\lesssim&
\frac{1}{\sqrt{2}} \sqrt{\mathbb{E}\left(\delta^2_n\right)} \sqrt{\log \left( \Lambda^2(P)\right)}+\sqrt{\mathbb{E}\left(\delta^2_n\right)}\sqrt{\log h^{-(1+d+\nu d)})}+A''\\
&\lesssim& 
h^{-d/2}\left(1 +\sqrt{\frac{Y}{\sqrt{n}}}+ \frac{h^{-d/2}}{\sqrt{n}} \right)\left(1+\sqrt{\log h^{-1})}\right) +1
\end{eqnarray*}
In the first line, we used Cauchy-Schwarz. In the second, Jensen's inequality. In the third, the fact that $\Lambda^2(P)$ is linear in $P$. We can express the right-hand side above as $a\sqrt{Y}+b$ where $a=h^{-d/2}n^{-1/4}(1+\sqrt{\log h^{-1}})$ and $b=(1+\sqrt{h^{-1}})(h^{-d/2}+h^{-d}n^{-1/2})+1$. Since the relation $Y\lesssim a\sqrt{Y}+b$ implies $Y\lesssim a^2+b$ we obtain
$$Y\lesssim h^{-d}n^{-1/2}(1+\log h^{-1})+ (1+\sqrt{\log h^{-1}})(h^{-d/2}+h^{-d}n^{-1/2})+1.$$
Therefore,
\begin{eqnarray*}
\mathbb{E}\left(\frac{\sqrt{nh_n^d}}{\sqrt{\log h^{-1}_n}} \sup_{z\in B,t\in [0,1]}||W_n(t,z)||\right) &=&
\mathbb{E}\left(\frac{\sqrt{nh_n^d}}{\sqrt{\log h^{-1}_n}} \Big\lVert  \mathbb{P}_n-P \Big \rVert_{\mathcal{G}_h} \right) \\
&=&
\mathbb{E}\left(\frac{\sqrt{h_n^d}}{\sqrt{\log h^{-1}_n}} Y \right)\\
&\lesssim& \frac{(1+\log h_n^{-1})}{\sqrt{\log h^{-1}_n nh_n^d}}+ \frac{(1+\sqrt{\log h_n^{-1}})}{\sqrt{\log h_n^{-1}}}\left(1+\frac{1}{\sqrt{nh_n^d}}\right)+\frac{\sqrt{h_n^{d}}}{\sqrt{\log h_n^{-1}}}\\
&\lesssim &
1.
\end{eqnarray*}
In the last line, we used that by our hypothesis on the bandwidth, $\sqrt{\log h_n^{-1}}^{-1}=o(1)$, $\sqrt{nh_n^d}^{-1}=o(1)$, $\sqrt{h^d_n}=o(1)$, and $\sqrt{\log h^{-1}_n}\sqrt{ nh_n^d}^{-1}=o(1)$.
Since the expectation is bounded, we concluded that its argument is $O_p(1)$.

We have shown the desired statement in the case where $c(z)=0,d(z)=1$. For the general case, we define the class of functions
$$\mathcal{W}_h=\Big\{g:\mathbb{R}^d\times \mathbb{R}^d\to \mathbb{R}, l(x,y,v)=\frac{1}{h^d} \left(c(z)y +d(z)\right) K\left(\frac{tx+(1-t)y -z}{h}\right),z\in B,  t\in[0,1]\Big\},$$
and proceed as in \cite{einmahl2000empirical,einmahl2005uniform,hansen2008uniform}. The core of the argument was already extended, but a tedious truncation and conditioning argument is still necessary to control fluctuations when $X_1-X_0$ is unbounded. We skip these calculations as the argument remains unchanged in this case, since we have already provided appropriate uniform control over the time-dependent classes.

 \end{proof}

\begin{lemma}\label{lemma:kernelderbound}
Let $\hat{v}_t(z)$ be the kernel-based estimator of the velocity, and $\hat{f}_t(z),\hat{p}_t(z)$ be the corresponding numerator and denominator. Then, under the assumptions of Theorem \ref{teo:CLT} we have that if $L=O_p(1)$,

\begin{itemize}
\item[(a)]

\begin{equation}\label{eq:biasker} \sup_{t\in[0,1],\lVert z\rVert \leq L} \lVert \mathbb{E}\left(\widehat{p}_t(z)\right)-p_t(z)\rVert =O\left(h_n^\beta \right)
    \end{equation}
    and 
    \begin{equation}\label{eq:biasdif} \sup_{t\in[0,1],\lVert z\rVert \leq L} \lVert \mathbb{E}\left(\hat{f}_t(z)-v_t(z)\hat{p}_t(z)\right)\rVert =O\left(h_n^\beta \right)\end{equation}
    
    \item[(b)] 
    \begin{equation}\label{eq:kerbound} \sup_{t\in[0,1],\lVert z\rVert \leq L} \lVert \hat{p}_t(z)-p_t(z)\rVert =O_p\left(\sqrt{\frac{\log n}{nh_n^d}}+h_n^\beta \right)
    \end{equation}
    and 
    \begin{equation}\label{eq:difbound} \sup_{t\in[0,1],\lVert z\rVert \leq L} \lVert \hat{f}_t(z)-v_t(z)\hat{p}_t(z)\rVert =O_p\left(\sqrt{\frac{\log n}{nh_n^d}}+h_n^\beta \right)\end{equation}

\item[(c)] for $l=0,1,2$, 
\begin{equation}
\label{eq:dervbound} \sup_{t\in[0,1],\lVert z\rVert \leq L}\Big \lVert \frac{\partial^l}{\partial z^l}\hat{v}(t,z)-\frac{\partial^l}{\partial z^l}v(t,z)\Big \rVert =O_p\left(h_n^{\beta-l}+\sqrt{\frac{\log n}{nh_n^{d+2l}}}\right)\end{equation}
\end{itemize} 
\end{lemma}
\begin{proof}

For the bias rates  (a), let's focus on \eqref{eq:biasdif}; the analysis of \eqref{eq:biasdif} is similar. Note first that the population quantity equals zero since $f_t(z)=v_t(z)p_t(z)$ for all $t,z$, by definition. We have
\begin{eqnarray*}\mathbb{E}\left(\hat{f}_t(z)-v_t(z)\hat{p}_t(z)\right)&=& \mathbb{E}\left(\frac{1}{n}\sum_{i=1}^n \left(\Delta_i -v(s,z)\right) K_h\left(X_i(s)-z\right)\right)\\
&=& \int \int  \left(\delta -v(s,z)\right) K_h\left(u-z\right)p_0(u-s\delta)p_1(u+(1-s)\delta)d\delta du\\
&=&  \int \int  \left(\delta -v(s,z)\right) K\left(u\right)p_0(z+hu-s\delta)p_1(z+hu+(1-s)\delta) d\delta du\\
&=&\int K\left(u\right)\left(v(s,z+hu)-v(s,z)\right)p_s(z+hu) du.\end{eqnarray*}

We now follow the classical analysis of the bias of kernel estimators \cite[Chapter 1]{Tsybakov} of prescribed order. We perform a Taylor expansion of order $l=\lfloor \beta \rfloor$ for the function \begin{equation}\label{eq:gholder} g_{s,z}(w):=\left(v(s,z+w)-v(s,z)\right)p_s(z+w)= f(s,z+w)-v(s,z)p_s(z+w),\end{equation}
around $w=0$, to control the difference $g_{s,z}(hu)-g_{s,z}(0)=g_{s,z}(hu)$. To avoid unnecessary notation, and since the argument extends easily, we will proceed as if the derivatives were unidimensional. We have that
$$g_{s,z}(hu)=\sum_{j=1}^{l-1} \frac{(uh)^j}{j!}g_{s,z}^{(j)}(0) +(uh)^{l}g_{s,z}^{(l)}(\tau uh),$$
where $\tau\in[0,1]$ depends on $z,s,u,h$. Since the first $l-1$ terms in the sum depend on $u$ only through the term $(uh)^j$, and since the kernel is of order $l$, these terms vanish after integration with respect to $u$. While the last term does not vanish, we can still bound $g_{s,z}^{(l)}(\tau uh)$ uniformly over $\lVert z\rVert \leq L$ and $s\in[0,1]$: indeed, by Lemma \ref{lemma:holdersmooth} and \eqref{eq:gholder}, $g_{s,z}(\cdot)$ is the difference of two $\beta$-H{\"o}lder functions, $f(s,z+\cdot)$ and $v(s,z)p_s(z+\cdot)$. Although the factor $v(s,z)$ may be unbounded over $z\in\mathbb{R}^d$, it remains bounded under the constraint $\lVert z\rVert \leq L$. Therefore
$$M:=\sup_{\lVert z\rVert \leq L, s\in[0,1]} ||g_{s,z}||_\beta <\infty.$$
Then, since $\int K(u)u^ldu =0$ we have, for $\lVert z\rVert \leq L, t\in[0,1]$
\begin{eqnarray*}\|\mathbb{E}\left(\hat{f}_t(z)-v_t(z)\hat{p}_t(z)\right)\|
&\leq & \sup_{\lVert z\rVert \leq L, s\in[0,1]}    \Big \lVert \int K\left(u\right) g_{s,z}(hu)du \Big \rVert \\
&= &    \Big \lVert  \int K\left(u\right) \frac{(uh)^l}{l!} \left(g^{(l)}_{s,z}(\tau hu)-g^{(l)}_{s,z}(0)\right)du  \Big \rVert\\
&\leq &   \int \lvert  K\left(u\right)\rvert  \lvert  \frac{(uh)^l}{l!} M (\tau uh)^{\beta-l}\rvert du \\
&\lesssim & h^{\beta}
,\end{eqnarray*}
so we have established \eqref{eq:biasbound}. In the last line, we used that $p_s(z_s)$ is bounded below (Lemma \ref{lemma:infprob}) and that $\tilde{\Phi}(s)$ is bounded over $s$ (Proposition \ref{prop:Volterra}).

Statement (b) is a uniform-in-time version of the usual uniform consistency rates for the kernel density estimator and the linearization term appearing in kernel regression. For \eqref{eq:difbound}, we write
$$\hat{f}_t(z)-v_t(z)\hat{p}_t(z)=\left[\hat{f}_t(z)-v_t(z)\hat{p}_t(z)-\mathbb{E}\left(\hat{f}_t(z)-v_t(z)\hat{p}_t(z)\right)\right]+\mathbb{E}\left(\hat{f}_t(z)-v_t(z)\hat{p}_t(z)\right)$$
To control the first term (deviations), we use Lemma \ref{lemma:einimproved}:
take $Y=\Delta$, $X=X_t$, $c(z)=1$ and $d(z)=v_t(z)$. 
 We obtain the term $O_p(\sqrt{\log n/(nh_n^d)})$ term by noting that
 
$$\sup_{t\in [0,1],\lVert z\rVert L}  \mathbb{E}\left(\Big \lVert\Delta-v_t(z)\Big \rVert^2 \Big \lvert X_t=z\right)p_t(z)<\infty,$$

a claim that follows since $\inf_{s\in[0,1],\lVert z\rVert \leq L} p_s(z)>0$ (Lemma \ref{lemma:infprob}), and Lemma \ref{lemma:momentbound}. The $h^\beta$ term comes from the bias analysis (a).  The analysis of \eqref{eq:kerbound} is analogous and also follows from Lemma \ref{lemma:momentbound}. The analysis of \eqref{eq:kerbound} is even simpler.

Regarding (b), in the case $l=0$, the results follow from a linearization of the kernel regression estimator:
 

 from \eqref{eq:ratio} it follows that
 \begin{eqnarray*}
 \hat{v}_t(z)-v_t(z)&=&\frac{\hat{f}_t(z)-v_t(z)\hat{p}_t(z)}{p_t(z)} \\&&-\left(\frac{\hat{f}_t(z)-v_s(z)\hat{p}_t(z)}{p_t(z)}\right)\frac{\hat{p}_t(z)-p_t(z)}{\hat{p}_t(z)}.
\\
&=&
\frac{\hat{f}_t(z)-v_t(z)\hat{p}_t(z)}{p_t(z)} + O_p\left(\sup_{t\in[0,1],\lVert z\rVert \leq L}\lVert \hat{f}_t(z)-v_t(z)\hat{p}_t(z)\rVert\lVert \hat{p}_t(z)-p_t(z)\rVert \right)\\
&=&O_p\left(\sup_{t\in[0,1],\lVert z\rVert \leq L}\lVert \hat{f}_t(z)-v_t(z)\hat{p}_t(z)\rVert\right)\\
&=& O_p\left(\sqrt{\frac{\log n}{nh_n^d}}+h_n^\beta \right)
\end{eqnarray*}
In the above bounds, we used several facts. First, that since $\inf_{t\in [0,1],\lVert z\rVert\leq L} p_s(z)>0$ (Lemma \ref{lemma:infprob}), and that by  \eqref{eq:kerbound} 
\begin{equation}\label{eq:kerbound2}\sup_{t\in[0,1],\lVert z\rVert \leq L} \lVert\hat{p}_t(z)-p_t(z)\rVert =o_p(1),\end{equation}
we have that
$$\sup_{t\in[0,1],\lVert z\rVert\leq L}\frac{1}{\hat{p}_t(z)}=O_p(1).$$
Also, we used that \eqref{eq:kerbound2} implies that we can ignore the product term, and that the final rate follows from \eqref{eq:kerbound}.

The analysis of derivatives is standard; each kernel differentiation contributes a $h^{-1}$ term to the variance and bias analysis. The key observation is that (for $l=1$)
$$\frac{\partial}{\partial z}\hat{v}(t,z)-\frac{\partial}{\partial z}v(t,z) = \frac{1}{\hat{p}(t,z)}\frac{\partial}{\partial z} \hat{f}(t,z)-\frac{1}{p(t,z)}\frac{\partial}{\partial z} f(t,z)-\left(\frac{\hat{v}(t,z) }{\hat{p}(t,z)}\frac{\partial}{\partial z}\hat{p}(t,z)-\frac{v(t,z) }{p(t,z)}\frac{\partial}{\partial z}p(t,z)\right).$$
The analysis of the differences above is similar to the one for the original kernel regression estimator, but this one contains derivatives of the kernel. See e.g. the proof of Theorem 6 in \cite{hansen2008uniform} for details in the heuristics. 
\end{proof}

\begin{lemma}\label{lemma:intdm}
Suppose that $p_0,p_1$ are differentiable. Under the assumptions of Theorem \ref{teo:regularity}, in the one-dimensional case, 
\begin{equation*}
u\frac{d}{ds}\left(\tilde{\Phi}(s)\frac{p_s(z_s+hu)}{p_s(z_s)}\right)=-\frac{\tilde{\Phi}(s)}{p_s(z_s)}\frac{d}{d h} m(u,h,s),
\end{equation*}
where $m(u,h,s)=\left(v(s,z_s+hu)-v(s,z_s)\right)p_s(z_s+hu)$.
\end{lemma}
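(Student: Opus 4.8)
The plan is to verify the identity by direct differentiation of both sides, using the ODE~\eqref{eq:rectified-flow-ODE} for $z_s$, the matrix ODE~\eqref{eq:phiode} for $\Phi$ from Lemma~\ref{lemma:fundamentalprop}, and the elementary relation $v_s(z) = \mathbb{E}[X_1-X_0\mid X_s=z] = f_s(z)/p_s(z)$. First I would expand the left-hand side using the product rule in $s$. Writing $w := z_s + hu$ for brevity (keeping in mind $w$ depends on $s$ through $z_s$ and $\dot z_s = v(s, z_s)$), the derivative $\frac{d}{ds}\big(\Phi(1,s,z_s)\, p_s(w)/p_s(z_s)\big)$ splits into three terms: one from differentiating $\Phi(1,s,z_s)$ along $s$, one from differentiating $p_s(w)$ (which in turn has a contribution from $\partial_s p_s$ at fixed argument and a contribution from $\dot w = v(s,z_s)$ acting on the spatial argument), and one from differentiating the reciprocal $1/p_s(z_s)$. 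For the $\Phi$ term I would use the formula derived in the proof of Corollary~\ref{cor:1d}, namely $\frac{d}{ds}\Phi(1,s,z_s) = -\partial_z v(s,z_s)\,\Phi(1,s,z_s)$, which follows from Lemma~\ref{lemma:fundamentalprop} and the chain rule since $\frac{d}{ds}z_s = v(s,z_s)$.

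Next I would expand the right-hand side. Here $m(u,h,s) = \big(v(s, z_s+hu) - v(s,z_s)\big)\,p_s(z_s+hu)$, and since the only $h$-dependence is through the argument $z_s + hu$ (with $s,z_s$ fixed when differentiating in $h$), one gets
\[
\frac{d}{dh} m(u,h,s) = u\,\partial_z v(s, z_s+hu)\,p_s(z_s+hu) + u\big(v(s,z_s+hu) - v(s,z_s)\big)\,\partial_z p_s(z_s+hu).
\]
Using $f_s = v_s p_s$, this can be rewritten as $u\,\partial_z f_s(z_s+hu) - u\,v(s,z_s)\,\partial_z p_s(z_s+hu)$. The key analytic input is the continuity equation satisfied by $p_s$: since $X_s = (1-t)X_0 + tX_1$ has law $\mu_s$ with density $p_s$ and velocity field $v_s$, Theorem~\ref{thm:uniquene-ae-implies-transport-map} (and its proof via the continuity equation of~\cite{AmbrosioGigliSavare2008}) gives $\partial_s p_s(z) + \partial_z\big(v(s,z) p_s(z)\big) = 0$, i.e. $\partial_s p_s(z) = -\partial_z f_s(z)$ in one dimension. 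I would substitute this for both the $\partial_s p_s(w)$ and $\partial_s p_s(z_s)$ terms that arise on the left-hand side.

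After these substitutions, both sides become explicit expressions in $\Phi(1,s,z_s)$, $p_s(z_s)$, $p_s(w)$, $v(s,z_s)$, $v(s,w)$, $\partial_z v$, $\partial_z p_s$ evaluated at $z_s$ and at $w$, and I would check term-by-term that the left-hand side equals $-\frac{\Phi(1,s,z_s)}{p_s(z_s)}\frac{d}{dh}m(u,h,s)$. The bookkeeping is the main obstacle: one must be careful that the factors of $u$ match (the left side has an overall $u$, produced because $\frac{d}{ds}$ hits $p_s(w)$ with $\dot w$ containing $v(s,z_s)$ but the spatial derivative in $h$ carries the factor $u$ from $\partial_h w = u$), and that the cross terms involving $v(s,z_s)\partial_z p_s(w)$ and $\partial_z v(s,z_s) p_s(w)$ cancel against the $\Phi'$ contribution and against the $p_s(z_s)^{-1}$ contribution. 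I expect the identity to fall out cleanly once the continuity equation is inserted; no regularity beyond differentiability of $p_0, p_1$ (hence of $v_s$ and $p_s$, guaranteed under the assumptions of Theorem~\ref{teo:regularity}) and the validity of differentiation under the integral sign — already established in the proofs of Lemma~\ref{lemma:holdersmooth} and Proposition~\ref{prop:derzbound} — is needed. I would also remark that this is precisely the structural identity used in the proof of Corollary~\ref{cor:1d} to upgrade the bias rate to $h^{\beta+1}$, so stating it in the one-dimensional form suffices for that application.
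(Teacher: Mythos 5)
Your plan is correct and matches the paper's proof in its essential structure: both use the product rule, the identity $\frac{d}{ds}\Phi(1,s,z_s)=-\partial_z v(s,z_s)\Phi(1,s,z_s)$ from Lemma~\ref{lemma:fundamentalprop}, and the continuity equation along the path (which the paper packages as Lemma~\ref{lemma:ders}, $\frac{d}{ds}p_s(z_s)=-\partial_z v(s,z_s)\,p_s(z_s)$) to kill the $\Phi'$ and $1/p_s(z_s)$ contributions and reduce the left side to $\frac{\Phi(1,s,z_s)}{p_s(z_s)}\,u\frac{d}{ds}p_s(z_s+hu)$. The only small difference is that you close the argument by invoking the pointwise continuity equation $\partial_s p_s=-\partial_z f_s$ abstractly, whereas the paper instead verifies $u\frac{d}{ds}p_s(z_s+hu)=-\frac{d}{dh}m(u,h,s)$ by differentiating the explicit integral representations of $p_s$ and $m$ directly and observing they produce the same integrand; also your description of which cross terms cancel is slightly muddled (the $\Phi'$ term cancels against the $p_s(z_s)^{-1}$ term, while the $v(s,z_s)\partial_z p_s(z_s+hu)$ term does not cancel but survives to match $\frac{d}{dh}m$), but the bookkeeping works out as you expect.
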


\begin{proof}[Proof of Lemma \ref{lemma:intdm}]

By Proposition \ref{prop:Volterra}

\begin{eqnarray*}
\frac{d}{ds}\tilde{\Phi}(s) &=& -\tilde{\Phi}(s)\frac{\partial }{\partial z}v(s,z_s).
\end{eqnarray*}

Also, by Lemma \ref{lemma:ders}
$$\frac{d}{ds} p_s(z_s) = - \mathrm{Tr}\left(\frac{\partial }{\partial z}v(s,z_s)\right)p_s(z_s).$$
Therefore,
$$\frac{d}{ds}\left(\frac{p_s(z_s+hu)}{p_s(z_s)}\right)=\frac{\frac{d}{ds}p_s(z_s+hu)}{p_s(z_s)}+\frac{\partial}{\partial z}v(s,z_s)\frac{p_s(z_s+hu)}{p_s(z_s)},$$
and consequently
\begin{eqnarray*}
u\frac{d}{ds}\left(\tilde{\Phi}(s)\frac{p_s(z_s+hu)}{p_s(z_s)}\right)&=&-u\frac{\partial }{\partial z}v(s,z_s)\tilde{\Phi}(s)\frac{p_s(z_s+hu)}{p_s(z_s)}\\
&+&u\tilde{\Phi}(s)\left(\frac{\frac{d}{ds}p_s(z_s+hu)}{p_s(z_s)}+\frac{\partial}{\partial z}v(s,z_s)\frac{p_s(z_s+hu)}{p_s(z_s)}\right)\\
&=& \frac{\tilde{\Phi}(s)}{p_s(z_s)}u \frac{d}{ds}p_s(z_s+hu).
\end{eqnarray*}
It only remains to identify the above derivative as a derivative of $m$ with respect to $h$. This is true since
\begin{eqnarray*}
\frac{d}{ds} p_s(z_s+hu)&=&\int_{\mathbb{R}}  \frac{d}{ds}\left( p_0(z_s+hu-s\delta)p_1(z_s+hu+(1-s)\delta)d\delta\right)\\
&=& 
\int_{\mathbb{R}} \left(\frac{d}{ds}z_s-\delta\right)p'_0(z_s+hu-s\delta)p_1(z_s+hu+(1-s)\delta)d\delta\\&& +\int_{\mathbb{R}} \left(v(s,z_s)-\delta\right)p_0(z_s+hu-s\delta)p'_1(z_s+hu+(1-s)\delta)d\delta
\\
&=& \int_{\mathbb{R}} \left(\frac{d}{ds}z_s-\delta\right)p'_0(z_s+hu-s\delta)p_1(z_s+hu+(1-s)\delta)d\delta\\&& +\int_{\mathbb{R}} \left(v(s,z_s)-\delta\right)p_0(z_s+hu-s\delta)p'_1(z_s+hu+(1-s)\delta)d\delta.
\end{eqnarray*}

and, on the other hand, 
\begin{eqnarray*}
\frac{d}{dh} m(u,h,s)&=&\int_{\mathbb{R}} \left(\delta- v(s,z_s)\right)\frac{d}{dh}  p_0(z_s+hu-s\delta)p_1(z_s+hu+(1-s)\delta)d\delta\\
&=& u\int_{\mathbb{R}} \left(\delta- v(s,z_s)\right) p'_0(z_s+hu-s\delta)p_1(z_s+hu+(1-s)\delta)d\delta\\
&&+u\int_{\mathbb{R}} \left(\delta- v(s,z_s)\right)p_0(z_s+hu-s\delta)p'_1(z_s+hu+(1-s)\delta)d\delta\\
&=&-u\frac{d}{ds}p_s(z_s+hu).
\end{eqnarray*}
\end{proof}
\begin{lemma}\label{lemma:ders}
Under the assumptions of Theorem \ref{teo:regularity}
\begin{equation} \label{eq:derv} \frac{d}{ds} p_s(z_s) = - \mathrm{Tr}\left(\frac{\partial }{\partial z}v(s,z_s)\right)p_s(z_s)\end{equation}
\end{lemma}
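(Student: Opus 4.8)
\textbf{Proof plan for Lemma~\ref{lemma:ders}.} The statement is the continuity equation for the law $\mu_s$ of $z_s = \mathfrak{R}(s, X_0)$, specialized to the one-dimensional setting where $p_s$ denotes the Lebesgue density of $X_s = (1-s)X_0 + sX_1$. The plan is to exploit that $s\mapsto z_s = \mathfrak{R}(s,x)$ is the flow map of the ODE $\dot z = v(s,z)$ and that, by Theorem~\ref{thm:uniquene-ae-implies-transport-map}, the pushforward of $\mu_0$ under $\mathfrak{R}(s,\cdot)$ is exactly $\mu_s$, the law of $X_s$, whose density is $p_s$. First I would recall the change-of-variables (pushforward) identity: for any test function $\varphi\in C_c^\infty(\mathbb{R}^d)$,
\[
\int \varphi(z) p_s(z)\,dz = \mathbb{E}[\varphi(X_s)] = \mathbb{E}[\varphi(\mathfrak{R}(s, X_0))] = \int \varphi(\mathfrak{R}(s, x)) p_0(x)\,dx.
\]
Since $v$ has bounded first and second derivatives (Proposition~\ref{prop:derzbound}), the flow $\mathfrak{R}(s,\cdot)$ is a $C^1$ diffeomorphism with Jacobian $\Phi(s,0,x) = \partial_x \mathfrak{R}(s,x)$, and Liouville's formula gives $\partial_s \det\Phi(s,0,x) = \mathrm{Tr}(\partial_z v(s,\mathfrak{R}(s,x)))\det\Phi(s,0,x)$ (this is the Abel--Jacobi--Liouville identity, as used in the proof of Proposition~\ref{prop:Volterra}).

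Next I would differentiate the identity $\int \varphi(z)p_s(z)\,dz = \mathbb{E}[\varphi(\mathfrak{R}(s,X_0))]$ in $s$, justifying the interchange of $d/ds$ and the expectation by dominated convergence (using boundedness of $\varphi$, $\nabla\varphi$, and of $v = \partial_s\mathfrak{R}$ on the support of $\varphi$ along the flow):
\[
\frac{d}{ds}\int \varphi(z)p_s(z)\,dz = \mathbb{E}\!\left[\nabla\varphi(\mathfrak{R}(s,X_0))\cdot v(s,\mathfrak{R}(s,X_0))\right] = \int \nabla\varphi(z)\cdot v(s,z)\, p_s(z)\,dz,
\]
where the last step again uses the pushforward identity. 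Integrating by parts (valid since $p_s$ decays exponentially by Lemma~\ref{lemma:boundp} and $v p_s$ is $\beta$-H\"older by Lemma~\ref{lemma:holdersmooth}, so the boundary terms vanish) yields
\[
\frac{d}{ds}\int \varphi(z)p_s(z)\,dz = -\int \varphi(z)\,\mathrm{div}(v(s,\cdot)p_s)(z)\,dz.
\]
Hence $\partial_s p_s + \mathrm{div}(v_s p_s) = 0$ in the distributional sense, and since all quantities are continuous (indeed $C^1$ in $z$ by the regularity lemmas) this holds pointwise: $\partial_s p_s(z) = -\mathrm{div}(v(s,z)p_s(z)) = -\mathrm{Tr}(\partial_z v(s,z))p_s(z) - \nabla p_s(z)\cdot v(s,z)$.

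Finally, to get the stated formula I would evaluate this along the characteristic $z = z_s$: writing $g(s) := p_s(z_s)$, the chain rule gives $g'(s) = (\partial_s p_s)(z_s) + \nabla p_s(z_s)\cdot \dot z_s = (\partial_s p_s)(z_s) + \nabla p_s(z_s)\cdot v(s,z_s)$, and substituting the continuity equation the transport term $\nabla p_s(z_s)\cdot v(s,z_s)$ cancels, leaving $g'(s) = -\mathrm{Tr}(\partial_z v(s,z_s))p_s(z_s)$, which is~\eqref{eq:derv}. The main obstacle is purely technical: carefully justifying the differentiation-under-the-integral and the vanishing of boundary terms in the integration by parts; both follow from the exponential tail bound on $p_s$ (Lemma~\ref{lemma:boundp}), the uniform bounds on $v$ and its derivatives (Proposition~\ref{prop:derzbound}), and the $\beta$-H\"older regularity of $f_s = v_s p_s$ (Lemma~\ref{lemma:holdersmooth}), so no genuinely new estimate is needed. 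Alternatively, and perhaps more directly, one can bypass the distributional argument entirely: differentiate $p_s(z) = \int p_0(z - s\delta)p_1(z + (1-s)\delta)\,d\delta$ directly under the integral sign (justified by Lemma~\ref{lemma:holdersmooth} and the tail bounds), obtaining an explicit expression for $\partial_s p_s(z) + \nabla p_s(z)\cdot v(s,z)$, and check by a short computation using the definition $v_s(z) = f_s(z)/p_s(z)$ that this combination equals $-\mathrm{Tr}(\partial_z v(s,z))p_s(z)$; this is the route hinted at by the analogous manipulations in the proof of Lemma~\ref{lemma:intdm}.
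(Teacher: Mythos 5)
Your main route is correct but genuinely different from the paper's. You derive the continuity equation $\partial_s p_s + \mathrm{div}(v_s p_s)=0$ via the pushforward identity (using Theorem~\ref{thm:uniquene-ae-implies-transport-map}), test functions, and integration by parts, then evaluate along the characteristic $z_s$ so that the transport term $\nabla p_s(z_s)\cdot v(s,z_s)$ from the material-derivative chain rule cancels the one from the continuity equation, leaving exactly~\eqref{eq:derv}. The paper instead proceeds by a purely computational argument: it differentiates the explicit convolution formula $p_s(z)=\int p_0(z-s\delta)p_1(z+(1-s)\delta)\,d\delta$ under the integral along the path, writes $\tfrac{d}{ds}p_s(z_s)=\int (v_s-\delta)\cdot f(\delta,s,z_s)\,d\delta$ for the vector field $f=\nabla p_0\,p_1 + p_0\,\nabla p_1$, separately computes $\partial_z v(s,z_s)=\bigl[\int f\,\delta^{\top}d\delta - (\int f\,d\delta)\,v_s^{\top}\bigr]/p_s(z_s)$, and observes that the trace of this matrix, multiplied by $p_s(z_s)$, matches $-\tfrac{d}{ds}p_s(z_s)$ because the trace of $f(\delta- v_s)^{\top}$ equals $f\cdot(\delta-v_s)$. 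This is precisely the "alternative" direct route you mention at the end. Your primary Eulerian--Lagrangian argument is arguably cleaner and exposes the structure of the result as a Liouville/Jacobi identity for the flow, but it has to import Theorem~\ref{thm:uniquene-ae-implies-transport-map} and upgrade the continuity equation from distributional to pointwise (which does go through, given Proposition~\ref{prop:derzbound} and Lemmas~\ref{lemma:holdersmooth}, \ref{lemma:boundp}); the paper's computation is longer but entirely self-contained and avoids these transfers. Both are valid proofs, and the technical justifications you flag (dominated convergence, vanishing boundary terms) are indeed covered by the cited regularity results.
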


\begin{proof}
\begin{eqnarray*}\frac{d}{ds} p_s(z_s) &=& \int \left(\frac{d}{ds}z_s-\delta\right)\cdot \left(\nabla p_0(z_s-s\delta)p_1(z_s+(1-s)\delta)+p_0(z_s-s\delta)\nabla p_1(z_s+(1-s)\delta)\right)d\delta\\
&=& \int \left(v_s-\delta\right)\cdot \left(\nabla p_0(z_s-s\delta)p_1(z_s+(1-s)\delta)+p_0(z_s-s\delta)\nabla p_1(z_s+(1-s)\delta)\right)d\delta \\ &=& 
\int \left(v_s-\delta\right)\cdot f(\delta,s,z_s) d\delta
\end{eqnarray*}
where we have defined the vector-valued function $$f(\delta,s,z_s)=\nabla p_0(z_s-s\delta)p_1(z_s+(1-s)\delta)+p_0(z_s-s\delta)\nabla p_1(z_s+(1-s)\delta)$$

Note that the derivative of the velocity with respect to the $z$ coordinate is the following matrix

\begin{eqnarray}\nonumber \frac{\partial }{\partial z}v(s,z_s) &=& \frac{\partial }{\partial z} \frac{ \int \delta p_0(z_s-s\delta)p_1(z_s+(1-s)\delta)d\delta}{p_s(z_s)}\\
\nonumber &=& \frac{\int f(\delta,s,z_s) \delta^Td\delta }{p_s(z_s)}  -\frac{\int f(\delta,s,z_s) d\delta}{p_s(z_s)} \frac{\int \delta^T p_0(z_s-s\delta)p_1(z_s+(1-s)\delta)d\delta}{p_s(z_s)} \\
&=& \frac{\int f(\delta,s,z_s) \delta^Td\delta }{p_s(z_s)}  -\frac{\int f(\delta,s,z_s) d\delta}{p_s(z_s)} v_s(z_s)^T 
\end{eqnarray}
Then,

\begin{eqnarray*} p_s(z_s)\mathrm{Tr}\left(\frac{\partial }{\partial z}v(s,z_s)\right)&=&\mathrm{Tr}\left(\int f(\delta,s,z_s)\left(\delta- v_s(z_s)\right)^T d\delta\right)\\
&=& \mathrm{Tr}\left(\int f(\delta,s,z_s)\cdot \left(\delta- v_s(z_s)\right) d\delta\right)\\ &=& -
\frac{d}{ds} p_s(z_s)
\end{eqnarray*}

\end{proof}

\section{Proofs of Results in Section~\ref{sec:bounded}}\label{appsec:proofs-exist-uni-rates-bounded}
For any set-valued mapping $t\mapsto A(t)\subseteq\mathbb{R}^d$, define limsup and liminf in the Painlev{\'e}-Kuratowski sense as
\begin{align*}
    \liminf_{t\to t_0} A(t) &:= \left\{v:\, \limsup_{t\to t_0}\,\mbox{dist}(v, A(t)) = 0\right\},\\
    \limsup_{t\to t_0} A(t) &:= \left\{v:\, \liminf_{t\to t_0}\,\mbox{dist}(v,\, A(t)) = 0\right\}.
\end{align*}
(See Definition 1.1.1 of~\cite{aubin2009set}.) The (contingent) tangent cone of a closed set $\Omega$ at a point $x\in\Omega$ is defined as
\[
T_{\Omega}(x) := \limsup_{h\to0+}\, \frac{\Omega - x}{h}.
\]
(See Definition 4.1.1 of~\cite{aubin2009set}.)
\begin{proposition}[Properties of $S_t(z)$]\label{prop:properties-of-S_t}
Suppose assumption~\ref{eq:compact-support} holds. Fix $z\in\Omega$. Define
\begin{align*}
    L_t(z) &:= \{\delta\in\mathbb{R}^d:\, z - t\delta\in\Omega\},\\
    U_t(z) &:= \{\delta\in\mathbb{R}^d:\, z + (1 - t)\delta \in \Omega\},
\end{align*}
and
\[
S_t(z) := L_t(z)\cap U_t(z).
\]
Then the following statements hold:
\begin{enumerate}
    \item for all $t\in(0, 1)$,
\[
L_t(z) ~\subseteq~ -T_{\Omega}(z),\quad\mbox{and}\quad U_t(z)~\subseteq~ T_{\Omega}(z).
\]
Moreover, for $t \le \mathrm{dist}(z, \partial\Omega)/\mathrm{diam}(\Omega)$, then
\[
L_t(z) \subseteq U_t(z) \quad\Rightarrow\quad S_t(z) = U_t(z).
\]
In general,
\[
S_t(z) \subseteq \mathcal{B}\left(0,\,\frac{\mathrm{diam}(\Omega) - \mathrm{dist}(z,\,\partial\Omega)}{\max\{t, 1 - t\}}\right).
\]
\item The set-valued map $t\mapsto L_t(z) := \{\delta\in\mathbb{R}^d:\, z - t\delta\in\Omega\}$ is non-decreasing in $t\in[0, 1]$ (i.e., $L_t(z) \subseteq L_{t'}(z)$ if $t \le t'$) but not continuous at $t = 0$, unless $z\in\Omega^\circ$. (Here continuity is in the Painlev{\'e}-Kuratowski sense.) Similarly, the map $t\mapsto U_t(z)$ is non-increasing in $t\in[0, 1]$, but not continuous at $t = 1$, unless $z\in\Omega^\circ$. 
\item For any non-negative function $(t, \delta)\to f(t, \delta)$ continuous in $t\in[0, 1]$ satisfying
\[
\lim_{t\to0} \int_{-T_{\Omega}(z)} f(t, \delta)d\delta = \int_{-T_{\Omega}(z)} f(0, \delta)d\delta,
\]
we have
\[
\lim_{t\to0}\int_{S_t(z)} f(t, \delta)d\delta = \int_{(-T_{\Omega}(z))\cap(\Omega - z)} f(0, \delta)d\delta.
\]
Similarly, if
\[
\lim_{t\to 1}\int_{T_{\Omega}(z)} f(t, \delta)d\delta = \int_{T_{\Omega}(z)} f(1, \delta)d\delta,
\]
then
\[
\lim_{t\to 1}\int_{S_t(z)} f(t, \delta)d\delta = \int_{T_{\Omega}(z)\cap(z-\Omega)} f(1, \delta)d\delta. 
\]
\item For any $t\in(0, 1)$ and $z, z'\in\Omega$ with 
\begin{equation}\label{eq:dist-interior-cond}
\|z - z'\| ~\le~ \gamma ~\le~ \bar{t}\min\{\mathrm{dist}(z, \partial\Omega),\,\mathrm{dist}(z', \partial\Omega)\},
\end{equation}
we have
\[
(S_t(z))^{-2\gamma/(t(1-t))} ~\subseteq~ S_t(z') ~\subseteq~ (S_t(z))^{2\gamma/(t(1-t))}.
\]
Additionally, for any $t\in(0, 1), h > 0$ such that $t + h\in(0, 1)$ and $z\in\Omega$,
\begin{equation}\label{eq:inclusions-in-time}
      (S_t(z))^{-2\mathrm{diam}^2(\Omega)h/(\varepsilon\bar{t})} \subseteq S_{t+h}(z)\subseteq (S_t(z))^{2\mathrm{diam}^2(\Omega)h/(\varepsilon\bar{t})},\quad\mbox{for}\quad \varepsilon = \mathrm{dist}(z,\,\partial\Omega).
\end{equation}
Here $\bar{t} = \min\{t, 1- t\}$.
\item If $z\in\Omega^\circ$ and $\tilde{z} \in \mathrm{Proj}_{\partial\Omega}(z)$, then
\[
-\frac{\mathrm{dist}^2(z,\,\partial\Omega)}{1-t} ~\le~ \inf_{\delta\in S_t(z)}\, \delta^{\top}(z - \tilde{z}) ~<~ \sup_{\delta\in S_t(z)}\, \delta^{\top}(z - \tilde{z}) ~\le~ \frac{\mathrm{dist}^2(z,\, \partial\Omega)}{t}.
\]
\item If $z\in\mathrm{SC}(\Omega)$, then $S_t(z) = \{0\}$ for all $t\in(0, 1)$. 
In particular, if $\Omega$ is a strictly convex set, then $S_t(z) = \{0\}$ whenever $t\in(0, 1)$ and $z\in\partial\Omega$.
\end{enumerate}
\end{proposition}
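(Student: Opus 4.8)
\textbf{Plan for Proposition~\ref{prop:properties-of-S_t}(6).} The statement asserts that if $z\in\mathrm{SC}(\Omega)$, then $S_t(z) = \{0\}$ for all $t\in(0,1)$. First observe that $0\in S_t(z)$ trivially since $z-t\cdot 0 = z + (1-t)\cdot 0 = z\in\Omega$. So the content is that $S_t(z)$ contains no nonzero vector. I would argue by contradiction: suppose $\delta\neq 0$ with $\delta\in S_t(z)$, i.e.\ both $a := z - t\delta\in\Omega$ and $b := z + (1-t)\delta\in\Omega$. The key geometric observation is that $z$ lies strictly between $a$ and $b$ on the line through them: indeed, $z = (1-\lambda)a + \lambda b$ with $\lambda = t\in(0,1)$ (one checks $(1-t)a + tb = (1-t)(z - t\delta) + t(z + (1-t)\delta) = z$). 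Since $\delta\neq 0$, we have $a\neq z\neq b$, so $z$ is a genuine interior point of the segment $[a,b]$.

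The next step is to invoke strict convexity at $z$. By definition, $z\in\mathrm{SC}(\Omega)\subseteq\partial\Omega$ means $\mathfrak{m}_z(\varepsilon;\Omega) > 0$ for every $\varepsilon\in(0,\mathrm{diam}(\Omega)]$, and as noted in the text this implies $\Omega$ is strictly convex at $z$, i.e.\ for all $c\in\Omega$ with $c\neq z$, the midpoint $(z+c)/2\in\Omega^\circ$. But here's the issue: $z$ is not the midpoint of $[a,b]$ unless $t = 1/2$. So I would instead argue directly using convexity: both $a, b\in\Omega$ and $z\in[a,b]$, and $z\in\partial\Omega$. A standard convex-analysis fact is that if a boundary point of a convex set lies on a segment joining two points of the set, then the entire segment lies in the boundary (the segment cannot "dip into" the interior and come back out, by convexity of $\Omega^\circ$ and the supporting hyperplane at $z$). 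Hence $a, b, z$ all lie on $\partial\Omega$, with $z$ strictly interior to $[a,b]$. This contradicts strict convexity at $z$: take $c = a$ (or $c = b$); strict convexity at $z$ forces $(z+a)/2\in\Omega^\circ$, but $(z+a)/2$ lies on the segment $[a,z]\subseteq[a,b]\subseteq\partial\Omega$, so $(z+a)/2\in\partial\Omega$, a contradiction (assuming $a\neq z$, which holds since $\delta\neq 0$). This yields $\delta = 0$.

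For the final clause: if $\Omega$ is strictly convex, then by definition $\mathrm{SC}(\Omega) = \partial\Omega$, so every $z\in\partial\Omega$ satisfies $z\in\mathrm{SC}(\Omega)$, and the first part gives $S_t(z) = \{0\}$ for all $t\in(0,1)$.

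\textbf{Anticipated obstacle.} The one step requiring care is the convex-analysis claim that a boundary point lying strictly inside a segment with both endpoints in $\Omega$ forces the whole segment onto $\partial\Omega$. This should follow from the existence of a supporting hyperplane $H$ at $z$ (with $\Omega$ on one side): if $a$ or $b$ were in $\Omega^\circ$ it would lie strictly on the interior side of $H$, but then since $z\in[a,b]$ and $z\in H$, the other endpoint would have to lie strictly on the \emph{exterior} side of $H$, contradicting $a,b\in\Omega$. One should also handle the degenerate possibility that the supporting hyperplane is not unique, but the argument only needs \emph{some} supporting hyperplane, whose existence is guaranteed for convex sets with nonempty interior (assumption~\ref{eq:compact-support}). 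Everything else is routine.
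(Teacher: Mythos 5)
Your argument is correct, and it takes a genuinely more elementary route than the paper's. The paper proceeds through the tangent cone: it uses the inclusion $S_t(z)\subseteq(-T_{\Omega}(z))\cap U_t(z)$ from part (1), first strengthens the supporting-hyperplane inequality to the strict form $(y-z)^{\top}n<0$ for all $y\in\Omega\setminus\{z\}$ (which itself requires a small argument via strict convexity), and then derives a contradiction from a sequence witnessing membership in $T_{\Omega}(z)$ on one side and the strict inequality applied to $z+(1-t)v\in\Omega$ on the other. You instead work directly with the definition of $S_t(z)$: the endpoints $a=z-t\delta$ and $b=z+(1-t)\delta$ are in $\Omega$, and the identity $z=(1-t)a+tb$ places the boundary point $z$ strictly inside the segment $[a,b]$. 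Plugging this into the (weak) supporting-hyperplane inequality, you immediately get $(a-z)^{\top}n=(b-z)^{\top}n=0$, so $a,b$ lie on the supporting hyperplane and hence on $\partial\Omega$; strict convexity at $z$ then forces $(z+a)/2\in\Omega^\circ$, contradicting $(z+a)/2\in H\cap\Omega\subseteq\partial\Omega$. Your version avoids both the tangent cone and the strictified hyperplane lemma, at essentially no extra cost; the paper's formulation is perhaps more natural given that part (1) has already established the tangent-cone inclusion, but your direct argument is cleaner as a standalone proof. One small polish: you do not actually need the intermediate claim that the whole segment $[a,b]$ lies in $\partial\Omega$; once $(a-z)^{\top}n=0$, the midpoint $(z+a)/2$ lies on $H$ directly, which is all that is needed.
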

\begin{proof}
By Proposition 4.2.1 of~\cite{aubin2009set}, we have 
\[
\mbox{closure}(\mbox{cone}(\Omega - z)) = T_{\Omega}(z).
\]
In words, this says that the closure of the cone spanned by $\Omega - z$ is the tangent cone.
This implies that
\begin{equation}\label{eq:inclusion-into-tangent-cones}
\frac{\Omega - z}{t} \subseteq T_{\Omega}(z)\quad\mbox{and}\quad \frac{\Omega - z}{1 - t}\subseteq T_{\Omega}(z),\quad\mbox{for any}\quad t\in(0, 1).
\end{equation}

Moreover, 
\begin{equation}\label{eq:definition-of-tangent-cones}
\lim_{t\downarrow 0}\, \frac{\Omega - z}{t} ~=~ T_{\Omega}(z),\quad\mbox{and}\quad \lim_{t\uparrow1}\,\frac{\Omega - z}{1-t} ~=~ T_{\Omega}(z).
\end{equation}
    \begin{enumerate}
        \item From~\eqref{eq:inclusion-into-tangent-cones}, we get that $\delta\in L_t(z)$ implies $\delta\in -T_{\Omega}(z)$ and $\delta\in U_t(z)$ implies $\delta\in T_{\Omega}(z)$, which proves the first statement. We now prove that for any $z\in\Omega$,
        \begin{equation}\label{eq:max-distance-to-boundary}
            \sup_{y\in\Omega}\|z - y\| \le \mbox{diam}(\Omega) - \mbox{dist}(z, \partial\Omega).
        \end{equation}

  Assuming~\eqref{eq:max-distance-to-boundary}, we get that
        \begin{align*}
        U_t(z) &= \frac{\Omega - z}{1 - t}\\ 
        &\overset{\eqref{eq:max-distance-to-boundary}}{\subseteq} \mathcal{B}\left(0,\,\frac{\mbox{diam}(\Omega) - \mbox{dist}(z, \partial\Omega)}{1 - t}\right)\\ 
        &{\subseteq} \mathcal{B}\left(0,\,\frac{\mbox{dist}(z, \partial\Omega)}{t}\right),\quad\mbox{for}\quad t\le \frac{\mbox{dist}(z,\partial\Omega)}{\mbox{diam}(\Omega)},\\ 
        &\subseteq \frac{z - \Omega}{t} = L_t(z).
        \end{align*}
         To prove~\eqref{eq:max-distance-to-boundary}, take any two points $z_1, z_2\in\partial\Omega$ such that $z = \lambda z_1 + (1 - \lambda)z_2$ for some $\lambda\in(0, 1)$. Then
        \[
        \|z_1 - z_2\| = \|z - z_1\| + \|z - z_2\|.
        \]
        Take $z_2$ so that $\|z - z_2\| = \sup_{y\in\Omega}\|z - y\|$; if there is no such point, consider a limiting sequence on the boundary. The corresponding $z_1$ is a point on $\partial\Omega$ and the definition of $\mbox{dist}(z, \partial\Omega)$ implies $\|z - z_1\| \ge \mbox{dist}(z,\,\partial\Omega)$. Therefore, we get
        \[
        \|z_1 - z_2\| \ge \mbox{dist}(z,\,\partial\Omega) + \sup_{y\in\Omega}\|z - y\|.
        \]

        The left hand side is bounded above by the diameter of $\Omega$ by definition. Hence, we obtain~\eqref{eq:max-distance-to-boundary}. The second part of the result follows from the observation that
        \[
        L_t(z) \subseteq \mathcal{B}\left(0,\,\frac{\mbox{diam}(\Omega) - \mbox{dist}(z,\partial\Omega)}{t}\right),\quad\mbox{and}\quad U_t(z) \subseteq\mathcal{B}\left(0,\,\frac{\mbox{diam}(\Omega) - \mbox{dist}(z, \partial\Omega)}{1 - t}\right).
        \]
        \item It is easy to see that 
        \[
        L_0(z) = \mathbb{R}^d\quad\mbox{and}\quad U_1(z) = \mathbb{R}^d.
        \]
        As $t\downarrow 0$, note that $L_t(z)$ increases to $-T_{\Omega}(z)$ and $U_t(z)$ decreases to $\Omega - z$. This is because of the convexity of $\Omega$: $z - t\delta\in \Omega$ (along with $z\in\Omega$) implies that $z - s\delta\in\Omega$ for all $s\in[0, t]$ (or equivalently, $\delta\in (z - \Omega)/s$. (Similar reasoning also holds for $(\Omega - z)/(1-t)$). If $z\in\Omega^\circ$, then $-T_{\Omega}(z) = T_{\Omega}(z) = L_0(z) = U_1(z) = \mathbb{R}^d$.
        \item Because $S_t(z) = L_t(z)\cap U_t(z)$, by non-negativity of $f$, we get
        \begin{align*}
        0 \le f(t, \delta)\mathbf{1}\{\delta\in S_t(z)\} &= f(t, \delta)\mathbf{1}\{\delta\in L_t(z)\}\mathbf{1}\{\delta\in U_t(z)\}\\ 
        &\le f(t, \delta)\mathbf{1}\{\delta\in L_t(z)\}\\ 
        &\le f(t, \delta)\mathbf{1}\{\delta\in -T_{\Omega}(z)\}.
        \end{align*}
        Hence,
        \[
        0 \le f(t, \delta)\mathbf{1}\{\delta\in S_t(z)\} \le f(t, \delta)\mathbf{1}\{\delta\in -T_{\Omega}(z)\},
        \]
        and each of these functions converges to $0$, $f(0, \delta)\mathbf{1}\{\delta\in(-T_{\Omega}(z))\cap(\Omega - z)\}$, and $f(0, \delta)\mathbf{1}\{\delta\in -T_{\Omega}(z)\}$, as $t\to0$. Therefore, by our assumption and Pratt's \citep[Theorem 5.5]{gut2006probability} lemma, we get the result.

        The proof for convergence as $t\to1$ is identical.
        \item We prove the result for $t \le 1/2$. Clearly, 
        \begin{align*}
        S_t(z') &= (S_t(z)\cap S_t(z'))\cup(S_t(z')\setminus S_t(z))\\ 
        &\subseteq S_t(z)\cup(S_t(z')\setminus S_t(z)).
        \end{align*}
        Hence, it suffices to prove that 
        \begin{equation}\label{eq:set-subset-inflation-implosion}
            S_t(z')\setminus S_t(z) ~\subseteq~ (S_t(z))^{2\gamma/(t(1-t))}.
        \end{equation} 
        Consider a vector $\delta'\in S_t(z')\setminus S_t(z)$. To prove~\eqref{eq:set-subset-inflation-implosion}, it suffices to produce $\delta \in S_t(z)$ such that $\|\delta' - \delta\| \le 2\gamma/(t(1-t))$. Because $\delta'\in S_t(z')\setminus S_t(z)$, we must have either
        \begin{equation}
            \begin{split}
                &z' - t\delta'\in\Omega,\; z' + (1-t)\delta'\in\Omega,\; z - t\delta' \notin \Omega,\\
                &z' - t\delta'\in\Omega,\; z' + (1-t)\delta'\in\Omega,\; z + (1-t)\delta' \notin \Omega.
            \end{split}
        \end{equation}
        Let us consider the first case. Because $S_t(z) \neq \emptyset$ (note that $0\in S_t(z)$ for all $z\in\Omega$), the set
        \[
        \Upsilon := \Omega\cap\left(\frac{z - t\Omega}{1-t}\right),
        \]
        is non-empty. In fact, for any $v\in S_t(z)$, we have $z - tv = \xi\in\Omega$ and 
        \[
        z + (1-t)v = z + (1-t)(z - \xi)/t = (z - (1-t)\xi)/t\in\Omega\quad\Rightarrow\quad \xi\in \frac{z - t\Omega}{1-t}.
        \]
        Therefore, $\xi\in\Upsilon$. Define
        \[
        \vartheta = \mbox{Proj}_{\Upsilon}(z - t\delta')\quad\Rightarrow\quad \delta = \frac{z - \vartheta}{t}\in S_t(z).
        \]
        It suffices to prove that $\|\delta - \delta'\| \le 2\gamma/(t(1-t))$. Note that
        \[
        \|\delta - \delta'\| = \left\|\frac{z - t\delta}{t} - \frac{z - t\delta'}{t}\right\| = \frac{1}{t}\left\|\vartheta - (z - t\delta')\right\| = \frac{\mbox{dist}(z - t\delta',\,\Upsilon)}{t}.
        \]
        To control the right hand side, note that the distance to a set is a decreasing function of the set. Hence, if we can show 
        \begin{equation}\label{eq:inclusions-away-from-boundary}
            \Upsilon \supseteq \left(\Omega\cap\frac{z' - t\Omega}{1 - t}\right)^{-\|z - z'\|/(1-t)} \neq \emptyset,
        \end{equation}
        then 
        \begin{align*}
        \mbox{dist}(z - t\delta',\, \Upsilon) &\le \mbox{dist}\left(z - t\delta',\, \left(\Omega\cap\frac{z' - t\Omega}{1 - t}\right)^{-\|z - z'\|/(1-t)}\right)\\ 
        &\le \mbox{dist}\left(z - t\delta',\, \Omega\cap\frac{z' - t\Omega}{1 - t}\right) + \frac{\|z - z'\|}{(1-t)}\\
        &\le \mbox{dist}(z - t\delta', z' - t\delta') + \frac{\|z - z'\|}{1-t} = \frac{2-t}{1-t}\|z - z'\|,
        \end{align*}
        which implies the result. In this calculation, it is necessary to prove that the set in~\eqref{eq:inclusions-away-from-boundary} is non-empty, because the distance of any point to an empty set is infinity.
        To prove~\eqref{eq:inclusions-away-from-boundary}, note that
        \[
        \mathcal{B}\left(z',\, \frac{t\mbox{dist}(z', \partial\Omega)}{1-t}\right) ~\subseteq~ \Omega\cap\left(\frac{z' - t\Omega}{1-t}\right).
        \]
        (By the definition of the distance, we get $\mathcal{B}(z', \mbox{dist}(z', \partial\Omega))\subseteq\Omega$ and so, the first inclusion holds for $t \le 1/2$. For the second one, observe that for any $\mathcal{B}(z', \varepsilon)\subseteq \Omega$, $(z' - t\Omega)/(1-t) \supseteq \mathcal{B}(z', t\varepsilon/(1-t))$. Applying this with $\varepsilon = \mbox{dist}(z', \partial\Omega)$ yields this inclusion result.) This implies that
        \[
        \left(\Omega\cap\frac{z' - t\Omega}{1 - t}\right)^{-\|z - z'\|/(1-t)} \neq \emptyset\quad\mbox{whenever}\quad \|z - z'\| \le t\mbox{dist}(z', \partial\Omega).
        \]
        Using the fact that
        \[
        \frac{z - t\Omega}{1 - t}\supseteq \left(\frac{z' - t\Omega}{1 - t}\right)^{-\|z - z'\|/(1-t)},
        \]
        we conclude
        \[
        \Upsilon \supseteq \Omega\cap\left(\frac{z' - t\Omega}{1 - t}\right)^{-\|z - z'\|/(1-t)} \supseteq \left(\Omega\cap\frac{z' - t\Omega}{1 - t}\right)^{-\|z - z'\|/(1-t)} \neq \emptyset.
        \]
        This proves~\eqref{eq:inclusions-away-from-boundary}. The proof when $z + (1-t)\delta'\notin\Omega$ is similar. To prove the lower inclusion in the result, swap the roles of $z, z'$.

        To prove~\eqref{eq:inclusions-in-time}, consider any $\delta\in S_{t+h}(z)\setminus S_t(z)$. By monotonicity of $t\mapsto L_t(z)$ and $t\mapsto U_t(z)$, we get
    \[
    \delta\in L_{t+h}(z)\cap L_t(z)\cap U_{t+h}(z)\quad\mbox{and}\quad \delta\notin U_t(z).
    \]
    By Lemma 1 of~\cite{hoffmann1992distance}, we get
    \begin{align*}
    \mbox{dist}(\delta,\, L_t(z)\cap U_t(z)) &\le \frac{\mbox{diam}(L_t(z))}{\sup_{y\in S_t(z)}\mbox{dist}(y, \partial L_t(z))}\mbox{dist}(\delta,\, U_t(z))\\ 
    &\le \frac{\mbox{diam}(\Omega)}{t\mbox{dist}(0, \partial L_t(z))}\left\|\delta - \left(\frac{1-t-h}{1-t}\right)\delta\right\|\\
    &\le \frac{2\mbox{diam}^2(\Omega)}{(1-t)\varepsilon}h.
    \end{align*}
    The second inequality follows from the fact that $\delta\in S_{t+h}(z) \subseteq U_{t+h}(z)$ implies that $\delta\in (\Omega - z)/(1-t-h)$ or equivalently, $(1-t-h)\delta/(1-t)\in (\Omega - z)/(1-t) = U_t(z)$.
    This proves the second inclusion of~\eqref{eq:inclusions-in-time}.

    For the first inclusion of~\eqref{eq:inclusions-in-time}, consider any $\delta\in S_t(z)$ such that 
    \[
    \mathcal{B}\left(\delta,\, \frac{2\mbox{diam}^2(\Omega)h}{\varepsilon\bar{t}}\right) \subseteq S_t(z).
    \]
    To prove the first inclusion, it suffices to show that $\delta\in S_{t+h}(z)$. Because $\delta\in S_t(z) = L_t(z)\cap U_t(z)$, we get $\delta\in U_{t+h}(z)$. Consider the vector $(1 + h/t)\delta$. Clearly,
    \[
    \left\|\delta - (1+h/t)\delta\right\| \le \frac{h}{t}\|\delta\| \le \frac{h}{\bar{t}}(2\mbox{diam}(\Omega)) \le \frac{2h\mbox{diam}^2(\Omega)h}{\varepsilon\bar{t}}.
    \]
    This implies $(1+h/t)\in\mathcal{B}(\delta,\,2\mbox{diam}^2(\Omega)h/(\varepsilon\bar{t}))\subseteq S_t(z)$. Therefore, 
    \[
    (1 + h/t)\delta\subseteq L_t(z)\quad\Rightarrow\quad \delta\in L_{t+h}(z). 
    \]
    Hence, $\delta\in S_{t+h}(z)$.
        \item For $z\in\Omega$, $\delta\in S_t(z)$ implies that $z - t\delta, z + (1-t)\delta\in\Omega$. Hence, from Lemma~\ref{lem:projection-to-boundary}, we get
        \begin{align*}
        (z - \tilde{z})^{\top}(z - t\delta - \tilde{z}) \ge 0,\quad &\Rightarrow\quad \|z - \tilde{z}\|^2 \ge t\delta^{\top}(z - \tilde{z}),\\
        (z - \tilde{z})^{\top}(z + (1-t)\delta - \tilde{z}) \ge 0,\quad &\Rightarrow\quad \|z - \tilde{z}\|^2 \ge -(1-t)\delta^{\top}(z - \tilde{z})^2.
        \end{align*}
        This implies the result. The strict inequality between the infimum and the supremum follows from the fact that $\delta\in S_t(z)$ implies $-t\delta/(1-t) \in S_t(z)$ (for $t < 1/2$), and hence, the infimum is negative and the supremum is positive. 
        \item Because $z\in\partial\Omega$ and $\Omega$ is a closed convex set, there exists a supporting hyperplane, i.e., there exists a vector $n\neq 0$ such that $(y - z)^{\top}n \le 0$ for all $y\in\Omega$. Strict convexity of $\Omega$ at $z$ implies that $(y - z)^{\top}n < 0$ for all $y\in\Omega\setminus\{z\}$. (A proof is as follows: suppose there exists $z'\in\Omega, z' \neq z$ and $(z' - z)^{\top}n = 0$. Then by strict convexity at $z$, $(z' + z)/2 \in \Omega^\circ$ and hence, $(z' + z)/2 + \varepsilon u \in \Omega$ for all $u\in S^{d-1}$ and $\varepsilon > 0$ small enough. This implies that 
        \[
        ((z' + z)/2 + \varepsilon u -z)^{\top}n = \varepsilon u^{\top}n \le 0\quad\mbox{for all}\quad u\in S^{d-1},
        \]
        which is a contradiction.) 

        To prove that $S_t(z) = \{0\}$, we note that $S_t(z) \subseteq (-T_{\Omega}(z))\cap U_t(z)$ for all $t\in(0, 1)$. It is clear that $\{0\} \subset (-T_{\Omega}(z))\cap U_t(z)$. Suppose, if possible, there exists $v\neq 0$ such that $-v\in T_{\Omega}(z)$ and $v\in U_t(z)$. Formally, this means that there exists $\tau_k\downarrow0$ and $z_k\in\Omega$ such that $z_k\to z$ and $(z_k - z)/\tau_k \to -v$, and $z + (1 - t)v\in \Omega$. From the supporting hyperplane and that $v \neq 0$, we get 
        \[
        (z_k - z)^{\top}n < 0\quad\mbox{for all}\quad k \ge 1\quad\Rightarrow\quad \left(\frac{z_k - z}{\tau_k}\right)^{\top}n < 0\quad\Rightarrow\quad -v^{\top}n \le 0.
        \]
        On the other hand, we have
        \[
        (z + (1-t)v - z)^{\top}n < 0\quad\Rightarrow\quad v^{\top}n < 0,
        \]
        which is a contradiction. One can also prove this result using the modulus of strict convexity; see, for example, the proof of Proposition~\ref{prop:well-defined-strictly-convex-set}.
    \end{enumerate}
\end{proof}
\begin{proposition}[Examples of $\Omega$ and contingent cones]
\begin{enumerate}

    \item If $\Omega = \{x\in\mathbb{R}^d:\, a_i^{\top}x \le b_i\mbox{ for }1\le i\le m\}$, then setting
\[
I(z) = \{j\in\{1, 2, \ldots, m\}| a_j^{\top}z = b_j\}\quad\mbox{for all}\quad z\in\partial\Omega,
\]
we have
\begin{align*}
T_{\Omega}(z) &= \{w\in\mathbb{R}^d:\, a_j^{\top}w \le 0\quad\mbox{for all}\quad j\in I(z)\},\\
(-T_{\Omega}(z))\cap(\Omega - z) &= \{w\in\mathbb{R}^d:\, a_j^{\top}w = 0\;\mbox{for}\; j\in I(z),\;  a_j^{\top}w \le b_j - a_j^{\top}z\;\mbox{for}\;j\notin I(z)\}.
\end{align*}
\item Suppose $\Omega = \{x\in\mathbb{R}^d:\, c_i(x) = 0\;\mbox{for}\;i\in\mathcal{E}, c_i(x) \le 0\;\mbox{for}\;i\in\mathcal{I}\}$\footnote{For this part, we do not need $c_i$'s to be convex.} for some finite sets $\mathcal{E},\mathcal{I}$. Set
\[
I(z) = \{i\in\mathcal{E}\cup\mathcal{I}:\, c_i(z) = 0\}.
\]
If $\{\nabla c_i(z):\, i\in I(z)\}$ are linearly independent, then
\begin{align*}
    T_{\Omega}(z) &= \{w\in\mathbb{R}^d:\, w^{\top}\nabla c_i(z) = 0\;\mbox{for}\;i\in\mathcal{E}\;\mbox{and}\; w^{\top}\nabla c_i(z) \le 0\;\mbox{for}\;i\in \mathcal{I}\cap I(z)\},\\
    (-T_{\Omega}(z))\cap(\Omega - z) &= \{w\in\mathbb{R}^d:\, w^{\top}\nabla c_i(z) = 0,\,c_i(z + w) = 0,\;\mbox{for}\;i\in\mathcal{E}\}\\
    &\quad\cup\{w\in\mathbb{R}^d:\,c_i(z + w) \le 0\mbox{ for }i\in\mathcal{I},\; w^{\top}\nabla c_i(z) \ge 0\mbox{ for }i\in\mathcal{I}\cap I(z)\}.
\end{align*}
\end{enumerate}
\end{proposition}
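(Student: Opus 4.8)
The plan is to handle the two parts separately, in both cases working directly from the definition of the contingent cone recalled earlier: $w\in T_{\Omega}(z)$ iff there exist $\tau_k\downarrow0$ and $z_k\in\Omega$ with $(z_k-z)/\tau_k\to w$ (so, in particular, a $C^{1}$ feasible arc through $z$ with velocity $w$ certifies $w\in T_{\Omega}(z)$), and then reading off the description of $(-T_{\Omega}(z))\cap(\Omega-z)$ by combining the descriptions of $-T_{\Omega}(z)$ and of $\Omega-z$. For the polyhedral case everything is first order and elementary; for the smooth case the only substantive point is that the linearized feasible cone is contained in $T_{\Omega}(z)$, which is where linear independence of the active gradients (LICQ) enters.

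\textbf{Part 1 (polyhedral $\Omega$).} Write $I(z)$ for the active index set at $z$. If $w\in T_{\Omega}(z)$, pick $\tau_k\downarrow0$, $z_k\in\Omega$ with $(z_k-z)/\tau_k\to w$; for $j\in I(z)$, $a_j^{\top}z_k\le b_j=a_j^{\top}z$ forces $a_j^{\top}(z_k-z)\le0$, and dividing by $\tau_k>0$ and passing to the limit gives $a_j^{\top}w\le0$. Conversely, if $a_j^{\top}w\le0$ for all $j\in I(z)$, then $a_j^{\top}(z+\tau w)\le b_j$ for active $j$ and, by continuity, $a_j^{\top}(z+\tau w)<b_j$ for inactive $j$ once $\tau>0$ is small, so $\tau\mapsto z+\tau w$ is feasible near $0$ and $w\in T_{\Omega}(z)$. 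For the last set, $w\in(-T_{\Omega}(z))\cap(\Omega-z)$ means $a_j^{\top}w\ge0$ for $j\in I(z)$ together with $a_i^{\top}(z+w)\le b_i$ for all $i$; for active $i$ the latter is $a_i^{\top}w\le0$, hence $a_i^{\top}w=0$, while for inactive $i$ it is $a_i^{\top}w\le b_i-a_i^{\top}z$, which is precisely the claimed description.

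\textbf{Part 2 (smooth constraints under LICQ).} Let $I(z)$ denote the active set, which now contains all of $\mathcal{E}$. If $w\in T_{\Omega}(z)$, pick $\tau_k\downarrow0$, $z_k\in\Omega$ with $(z_k-z)/\tau_k\to w$; the first-order expansion $c_i(z_k)=c_i(z)+\nabla c_i(z)^{\top}(z_k-z)+o(\|z_k-z\|)$ together with $c_i(z)=0$ and $c_i(z_k)=0$ for $i\in\mathcal{E}$ (respectively $c_i(z_k)\le0$ for active $i\in\mathcal{I}$) gives, after dividing by $\tau_k$, that $\nabla c_i(z)^{\top}w=0$ for $i\in\mathcal{E}$ and $\nabla c_i(z)^{\top}w\le0$ for $i\in\mathcal{I}\cap I(z)$. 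For the reverse inclusion, take $w$ in this linearized cone and split the active set as $I_0=\{i\in I(z):\nabla c_i(z)^{\top}w=0\}$ (it contains $\mathcal{E}$) and $I_-=\{i\in\mathcal{I}\cap I(z):\nabla c_i(z)^{\top}w<0\}$. Let $A$ be the matrix with rows $\nabla c_i(z)^{\top}$, $i\in I_0$, linearly independent by LICQ, so $AA^{\top}$ is invertible. Apply the implicit function theorem to $\Psi(\tau,u)=\big(c_i(z+\tau w+A^{\top}u)\big)_{i\in I_0}$ at $(\tau,u)=(0,0)$, where $\partial_u\Psi(0,0)=AA^{\top}$, to get a $C^{1}$ map $\tau\mapsto u(\tau)$ with $u(0)=0$ and $\Psi(\tau,u(\tau))\equiv0$; differentiating at $0$ and using $Aw=0$ yields $\dot u(0)=0$, so $z(\tau):=z+\tau w+A^{\top}u(\tau)$ has $z(0)=z$, $\dot z(0)=w$, and $c_i(z(\tau))=0$ for $i\in I_0$. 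For $i\in I_-$, $c_i(z(\tau))=\tau\nabla c_i(z)^{\top}w+o(\tau)<0$ for small $\tau>0$; for inactive $i$, $c_i(z(\tau))<0$ by continuity. Hence $z(\tau)\in\Omega$ for small $\tau>0$ and $w=\dot z(0)\in T_{\Omega}(z)$. Intersecting the description of $-T_{\Omega}(z)$ ($w^{\top}\nabla c_i(z)=0$ for $i\in\mathcal{E}$, $w^{\top}\nabla c_i(z)\ge0$ for $i\in\mathcal{I}\cap I(z)$) with that of $\Omega-z$ ($c_i(z+w)=0$ for $i\in\mathcal{E}$, $c_i(z+w)\le0$ for $i\in\mathcal{I}$) yields the stated formula for $(-T_{\Omega}(z))\cap(\Omega-z)$.

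The only non-routine ingredient is the reverse inclusion in Part 2: without a constraint qualification the linearized cone can be strictly larger than $T_{\Omega}(z)$, and LICQ is exactly what lets the implicit function theorem build the feasible arc $z(\tau)$. The delicate bookkeeping is the split of the active inequalities into those with vanishing directional derivative along $w$ (forced onto the constraint surface via $u(\tau)$) and those with strictly negative directional derivative (kept feasible by the first-order expansion), with the inactive constraints kept feasible by mere continuity.
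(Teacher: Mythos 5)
Your Part 1 is essentially the paper's own argument: the forward inclusion follows by dividing $a_j^{\top}(z_k-z)\le 0$ by $\tau_k>0$ and passing to the limit, and the converse by verifying that $z+\tau w$ stays feasible for small $\tau$ (the paper isolates $\varepsilon=\min_{j\notin I(z)}(b_j-a_j^{\top}z)>0$ to control the inactive constraints, exactly your ``by continuity'' step). The substantive difference is Part 2: the paper gives no argument there, citing Lemma 12.2 of~\cite{nocedal2006numerical} (and Proposition 4.3.7 of~\cite{aubin2009set}), whereas you supply a self-contained proof via the implicit function theorem that is, in effect, the proof of the cited lemma. Your split of the active set into $I_0$ (zero directional derivative, pushed onto the constraint manifold by $A^{\top}u(\tau)$) and $I_-$ (strictly negative, kept feasible by first-order decay), with LICQ invoked only on the $I_0$ rows so that $AA^{\top}$ is invertible, and the computation $\dot u(0)=0$ from $Aw=0$ giving $\dot z(0)=w$, is the correct and standard bookkeeping. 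One small point worth flagging: you correctly derive $(-T_{\Omega}(z))\cap(\Omega-z)$ as the \emph{intersection} of the $\mathcal{E}$-conditions and the $\mathcal{I}$-conditions; the proposition as printed joins the two sets with $\cup$, which is evidently a typo for $\cap$ (a point in the intersection must satisfy both groups of constraints), and your reading is the intended one.
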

\begin{proof}
\begin{enumerate}
        \item The proof is straightforward. Let $\omega\in T_{\Omega}(z)$. Then by definition, there exists $\tau_k\downarrow0, z_k\in\Omega$ such that $z_k\to z$ and $(z_k - z)/\tau_k\to \omega$. Because $z_k\in\Omega$,
        \[
        a_i^{\top}z_k \le b_i\quad\mbox{for all}\quad i\in I(z)\quad\Rightarrow\quad a_i^{\top}(z_k - z)/\tau_k \le 0\quad\mbox{for all}\quad i\in I(z).
        \]
        Taking the limit as $k\to\infty$, we get $T_{\Omega}(z)\in\{w\in\mathbb{R}^d:\, a_j^{\top}w \le 0\;\mbox{for}\; j\in I(z)\}$. To prove the converse, set
        \[
        \varepsilon = \inf_{j\notin I(z)} (b_j - a_j^{\top}z).
        \]
        By definition, $\varepsilon > 0$ (because $m < \infty$). Therefore, $z_k = z + \tau_k w$ for any $w$ satisfying $a_j^{\top}w \le 0$ for $j\in I(z)$ satisfies   
        \[
        a_j^{\top}z_k = a_j^{\top}z + \tau_k a_j^{\top}w\begin{cases} \le b_j + 0, &\mbox{if }j\in I(z),\\
        \le b_j + \varepsilon + \tau_k a_j^{\top}w, &\mbox{otherwise.}\end{cases}
        \]
        Hence, as $\tau_k\to 0$, $\varepsilon + \tau_k a_j^{\top}w > 0$ and hence, $z_k\in\Omega$ for $k$ large enough. Recalling that 
        \begin{align*}
        \Omega - z &= \{w\in\mathbb{R}^d:\, a_j^{\top}w \le b_j - a_j^{\top}z\mbox{ for }1\le j\le m\}\\
        &= \{w\in\mathbb{R}^d:\, a_j^{\top}w \le 0\mbox{ for }j\in I(z),\; a_j^{\top}w \le b_j - a_j^{\top}z\mbox{ for }j\notin I(z)\},
        \end{align*}
        we obtain the result.
        \item Follows from Lemma 12.2 of~\cite{nocedal2006numerical}. See Proposition 4.3.7 of~\cite{aubin2009set} for an alternative set of assumptions.
        \end{enumerate}
\end{proof}
\begin{lemma}\label{lem:uniqueness-final}
    Suppose assumptions~\ref{eq:compact-support},~\ref{eq:bounded-away-densities}, and~\ref{eq:Osgood-condition-densities} hold. Then the integral equation~\eqref{eq:rectified-flow-integral-equation} has a unique solution $z^*:[0, 1]\to\Omega$ with $z^*(t)\in\Omega^\circ$ for all $t\in[0, 1)$, whenever $x\in\Omega^\circ$.
\end{lemma}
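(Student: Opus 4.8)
The plan is to separate existence from uniqueness, with uniqueness being the substantive part. For existence, note that Lemma~\ref{lem:boundedness-and-local-Lipschitz} shows $z\mapsto v(t,z)$ is bounded on $\Omega$ and uniformly continuous on compact subsets of $\Omega^\circ$; together with Theorem~\ref{thm:Peano-existence}(1) this produces a Carath\'eodory solution of~\eqref{eq:rectified-flow-integral-equation} on some $[0,\bar t)$ lying in $\Omega^\circ$, and Lemma~\ref{lem:distance-to-boundary-latest} forces $\bar t=1$. Since $\|v(t,z)\|\le\mathrm{diam}(\Omega)$ on $\Omega^\circ$, this solution is Lipschitz in $t$, hence extends continuously to $[0,1]$ with $z^*(1)\in\Omega$ (a closed set), and by dominated convergence it still satisfies~\eqref{eq:rectified-flow-integral-equation} at $t=1$; by the same distance bound $z^*(t)\in\Omega^\circ$ for $t<1$.

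For uniqueness, fix $x\in\Omega^\circ$, set $c:=\mathrm{dist}(x,\partial\Omega)>0$, and let $y_1,y_2:[0,1]\to\Omega$ be two solutions. The crucial input is Lemma~\ref{lem:distance-to-boundary-latest}: any solution satisfies $\mathrm{dist}(y_i(t),\partial\Omega)\ge(1-t)c$, so for every $\delta\in(0,1)$ and every $t\in[0,1-\delta]$ one has $y_1(t),y_2(t)\in\Omega^{-\delta c}$. Writing $\eta(t):=\|y_1(t)-y_2(t)\|$ and fixing $\delta$, inequality~\eqref{eq:locally-Lipschitz} of Lemma~\ref{lem:boundedness-and-local-Lipschitz} then gives, for all $t\in[0,1-\delta]$,
\[
\|v(t,y_1(t))-v(t,y_2(t))\|\le\mathfrak{L}_1\omega(\eta(t))+\mathfrak{L}_2(\delta c)\,\eta(t)\le L_\delta\,\kappa(\eta(t)),
\]
where $\kappa(u):=\omega(u)+u$ and $L_\delta:=\mathfrak{L}_1\vee\mathfrak{L}_2(\delta c)<\infty$; by assumption~\ref{eq:Osgood-condition-densities}, $u\mapsto\omega(u)+u$ is a non-decreasing Osgood function, i.e.\ $\Psi(u)=\int du/\kappa(u)$ tends to $-\infty$ as $u\downarrow0$. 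From $\eta(t)\le\int_0^t\|v(s,y_1(s))-v(s,y_2(s))\|ds\le L_\delta\int_0^t\kappa(\eta(s))ds$ together with $\eta(0)=0$, the Osgood/Bihari argument from the proof of Theorem~\ref{thm:uniqueness-viability} (now with the \emph{constant} factor $a(\cdot)\equiv L_\delta$) forces $\eta\equiv0$ on $[0,1-\delta]$: otherwise pick $t_0<t_1\le1-\delta$ with $\eta(t_0)=0$ and $\eta>0$ on $(t_0,t_1]$, set $V(t)=L_\delta\int_{t_0}^t\kappa(\eta(s))ds\ge\eta(t)$, observe $V'\le L_\delta\kappa(V)$, and integrate $V'/\kappa(V)\le L_\delta$ over $(t_0+\epsilon,t_1]$; as $\epsilon\downarrow0$ the left side $\Psi(V(t_1))-\Psi(V(t_0+\epsilon))$ tends to $+\infty$ while the right side stays bounded, a contradiction. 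Letting $\delta\downarrow0$ and using continuity of $y_1,y_2$ at $t=1$ gives $y_1\equiv y_2$ on $[0,1]$, and the first part already shows every solution lies in $\Omega^\circ$ on $[0,1)$.

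The one genuine obstacle is the blow-up of the modulus-of-continuity constant $\mathfrak{L}_2(\varepsilon)$ as $\varepsilon\downarrow0$, that is, as the path approaches $\partial\Omega$ near $t=1$; this is exactly what Lemma~\ref{lem:distance-to-boundary-latest} is designed to control, confining every solution to $\Omega^{-(1-t)c}$ so that $L_\delta$ remains finite on each slab $[0,1-\delta]$ and the singular endpoint $t=1$ is recovered for free by continuity. The rest is routine: checking that $\kappa=\omega+(\text{identity})$ inherits the Osgood property from~\ref{eq:Osgood-condition-densities}, that $v(s,y_i(s))$ is well-defined (true since $y_i(s)\in\Omega^\circ$ for $s<1$), and that $\eta$ is continuous with $\eta(0)=0$.
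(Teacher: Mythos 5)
Your proposal is correct and follows essentially the same route as the paper: Lemma~\ref{lem:distance-to-boundary-latest} confines every solution to $\Omega^{-\delta c}$ on $[0,1-\delta]$, Lemma~\ref{lem:boundedness-and-local-Lipschitz} then gives an Osgood-type modulus with a finite constant on that slab (so assumption~\ref{eq:Lipschitz-Osgood} holds there), and the endpoint $t=1$ is recovered by continuity. The only cosmetic difference is that you unpack the Osgood/Bihari contradiction by hand where the paper simply invokes Theorem~\ref{thm:uniqueness-viability}.
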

\begin{proof}
    By Lemma~\ref{lem:distance-to-boundary-latest}, every solution $y(\cdot)$ belongs to the interior on $[0, 1 - \delta]$, for any $\delta > 0$. Furthermore, Lemma~\ref{lem:boundedness-and-local-Lipschitz} along with assumption~\ref{eq:Osgood-condition-densities} implies that assumption~\ref{eq:Lipschitz-Osgood} holds true. Hence, from Theorem~\ref{thm:uniqueness-viability}, it follows that there exists a unique solution $z^*:[0, 1 - \delta]\to\Omega^\circ$ for any $\delta > 0$. To prove the existence of a unique solution in $[0, 1]$, consider any sequence $\{\delta_k\}_{k\ge1}\subset[0, 1]$ such that $\delta_k\to0$ as $k\to\infty$. By the uniqueness on $[0, 1 - \delta]$ for any $\delta > 0$, we get a unique $z^*(1 - \delta_k)\in \Omega$ such that $z^*(1-\delta_k) = x + \int_0^{1 - \delta_k} v(s, z^*(s))ds$. The boundedness of $v(\cdot, \cdot)$ implies $\{z^*(1 - \delta_k)\}_{k\ge1}$ is a Cauchy sequence and hence, is convergent. Because $\Omega$ is compact (from assumption~\ref{eq:compact-support}), the limit $z^*(1)$ of $z^*(1 - \delta_k)$ belongs to $\Omega$ and must satisfy $z^*(1) = x + \int_0^1 v(s, z^*(s))ds$. 
\end{proof}
\subsection{Proof of Lemma~\ref{lem:boundedness-and-local-Lipschitz}}\label{appsubsec:proof-boundedness-and-local-Lipschitz}    
Recall that $v(t, z) = \mathbb{E}[X_1 - X_0|tX_1 + (1-t)X_0 = z]$. Hence, by Jensen's inequality, $\|v(t, z)\| \le \mathbb{E}[\|X_1 - X_0\||tX_1 + (1-t)X_0 = z]$. Because $X_0, X_1\in\Omega$, we have $\|X_1 - X_0\| \le \mbox{diam}(\Omega)$ almost surely. Therefore, $\|v(t, z)\| \le \mbox{diam}(\Omega)$ for all $z\in\Omega^\circ$. 

    Fix $z, z'\in\Omega^{-\varepsilon}$ with $\|z - z'\| = \eta$. Observe that
\begin{align*}
    v(t, z) - v(t, z') &= \frac{f_t(z)}{p_t(z)} - \frac{f_t(z')}{p_t(z')}\\
    &= \frac{f_t(z) - f_t(z')}{p_t(z)} - v(t, z')\frac{p_t(z) - p_t(z')}{p_t(z)}.
\end{align*}
Note that
\[
p_t(z) \ge \underline{\mathfrak{p}}^2\mbox{Vol}(S_t(z))\quad\mbox{for all}\quad z\in\Omega^{-\varepsilon}.
\]
For any function $h:\mathbb{R}^d\to\mathbb{R}$, consider
\[
T_h(z) = \int_{S_t(z)} h(\delta)p_0(z - t\delta)p_1(z + (1-t)\delta)d\delta.
\]
Then 
\begin{align*}
    |T_h(z) - T_h(z')| &\le \left|\int_{S_t(z)\cap S_t(z')} h(\delta)\left(\frac{p_0(z' - t\delta)p_1(z' + (1-t)\delta)}{p_0(z - t\delta)p_1(z + (1-t)\delta)} - 1\right)p_0(z - t\delta)p_1(z + (1 - t)\delta)d\delta\right|\\
    &\quad+ \left|\int_{S_t(z)\setminus S_t(z')} h(\delta)p_0(z - t\delta)p_1(z + (1-t)\delta)d\delta\right|\\
    &\quad+ \left|\int_{S_t(z')\setminus S_t(z)} h(\delta)p_0(z' - t\delta)p_1(z' + (1-t)\delta)d\delta\right|\\
    &\le p_t(z)\sup_{\delta\in S_t(z)\cap S_t(z')} |h(\delta)|\left|\frac{p_0(z' - t\delta)p_1(z' + (1-t)\delta)}{p_0(z - t\delta)p_1(z + (1-t)\delta)} - 1\right|\\
    &\quad+ p_t(z)\sup_{\delta\in S_t(z)} |h(\delta)|p_0(z - t\delta)p_1(z + (1-t)\delta)\frac{\mbox{Vol}(S_t(z)\setminus S_t(z'))}{\underline{\mathfrak{p}}^2\mbox{Vol}(S_t(z))}\\
    &\quad+ p_t(z)\sup_{\delta\in S_t(z')}|h(\delta)|p_0(z' - t\delta)p_1(z' + (1-t)\delta)\frac{\mbox{Vol}(S_t(z)\setminus S_t(z'))}{\underline{\mathfrak{p}}^2\mbox{Vol}(S_t(z))}.
\end{align*}
From assumption~\ref{eq:bounded-away-densities} (and~\eqref{eq:density-lower-bound-notation},~\eqref{eq:modulus-density}), we get
\[
\sup_{\delta\in S_t(z)\cap S_t(z')}
\left|\frac{p_0(z' - t\delta)p_1(z' + (1-t)\delta)}{p_0(z - t\delta)p_1(z + (1-t)\delta)} - 1\right| \le \left(1 + \omega(\eta)\right)^2 - 1 \le 2\omega(\eta) + \omega^2(\eta),
\]
and
\[
\sup_{\delta\in S_t(z)}
p_0(z - t\delta)p_1(z + (1-t)\delta) \le \overline{\mathfrak{p}}^2.
\]
Therefore, 
\begin{equation}\label{eq:ratio-z-z'-inequality}
\frac{|T_h(z) - T_h(z')|}{p_t(z)} \le \sup_{\delta\in S_t(z)\cup S_t(z')}|h(\delta)|\left[2\omega(\eta) + \omega^2(\eta) + \frac{\overline{\mathfrak{p}}^2}{\underline{\mathfrak{p}}^2}\frac{\mbox{Vol}(S_t(z)\Delta S_t(z'))}{\mbox{Vol}(S_t(z))}\right],
\end{equation}
where $A\Delta B$ represents the symmetric difference of $A$ and $B$.
Taking $h(\delta) = e_j^{\top}\delta$ and $h(\delta)\equiv v(t, z')$, we get
\begin{equation}\label{eq:almost-final-Lipschitz-inequality}
\|v(t, z) - v(t, z')\| \le 3\mbox{diam}(\Omega)\left[2\omega(\eta) + \omega^2(\eta) + \frac{\overline{\mathfrak{p}}^2}{\underline{\mathfrak{p}}^2}\frac{\mbox{Vol}(S_t(z)\Delta S_t(z'))}{\mbox{Vol}(S_t(z))}\right],
\end{equation}
because
\[
\sup_{\delta\in S_t(z)}\|\delta\| \le \frac{\mbox{diam}(\Omega) - \varepsilon}{\max\{t, 1 - t\}} \le 2(\mbox{diam}(\Omega) - \varepsilon),\quad\mbox{and}\quad \sup_{z\in\Omega^\circ}\|v(t, z)\| \le \mbox{diam}(\Omega).
\]
The term $2\omega(\eta) + \omega^2(\eta)$ can be simplified as follows. If $\eta \le \omega^{-1}(1)$, then 
$$2\omega(\eta) + \omega^2(\eta) \le 3\omega(\eta).$$ If $\eta > \omega^{-1}(1)$, then using~\eqref{eq:bounded-velocity} $$\|v(t, z) - v(t, z')\| \le 2\mbox{diam}(\Omega)\eta/\omega^{-1}(1).$$ Therefore, in~\eqref{eq:almost-final-Lipschitz-inequality}, we can replace $2\omega(\eta) + \omega^2(\eta)$ with $3\omega(\eta) + \eta/\omega^{-1}(1)$. This yields
\begin{equation}\label{eq:almost-final-Lipschitz-inequality-2}
\|v(t, z) - v(t, z')\| \le 3\mbox{diam}(\Omega)\left[3\omega(\eta) + \frac{\eta}{\omega^{-1}(1)} + \frac{\overline{\mathfrak{p}}^2}{\underline{\mathfrak{p}}^2}\frac{\mbox{Vol}(S_t(z)\Delta S_t(z'))}{\mbox{Vol}(S_t(z))}\right].
\end{equation}
It suffices to now prove an upper bound on $\mbox{Vol}(S_t(z)\Delta S_t(z'))/\mbox{Vol}(S_t(z))$. 

We shall first prove that bound under the simpler setting of $\min\{t, 1 - t\} \le \varepsilon/\mbox{diam}(\Omega)$.
For $t \le \varepsilon/\mbox{diam}(\Omega)$, Proposition~\ref{prop:properties-of-S_t}(1) implies that
\[
S_t(z) = \frac{\Omega - z}{1 - t}\quad\mbox{and}\quad S_t(z') = \frac{\Omega - z'}{1 - t}.
\]
Therefore, 
\begin{align*}
\frac{\mbox{Vol}(S_t(z)\Delta S_t(z'))}{\mbox{Vol}(S_t(z))} &= \frac{\mbox{Vol}((\Omega - z)\Delta(\Omega - z'))}{\mbox{Vol}(\Omega - z)}\\ 
&= \frac{\mbox{Vol}((\Omega - z + z - z')\Delta(\Omega - z))}{\mbox{Vol}(\Omega - z)}.
\end{align*}
Then Theorem 3 of~\cite{Schymura2014} yields
\[
\frac{\mbox{Vol}(S_t(z)\Delta S_t(z'))}{\mbox{Vol}(S_t(z))} \le \eta\frac{\mathcal{H}^{d-1}(\partial(\Omega - z))}{\mbox{Vol}(\Omega - z)}.
\]
(Here on the right hand side, we could have written $\mathcal{H}^{d-1}(\partial\Omega)$ and $\mbox{Vol}(\Omega)$, but notationally, it would be easier to apply Lemma 2.1 of~\cite{giannopoulos2018inequalities} in the way presented.) Because $\Omega - z \supseteq \mathcal{B}(0, \varepsilon)$, Lemma 2.1 of~\cite{giannopoulos2018inequalities} yields
\begin{equation}\label{eq:low-values-of-t}
\frac{\mbox{Vol}(S_t(z)\Delta S_t(z'))}{\mbox{Vol}(S_t(z))} \le \frac{d\eta}{\varepsilon}.
\end{equation}
Similarly, if $t \ge 1 - \varepsilon/\mbox{diam}(\Omega)$, we get
\[
S_t(z) = \frac{z - \Omega}{t}\quad\mbox{and}\quad S_t(z') = \frac{z' - \Omega}{t}.
\]
Therefore,
\begin{equation}\label{eq:high-value-of-t}
    \frac{\mbox{Vol}(S_t(z)\Delta S_t(z'))}{\mbox{Vol}(S_t(z))} \le \frac{d\eta}{\varepsilon}.
\end{equation}
Hence, for $\min\{t, 1 - t\} \le \varepsilon/\mbox{diam}(\Omega)$, the result is proved because 
\[
\frac{\overline{\mathfrak{p}}^2}{\underline{\mathfrak{p}}^2}\frac{d\eta}{\varepsilon} ~\le~ \frac{\overline{\mathfrak{p}}^2}{\underline{\mathfrak{p}}^2}\frac{d\eta}{\varepsilon}\times \frac{5^{d+1}\mbox{diam}(\Omega)}{\varepsilon}\quad\mbox{for all}\quad \varepsilon \in [0,\mbox{diam}(\Omega)]\quad\mbox{and}\quad d \ge 1.
\]

To prove the result when $\min\{t, 1 - t\} \ge \varepsilon/\mbox{diam}(\Omega)$, we can assume that $\eta \le \varepsilon\min\{t, 1 - t\}/2$.
Otherwise, using the boundedness of $v(t, z)$ (i.e., inequality~\eqref{eq:bounded-velocity}) and that ${\overline{\mathfrak{p}}^2}/{\underline{\mathfrak{p}}^2} \ge 1$, we conclude that
\begin{equation}\label{eq:unrestrictied-Lipschitz}
\begin{split}
\|v(t, z) - v(t, z')\| &\le 2\mbox{diam}(\Omega)\\ 
&\le 2\mbox{diam}(\Omega)\frac{2\eta}{\varepsilon\min\{t, 1 - t\}}\\ 
&= \left(\frac{4\mbox{diam}(\Omega)}{\varepsilon\min\{t, 1 - t\}}\right)\eta\\
&\le \frac{4\mbox{diam}^2(\Omega)}{\varepsilon^2}\eta\\
&\le 3\mbox{diam}^2(\Omega)\frac{\eta}{\varepsilon^2}\frac{\overline{\mathfrak{p}}^2}{\underline{\mathfrak{p}}^2}d5^{d+1},
\end{split}
\end{equation}
which proves the result. Hence, it suffices to prove the validity of the result under
\begin{equation}\label{eq:restricted-Lipschitz}
    \eta \le \varepsilon\min\{t, 1 - t\}/2,\quad\mbox{and}\quad \min\{t, 1 - t\} \ge \varepsilon/\mbox{diam}(\Omega).
\end{equation}
From Proposition~\ref{prop:properties-of-S_t}(4), it follows that, under~\eqref{eq:restricted-Lipschitz},
\[
S_t(z)\cup S_t(z') \subseteq (S_t(z))^{2\eta/(t(1-t))},\quad\mbox{and}\quad S_t(z')\supseteq (S_t(z))^{-2\eta/(t(1-t))}.
\]
These relations imply that
\begin{align*}
    \mbox{Vol}(S_t(z')\setminus S_t(z)) &= \mbox{Vol}(S_t(z')\cup S_t(z)) - \mbox{Vol}(S_t(z))\\
    &\le \mbox{Vol}((S_t(z))^{2\eta/(t(1-t))}) - \mbox{Vol}(S_t(z)),\\
    \mbox{Vol}(S_t(z)\setminus S_t(z')) &= \mbox{Vol}(S_t(z)) - \mbox{Vol}(S_t(z)\cap S_t(z'))\\
    &\le \mbox{Vol}(S_t(z)) - \mbox{Vol}((S_t(z))^{-2\eta/(t(1-t))}). 
\end{align*}
Therefore, 
\[
\mbox{Vol}(S_t(z')\Delta S_t(z)) \le \mbox{Vol}(A^{\gamma}\setminus A)\quad\mbox{where}\quad A = (S_t(z))^{-2\eta/(t(1-t))}\quad\mbox{and}\quad \gamma = \frac{4\eta}{t(1-t)}.
\]
(This follows from Eq. (3.15) of~\cite{schneider2013convex}.)
Because $z\in\Omega^{-\varepsilon}$, we have (under~\eqref{eq:restricted-Lipschitz}) 
\[
\mathcal{B}\left(0,\,\frac{\varepsilon}{\max\{t, 1 - t\}}\right) \subseteq S_t(z)\quad\Rightarrow\quad \mathcal{B}\left(0,\,\frac{\varepsilon}{2\max\{t, 1 - t\}}\right) \subseteq A.
\]
This implies that $A$ satisfies the assumptions of Lemma~\ref{lem:Lipschitz-of-volume} (with $r = \varepsilon/(2\max\{t, 1 - t\})$) and hence,
\[
\frac{\mbox{Vol}(S_t(z')\Delta S_t(z))}{\mbox{Vol}(S_t(z))} \le \frac{\mbox{Vol}(A)}{\mbox{Vol}(S_t(z))}\frac{8d\eta\max\{t, 1 - t\}}{t(1-t)\varepsilon}\left(1 + \frac{8\eta\max\{t, 1 - t\}}{t(1-t)\varepsilon}\right)^{d-1}.
\]
Because $A\subseteq S_t(z)$, we conclude (under~\eqref{eq:restricted-Lipschitz}) that
\begin{equation}\label{eq:correct-restricted-Lipschitz-constant}
\begin{split}
\frac{\mbox{Vol}(S_t(z')\Delta S_t(z))}{\mbox{Vol}(S_t(z))} &\le \eta\times\frac{8d}{\varepsilon\min\{t, 1 - t\}}\left(1 + \frac{4\varepsilon\min\{t, 1 - t\}\max\{t, 1 - t\}}{t(1-t)\varepsilon}\right)^{d-1}\\
&\le \eta\times \frac{d5^{d+1}}{\varepsilon \min\{t, 1 - t\}} \le \eta\times\frac{d5^{d+1}\mbox{diam}(\Omega)}{\varepsilon^2}.
\end{split}
\end{equation}
Substituting in~\eqref{eq:almost-final-Lipschitz-inequality-2}, we obtain
\begin{equation}\label{eq:final-Lipschitz-no-rest-t}
\|v(t, z) - v(t, z')\| \le 3\mbox{diam}(\Omega)\left[{3\omega(\eta)} + \frac{\eta}{\omega^{-1}(1)} + \eta\times\frac{\overline{\mathfrak{p}}^2\mbox{diam}(\Omega)d5^{d+1}}{\underline{\mathfrak{p}}^2\varepsilon^2}\right].
\end{equation}

Combining inequalities~\eqref{eq:almost-final-Lipschitz-inequality},~\eqref{eq:final-Lipschitz-no-rest-t}, and~\eqref{eq:low-values-of-t},~\eqref{eq:high-value-of-t}, we obtain the result.
\subsection{Proof of Lemma~\ref{lem:distance-to-boundary-latest}}\label{appsubsec:proof-of-dist-to-bdry}
    The existence of a solution follows from Theorem~\ref{thm:Peano-existence}(1) with $\mathcal{S} = \Omega$ and the boundedness of $v(\cdot, \cdot)$ as proved in~\eqref{eq:bounded-velocity}.

    To prove a differential inequality for the distance to the boundary of $\Omega$. Set
    \[
    \mathfrak{D}(t) = \mbox{dist}(y(t), \partial\Omega). 
    \]
    Suppose, if possible, that $T < 1$.
    By definition of $T$, $y(t)\in\Omega^\circ$ for $t\in[0, T)$ and hence, $\mathfrak{D}(t) > 0$ for all $t\in[0, T)$. Moreover, $\mathfrak{D}(T) = 0$ (i.e., $y(T)\in\partial\Omega$), because otherwise an application of Theorem~\ref{thm:Peano-existence}(1) for the ODE starting at time $T$ and the initial value $y(T)$ would have a solution that lies in $\Omega^\circ$ on $[T, T + \eta]$ (for some small $\eta > 0$). This contradicts the definition of $T$. 
    
    We shall now prove that $\mathfrak{D}(\cdot)$ is almost everywhere differentiable on $[0, T]$ and that
    \begin{equation}\label{eq:differential-inequality-distance-to-boundary}
    \mathfrak{D}'(t) \ge -\frac{1}{1 - t}\mathfrak{D}(t)\quad\mbox{almost everywhere}\quad t\in[0, T).
    \end{equation}
    Assuming~\eqref{eq:differential-inequality-distance-to-boundary} holds, we get that 
    \[
    \int_0^T \frac{\mathfrak{D}'(t)}{\mathfrak{D}(t)}dt \ge -\int_0^T \frac{1}{(1-t)}dt\quad\Leftrightarrow\quad \log\left(\frac{\mathfrak{D}(T)}{\mathfrak{D}(0)}\right) \ge \log(1-T)\quad\Leftrightarrow\quad \mathfrak{D}(T) \ge (1 - T)\mathfrak{D}(0). 
    \]
    This implies that~$\mathfrak{D}(T) > 0$ if $T < 1$, contradicting the definition of $T$. Hence, $T = 1$. Inequality~\eqref{eq:distance-from-boundary} follows from this calculation.
    
    To prove~\eqref{eq:differential-inequality-distance-to-boundary}, note that for $t, s\in[0, T]$,
    \begin{align*}
        |\mathfrak{D}(t) - \mathfrak{D}(s)| &\le |\mbox{dist}(y(t), \partial\Omega) - \mbox{dist}(y(s), \partial\Omega)|\\
        &\le \|y(t) - y(s)\|\\
        &\le 2\mbox{diam}(\Omega)|t - s|,
    \end{align*}
    where the last inequality follows from~\eqref{eq:bounded-velocity}. This implies that $t\mapsto \mathfrak{D}(t)$ is absolutely continuous on $[0, T]$, and hence, almost everywhere differentiable. Since $y(\cdot)$ is a solution to the integral equation~\eqref{eq:sub-rectified-flow-equation}, it is also absolutely continuous on $[0, T]$. Hence, there exists a zero measure set $\mathcal{N}\subset[0, 1]$ such that $\mathfrak{D}(\cdot)$ and $y(\cdot)$ are both differentiable for all $s\in[0,1]\setminus\mathcal{N}$.

    We shall prove the inequality~\eqref{eq:differential-inequality-distance-to-boundary} for any $t\in[0, T)\setminus\mathcal{N}$. 
    For any $s\in[0, 1]$, let $\tilde{y}(s)$ be any projection of $y(s)$ on to $\partial\Omega$:
    \[
    \tilde{y}(s) \in \mbox{Proj}_{\partial\Omega}(y(s)).        
    \]
    (Pick any element in the projection. Our bound below does not depend on this choice.)
    By Lipschitz continuity of $\mathfrak{D}(\cdot)$, $\mathfrak{D}(t) > 0$ for $t < T$ implies the existence of $\bar{\eta} > 0$ such that $\mathfrak{D}(t + \eta) > 0$ for all $\eta \in [0, \bar{\eta}]$. 
    Fix any $\eta \in [0, \bar{\eta}]$. 
    It is clear that
    \[
    \mathfrak{D}(t + \eta) = \|y(t + \eta) - \tilde{y}(t + \eta)\|,\quad\mbox{and}\quad \mathfrak{D}(t) \le \|y(t) - \tilde{y}(t + \eta)\|.
    \]
    Therefore,
    \begin{align*}
        \frac{\mathfrak{D}(t + \eta) - \mathfrak{D}(t)}{\mathfrak{D}(t + \eta)} &\ge 1 - \left(\frac{\|y(t) - \tilde{y}(t + \eta)\|^2}{\|y(t + \eta) - \tilde{y}(t + \eta)\|^2}\right)^{1/2}\\
        &= 1 - \left(1 + \frac{\|y(t) - y(t + \eta)\|^2 + 2(y(t) - y(t + \eta))^{\top}(y(t + \eta) - \tilde{y}(t + \eta))}{\mathfrak{D}^2(t + \eta)}\right)^{1/2}. 
    \end{align*}
    Using the inequality $\sqrt{1 + x} \le 1 + x/2$ for all $x \ge -1$, we get
    \begin{equation}\label{eq:intermediate-dist-to-bnd}
        \frac{\mathfrak{D}(t + \eta) - \mathfrak{D}(t)}{\mathfrak{D}(t + \eta)} \ge -\frac{\|y(t) - y(t + \eta)\|^2}{2\mathfrak{D}^2(t + \eta)} - \frac{(y(t) - y(t + \eta))^{\top}(y(t + \eta) - \tilde{y}(t + \eta))}{\mathfrak{D}^2(t + \eta)}. 
    \end{equation}
    Because $y(\cdot)$ solves the integral equation~\eqref{eq:rectified-flow-integral-equation}, inequality~\eqref{eq:bounded-velocity} implies 
    \begin{equation}\label{eq:first-term-dist-to-bnd}
    \|y(t) - y(t + \eta)\|^2 \le 4\mbox{diam}^2(\Omega)\eta^2,
    \end{equation}
    Furthermore, because $t\in[0,1]\setminus\mathcal{N}$, we get 
    \begin{equation}\label{eq:second-term-dist-to-bnd}
    \liminf_{\eta\downarrow0}\,\frac{\|y(t + \eta) - y(t) - \eta v(t, y(t))\|}{\eta} \to 0. 
    \end{equation}
    Hence, inequality~\eqref{eq:intermediate-dist-to-bnd} becomes
    \begin{align*}
    \frac{\mathfrak{D}(t + \eta) - \mathfrak{D}(t)}{\mathfrak{D}(t + \eta)} &\ge -\frac{4\eta^2\mbox{diam}^2(\Omega)}{2\mathfrak{D}^2(t + \eta)} \\
    &\quad + \frac{(y(t + \eta) - y(t) - \eta v(t, y(t)))^{\top}(y(t + \eta) - \tilde{y}(t + \eta))}{\mathfrak{D}^2(t + \eta)}\\
    &\quad + \eta\frac{v(t, y(t))^{\top}(y(t + \eta) - \tilde{y}(t + \eta))}{\mathfrak{D}^2(t + \eta)}.
    \end{align*}
    To control the third term, note that
    \[
    v(t, y(t)) = \int_{S_t(y(t))} \delta q(\delta; t, y(t))dt,\quad\mbox{where}\quad q(\delta; t, z) := \frac{p_0(z - t\delta)p_1(z + (1-t)\delta)}{\int_{S_t(z)} p_0(z - t\delta')p_1(z + (1-t)\delta')d\delta'}.
    \]
    Because $y(t)\in\Omega^\circ,$ $q(\cdot; t, y(t))$ is a valid probability density and, hence,
    \[
    v(t, y(t))^{\top}(y(t + \eta) - \tilde{y}(t + \eta)) ~\ge~ \inf_{\delta\in S_t(y(t))}\, \delta^{\top}(y(t + \eta) - \tilde{y}(t + \eta)).
    \]
    Because $\tilde{y}(t + \eta) \in \mbox{Proj}_{\partial\Omega}(y(t + \eta))$, Lemma~\ref{lem:projection-to-boundary} implies
    \[
    (y(t + \eta) - \tilde{y}(t + \eta))^{\top}(z - \tilde{y}(t + \eta)) \ge 0\quad\mbox{for all}\quad z\in \Omega.
    \]
    Because $\delta\in S_t(y(t))$ implies $y(t) + (1 - t)\delta\in\Omega$, we conclude 
    \[
    (y(t + \eta) - \tilde{y}(t + \eta))^{\top}(y(t) + (1 - t)\delta - \tilde{y}(t + \eta)) \ge 0.
    \]
    Equivalently, for any $\delta\in S_t(y(t))$,
    \begin{align*}
    \delta^{\top}(y(t + \eta) - \tilde{y}(t + \eta)) &\ge -\frac{1}{1 - t}(y(t + \eta) - \tilde{y}(t + \eta))^{\top}(y(t) - \tilde{y}(t + \eta))\\
    &= -\frac{1}{1 - t}(y(t + \eta) - \tilde{y}(t + \eta))^{\top}(y(t+\eta) - \tilde{y}(t + \eta))\\
    &\quad -\frac{1}{1 - t}(y(t + \eta) - \tilde{y}(t + \eta))^{\top}(y(t) - y(t+\eta))\\
    &\ge -\frac{\mathfrak{D}^2(t + \eta)}{1 - t} - \frac{1}{1 - t}\mathfrak{D}(t + \eta)\|y(t) - y(t + \eta)\|\\
    &\ge -\frac{\mathfrak{D}^2(t + \eta)}{1 - t} - \frac{2\eta}{1 - t}\mathfrak{D}(t + \eta)\mbox{diam}(\Omega),
    \end{align*}
    where the last inequality follows from~\eqref{eq:first-term-dist-to-bnd}.
    Combining all the inequalities, we get that for all $\eta\in[0,\bar{\eta}]$,
    \begin{align*}
    \frac{\mathfrak{D}(t + \eta) - \mathfrak{D}(\eta)}{\mathfrak{D}(t + \eta)} &\ge - \frac{4\eta^2\mbox{diam}^2(\Omega)}{\mathfrak{D}^2(t + \eta)}
    - \frac{\|y(t + \eta) - y(t) - \eta v(t, y(t))\|}{\mathfrak{D}(t + \eta)}\\
    &\quad - \frac{\eta\mathfrak{D}^2(t + \eta)}{(1-t)\mathfrak{D}^2(t + \eta)} - \frac{2\eta^2\mbox{diam}(\Omega)}{(1 - t)\mathfrak{D}(t + \eta)}.
    \end{align*}
    Note that this inequality does not depend on the choice of projection $\tilde{y}(t + \eta)$. Now, dividing both sides by $\eta$ and letting $\eta\downarrow0$ implies (using~\eqref{eq:second-term-dist-to-bnd})
    \[
    \liminf_{\eta\downarrow0}\,\frac{\mathfrak{D}(t + \eta) - \mathfrak{D}(\eta)}{\eta\mathfrak{D}(t + \eta)} \ge -\frac{1}{1 - t}.
    \]
    Furthermore, the continuity of $\mathfrak{D}(\cdot)$ implies $\mathfrak{D}(t + \eta)/\mathfrak{D}(t) \to 1$ as $\eta\to0$ and therefore we get~\eqref{eq:differential-inequality-distance-to-boundary}.
\subsection{Proof of Lemma~\ref{lem:distance-to-boundary-at-1}}\label{appsubsec:proof-of-dist-to-bnry-at-1}
    Because $\mu_0$ has a Lebesgue density and $\Omega$ is a convex body implies $\mu_0(\mathcal{E}_1) = 1$. From Theorem~\ref{thm:uniquene-ae-implies-transport-map}, we get $\mathfrak{R}(1, X) \sim \mu_1$ whenever $X\sim \mu_0$ and because $\mu_1$ also has a Lebesgue density, we conclude $\mu_0(\mathcal{E}_2) = 1$. Finally, if $\omega(\eta) \le C\eta$, then Lemma~\ref{lem:boundedness-and-local-Lipschitz} implies that for any $t\in[0, 1]$, $z\mapsto v(t, z)$ is differentiable almost everywhere $z\in\Omega$. (This follows because for any $z\in\Omega^\circ$, $\mbox{dist}(z, \partial\Omega) > 0$ and for all $h$ with small enough Euclidean norm, $z + h \in \Omega^\circ$. This yields $z, z + h \in \Omega^{-\varepsilon}$ for some $\varepsilon > 0$ and hence, $z\mapsto v(t, z)$ is locally Lipschitz on $\Omega^\circ$. Rademacher theorem, now, implies almost everywhere differentiability.) By Theorem~\ref{thm:uniquene-ae-implies-transport-map}, $\mathfrak{R}(s, X) \overset{d}{=} (1-s)X_0 + sX_1$ and hence, is absolutely continuous with respect to the Lebesgue measure. Therefore, $\mu_0(B(t)) = 0$ for all $t\in[0, 1]$. Observe now that, by Fubini's theorem,
    \[
    (\mbox{Leb}\times\mu_0)(\mathcal{E}_3^c) = \int_0^1 \mu_0(B(t))dt = 0.
    \]
    On the other hand, note that
    \[
    0 = (\mbox{Leb}\times\mu_0)(\mathcal{E}_3^c) = \int_{\Omega} \mbox{Leb}(\{t\in[0,1]:\, \mathfrak{R}(t, z)\in B(t)\})\mu_0(dz).
    \]
    Because the integrand is non-negative, it must be zero almost everywhere, which implies that $\mu_0(\mathcal{E}_4) = 0$.
    
    To prove the bounds on distance to boundary, note from~\eqref{eq:whole-path-integral} (and~\eqref{eq:bounded-velocity}) that 
    \[
    \|\mathfrak{R}(t, x) - \mathfrak{R}(t', x)\| \le \mbox{diam}(\Omega)|t - t'|\quad\mbox{for all}\quad t, t'\in[0, 1].
    \]
    Hence, Lipschitzness of the distance to a compact set implies that 
    \[
    \mbox{dist}(\mathfrak{R}(t, x), \partial\Omega) \ge \mbox{dist}(\mathfrak{R}(1, x), \partial\Omega) - \mbox{diam}(\Omega)(1 - t).
    \]
    Additionally, from Lemma~\ref{lem:distance-to-boundary-latest}, we know
    \[
    \mbox{dist}(\mathfrak{R}(t, x), \partial\Omega) \ge (1-t)\mbox{dist}(\mathfrak{R}(0, x),\,\partial\Omega).
    \]
    Therefore, 
    \begin{align*}
    &\inf_{t\in[0, 1]}\,\mbox{dist}(\mathfrak{R}(t, x), \partial\Omega)\\ 
    &\quad\ge \inf_{t\in[0, 1]}\max\left\{\mbox{dist}(\mathfrak{R}(1, x), \partial\Omega) - \mbox{diam}(\Omega)(1 - t),\, (1-t)\mbox{dist}(\mathfrak{R}(0, x),\,\partial\Omega)\right\}\\
    &\quad= \frac{\mbox{dist}(\mathfrak{R}(1, x),\,\partial\Omega)\mbox{dist}(\mathfrak{R}(0, x),\partial\Omega)}{\mbox{diam}(\Omega) + \mbox{dist}(\mathfrak{R}(0, x),\,\partial\Omega)}. 
    \end{align*}
    This proves~\eqref{eq:dist-to-boundary-path}. 
    
    From Theorem~\ref{thm:rectified-flow-ODE-unique-sol}, we know that~\eqref{eq:rectified-flow-integral-equation} has a unique solution, which by Theorem~\ref{thm:uniquene-ae-implies-transport-map} implies that $\mathfrak{R}(1, X) \sim \mu_1$ whenever $X\sim \mu_0$. Therefore, 
    \begin{align*}
    \mathbb{P}(\mbox{dist}(\mathfrak{R}(1, X),\,\partial\Omega) \le \gamma) &= \frac{\mathbb{P}(\mbox{dist}(\mathfrak{R}(1, X),\,\partial\Omega) \le \gamma)}{\mathbb{P}(\mathfrak{R}(1, X)\in\Omega)}\\ 
    &= \frac{\mathbb{P}(\mathfrak{R}(1, X)\in \Omega\setminus \Omega^{-\gamma})}{\mathbb{P}(\mathfrak{R}(1, X)\in\Omega)}\\
    &\le \frac{\overline{\mathfrak{p}}}{\underline{\mathfrak{p}}}\frac{\mbox{Vol}(\Omega\setminus\Omega^{-\gamma})}{\mbox{Vol}(\Omega)}.
    \end{align*}
    Because $\Omega\supseteq \mathcal{B}(z^*, r_{\mathrm{in}})$, we get $\Omega^{-\gamma} \supseteq \mathcal{B}(z^*, r_{\mathrm{in}} - \gamma)$ and hence, Lemma~\ref{lem:Lipschitz-of-volume} yields
    \[
    \frac{\mbox{Vol}(\Omega\setminus\Omega^{-\gamma})}{\mbox{Vol}(\Omega)} \le \frac{d\gamma}{(r_{\mathrm{in}} - \gamma)_+}\left(1 + \frac{\gamma}{(r_{\mathrm{in}} - \gamma)_+}\right)^{d-1}. 
    \]
    If $\gamma \le r_{\mathrm{in}}/2$, then $\max\{\gamma, r_{\mathrm{in}}/2\} \le r_{\mathrm{in}} - \gamma$ and hence, 
    \[
    \mathbb{P}(\mbox{dist}(\mathfrak{R}(1, X),\,\partial\Omega) \le \gamma) \le \frac{\overline{\mathfrak{p}}}{\underline{\mathfrak{p}}}\frac{2^{d}d\gamma}{r_{\mathrm{in}}}.
    \]
    On the other hand, if $\gamma > r_{\mathrm{in}}/2$, then
    \[
    \mathbb{P}(\mbox{dist}(\mathfrak{R}(1, X),\,\partial\Omega) \le \gamma) \le 1 \le \frac{2\gamma}{r_{\mathrm{in}}}.
    \]
    Hence, for all $\gamma > 0$,
    \[
    \mathbb{P}(\mbox{dist}(\mathfrak{R}(1, X),\,\partial\Omega) \le \gamma) \le \frac{\overline{\mathfrak{p}}}{\underline{\mathfrak{p}}}\frac{2^{d}d\gamma}{r_{\mathrm{in}}}.
    \]
    For all $x\in\mathcal{A}_{\gamma}^c\cap\Omega^{-\varepsilon}$, inequality~\eqref{eq:dist-to-boundary-path} implies the result.
\subsection{Proof of Theorem~\ref{thm:continuity-of-paths}}\label{appsubsec:proof-continuity-of-paths}
Fix any $\varepsilon > 0$ and $x, x'\in\Omega^{-\varepsilon}$ with $\|x - x'\| = \eta$. From Theorem~\ref{thm:rectified-flow-ODE-unique-sol}, we know that $\mathfrak{R}(t, x)$ and $\mathfrak{R}(t, x')$ are uniquely defined. From~\eqref{eq:rectified-flow-integral-equation}, we know that
\begin{align*}
\mathfrak{R}(t, x) &= x + \int_0^t v(s, \mathfrak{R}(s, x))ds,\\
\mathfrak{R}(t, x') &= x' + \int_0^t v(s, \mathfrak{R}(s, x'))ds. 
\end{align*} 
This implies that
\begin{equation}\label{eq:basic-inequality}
\|\mathfrak{R}(t, x) - \mathfrak{R}(t, x')\| \le \|x - x'\| + \int_0^t \|v(s, \mathfrak{R}(s, x)) - v(s, \mathfrak{R}(s, x'))\|ds.
\end{equation}
From Lemma~\ref{lem:distance-to-boundary-latest}, we know that
\[
\mathfrak{R}(s, x), \mathfrak{R}(s, x') \in \Omega^{-(1-s)\varepsilon}\quad\mbox{for all}\quad s\in[0,1].
\]
This implies (using~\eqref{eq:locally-Lipschitz}) that
\[
\|v(s, \mathfrak{R}(s, x)) - v(s, \mathfrak{R}(s, x'))\| \le \mathfrak{L}_1\omega(\|\mathfrak{R}(s, x) - \mathfrak{R}(s, x')\|) + \mathfrak{L}_2((1-s)\varepsilon; s)\|\mathfrak{R}(s, x) - \mathfrak{R}(s, x')\|.
\]
Therefore, substituting in~\eqref{eq:basic-inequality}, we conclude
\begin{equation}\label{eq:differential-inequality-bnd-path}
\begin{split}
\|\mathfrak{R}(t, x) - \mathfrak{R}(t, x')\| &\le \|x - x'\| + \int_0^t \mathfrak{L}_1\omega(\|\mathfrak{R}(s, x) - \mathfrak{R}(s, x')\|)ds\\ 
&\qquad+ \int_0^t \mathfrak{L}_2((1-s)\varepsilon; s)\|\mathfrak{R}(s, x) - \mathfrak{R}(s, x')\|ds.
\end{split}
\end{equation}
Set
\begin{align*}
V(t) &:= \|x - x'\| + \int_0^t \mathfrak{L}_1\omega(\|\mathfrak{R}(s, x) - \mathfrak{R}(s, x')\|)ds\\ 
&\qquad+ \int_0^t \mathfrak{L}_2((1-s)\varepsilon; s)\|\mathfrak{R}(s, x) - \mathfrak{R}(s, x')\|ds\quad\mbox{for all}\quad t \in [0, 1].
\end{align*}
Clearly, for all $t\in[0, 1]$,
\begin{align*}
V'(t) &= \mathfrak{L}_1\omega(\|\mathfrak{R}(t, x) - \mathfrak{R}(t, x')\|) + \mathfrak{L}_2((1-t)\varepsilon)\|\mathfrak{R}(t, x) - \mathfrak{R}(t, x')\|\\
&\le \mathfrak{L}_1\omega(V(t)) + \mathfrak{L}_2((1-t)\varepsilon; t)V(t)\\
&\le (\mathfrak{L}_1 + \mathfrak{L}_2((1 - t)\varepsilon; t))(\omega(V(t)) + V(t)).
\end{align*}
This implies that
\begin{equation}\label{eq:main-differential-inequality-continuity-of-paths}
\frac{V'(t)}{V(t) + \omega(V(t))} \le \mathfrak{L}_1 + \mathfrak{L}_2((1-t)\varepsilon; t)\quad\mbox{for all}\quad t\in[0, 1].
\end{equation}
Equivalently, 
\begin{equation}\label{eq:main-integral-inequality}
\Psi(V(t)) - \Psi(V(0)) \le \mathfrak{L}_1t + \int_0^t \mathfrak{L}_2((1 - s)\varepsilon; s)ds,\quad\mbox{for all}\quad t\in [0, 1].
\end{equation}
From~\eqref{eq:time-dependent-Lipschitz-constant}, setting $\mathfrak{C} = 3\overline{\mathfrak{p}}^2\mbox{diam}(\Omega)5^{d+1}/\underline{\mathfrak{p}}^2$, we get
\[
\mathfrak{L}_2((1-s)\varepsilon; s) = \frac{3\mbox{diam}(\Omega)}{\omega^{-1}(1)} + \frac{\mathfrak{C}}{(1-s)\varepsilon}\times\begin{cases}
1, &\mbox{if } s \le \varepsilon/(\varepsilon + \mbox{diam}(\Omega)),\\
1/s, &\mbox{if }\varepsilon/(\varepsilon + \mbox{diam}(\Omega)) \le s \le 1/2,\\
1/(1-s), &\mbox{if }s \ge 1/2.
\end{cases}
\]
This yields (for $t \ge 1/2$)
\begin{align*}
\int_0^t \mathfrak{L}_2((1-s)\varepsilon; s)ds &= \frac{3t\mbox{diam}(\Omega)}{\omega^{-1}(1)} + \frac{\mathfrak{C}}{\varepsilon}\left[\int_0^{1/2} (1-s)^{-1}ds + \int_{\varepsilon/(\varepsilon + \mbox{diam}(\Omega))}^{1/2} s^{-1}ds + \int_{1/2}^t (1-s)^{-2}ds\right]\\
&= \frac{3t\mbox{diam}(\Omega)}{\omega^{-1}(1)} + \frac{\mathfrak{C}}{\varepsilon}\left[\ln\left(\frac{\mbox{diam}(\Omega) + \varepsilon}{\varepsilon}\right) + \frac{2t-1}{1-t}\right].
\end{align*}
Noting $V(0) = \|x - x'\|$, we conclude that, for all $t\in [1/2, 1]$,
\begin{equation}\label{eq:partial-result-uniform-cont}
\begin{split}
&\sup_{s\in[0,t]}\|\mathfrak{R}(s, x) - \mathfrak{R}(s, x')\|\\ 
&\quad\le V(t) \le~ \Psi^{-1}\left(\Psi(\|x - x'\|) + \mathfrak{L}_1 + \frac{3\mbox{diam}(\Omega)}{\omega^{-1}(1)} + \frac{\mathfrak{C}\ln(1 + \mbox{diam}(\Omega)/\varepsilon)}{\varepsilon} + \frac{\mathfrak{C}t}{\varepsilon(1-t)}\right)\\
&\quad\le \Psi^{-1}\left(\Psi(\|x - x'\|) + \mathfrak{C}_1 + \frac{\mathfrak{C}\ln(2\mbox{diam}(\Omega)/\varepsilon)}{\varepsilon} + \frac{\mathfrak{C}}{\varepsilon}\frac{t}{1-t}\right),
\end{split}
\end{equation}
for some constant $\mathfrak{C}_1$.
Suppose 
\begin{equation}\label{eq:restriction-x-x'}
\|x - x'\| \le \Psi^{-1}(-2\mathfrak{C}/\varepsilon),
\end{equation}
so that $\Psi(\|x - x'\|) \le -2\mathfrak{C}/{\varepsilon}.$
Set $t^*\in[1/2, 1)$ such that
\begin{equation}\label{eq:some-simple-equation}
(2\mathfrak{C}/{\varepsilon})\frac{t^*}{1 - t^*} = -\Psi(\|x - x'\|)\quad\equiv\quad t^* = \frac{-\Psi(\|x - x'\|)}{2\mathfrak{C}/{\varepsilon} - \Psi(\|x - x'\|)} \ge \frac{1}{2}.
\end{equation}
Inequality~\eqref{eq:partial-result-uniform-cont} with $t = t^*$, now, implies
\[
\sup_{s\in[0,t^*]}\|\mathfrak{R}(s, x) - \mathfrak{R}(s, x')\| ~\le~ \Psi^{-1}(\Psi(\|x - x'\|)/2 + \mathfrak{C}_1 + \mathfrak{C}\ln(2\mbox{diam}(\Omega)/\varepsilon)/\varepsilon).
\]
To bound the difference for $t > t^*$, we use the fact that $\|v(s, z)\| \le \mbox{diam}(\Omega)$ to conclude
\begin{align*}
\sup_{t\in[t^*, 1]}\,\|\mathfrak{R}(t, x) - \mathfrak{R}(t, x')\| &\le \|\mathfrak{R}(t^*, x) - \mathfrak{R}(t^*, x')\| + 2\mbox{diam}(\Omega)|1 - t^*|\\
&\le \Psi^{-1}(\Psi(\|x - x'\|)/2) + 2\mbox{diam}(\Omega)\frac{2\mathfrak{C}}{2\mathfrak{C} - \varepsilon\Psi(\|x - x'\|)}.
\end{align*}
Finally, if~\eqref{eq:restriction-x-x'} is not satisfied, then using the fact that $\mathfrak{R}(t, x), \mathfrak{R}(t, x')\in\Omega$, we obtain that
\[
\sup_{t\in[0, 1]}\|\mathfrak{R}(t, x) - \mathfrak{R}(t, x')\| \le \mbox{diam}(\Omega) \le \mbox{diam}(\Omega)\frac{\|x - x'\|}{\Psi^{-1}(-2\mathfrak{C}/{\varepsilon})}.
\]
Combining all these inequalities, we conclude the first part of the result.

To prove the result for $x, x'\in \mathcal{A}_{\gamma}^c\cap\Omega^{-\varepsilon}$, we note from the proof of Lemma~\ref{lem:distance-to-boundary-at-1} that
\[
\mathfrak{R}(s, x), \mathfrak{R}(s, x') \in \Omega^{-(1-s)\varepsilon}\quad\mbox{for}\quad 0 \le s\le 1 - \frac{\gamma}{\varepsilon + \mbox{diam}(\Omega)},
\]
and
\[
\mathfrak{R}(s, x), \mathfrak{R}(s, x') \in \Omega^{-(\gamma - \mathrm{diam}(\Omega)(1-s))}\quad\mbox{for}\quad 1 - \frac{\gamma}{\varepsilon + \mbox{diam}(\Omega)} \le s \le 1,
\]
Note that $\gamma \le \mbox{diam}(\Omega)/2$ and hence $1 - \gamma/(\varepsilon + \mbox{diam}(\Omega)) \ge 1/2$.
This implies (using~\eqref{eq:locally-Lipschitz}) that
\begin{equation}\label{eq:better-lipschitz-constant-continuity-of-paths}
\|v(s, \mathfrak{R}(s, x)) - v(s, \mathfrak{R}(s, x'))\| \le \mathfrak{L}_1\omega(\|\mathfrak{R}(s, x) - \mathfrak{R}(s, x')\|) + \overline{\mathfrak{L}}_2(s)\|\mathfrak{R}(s, x) - \mathfrak{R}(s, x')\|,
\end{equation}
where
\begin{equation*}
\begin{split}
    \overline{\mathfrak{L}}_2(s) &:= \frac{3\mbox{diam}(\Omega)}{\omega^{-1}(1)}\\ 
    &+ \mathfrak{C}\times
    \begin{cases}
    ((1-s)\varepsilon)^{-1}, &\mbox{if }s \le \varepsilon/(\varepsilon + \mbox{diam}(\Omega)),\\
    ((1-s)s\varepsilon)^{-1}, &\mbox{if }\varepsilon/(\varepsilon + \mbox{diam}(\Omega)) \le s \le 1/2,\\
    ((1-s)^2\varepsilon)^{-1}, &\mbox{if }1/2 \le s \le 1 - \gamma/(\varepsilon + \mbox{diam}(\Omega)),\\
    ((\gamma - \mathrm{diam}(\Omega)(1-s))(1-s))^{-1}, &\mbox{if }1 - \gamma/(\varepsilon + \mbox{diam}(\Omega)) \le s \le 1 - \gamma/(2\mbox{diam}(\Omega)),\\
    (\gamma - \mathrm{diam}(\Omega)(1-s))^{-1}, &\mbox{if }s \ge 1 - \gamma/(2\mbox{diam}(\Omega)).
    \end{cases}
\end{split}
\end{equation*}
From this expression, we also derive
\[
\int_0^1 \overline{\mathfrak{L}}_2(s)ds = \frac{3\mbox{diam}(\Omega)}{\omega^{-1}(1)} + \frac{\mathfrak{C}}{\varepsilon}\left[\ln\left(\frac{\mbox{diam}(\Omega) + \varepsilon}{\varepsilon}\right) + \frac{\mbox{diam}(\Omega) + \varepsilon - 2\gamma}{\gamma}\right] + \frac{\mathfrak{C}}{\gamma}\ln\left(\frac{\mbox{diam}(\Omega)}{\varepsilon}\right) + \frac{\mathfrak{C}\ln 2}{\mbox{diam}(\Omega)}.
\]
Hence, (following the proof of)~\eqref{eq:main-integral-inequality} implies
\[
\Psi(V(1)) - \Psi(V(0)) \le \mathfrak{L}_1 + \frac{3\mbox{diam}(\Omega)}{\omega^{-1}(1)} + \frac{\mathfrak{C}\ln 2}{\mbox{diam}(\Omega)} + \frac{2\mathfrak{C}}{\min\{\varepsilon, \gamma\}}\ln\left(\frac{2\mbox{diam}(\Omega)}{\varepsilon}\right) + \frac{2\mathfrak{C}\mbox{diam}(\Omega)}{\varepsilon\gamma}.
\]
Therefore,
\begin{align*}
    &\sup_{s\in[0,1]}\,\|\mathfrak{R}(s, x) - \mathfrak{R}(s, x')\|\\ 
    &\le~ \Psi^{-1}\left(\Psi(\|x - x'\|) + \mathfrak{L}_1 + \frac{3\mbox{diam}(\Omega)}{\omega^{-1}(1)} + \frac{\mathfrak{C}\ln 2}{\mbox{diam}(\Omega)} + \frac{2\mathfrak{C}}{\min\{\varepsilon, \gamma\}}\ln\left(\frac{2\mbox{diam}(\Omega)}{\varepsilon}\right) + \frac{2\mathfrak{C}\mbox{diam}(\Omega)}{\varepsilon\gamma}\right).
\end{align*}
\subsection{Proof of Theorem~\ref{thm:equi-continuity-in-velocity}}\label{appsubsec:proof-equicontinuity-in-velocity}
    Fix any $\nu_1, \nu_2\in\mathcal{V}$ such that $\|\nu_1 - \nu_2\|_{\infty} \le \Delta$.
    Applying~\eqref{eq:new-integral-equation-equicontinuity} with $\nu_1$ and $\nu_2$, we obtain
    \begin{align*}
        \mathfrak{R}_{\nu_1}(t, x) - \mathfrak{R}_{\nu_2}(t, x) &= \int_0^t \left\{\nu_1(s, \mathfrak{R}_{\nu_1}(s, x)) - \nu_2(s, \mathfrak{R}_{\nu_2}(s, x))\right\}ds\\
        &= \int_0^t \{\nu_1(s, \mathfrak{R}_{\nu_1}(s, x)) - \nu_2(s, \mathfrak{R}_{\nu_1}(s, x))\}ds\\
        &\quad+ \int_0^t \{\nu_2(s, \mathfrak{R}_{\nu_1}(s, x)) - \nu_2(s, \mathfrak{R}_{\nu_2}(s, x))\}ds.
    \end{align*}
    This implies that for $t\in[0, 1]$
    \begin{align*}
    \|\mathfrak{R}_{\nu_1}(t, x) - \mathfrak{R}_{\nu_2}(t, x)\| \le \Delta + \int_0^t \|\nu_2(s, \mathfrak{R}_{\nu_1}(s, x)) - \nu_2(s, \mathfrak{R}_{\nu_2}(s, x))\|ds.
    \end{align*}
    To control the second term on the right hand side, observe that Lemma~\ref{lem:distance-to-boundary-latest} implies that for all $\nu\in\mathcal{V}$, $\mathfrak{R}_{\nu}(s, x) \in \Omega^{-(1-s)\varepsilon}$ for all $s\in[0, 1]$. Therefore, property~\ref{eq:Lipschitz-v-class-prop} implies
    \begin{equation}\label{eq:equicontinuity-weak-lipschitz}
    \begin{split}
    &\|\nu_2(s, \mathfrak{R}_{\nu_1}(s, x)) - \nu_2(s, \mathfrak{R}_{\nu_2}(s, x))\|\\ 
    &\le \mathfrak{C}\omega(\|\mathfrak{R}_{\nu_1}(s, x) - \mathfrak{R}_{\nu_2}(s, x)\|)\\ 
    &\quad+ \frac{\mathfrak{C}}{(1-s)\varepsilon}\|\mathfrak{R}_{\nu_1}(s, x) - \mathfrak{R}_{\nu_2}(s, x)\|\times
    \begin{cases}
    1, &\mbox{if }s \le \varepsilon/(\varepsilon + \mbox{diam}(\Omega)),\\
    1/s, &\mbox{if }\varepsilon/(\varepsilon + \mbox{diam}(\Omega)) \le s \le 1/2,\\
    1/(1-s), &\mbox{if }s \ge 1/2.
    \end{cases}
    \end{split}
    \end{equation}
    Set 
    \begin{align*}
    V(t) &= \Delta + \int_0^t \mathfrak{C}\omega(\|\mathfrak{R}_{\nu_1}(s, x) - \mathfrak{R}_{\nu_2}(s, x)\|)ds\\
    &\quad+ \int_0^t \mathfrak{L}(s; \varepsilon)\|\mathfrak{R}_{\nu_1}(s, x) - \mathfrak{R}_{\nu_2}(s, x)\|ds,
    \end{align*}
    where
    \begin{align*}
        \mathfrak{L}(\varepsilon; s) :=  \frac{\mathfrak{C}}{(1-s)\varepsilon}
\times
    \begin{cases}
    1, &\mbox{if }s \le \varepsilon/(\varepsilon + \mbox{diam}(\Omega)),\\
    1/s, &\mbox{if }\varepsilon/(\varepsilon + \mbox{diam}(\Omega)) \le s \le 1/2,\\
    1/(1-s), &\mbox{if }s \ge 1/2.
    \end{cases}
    \end{align*}
    This implies
    \begin{align*}
        V'(t) \le \mathfrak{L}(\varepsilon, t)(\omega(V(t)) + V(t)).
    \end{align*}
    Now following the proof of Theorem~\ref{thm:continuity-of-paths} (after~\eqref{eq:main-differential-inequality-continuity-of-paths}), we conclude that
    \begin{align*}
    &\sup_{\substack{\nu_1, \nu_2\in\mathcal{V},\,x\in\Omega^{-\varepsilon}\\\|\nu_1 - \nu_2\|_{\infty} \le \Delta}}\, \sup_{t\in[0, 1]}\,\|\mathfrak{R}_{\nu_1}(t, x) - \mathfrak{R}_{\nu_2}(t, x)\|\\ 
    &\le \Psi^{-1}(\Psi(\Delta)/2 + C\ln(\mbox{diam}(\Omega)/\varepsilon)/\varepsilon) + \mbox{diam}(\Omega)\max\left\{\frac{4C}{2C - \varepsilon\Psi(\Delta)},\,\frac{\Delta}{\Psi^{-1}(-C/\varepsilon)}\right\},
    \end{align*}
    for some constant $C$ depending only on $\mathfrak{C}$.

    To prove the result under~\eqref{eq:dist-to-boundary-nu_2}, note that the above inequality yields
    \[
    \mbox{dist}(\mathfrak{R}_{\nu_1}(1, x),\, \partial\Omega) \ge \mbox{dist}(\mathfrak{R}_{\nu_2}(1, x),\,\partial\Omega) - \mbox{RHS of}~\eqref{eq:equicontinuity-in-velocity-weak}.
    \]
    Hence, when $\Delta$ is small enough so that the right hand side of~\eqref{eq:equicontinuity-in-velocity-weak} is smaller than $\gamma$, we get
    \[
    \mbox{dist}(\mathfrak{R}_{\nu_1}(1, x),\, \partial\Omega) \ge \gamma.
    \]
    This implies that inequality~\eqref{eq:equicontinuity-weak-lipschitz} can be improved in the second term exactly as in the proof of Theorem~\ref{thm:continuity-of-paths} (see inequality~\eqref{eq:better-lipschitz-constant-continuity-of-paths}). Hence, following that proof, the second result follows. 
\subsection{Proof of Theorem~\ref{thm:rate-of-conv-velocity}}\label{appsubsec:proof-rate-of-conv-velocity}
    For a non-negative function $h:\mathbb{R}^d\to\mathbb{R}_+$, set
    \begin{align*}
    \widehat{T}_h(z) &= \int_{S_t(z)} h(\delta)\widehat{p}_0(z - t\delta)\widehat{p}_1(z + (1-t)\delta)d\delta,\\
    {T}_h(z) &= \int_{S_t(z)} h(\delta){p}_0(z - t\delta){p}_1(z + (1-t)\delta)d\delta
    \end{align*}
    Using~\eqref{eq:estimted-denisty-ratio-implication}, we get
    \begin{equation}\label{eq:T_h-density-estimated-ratio}
    e^{-2r_n}T_h(z) \le \widehat{T}_h(z) \le e^{2r_n}T_h(z)\quad\mbox{for all}\quad z\in\Omega.
    \end{equation}
    For any vector $a\in S^{d-1}$ (the unit sphere in $\mathbb{R}^d$), we have
    \begin{align*}
    &a^{\top}\widehat{v}^{\mathrm{den}}(t, z)\\
    &= \frac{\int_{S_t(z)} a^{\top}\delta\widehat{p}_0(z - t\delta)\widehat{p}_1(z + (1-t)\delta)d\delta}{\int_{S_t(z)} \widehat{p}_0(z - t\delta)\widehat{p}_1(z + (1-t)\delta)d\delta}\\
    &= \frac{\int_{S_t(z)} (a^{\top}\delta)_+\widehat{p}_0(z - t\delta)\widehat{p}_1(z + (1-t)\delta)d\delta}{\int_{S_t(z)} \widehat{p}_0(z - t\delta)\widehat{p}_1(z + (1-t)\delta)d\delta} - \frac{\int_{S_t(z)} (a^{\top}\delta)_-\widehat{p}_0(z - t\delta)\widehat{p}_1(z + (1-t)\delta)d\delta}{\int_{S_t(z)} \widehat{p}_0(z - t\delta)\widehat{p}_1(z + (1-t)\delta)d\delta}\\
    &\le e^{4r_n}\frac{\int_{S_t(z)} (a^{\top}\delta)_+{p}_0(z - t\delta){p}_1(z + (1-t)\delta)d\delta}{\int_{S_t(z)} {p}_0(z - t\delta){p}_1(z + (1-t)\delta)d\delta} - e^{-4r_n}\frac{\int_{S_t(z)} (a^{\top}\delta)_-{p}_0(z - t\delta){p}_1(z + (1-t)\delta)d\delta}{\int_{S_t(z)} {p}_0(z - t\delta){p}_1(z + (1-t)\delta)d\delta}\\
    &\le a^{\top}v(t, z) + \sup_{\delta\in S_t(z)}\|\delta\|\max\{|e^{4r_n} - 1|,\,|1 - e^{-4r_n}|\}.
    \end{align*}
    The first inequality follows from~\eqref{eq:T_h-density-estimated-ratio} with $h(\delta) = (a^{\top}\delta)_+$, $h(\delta) = (a^{\top}\delta)_-$, and $h(\delta) \equiv 1$. The second inequality follows from the fact that, for all $z\in\Omega^\circ$,
    \[
    \frac{T_h(z)}{p_t(z)} \le \sup_{\delta\in S_t(z)}|h(\delta)|.
    \]
    Swapping the roles of $\widehat{v}^{\mathrm{den}}$ and $v$, we get
    \[
    \sup_{t\in[0, 1],z\in\Omega^\circ}\|\widehat{v}^{\mathrm{den}}(t, z) - v(t, z)\| \le \sup_{\delta\in S_t(z)}\|\delta\|\max\{|e^{4r_n} - 1|,\,|1 - e^{-4r_n}|\} \le 2\mbox{diam}(\Omega)|e^{4r_n} - 1|.
    \]
    This completes the proof by noting that $\sup_{\delta\in S_t(z)}\|\delta\| \le 2\mbox{diam}(\Omega)$.
\subsection{Proof of Lemma~\ref{lem:equicontinuity-vhat-v}}\label{appsubsec:proof-equicontinuity-vhat-v}
Fix any $z, z'\in\Omega^{-\varepsilon}$ with $\eta = \|z - z'\|.$ Set
\begin{align*}
    \widehat{f}_t(z) &= \int_{S_t(z)} \delta \widehat{p}_0(z - t\delta)\widehat{p}_1(z + (1-t)\delta)d\delta\\
    {f}_t(z) &= \int_{S_t(z)} \delta {p}_0(z - t\delta){p}_1(z + (1-t)\delta)d\delta\\
    \widehat{p}_t(z) &= \int_{S_t(z)} \widehat{p}_0(z - t\delta)\widehat{p}_1(z + (1-t)\delta)d\delta\\
    {p}_t(z) &= \int_{S_t(z)} {p}_0(z - t\delta){p}_1(z + (1-t)\delta)d\delta
\end{align*}
Note that
\begin{align*}
m_t(z, z') &:= \left\{\widehat{v}^{\mathrm{den}}(t, z) - v(t, z)\right\} - \left\{\widehat{v}^{\mathrm{den}}(t, z') - v(t, z')\right\}\\
&= \frac{\widehat{f}_t(z) - \widehat{f}_t(z')}{\widehat{p}_t(z)} - \frac{f_t(z) - f_t(z')}{p_t(z)} + \widehat{v}^\mathrm{den}(t, z')\left(\frac{\widehat{p}_t(z')}{\widehat{p}_t(z)} - 1\right) - v(t, z')\left(\frac{p_t(z')}{p_t(z)} - 1\right)\\
&= \left(\frac{p_t(z)}{\widehat{p}_t(z)} - 1\right)\left(\frac{\widehat{f}_t(z) - \widehat{f}_t(z')}{p_t(z)}\right) + \frac{\{\widehat{f}_t(z) - \widehat{f}_t(z')\} - \{f_t(z) - f_t(z')\}}{p_t(z)}\\ 
&\quad+ (\widehat{v}^{\mathrm{den}}(t, z') - v(t, z'))\left(\frac{\widehat{p}_t(z')}{\widehat{p}_t(z)} - 1\right)\\ 
&\quad+ v(t, z')\frac{\widehat{p}_t(z')}{\widehat{p}_t(z)}\frac{\{p_t(z) - p_t(z')\} - \{\widehat{p}_t(z) - \widehat{p}_t(z')\}}{p_t(z)} + v(t, z')\frac{p_t(z')}{p_t(z)}\left(1 - \frac{\widehat{p}_t(z')}{\widehat{p}_t(z)}\right)\left(\frac{\widehat{p}_t(z')}{p_t(z')} - 1\right).
\end{align*}
(For a more detailed proof, see~\url{https://drive.google.com/file/d/1gXQsVFYonEB9eHqq52vlUHnpgffNojdi/view?usp=sharing}.)
Note that the first term can be further decomposed as 
\begin{align*}
\left(\frac{p_t(z)}{\widehat{p}_t(z)} - 1\right)\left(\frac{\widehat{f}_t(z) - \widehat{f}_t(z')}{p_t(z)}\right) &= \left(1 - \frac{\widehat{p}_t(z)}{p_t(z)}\right)\left(\widehat{v}^{\mathrm{den}}(t, z) - \widehat{v}^{\mathrm{den}}(t, z')\frac{\widehat{p}_t(z')}{\widehat{p}_t(z)}\right)\\
&= \left(1 - \frac{\widehat{p}_t(z)}{p_t(z)}\right)\left(\widehat{v}^{\mathrm{den}}(t, z) - \widehat{v}^{\mathrm{den}}(t, z')\right)\\
&\quad + \widehat{v}^{\mathrm{den}}(t, z')\left(1 - \frac{\widehat{p}_t(z)}{p_t(z)}\right)\left(1 - \frac{\widehat{p}_t(z')}{\widehat{p}_t(z)}\right).
\end{align*}
Every term in the decomposition has one factor related to the closeness of $z, z'$, and the other related to the closeness of $(\widehat{p}_0, \widehat{p}_1)$ and $(p_0, p_1)$.

We first note a few simple inequalities that control most of these terms. Under the assumption that $\omega(\kappa) \le L\kappa$ for all $\kappa > 0$ and $\eta \le \min\{1/L,\,\varepsilon^2/\mbox{diam}(\Omega)\}$, we get
\begin{equation}\label{eq:inequalities-ratio-Lipschitz}
    \begin{split}
    \max\{\|v(t, z)\|,\, \|v(t, z')\|,\, \|\widehat{v}^{\mathrm{den}}(t, z)\|,\,\|\widehat{v}^{\mathrm{den}}(t, z')\|\} ~&\le~ \mbox{diam}(\Omega),\\
    \max\left\{\left|\frac{p_t(z)}{\widehat{p}_t(z)} - 1\right|,\, \left|\frac{p_t(z')}{\widehat{p}_t(z')} - 1\right|,\, \left|\frac{\widehat{p}_t(z)}{p_t(z)} - 1\right|,\, \left|\frac{\widehat{p}_t(z')}{p_t(z')} - 1\right|\right\} ~&\le~ e^{2r_n} - 1,\\
    \|\widehat{v}^{\mathrm{den}}(t, z) - v(t, z)\| ~&\le~ 2\mbox{diam}(\Omega)(e^{4r_n} - 1),\\
    \max\left\{\left|\frac{p_t(z)}{p_t(z')} - 1\right|,\, \left|\frac{\widehat{p}_t(z)}{\widehat{p}_t(z')} - 1\right|,\,\left|\frac{p_t(z')}{p_t(z)} - 1\right|,\, \left|\frac{\widehat{p}_t(z')}{\widehat{p}_t(z)} - 1\right|\right\} ~&\le~ \eta\mbox{Lip}_t(\eta,\varepsilon),\\
    \|\widehat{v}^{\mathrm{den}}(t, z) - \widehat{v}^{\mathrm{den}}(t, z')\| ~&\le~ 3\eta\mbox{diam}(\Omega)\mbox{Lip}_t(\eta, \varepsilon),
    \end{split}
\end{equation}
where
\begin{align*}
\mbox{Lip}_t(\eta, \varepsilon) &:= 3L + \frac{\mathfrak{C}}{\varepsilon}\times
\begin{cases}
1, &\mbox{if }\min\{t, 1 - t\} \le \varepsilon/\mbox{diam}(\Omega),\\
1/\min\{t, 1 - t\}, &\mbox{otherwise.}
\end{cases}\\
&\le \frac{3L\varepsilon^2 + \mathfrak{C}\mbox{diam}(\Omega)}{\varepsilon^2} \le \frac{3L\mbox{diam}^2(\Omega) + \mathfrak{C}\mbox{diam}(\Omega)}{\varepsilon^2} =: \frac{C_1}{\varepsilon^2},
\end{align*}
with the constant $C_1$ depending only on $d, L, \mbox{diam}(\Omega),$ and $(\overline{\mathfrak{p}},\,\underline{\mathfrak{p}})$.
These inequalities follow from~\eqref{eq:bounded-velocity},~\eqref{eq:T_h-density-estimated-ratio} (with $h\equiv 1$), Theorem~\ref{thm:rate-of-conv-velocity},~\eqref{eq:ratio-z-z'-inequality} (with $h \equiv 1$), and~\eqref{eq:locally-Lipschitz} (along with~\eqref{eq:time-dependent-Lipschitz-constant}). These inequalities imply that 
\begin{align*}
    \|m_t(z, z')\| &\le 3(e^{2r_n} - 1)\eta\mbox{diam}(\Omega)\mbox{Lip}_t(\eta, \varepsilon) + \mbox{diam}(\Omega)(e^{2r_n} - 1)\eta\mbox{Lip}_t(\eta, \varepsilon)\\ 
    &\quad+ \frac{\|\{\widehat{f}_t(z) - \widehat{f}_t(z')\} - \{f_t(z) - f_t(z')\}\|}{p_t(z)}\\
    &\quad+ 2\mbox{diam}(\Omega)\eta(e^{4r_n} - 1)\mbox{Lip}_t(\eta, \varepsilon)\\ 
    &\quad+ \mbox{diam}(\Omega)(1 + \eta\mbox{Lip}_t(\eta, \varepsilon))\frac{|\{p_t(z) - p_t(z')\} - \{\widehat{p}_t(z) - \widehat{p}_t(z')\}|}{p_t(z)}\\
    &\quad+ \mbox{diam}(\Omega)(1 + \eta\mbox{Lip}_t(\eta,\varepsilon))\eta\mbox{Lip}_t(\eta, \varepsilon)(e^{2r_n} - 1)\\
    &\le C_2\eta(e^{4r_n} - 1)\mbox{Lip}_t(\eta, \varepsilon)(1 + \eta\mbox{Lip}_t(\eta, \varepsilon))\\
    &\quad+ \frac{\|\{\widehat{f}_t(z) - \widehat{f}_t(z')\} - \{f_t(z) - f_t(z')\}\|}{p_t(z)}\\
    &\quad+ \mbox{diam}(\Omega)(1 + \eta\mbox{Lip}_t(\eta, \varepsilon))\frac{|\{p_t(z) - p_t(z')\} - \{\widehat{p}_t(z) - \widehat{p}_t(z')\}|}{p_t(z)}.
\end{align*}
for some constant $C_2$ depending only on $\mbox{diam}(\Omega).$ To bound the last two terms, consider 
\begin{align*}
\Delta(h) &:= \frac{\int_{S_t(z)} h(\delta)\{p_0(z - t\delta)p_1(z + (1-t)\delta) - \widehat{p}_0(z - t\delta)\widehat{p}_1(z + (1-t)\delta)\}d\delta}{p_t(z)}\\ 
&\quad- \frac{\int_{S_t(z')} h(\delta)\{p_0(z' - t\delta)p_1(z' + (1-t)\delta) - \widehat{p}_0(z' - t\delta)\widehat{p}_1(z' + (1-t)\delta)\}d\delta}{p_t(z)}.
\end{align*}
Bounding $\Delta(h)$ for $h(\delta) = \delta$ and $h(\delta) \equiv 1$ provides bounds for the last two terms of $\|m_t(z, z')\|$.
We shall split $\Delta(h)$ into integrals on $S_t(z)\cap S_t(z')$, $S_t(z)\setminus S_t(z')$, and $S_t(z')\setminus S_t(z)$, as in the proof of Lemma~\ref{lem:boundedness-and-local-Lipschitz}. 
The integrand on $S_t(z)\cap S_t(z')$ without the factor of $h(\delta)$ can be decomposed as 
\begin{equation}\label{eq:intersection-Modulus-of-continuity}
    \begin{split}
    &(p_0(z - t\delta) - p_0(z' - t\delta) - \widehat{p}_0(z - t\delta) + \widehat{p}_0(z' - t\delta))p_1(z + (1-t)\delta)\\
    &\quad+ (\widehat{p}_0(z - t\delta) - \widehat{p}_0(z' - t\delta))(p_1(z + (1-t)\delta) - \widehat{p}_1(z + (1-t)\delta))\\
    &\quad+ p_0(z' - t\delta)(p_1(z + (1-t)\delta) - p_1(z' + (1-t)\delta) - \widehat{p}_1(z + (1-t)\delta) + \widehat{p}_1(z' + (1-t)\delta))\\
    &\quad+ (p_0(z' - t\delta) - \widehat{p}_0(z' - t\delta))(\widehat{p}_1(z + (1-t)\delta) - \widehat{p}_1(z'
    + (1-t)\delta)).
    \end{split}
\end{equation}
The second and the fourth terms of~\eqref{eq:intersection-Modulus-of-continuity} can be upper-bounded by
\[
L\|z - z'\|(e^{r_n} - 1)\left[\widehat{p}_0(z - t\delta)\widehat{p}_1(z + (1-t)\delta) + \widehat{p}_0(z' - t\delta)\widehat{p}_1(z' + (1-t)\delta)\right].
\]
The first and the third terms of~\eqref{eq:intersection-Modulus-of-continuity} can be upper-bounded using the log-derivatives of the densities. 
Note that these are well-defined only for $x\in\Omega^\circ$. We write
\begin{align*}
&p_0(z - t\delta) - p_0(z' - t\delta) - \widehat{p}_0(z - t\delta) + \widehat{p}_0(z' - t\delta)\\
&= p_0(z - t\delta)\left(1 - \frac{p_0(z' - t\delta)}{p_0(z - t\delta)}\right) - \widehat{p}_0(z - t\delta)\left(1 - \frac{\widehat{p}_0(z' - t\delta)}{\widehat{p}_0(z - t\delta)}\right)\\
&= p_0(z - t\delta)\left(\frac{\widehat{p}_0(z' - t\delta)}{\widehat{p}_0(z - t\delta)} - \frac{p_0(z' - t\delta)}{p_0(z - t\delta)}\right) + p_0(z - t\delta)\left(1 - \frac{\widehat{p}_0(z - t\delta)}{p_0(z - t\delta)}\right)\left(1 - \frac{\widehat{p}_0(z' - t\delta)}{\widehat{p}_0(z - t\delta)}\right).
\end{align*}
Hence, the first term can be upper-bounded by
\begin{align*}
&p_0(z - t\delta)p_1(z + (1-t)\delta)\left[(1 + L\eta)(\exp(s_n\eta) - 1) + (e^{r_n} - 1)L\eta\right]\\
&\le p_0(z - t\delta)p_1(z + (1-t)\delta)\left[2(\exp(s_n\eta) - 1) + (e^{r_n} - 1)L\eta\right].
\end{align*}
This follows from the fact that
\begin{align*}
&\frac{\widehat{p}_0(z' - t\delta)}{\widehat{p}_0(z - t\delta)} - \frac{p_0(z' - t\delta)}{p_0(z - t\delta)}\\ 
&= \frac{p_0(z' - t\delta)}{p_0(z - t\delta)}\left(\exp\left(\log\widehat{p}_0(z' - t\delta) - \log\widehat{p}_0(z - t\delta) - \log p_0(z' - t\delta) + \log p_0(z - t\delta)\right) - 1\right).
\end{align*}
Similarly, the third term of~\eqref{eq:intersection-Modulus-of-continuity} can be bounded by
\begin{align*}
&p_0(z' - t\delta)p_1(z' + (1-t)\delta)\left[(1 + L\eta)(\exp(s_n\eta) - 1) + (e^{r_n} - 1)L\eta\right]\\
&\le p_0(z' - t\delta)p_1(z' + (1-t)\delta)\left[2(\exp(s_n\eta) - 1) + (e^{r_n} - 1)L\eta\right].
\end{align*}
Combining these bounds, we conclude that the contribution to $\Delta(h)$ from the integration on $S_t(z)\cap S_t(z')$ is bounded by
\begin{equation}\label{eq:bound-on-intersection-part}
    \begin{split}
        &\frac{\sup_{\delta\in S_t(z)\cap S_t(z')}\|\delta\|}{p_t(z)}L\eta(e^{r_n} - 1)\left(\widehat{p}_t(z) + \widehat{p}_t(z')\right)\\
        &\quad+ \frac{\sup_{\delta\in S_t(z)\cap S_t(z')}\|\delta\|}{p_t(z)}\left[2(\exp(s_n\eta) - 1) + (e^{r_n} - 1)L\eta\right]\left(p_t(z) + p_t(z')\right).
    \end{split}
\end{equation}
Because $\sup_{\delta\in S_t(z)\cap S_t(z')}\|\delta\| \le 2\mbox{diam}(\Omega)$ and $\widehat{p}_t(z)/p_t(z), \widehat{p}_t(z')/p_t(z), p_t(z')/p_t(z)$ are all bounded using~\eqref{eq:inequalities-ratio-Lipschitz}, we get the simplified bound on the intersection integral as
\begin{equation}\label{eq:bound-on-intersection-part-final}
    \begin{split}
        &\mathfrak{C}\left(2 + \eta\mbox{Lip}_t(\eta, \varepsilon)\right)\left[2(\exp(s_n\eta) - 1) + (e^{r_n} - 1)L\eta\right].
    \end{split}
\end{equation}

To control the integral over $S_t(z)\setminus S_t(z')$, note that
\begin{align*}
    &|p_0(z - t\delta)p_1(z + (1-t)\delta) - \widehat{p}_0(z - t\delta)\widehat{p}_1(z + (1-t)\delta)|\\
    &\le (e^{2r_n} - 1)\overline{\mathfrak{p}}^2,
\end{align*}
and hence, the second term can be bounded by
\begin{equation}\label{eq:bound-on-second-term}
    \sup_{\delta\in S_t(z)}\,|h(\delta)|\frac{\overline{\mathfrak{p}}^2}{\underline{\mathfrak{p}}^2}\frac{\mbox{Vol}(S_t(z)\setminus S_t(z'))}{\mbox{Vol}(S_t(z))}(e^{2r_n} - 1).
\end{equation}
Similarly, the third term can be bounded by
\begin{equation}\label{eq:bound-on-third-term}
    \sup_{\delta\in S_t(z)}\,|h(\delta)|\frac{\overline{\mathfrak{p}}^2}{\underline{\mathfrak{p}}^2}\frac{\mbox{Vol}(S_t(z')\setminus S_t(z))}{\mbox{Vol}(S_t(z))}(e^{2r_n} - 1).
\end{equation}
Together, the second and third terms can be bounded by
\begin{equation}\label{eq:combined-second-third}
\begin{split}
    &\sup_{\delta\in S_t(z)}\,|h(\delta)|\frac{\overline{\mathfrak{p}}^2}{\underline{\mathfrak{p}}^2}\frac{\mbox{Vol}(S_t(z)\Delta S_t(z'))}{\mbox{Vol}(S_t(z))}(e^{2r_n} - 1)\\ 
    &\le~ \mathfrak{C}(e^{2r_n} - 1)\frac{\eta}{\varepsilon}\times\begin{cases}
        1, &\mbox{if }\min\{t, 1 - t\} \le \varepsilon/\mbox{diam}(\Omega),\\
        1/\min\{t, 1 - t\}, &\mbox{otherwise.}
    \end{cases}\\
    &\le~ \mathfrak{C}(e^{2r_n} - 1)\frac{\eta\mbox{diam}(\Omega)}{\varepsilon^2}.
\end{split}
\end{equation}
Combining bounds~\eqref{eq:bound-on-intersection-part-final},~\eqref{eq:combined-second-third} and the bound on $\|m_t(z, z')\|$, we conclude that
\begin{align*}
&\|m_t(z, z')\|\\ &\le C_2\eta(e^{4r_n} - 1)\mbox{Lip}_t(\eta, \varepsilon)(1 + \eta\mbox{Lip}_t(\eta, \varepsilon))\\
    &\quad+ C_3\mbox{diam}(\Omega)(1 + \eta\mbox{Lip}_t(\eta, \varepsilon))^2\left[2(\exp(s_n\eta) - 1) + (e^{r_n} - 1)L\eta\right]\\
    &\quad+ C_4\mbox{diam}(\Omega)(1 + \eta\mbox{Lip}_t(\eta, \varepsilon))(e^{2r_n} - 1)\frac{\eta}{\varepsilon}\times\begin{cases}
        1, &\mbox{if }\min\{t, 1 - t\} \le \varepsilon/\mbox{diam}(\Omega),\\
        1/\min\{t, 1 - t\}, &\mbox{otherwise.}
        \end{cases}\\
    &\le C_5\eta(e^{4r_n} - 1)(\varepsilon^{-2} + \eta\varepsilon^{-4})\\
    &\quad+ C_6(1 + \eta\varepsilon^{-2})^2\left[\exp(s_n\eta) - 1 + (e^{r_n} - 1)L\eta\right],
\end{align*}
for some constants $C_5$ and $C_6$ depending only on $d, L, \mbox{diam}(\Omega),$ and $(\overline{\mathfrak{p}},\,\underline{\mathfrak{p}})$.
\subsection{Proof of Proposition~\ref{prop:Lipschitz-cont-velocity-in-time}}\label{appsubsec:proof-Lipschitz-velocity-in-time}
    For $g:\mathbb{R}^d\to\mathbb{R}$, define
    \[
    T_t(z) = \frac{\int_{S_t(z)} g(\delta)p_0(z - t\delta)p_1(z + (1-t)\delta)d\delta}{\int_{S_t(z)} p_0(z - t\delta)p_1(z + (1-t)\delta)d\delta}.
    \]
    Note that
    \begin{align*}
        &T_t(z) - T_{t+h}(z)\\ 
        &= \int_{S_t(z)} g(\delta)p_0(z - t\delta)p_1(z + (1-t)\delta)d\delta - \int_{S_{t+h}(z)} g(\delta)p_0(z - (t+h)\delta)p_1(z + (1-t-h)\delta)d\delta\\
        &= \int_{S_t(z)\cap S_{t+h}(z)} g(\delta)p_0(z - t\delta)p_1(z + (1-t)\delta)\left[1 - \frac{p_0(z - (t+h)\delta)p_1(z + (1-t-h)\delta)}{p_0(z - t\delta)p_1(z + (1-t)\delta)}\right]d\delta\\
        &\quad+ \int_{S_t(z)\setminus S_{t+h}(z)} g(\delta)p_0(z - t\delta)p_1(z + (1-t)\delta)d\delta\\
        &\quad- \int_{S_{t+h}(z)\setminus S_t(z)} g(\delta)p_0(z - (t+h)\delta)p_1(z + (1-t-h)\delta)d\delta.
    \end{align*}
    From the modulus of continuity of the densities, we get
    \begin{align*}
    \left|1 - \frac{p_0(z - (t+h)\delta)p_1(z + (1-t-h)\delta)}{p_0(z - t\delta)p_1(z + (1-t)\delta)}\right| ~&\le~ (1 + \omega^2(h\|\delta\|)) - 1 \le 2Lh\|\delta\| + L^2h^2\|\delta\|^2\\
    ~&\le~ 4Lh\mbox{diam}(\Omega) + 4L^2h^2\mbox{diam}^2(\Omega)\\
    ~&\le~ 8L\mbox{diam}(\Omega)h,\quad\mbox{if }\quad h \le 1/(L\mbox{diam}(\Omega)).
    \end{align*}
    This implies that $t\mapsto 1 - {p_0(z - (t+h)\delta)p_1(z + (1-t-h)\delta)}/{p_0(z - t\delta)p_1(z + (1-t)\delta)}$ is Lipschitz continuous with a Lipschitz constant of $8L\mbox{diam}(\Omega)$ for all $t\in(0, 1)$. Therefore, the bound holds for all $h > 0$ such that $t + h\in[0, 1]$. 
    Hence, the first term in the decomposition of $T_t(z) - T_{t+h}(z)$ is bounded by
    \[
    p_t(z)\left(\sup_{\delta\in S_t(z)}|g(\delta)|\right)8L\mbox{diam}(\Omega)h.
    \]
    The second and third terms in the decompositions are together bounded by
    \[
    \left(\sup_{\delta\in S_t(z)}|g(\delta)|\right)\overline{\mathfrak{p}}^2\mbox{Vol}(S_t(z)\Delta S_{t+h}(z)).
    \]
    Because $p_t(z) \ge \underline{\mathfrak{p}}^2\mbox{Vol}(S_t(z))$, we get
    \[
    \frac{|T_t(z) - T_{t+h}(z)|}{p_t(z)} \le \left(\sup_{\delta\in S_t(z)}|g(\delta)|\right)\left(8L\mbox{diam}(\Omega)h + \frac{\overline{\mathfrak{p}}^2}{\underline{\mathfrak{p}}^2}\frac{\mbox{Vol}(S_t(z)\Delta S_{t+h}(z))}{\mbox{Vol}(S_t(z))}\right).
    \]
    To control the volume of the symmetric difference, we consider two cases.
    \paragraph{Case 1: $\min\{t, 1 - t\} \le \varepsilon/(2\mbox{diam}(\Omega))$.} In this case, using the assumption that $h \le \varepsilon/(2\mbox{diam}(\Omega))$, we get $\min\{t+h, 1 - t - h\} \le \varepsilon/\mbox{diam}(\Omega)$. Hence, Proposition~\ref{prop:properties-of-S_t}(1) implies
    \begin{align*}
    S_t(z) &= \begin{cases}
        (x - \Omega)/t, &\mbox{if }t \ge  \varepsilon/(2\mbox{diam}(\Omega)),\\
        (\Omega - x)/(1-t), &\mbox{otherwise.}
    \end{cases}\;\mbox{and}\\
    S_{t+h}(z) &= \begin{cases}
        (x - \Omega)/(t+h), &\mbox{if }t \ge  \varepsilon/(2\mbox{diam}(\Omega)),\\
        (\Omega - x)/(1-t - h), &\mbox{otherwise.}
    \end{cases}
    \end{align*}
    Therefore, 
    \begin{align*}
    \frac{\mbox{Vol}(S_t(z)\Delta S_{t+h}(z))}{\mbox{Vol}(S_t(z))} &\le \max\left\{\left|1 - \frac{t}{t + h}\right|,\, \left|1 - \frac{1-t}{1 - t - h}\right|\right\}\\
    &\le \frac{h}{\min\{t + h, 1 - t - h\}} \le \frac{\mbox{diam}(\Omega)h}{\varepsilon}.
    \end{align*}
    \paragraph{Case 2: $\min\{t, 1 - t\} > \varepsilon/(2\mbox{diam}(\Omega))$.} In this case, Proposition~\ref{prop:properties-of-S_t}(4) implies that
    \[
    \frac{\mbox{Vol}(S_{t+h}(z)\Delta S_t(z))}{\mbox{Vol}(S_t(z))} \le \frac{\mbox{Vol}((S_t(z))^{2\mathrm{diam}^2(\Omega)h/(\varepsilon\bar{t})}\setminus (S_t(z))^{-2\mathrm{diam}^2(\Omega)h/(\varepsilon\bar{t})})}{\mbox{Vol}(S_t(z))}.
    \]
    If $2\mbox{diam}^2(\Omega)h/(\varepsilon \bar{t}) \le \varepsilon/(2\max\{t, 1 - t\})$ (which is satisfied if $h \le \varepsilon^3/(4\mbox{diam}^3(\Omega))$), then Lemma~\ref{lem:Lipschitz-of-volume} implies
    \begin{align*}
    &\frac{\mbox{Vol}((S_t(z))^{2\mathrm{diam}^2(\Omega)h/(\varepsilon\bar{t})}\setminus (S_t(z))^{-2\mathrm{diam}^2(\Omega)h/(\varepsilon\bar{t})})}{\mbox{Vol}((S_t(z))^{-2\mathrm{diam}^2(\Omega)h/(\varepsilon\bar{t})})}\\ 
    &\le \frac{8d\mbox{diam}^2(\Omega)h\max\{t, 1 - t\}}{\varepsilon^2\bar{t}}\left(1 + \frac{8\mbox{diam}^2(\Omega)h\max\{t, 1 - t\}}{\varepsilon^2\bar{t}}\right)^{d-1}\\
    &\le \frac{4^{d+1}d\mbox{diam}^2(\Omega)h\max\{t, 1 - t\}}{\varepsilon^2\bar{t}}.
    \end{align*}
    Therefore,
    \[
    \frac{\mbox{Vol}(S_{t+h}(z)\Delta S_t(z))}{\mbox{Vol}(S_t(z))} \le \frac{4^{d+1}d\mbox{diam}^2(\Omega)h}{\varepsilon^2\bar{t}},\quad\mbox{if }h \le \varepsilon^3/(4\mbox{diam}^3(\Omega)).
    \]
    Combining both cases, we conclude
    \[
    \frac{|T_t(z) - T_{t+h}(z)|}{p_t(z)} \le \left(\sup_{\delta\in S_t(z)}|g(\delta)|\right)\left(8L\mbox{diam}(\Omega) + \frac{\overline{\mathfrak{p}}^2}{\underline{\mathfrak{p}}^2}\frac{4^{d+1}d\mbox{diam}^2(\Omega)}{\varepsilon^2\bar{t}}\right)h,
    \]
    whenever $h \le \varepsilon^3/(4\mbox{diam}^3(\Omega))$. Taking $g(\delta) = e_j^{\top}\delta$ and $g(\delta)\equiv v(t, z)$, we get 
    \begin{equation}\label{eq:lipschitz-velocity-in-t}
    \begin{split}
    \|v(t, z) - v(t + h, z)\| &\le \frac{\|f_t(z) - f_{t+h}(z)\|}{p_t(z)} + \|v(t + h, z)\|\left|\frac{p_{t+h}(z)}{p_t(z)} - 1\right|\\
    &\le 2\mbox{diam}(\Omega)\left(8L\mbox{diam}(\Omega) + \frac{\overline{\mathfrak{p}}^2}{\underline{\mathfrak{p}}^2}\frac{4^{d+1}d\mbox{diam}^2(\Omega)}{\varepsilon^2\bar{t}}\right)h,
    \end{split}
    \end{equation}
    whenever $h \le \min\{1/(L\mbox{diam}(\Omega)),\, \varepsilon^3/(4\mbox{diam}^3(\Omega))\}$. This implies that $t\mapsto v(t, z)$ is almost everywhere differentiable on $(0, 1)$, with derivative bounded by 
    \[
    2\mbox{diam}(\Omega)\left(8L\mbox{diam}(\Omega) + \frac{\overline{\mathfrak{p}}^2}{\underline{\mathfrak{p}}^2}\frac{4^{d+1}d\mbox{diam}^2(\Omega)}{\varepsilon^2\bar{t}}\right).
    \]
    Hence, inequality~\eqref{eq:lipschitz-velocity-in-t} holds for all $h > 0$ such that $t + h\in[0, 1]$. This proves the result. 
\subsection{Proof of Theorem~\ref{thm:linearization-bounded-case}}\label{appsubsec:proof-of-linearization}
Recall that
\begin{align*}
    \widehat{\mathfrak{R}}(t, x) &= x + \int_0^t \widehat{v}^{\mathrm{den}}(s,\,\widehat{\mathfrak{R}}(s, x))ds,\\
    \mathfrak{R}(t, x) &= x + \int_0^t v(s,\,\mathfrak{R}(s, x))ds.
\end{align*}
This implies $\widehat{E}(t, x) = \widehat{\mathfrak{R}}(t, x) - \mathfrak{R}(t, x)$ satisfies
\begin{align*}
\widehat{E}(t, x) &= \int_0^t \{\widehat{v}^{\mathrm{den}}(s, \widehat{\mathfrak{R}}(s, x)) - v(s, \mathfrak{R}(s, x))\}ds \\
&= \int_0^t \{\widehat{v}^{\mathrm{den}}(s, \widehat{\mathfrak{R}}(s, x)) - v(s, \widehat{\mathfrak{R}}(s, x)) - \widehat{v}^{\mathrm{den}}(s, \mathfrak{R}(s, x)) + v(s, \mathfrak{R}(s, x))\}dx\\
&\quad+ \int_0^t \{\widehat{v}^{\mathrm{den}}(s, \mathfrak{R}(s, x)) - v(s, \mathfrak{R}(s, x))\}ds\\
&\quad+ \int_0^t \{v(s, \widehat{\mathfrak{R}}(s, x)) - v(s, \mathfrak{R}(s, x))\}ds.
\end{align*}
From Lemma~\ref{lem:equicontinuity-vhat-v} (along with Corollary~\ref{cor:rate-of-conv-rectified-lipschitz}), the first term can be bounded (for large enough $n$) by
\[
C'_{\varepsilon,\gamma}\sup_{0 \le s \le 1}\|\widehat{E}(s, x)\|(r_n + s_n).
\]
From Lemma~\ref{lem:distance-to-boundary-at-1}, $\mathfrak{R}(s, x)$ belongs to the set of differentiability points of $z\mapsto v(s, z)$ for almost all $s\in[0, 1]$ and hence, 
\[
v(s, \widehat{\mathfrak{R}}(s, x)) - v(s, \mathfrak{R}(s, x)) = \frac{\partial}{\partial z}v(s, z)^\top\bigg|_{z = \mathfrak{R}(s, x)}(\widehat{\mathfrak{R}}(s, x) - \mathfrak{R}(s, x)) + o_p(\|\widehat{E}(s, x)\|).
\]
Moreover, the Lipschitz continuity of $z\mapsto v(s, z)$ on $\Omega^{-\kappa}$ (for any $\kappa > 0$) implies that
\[
\sup_{s\in[0, 1]}\left\|v(s, \widehat{\mathfrak{R}}(s, x)) - v(s, \mathfrak{R}(s, x)) - \frac{\partial}{\partial z}v(s, z)^\top\bigg|_{z = \mathfrak{R}(s, x)}(\widehat{\mathfrak{R}}(s, x) - \mathfrak{R}(s, x))\right\| \le C\sup_{s\in[0,1]}\|\widehat{E}(s, x)\|,
\]
for some constant $C$ depending on the Lipschitz constant of $z\mapsto v(s, z)$. 
Therefore, by the dominated convergence theorem, we get
\begin{equation}\label{eq:bound-differential-expansion}
\begin{split}
\mathcal{E}_n &:= \sup_{t\in[0, 1]}\left\|\widehat{E}(t, x) - \int_0^t \{\widehat{v}^{\mathrm{den}}(s, \mathfrak{R}(s, x)) - v(s, \mathfrak{R}(s, x))\}ds - \int_0^t \partial_zv(s, \mathfrak{R}(s, x))^\top\widehat{E}(s, x)ds\right\|\\
&= o_p\left(\sup_{s\in[0,1]}\|\widehat{E}(s, x)\|\right).
\end{split}
\end{equation}
Then, we get
\begin{align*}
    \sup_{t\in[0,1]}\, \left\|\widehat{E}(t, x) - \widetilde{E}(t, x) - \int_0^t \partial_zv(s, \mathfrak{R}(s, x))^\top(\widehat{E}(s, x) - \widetilde{E}(s, x))ds\right\| = \mathcal{E}_n.
\end{align*}
This implies
\[
\left\|\widehat{E}(t, x) - \widetilde{E}(t, x)\right\| \le \mathcal{E}_n + \int_0^t \|\partial_zv(s, \mathfrak{R}(s, x))\|_{\mathrm{op}}\|\widehat{E}(s, x) - \widetilde{E}(s, x)\|ds. 
\]
From the Lipschitz continuity of $z\mapsto v(s, z)$, we get $\|\partial_zv(s, \mathfrak{R}(s, x))\|_{\mathrm{op}} \le C$. Hence, setting $V(t) = \mathcal{E}_n + C\int_0^t \|\widehat{E}(s, x) - \widetilde{E}(s, x)\|ds$, we have
\[
V'(t) = C\left\|\widehat{E}(t, x) - \widetilde{E}(t, x)\right\| \le CV(t)\quad\mbox{for all}\quad t\in[0, 1].
\]
This implies
\[
\frac{V(t)}{V(0)} \le C\quad\Rightarrow\quad \left\|\widehat{E}(t, x) - \widetilde{E}(t, x)\right\| \le \mathcal{E}_ne^{Ct}\quad\mbox{for all}\quad t\in[0, 1].
\]
In particular, from~\eqref{eq:bound-differential-expansion}, we conclude
\[
\sup_{t\in[0, 1]}\|\widehat{E}(t, x) - \widetilde{E}(t, x)\| = o_p\left(\sup_{s\in[0,1]}\|\widehat{E}(s, x)\|\right).
\]
This is equivalent to
\[
\sup_{t\in[0, 1]}\|\widehat{E}(t, x) - \widetilde{E}(t, x)\| = o_p\left(\sup_{s\in[0,1]}\|\widetilde{E}(s, x)\|\right),\quad\mbox{as}\quad n\to\infty.
\]
This proves the result.
\subsection{Proof of Proposition~\ref{prop:density-estimator-bounded}}\label{appsubsec:proof-of-density-estimator-bias}
    Note that
    \begin{align*}
        \mathbb{E}[\widehat{p}_j(z)] &= \int_{\Omega} \frac{K_{z,h}(x - z)}{\mbox{Vol}(V_{z,h})}p_j(x)dx\\
        &= \int_{\Omega - z} \frac{K_{z,h}(u)}{\mbox{Vol}(V_{z,h})}p_j(z + u)du 
    \end{align*}
    Because $K_{z,h}(u) = 0$ whenever $\|u\| > h$, we get that
    \[
    |p_j(z + u) - q_j(u)| \le L\|u\|^\beta \le Lh^\beta.
    \]
    This implies that
    \[
    \left|\mathbb{E}[\widehat{p}_j(z)] - \int_{V_{z,h}} \frac{K_{z,h}(u)}{\mbox{Vol}(V_{z,h})}q_{z,j}(u)du\right| ~\le~ Lh^\beta\int_{V_{z,h}}\frac{|K_{z,h}(u)|}{\mbox{Vol}(V_{z,h})}du \le Lh^\beta\frac{\mathfrak{K}\mbox{Vol}(V_{z,h})}{\mbox{Vol}(V_{z,h})} = L\mathfrak{K}h^\beta.
    \]
    Note that assumption~\ref{eq:Holder-smooth-densities} implies $q_{z,j}(0) = p_j(z)$ and $q_{z,j}(u) = p_j(z) + \sum_{1\le \|\alpha\|_1 \le \llfloor s\rrfloor} a_{\alpha}\prod_{j=1}^d u_j^{\alpha_j}$ for some coefficients $a_{\alpha}$ which can depend on $z.$ Hence, the assumption on the kernel yields
    \[
    \int_{V_{z,h}} \frac{K_{z,h}(u)}{\mbox{Vol}(V_{z,h})}q_{z,j}(u)du = p_j(z)\frac{\int_{V_{z,h}} K_{z,h}(u)du}{\mbox{Vol}(V_{z,h})} = p_j(z).
    \]
    Therefore,
    \[
    \sup_{z\in\Omega}\,\left|\mathbb{E}[\widehat{p}_j(z)] - p_j(z)\right| ~\le~ L\mathfrak{K}h^\beta.
    \]
    The bound is uniform over all $z\in\Omega$.

    To bound the variance, note that
    \begin{align*}
        \mbox{Var}(\widehat{p}_j(z)) &= \frac{1}{n\mbox{Vol}^2(V_{z,h})}\mbox{Var}\left(K_{z,h}(X_{j1} - z)\right)\\
        &\le \frac{1}{n\mbox{Vol}^2(V_{z,h})}\int_{\Omega} K_{z,h}^2(x - z)p_j(z)dz\\
        &\le \frac{1}{n\mbox{Vol}^2(V_{z,h})}\int_{V_{z,h}} K_{z,h}^2(u)p_j(z + u)du\\
        &\le \frac{\mathfrak{K}^2\|p_j\|_{\infty}}{n\mbox{Vol}^2(V_{z,h})}\int_{V_{z,h}}du = \frac{\mathfrak{K}^2\|p_j\|_{\infty}}{n\mbox{Vol}(V_{z,h})}.
    \end{align*}
    The final part of the result follows from the fact that any set of full affine rank has positive Lebesgue density at each point and that every convex set with non-empty interior has a full affine rank. See, for example,~\url{https://math.stackexchange.com/questions/3491213/is-there-a-lower-bound-to-density-at-boundary-points-of-a-convex-set} and~\cite{Choudhary2025Uniform}.
    This completes the proof.
\subsection{Proof of Theorem~\ref{thm:asymp-normality-bounded-case}}\label{appsec:proof-asym-normality-bounded-case}
    Observe that
    \begin{align*}
        &\widehat{v}^{\mathrm{den}}(s, z) - v(s, z)\\ &\quad= \widehat{v}^{\mathrm{den}}(s, z)\left(1 - \frac{\widehat{p}_s(z)}{p_s(z)}\right) + \frac{\widehat{f}_s(z) - f_s(z)}{p_s(z)}\\
        &\quad= v(s, z)\left(1 - \frac{\widehat{p}_s(z)}{p_s(z)}\right) + \frac{\widehat{f}_s(z) - f_s(z)}{p_s(z)} + (\widehat{v}^{\mathrm{den}}(s, z) - v(s, z))\left(1 - \frac{\widehat{p}_s(z)}{p_s(z)}\right)\\
        &\quad= \int_{S_s(z)} \frac{\delta - v(s, z)}{p_s(z)}\{\widehat{p}_0(z - s\delta)\widehat{p}_1(z + (1-s)\delta) - p_0(z - s\delta)p_1(z + (1-s)\delta)\}d\delta\\
        &\qquad+ (\widehat{v}^{\mathrm{den}}(s, z) - v(s, z))\left(1 - \frac{\widehat{p}_s(z)}{p_s(z)}\right)\\
        &\quad= \int_{S_s(z)} \frac{\delta - v(s, z)}{p_s(z)}p_1(z + (1-s)\delta)\{\widehat{p}_0(z - s\delta) - p_0(z - s\delta)\}d\delta\\
        &\qquad+ \int_{S_s(z)} \frac{\delta - v(s, z)}{p_s(z)}p_0(z - s\delta)\{\widehat{p}_1(z + (1-s)\delta) - p_1(z + (1-s)\delta)\}\delta\\
        &\qquad+ \int_{S_s(z)} \frac{\delta - v(s, z)}{p_s(z)}\{\widehat{p}_0(z - s\delta) - p_0(z - s\delta)\}\{\widehat{p}_1(z + (1-s)\delta) - p_1(z + (1-s)\delta)\}d\delta\\
        &\qquad+ (\widehat{v}^{\mathrm{den}}(s, z) - v(s, z))\left(1 - \frac{\widehat{p}_s(z)}{p_s(z)}\right).
    \end{align*}
    With $z_s = \mathfrak{R}(s, z)$, define 
    \begin{align*}
    \widehat{\gamma}(s, x) &:= \int_{S_s(z_s)} \frac{\delta - v(s, z_s)}{p_s(z)}p_1(z_s + (1-s)\delta)\{\widehat{p}_0(z_s - s\delta) - p_0(z_s - s\delta)\}d\delta\\
        &\qquad+ \int_{S_s(z)} \frac{\delta - v(s, z)}{p_s(z_s)}p_0(z_s - s\delta)\{\widehat{p}_1(z_s + (1-s)\delta) - p_1(z_s + (1-s)\delta)\}\delta.
    \end{align*}
    Then we get 
    \[
    \sup_{s\in[0,1]}\left\|\widehat{v}^{\mathrm{den}}(s, \mathfrak{R}(s,x)) - v(s,\,\mathfrak{R}(s,x)) - \widehat{\gamma}(s,x)\right\| = O_p\left(\max_{s\in[0,1]}\|\Phi(1)(\Phi(s))^{-1}\|_{\mathrm{op}}\right)r_n^2.
    \]
    From Proposition~\ref{prop:Volterra}, $\max_{s\in[0,1]}\|\Phi(1)(\Phi(s))^{-1}\|_{\mathrm{op}} = O(1)$.
    
    Observe that 
   \begin{align*}
    \mathbb{E}[\widehat{\gamma}(s, x)] &:= \int_{S_s(z_s)} \frac{\delta - v(s, z_s)}{p_s(z_s)}p_1(z_s + (1-s)\delta)\{\mathbb{E}[\widehat{p}_0(z_s - s\delta)] - p_0(z_s - s\delta)\}d\delta\\
        &\qquad+ \int_{S_s(z_s)} \frac{\delta - v(s, z_s)}{p_s(z_s)}p_0(z_s - s\delta)\{\mathbb{E}[\widehat{p}_1(z_s + (1-s)\delta)] - p_1(z_s + (1-s)\delta)\}\delta.
    \end{align*}
    From Proposition~\ref{prop:density-estimator-bounded} (under assumption~\ref{eq:Holder-smooth-densities}), we get
    \begin{align*}
    \sup_{s\in[0,1]}\|\mathbb{E}[\widehat{\gamma}(s, x)]\| &\le 3\mbox{diam}(\Omega)L\mathfrak{K}h^\beta\int_{S_s(z)} \frac{\sum_{j=0}^1 p_j(z + (j-s)\delta)\mathbf{1}\{z + (1-j- s)\delta\in\Omega\}}{p_s(z)}\\
    &\le \frac{3L\mathfrak{K}\mbox{diam}(\Omega)}{\underline{\mathfrak{p}}}h^{\beta}.
    \end{align*}
    Therefore, defining
    \[
    \widetilde{E}^*(t, x) := \Phi(t)\int_0^t (\Phi(s))^{-1} (\widehat{\gamma}(s,x) - \mathbb{E}[\widehat{\gamma}(s, x)])ds, 
    \]
    we get
    \[
    \sup_{t\in[0,1]}\left\|\widehat{\mathfrak{R}}(t, x) - \mathfrak{R}(t, x) - \widetilde{E}^*(t, x)\right\| = o_p\left(\sup_{t\in[0,1]}\|\widetilde{E}^*(t, x)\|\right) + O_p(r_n^2 + h^{\beta}).
    \]
    We first find the rate of convergence of $\sup_{t\in[0,1]}\|\widetilde{E}^*(t, x)\|$. Note that
    \begin{align*}
    \sup_{t\in[0,1]}\|\widetilde{E}^*(t, x)\| &\le C\sup_{t\in[0,1]}\,\left\|\int_0^t (\Phi(s))^{-1} (\widehat{\gamma}(s,x) - \mathbb{E}[\widehat{\gamma}(s, x)])ds\right\|\\
    &\le C\sup_{t\in[0,t^*]}\,\left\|\int_0^t (\Phi(s))^{-1} (\widehat{\gamma}(s,x) - \mathbb{E}[\widehat{\gamma}(s, x)])ds\right\|\\
    &\quad+ C\sup_{t\in[t^*, 1]}\,\left\|\int_0^t (\Phi(s))^{-1} (\widehat{\gamma}(s,x) - \mathbb{E}[\widehat{\gamma}(s, x)])ds\right\|\\
    &\le 2C\sup_{t\in[0,t^*]}\,\left\|\int_0^t (\Phi(s))^{-1} (\widehat{\gamma}(s,x) - \mathbb{E}[\widehat{\gamma}(s, x)])ds\right\|\\
    &\quad+ C\sup_{t\in[t^*, 1]}\,\left\|\int_{t^*}^t (\Phi(s))^{-1} (\widehat{\gamma}(s,x) - \mathbb{E}[\widehat{\gamma}(s, x)])ds\right\|
    \end{align*}
    Set
    \begin{align*}
        Z_n(t) &:= \int_{0}^t (\Phi(s))^{-1} (\widehat{\gamma}(s,x) - \mathbb{E}[\widehat{\gamma}(s, x)])ds,\quad\mbox{for}\quad 0 \le t \le t^*,\\
        W_n(t) &:= \int_{t^*}^t (\Phi(s))^{-1} (\widehat{\gamma}(s,x) - \mathbb{E}[\widehat{\gamma}(s, x)])ds,\quad\mbox{for}\quad t^* \le t \le 1.        
    \end{align*}
    We apply the Kolmogorov continuity theorem (i.e., a simple version of generic chaining) to control the supremum of these terms. (The following proof follows Section 1.3 of~\cite{Talagrand2022}.) 
    We shall control $\sup_{t\in[t^*, 1]}\|W_n(t)\|$ first.
    For each $k \ge 1$, set $\mathcal{G}_k = \{\lfloor 2^kt\rfloor/2^{k}:\, t\in[0, 1)\}\cap[t^*, 1]$. Clearly, the cardinality of $\mathcal{G}_k$ is at most $2^{k}$. For each $t\in[t^*, 1]$, choose $\pi_k(t)\in\mathcal{G}_k$ such that $|t - \pi_k(t)| \le 2^{-k}$. It is easy to see that $|\pi_k(t) - \pi_{k-1}(t)| \le 3/2^k$ for all $k \ge 1$ and $t\in[t^*, 1]$. Set
    \[
    \mathcal{U}_k = \{(s, t)\in\mathcal{G}_k\times\mathcal{G}_k:\, |s - t| \le 3/2^k\}.
    \]
    Since, for any given $s\in\mathcal{G}_k$, there are at most six elements in $\mathcal{G}_k$ that are $3/2^k$ distant from $s$, we conclude that the cardinality of $\mathcal{U}_k$ is at most $3\times 2^{k+1}$. This implies that
    \begin{equation}\label{eq:joint-terms-cardinality}
        |\{(\pi_k(t),\,\pi_{k-1}(t)):\,t\in[t^*, 1]\}| \le 3\times 2^{k+1}.
    \end{equation}
    This holds because $\mathcal{G}_{k-1}\subset \mathcal{G}_k$.
    Observe now that, setting $\pi_0(t) = t^*$,
    \begin{equation}\label{eq:main-generic-chaining-bound}
        \begin{split}
        &\mathbb{E}\left[\sup_{t\in[t^*,1]}\|W_n(t)\|\right]\\ 
        &\le \sum_{k\ge1}\, \mathbb{E}\left[\sup_{t\in[t^*,1]}\|W_n(\pi_k(t)) - W_n(\pi_{k-1}(t))\|\right]\\
        &\le \sum_{k\ge1}\, \left(\mathbb{E}\left[\sup_{(t_1, t_2)\in\mathcal{U}_k}\|W_n(t_2) - W_n(t_1)\|^2\right]\right)^{1/2}\\
        &\le d^{1/2}\sum_{k\ge1}\, (3\times 2^{k+1})^{1/2}\max_{1\le j\le d}\left(\sum_{(t_1, t_2)\in\mathcal{U}_k}\mathbb{E}\left[|e_j^{\top}(W_n(t_2) - W_n(t_1)|^2\right]\right)^{1/2}.
        \end{split}
    \end{equation}
    (Here we used the fact that $\mathbb{E}[\max_{1\le j\le M}|W_j|^2] \le \mathbb{E}[\sum_jW_j^2].$)
    For $t^* \le t_1 \le t_2 \le 1$, $e_j^{\top}(W_n(t_2) - W_n(t_1))$ is a mean-zero random variable and can be written as
    \begin{align*}
        &e_j^{\top}(W_n(t_2) - W_n(t_1))\\
        &= \frac{1}{n}\sum_{i=1}^n \int_{t_1}^{t_2} \{e_j^{\top}U_{0i}(s, x) + e_j^{\top}U_{1i}(s, x) - \mathbb{E}[e_j^{\top}U_{0i}(s, x) + e_j^{\top}U_{1i}(s, x)]\}ds,
    \end{align*}
    where, with $z_s = \mathfrak{R}(s, x)$,
    \begin{align*}
        U_{0i}(s, x)
        &= (\Phi(s))^{-1}\int_{S_{s}(z_s)} \frac{\delta - v(s, z_s)}{p_s(z_s)}p_1(z_s + (1- s)\delta)\frac{K_{z_s-s\delta,h}(X_{0i} - (z_s - s\delta))}{\mbox{Vol}(V_{z_s-s\delta,h})}d\delta\\
        U_{1i}(s, x) &= (\Phi(s))^{-1}\int_{S_{s}(z_s)} \frac{\delta - v(s, z_s)}{p_s(z_s)}p_0(z_s - s\delta)\frac{K_{z_s+(1-s)\delta,h}(X_{1i} - (z_s + (1-s)\delta))}{\mbox{Vol}(V_{z_s-s\delta,h})}d\delta.
    \end{align*}
    Given the independence of $(X_{01}, \ldots, X_{0n})$ and $(X_{11}, \ldots, X_{1n})$, and the similar structure of $U_{0i}, U_{1i}$, it suffices to control the second moment of $\int_{t_1}^{t_2} e_j^{\top}U_{0i}(s, x)ds$. For this, we use
    \begin{align*}
    &\mathbb{E}\left|\int_{t_1}^{t_2} e_j^{\top}U_{0i}(s, x)ds\right|^2\\ 
    &= \int_{\Omega} \left(\int_{t_1}^{t_2}\int_{S_{s}(z_s)} \frac{e_j^{\top}(\delta - v(s, z_s))}{p_s(z_s)}p_1(z_s + (1- s)\delta)\frac{K_{z_s-s\delta,h}(z - (z_s - s\delta))}{\mbox{Vol}(V_{z_s-s\delta,h})}d\delta ds\right)^2p_0(z)dz\\
    &\le \frac{C}{h^{2d}}\int_{\Omega}\left(\int_{t_1}^{t_2}\int_{S_{s}(z_s)}\mathbf{1}\{\|z - (z_s - s\delta)\| \le h\}d\delta ds\right)^2p_0(z)dz,
    \end{align*}
    where the last inequality follows several fact (a) $\|p_1\|_{\infty} \le \overline{\mathfrak{p}},$ (b) $|e_j^{\top}(\delta - v(s, z))| \le 3\mbox{diam}(\Omega)$, (c) there exists $\varepsilon_x > 0$ such that $\mathfrak{R}(s, x)\in\Omega^{-\varepsilon_x}$ for almost all $x\in\Omega^\circ$, (d) $p_s(z)$ is bounded away from for $z\in\Omega^{-\varepsilon}$, (e) $|K_{z,h}(u)| \le \mathfrak{K}\mathbf{1}\{u\in V_{z,h}\}$, and (f) $\mbox{Vol}(V_{z,h}) \ge Ch^d$ for all $z\in\Omega$. 
    Note that
    \begin{align*}
        \int_{S_s(z_s)} \mathbf{1}\{\|z - (z_s - s\delta)\| \le h\}d\delta &= \mbox{Vol}\left(\mathcal{B}\left(\frac{z_s - z}{s},\,\frac{h}{s}\right)\cap\left(\frac{z_s - \Omega}{s}\right)\cap\left(\frac{\Omega - z_s}{1 - s}\right)\right).
    \end{align*}
    Because $s \ge t_1 \ge h$, we have
    \begin{equation}\label{eq:volume-bound-large-s}
    \mbox{Vol}\left(\mathcal{B}\left(\frac{z_s - z}{s},\,\frac{h}{s}\right)\cap\left(\frac{z_s - \Omega}{s}\right)\cap\left(\frac{\Omega - z_s}{1 - s}\right)\right) \le \mbox{Vol}\left(\mathcal{B}\left(\frac{z - z_s}{s},\,\frac{h}{s}\right)\right) \le C(h/s)^d.
    \end{equation}
    This implies that
    \[
    \sup_{z\in\Omega}\int_{t_1}^{t_2}\int_{S_{s}(z_s)}\mathbf{1}\{\|z - (z_s - s\delta)\| \le h\}d\delta ds \le Ch^d\int_{t_1}^{t_2} \frac{ds}{s^d} \le C(t_2 - t_1)(h/t_1)^d. 
    \]
    Therefore, 
    \begin{align*}
        &\mathbb{E}\left|\int_{t_1}^{t_2} e_j^{\top}U_{0i}(s, x)ds\right|^2\\
        &\le \frac{C'}{h^{2d}}\frac{h^d(t_2 - t_1)}{t_1^d}\int_{\Omega}\int_{t_1}^{t_2} \int_{S_s(z_s)} \mathbf{1}\{\|z - (z_s - s\delta)\| \le h\}p_0(z)d\delta ds dz\\
        &= \frac{C'(t_2 - t_1)}{h^dt_1^d}\int_{t_1}^{t_2}\int_{S_s(z_s)} \mathbb{P}(X_0 \in \mathcal{B}(z_s - s\delta, h))d\delta ds\\
        &\le \frac{C^{''}(t_2 - t_1)}{t_1^d} \int_{t_1}^{t_2}ds = C^{''}\frac{(t_2 - t_1)^2}{t_1^d}.
    \end{align*}
    Hence, we conclude that for $d \ge 2$
    \begin{equation}\label{eq:bound-increment-W}
    \begin{split}
    \max_{1\le j\le d}\left(\sum_{(t_1, t_2)\in\mathcal{U}_k}\mathbb{E}\left[|e_j^{\top}(W_n(t_2) - W_n(t_1)|^2\right]\right)^{1/2} &\le C\left(\sum_{t_1\in\mathcal{G}_k}\sum_{t_2\in\mathcal{G}_k: |t_2 - t_1| \le 3/2^k} \frac{(t_2 - t_1)^2}{t_1^d}\right)^{1/2}\\
    &\le \frac{C}{2^k}\left(\sum_{t_1\in\mathcal{G}_k} \frac{1}{t_1^d}\right)^{1/2}\\
    &\le \frac{C}{2^k}\left(\sum_{i=\lceil 2^kt^*\rceil}^{2^k} \left(\frac{2^k}{i}\right)^{d}\right)^{1/2}\\
    &\le \frac{C}{2^k}\left(2^{kd}\int_{2^kt^*}^{2^k} \frac{1}{s^d}ds\right)^{1/2}\\
    &\le \frac{C}{2^k}\left(2^{kd}\frac{(2^kt^*)^{-d+1}}{d-1}\right)^{1/2} \le \frac{C}{2^{k/2}(t^*)^{(d-1)/2}}.
    \end{split}
    \end{equation}
    Substituting this in~\eqref{eq:main-generic-chaining-bound}, we obtain
    \[
    \mathbb{E}\left[\sup_{t\in[t^*,1]}\|W_n(t)\|\right] \le C_1\frac{1}{\sqrt{n(t^*)^{d-1}}}\sum_{k\ge 1} \frac{2^{k/2}}{2^k} \le \frac{C_2}{\sqrt{n(t^*)^{d-1}}}.
    \]
    If $d = 1$, then 
    \[
    \sum_{i=\lceil 2^kt^*\rceil}^{2^k} (2^k/i)^d \le 2^{k}\int_{2^kt^*}^{2^k} ds/s = 2^k\log(1/t^*).
    \]
    Hence, we conclude
    \[
    \mathbb{E}\left[\sup_{t\in[t^*,1]}\|W_n(t)\|\right] \le C_1\frac{1}{\sqrt{n(t^*)^{d-1}}}\sum_{k\ge 1} \frac{2^{k/2}}{2^k} \le \frac{C}{\sqrt{n}}\times\begin{cases}\sqrt{\log(1/t^*)}, &\mbox{if }d = 1,\\
    (t^*)^{-(d-1)/2}, &\mbox{if }d \ge 2.\end{cases}
    \]

    The derivation of the bound for $\sup_{t\in[0,t^*]}\|Z_n(t)\|$ follows the same structure and reuse $\mathcal{G}_k, \mathcal{U}_k$ for similar sets. Define
    \begin{align*}
        \mathcal{G}_k &:= \{\lfloor 2^kt\rfloor/2^k:\, t\in[0,t^*)\},\quad
        \mathcal{U}_k := \{(s, t)\in\mathcal{G}_k\times\mathcal{G}_K:\, |s - t| \le 3/2^k\}.
    \end{align*}
    Clearly, the cardinality of $\mathcal{G}_k$ is at most $\lceil 2^kt^*\rceil$ and that of $\mathcal{U}_k$ is at most $6\lceil 2^kt^*\rceil$. Note that if $k \le \log_2(1/t^*)$, then $\mathcal{G}_k = \{0\}$. Using the same decomposition as~\eqref{eq:main-generic-chaining-bound}, it suffices to control $\mathbb{E}[(e_j^{\top}(Z_n(t_2) - Z_n(t_1)))^2]$ for any $(t_1, t_2)\in\mathcal{U}_k$. As before, we get
    \[
    \mathbb{E}[(e_j^{\top}(Z_n(t_2) - Z_n(t_1)))^2] \le \frac{C}{nh^{2d}}\int_{\Omega}\left(\int_{t_1}^{t_2}\int_{S_s(z_s)} \mathbf{1}\{\|z - (z_s - s\delta)\| \le h\}d\delta ds\right)^2p_0(z)dz
    \]
    However, instead of~\eqref{eq:volume-bound-large-s}, we use
    \[
    \mbox{Vol}\left(\mathcal{B}\left(\frac{z_s - z}{s},\,\frac{h}{s}\right)\cap\left(\frac{z_s - \Omega}{s}\right)\cap\left(\frac{\Omega - z_s}{1 - s}\right)\right) \le \mbox{Vol}\left(\left(\frac{z_s - \Omega}{s}\right)\cap\left(\frac{\Omega - z_s}{1 - s}\right)\right) \le C,
    \]
    for some constant independent of $s$ and depending only on $d$ and the diameter of $\Omega$. This implies
    \begin{align*}
    \mathbb{E}[(e_j^{\top}(Z_n(t_2) - Z_n(t_1)))^2] &\le \frac{C(t_2 - t_1)}{nh^{2d}}\int_{t_1}^{t_2}\int_{S_s(z_s)}\int_{\Omega} \mathbf{1}\{\|z - (z_s - s\delta)\| \le h\}p_0(z)dz\\
    &\le \frac{C(t_2 - t_1)^2}{nh^d}.
    \end{align*}
    Note that in decomposition~\eqref{eq:main-generic-chaining-bound}, the summands for $k \le \log_2(1/t^*)$ can be ignored because $\mathcal{G}_k = \{0\}$ implies $\pi_k(t) = \pi_{k-1}(t)$. Therefore,
    \begin{align*}
    \mathbb{E}\left[\sup_{t\in[0, t^*]}\|Z_n(t)\|\right] &\le C\sum_{k > \log_2(1/t^*)}\, (\lceil 2^kt^*\rceil)^{1/2}\max_{1\le j\le d}\sup_{(t_1, t_2)\in\mathcal{U}_k}\left(\mathbb{E}\left[|e_j^{\top}(Z_n(t_2) - Z_n(t_1)|^2\right]\right)^{1/2}\\
    &\le C\sum_{k\ge \log_2(1/t^*)} \frac{2^{k/2}(t^*)^{1/2}}{\sqrt{nh^d}}\left(\frac{1}{2^k}\right) \le C\frac{t^*}{\sqrt{nh^d}}.
    \end{align*}
    Hence,
    \[
    \sup_{t\in [0,1]}\|\widetilde{E}^*(t, x)\| = O_p\left(\frac{t^*}{\sqrt{nh^d}}\right) + n^{-1/2}\begin{cases}\sqrt{\log(1/t^*)}, &\mbox{if }d = 1,\\
    (t^*)^{-(d-1)/2}, &\mbox{if }d\ge 2.
    \end{cases}
    \]
    For $d = 1$, choosing $t^* = \sqrt{h\log(1/h)}$ and for $d = 1$, choosing $t^* = h^{d/(d + 1)}$, we get
    \[
    \sup_{t\in [0,1]}\|\widetilde{E}^*(t, x)\| = O_p(n^{-1/2})\times\begin{cases}\sqrt{\log(1/h)}, &\mbox{if }d = 1,\\
    \sqrt{h^{-d(d-1)/(d+1)}}, &\mbox{if }d\ge 2.
    \end{cases}
    \]
    
\subsection{Proof of Proposition~\ref{prop:well-defined-strictly-convex-set}}\label{appsubsec:well-defined-strictly-convex}
    The continuity of $z\mapsto v(0, z)$ and $z\mapsto v(1, z)$ is obvious, because they are linear functions. 
    \paragraph{Fix $t\in(0, 1/2]$ and $z\in\Omega^\circ$.} (The case of $t\in[1/2,1)$ is similar.) There exists $\varepsilon > 0$,
    \begin{equation}\label{eq:bounded-away-from-zero-p_t}
    p_t(z') = \int_{S_t(z')} p_0(z' - t\delta)p_1(z' + (1-t)\delta)d\delta > 0\quad\mbox{for all}\quad z'\in B(z, \varepsilon)\subseteq\Omega.
    \end{equation}
    This follows from the assumption that the densities are bounded away from zero on $\Omega$. This calculation allows us to assume that $z, z'\in\Omega^\circ$ and condition~\eqref{eq:inclusions-away-from-boundary} holds.
    Moreover, for any $t\in[0, 1],$ $S_t(z)$ is a compact subset of $\mathbb{R}^d$.
    Hence, it suffices to prove that for a continuous function $g:\mathbb{R}^d\to\mathbb{R}_+$ (bounded on bounded sets),
    \begin{equation}\label{eq:continuity-of-h}
    h(z') = \int_{S_t(z')} g(\delta)p_0(z' - t\delta)p_1(z' + (1-t)\delta)d\delta ~\to~ h(z)\quad\mbox{as}\quad\|z' - z\|\to0.
    \end{equation}
    For notational convenience, set $e(t, z, \delta) = g(\delta)p_0(z - t\delta)p_1(z + (1-t)\delta),$ which is non-negative and positive if and only if $\delta\in S_t(z)$.
    Note that
    \begin{align*}
        h(z') - h(z) &= \int_{S_t(z')\setminus S_t(z)} e(t, z', \delta)d\delta - \int_{S_t(z)\setminus S_t(z')} e(t, z, \delta)d\delta\\
        &\quad + \int_{S_t(z')\cap S_t(z)} \{e(t, z', \delta) - e(t, z, \delta)\}d\delta.
    \end{align*}
    Because $S_t(z)$ is bounded for all $z$ and $g$ is bounded on bounded sets, we get
    \[
    \left|\int_{S_t(z')\setminus S_t(z)} e(t, z', \delta)d\delta\right| \le \|g\|_{\infty}\|p_0\|_{\infty}\|p_1\|_{\infty}\mbox{Leb}(S_t(z')\setminus S_t(z)).
    \]
    From Proposition~\ref{prop:properties-of-S_t}(4), we get for $\|z - z'\| = \gamma > 0$ small enough, 
    \[
    \mbox{Leb}(S_t(z')\setminus S_t(z)) \le \mbox{Leb}((S_t(z))^{2\gamma/(t(1-t))}\setminus S_t(z)) \to 0\quad\mbox{as}\quad \gamma\to0.
    \]
    Similarly, we obtain that
    \[
    \left|\int_{S_t(z)\setminus S_t(z')} e(t, z, \delta)d\delta\right| \to 0\quad\mbox{as}\quad \gamma \to 0.
    \]
    Finally, setting
    \[
    \omega_j(\varepsilon) = \sup_{\|x - x'\| \le \varepsilon}|p_j(x) - p_j(x')|\quad\mbox{for}\quad j = 0, 1,
    \]
    note that
    \[
    \left|\int_{S_t(z')\cap S_t(z)} \{e(t, z', \delta) - e(t, z, \delta)\}d\delta\right| \le \|g\|_{\infty}\|p_1\|_{\infty}\omega_0(\gamma) + \|g\|_{\infty}\|p_0\|_{\infty}\omega_1(\gamma),
    \]
    which converges to zero as $\gamma\to0.$ (Recall that assumption~\ref{eq:bounded-away-densities} implies continuity of $p_0, p_1$ on the compact $\Omega$, and hence they are uniformly continuous on $\Omega$ which yields $\omega_j(\gamma)\to 0$ as $\gamma\to0$.)
    Therefore,~\eqref{eq:continuity-of-h} holds, which by~\eqref{eq:bounded-away-from-zero-p_t} implies continuity of $z\mapsto v(t, z)$ on $\Omega^\circ.$
    \paragraph{Fix $t\in(0, 1)$ and $z\in\mathrm{SC}(\Omega)$.} Proposition~\ref{prop:properties-of-S_t}(5) implies that $S_t(z) = \{0\}$. Hence, $p_t(z) = 0$ because $p_0(z - t\delta)p_1(z + (1-t)\delta) = 0$ almost everywhere $\delta\in\mathbb{R}^d$. To show that $v(t, z) = 0$ is the continuous extension, we show that for any $z'\in\Omega$ that is ``close'' to $z$, $S_t(z')$ is a small set (in the sense of its diameter). 
    
    For any $z'\in\Omega$, we shall show that
    \begin{equation}\label{eq:upper-bound-in-delta'}
    \sup_{\delta'\in S_t(z')}\|\delta'\| ~\le~ \frac{\|z' - z\| + \mathfrak{m}_z^{-1}(\|z - z'\|/2;\,\Omega)}{\min\{t, 1 - t\}}.
    \end{equation}
    If $\|\delta'\| \le \|z' - z\|/\min\{t, 1 - t\}$, then nothing to prove. Assume that $\|\delta'\| \ge \|z' - z\|/\min\{t, 1 - t\}$.
    Applying the definition of modulus of uniform convexity~\eqref{eq:modulus-at-a-point}, we obtain
    \begin{align*}
    \mbox{dist}\left(\frac{z + z' -t\delta'}{2},\,\partial\Omega\right) &\ge \mathfrak{m}_z\left(\|z' - t\delta' - z\|; \Omega\right)\\
    \mbox{dist}\left(\frac{z + z' + (1-t)\delta'}{2},\,\partial\Omega\right) &\ge \mathfrak{m}_z\left(\|z' + (1-t)\delta' - z\|; \Omega\right).
    \end{align*}
    From Lemma~\ref{lem:concavity-of-distance-to-boundary}, these inequalities imply
    \begin{align*}
    \left\|\frac{z + z'}{2} - z\right\| &\ge \mbox{dist}\left(\frac{z + z'}{2},\,\partial\Omega\right)\\
    &\ge (1-t)\mbox{dist}\left(\frac{z + z' - t\delta'}{2},\,\partial\Omega\right) + t\mbox{dist}\left(\frac{z + z' + (1-t)\delta'}{2},\,\partial\Omega\right)\\
    &\ge (1-t)\mathfrak{m}_z(\|z' - t\delta' - z\|;\, \Omega) + t\mathfrak{m}_z(\|z' + (1-t)\delta' - z\|;\, \Omega)\\
    &\ge \mathfrak{m}_z(\min\{t, 1-t\}\|\delta'\| - \|z' - z\|;\, \Omega),
    \end{align*}
    where the last inequality follows from the monotonicity of $\varepsilon\mapsto \mathfrak{m}_z(\varepsilon,\,\Omega)$. Therefore, 
    \[
    \min\{t, 1 - t\}\|\delta'\| - \|z' - z\| \le \mathfrak{m}_z^{-1}\left(\frac{\|z - z'\|}{2};\,\Omega\right).
    \]
    This implies~\eqref{eq:upper-bound-in-delta'}, which in turn implies that the diameter of $S_t(z')$ converges to zero. Hence, for any $z'\in\Omega^\circ$ with $\|z' - z\| \le \gamma$, we get
    \[
    \|v(t, z')\| \le \frac{2\max\{\gamma,\mathfrak{m}_z^{-1}(\gamma/2; \Omega)\}}{t(1-t)} \to 0,\quad\mbox{as}\quad \gamma\to0.
    \]
    (Recall that $v(t, z')$ is the expected value of a random vector supported on $S_t(z')$.)
    Hence, defining $v(t, z) = 0$ for $t\in(0, 1)$ and $z\in\partial\Omega$ is a point of strict convexity of $\Omega$. Even if we consider $z'\in\partial\Omega$ with $\|z' - z\|\to0$, Lemma~\ref{lem:eventually-strongly-convex} implies that $z'$ eventually is a point of strict convexity and so, zero is a continuous extension of the velocity field.

\section{Boundary Effects}

The boundary of $\Omega$ has a strong effect on $v_t(z)$.
When $\Omega$ has boundaries,
the velocity $v_t(z)$ is not differentiable
no matter how smooth the densities are.
Furthermore, the Lipschitz constant that explodes
at $t\to 0$ and $t\to 1$.
We note that boundaries
are known to play an important role
for optimal transport maps as well
(Caffarelli 1992, 1996; De Philipps and Figalli 2014)
so it is not surprising that they do so here as well.

Let $\beta$ be a positive integer.
We will say that a function $g$ is $\beta$-smooth at $x$ if,
for all $y$ in a neighborhood of $x$, and some $L < \infty$,
$$
|D^s g(x) - D^s g(y)| \leq L ||x-y||\ \ \ \mathrm{for\ all\ }|s|\leq \beta-1.
$$
Thus, for $u$ close to $x$,
$$
|g(u)-g_{x,\beta}(u)|\leq L||u-x||^\beta
$$
where
$$
g_{x,\beta}(u) = \sum_{|s| < \beta} \frac{(u-x)^s}{s!}.
$$
We will use the following facts.

\bigskip

{\bf Fact 1}: If
$f(z,\delta)$ and $g(z,\delta)$ are $\beta$-smooth at $(z,\delta)$,
$a(z)$ and $b(z)$ are linear
and
$\int_{a_1(z)}^{b_1(z)} \cdots \int_{a_d(z)}^{b_d(z)} g(z,\delta) d\delta_1\ \cdots d\delta_d > 0$,
then
$$
\frac{\int_{a_1(z)}^{b_1(z)} \cdots \int_{a_d(z)}^{b_d(z)} f(z,\delta) d\delta_1\ \cdots d\delta_d}
{\int_{a_1(z)}^{b_1(z)} \cdots \int_{a_d(z)}^{b_d(z)} g(z,\delta) d\delta_1\ \cdots d\delta_d}
$$
is $\beta$-smooth at $z$.
This follows since, repeated application of the Leibniz rule shows that
the derivates of the numerator and denominator
are linear combinations of
$D^s f(z,\delta)$,
$D^s g(z,\delta)$,
$D^s a_j(z)$ and $D^s b_j(z)$.

\bigskip

{\bf Fact 2}:
Suppose that $a(z)$ is linear for $u < z$
and linear for $u>z$ and that
the left and right derivatives of $a(u)$ at $z$ are not equal.
Then
$m(z)=\int_{a(z)}^{b(z)} f(z,\delta) d\delta/ \int_{a(z)}^{b(z)} g(z,\delta) d\delta $ is
not differentiable at $z$.
Similarly for $b(z)$.
This follows by
comparing
$m(z+\epsilon)-m(z)$ and
$m(z)-m(z-\epsilon)$.

\bigskip

We assume throught this section that
$p_0$ and $p_1$ are $\beta$-smooth at all $z$.
We will consider two examples.
In each case we show the following:

(P1) For
$t\in (0,1/2) \bigcap (1/2,1)$,
$v_t(z)$ is
not differentiable on
a set $\Lambda_t$ of measure 0.
Specifically,
\begin{equation}\label{eq::Lambda}
\Lambda_t = \Bigl\{z\in\Omega:\ z = (1-t)b_1 + t b_2,\ \ \mathrm{for\ some\ }b_1,b_2\in\partial \Omega\Bigr\}.
\end{equation}

(P2) $v_t(z)$ is $\beta$ smooth on
$\Lambda_t^c$.

(P3) There is a positive constant $c$ such that
$$
C_t \equiv \sup_{z_1,z_2}\frac{|v_t(z_2)-v_t(z_1)|}{|z_2-z_1|} \geq \frac{c}{t(1-t)}.
$$

\bigskip

{\bf Case 1: Interval.}
$\Omega = [\ell,r]$.
Then
$$
v_t(z) =
\frac{\int_{a_t(z)}^{b_t(z)} \delta p_0(z-t\delta) p_1(z+(1-t)\delta)d\delta}
{\int_{a_t(z)}^{b_t(z)}  p_0(z-t\delta) p_1(z+(1-t)\delta)d\delta}
$$
where
$$
a_t(z) = \max
\Biggl\{\frac{z-r}{t}, \frac{\ell-z}{1-t} \Biggr\},\ \ \ 
b_t(z) = \min
\Biggl\{\frac{z-\ell}{t}, \frac{r-z}{1-t} \Biggr\}.
$$
If $t < 1/2$ let
$z_0 = tr + (1-t)\ell$ and $z_1 = t\ell + (1-t)r$.
If $t>1/2$ let
$z_1 = tr + (1-t)\ell$ and $z_0 = t\ell + (1-t)r$.
We'll focus on the case where $t < 1/2$.
Then
$$
S_t(z) = [a_t(z),b_t(z)] =
\begin{cases}
\left[\frac{\ell-z}{1-t},\ \   \frac{z-\ell}{t}\right] & \ell \leq z \leq z_0\\
\medskip
\left[\frac{\ell-z}{1-t},\ \   \frac{r-z}{1-t}\right] & z_0 \leq z \leq z_1\\
\medskip
\left[\frac{z-r}{t},\ \   \frac{r-z}{1-t}\right] & z_1 \leq z \leq r.
\end{cases}
$$
Now $a_t(z)$ and $b_t(z)$ are linear
on the open intervals
$(\ell,z_0)$, $(z_0,z_1)$ and $(z_1,r)$
so $v_t(z)$ is $\beta$-smooth there.
On the other hand, $a_t(z)$ is not differentiable at $z_1$ and
$b_z(t)$ is not differentiable at $z_0$ and so
$v_t(z)$ is not differentiable on $\{z_0,z_1\}$.
So $v_t(z)$ is not differentiable on
$\Lambda_t(z) = \{z_0,z_1\}$ and is $\beta$-smooth otherwise.
Now
$$
C_t \geq \lim_{z\to\ell}\frac{v_t(z_0)-v_t(z)}{z_0 - z}.
$$
For $z < z_0$,
and some $\delta',\delta'' \in (0,z_0)$,
\begin{align*}
v_t(z) &=
\frac{\int_{ \frac{\ell-z}{1-t} }^{\frac{z-\ell}{t}} \delta r_t(z,\delta) d\delta}
{\int_{ \frac{\ell-z}{1-t} }^{\frac{z-\ell}{t}}  r_t(z,\delta) d\delta}=
\frac{r_t(z,\delta') \int_{ \frac{\ell-z}{1-t} }^{\frac{z-\ell}{t}} \delta  d\delta}
{r_t(z,\delta'') \int_{ \frac{\ell-z}{1-t} }^{\frac{z-\ell}{t}}   d\delta}\\
&=
\frac{r_t(z,\delta')}{r_t(z,\delta'')}
\frac{(z-\ell)(1-2t)}{2 t (1-t)} \to 0
\end{align*}
as $z\to \ell$.
On the other hand,
$$
v_t(z_0) =
\frac{\int_{-t(r-\ell)}^{r-\ell} \delta r_t(z,\delta)d\delta}
{\int_{-t(r-\ell)}^{r-\ell}r_t(z,\delta)d\delta} =
c+ O(t)
$$
where
$$
c =
\frac{\int_{0}^{r-\ell} \delta r_t(z,\delta)d\delta}
{\int_{0}^{r-\ell}r_t(z,\delta)d\delta} >0.
$$
So
$$
\lim_{z\to \ell}
\frac{v_t(z_0)-v_t(z)}{z_0 - \ell} = 
\frac{c + O(t)}{t(r-z)} \asymp \frac{1}{t}.
$$
Similar arguments hold for the $z\to r$ and for $t > 1/2$.

\bigskip

{\bf Case 2. Rectangle.}
$\Omega = [\ell_1,r_1] \times \cdots \times [\ell_d,r_d]$.
Without loss of generality, assume that $0 < t < 1/2$.
Now
$$
S_t(z) = [a_{t1}(z),b_{t1}(z)] \times \cdots \times [a_{td}(z),b_{td}(z)]
$$
where
$$
a_{tj}(z) = \max
\Biggl\{\frac{z_j-r_j}{t}, \frac{\ell_j-z}{1-t} \Biggr\},\ \ \ 
b_{tj}(z) = \min
\Biggl\{\frac{z-\ell_j}{t}, \frac{r_j-z}{1-t} \Biggr\}.
$$
Let
$z_{0j} = tr_j + (1-t)\ell_j$ and $z_{1j} = t\ell_j + (1-t)r_j$.
Then
$\ell_j \leq z_{0j} \leq z_{1j}\leq r_j$.
The interval $[\ell_j,r_j]$
is partitioned into three intervals
$[\ell_j,z_{0j}]$, $[z_{0j},z_{1j}]$, $[z_{1j},r_j]$.
Then
$$
[a_{tj}(z),b_{tj}(z)] =
\begin{cases}
\left[\frac{\ell_j-z_j}{1-t},\ \   \frac{z_j-\ell_j}{t}\right] & \ell_j \leq z_j \leq z_{0j}\\
\medskip
\left[\frac{\ell_j-z_j}{1-t},\ \   \frac{r_j-z_j}{1-t}\right] & z_{0j} \leq z_j \leq z_{1j}\\
\medskip
\left[\frac{z_j-r_j}{t},\ \   \frac{r_j-z_j}{1-t}\right] & z_{1j} \leq z_j \leq r_j.
\end{cases}
$$
Hence we can partition $S_t(z)$
into sets of the form
$C_1 \times \cdots \times C_d$
where each $C_j$ is one of
$[\ell_j,z_{0j}]$, $[z_{0j},z_{1j}]$, $[z_{1j},r_j]$.
There are $N=3^d$ such sets
which we denote by $R_1,\ldots, R_N$.
Then 
$a_{jt}(z)$ and $b_{jt}(z)$ are linear functions
over the interiors of these sets
but have a change of slope at the boundaries.
So $v_t(z)$ is smooth
over the interiors of $R_1,\ldots, R_N$.
See Figure \ref{fig::Rectangle_Plot}.

Let $\Lambda_t(z)$ be the boundaries of these sets.
Note that this is precisely the set defined by (\ref{eq::Lambda}).
Fix a point $z$ on the boundary of $R_k$, say.
There exists at least one $j$ such that
$z_j = z_{0j}$ or 
$z_j = z_{1j}$.
Rewrite $S_t(z)$ as
$S_t(z) = [a_{tj}(z),b_{tj}(z)] \times B$
where
$B = \prod_{k \neq j }[a_{tk}(z),b_{tk}(z)]$.
Hence,
$$
\int_{S_t(z)} \delta r_t(z,\delta) d\delta =
\int_{a_{tj}(z)}^{b_{tj}(z)} M_t(z_j,\delta)
$$
where
$M_t(z_j,\delta) = \int_B \delta r_t(z,\delta) d\delta_{(-j)}$
where $\delta_{(-j)} = (\delta_1,\ldots, \delta_{j-1},\delta_{j+1},\ldots, \delta_d)$.
By Fact 2,
$\int_{a_{tj}(z)}^{b_{tj}(z)} M_t(z_j,\delta)$ is not differentiable.
A similar argument applies to the denominator
$\int_{S_t(z)}r_t(z,\delta) d\delta$.

\begin{figure}
\begin{center}
\includegraphics[scale=.5]{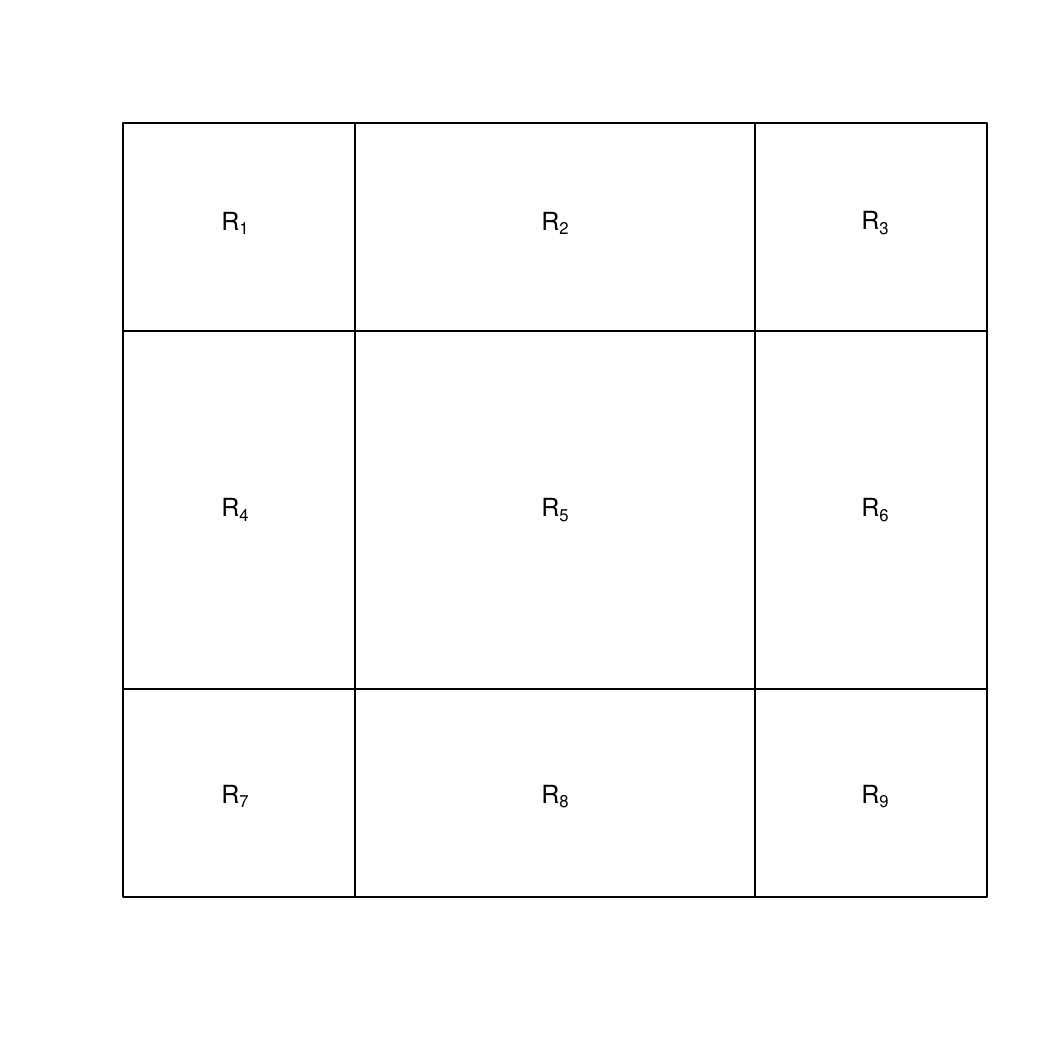}
\end{center}
\vspace{-1cm}
\caption{\em This plot shows the sets $R_1,\ldots, R_N$ in the case of a hyper-rectangle.
The set of non-differentiability $\Lambda_t$ consists of the vertical and horizontal lines.
The function $v_t(z)$ is $\beta$-smooth in the interior of each $R_j$.}
\label{fig::Rectangle_Plot}
\end{figure}

Now we bound Lipschitz constant.
Let $z_n$ be a sequence in $\Omega$ such that
$z_n \to \ell=(\ell_1,\ldots, \ell_d)$ as $n\to \infty$.
It is easy to check that
$v_t(z_n) \to 0$ as $z_n \to\ell$
as in the univariate case.
Next,
\begin{align*}
v_t(z_0) &=
\frac{\int_{-t(r_1-\ell_1)}^{r_1-\ell_1} \times \int_{-t(r_d-\ell_d)}^{r_d-\ell_d}
\delta r_t(z,\delta)d\delta}
{\int_{-t(r_1-\ell_1)}^{r_1-\ell_1} \times \int_{-t(r_d-\ell_d)}^{r_d-\ell_d}
r_t(z,\delta)d\delta}\\
& \to
\frac{\int_{0}^{r_1-\ell_1} \times \int_{0}^{r_d-\ell_d} \delta p_1(z+\delta)d\delta}
{\int_{0}^{r_1-\ell_1} \times \int_{0}^{r_d-\ell_d} p_1(z+\delta)d\delta}
\equiv c.
\end{align*}
Also,
$||z_0 - \ell||=t||r-\ell||$.
Hence,
$$
C_t \geq \frac{v(z_0)-v(\ell)}{||z_0-\ell||} \asymp \frac{c}{t||r-\ell||} 
$$
as $t\to 0$.

\end{document}